\newtheorem{Thm}{Theorem}[section]
\newtheorem{Prop}[Thm]{Proposition}
\newtheorem{Lem}[Thm]{Lemma}
\newtheorem{Cor}[Thm]{Corollary}
\newtheorem{Ex}[Thm]{Example}
\theoremstyle{definition}
\newtheorem{Def}[Thm]{Definition}
\newtheorem{Rem}[Thm]{Remark}
\newtheorem{Conj}{Conjecture}
\newcommand{\Z}{\mathbf{Z}}
\newcommand{\N}{\mathbf{N}}
\newcommand{\R}{\mathbf{R}}
\newcommand{\C}{\mathbf{C}}
\newcommand{\Q}{\mathbf{Q}}
\newcommand{\F}{\mathbf{F}}
\newcommand{\HH}{\mathcal{H}}
\newcommand{\G}{\rtimes^{\rho} G}
\numberwithin{equation}{section}
\newcommand{\Group}{\mathcal{G}} 
\newcommand{\norme}[1]{\left\Vert #1\right\Vert}
\newcommand{\End}{\mathrm{End}}
\newcommand{\A}{\mathcal{A}^{\rho}_r(G)}
\title{The Baum-Connes conjecture: an extended survey}
\author{Maria Paula GOMEZ APARICIO, Pierre JULG, and Alain VALETTE}
\date{\today}
\begin{document}

\baselineskip=16pt

\maketitle
\hfill{\it To Alain Connes, for providing lifelong inspiration}


\tableofcontents

\section{Introduction}

\subsection{Building bridges}

Noncommutative Geometry is a field of Mathematics which builds bridges between many different subjects. Operator algebras, index theory, K-theory, geometry of foliations, group representation theory are, among others, ingredients of the impressive achievements of Alain Connes and of the many mathematicians that he has inspired in the last 40 years. 

At the end of the 1970's the work of Alain Connes on von Neumann theory naturally led him to explore foliations and groups. His generalizations of Atiyah's $L^2$ index theorem were the starting point of his ambitious project of Noncommutative Geometry. A crucial role has been played by the pioneering conference in Kingston in July 1980, where he met the topologist Paul Baum. The picture of what was soon going to be known as the Baum-Connes conjecture quickly emerged. The catalytic effect of IHES should not be underestimated; indeed the paper \cite{BC} was for a long time available only as an IHES 1982 preprint. It is only in 1994 that the general and precise statement was given in the proceedings paper \cite{BCH} with Nigel Higson.

\subsection{In a nutshell - without coefficients...}

The Baum-Connes conjecture also builds a bridge, between commutative geometry and non-commutative geometry. Although it may be interesting to formulate the conjecture for locally compact groupoids\footnote{This is important e.g. for applications to foliations, see Chapter 7.}, we stick to the well-accepted tradition of formulating the conjecture for locally compact, second countable groups.

{\bf For every locally compact group $G$ there is a Baum-Connes conjecture!}. We start by associating to $G$ four abelian groups $K_*^{top}(G)$ and $K_*(C^*_r(G))$ (with $*=0,1$), then we construct a group homomorphism, the {\it assembly map}:
$$\mu_r:K_*^{top}(G)\rightarrow K_*(C^*_r(G))\;\;\;(*=0,1).$$
We say that {\bf the Baum-Connes conjecture holds for $G$ if $\mu_r$ is an isomorphism for $*=0,1$}. Let us give a rough idea of the objects.

\begin{itemize}
\item The RHS of the conjecture, $K_*(C^*_r(G))$, is called the {\it analytical side}: it belongs to noncommutative geometry. Here $C^*_r(G)$, the {\it reduced $C^*$-algebra of $G$}, is the closure in the operator norm of $L^1(G)$ acting by left convolution on $L^2(G)$; and $K_*(C^*_r(G))$ is its {\it topological K-theory}.

Topological K-theory is a homology theory for Banach algebras $A$, enjoying the special feature of Bott periodicity ($K_i(A)$ is naturally isomorphic to $K_{i+2}(A)$), so that there are just two groups to consider: $K_0$ and $K_1$. K-theory conquered $C^*$-algebra theory around 1980, as a powerful invariant to distinguish $C^*$-algebras up to isomorphism. A first success was, in the case of the free group $\mathbf{F}_n$ of rank $n$, the computation of $K_*(C^*_r(\mathbf{F}_n))$ by Pimsner and Voiculescu \cite{PimsnerVoiculescu}: they obtained 
$$K_0(C^*_r(\mathbf{F}_n))=\Z,\;\;K_1(C^*_r(\mathbf{F}_n))=\Z^n,$$
so that $K_1$ distinguishes reduced $C^*$-algebras of free groups of various ranks.

For many connected Lie groups (e.g. semisimple), $C^*_r(G)$ is type I, which points to using {\it d\'evissage} techniques: representation theory allows to define ideals and quotients of $C^*_r(G)$ that are less complicated, so $K_*(C^*_r(G))$ can be computed by means of the 6-terms exact sequence associated with a short exact sequence of Banach algebras. By way of contrast, if $G$ is discrete, $C^*_r(G)$ is very often {\it simple} (see \cite{BKKO} for recent progress on that question); it that case, d\'evissage must be replaced by brain power (see \cite{Pimsner86} for a sample), and the Baum-Connes conjecture at least provides a conjectural description of what $K_*(C^*_r(G))$ should be (see e.g. \cite{SanchezGarcia}). 

\item The LHS of the conjecture, $K_*^{top}(G)$, is called the {\it geometric}, or {\it topological} side. This is actually misleading, as its definition is awfully analytic, involving Kasparov's bivariant theory (see Chapter 3). A better terminology would be the {\it commutative side}, as indeed it involves a space $\underline{EG}$, the {\it classifiying space for proper actions of $G$} (see Chapter 4), and $K_*^{top}(G)$ is the $G$-equivariant K-homology of $\underline{EG}$. 

When $G$ is discrete and torsion-free, then $\underline{EG}=EG=\widetilde{BG}$, the universal cover of the classifying space $BG$. As $G$ acts freely on $EG$, the $G$-equivariant K-homology of $EG$ is $K_*(BG)$, the ordinary K-homology of $BG$, where K-homology for spaces can be defined as the homology theory dual to topological K-theory for spaces.

\item The assembly map $\mu_r$ will be defined in Chapter 4 using Kasparov's equivariant KK-theory. Let us only give here a flavor of the meaning of this map. It was discovered in the late 1970's and early 1980's that the K-theory group $K_*(C^*_r(G))$ is a receptacle for indices, see section \ref{ConMosco}. More precisely, if $M$ is a smooth manifold with a proper action of $G$ and compact quotient, and $D$ an elliptic $G$-invariant differential operator on $M$, then $D$ has an index $ind_G(D)$ living in $K_*(C^*_r(G))$. Therefore, the geometric group $K_*^{top}(G)$ should be thought of as the set of homotopy classes of such pairs $(M,D)$, and the assembly map $\mu_r$ maps the class $[(M,D)]$ to $ind_G(D)\in K_*(C^*_r(G))$.

\end{itemize}

\subsection{... and with coefficients}
There is also a more general conjecture, called {\it the Baum-Connes conjecture with coefficients}, where we allow $G$ to act by *-automorphisms on an auxiliary $C^*$-algebra $A$ (which becomes a $G-C^*$-algebra), and where the aim is to compute the K-theory of the reduced crossed product $C^*_r(G,A)$. One defines then the assembly map
$$\mu_{A,r}:K_*^{top}(G,A)\rightarrow K_*(C^*_r(G,A))\;\;\;(*=0,1)$$
and we say that {\bf the Baum-Connes conjecture with coefficients holds for $G$ if $\mu_{A,r}$ is an isomorphism for $*=0,1$ and every $G-C^*$-algebra $A$}. The advantage of the conjecture with coefficients is that it is inherited by closed subgroups; its disadvantage is that it is false in general, see Chapter 9.

\subsection{Structure of these notes}\label{Summary}

Using the acronym BC for ``{\it Baum-Connes conjecture}'', here is what the reader will find in this piece.

\begin{itemize}
\item Where does BC come from? Chapter 2, on the history of the conjecture.
\item What are the technical tools and techniques? Chapter 3, on Kasparov theory (and the Dirac - dual Dirac method).
\item What is BC, what does it entail, what is the state of the art? Chapter 4.
\item Why is BC difficult? Chapter 5, discussing BC with coefficients for semisimple Lie groups and their closed (e.g. discrete) subgroups.
\item How can we hope to overcome those difficulties? Chapter 6, on Banach algebraic methods.
\item Is BC true or false? For BC without coefficients we don't know, but we know that the natural extension of BC from groups to groupoids is false (see Chapter 7), and we know that BC with coefficients is false (see Chapter 9).
\end{itemize}
We could have stopped there. But it seemed unfortunate not to mention an important avatar of BC, namely the {\it coarse Baum-Connes conjecture} (CBC) due to the late John Roe: roughly speaking, groups are replaced by metric spaces, see Chapter 8. An important link with the usual BC is that for a finitely generated group, which can be viewed as a metric space via some Cayley graph, CBC implies the injectivity part of BC. 

Finally, it was crucial to mention the amount of beautiful mathematics generated by BC, and this is done in Chapter 9. 

\subsection{What do we know in 2019?}\label{known}

In Chapter \ref{indexmaps} we explain the ``{\it Dirac - dual Dirac}'' method used by Kasparov \cite{KaspConsp} to prove the injectivity of $\mu_{A,r}$ for all semisimple Lie group $G$ and all $G$-$C^*$-algebras $A$; this also proves injectivity for closed subgroups of a semisimple Lie group, as this property passes to closed subgroups.  Since then, an abstraction of the Dirac - dual Dirac method, explained in section \ref{viveTu}, has been used by Kasparov and Skandalis \cite{Kasparov-Skandalis03}, to prove the injectivity of the assembly map for a large class of groups denoted by $\mathcal{C}$ in \cite{LaffInv}. This class contains, for example all locally compact groups acting continuously, properly and isometrically on a complete and simply connected Riemannian manifold  of non-positive scalar curvature (see \cite{Kasparov88}), or on a Bruhat-Tits affine building (for example all $p$-adic groups, see \cite{Kasparov-Skandalis91}), all hyperbolic groups (see \cite{Kasparov-Skandalis03}). So the injectivity of the Baum-Connes assembly map has been proven for a huge class of groups.

The conjecture with coefficients has been proven for a large class of groups that includes all groups with the Haagerup property (eg. $SL_2(\R)$, $SO_0(n,1)$, $SU(n,1)$, and all free groups). For those groups the proof is due to Higson and Kasparov (see \cite{HigKas}) and it is also based on the "Dirac-dual Dirac" method. This method cannot however be applied to non-compact groups having property (T), not even for the conjecture without coefficients: see section \ref{KvsH} for more on the tension between the Haagerup and Kazhdan properties.

Nevertheless, as will be explained in section \ref{Laff}, Lafforgue managed to prove the conjecture without coefficients for all semisimple Lie groups and for some of their discrete subgroups, precisely those having property (RD) (as defined in section \ref{(RD)}). For example, the conjecture without coefficients is true for all cocompact lattices in $SL_3(\R)$ but it is still open for $SL_3(\mathbf{Z})$\footnote{In the case of $SL_3(\Z)$, surjectivity of $\mu_r$ is the open problem; the LHS of the Baum-Connes conjecture was computed in \cite{SanchezGarcia}.}.

On the other hand, the conjecture with coefficients has been proven for all hyperbolic groups (see \cite{LaffHyp}), but it still open for higher rank semisimple Lie groups and their closed subgroups: see sections \ref{Trichotomy} and \ref{hyperLafforgue} for more on that.

An example of a group for which, at the time of writing, $\mu_r$ is not known to be either injective or surjective, is the free Burnside group $B(d,n)$, as soon as it is infinite\footnote{Recall that $B(d,n)$ is defined as the quotient of the non-abelian free group $\F_d$ by the normal subgroup generated by all $n$'s powers in $\F_d$.}.

\subsection{A great conjecture?}

What makes a conjecture great? Here we should of course avoid the chicken-and-egg answer ``{\it It's a great conjecture because it is due to great mathematicians}''. We should also be suspicious of the pure maths self-referential answer: ``{\it It's a great conjecture because it implies several previous conjectures}'': that an abstruse conjecture implies even more abstruse ones\footnote{Compare with sections \ref{Novikov} and \ref{BCconsequences}.}, does not necessarily make it great.

We believe that the interest of a conjecture lies in the feeling of unity of mathematics that it entails. We hope that the reader, in particular the young expert, after glancing at the table of contents and the various subjects listed in section \ref{prerequis} below, will not let her/himself be discouraged. Rather (s)he should take this as an incentive to learn new mathematics, and most importantly connections between them.

Judging by the amount of fields that it helps bridging (representation theory, geometric group theory, metric geometry, dynamics,...), we are convinced that yes, the Baum-Connes conjecture is indeed a great conjecture.

\subsection{Which mathematics are needed?}\label{prerequis}
We use freely the following concepts; for each we indicate one standard reference.
\begin{itemize}
\item locally compact groups (Haar measure, unitary representations): see \cite{Dix};
\item semisimple Lie groups and symmetric spaces: see \cite{Helgason};
\item operator algebras (full and reduced group $C^*$-algebras, full and reduced crossed products): see \cite{Pedersen};
\item K-theory for $C^*$-algebras (Bott periodicity, 6-terms exact sequences, Morita equivalence): see \cite{WeggeOlsen};
\item index theory: see \cite{BoossBleecker}
\end{itemize}

\bigskip
{\bf Acknowledgements}: Thanks are due to J.-B. Bost, R. Coulon, N. Higson, V. Lafforgue , P.-Y. Le Gall, H. Oyono-Oyono, N. Ozawa and M. de la Salle for useful conversations and exchanges.

\section{Birth of a conjecture}

\subsection{Elliptic (pseudo-) differential operators}

Let $M$ be a closed manifold, and let $D$ be a(pseudo-) differential operator acting on smooth sections of some vector bundles $E,F$ over $M$, so $D$ maps $C^\infty(E)$ to $C^\infty(F)$. Let $T^*M$ denote the cotangent bundle of $M$. The (principal) symbol is a bundle map $\sigma(D)$ from the pullback of $E$ to the pullback of $F$ on $T^*M$. Recall that $D$ is said to be {\it elliptic} if $\sigma(D)$ is invertible outside of the zero section of $T^*M$. In this case standard elliptic theory guarantees that $ker(D)$ and $coker(D)$ are finite-dimensional, so that the (Fredholm) index of $D$ is defined as 
$$Ind(D)=\dim_\C \mathrm{ker} (D) -\dim_\C \mathrm{coker}(D) \in \Z.$$
The celebrated Atiyah-Singer theorem \cite{AtiSin} then provides a topological formula for $Ind(D)$ in terms of topological invariants associated with $M$ and $\sigma(D)$.

Now let $\tilde{M}\rightarrow M$ be a Galois covering of $M$, with group $\Gamma$, so that $M=\Gamma\backslash\tilde{M}$. Assume that $D$ lifts to a $\Gamma$-invariant operator $\tilde{D}$ on $\tilde{M}$, between smooth sections of $\tilde{E}, \tilde{F}$, the vector bundles pulled back from $E,F$ via the covering map. 

\begin{itemize}
\item Assume first that $\Gamma$ is finite, i.e. our covering has $n=|\Gamma|$ sheets. Then $\tilde{M}$ is a closed manifold, and the index of $\tilde{D}$ satisfies $Ind(\tilde{D})=n\cdot Ind(D)$. Now we may observe that, in this case, there is a more refined analytical index, obtained by observing that $ker(\tilde{D})$ and $coker(\tilde{D})$ are finite-dimensional representation spaces of $\Gamma$, hence their formal difference makes sense in the additive group of the representation ring $R(\Gamma)$: we get an element $\Gamma-Ind(\tilde{D})\in R(\Gamma)$; the character of this virtual representation of $\Gamma$, evaluated at $1\in \Gamma$, gives precisely $Ind(\tilde{D})$.

\item Assume now that $\Gamma$ is infinite. Then the $L^2$-kernel and $L^2$-cokernel of $\tilde{D}$ are closed subspaces of the suitable space of $L^2$-sections, namely $L^2(\tilde{M},\tilde{E})$ and $L^2(\tilde{M},\tilde{F})$, and by $\Gamma$-invariance those spaces are representation spaces of $\Gamma$. The problem with these representations is that their classical dimension is infinite. Atiyah's idea in \cite{AtiyahL2} is to measure the size of these spaces via the dimension theory of von Neumann algebras.

More precisely, the $L^2$-kernel of $\tilde{D}$ is $\Gamma$-invariant, so the orthogonal projection onto that kernel belongs to the algebra $A$ of operators commuting with the natural $\Gamma$-representation on $L^2(\tilde{M}, \tilde{E})$. Choosing a fundamental domain for the $\Gamma$-action on $\tilde{M}$ allows to identify $\Gamma$-equivariantly $L^2(\tilde{M},\tilde{E})$ with $\ell^2(\Gamma)\otimes L^2(M,E)$. So $A$ becomes the von Neumann algebra $L(\Gamma)\otimes \mathcal{B}(L^2(M,E))$, where $L(\Gamma)$, the {\it group von Neumann algebra} of $\Gamma$, is generated by the right regular representation of $\Gamma$ on $\ell^2(\Gamma)$. The canonical trace on $L(\Gamma)$ (defined by $\tau(a)=\langle a(\delta_e), \delta_e\rangle$ for $a\in L(\Gamma)$) provides a dimension function $\dim_\Gamma$ on the projections in $A$. Atiyah's {\it $L^2$-index theorem} \cite{AtiyahL2} states that 
\begin{Thm}\label{L2index} In the situation above:
 $$Ind(D)=Ind_\Gamma(\tilde{D}),$$
where the right-hand side is defined as $$Ind_\Gamma(\tilde{D}):=\dim_\Gamma( \mathrm{ker}\tilde{D}))-\dim_\Gamma(\mathrm{coker}(\tilde{D})).$$
\end{Thm}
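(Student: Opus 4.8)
The plan is to run the heat-kernel proof, comparing the McKean--Singer supertrace downstairs on $M$ with its $\Gamma$-equivariant (von Neumann) analogue upstairs on $\tilde{M}$. First I would form the two Laplacians $\Delta_0=D^*D$ and $\Delta_1=DD^*$ on $M$, together with their $\Gamma$-invariant lifts $\tilde{\Delta}_0=\tilde{D}^*\tilde{D}$ and $\tilde{\Delta}_1=\tilde{D}\tilde{D}^*$ on $\tilde{M}$. On the closed manifold $M$ the heat operators $e^{-t\Delta_i}$ are trace class, and the classical McKean--Singer identity gives, for every $t>0$,
$$Ind(D)=\mathrm{Tr}(e^{-t\Delta_0})-\mathrm{Tr}(e^{-t\Delta_1}),$$
because $D$ intertwines the strictly positive spectral subspaces of $\Delta_0$ and $\Delta_1$, so all nonzero eigenvalues cancel and only the kernels survive.

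Next I would establish the $\Gamma$-equivariant version upstairs. The operators $e^{-t\tilde{\Delta}_i}$ have smooth, $\Gamma$-invariant Schwartz kernels $\tilde{K}_t^i(x,y)$, and I would define the $\Gamma$-trace by $\mathrm{Tr}_\Gamma(T)=\int_{\mathcal{F}}\mathrm{tr}\,k_T(x,x)\,dx$, integrating the pointwise trace of the kernel over a fundamental domain $\mathcal{F}$ for the $\Gamma$-action; applied to the projection onto $\ker\tilde{D}$ this recovers $\dim_\Gamma(\ker\tilde{D})$. The point is that $\mathrm{Tr}_\Gamma$ is a genuine trace on $\Gamma$-equivariant operators, and $\tilde{D}$ furnishes a $\Gamma$-equivariant isometry between the strictly positive spectral subspaces of $\tilde{\Delta}_0$ and $\tilde{\Delta}_1$; these therefore carry equal $\Gamma$-dimension, so the nonzero spectrum again cancels and one obtains, for every $t>0$,
$$\mathrm{Tr}_\Gamma(e^{-t\tilde{\Delta}_0})-\mathrm{Tr}_\Gamma(e^{-t\tilde{\Delta}_1})=\dim_\Gamma(\ker\tilde{D})-\dim_\Gamma(\mathrm{coker}\,\tilde{D})=Ind_\Gamma(\tilde{D}).$$

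It remains to compare the two supertraces, and here the bridge is the periodization identity $K_t^i(x,x)=\sum_{\gamma\in\Gamma}\tilde{K}_t^i(\tilde{x},\gamma\tilde{x})$, which expresses the downstairs heat kernel as the $\Gamma$-sum of the upstairs one. Integrating the supertrace over $M=\Gamma\backslash\tilde{M}$ and isolating the $\gamma=e$ term yields
$$Ind(D)-Ind_\Gamma(\tilde{D})=\int_{\mathcal{F}}\sum_{\gamma\neq e}\mathrm{str}\,\tilde{K}_t(\tilde{x},\gamma\tilde{x})\,d\tilde{x}=:R(t),$$
and since both indices are independent of $t$, so is $R(t)$. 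The crux, which I expect to be the main obstacle, is to show $R(t)\to 0$ as $t\to 0^+$: this rests on Gaussian off-diagonal estimates for the heat kernel on $\tilde{M}$, of the form $|\tilde{K}_t(\tilde{x},\gamma\tilde{x})|\lesssim t^{-n/2}e^{-d(\tilde{x},\gamma\tilde{x})^2/Ct}$, which hold because $\tilde{M}$ has bounded geometry as a cover of the compact manifold $M$. Combined with the uniform positive lower bound on the displacement $d(\tilde{x},\gamma\tilde{x})$ for $\gamma\neq e$ and the controlled volume growth of $\Gamma$, these estimates make the series converge and vanish in the short-time limit. Being constant in $t$, $R(t)$ is then identically zero, which gives $Ind(D)=Ind_\Gamma(\tilde{D})$.
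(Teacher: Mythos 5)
Your argument is correct and complete in outline; it is the standard heat-kernel proof of Atiyah's $L^2$-index theorem. Note, however, that the survey does not prove this statement at all: Theorem \ref{L2index} is simply quoted from Atiyah's paper \cite{AtiyahL2}, so there is no internal proof to compare against. It is still worth contrasting your route with Atiyah's original one. Atiyah avoids the heat semigroup and the short-time limit entirely: he picks a parametrix $Q$ for $D$ whose Schwartz kernel is supported in a small neighbourhood of the diagonal, lifts it to a $\Gamma$-invariant parametrix $\tilde Q$ for $\tilde D$, and observes that the remainders $1-QD$, $1-DQ$ and their lifts then have \emph{literally the same} local traces on a fundamental domain, so the two indices agree with no error term to estimate. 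Your version replaces this support trick by the identity $Ind(D)-Ind_\Gamma(\tilde D)=R(t)$ with $R$ constant in $t$, killed by Gaussian off-diagonal decay as $t\to 0^+$; this costs you the heat-kernel estimates but buys the familiar McKean--Singer framework and generalizes readily (e.g.\ to local index theory). Two small points you should make explicit if you write this up: (i) the identification of your geometric trace $\mathrm{Tr}_\Gamma(T)=\int_{\mathcal F}\mathrm{tr}\,k_T(x,x)\,dx$ with the trace coming from the von Neumann algebra $L(\Gamma)\otimes\mathcal B(L^2(M,E))$ used in the paper to define $\dim_\Gamma$, and (ii) the cancellation of the positive spectrum upstairs must be phrased via the spectral measure and the polar decomposition of $\tilde D$ (the spectrum need not be discrete), not via eigenvalue pairing as on the compact quotient. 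Neither is a gap; both are standard.
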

\end{itemize}

\subsection{Square-integrable representations}

Recall that, for $G$ a locally compact unimodular group, a unitary irreducible representation $\pi$ of $G$ is said to be {\it square-integrable} if, for every two vectors $\xi,\eta$ in the Hilbert space of the representation $\pi$, the coefficient function
$$g\mapsto \langle \pi(g)\xi, \eta\rangle$$
is square-integrable on $G$. Equivalently, $\pi$ is a sub-representation of the left regular representation $\lambda_G$ of $G$ on $L^2(G)$ (see \cite{Dix}, section 14.1, for the equivalence). The set of square-integrable representations of $G$ is called the {\it discrete series} of $G$.

When $G$ is a semisimple Lie group with finite center, we denote by $\hat{G}_r$ the {\it reduced dual}, or {\it tempered dual} of $G$: this is the set of (equivalence classes of) unitary irreducible representations of $G$ weakly contained in $\lambda_G$; it may also be defined as the support of the Plancherel measure on the full dual $\hat{G}$ of $G$. A cornerstone of 20th century mathematics is Harish-Chandra's explicit description of the Plancherel measure on semisimple Lie groups, and it turns out that the discrete series of $G$ is exactly the set of atoms of the Plancherel measure.

Let us be more specific. Let $K$ be a maximal compact subgroup of $G$, a connected semisimple Lie group with finite center. A first result of Harish-Chandra states that the discrete series of $G$ is non-empty if and only if $G$ and $K$ have equal rank. This exactly means that a maximal torus of $K$ is also a maximal torus of $G$. Let us assume that this holds, and let us fix a maximal torus $T$ in $K$. Let $\mathfrak{g}_\C,\mathfrak{k}_\C,\mathfrak{t}_\C$ be the complexified Lie algebras of $G,K,T$ respectively. Decomposing the adjoint representations of $T$ on $\mathfrak{k}_\C$ and $\mathfrak{g}_\C$ respectively, we get two root systems $\Phi_c$ and $\Phi$, with $\Phi_c\subset \Phi$: we say that $\Phi$ is the set of roots, while $\Phi_c$ is the set of {\it compact} roots. Correspondingly there are two Weyl groups $W(K)\subset W$. We denote by $\Lambda$ the lattice of weights of $T$. An element of $\mathfrak{t}_\C$ is {\it regular} if its stabilizer in $W$ is trivial. We denote by $\rho$ half the sum of positive roots in $\Phi$ (with respect to a fixed set $\Psi$ of positive roots), and by $\rho_c$ half the sum of the positive compact roots. We have then Harish-Chandra's main result on existence and exhaustion of discrete series (see \cite{Lips}, section I.B.2 for a nice summary of Harish-Chandra's theory):

\begin{Thm}\label{Harish} To each regular element $\lambda\in\Lambda+\rho$ is naturally associated a square-integrable irreducible representation $\pi_\lambda$ of $G$ such that $\pi_\lambda|_K$ contains with multiplicity 1 the $K$-type with highest weight $\lambda+\rho-2\rho_c$. Every discrete series representation of $G$ appears in this way. If $\lambda,\mu\in \Lambda+\rho$, the representations $\pi_\lambda, \pi_\mu$ are unitarily equivalent if and only if $\lambda$ and $\mu$ are in the same $W(K)$-orbit.
\end{Thm}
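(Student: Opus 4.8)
The plan is to follow the geometric route of Parthasarathy and Atiyah--Schmid, realizing each $\pi_\lambda$ as the space of $L^2$-solutions of a Dirac-type operator on the symmetric space $X=G/K$; this fits the index-theoretic theme of the present survey and reduces the existence assertion to a vanishing theorem. Since $\mathrm{rank}(G)=\mathrm{rank}(K)$, the space $X$ is even-dimensional and carries a $G$-invariant spinor bundle $S=S^+\oplus S^-$. Fixing a regular $\lambda\in\Lambda+\rho$, I would let $V$ be the irreducible $K$-representation dictated by the spinor twist attached to $\lambda$, form the homogeneous bundles $\mathcal{E}^\pm=G\times_K(S^\pm\otimes V)$, and consider the associated $G$-invariant Dirac operator $D_\lambda\colon C^\infty(\mathcal{E}^+)\to C^\infty(\mathcal{E}^-)$. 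The candidate for $\pi_\lambda$ is then the natural unitary $G$-representation on the $L^2$-kernel of $D_\lambda$.

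First I would establish the Parthasarathy formula $D_\lambda^2=-\Omega_G+c(\lambda)$, expressing the square of the Dirac operator through the Casimir $\Omega_G$ up to an explicit scalar $c(\lambda)$ depending on $\|\lambda\|$ and $\|\rho\|$. Regularity of $\lambda$ makes this scalar strictly positive on the ``wrong'' half of the spinor grading, which forces the $L^2$-kernel to concentrate in a single degree; combined with Schmid's vanishing theorem this shows the index space is nonzero and is a genuine square-integrable representation, i.e.\ a subrepresentation of $\lambda_G$. The infinitesimal character of this representation is $\lambda$ by construction, which already pins it down up to the finite ambiguity coming from $W$.

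Next I would compute the $K$-types by restricting the bundle construction to the fibre over the base point $eK$ and applying the branching/Borel--Weil mechanism: the minimal $K$-type arises by tensoring the highest weight of $V$ with the extremal weight of the spinor module $S$, whose contribution is exactly the shift $\rho-2\rho_c$ (the difference between the half-sum of all positive roots and twice the half-sum of the compact ones, reflecting the noncompact part of the spinor representation). This yields the asserted lowest $K$-type $\lambda+\rho-2\rho_c$, the multiplicity one being a consequence of the sharpness in the Dirac inequality. The equivalence criterion then follows: two representations $\pi_\lambda,\pi_\mu$ share an infinitesimal character iff $\lambda,\mu$ lie in the same $W$-orbit, and agreement of the distinguished $K$-type cuts this down to a single $W(K)$-orbit.

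The main obstacle is the exhaustion statement --- that every discrete series arises as some $\pi_\lambda$. The geometric construction produces, for each regular $\lambda\in\Lambda+\rho$, one square-integrable representation, but proving that this list is complete requires showing there are no further atoms of the Plancherel measure. This is exactly the hard analytic core of Harish-Chandra's theory: one must match the formal-degree and character data of the $\pi_\lambda$ against the full Plancherel formula, or equivalently invoke Harish-Chandra's classification of invariant eigendistributions to see that a discrete series representation is determined by its regular infinitesimal character together with its lowest $K$-type. I would therefore treat existence and the $K$-type computation geometrically as above, and import the exhaustion together with the count of equivalence classes from Harish-Chandra's Plancherel theorem, which is the genuinely deep ingredient that the elementary Dirac-operator argument cannot supply on its own.
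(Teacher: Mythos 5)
The paper does not prove this statement: Theorem \ref{Harish} is quoted as Harish-Chandra's existence-and-exhaustion theorem, with a pointer to \cite{Lips} for a summary, and the surrounding text explicitly treats the Dirac-operator realization as a \emph{later} development (Theorem \ref{AtiyahSchmid}, due to Atiyah and Schmid) answering a question ``left open'' by Theorem \ref{Harish}. So your proposal is not the paper's route; it is essentially the Atiyah--Schmid geometric construction promoted to a proof of Harish-Chandra's classification. As a sketch of that programme it is broadly sound: the Parthasarathy formula $D_\lambda^2=-\Omega_G+c(\lambda)$, the concentration of the $L^2$-kernel in one degree, the identification of the lowest $K$-type $\lambda+\rho-2\rho_c$ with multiplicity one via the Dirac inequality, and the use of the infinitesimal character plus the distinguished $K$-type to separate the $W$-orbit into $W(K)$-orbits are all genuine ingredients of \cite{AtiSch}. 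You are also right, and commendably explicit, that exhaustion is the hard part.

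Two cautions. First, nonvanishing of the kernel does not follow from positivity of the Parthasarathy scalar alone: one needs the $L^2$-index theorem (via an auxiliary torsion-free cocompact lattice, as the paper notes) to see that the index is a nonzero polynomial in $\lambda$, together with Schmid's vanishing theorem to kill the cokernel; your phrasing elides which of these does what. Second, even the Atiyah--Schmid proof of exhaustion is not a matter of ``importing the Plancherel theorem'' as a black box on top of an elementary geometric argument: it requires Harish-Chandra's regularity theorem for invariant eigendistributions and the asymptotic control of $K$-finite matrix coefficients, i.e.\ a substantial slice of the analytic theory you were hoping to bypass. So your division of labor (geometry for existence and $K$-types, analysis for exhaustion) is a fair description of the literature, but it is a reorganization of known deep results rather than a self-contained proof, and it inverts the logical order in which the paper presents Theorems \ref{Harish} and \ref{AtiyahSchmid}.
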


Impressive as it is, Theorem \ref{Harish} left open the question of constructing geometrically the discrete series representations $\pi_\lambda$. That question was solved by Atiyah and Schmid \cite{AtiSch}. Assume that $G$ has discrete series representations, which forces the symmetric space $G/K$ to be even-dimensional. Assume moreover that $G/K$ carries a $G$-invariant spin structure, meaning that the isotropy representation of $K$ on $V :=\mathfrak{g}/\mathfrak{k}$ lifts to the Spin group of $V$; this can be ensured by replacing $G$ by a suitable double cover. Then we have the two irreducible spinor representations $S^+,S^-$ of $Spin(V)$, that we view as $K$-representations\footnote{$G/K$ carries a $G$-invariant spin structure if and only if $\rho-\rho_c\in\Lambda$, see \cite[4.34]{AtiSch}; the distinction between $S^+$ and $S^-$ is made by requiring that $\rho-\rho_c$ is the highest weight for $S^+$, see \cite[3.13]{AtiSch}.}. Fix a regular element $\lambda$ in $\Lambda+\rho$; conjugating $\Psi$ by some element of $W$, we may assume that $\lambda$ is dominating for $\Psi$. Then $\mu :=\lambda-\rho_c\in\Lambda$ is a weight dominating for $\Phi_c\cap\Psi$, and we denote by $E_\mu$ the irreducible representation of $K$ with highest weight $\mu$. Form the G-equivariant induced vector bundles $G\times_K(E_\mu\otimes S^\pm)$ over $G/K$, and let 
$$D_\mu: C^\infty(G\times_K(E_\mu\otimes S^+))\rightarrow C^\infty(G\times_K(E_\mu\otimes S^-))$$ 
be the corresponding Dirac operator with coefficients in $\mu$. The main result of Atiyah and Schmid (see \cite[9.3]{AtiSch}) is then: 

\begin{Thm}\label{AtiyahSchmid} Let $\lambda\in\Lambda+\rho$ be regular, with $\lambda=\mu+\rho_c$ as above. Then $\mathrm{coker}(D_\mu^+)=0$ and the $G$-representation on $\mathrm{ker}(D_\mu^+)$ is the discrete series representation $\pi_\lambda$. If $\lambda$ is not regular, then $\mathrm{ker}(D_\mu^+)=\mathrm{coker}(D_\mu^+)=0$.
\end{Thm}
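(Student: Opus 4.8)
The plan is to convert this analytic problem on $G/K$ into representation theory of $G$, exploiting that $D_\mu$ is $G$-invariant and that $G$ is unimodular with a well-understood Plancherel decomposition. First I would identify the $L^2$-sections of $G\times_K(E_\mu\otimes S^\pm)$ with the $K$-invariants $(L^2(G)\otimes E_\mu\otimes S^\pm)^K$, where $K$ acts by right translation on $L^2(G)$; the left regular representation then makes this a unitary $G$-representation commuting with $D_\mu$. Decomposing $L^2(G)$ via the Plancherel theorem, $D_\mu$ becomes a field of operators $D_\pi\colon \mathrm{Hom}_K(\pi,E_\mu\otimes S^+)\to\mathrm{Hom}_K(\pi,E_\mu\otimes S^-)$ indexed by the tempered dual $\hat G_r$, so that both kernel and cokernel are assembled from the $K$-multiplicity spaces of the individual $\pi$.

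The engine of the argument is Parthasarathy's formula. Writing $\Omega_G$ for the Casimir operator acting through the $G$-action on sections, one has
$$D_\mu^2=-\Omega_G+\big(\|\lambda\|^2-\|\rho\|^2\big)\,\mathrm{Id},\qquad \lambda=\mu+\rho_c.$$
Since $\Omega_G$ acts on the $\pi$-isotypic part by a scalar $c(\pi)=\|\nu_\pi\|^2-\|\rho\|^2$, the restriction of $D_\mu^2$ to the $S^+$-summand gives $D_\pi^*D_\pi=(\|\lambda\|^2-\|\rho\|^2-c(\pi))\,\mathrm{Id}$, which is invertible unless $c(\pi)=\|\lambda\|^2-\|\rho\|^2$. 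An $L^2$-harmonic spinor must therefore be supported, in the Plancherel decomposition, on the set $\{\pi:c(\pi)=\|\lambda\|^2-\|\rho\|^2\}$; this set meets the continuous part of $\hat G_r$ in Plancherel-measure zero, so it contributes nothing in $L^2$, and only the atoms — the discrete series — survive. On each such $\pi_{\lambda'}$ the scalar above vanishes, so $D_\pi^*D_\pi=0$ forces $D_\pi=0$, whence
$$\ker(D_\mu^+)=\bigoplus_{\lambda'}\pi_{\lambda'}\otimes\mathrm{Hom}_K(\pi_{\lambda'},E_\mu\otimes S^+),\quad \mathrm{coker}(D_\mu^+)=\bigoplus_{\lambda'}\pi_{\lambda'}\otimes\mathrm{Hom}_K(\pi_{\lambda'},E_\mu\otimes S^-),$$
the sums running over the finitely many regular $\lambda'\in\Lambda+\rho$ (up to $W(K)$) with $\|\lambda'\|=\|\lambda\|$.

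It remains to compute these $K$-multiplicity spaces, and this is where I expect the real work to lie. By Theorem \ref{Harish}, $\pi_{\lambda'}|_K$ contains with multiplicity one the minimal $K$-type of highest weight $\lambda'+\rho-2\rho_c$; on the other side the top $K$-type of $E_\mu\otimes S^+$ has highest weight $\mu+(\rho-\rho_c)=\lambda+\rho-2\rho_c$, which is precisely the minimal $K$-type of $\pi_\lambda$. The cleanest route to the exact answer is to compute the $G$-equivariant index $[\ker]-[\mathrm{coker}]$: its coefficient on $\pi_{\lambda'}$ is the virtual multiplicity $\dim\mathrm{Hom}_K\!\big(\pi_{\lambda'},E_\mu\otimes(S^+\ominus S^-)\big)$, which one evaluates by feeding Blattner's $K$-type formula for $\pi_{\lambda'}|_K$ together with the identity $\mathrm{ch}(S^+)-\mathrm{ch}(S^-)=\prod_{\alpha\in\Psi\setminus\Phi_c}(e^{\alpha/2}-e^{-\alpha/2})$ into a Weyl-character-formula manipulation. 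The antisymmetrization collapses the sum to $\pm[\pi_\lambda]$ when $\lambda$ is regular and to $0$ when $\lambda$ is singular.

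Finally I would upgrade this index identity to the stated kernel/cokernel statement via a vanishing theorem: a sharpened form of Parthasarathy's inequality (strict positivity on the $S^-$ side once $\lambda$ is chosen dominant) yields $\mathrm{coker}(D_\mu^+)=\ker(D_\mu^-)=0$ outright, so that $\ker(D_\mu^+)$ equals the index, namely $\pi_\lambda$ with multiplicity one in the regular case and $0$ in the singular case. The main obstacle is exactly the combinatorial $K$-type bookkeeping of the preceding paragraph: disentangling, among the several discrete series sharing the Casimir value $\|\lambda\|^2-\|\rho\|^2$, that only $\pi_\lambda$ carries a nonzero $S^+$-multiplicity while all $S^-$-multiplicities cancel. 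It is here that the fine structure of the spin representation and the multiplicity-one property of minimal $K$-types must be exploited in full.
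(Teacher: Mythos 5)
The paper does not actually prove this theorem: it quotes it from Atiyah--Schmid \cite[9.3]{AtiSch}, and the only methodological hint it gives is that Atiyah's $L^2$-index theorem (Theorem \ref{L2index}) enters, applied to the covering $G/K\rightarrow\Gamma\backslash G/K$ for an auxiliary torsion-free cocompact lattice $\Gamma$. Your route is therefore genuinely different from the one the paper points to. You obtain the $G$-index by decomposing $(L^2(G)\otimes E_\mu\otimes S^{\pm})^K$ under the Plancherel theorem, isolating the discrete series via Parthasarathy's formula $D_\mu^2=-\Omega_G+\|\lambda\|^2-\|\rho\|^2$, and then evaluating the virtual multiplicities $\dim\mathrm{Hom}_K\bigl(\pi_{\lambda'},E_\mu\otimes(S^+\ominus S^-)\bigr)$ through Blattner's formula. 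Atiyah and Schmid instead get the index almost for free: the $L^2$-index theorem reduces it to an Atiyah--Singer computation on the compact quotient $\Gamma\backslash G/K$, producing a Weyl-dimension-type polynomial in $\lambda$ that vanishes exactly when $\lambda$ is singular, and the $K$-type estimates are used only for the vanishing of $\ker(D_\mu^-)$ and the identification of the surviving constituent. What each approach buys: yours stays inside representation theory but consumes the full Plancherel theorem, Harish-Chandra's exhaustion and minimal-$K$-type theorem (Theorem \ref{Harish}), and Blattner's formula (itself a deep theorem of Hecht--Schmid); this makes it circular if the point of the theorem is to \emph{construct} the discrete series geometrically, which was Atiyah and Schmid's motivation and is why the paper singles out the $L^2$-index theorem as the essential input.

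Two steps in your sketch deserve explicit attention before the argument closes. First, the passage from the Plancherel decomposition to "only the atoms contribute to the $L^2$-kernel" needs either the absolute continuity of the Plancherel measure along the continuous series (so that the level set of the Casimir is null), or the Connes--Moscovici/Atiyah--Schmid argument that a $G$-invariant kernel of finite $G$-dimension is a finite sum of square-integrable representations. Second, the "Weyl-character-formula manipulation" plus the vanishing of $\ker(D_\mu^-)$ is exactly the equality case of Parthasarathy's Dirac inequality and carries essentially the entire content of the theorem; you correctly identify it as the hard part, but as written it is a gesture rather than a proof.
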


It is interesting to observe that Atiyah's $L^2$-index theorem plays a role in the proof, as the authors need a torsion-free cocompact lattice $\Gamma$ in $G$ and apply the $L^2$-index theorem to the covering of the compact manifold $\Gamma\backslash G/K$ by $G/K$.

To summarize, {\it Dirac induction} (i.e. realizing $G$-representations by means of Dirac operators with coefficients in $K$-representations) sets up a bijection between a generic set of irreducible representations of $K$, and all square-integrable representations of $G$. Suitably interpreted using K-theory of $C^*$-algebras, this principle paved the way towards the Connes-Kasparov conjecture, which was the first form of the Baum-Connes conjecture.

\subsection{Enters K-theory for group $C^*$-algebras}\label{ConMosco}

The Atiyah-Schmid construction of the discrete series, served as a crucial motivation for Connes and Moscovici \cite{ConMos} in their study of the $G$-index for $G$-equivariant elliptic differential operators $D$ on homogeneous spaces of the form $G/K$, where $G$ is a unimodular Lie group with countably many connected components, and $K$ is a compact subgroup. Their aim is to define the $G$-index of $D$ intrinsically, i.e. without appealing to Atiyah's $L^2$-index theory (so, not needing an auxiliary cocompact lattice in $G$): $D$ will not be Fredholm in the usual sense (unless $G$ is compact), but $\mathrm{ker}(D)$ and $\mathrm{coker}(D)$ will have finite $G$-dimension in the sense of the Plancherel measure on $\hat{G}_r$. The formal difference of these, the $G$-index of $D$, is a real number shown to depend only on the class $[\sigma(D)]$ of the symbol of $D$ in K$_K(V^*)$, where K$_K$ denotes equivariant K-theory with compact supports and $V^*$ is the cotangent space to $G/K$ at the origin. This $G$-index is computed in terms of the symbol of $D$, and this index formula is used to prove that $\ker(D)$ is a finite direct sum of square-integrable representations of $G$.

 Crucial for our story is the final section of \cite{ConMos}. Indeed, there Connes and Moscovici sketch the construction of an index taking values in K$_*(C^*_r(G))$, the topological K-theory of the reduced $C^*$-algebra of $G$. It goes as follows: let $\rho$ be a finite-dimensional unitary representation of $K$ on $H_\rho$, form the induced vector bundle $E_\rho := G\times_K H_\rho$ over $G/K$. Denote by $\Psi^*_G(G/K,E_\rho)$ be the norm closure of the space of 0-th order $G$-invariant pseudo-differential operators on $G/K$ acting on sections of $E_\rho$: since such an operator acts by bounded operators on $L^2(G/K,E_\rho)$, we see that $\Psi^*_G(G/K,E_\rho)$ is a $C^*$-algebra on $L^2(G/K,E_\rho)$. The symbol map induces a $*$-homomorphism $\Psi^*_G(G/K,E_\rho)\rightarrow C_K(S(V^*), \mathcal{B}(H_\rho))$, where  the latter is the algebra of $K$-invariant, $\mathcal{B}(H_\rho)$-valued continuous functions on $S(V^*)$, the unit sphere in $V^*$. It fits into a short exact sequence
\begin{equation}\label{pseudo}
0\rightarrow C^*_G(G/K,E_\rho)\rightarrow \Psi^*_G(G/K,E_\rho)\rightarrow C_K(S(V^*), \mathcal{B}(H_\rho))\rightarrow 0,
\end{equation}
where the kernel $C^*_G(G/K,E_\rho)$ is the norm closure of $G$-invariant regularizing operators on $G/K$. When $\rho$ is the left regular representation of $K$, Connes and Moscovici observe that $C^*_G(G/K,E_\rho)$ is canonically isomorphic to the reduced $C^*$-algebra $C^*_r(G)$ of $G$. If $D\in \Psi^*_G(G/K,E_\rho)$ is elliptic, then its symbol is invertible in $C_K(S(V^*), \mathcal{B}(H_\rho))$, so defines an element $[\sigma(D)]\in K_1(C_K(S(V^*)))$. The short exact sequence (\ref{pseudo}) defines a 6-terms exact sequence in K-theory, and the connecting map $K_1(C_K(S(V^*)))\rightarrow K_0(C^*_r(G))$ allows to define $ind_G(D)\in K_0(C^*_r(G))$. So the K-theory $K_*(C^*_r(G))$ appears as a receptacle for indices of $G$-invariant elliptic pseudo-differential operators on manifolds of the form $G/K$, with $K$ compact.

We quote the final lines of \cite{ConMos}: ``{\it Of course, to obtain a valuable formula for the index map $ind_G$, one first has
to compute $K_0(C^*_r(G))$. When $G$ is simply connected and solvable, it follows from the Thom isomorphism in \cite{ConThom} that $K_i(C^*(G)) \simeq K^{i+j}(point), i,j\in\Z_2$, where $j$ is the dimension $\mod 2$ of $G$. The computation of the K-theory of $C^*(G)$ for an arbitrary Lie group $G$ and the search for an "intrinsic" index formula certainly deserve further study.}'' This served as a research programme for the following years!\footnote{We believe that Connes and Moscovici actually had $C^*_r(G)$, not $C^*(G)$, in mind when writing this.}

Let us end this section by mentioning that, since the framework in \cite{ConMos} is unimodular Lie groups with countably many connected components, it applies in particular to countable discrete groups $\Gamma$. In this case the canonical trace $\tau: C^*_r(\Gamma)\rightarrow\C$ defines a homomorphism $\tau_*:K_0(C^*_r(\Gamma))\rightarrow\R$, and $\tau_*(ind_\Gamma(D))=Ind_\Gamma(D)$, the $\Gamma$-index of $D$ as in (\ref{L2index}).

\subsection{The Connes-Kasparov conjecture}

Disclaimer: the Connes-Kasparov conjecture is not a conjecture anymore since 2003! After proofs of several particular cases, starting with the case of simply connected solvable groups established by Connes \cite{ConThom}, and the cornerstone of semisimple groups being established first by Wassermann \cite{Wassermann87} by representation-theoretic methods then by Lafforgue \cite{LaffInv} by geometric/analytical techniques, the general case was handled by Chabert-Echterhoff-Nest \cite{ChEcNe} building on Lafforgue's method. Nevertheless the Connes-Kasparov conjecture was fundamental for the later formulation of the more general Baum-Connes conjecture.

Let $G$ be a connected Lie group, and let $K$ be a maximal compact subgroup (it follows from structure theory that $K$ is unique up to conjugation). Set $V=\mathfrak{g}/\mathfrak{k}$; assume that $G/K$ carries a $G$-invariant spin structure, i.e. that the adjoint representation of $K$ on $V$ lifts to $Spin(V)$. Let $S^+,S^-$ be the spinor representations of $Spin(V)$ (with the convention $S^+=S^-$ if $j=\dim G/K$ is odd), that we view as $K$-representations. Let $\rho$ be a finite-dimensional representation of $K$, form the induced $G$-vector bundles $E^\pm_\rho= G\times_K (\rho\otimes S^\pm)$. Let $D_\rho:C^\infty(E^+_\rho)\rightarrow C^\infty(E^-_\rho)$ be the corresponding Dirac operator. Let $R(K)$ be the representation ring of $K$. Thanks to the previous section, we may define the {\it Dirac induction}
$$\mu_G:R(K)\rightarrow K_j(C^*_r(G)):\rho\mapsto ind_G(D^+_\rho),$$
a homomorphism of abelian groups. The {\it Connes-Kasparov conjecture} (see \cite{BC}, section 5;  \cite{Kasp}; \cite{KaspConsp}, Conjecture 1) is the following statement:

\begin{Conj}\label{ConKas}(1st version) Let $G$ be a connected Lie group, $K$ a maximal compact subgroup, $j=\dim(G/K)$. Assume that $G/K$ carries a $G$-invariant spin structure.
\begin{enumerate}
\item[1)] The Dirac induction $\mu_G:R(K)\rightarrow K_j(C^*_r(G))$ is an isomorphism;
\item[2)] $K_{j+1}(C^*_r(G))=0$
\end{enumerate}
\end{Conj}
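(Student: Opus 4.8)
The plan is to recognize the Dirac induction $\mu_G$ as a concrete incarnation of the Baum--Connes assembly map and to prove it is an isomorphism by Kasparov's \emph{Dirac--dual Dirac} method. The first step is the observation that, for any connected $G$ with maximal compact $K$, the homogeneous space $G/K$ is diffeomorphic to a Euclidean space and is a model for $\underline{EG}$; since $G$ acts transitively with stabilizer $K$, the equivariant K-homology satisfies $K^G_*(G/K)\cong K^K_*(\mathrm{pt})$, and spin Poincar\'e duality shifts this to $K^G_j(G/K)\cong R(K)$, $K^G_{j+1}(G/K)=0$, with $j=\dim(G/K)$. Under this identification $\mu_G$ is precisely the assembly map, part~2) becomes the statement that the left-hand side is concentrated in a single parity, and it suffices to prove $\mu_G$ is bijective.

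For $G$ semisimple with finite center I would then build the two basic elements of $KK^G$. The $G$-invariant spin Dirac operator on $G/K$ furnishes the \emph{Dirac element} $\alpha\in KK^G(C_0(G/K),\C)$. Exploiting that $G/K$ is a Hadamard manifold (complete, simply connected, nonpositive curvature), I would construct the \emph{dual-Dirac element} $\beta\in KK^G(\C,C_0(G/K))$ from Clifford multiplication by the globally defined geodesic position field emanating from a basepoint. The easy half of the theory, the identity $\alpha\otimes_{\C}\beta=1_{C_0(G/K)}$, holds by a Bott-periodicity computation along the fibres, so the $\gamma$-element $\gamma:=\beta\otimes_{C_0(G/K)}\alpha\in KK^G(\C,\C)$ is idempotent and, after descent to $C^*_r(G)$, yields a one-sided inverse of $\mu_G$; this already gives split injectivity.

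The main obstacle is surjectivity, equivalently the equality $\gamma=1$ in $KK^G(\C,\C)$. For rank-one or Haagerup groups a direct homotopy contracts $\gamma$ to $1$, but for higher-rank semisimple groups this homotopy genuinely fails---this is the tension with property~(T) mentioned in the introduction---so a different input is needed to close the gap. To supply it I would compute the right-hand side outright from Harish-Chandra's Plancherel theory: the tempered dual decomposes into finitely many series parabolically induced from discrete series of cuspidal Levi subgroups, giving a d\'evissage of $C^*_r(G)$ whose subquotients are, up to Morita equivalence, of the form $C_0(\R^n)\otimes\mathcal{K}$; iterated Connes--Thom isomorphisms then show $K_*(C^*_r(G))$ is free abelian, concentrated in degree $j$, with a basis indexed by the discrete series data. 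Finally I would match the two sides: by the Atiyah--Schmid description (Theorem~\ref{AtiyahSchmid}), Dirac induction carries each $\rho$ with regular parameter to the corresponding discrete series class, so $\mu_G$ sends a $\Z$-basis of $R(K)$ to a $\Z$-basis of $K_j(C^*_r(G))$ and is an isomorphism, while $K_{j+1}(C^*_r(G))=0$. The passage from semisimple to arbitrary connected $G$ I would treat separately, peeling off the solvable radical through the Levi decomposition and iterated Connes--Thom isomorphisms to reduce to the case just handled.
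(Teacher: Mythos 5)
Your overall strategy---Kasparov's Dirac--dual Dirac method for split injectivity, combined with an explicit Plancherel-theoretic computation of $K_*(C^*_r(G))$ for surjectivity---is the historically correct combination: it is essentially Wassermann's proof \cite{Wassermann87}, and your observation that $\gamma=1$ is unavailable in higher rank because of property (T) is exactly right. But the surjectivity half, as written, has a genuine gap. The d\'evissage of $C^*_r(G)$ coming from Harish-Chandra theory does \emph{not} have all subquotients Morita equivalent to $C_0(\R^n)\otimes\mathcal{K}$: parabolically induced representations can be reducible at singular parameters, and the resulting summands look like the odd-principal-series algebra of $SL_2(\R)$ described in this survey, namely $\{f\in C_0([0,+\infty[,M_2(\C)): f(0)\;\mbox{is diagonal}\}$, which is not Morita equivalent to $C_0(\R)\otimes\mathcal{K}$---and this is precisely where part of the K-theory lives. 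Consequently $K_j(C^*_r(G))$ is \emph{not} based by discrete-series data alone, and your matching step breaks down on the singular part of $R(K)$: by Theorem \ref{AtiyahSchmid}, for singular $\lambda$ the Dirac operator has vanishing kernel and cokernel, so Atiyah--Schmid says nothing about where $\mu_G$ sends those generators. In the $SL_2(\R)$ example the trivial $K$-type is singular, and its image under $\mu_G$ is the ``missing'' generator coming from the mock discrete series, not from any discrete series. Closing this gap is the hard core of Wassermann's argument: one needs Knapp--Stein intertwining operators and Arthur's results on $R$-groups to control reducibility, together with a minimal-$K$-type analysis to see that Dirac induction hits the corresponding generators bijectively.

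The reduction from general connected $G$ to the semisimple case is also not as you describe. Writing $G=R\rtimes L$ (Levi decomposition, $G$ simply connected), the radical $R$ is a \emph{normal} subgroup, so $C^*_r(G)\cong C^*(R)\rtimes_r L$: the solvable part is crossed on the \emph{inside}, the semisimple part on the \emph{outside}. The Connes--Thom isomorphism \cite{ConThom} computes $K_*(A\rtimes\R)$, i.e.\ it peels off an $\R$-action applied \emph{last}; this is why it handles simply connected solvable groups (where a copy of $\R$ can always be split off as the outermost layer), but it cannot be iterated through a crossed product by $L$ sitting outside. Doing the reduction honestly amounts to proving the Baum--Connes conjecture for $L$ with coefficients in $C^*(R)$---which is open for higher-rank $L$, one of the main points of Chapter 5 of this survey---or to running the much more delicate induction of Chabert--Echterhoff--Nest \cite{ChEcNe} (Theorem \ref{ConKasCEN}), which builds on Lafforgue's Banach KK-theory and is the reason the general connected case was settled only in 2003. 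So both the surjectivity step for semisimple $G$ and the reduction step require substantial input beyond what you have written.
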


\begin{Rem}\label{CoKadiscrete} If $G$ is semisimple with finite center, and $\pi$ is a square-integrable representation of $G$, then $\pi$ defines an isolated point of $\hat{G}_r$, so there is a splitting $C^*_r(G)=J_\pi \oplus\mathcal{K}$, where $J_\pi$ is the $C^*$-kernel of $\pi$ and $\mathcal{K}$ is the standard algebra of compact operators. Hence $K_0(C^*_r(G))=K_0(J_\pi)\oplus\Z$, i.e. $\pi$ defines a free generator $[\pi]$ of $K_0(C^*_r(G))$. In terms of the Connes-Kasparov conjecture, Theorem \ref{AtiyahSchmid} expresses the fact that the Dirac induction $\mu_G$ induces an isomorphism between an explicit free abelian subgroup of $R(K)$ and the free abelian part of $K_0(C^*_r(G))$ associated with the discrete series.
\end{Rem}

\begin{Ex} Take $G=SL_2(\R)$, so that $K=T=SO(2)$. Then the set $\Lambda$ of weights of $T$ identifies with $\Z$, the set $\Phi$ of roots is $\{-2,0,2\}$ (so that $\rho =1$ if $\Psi=\{2\}$), the set $\Phi_c$ of compact roots is $\{0\}$, and the Dirac induction consists in associating to $n>0$ the holomorphic discrete series representation $\pi_{n+1}$ (with minimal $K$-type $n+1$), and to $n<0$ the anti-holomorphic discrete series representation $\pi_{n-1}$ (with minimal $K$-type $n-1$). For the singular weight $n=0$ (i.e. the trivial character of $K$), it follows from Theorem \ref{AtiyahSchmid} that the corresponding Dirac operator $D_0$ has no kernel or cokernel. However, as prescribed by Conjecture \ref{ConKas}, its image by $\mu_G$ provides the ``missing'' generator of $K_0(C^*_r(G))$. To understand this, let us dig further into the structure of $C^*_r(G)$: apart from discrete series representations, $\hat{G}_r$ comprises two continuous series of representations. To describe those, consider the subgroup $B$ of upper triangular matrices and define two families of unitary characters (where $t\geq 0$):
$$\chi_{0,t}: B\rightarrow\mathbf{T}:\left(\begin{array}{cc}a & b \\0 & a^{-1}\end{array}\right)\mapsto |a|^{it}$$
$$\chi_{1,t}: B\rightarrow\mathbf{T}:\left(\begin{array}{cc}a & b \\0 & a^{-1}\end{array}\right)\mapsto sign(a)\cdot|a|^{it}$$
For $\epsilon=0,1$ and $t\geq 0$, denote by $\sigma_{\epsilon,t}$ the unitarily induced representation:
$$\sigma_{\epsilon,t}=Ind_B^G \chi_{\epsilon,t}.$$ 
The family $\{\sigma_{0,t}: t\geq 0\}$ (resp. $\{\sigma_{1,t}: t\geq 0\}$) is the {\it even principal series} (resp. {\it odd principal series}). For $t>0$ or for $\epsilon=0$, the representation $\sigma_{\epsilon,t}$ is irreducible. But $\sigma_{1,0}$ splits into two irreducible components $\sigma_1^+,\sigma_1^-$ (sometimes called {\it mock discrete} representations), and $\hat{G}_r$ is the union of the discrete series, the even and the odd principal series of representations. The topology on the even principal series is the topology of $[0,+\infty[$, while the topology on the odd principal series is mildly non-Hausdorff: for $t\rightarrow 0$, the representation $\sigma_{1,t}$ converges simultaneously to $\sigma_1^+$ and $\sigma_1^-$. As a consequence, the direct summand of $C^*_r(G)$ corresponding to the even principal series is Morita equivalent to $C_0([0,+\infty[)$, and hence is trivial in K-theory, while the direct summand corresponding to the odd principal series is Morita equivalent to 
$$\{f\in C_0([0,+\infty[,M_2(\C)): f(0)\;\mbox{is diagonal}\},$$
that contributes a copy of $\Z$  to $K_0(C^*_r(G))$, generated by the image of the trivial character of $K$ under Dirac induction. This description of $C^*_r(G)$ also gives $K_1(C^*_r(G))=0$ by direct computation. 
\end{Ex}

Coming back to the general framework ($G$ connected Lie group, $K$ maximal compact subgroup), let us indicate how to modify the conjecture when $G/K$ does {\it not} have a $G$-invariant spin structure. Then we may construct a double cover $\tilde{G}$ of $G$, with maximal compact subgroup $\tilde{K}$, such that $\tilde{G}/\tilde{K}=G/K$ carries a $\tilde{G}$-invariant spin structure. Let $\varepsilon\in Z(\tilde{G})$ be the non-trivial element of the covering map $\tilde{G}\rightarrow G$. Then $R(\tilde{K})$ splits into a direct sum $$R(\tilde{K})=R(\tilde{K})^0\oplus R(\tilde{K})^1,$$
where $R(\tilde{K})^0$ (resp. $R(\tilde{K})^1$) is generated by those irreducible representations $\rho\in\hat{K}$ such that $\rho(\varepsilon)=1$ (resp. $\rho(\varepsilon)=-1$). So $R(\tilde{K})^0$ identifies canonically with $R(K)$. Similarly $C^*_r(\tilde{G})$ splits into the direct sum of two ideals $C^*_r(\tilde{G})=J^0\oplus J^1$ where $J^0$ (resp. $J^1$) corresponds to those representations $\pi\in\widehat{(\tilde{G})}_r$ such that $\pi(\varepsilon)=1$ (resp. $\pi(\varepsilon)=-1$); so $J^0$ identifies canonically with $C^*_r(G)$. Now we observe that the Dirac induction for $\tilde{G}$:
$$\mu_{\tilde{G}}:R(\tilde{K})=R(\tilde{K})^0\oplus R(\tilde{K})^1\rightarrow K_j(C^*_r(\tilde{G}))=K_j(J^0)\oplus K_j(J^1)$$
interchanges the $\Z/2$-gradings: indeed the spin representations $S^\pm$ do not factor through $K$ by assumption, but if $\rho$ is in $R(\tilde{K})^1$, then $S^\pm\otimes\rho$ factors through $K$ (as $\varepsilon$ acts by the identity). Hence the second case of the Connes-Kasparov conjecture:

\begin{Conj}\label{ConKas2} (2nd version) Let $G$ be a connected Lie group, $K$ a maximal compact subgroup, $j=\dim(G/K)$. Assume that $G/K$ does not carry a $G$-invariant spin structure.
\begin{enumerate}
\item[1)] The Dirac induction $\mu_{\tilde{G}}:R(\tilde{K})^1\rightarrow K_j(C^*_r(G))$ is an isomorphism;
\item[2)] $K_{j+1}(C^*_r(G))=0$
\end{enumerate}
\end{Conj}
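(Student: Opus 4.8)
The plan is to deduce this second version as a purely formal consequence of the first version (Conjecture \ref{ConKas}) applied to the double cover $\tilde{G}$, exploiting the two splittings introduced just above: $R(\tilde{K})=R(\tilde{K})^0\oplus R(\tilde{K})^1$ on the representation side and $C^*_r(\tilde{G})=J^0\oplus J^1$ on the $C^*$-algebraic side. By construction $\tilde{G}/\tilde{K}=G/K$ \emph{does} carry a $\tilde{G}$-invariant spin structure, so the first version applies to $\tilde{G}$ and grants that the total Dirac induction $\mu_{\tilde{G}}:R(\tilde{K})\to K_j(C^*_r(\tilde{G}))$ is an isomorphism and that $K_{j+1}(C^*_r(\tilde{G}))=0$. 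Everything else comes from tracking the action of the central involution $\varepsilon$.

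First I would record the precise effect of $\varepsilon$ on Dirac induction. Since the spinor representations $S^\pm$ do not factor through $K$, the element $\varepsilon$ acts on each $S^\pm$ by the scalar $-1$. Hence for $\rho\in R(\tilde{K})^0$ (resp. $\rho\in R(\tilde{K})^1$) the coefficient representation $\rho\otimes S^\pm$ satisfies $(\rho\otimes S^\pm)(\varepsilon)=-1$ (resp. $=+1$), so that the induced bundles $E^\pm_\rho$ and the associated Dirac operator $D_\rho$ are carried by the summand $J^1$ (resp. $J^0$). Consequently the index $\mathrm{ind}_{\tilde{G}}(D^+_\rho)$ lands in $K_j(J^1)$ (resp. $K_j(J^0)$); that is, $\mu_{\tilde{G}}(R(\tilde{K})^0)\subseteq K_j(J^1)$ and $\mu_{\tilde{G}}(R(\tilde{K})^1)\subseteq K_j(J^0)$. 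This is exactly the assertion that $\mu_{\tilde{G}}$ interchanges the two $\mathbf{Z}/2$-gradings.

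Next I would combine this with the isomorphism statement. Because $\mu_{\tilde{G}}$ is an additive map sending each source summand into a single, grading-swapped, target summand, it decomposes as a direct sum $\mu_{\tilde{G}}=\mu^0\oplus\mu^1$ with $\mu^0:R(\tilde{K})^0\to K_j(J^1)$ and $\mu^1:R(\tilde{K})^1\to K_j(J^0)$; in matrix form it is anti-diagonal with respect to the two decompositions. An invertible map of this block shape forces each block to be invertible, so in particular $\mu^1:R(\tilde{K})^1\to K_j(J^0)$ is an isomorphism. Under the canonical identification $J^0\cong C^*_r(G)$ this is precisely statement (1). For statement (2), I would simply note that $K_{j+1}(J^0)$ is a direct summand of $K_{j+1}(C^*_r(\tilde{G}))=0$, whence $K_{j+1}(C^*_r(G))=K_{j+1}(J^0)=0$.

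The point I want to stress is that this reduction carries no genuine analytic or geometric difficulty: all the real work sits in the first version of the conjecture for the (honestly spin) group $\tilde{G}$, which is the content established by Wassermann, Lafforgue, and Chabert-Echterhoff-Nest. The only delicate checks in the present step are bookkeeping in nature: verifying carefully that $\varepsilon$ acts by $-1$, and not trivially, on $S^\pm$, and that the identifications $R(\tilde{K})^0\cong R(K)$ and $J^0\cong C^*_r(G)$ are compatible with Dirac induction, so that no cross-terms between the gradings can occur. Once these are in place the statement is a one-line splitting argument, and the main obstacle is not here at all but in the first version itself.
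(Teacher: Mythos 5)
Your reduction is correct and is essentially the argument the paper itself gives in the paragraph preceding Conjecture \ref{ConKas2}: one passes to the double cover $\tilde{G}$ (for which $\tilde{G}/\tilde{K}=G/K$ is spin, so Conjecture \ref{ConKas} applies), splits $R(\tilde{K})$ and $C^*_r(\tilde{G})=J^0\oplus J^1$ according to the action of $\varepsilon$, and uses the fact that $\mu_{\tilde{G}}$ interchanges the two $\Z/2$-gradings because $S^\pm(\varepsilon)=-1$, so that the anti-diagonal block $R(\tilde{K})^1\to K_j(J^0)\cong K_j(C^*_r(G))$ is an isomorphism and $K_{j+1}(C^*_r(G))$ is a summand of $K_{j+1}(C^*_r(\tilde{G}))=0$. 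Your explicit block-matrix bookkeeping and the identification of where each Dirac index lands match the paper's reasoning exactly.
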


As we said before, the Connes-Kasparov conjecture was eventually proved for arbitrary connected Lie groups by J. Chabert, S. Echterhoff and R. Nest \cite{ChEcNe}, whose result is even more general as it encompasses almost connected groups, i.e. locally compact groups whose group of connected components is compact.

\begin{Thm}\label{ConKasCEN} The Connes-Kasparov conjecture holds for almost connected groups.
\end{Thm}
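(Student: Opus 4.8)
The plan is to recast the Connes-Kasparov conjecture as the bijectivity of the Baum-Connes assembly map $\mu_r\colon K_*^{top}(G)\to K_*(C^*_r(G))$ and then to prove the latter by reducing, through the structure theory of almost connected groups, to the two cases one can handle: amenable groups and semisimple groups. First I would carry out the reformulation. For a connected Lie group $G$ with maximal compact subgroup $K$, the symmetric space $G/K$ is a cocompact model for $\underline{EG}$, so $K_*^{top}(G)=KK^G_*(C_0(G/K),\C)$; since $K$ is compact, the induction isomorphism together with the Green-Julg theorem gives $KK^G_*(C_0(G/K),\C)\cong KK^K_*(\C,\C)=R(K)$, concentrated in degree $j=\dim(G/K)\bmod 2$. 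Under this identification the Dirac induction $\mu_G$ of Conjectures \ref{ConKas} and \ref{ConKas2} is exactly the assembly map, so Connes-Kasparov becomes the statement that $\mu_r$ is an isomorphism; Remark \ref{CoKadiscrete} and Theorem \ref{AtiyahSchmid} already verify this on the free part carried by the discrete series.

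I would then shrink the class of groups. By the Gleason-Yamabe and Montgomery-Zippin structure theory an almost connected group is an inverse limit $G=\varprojlim G/K_i$ over compact normal subgroups $K_i$ with each $G/K_i$ an almost connected Lie group; since compact groups satisfy the conjecture trivially and the assembly map is compatible with such limits, this reduces the problem to almost connected Lie groups, and then, using that the component group is compact, to \emph{connected} Lie groups. The engine of the argument is a permanence principle for extensions: for a closed normal subgroup $N\triangleleft G$ with quotient $Q$, one builds a partial assembly map factoring $\mu_r^G$ through the assembly map of $N$ and that of $Q$ acting on the reduced crossed products by $N$, so that validity of the conjecture \emph{with coefficients} for $N$ and for $Q$, restricted to the coefficient algebras that actually occur, forces it for $G$. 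This is where coefficients become unavoidable even though the target statement has trivial coefficients.

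I would apply this to the Levi-Mal'cev decomposition $G=R\rtimes L$ of a connected Lie group, with $R$ the amenable solvable radical and $L$ semisimple. For $R$ the conjecture with coefficients follows either by iterating the Connes-Thom isomorphism \cite{ConThom} along a composition series of $\R$-by-$\R$ extensions, or more robustly from Higson-Kasparov \cite{HigKas}, since connected amenable Lie groups have the Haagerup property. The extension principle then reduces the conjecture for $G$ to the conjecture for $L$ with coefficients in $C^*(R)$, and the decisive point is that these coefficients are tame. Running the Mackey machine on the $L$-action on the (type I) spectrum $\hat{R}$ decomposes $C^*(R)$ into summands supported on $L$-orbits; induction in stages identifies each summand, up to Morita equivalence, with a reductive stabilizer subgroup of $L$ acting on continuous-trace coefficients that are stably trivial or carry only a mild projective twist. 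For such coefficients one again has the Connes-Kasparov statement by Lafforgue \cite{LaffInv}, so no pathological coefficient algebra ever enters; this is precisely why Connes-Kasparov can be proved without settling the still-open Baum-Connes conjecture with arbitrary coefficients for higher-rank groups.

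The main obstacle is the semisimple input. The classical Dirac--dual-Dirac method yields only injectivity of $\mu_r$ for a semisimple $L$, because for groups with property (T) Kasparov's $\gamma$-element is not equal to $1$ in the equivariant $KK$-ring; surjectivity requires Lafforgue's passage to Banach-algebraic bivariant $K$-theory, which is the deepest ingredient of the whole proof. The second, more bookkeeping-heavy difficulty is to make the extension machinery genuinely functorial: one must check that $\mu_r$ descends correctly through $1\to R\to G\to L\to 1$, keep track of the $\Z/2$-grading and the spin twist distinguishing Conjectures \ref{ConKas} and \ref{ConKas2}, and confirm at each stage of the Mackey decomposition that only coefficients within Lafforgue's reach arise. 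With both in hand, the permanence principle assembles the amenable and semisimple cases into the statement for all almost connected groups, following Chabert-Echterhoff-Nest \cite{ChEcNe}.
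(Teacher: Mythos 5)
The survey does not actually prove this theorem: it is quoted from Chabert--Echterhoff--Nest \cite{ChEcNe}, described only as ``building on Lafforgue's method'', and your outline --- reformulating Dirac induction as the assembly map, reducing to connected Lie groups via projective limits over compact normal subgroups, applying an extension-permanence principle to the Levi decomposition with Higson--Kasparov \cite{HigKas} (or iterated Connes--Thom \cite{ConThom}) handling the amenable radical and Lafforgue's Banach $KK$-theory \cite{LaffInv} handling the reductive quotient after a Mackey-type analysis of the coefficients --- is precisely the strategy of that cited proof. So your roadmap matches the intended argument; just be aware that every genuinely hard step (Lafforgue's theorem, the Chabert--Echterhoff permanence results, the continuity of both sides of the assembly map under the projective limit, and the check that only continuous-trace/twisted-trivial coefficients arise) is carried by the references rather than by anything you have supplied.
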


In the same paper \cite{ChEcNe}, Chabert-Echterhoff-Nest obtain a purely representation-theoretic consequence of Theorem \ref{ConKasCEN}:

\begin{Cor} Let $G$ be a connected unimodular Lie group. Then all square-integrable factor representations of $G$ are type I. Moreover, $G$ has no square-integrable factor representations if $\dim(G/K)$ is odd.
\end{Cor}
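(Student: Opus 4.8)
The plan is to reduce this representation-theoretic statement to the K-theory computation furnished by Theorem \ref{ConKasCEN}, by encoding each square-integrable factor representation as a projection in $C^*_r(G)$ that is detected by the canonical trace. First I would set up the Plancherel dictionary for the unimodular group $G$: a square-integrable factor representation $\pi$ is precisely an atom of the Plancherel measure, equivalently it corresponds to a nonzero central projection $z_\pi$ in the group von Neumann algebra $L(G)=C^*_r(G)''$, and its factor type is by definition the type of the factor $M_\pi:=z_\pi L(G)=\pi(G)''$. The point I would extract from unimodularity together with square-integrability is that the Plancherel trace $\tau$ is semifinite on $C^*_r(G)$ and that the atom carries a \emph{finite-trace} projection: there is a projection $p_\pi\in C^*_r(G)$ under $z_\pi$ (the ``formal-degree projection'', arising from Schur orthogonality of the $L^2$ matrix coefficients) with $0<\tau(p_\pi)<\infty$. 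Since $\tau$ induces a homomorphism $\tau_*:K_0(C^*_r(G))\to\R$ with $\tau_*[p_\pi]=\tau(p_\pi)>0$, the class $[p_\pi]$ is nonzero in $K_0(C^*_r(G))$. This is the one place where the hypothesis of unimodularity is genuinely used.

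Granting this, the odd-dimensional assertion is immediate. If $j=\dim(G/K)$ is odd, then Theorem \ref{ConKasCEN} (in whichever version of the Connes--Kasparov statement applies, according to the spin structure) gives $K_{j+1}(C^*_r(G))=0$; by Bott periodicity $K_{j+1}=K_0$, so $K_0(C^*_r(G))=0$. This contradicts the existence of the nonzero class $[p_\pi]$, and therefore $G$ admits no square-integrable factor representation when $j$ is odd.

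For the type~I assertion, note that a square-integrable factor representation can exist only when $j$ is even, and in that case Theorem \ref{ConKasCEN} gives that $K_0(C^*_r(G))$ is isomorphic to $R(K)$ (or to $R(\tilde K)^1$ in the non-spin case), hence is \emph{free abelian}, with $K_1(C^*_r(G))=0$. Suppose, for contradiction, that some atom $\pi$ were of type~II. The idea is to exploit halving of projections in a type~II factor: inside $M_\pi$ one can write $p_\pi\sim q\oplus q$ with $\tau(q)=\tau(p_\pi)/2$, and iterating this in the weakly dense summand $A_\pi$ would produce relations $[p_\pi]=2^n[r_n]$ in $K_0$ for every $n$. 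Thus $[p_\pi]$ would be a nonzero, infinitely $2$-divisible element of the free abelian group $K_0(C^*_r(G))$ --- which is impossible, since a free abelian group has no nonzero divisible elements. Hence every square-integrable factor representation is forced to be type~I.

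The main obstacle is the last step: passing from the halving of projections in the \emph{von Neumann} factor $M_\pi$ to an honest relation $[p_\pi]=2[q]$ in the $K_0$-group of the \emph{reduced $C^*$-algebra}. This requires, on the one hand, that the atom $\pi$ genuinely splits off $C^*_r(G)$ as a direct summand $A_\pi$ with $A_\pi''=M_\pi$ (so that $p_\pi$ really lives in $C^*_r(G)$ and its class lies in a direct summand of $K_0(C^*_r(G))$), and on the other hand that $A_\pi$ contains enough projections, with the needed Murray--von Neumann equivalences realized \emph{inside} $A_\pi$ --- a projection-approximation (real-rank-zero type) issue --- so that the divisibility occurs in $K_0(A_\pi)\subseteq K_0(C^*_r(G))$ and not merely at the level of trace values. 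Controlling the projection structure of the summand $A_\pi$ is the technical heart of the argument; once it is in place, the two K-theoretic inputs from Connes--Kasparov (freeness of $K_0$, and vanishing of $K_0$ in odd dimension) are exactly what turn ``nonzero formal degree'' and ``type~II'' into contradictions.
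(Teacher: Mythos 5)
You should know at the outset that the survey does not prove this corollary: it is quoted from Chabert--Echterhoff--Nest \cite{ChEcNe} as a consequence of Theorem \ref{ConKasCEN}, so your proposal has to stand on its own. Its architecture is the right one, and it is essentially the one used in the literature: a square-integrable factor representation must give rise to a nonzero, trace-detected class in $K_0(C^*_r(G))$, and the two Connes--Kasparov computations ($K_0(C^*_r(G))=0$ when $\dim(G/K)$ is odd, $K_0(C^*_r(G))$ free abelian when it is even) then rule out existence, respectively force type I. Your purely $K$-theoretic endgame is also fine: a free abelian group has no nonzero infinitely $2$-divisible element, and a nonzero class cannot live in a trivial group.

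The genuine gap is the one you flag in your last paragraph, but it is more serious than a ``projection-approximation issue'', and without it both halves of the argument collapse. First, the existence of $p_\pi$ \emph{in} $C^*_r(G)$: Schur orthogonality only places the coefficient $d_\pi\overline{\langle\pi(\cdot)\xi,\xi\rangle}$ in $L^2(G)$ and makes $\lambda$ of it a projection in the von Neumann algebra $L(G)$; a bounded operator $\lambda(f)$ with $f\in L^2(G)$ need not lie in the reduced $C^*$-algebra (already for $G=\Z$, $\lambda(f)$ is multiplication by $\hat f\in L^\infty(\mathbf{T})$ and belongs to $C(\mathbf{T})=C^*_r(\Z)$ only when $\hat f$ is continuous). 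Worse, for a hypothetical type II atom there is no irreducible subrepresentation, hence no matrix coefficient to which Schur orthogonality applies, so the ``formal-degree projection'' has no evident candidate at all; this is precisely the case you must handle. Second, the halving: comparison theory in the factor $M_\pi$ produces partial isometries only in the weak closure, and $z_\pi C^*_r(G)$ has no real-rank-zero or divisibility property that would let you realize $[p_\pi]=2^n[r_n]$ inside $K_0(C^*_r(G))$; one cannot push von Neumann equivalences down into a weakly dense $C^*$-subalgebra. The workable route runs the comparison in the opposite direction: the atom canonically yields a densely defined, lower semicontinuous trace $\tau_\pi=\tau_{M_\pi}\circ\tilde\pi$ on $C^*_r(G)$, hence a homomorphism $(\tau_\pi)_*\colon K_0(C^*_r(G))\to\R$, and the substantive input --- where Dirac induction and $L^2$-index theory in the spirit of Connes--Moscovici genuinely enter --- is the nonvanishing and the computation of this pairing on the generators $\mu_G(\sigma)$. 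As written, your proposal assumes its hardest step rather than proving it.
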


\subsection{The Novikov conjecture}\label{Novikov}

For discrete groups, an important motivation for the Baum-Connes conjecture was provided by the work of A.S. Mishchenko (see e.g. \cite{Mish}) and G.G. Kasparov (see e.g. \cite{KaspConsp}) on the {\it Novikov conjecture}, whose statement we now recall. 

For a discrete group $\Gamma$, denote by $B\Gamma$ ``the'' {\it classifying space} of $\Gamma$, a $CW$-complex characterized, up to homotopy, by the properties that its fundamental group is $\Gamma$ and its universal cover $E\Gamma$ is contractible\footnote{As A. Connes once pointed out: ``{\it $E\Gamma$ is a point on which $\Gamma$ acts freely!}''}. Alternatively, $B\Gamma$ is a $K(\Gamma,1)$-space. As a consequence, group cohomology of $\Gamma$, defined algebraically, is canonically isomorphic to cellular cohomology of $B\Gamma$. 

Let $M$ be a smooth, closed, oriented manifold of dimension $n$, equipped with a map $f:M\rightarrow B\Gamma$. For $x\in H^*(B\Gamma,\Q)$ (cohomology with rational coefficients), consider the {\it higher signature}
$$\sigma_x(M,f)=\langle f^*(x)\cup L(M),[M]\rangle \in\Q,$$
where $L(M)$ is the $L$-class (a polynomial in the Pontryagin classes, depending on the smooth structure of $M$), and $[M]$ is the fundamental class of $M$. The Novikov conjecture states that these numbers are homotopy invariant (and so do not depend on the smooth structure of $M$):

\begin{Conj}\label{Nov} (The Novikov conjecture on homotopy invariance of higher signatures). Let $h:N\rightarrow M$ be a homotopy equivalence; then for any $x\in H^*(B\Gamma,\Q)$:
$$\sigma_x(M,f)=\sigma_x(N,f\circ h).$$
\end{Conj}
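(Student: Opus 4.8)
The plan is to prove Conjecture \ref{Nov} by establishing that the homology class $f_*(L(M)\cap[M])\in H_*(B\Gamma;\Q)$ is a homotopy invariant; by the projection formula $\sigma_x(M,f)=\langle x, f_*(L(M)\cap[M])\rangle$ for every $x\in H^*(B\Gamma;\Q)$, so homotopy invariance of this single class is equivalent to the full statement. The route to it passes through operator K-theory. First I would trade the $L$-class for the signature operator: rationally the Chern character is an isomorphism $K_*(B\Gamma)\otimes\Q\cong H_*(B\Gamma;\Q)$, and under it $f_*(L(M)\cap[M])$ corresponds, up to an invertible normalization, to $f_*[D_{\mathrm{sig}}]$, where $[D_{\mathrm{sig}}]\in K_*(M)$ is the K-homology class of the signature operator of $M$. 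Thus it suffices to show that $f_*[D_{\mathrm{sig}}]\in K_*(B\Gamma)\otimes\Q$ is a homotopy invariant.

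The substantive geometric input is Mishchenko's analytic symmetric signature. Pulling back the universal flat $C^*_r(\Gamma)$-line bundle along $f$ gives the Mishchenko bundle $\mathcal{L}_f=\widetilde{M}\times_\Gamma C^*_r(\Gamma)$, a flat bundle of finitely generated projective $C^*_r(\Gamma)$-modules, and twisting the signature operator by $\mathcal{L}_f$ yields a Mishchenko--Fomenko index $\mathrm{sign}_\Gamma(M,f)\in K_*(C^*_r(\Gamma))$. By construction this is exactly the image of $f_*[D_{\mathrm{sig}}]$ under the assembly map $\mu_r\colon K_*(B\Gamma)\to K_*(C^*_r(\Gamma))$. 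The crucial fact, due to Mishchenko \cite{Mish} (and recast algebraically through the symmetric signature in $L$-theory and Ranicki's algebraic theory of surgery, then transported to $C^*$-algebras), is that $\mathrm{sign}_\Gamma(M,f)$ is \emph{homotopy invariant}: any homotopy equivalence $h\colon N\to M$ gives $\mathrm{sign}_\Gamma(N,f\circ h)=\mathrm{sign}_\Gamma(M,f)$, because the symmetric signature of a Poincaré complex depends only on its chain-homotopy type. This is where Poincaré duality and algebraic surgery do the genuine work, and, importantly, it holds for \emph{every} discrete group $\Gamma$ with no hypothesis on $C^*_r(\Gamma)$.

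It then remains to pull the homotopy invariance back from the image in $K_*(C^*_r(\Gamma))$ to the class $f_*[D_{\mathrm{sig}}]$ itself, and this is where I would invoke the rational injectivity of $\mu_r$ (the \emph{Strong Novikov Conjecture}): once $\mu_r\otimes\Q$ is injective, homotopy invariance of $\mu_r(f_*[D_{\mathrm{sig}}])=\mathrm{sign}_\Gamma(M,f)$ forces homotopy invariance of $f_*[D_{\mathrm{sig}}]\otimes\Q$, hence of $f_*(L(M)\cap[M])$, which is precisely Conjecture \ref{Nov}. I expect this rational injectivity to be the main obstacle, and it is the only place where the strategy is not unconditional: it is the hard analytic/geometric input furnished by the Dirac--dual Dirac method (Kasparov \cite{KaspConsp}, Kasparov--Skandalis \cite{Kasparov-Skandalis03}) and established in this survey for the large class $\mathcal{C}$ --- groups acting properly isometrically on nonpositively curved manifolds or Bruhat--Tits buildings, hyperbolic groups, and so on. In contrast to the formal homotopy invariance of the symmetric signature, it is exactly this injectivity that remains open for arbitrary $\Gamma$, and whose verification for a given group converts the above scheme into a genuine proof of the Novikov conjecture for that group.
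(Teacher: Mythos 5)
Your proposal follows essentially the same route as the paper's own treatment (section 2.5 and Corollary \ref{BCimpliesNov}): reduce the higher signatures to homotopy invariance of $f_*[d+d^*]$ in $K_0(B\Gamma)\otimes_\Z\Q$ via the rationally bijective Chern character and equation \ref{Chern}, note that its image under assembly is the index of the signature operator twisted by the Mishchenko bundle $\mathcal{L}_\Gamma$ --- homotopy invariant by Kasparov's Theorem \ref{KasNovikov}, which is exactly your Mishchenko symmetric-signature step --- and conclude from rational injectivity of $\beta=\mu_r\circ\iota_\Gamma$. Like the paper, this is a conditional reduction of Conjecture \ref{Nov} to the Strong Novikov conjecture rather than an unconditional proof, and you correctly identify that injectivity (Kasparov \cite{KaspConsp}, Kasparov--Skandalis \cite{Kasparov-Skandalis03}) is the sole non-formal input.
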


We say that {\it the Novikov conjecture holds for $\Gamma$} if Conjecture \ref{Nov} holds for every $x\in H^*(B\Gamma,\Q)$. We refer to the detailed survey paper \cite{FRR} for the history of this conjecture, and an explanation why it is important.

We summarize now Kasparov's approach from section 9 in \cite{KaspConsp}\footnote{Although published only in 1995, the celebrated ``{\it Conspectus}'' was first circulated in 1981.}. Keeping notations as in Conjecture \ref{Nov}, Kasparov considers the homology class $\mathcal{D}(M)=L(M)\cap [M]\in H_*(M,\Q)$ which is Poincar\'e-dual to $L(M)$, and Conjecture \ref{Nov} is equivalent to the homotopy invariance of the class $f_*(\mathcal{D}(M))\in H_*(B\Gamma,\Q)$.

Let $d:\Omega^p(M)\rightarrow \Omega^{p+1}(M)$ be the exterior derivative on differential forms. Up to crossing $M$ with the circle $S^1$, we may assume that $n=\dim M$ is even. Fix an auxiliary Riemannian metric on $M$. This allows to define the adjoint $d^*:\Omega^p(M)\rightarrow \Omega^{p-1}(M)$: it satisfies $d^*=-\star d\star$, where $\star$ is the Hodge operator associated with the Riemannian structure. 

Now consider $d+d^*$ acting on the space of all forms %
$\Omega (M)=\bigoplus _{p=0}^{n}\Omega ^{p}(M)$. One way to consider this as a graded operator is the following: let $\tau$  be an involution on the space of all forms defined by:
$$\tau (\omega )=i^{p(p-1)+\frac{n}{2}}\star \omega \quad ,\quad \omega \in \Omega ^{p}(M).$$
It is verified that $d+d^{*}$ anti-commutes with $\tau$: with this grading on forms, $d+d^*$ is the {\it signature operator} on $M$. As it is an elliptic operator, it defines an element $[d+d^*]$ in the group $K_0(M)$ of K-homology\footnote{K-homology is the homology theory dual to topological K-theory. It was shown by Atiyah \cite{Atiyah70} that an elliptic (pseudo-)differential operator on $M$ defines an element in $K_0(M)$.} of $M$. Note that, by connectedness of the space of Riemannian metrics on $M$, the element $[d+d^*]\in K_0(M)$ does not depend on the choice of a Riemannian metric. Using Hodge theory, it is classical to check that the index of $d+d^*$ is exactly the {\it topological signature} of $M$, i.e. the signature of the quadratic form given by cup-product on the middle-dimensional cohomology $H^{\frac{n}{2}}(M,\C)$. Now consider the {\it index pairing} between K-theory and K-homology of $M$:
$$K^0(M)\times K_0(M)\rightarrow \Z: (\xi,D)\mapsto Ind(D_\xi),$$
the index of the differential operator $D_\xi$, which is $D$ with coefficients in the vector bundle $\xi$ on $M$. In particular $Ind((d+d^*)_\xi)$ is the index of the signature operator with coefficients in $\xi$, i.e. acting on sections of $\Lambda^*(M)\otimes\xi$. It is given by the cohomological version of the Atiyah-Singer index theorem:
\begin{equation}\label{AS}
Ind((d+d^*)_\xi)= \langle Ch^*(\xi)\cup L(M),[M]\rangle,
\end{equation}
where $Ch^*$ denotes the Chern character in cohomology. Recall that, for every finite $CW$-complex $X$, we have Chern characters in cohomology and homology:
$$Ch^*:K^0(X)\rightarrow \bigoplus_{k=0}^\infty H^{2k}(X,\Q);$$
$$Ch_*:K_0(X)\rightarrow \bigoplus_{k=0}^\infty H_{2k}(X,\Q),$$
which are rational isomorphisms, compatible with the index pairing and with the pairing between cohomology and homology. Equation \ref{AS} then implies that 
\begin{equation}\label{Chern}
Ch_*([d+d^*])=L(M)\cap[M]=\mathcal{D}(M).
\end{equation}
Assume for simplicity that $B\Gamma$ is a closed manifold \footnote{When $B\Gamma$ is a general $CW$-complex we must replace $K_0(B\Gamma)$ by 
$RK_0(B\Gamma)=\varinjlim_X K_0(X)$, where $X$ runs along compact subsets of $B\Gamma$.}, which implies that $\Gamma$ is torsion-free. Recall that Conjecture \ref{Nov} is equivalent to homotopy invariance of $f_*(\mathcal{D}(M))$. By equation \ref{Chern} and functoriality of $Ch_*$, we have:
$$f_*(\mathcal{D}(M))=f_*(Ch_*([d+d^*]))=Ch_*(f_*[d+d^*]).$$
By rational injectivity of $Ch_*$, we see that Conjecture \ref{Nov} is equivalent to the homotopy invariance of $f_*[d+d^*]$ in $K_0(B\Gamma)\otimes_\Z \Q$.

 In the final section of \cite{KaspConsp}, Kasparov defines a homomorphim $\beta:K_i(B\Gamma)\rightarrow K_i(C^*_r(\Gamma))$ that later was identified with the assembly map $\mu_r: K_i^\Gamma(\underline{E\Gamma})\rightarrow K_i(C^*_r(\Gamma))$. Kasparov's $\beta$ is defined as follows. Keep the assumption that $B\Gamma$ is a finite complex. Form the induced vector bundle $\mathcal{L}_\Gamma=E\Gamma\times_\Gamma C^*_r(\Gamma)$ (where $\Gamma$ acts on $C^*_r(\Gamma)$ by left translations). This is a vector bundle with fiber $C^*_r(\Gamma)$ over $B\Gamma$, sometimes called the {\it Mishchenko line bundle}. Its space $C(E\Gamma,C^*_r(\Gamma))^\Gamma$ of continuous sections, is a projective finite type module over $C(B\Gamma)\otimes C^*_r(\Gamma)$ (and as such it defines a K-theory element $[\mathcal{L}_\Gamma]\in K_0(C(B\Gamma)\otimes C^*_r(\Gamma))$). For a K-homology element $[D]\in K_0(B\Gamma)$ given by an elliptic (pseudo-)differential operator $D$ over $B\Gamma$ we may form the operator $D_{\mathcal{L}_\Gamma}$ with coefficients in $\mathcal{L}_\Gamma$: its kernel and co-kernel are projective finite type modules over $C^*_r(\Gamma)$, so their formal difference defines an element $\beta[D]\in K_0(C^*_r(\Gamma))$: this defines the desired homomorphism\footnote{In terms of Kasparov theory, to be defined in Chapter \ref{indexmaps} below, this can be expressed using Kasparov product: $\beta[D]=[\mathcal{L}_\Gamma]\otimes_{C(B\Gamma)}[D]$.}
$$\beta:K_0(B\Gamma)\rightarrow K_0(C^*_r(\Gamma)).$$

Coming back to the Novikov conjecture, recall that it is equivalent to the homotopy invariance of $f_*[d+d^*]$ in $K_0(B\Gamma)\otimes_\Z \Q$. Now one of Kasparov's result in \cite{KaspConsp} (Theorem 2 in the final section) is:

\begin{Thm}\label{KasNovikov} If $M$ is an even-dimensional smooth, closed, oriented manifold and $f:M\rightarrow B\Gamma$ is a continuous map, then $\beta(f_*[d+d^*])\in K_0(C^*_r(\Gamma))$ is a homotopy invariant of $M$.
\hfill$\square$
\end{Thm}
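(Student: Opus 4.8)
The plan is to reinterpret $\beta(f_*[d+d^*])$ as the index of a single elliptic operator on $M$ itself, and then to recognize that index as a signature-type invariant that only sees the homotopy type. First I would use the description of $\beta$ through the Kasparov product, $\beta[D]=[\mathcal{L}_\Gamma]\otimes_{C(B\Gamma)}[D]$, together with functoriality of the cap product and naturality under $f$, to rewrite
$$\beta(f_*[d+d^*])=\mathrm{Ind}_{C^*_r(\Gamma)}\big((d+d^*)_{V_f}\big),$$
where $V_f=f^*\mathcal{L}_\Gamma$ is the flat bundle of free rank-one $C^*_r(\Gamma)$-modules over $M$ obtained by pulling back the Mishchenko line bundle, and $(d+d^*)_{V_f}$ is the signature operator twisted by $V_f$. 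The gain of this step is that all the topology of $B\Gamma$ has been transferred onto a flat coefficient system living over $M$, so the right-hand side is the $C^*_r(\Gamma)$-valued index of a concrete operator on the manifold.

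Next I would analyze the twisted operator through the de Rham complex with local coefficients. Since $V_f$ is flat, the exterior derivative extends to a differential $d_\nabla$ on $\Omega^*(M;V_f)$ with $d_\nabla^2=0$, and the twisted signature operator is assembled from $d_\nabla+d_\nabla^*$ with the $\Z/2$-grading given by the involution $\tau$ of the statement. Hodge theory over the Hilbert module $C^*_r(\Gamma)$ then should identify its index, as an element of $K_0(C^*_r(\Gamma))$, with a signature invariant built from the $C^*_r(\Gamma)$-valued cohomology $H^*(M;V_f)$ equipped with the nondegenerate pairing coming from cup product and Poincar\'e duality. Concretely, this exhibits $\beta(f_*[d+d^*])$ as the image in $K_0(C^*_r(\Gamma))$ of Mishchenko's \emph{symmetric signature} of the Poincar\'e duality chain complex $(C^*(\widetilde{M}),\mathrm{duality})$ over the group ring, pushed forward along $\Z\Gamma\to C^*_r(\Gamma)$.

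Finally, homotopy invariance becomes the assertion that this symmetric signature is unchanged under an orientation-preserving homotopy equivalence $h:N\to M$. Setting $f'=f\circ h$, such an $h$ induces an isomorphism $h^*:H^*(M;V_f)\to H^*(N;(f')^*\mathcal{L}_\Gamma)$ which is compatible with the duality pairings up to symmetric Poincar\'e chain homotopy equivalence; since the symmetric signature, and hence its image in $K_0(C^*_r(\Gamma))$, depends only on the chain homotopy class of the Poincar\'e complex, the two invariants coincide.

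I expect the main obstacle to be the middle step, and its difficulty is twofold. First, over a $C^*$-algebra the twisted Laplacian need not have closed range, so the naive equality ``harmonic forms $=$ cohomology'' can fail; one must argue that the index is nevertheless a well-defined class in $K_0(C^*_r(\Gamma))$ and that it is genuinely computed by the duality pairing on cohomology, which is where regularity of the operator on Hilbert modules (or a controlled perturbation) has to be handled with care. Second, and more conceptually, one must show that the resulting $K_0$-class is determined by the homotopy type alone: the clean way is through Mishchenko's symmetric signature, and the verification that a homotopy equivalence induces a symmetric Poincar\'e chain homotopy equivalence of the local-coefficient complexes is the step that carries all the weight.
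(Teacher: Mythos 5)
The paper does not actually prove this statement: it is quoted from the final section of Kasparov's Conspectus and the $\square$ after the statement signals that the proof is omitted, so there is no in-text argument to compare yours against. That said, your outline is the standard and essentially correct route (Mishchenko, Kasparov, Kaminker--Miller). Step 1 is fine: by the projection formula for the Kasparov product, $[\mathcal{L}_\Gamma]\otimes_{C(B\Gamma)}f_*[d+d^*]=[f^*\mathcal{L}_\Gamma]\otimes_{C(M)}[d+d^*]$, which is the Mishchenko--Fomenko index of the signature operator twisted by the flat bundle $V_f$ of finitely generated projective $C^*_r(\Gamma)$-modules. You have also correctly located where all the work lies, namely the middle step.

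Two remarks to sharpen that middle step. First, rather than trying to run Hodge theory on $\Omega^*(M;V_f)$ (where, as you note, the Laplacian need not have closed range), the standard device is to compare the analytic index directly with the class of the \emph{combinatorial} Poincar\'e complex $C_*(\widetilde{M})\otimes_{\Z\Gamma}C^*_r(\Gamma)$, which is a complex of finitely generated free modules with a duality chain equivalence; the hard analytic content of the theorem is precisely this analytic-versus-combinatorial comparison, and once it is made the homotopy invariance is the formal chain-level argument you describe. Second, for the symmetric-signature route to land in $K_0(C^*_r(\Gamma))$ you need the fact, special to $C^*$-algebras, that a nonsingular symmetric form over $C^*_r(\Gamma)$ has a well-defined $K_0$-class given by the difference of its positive and negative spectral projections (equivalently, that the natural map from the symmetric $L$-group $L^0(C^*_r(\Gamma))$ to $K_0(C^*_r(\Gamma))$ is an isomorphism); this should be stated explicitly, as it is the reason no $L$-theoretic 2-torsion ambiguity survives. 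You should also record that the homotopy equivalence $h$ is required to be orientation-preserving and that $f'=f\circ h$, so that $h^*V_f\cong V_{f'}$ as flat bundles. Finally, be aware that there is an alternative, purely analytic proof of this homotopy invariance due to Hilsum and Skandalis which avoids the symmetric signature altogether; your route is the older algebraic one and is perfectly legitimate, but the regularity issues you flag are exactly what that later proof was designed to circumvent.
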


As an immediate consequence of Theorem \ref{KasNovikov}, we get the following result:

\begin{Cor}\label{BCimpliesNov} If the map $\beta$ is rationally injective, then the Novikov conjecture (conjecture \ref{Nov}) holds for $\Gamma$.
\end{Cor}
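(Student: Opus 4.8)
The plan is to obtain the corollary as a formal consequence of Theorem \ref{KasNovikov}, the rational injectivity hypothesis on $\beta$, and the reformulation of the Novikov conjecture carried out in the paragraphs preceding Theorem \ref{KasNovikov}. All the analytic substance lives in Theorem \ref{KasNovikov}; what remains is essentially a one-line diagram-chase, together with the routine passage from even to arbitrary dimension.

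I would first treat the case $\dim M$ even. Fix an (orientation-preserving) homotopy equivalence $h:N\to M$ and the reference map $f:M\to B\Gamma$, so that $N$ comes equipped with $f\circ h$. Recall the chain of equivalences established in the text: by equation \ref{Chern} and the rational isomorphism given by the homology Chern character $Ch_*$, the Novikov conjecture for $(M,f)$ amounts to the equality
$$f_*[d+d^*]_M=(f\circ h)_*[d+d^*]_N \qquad \text{in } K_0(B\Gamma)\otimes_\Z\Q .$$
Theorem \ref{KasNovikov} says exactly that $\beta(f_*[d+d^*])$ is a homotopy invariant of $M$, which, applied to the equivalence $h$, gives $\beta(f_*[d+d^*]_M)=\beta((f\circ h)_*[d+d^*]_N)$ in $K_0(C^*_r(\Gamma))$, and a fortiori in $K_0(C^*_r(\Gamma))\otimes_\Z\Q$. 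If $\beta$ is rationally injective, i.e.\ $\beta\otimes\mathrm{id}_\Q$ is injective, then the two classes $f_*[d+d^*]_M$ and $(f\circ h)_*[d+d^*]_N$ must already agree in $K_0(B\Gamma)\otimes_\Z\Q$; by the equivalence above, this is the Novikov conjecture for $(M,f)$.

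For odd-dimensional $M$ I would use the device already mentioned before Theorem \ref{KasNovikov}, namely crossing with the circle. Concretely, one maps $M\times S^1\to B\Gamma\times S^1=B(\Gamma\times\Z)$ by $f\times\mathrm{id}$ and pairs against $x\times u$, where $u$ generates $H^1(S^1,\Q)$; since $L(M\times S^1)=L(M)$ and $\langle u,[S^1]\rangle=1$, this recovers $\sigma_x(M,f)$, so homotopy invariance of the higher signatures of the even-dimensional $M\times S^1$ yields that of $\sigma_x(M,f)$.

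The corollary is therefore immediate once Theorem \ref{KasNovikov} is granted, and I do not expect a genuine obstacle; the only points demanding care are bookkeeping ones. First, one must read ``homotopy invariant of $M$'' in Theorem \ref{KasNovikov} with the correct reference maps ($f$ on $M$, $f\circ h$ on $N$). Second, the even/odd reduction is not completely formal: the circle trick replaces $\Gamma$ by $\Gamma\times\Z$, so strictly speaking one wants the rational injectivity hypothesis for $\Gamma\times\Z$ as well, and this is the single spot where a shade more than the bare statement is invoked. Finally, if $B\Gamma$ is only a $CW$-complex rather than a finite complex, one works, as in the footnote, with $RK_0(B\Gamma)=\varinjlim_X K_0(X)$ over the compact subsets $X$.
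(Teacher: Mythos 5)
Your argument is correct and is exactly the route the paper takes: the corollary is presented there as an immediate consequence of Theorem \ref{KasNovikov} together with the reformulation of the Novikov conjecture as homotopy invariance of $f_*[d+d^*]$ in $K_0(B\Gamma)\otimes_\Z\Q$, which is precisely the diagram chase you spell out. Your remarks on the even/odd reduction and on needing rational injectivity for $\Gamma\times\Z$ are sensible bookkeeping points that the paper leaves implicit, but they do not constitute a different approach.
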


The main result of Kasparov's Conspectus \cite{KaspConsp}, is the following: 
\begin{Thm} If $\Gamma$ is a discrete subgroup of a connected Lie group, then the map $\beta$ is injective.
\end{Thm}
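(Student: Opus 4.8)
The plan is to prove injectivity of $\beta$ by the \emph{Dirac--dual Dirac method}, using crucially that $\Gamma$ sits inside a connected Lie group $G$. Let $K$ be a maximal compact subgroup of $G$ and set $X=G/K$. By the Cartan--Iwasawa--Mostow theorem $X$ is $G$-equivariantly diffeomorphic to a Euclidean space, hence contractible, and it carries a complete $G$-invariant Riemannian metric; when $G$ is semisimple this metric has non-positive curvature and $X$ is a Hadamard manifold. Both $G$ and the discrete subgroup $\Gamma$ act properly and isometrically on $X$, so $X$ is a model for the classifying space $\underline{E\Gamma}$ for proper actions, and $\beta$ is the assembly map $K_*^\Gamma(X)\to K_*(C^*_r(\Gamma))$ with which it was identified in the text. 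First I would reinterpret $\beta$ through Kasparov products, so that the entire problem becomes an identity in equivariant $KK$-theory.

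Next I would produce the two fundamental classes. The \emph{Dirac element} $d\in KK^\Gamma_*\big(C_\tau(X),\C\big)$ is the class of the de Rham/Dirac operator on $X$; taking coefficients in the Clifford bundle $C_\tau(X)=C_0(X,\mathrm{Cl}(TX))$ sidesteps any spin hypothesis. The \emph{dual-Dirac element} $\eta\in KK^\Gamma_*\big(\C,C_\tau(X)\big)$ is the geometric heart of the argument: the exponential/geodesic structure of $X$ (unique geodesics in the Hadamard case, or the diffeomorphism $T_{x_0}X\xrightarrow{\sim}X$ in general) lets one transport a Bott class and build $\eta$ from Clifford multiplication by a radial gradient field. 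I would construct both classes first \emph{$G$-equivariantly}, i.e. in $KK^G$, and only then restrict along $\Gamma\hookrightarrow G$ via the restriction functor $KK^G\to KK^\Gamma$; this is precisely where the connected-Lie-group hypothesis is used.

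The crux is the identity
$$\eta\otimes_{C_\tau(X)} d \;=\; 1_{\C}\qquad\text{in }KK^\Gamma(\C,\C).$$
I would establish it at the level of $G$, where $KK^G(\C,\C)=R(G)$, by a Bott-periodicity rotation homotopy made available equivariantly by the Euclidean structure of $X$, and then restrict the resulting identity to $\Gamma$. This single step---constructing $\eta$ and proving $\eta\otimes d=1$---is where essentially all the difficulty lies, and is the one place the geometry of $G/K$ is genuinely exploited; it is the technical core of Kasparov's Conspectus.

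Finally I would apply Kasparov's descent homomorphism $j_\Gamma:KK^\Gamma(A,B)\to KK\big(A\rtimes_r\Gamma,B\rtimes_r\Gamma\big)$ to $d$ and $\eta$ and invoke the compatibility of $\beta$ with Kasparov products: composing $\beta$ with the map induced by $j_\Gamma(\eta)$ reproduces multiplication by $j_\Gamma(\eta\otimes d)=j_\Gamma(1)=1$. Thus the descended dual-Dirac supplies an explicit left inverse to $\beta$, so $\beta$ is split injective; in particular it is rationally injective, which via Corollary \ref{BCimpliesNov} recovers the Novikov conjecture for such $\Gamma$. The routine points---well-definedness of $d$ and $\eta$, associativity of the product, and the bookkeeping identifying the Mishchenko assembly map with the $KK$-theoretic one---I would treat as standard.
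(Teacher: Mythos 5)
Your overall architecture (build the Dirac and dual-Dirac classes $G$-equivariantly, restrict along $\Gamma\hookrightarrow G$, then descend) is exactly Kasparov's, as recorded in section \ref{gamma}; but the identity you place at the crux is the wrong one of the two possible Kasparov products, and it is \emph{false}. With $\eta\in KK^\Gamma(\C,C_\tau(X))$ and $d\in KK^\Gamma(C_\tau(X),\C)$, the product $\eta\otimes_{C_\tau(X)}d\in KK^\Gamma(\C,\C)$ is precisely the $\gamma$-element. What is true (Theorem \ref{dualdirac}, in its Clifford/cotangent formulation, cf.\ section \ref{Poincare}) is the product taken in the other order: $d\otimes_{\C}\eta=1$ in $KK^\Gamma(C_\tau(X),C_\tau(X))$; the element $\gamma=\eta\otimes_{C_\tau(X)}d$ is only an \emph{idempotent}, and in general $\gamma\neq 1$. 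Indeed, $\gamma=1$ would give K-amenability and Conjecture \ref{ConjBCcoeff} for $G$ and all its closed subgroups; by Corollary \ref{TnotKamen} this fails for every non-compact group with property (T) --- e.g.\ $G=Sp(n,1)$ or $SL_3(\R)$ and their infinite lattices, groups to which the theorem you are proving certainly applies. Your proposed proof of the identity cannot work either: the rotation homotopy only yields $d\otimes_{\C}\eta=\tau_{C_\tau(X)}(\eta\otimes_{C_\tau(X)}d)$ (Lemma \ref{rotation}), and Bott periodicity enters \emph{$K$-equivariantly}, not $G$-equivariantly --- one may replace $G/K$ by its tangent space at the origin only after restricting to $K$, which proves $\mathrm{Rest}^G_K(\gamma)=1$ in $R(K)$ (Lemma \ref{restriction}), never $\gamma=1$ in $KK_G(\C,\C)$. (Relatedly, $KK_G(\C,\C)$ is \emph{not} the representation ring of a noncompact $G$; ``$R(G)$'' is merely Kasparov's notation for this ring, so no such computation is available.)

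The repair is to notice that injectivity never needed $\gamma=1$. Up to Poincar\'e duality and the Morita equivalence identifying $C^*_r(\Gamma,C_\tau(X)\otimes A)$ with the algebra computing the left-hand side, the assembly map is the K-theory map induced by $j_{\Gamma,r}(\tau_A(d))$ and your candidate inverse is induced by $j_{\Gamma,r}(\tau_A(\eta))$; since $\tau_A$ and descent are multiplicative, the composite (inverse after assembly) is induced by $j_{\Gamma,r}(\tau_A(d\otimes_{\C}\eta))$. Thus the \emph{true} identity $d\otimes_{\C}\eta=1$ in $KK^\Gamma(C_\tau(X),C_\tau(X))$ already makes the descended dual-Dirac a left inverse of the assembly map, giving split injectivity: this is the content of Theorem \ref{StrongNovikov}, and in Tu's abstract form Theorem \ref{abstractgamma}, where existence of a $\gamma$-element factoring through a proper algebra and restricting to $1$ on compact subgroups gives injectivity of $\mu_{A,r}$, while $\gamma=1$ (equivalently $\tilde\gamma_A=\mathrm{Id}$) is what governs \emph{surjectivity} --- which is exactly why that statement cannot be provable here. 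So: keep your constructions of $d$ and $\eta$, prove $d\otimes_{\C}\eta=1$ in $KK^G(C_\tau(X),C_\tau(X))$ via the rotation lemma combined with $\mathrm{Rest}^G_K(\gamma)=1$, restrict to $\Gamma$, descend, and conclude; and note that the identification of Kasparov's $\beta$ on $K_*(B\Gamma)$ with $\mu_r\circ\iota_\Gamma$ is itself a nontrivial input (due to Land) rather than mere bookkeeping.
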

\begin{Cor} The Novikov conjecture holds for any discrete subgroup of a connected Lie group.
\end{Cor}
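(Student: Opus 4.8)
The plan is to obtain this corollary as a one-line deduction from the two results just recorded, both of which I am free to assume here. First I would invoke the preceding Theorem, which states that $\beta\colon K_0(B\Gamma)\to K_0(C^*_r(\Gamma))$ is injective for every discrete subgroup $\Gamma$ of a connected Lie group. Injectivity immediately gives rational injectivity, since $\Q$ is flat over $\Z$ and hence $-\otimes_\Z\Q$ preserves injections. I would then feed this into Corollary \ref{BCimpliesNov}, whose hypothesis is exactly the rational injectivity of $\beta$ and whose conclusion is the Novikov conjecture for $\Gamma$. Chaining the two yields the statement, with no further computation.

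For clarity it is worth recalling why Corollary \ref{BCimpliesNov} closes the argument, since this is where the geometric content lives. By the discussion preceding it, the Novikov conjecture for $\Gamma$ is equivalent to the homotopy invariance of $f_*(\mathcal{D}(M))\in H_*(B\Gamma,\Q)$; through the identity $Ch_*([d+d^*])=L(M)\cap[M]=\mathcal{D}(M)$ of equation (\ref{Chern}) and the rational injectivity of the homology Chern character $Ch_*$, this is in turn equivalent to homotopy invariance of $f_*[d+d^*]$ in $K_0(B\Gamma)\otimes_\Z\Q$. A rationally injective $\beta$ transports the question forward: invariance of $f_*[d+d^*]$ follows from invariance of $\beta(f_*[d+d^*])$, and the latter is precisely Theorem \ref{KasNovikov}. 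The skeleton is thus that Theorem \ref{KasNovikov} secures invariance downstream in $K_0(C^*_r(\Gamma))$, while rational injectivity of $\beta$ pulls it back upstream to $K_0(B\Gamma)\otimes_\Z\Q$.

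The main obstacle is therefore not in the corollary but in the input I am assuming, namely the injectivity of $\beta$ for discrete subgroups of connected Lie groups. This is the substantive theorem of the Conspectus, established by the Dirac--dual Dirac method. Embedding $\Gamma$ in the ambient connected Lie group $G$ with maximal compact subgroup $K$, one lets $\Gamma$ act properly on $G/K$, which is diffeomorphic to a Euclidean space and so serves as a model for $\underline{E\Gamma}$. One then manufactures a Dirac element from the Dirac operator along $G/K$ and a dual-Dirac element $\eta$ from the flat (Bott-type) geometry of $G/K$, and proves that their Kasparov product is the identity in the relevant equivariant $KK$-group. That product identity --- that the $\gamma$-element restricts to $1$ on the groups in play --- is the genuinely hard analytic step; but since it is packaged in the preceding Theorem, for the present corollary I would invoke it as a black box.
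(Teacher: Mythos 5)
Your proof is correct and is exactly the deduction the paper intends: the corollary is stated without proof precisely because it follows immediately by chaining the preceding Theorem (injectivity of $\beta$ for discrete subgroups of connected Lie groups) with Corollary 2.12, using that tensoring with the flat $\Z$-module $\Q$ turns injectivity into rational injectivity. Your additional paragraphs accurately recap the paper's own surrounding discussion rather than introducing a different route.
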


\section { Index maps in K-theory: the contribution of Kasparov}\label{indexmaps}

\subsection{Kasparov bifunctor}\label{bifunctor}

The powerful tool developed by G. Kasparov in his proof of the Novikov conjecture is the equivariant KK-theory. We refer to \cite{KaspConsp} and \cite{Kasparov88}.

For any locally compact group $G$ and $A$, $B$ two $G-C^*$-algebras  (i.e. $C^*$-algebras equipped with a strongly continuous action by automorphisms of the group $G$), Kasparov defines an abelian group $KK_G(A, B)$. The main tool  in the theory is the cup product $$KK_G(A,B)\times KK_G(B,C)\rightarrow KK_G(A,C):(x,y)\mapsto x\otimes_B y.$$ 
In particular, if ${\bf C}$ is the field of complex numbers equipped with the trivial $G$-action,  $KK_G({\bf C}, {\bf C})$ is a  ring, which turns out to be commutative. Moreover the homomorphisms 
$$\tau_D: KK_G(A,B)\rightarrow KK_G(A\otimes D,B\otimes D)$$
defined by tensoring by a $C^*$-algebra $D$, equip all the $KK_G(A, B)$'s with a structure of $KK_G({\bf C}, {\bf C})$-modules. 

One of the most important ingredients in $G$-equivariant KK-theory is the existence of descent maps : for all $G$-$C^*$algebras $A$ and $B$ there are group homomorphisms

 $$j_{G,r}: KK_G(A,B)\rightarrow KK(C_r^*(G,A),C_r^*(G,B))$$
 $$j_{G,{\rm max}}: KK_G(A,B)\rightarrow KK(C_{\rm max}^*(G,A),C_{\rm max}^*(G,A))$$
where $C_r^*(G,A)$ and $C_{\rm max}^*(G,A)$ denote respectively the reduced and the full crossed product.

The abelian group $KK_G(A, B)$ is defined as follows:

\begin{Def}\label{Fredmodule} An $(A,B)$-Fredholm bimodule is given by:
\begin{enumerate}
\item[(i)]\ a $B$-Hilbert module $E$;
\item[(ii)] a covariant representation ($\pi,\rho (g))$  of $(G, A)$ on the Hilbert module $E$;
\item[(iii)] an operator $T$ on $E$ , $B$-bounded and self-adjoint (i.e. $T=T^*$) and such that: for any $a$ in $A$ and $g$ in $G$, the operators $(1-T^2) \rho (a)$, $T\rho(a)-\rho(a)T$ and $T\pi (g)-\pi(g)T$ are $B$-compact operators; moreover the map $g\mapsto T\pi (g)-\pi(g)T$ is norm continuous.
\end{enumerate}
\end{Def}

Such a $(A,B)$-Fredholm module is also called odd $(A,B)$-Fredholm module. An even $(A,B)$-Fredholm module is given by a $(A,B)$-Fredholm module together wih a ${\bf Z}/2$-grading on the module $E$, such that the covariant representation preserves the grading, and the operator $T$ is odd with respect to the grading.

\bigskip
One defines a homotopy of $(A,B)$-Fredholm modules to be a $(A,B\otimes C[(0,1])$-Fredholm module. 
An element of $KK_G(A, B)$ is defined as a homotopy class of even $(A,B)$-Fredholm modules. Addition is given by direct sum. The zero element is given by the class of degenerate modules, i.e. those where ``compact'' is replaced by ``zero'' in Definition \ref{Fredmodule}. When necessary we use the notation $KK_G^j(A, B)$ with $j=0$ (resp. $1$) for the even (resp. odd case). 

When there is no group acting, we simply write $KK(A,B)$. Ordinary K-theory for $C^*$-algebras is recovered by $K_*(B)=KK^*(\C,B)$, while K-homology corresponds to $K^*(A)=KK^*(A,\C)$.

\subsection{Dirac induction in KK-theory}

In \cite{KaspConsp}, G. Kasparov gives an interpretation of the Dirac induction map   from $K_*(C^*(K,A))$ to $K_*(C^*_r(G,A))$ in the framework of KK-theory,. Here $G$ is a semisimple Lie group with finite center and $K$ a maximal compact subgroup. We assume that the adjoint representation of $K$ on $V=\mathfrak{g}/\mathfrak{k}$ lifts to $Spin(V)$. The symmetric space $X=G/K$ then carries a $G$-invariant spin structure. Let $D$ be the corresponding Dirac operator, a $G$-invariant elliptic operator defined on the sections of the spinor bundle $S$ of $X$.

We define an element $\alpha$ of the group $KK_G^j(C_0(X),{\bf C})$ as the homotopy class of the $(C_0(X),{\bf C})$-Fredholm bimodule defined by:

\begin{enumerate}
\item[(i)] The Hilbert space $L^2(X , S)$ of $L^2$-sections of the spinor bundle $S$. 

\item[(ii)] The covariant action on $L^2(X , S)$ of the $G-C^*$-algebra $C_0(X)$ of continuous functions on $X$ vanishing at infinity.

\item[(iii)] The operator $F=D(1+D^2)^{-1/2}$ obtained by functional calculus from the Dirac operator $D$.
\end{enumerate}

Note that the bundle $S$ is graded for $j$ even, and trivially graded if $j$ is odd. The above Fredholm module therefore defines an element  $\alpha \in KK_G^j(C_0(G/K),{\bf C})$ where $j=\dim G/K \;(\mod 2)$.

Now consider the following composition
$$KK_G(C_0(G/K),{\bf C})\rightarrow KK_G(C_0(G/K)\otimes A, A)\rightarrow KK(C^*(K, A), C^*_r(G,A))$$
where the first map is $\tau_A$  and the second is $j_{G,r}$, taking into account the Morita equivalence of $C^*_r(G,C_0(G/K)\otimes A)$ with $C^*(K, A)$.
The image of $\alpha$ by the above composed map is an element of $KK^j(C^*(K,A), C^*_r(G,A))$ which defines a map $$\tilde{\alpha}_A: K_{*+j}(C^*(K,A))\rightarrow K_*(C^*_r(G,A)).$$

Note the two special cases:
\begin{enumerate}
\item[(i)] When $A={\bf C}$, this is nothing but the Connes-Kasparov map $K_{*+j}(C^*(K))\rightarrow K_*(C^*_r(G))$, see Conjecture \ref{ConKas}.
\item[(ii)] When $\Gamma$ is a torsion-free discrete cocompact subgroup of $G$, and $M=\Gamma\backslash G/K$, this gives the map $\beta:K_{*+j}(C(M))\rightarrow K_*(C^*_r({\Gamma}))$, see section 2.5 where $M=B\Gamma$, the classifying space of $\Gamma$.
\end{enumerate}

\subsection {The dual Dirac method and the $\gamma$-element}\label{gamma}

In order to construct the inverse map, Kasparov defines in  \cite{KaspConsp} the element $$\beta\in KK_G^j({\bf C}, C_0(X))$$ as the homotopy class of the following $({\bf C}, C_0(X))$-Fredholm bimodule:
\begin{enumerate}
\item[(i)] The $C_0(X)$-Hilbert module $C_0(X, S)$ of sections of the spinor bundle $S$;
\item[(ii)] the natural action of $G$ on $C_0(X)$;
\item[(iii)] the operator on $C_0(X)$ which is the  Clifford multiplication by the 
vector field $b$ on $X$ defined as follows: let $x_0$ be the origin in 
$X$ (i.e. the class of the identity in $G/K$), then the value of  $b$ at a point $x\in X$ is the vector tangent to the geodesic from $x$ to $x_0$, and of length $\rho(1+\rho^2)^{-1/2}$ if $\rho$ is the distance between $x$ and $x_0$.
\end{enumerate}

Similarly to what was done for $\alpha$, the element $\beta\in KK_G^j({\bf C}, C_0(G/K))$ gives rise to an element $\tilde\beta$ of $KK^j(C^*_{\rm red}(G,A),C^*(K,A))$ by applying to $\beta$ the following maps:
$$KK_G(({\bf C}, C_0(G/K))\rightarrow KK_G(A, C_0(G/K)\otimes A)\rightarrow KK(C^*_r(G,A), C^*(K, A)),$$
hence a map $K_*(C^*_r(G,A)) \rightarrow K_{*+j}(C^*(K,A))$ which is a candidate to be the inverse of the index map.

In other words, one would hope that the following equalities hold in KK-theory: 
$\alpha\otimes_{\bf C}\beta=1$ in $KK_G(C_0(X),C_0(X))$ and $\beta\otimes_{C_0(X)}\alpha=1$ in $KK_G({\bf C}, {\bf C}) $. However such a dream is not fullfilled. Only the first statement is true in general.

 \begin {Thm}\label{dualdirac} One has $\alpha\otimes_{\bf C}\beta=1$ in $KK_G(C_0(X),C_0(X))$. As a consequence,  $\gamma:=\beta\otimes_{C_0(X)}\alpha$   
 is an idempotent of the ring $KK_G(\C,\C)$ , i.e. $\gamma\otimes_{\bf C}\gamma=\gamma$. 
\end {Thm}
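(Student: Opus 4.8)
The plan is to establish the first equality $\alpha\otimes_{\mathbf{C}}\beta=1$ in $KK_G(C_0(X),C_0(X))$ by a direct Kasparov-theoretic argument, and then deduce the idempotent property of $\gamma$ formally from it. The second statement is purely algebraic once the first is in hand: writing $\gamma=\beta\otimes_{C_0(X)}\alpha$, associativity of the Kasparov product gives
\begin{align*}
\gamma\otimes_{\mathbf{C}}\gamma
&=(\beta\otimes_{C_0(X)}\alpha)\otimes_{\mathbf{C}}(\beta\otimes_{C_0(X)}\alpha)\\
&=\beta\otimes_{C_0(X)}(\alpha\otimes_{\mathbf{C}}\beta)\otimes_{C_0(X)}\alpha\\
&=\beta\otimes_{C_0(X)}1\otimes_{C_0(X)}\alpha\\
&=\beta\otimes_{C_0(X)}\alpha=\gamma,
\end{align*}
so the whole weight of the theorem rests on the first identity.

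To prove $\alpha\otimes_{\mathbf{C}}\beta=1$, first I would unwind the Kasparov product of the two explicit bimodules. The module $\alpha$ lives on $L^2(X,S)$ with operator $F=D(1+D^2)^{-1/2}$, while $\beta$ is built on the $C_0(X)$-module of sections of $S$ with the Clifford multiplication by the bounded geodesic vector field $b$. Their product is represented on the graded tensor-product module $L^2(X,S)\hat\otimes\, C_0(X,S)$, and the candidate operator is a suitable combination of $F$ (acting on the first factor) and Clifford multiplication by $b$ (acting on the second), satisfying the positivity and connection conditions that characterize the Kasparov product. The point to verify is that this product bimodule is homotopic, through $G$-equivariant bimodules, to the identity element $1\in KK_G(C_0(X),C_0(X))$, i.e.\ to the $C_0(X)$-module $C_0(X,S\hat\otimes S)$ with the obvious left/right actions and zero operator (up to degenerate summands).

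The geometric mechanism behind this is the contractibility of $X=G/K$, a complete simply connected nonpositively curved symmetric space: the exponential map based at $x_0$ is a $K$-equivariant diffeomorphism, and the geodesic retraction toward $x_0$ furnishes the homotopy. Concretely, one rescales the operator along the family $F_s$, $s\in[0,1]$, interpolating between the product operator and a operator adapted to a single fiber (the tangent space $V=\mathfrak{g}/\mathfrak{k}$ at $x_0$), where the computation reduces to the classical Bott-periodicity statement that Dirac and dual-Dirac on a Euclidean space are mutually inverse. The $G$-equivariance is maintained because $b$ is canonically defined from the Riemannian structure and $D$ is $G$-invariant. I would organize this as: (a) identify the product operator explicitly and check the Fredholm-module axioms; (b) use the geodesic flow to build an equivariant homotopy contracting $X$ to $x_0$, reducing the class to the tangent-space model; (c) invoke the Euclidean Bott calculation to identify the reduced class with $1$.

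\textbf{The main obstacle} I expect is step (b)--(c): making the homotopy genuinely $G$-equivariant while controlling the behavior of the unbounded operator $D$ under the deformation, and rigorously reducing the curved symmetric-space computation to the flat Bott-periodicity model. The analytic subtleties of the Kasparov product of two unbounded-type classes (connection, positivity modulo compacts, continuity in the $G$-variable) are where the real work lies; the equivariance of $b$ and the nonpositive curvature of $X$ are exactly what make the retraction compatible with the group action and guarantee the homotopy stays within equivariant bimodules. By contrast, the failure of the reverse equality $\beta\otimes_{C_0(X)}\alpha=1$ — and hence the genuinely nontrivial content of the $\gamma$-element — is precisely what obstructs proving Baum--Connes by this method for groups with property (T), and is not something to be established here.
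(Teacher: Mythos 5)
Your reduction of the idempotent statement to the identity $\alpha\otimes_{\mathbf{C}}\beta=1$ via associativity of the Kasparov product is correct, and is exactly how the paper views the second half of the theorem. The gap is in your proof of the first identity, specifically steps (b)--(c). The geodesic retraction of $X=G/K$ onto the basepoint $x_0$ is \emph{not} $G$-equivariant; it is only $K$-equivariant, since $x_0$ is fixed by $K$ alone. An element $g\in G$ carries the geodesic from $x$ to $x_0$ to the geodesic from $gx$ to $gx_0$, not to the geodesic from $gx$ to $x_0$; for the same reason the vector field $b$ entering $\beta$ is not $G$-invariant (only its commutators with the $G$-action are compact). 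So the homotopy you describe, contracting the whole product module to the tangent-space model at $x_0$, does not stay within $G$-equivariant bimodules. The decisive symptom that something must be wrong: your mechanism never uses the left $C_0(X)$-action in an essential way, so the identical argument applied to $\beta\otimes_{C_0(X)}\alpha$ would yield $\gamma=1$ in $KK_G(\mathbf{C},\mathbf{C})$ --- which is false whenever $G$ is noncompact with property (T) (Corollary \ref{TnotKamen}), a tension you acknowledge in your last sentence but do not resolve.

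The paper's proof is structured precisely to get around this, in two steps that your proposal collapses into one. First, the rotation Lemma \ref{rotation} shows $\alpha\otimes_{\mathbf{C}}\beta=\tau_{C_0(X)}(\beta\otimes_{C_0(X)}\alpha)=\tau_{C_0(X)}(\gamma)$; the genuinely $G$-equivariant geodesic homotopy in Kasparov's rotation trick moves \emph{pairs} of points along their connecting geodesics (a contraction of $X\times X$ onto its diagonal, equivariant because $g$ maps the geodesic joining $x$ to $y$ to the geodesic joining $gx$ to $gy$) --- this is where nonpositive curvature is used $G$-equivariantly, not in a contraction to a fixed point. Second, one uses the factorization $\tau_{C_0(X)}=\mathrm{Ind}^G_K\circ\mathrm{Rest}^G_K$ (Corollary \ref{induction}): since $X=G/K$ is a proper homogeneous space, tensoring by $C_0(X)$ factors through restriction to $K$, so it suffices to prove $\mathrm{Rest}^G_K(\gamma)=1$ in $R(K)$ (Lemma \ref{restriction}). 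Only at this last, purely $K$-equivariant stage are your contraction of $X$ to $x_0$ and Euclidean Bott periodicity on $V=\mathfrak{g}/\mathfrak{k}$ legitimate. The extra $C_0(X)$ leg is exactly what distinguishes $\alpha\otimes_{\mathbf{C}}\beta$ from $\gamma$, and makes the first equal to $1$ while the second in general is not; any correct proof must make that distinction operative, and yours does not.
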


This element $\gamma$ plays a key role in the Baum-Connes conjecture. The main step in the proof of Theorem \ref{dualdirac} is the following {\it rotation lemma}:

\begin {Lem}\label{rotation}  $\alpha\otimes_{\bf C}\beta = \tau_{C_0(X)}(\beta\otimes_{C_0(X)}\alpha)$.
\end{Lem}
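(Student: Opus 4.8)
The plan is to realise both sides as concrete $G$-equivariant Kasparov modules supported on the product $X\times X$ and to join them by an explicit \emph{rotation} homotopy. Throughout I write $X_1,X_2$ for the two factors of $X\times X$, using that both carry the same spinor bundle $S$ and the same Clifford structure.

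First I would spell out the two representatives. Unwinding the Kasparov product over $\C$ (where the middle algebra is scalar, so the product is external), $\alpha\otimes_\C\beta$ is given on the $C_0(X_2)$-Hilbert module $L^2(X_1,S)\otimes C_0(X_2,S)$, with $C_0(X)$ acting by multiplication along $X_1$, by the graded combination of the bounded Dirac operator $F=D(1+D^2)^{-1/2}$ along $X_1$ with Clifford multiplication by the inward geodesic field $b$ along $X_2$; the two are made to anticommute inside the Clifford algebra of $TX_1\oplus TX_2$. On the other side, $\gamma=\beta\otimes_{C_0(X)}\alpha$ is computed on $L^2(X_1,S\otimes S)$, the balanced tensor product that collapses the two copies of $X$ to the diagonal, and there the operator combines $F$ and Clifford multiplication by $b$ along the \emph{same} factor; applying $\tau_{C_0(X)}$ then adjoins the inert coefficient $C_0(X_2)$. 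Hence on both sides the elliptic first-order part is the Dirac operator along $X_1$, and the two differ only in the placement of the zeroth-order Clifford term: along $X_2$ for $\alpha\otimes_\C\beta$, along $X_1$ for $\tau_{C_0(X)}(\gamma)$.

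Next I would construct the homotopy. For $\theta\in[0,\pi/2]$ I would keep $F$ along $X_1$ and replace the Clifford term by Clifford multiplication by the rotated field $b_\theta(x_1,x_2)=\cos\theta\cdot b(x_2)+\sin\theta\cdot b(x_1)$, read inside $\mathrm{Cl}(T_{x_1}X_1\oplus T_{x_2}X_2)$, while transporting the representation of the source algebra along the geodesic joining $x_1$ and $x_2$ so that the module structures match at the endpoints. At $\theta=0$ this recovers, up to the usual normalisation of the Kasparov product by a connection, the representative of $\alpha\otimes_\C\beta$, and at $\theta=\pi/2$ that of $\tau_{C_0(X)}(\gamma)$. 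Such a rotation is legitimate because turning a tangent vector from the $X_2$-plane into the $X_1$-plane is an isometry of the doubled tangent bundle compatible with Clifford multiplication, the two factors being isometric copies of $X$.

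The main obstacle is to check that $(F,b_\theta)$ is, for each $\theta$, a genuine Kasparov module, with norm-continuous dependence on $\theta$ and on the group variable. One must verify that the conditions of Definition \ref{Fredmodule} survive, i.e.\ that $(1-T_\theta^2)\rho(a)$, the commutators $T_\theta\rho(a)-\rho(a)T_\theta$, and $T_\theta\pi(g)-\pi(g)T_\theta$ all remain $C_0(X_1\times X_2)$-compact throughout the rotation. This is exactly where the geometry of the symmetric space is used: non-positive curvature and uniqueness of geodesics make $b$ a smooth field whose length tends to $1$ at infinity, so the rotated field $b_\theta$ vanishes only on a controlled proper locus and $T_\theta^2\to 1$ off it, which forces the required compactness. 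The same control --- in particular that $g\cdot b-b$ is a $C_0$-perturbation for every $g$, the very fact that makes $\beta$ and $\gamma$ equivariant --- yields equivariance of the homotopy. Once these estimates are secured the homotopy delivers the equality in $KK_G(C_0(X),C_0(X))$; its effect is to replace the two-variable product by a one-variable (diagonal) operator, which is the form in which $\alpha\otimes_\C\beta$ is subsequently identified with $1$.
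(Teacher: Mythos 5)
The paper itself states this lemma without proof (it is quoted from Kasparov's work), so your attempt can only be judged on its own merits. Your overall strategy --- deform the representative of $\alpha\otimes_{\C}\beta$, which lives over $X\times X$ with the Dirac part in the first variable and the Clifford part in the second, into something supported near the diagonal --- is indeed the idea behind the name ``rotation lemma''. But the specific homotopy you write down does not work.

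The field $b_\theta(x_1,x_2)=\cos\theta\cdot b(x_2)+\sin\theta\cdot b(x_1)\in T_{x_1}X\oplus T_{x_2}X$ destroys, for $0<\theta\leq\pi/2$, exactly the two structural features that make $\theta=0$ a Kasparov module. First, the component $\sin\theta\, b(x_1)$ acts by Clifford multiplication on the \emph{same} spinor factor as $F=D(1+D^2)^{-1/2}$, and the graded anticommutator $\{F,c(b(x_1))\}$ is an order-zero pseudodifferential operator with non-vanishing symbol $-2\langle\xi,b\rangle(1+|\xi|^2)^{-1/2}$; it is not locally compact, so $(1-T_\theta^2)\rho(a)$ acquires a non-compact summand. (This is precisely why the genuine representative of $\gamma=\beta\otimes_{C_0(X)}\alpha$ lives on $L^2(X,S\widehat{\otimes}S)$ with $c(b)$ and $D$ acting on \emph{different} Clifford factors, not ``along the same factor'' as you write.) Second, $1-|b_\theta|^2=\cos^2\theta\,(1-|b(x_2)|^2)+\sin^2\theta\,(1-|b(x_1)|^2)$: over the support of $\rho(a)$ the variable $x_1$ stays in a compact set, so the second summand is bounded below and does \emph{not} decay as $x_2\to\infty$; hence $(1-T_\theta^2)\rho(a)$ fails to lie in $\mathcal{K}(E)=C_0(X_2,\mathcal{K}(L^2(X_1,S))\otimes\mathrm{End}(S))$, contrary to your claim that ``$T_\theta^2\to1$ off a controlled proper locus''. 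Finally, even if the path were admissible, the endpoint $\theta=\pi/2$ leaves the inert bimodule $(C_0(X_2,S),0)$ in the second factor, whose class is $[S^+]-[S^-]$ (the Bott/Euler class), not $1_{C_0(X)}$; so the target of your homotopy is not $\tau_{C_0(X)}(\beta\otimes_{C_0(X)}\alpha)$. The phrase ``transporting the representation of the source algebra along the geodesic'' gestures at the needed repair but is not a construction.

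The actual argument of Kasparov is different in a crucial way: one keeps the Clifford field entirely in the $T_{x_2}X$-factor throughout and instead moves its \emph{center}, replacing $b_{x_0}(x_2)$ by $b_{c_t(x_1)}(x_2)$ where $c_t(x_1)$ runs along the geodesic from $x_0$ to $x_1$. This preserves the exact graded anticommutation with $F\widehat{\otimes}1$ and the decay $1-|b_{c_t(x_1)}(x_2)|^2=(1+d(c_t(x_1),x_2)^2)^{-1}\to0$ as $x_2\to\infty$, uniformly for $x_1$ in the support of $a$. The endpoint, supported near the diagonal, is then identified with $\tau_{C_0(X)}(\gamma)$ by linearizing along the diagonal via the exponential map and performing the rotation in the flat model $V\oplus V$ --- this last step, an honest rotation of tangent spaces compatible with Clifford multiplication and with Bott periodicity, is where your geometric intuition belongs, but it takes place fibrewise in $TX\oplus TX$ over the diagonal, not globally on $X\times X$.
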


On the other hand, Kasparov shows  

\begin {Lem}\label{restriction} ${\rm Rest}^G_K (\gamma)=1$ in $R(K)$.
\end{Lem}

where $${\rm Rest}^G_K: KK_G({\bf C}, {\bf C})\rightarrow R(K)$$ is the natural restriction map. This is a $K$-equivariant version of the Bott periodicity. Namely, from the $K$-equivariant point of view, the space $G/K$ can be replaced by its tangent space $V$ at $x_0$. Then the Euclidean space $V$ is equipped with a representation of $K$  which factors though $Spin (V)$ and the Bott periodicity has an equivariant version, an isomorphism between $K_*(C^*(K,C_0(V)))$ and $R(K)$.

\begin {Cor}\label{induction} ${\tau_{C_0(G/K)}(\gamma})=1$ in $KK_G(C_0(G/K),C_0(G/K))$.
\end{Cor}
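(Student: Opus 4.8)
The plan is to read the statement off directly from the two results just established, namely the rotation lemma (Lemma \ref{rotation}) and Theorem \ref{dualdirac}. Recall that $X = G/K$, so that $\tau_{C_0(G/K)}(\gamma)$ lives in $KK_G(C_0(X), C_0(X))$, which is exactly the group in which the identity $\alpha \otimes_{\mathbf{C}} \beta = 1$ of Theorem \ref{dualdirac} takes place; the types therefore match and there is nothing to reconcile on that front.

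First I would unwind the definition $\gamma := \beta \otimes_{C_0(X)} \alpha$ and apply $\tau_{C_0(X)}$ to both sides, obtaining $\tau_{C_0(X)}(\gamma) = \tau_{C_0(X)}(\beta \otimes_{C_0(X)} \alpha)$. The rotation lemma identifies the right-hand side with $\alpha \otimes_{\mathbf{C}} \beta$, so that $\tau_{C_0(X)}(\gamma) = \alpha \otimes_{\mathbf{C}} \beta$. Finally, Theorem \ref{dualdirac} asserts precisely that $\alpha \otimes_{\mathbf{C}} \beta = 1$ in $KK_G(C_0(X), C_0(X))$, and chaining the two equalities yields $\tau_{C_0(G/K)}(\gamma) = 1$, as desired.

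Since both inputs are already available, there is essentially no obstacle in the deduction itself: the corollary is a formal consequence of combining Lemma \ref{rotation} with Theorem \ref{dualdirac}. The one point that warrants a moment's care is the bookkeeping of the functor $\tau_{C_0(X)}$, i.e. checking that $\tau_{C_0(X)}(\gamma) \in KK_G(\mathbf{C} \otimes C_0(X), \mathbf{C} \otimes C_0(X))$ is canonically the element of $KK_G(C_0(X), C_0(X))$ referred to in Theorem \ref{dualdirac}; but this is just the identification $\mathbf{C} \otimes C_0(X) \cong C_0(X)$. All the genuine mathematical content—the geometry of the Clifford vector field $b$, the Dirac operator underlying $\alpha$, and the homotopy realizing $\alpha \otimes_{\mathbf{C}} \beta = 1$—has already been absorbed into the statements we are allowed to cite, so the proof here is purely a matter of assembling them.
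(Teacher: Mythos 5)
Your deduction is formally valid as an implication, but it is circular within the logical architecture of this section. In the paper, Theorem \ref{dualdirac} (the identity $\alpha\otimes_{\bf C}\beta=1$) is \emph{not} an independently established input: the text states explicitly that ``Theorem \ref{dualdirac} follows by combining lemma \ref{rotation} with corollary \ref{induction}.'' In other words, Corollary \ref{induction} is one of the two ingredients used to \emph{prove} Theorem \ref{dualdirac}, so you cannot invoke that theorem to derive the corollary. The rotation lemma by itself only gives $\tau_{C_0(X)}(\gamma)=\alpha\otimes_{\bf C}\beta$; to conclude that either side equals $1$ you need an independent argument, and that is precisely what the corollary is supposed to supply.

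The paper's actual proof goes through Lemma \ref{restriction} instead: one uses the factorization $\tau_{C_0(G/K)}={\rm Ind}^G_K\circ{\rm Rest}^G_K$, where ${\rm Ind}^G_K:R(K)\rightarrow KK_G(C_0(G/K),C_0(G/K))$ is Kasparov's induction map. Since ${\rm Rest}^G_K(\gamma)=1$ in $R(K)$ by Lemma \ref{restriction} (a $K$-equivariant Bott periodicity statement, which carries the real content here), and ${\rm Ind}^G_K$ sends $1$ to $1$, one gets $\tau_{C_0(G/K)}(\gamma)={\rm Ind}^G_K(1)=1$. To repair your argument you should replace the appeal to Theorem \ref{dualdirac} by this restriction--induction factorization together with Lemma \ref{restriction}.
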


This follows from the fact that $\tau_{C_0(G/K)}={\rm Ind}^G_K \circ {\rm Rest}^G_K$,
where the induction ${\rm Ind}^G_K: R(K)\rightarrow KK_G(C_0(G/K),C_0(G/K))$ is defined in \cite {Kasparov88}. Theorem \ref{dualdirac} follows by combining lemma \ref{rotation} with corollary \ref{induction}.

Since $\gamma$ is an idempotent, the ring $KK_G({\bf C}, {\bf C})$ is a direct sum of two subrings
$$KK_G({\bf C}, {\bf C})=\gamma KK_G({\bf C},{\bf C})\oplus (1-\gamma ) KK_G({\bf C},{\bf C})$$
Moreover, by lemma \ref{restriction} the restriction map
 $KK_G({\bf C},{\bf C})\rightarrow KK_K({\bf C},{\bf C})=R(K)$ is an isomorphism from $\gamma KK_G({\bf C},{\bf C})$ to $R(K)$, and vanishes on the complement $(1-\gamma ) KK_G({\bf C},{\bf C})$. More generally for any $A$,$B$ as above, 
 $$KK_G(A,B)=\gamma KK_G(A,B)\oplus (1-\gamma ) KK_G(A,B),$$
 the restriction map is an isomorphism from $\gamma KK_G(A,B)$ to $KK_K(A,B)$ and vanishes on $(1-\gamma ) KK_G(A,B)$.
 
 \bigskip

The element $\gamma$ acts on the K-theory of $C_r^*(G,A)$  by an idempotent map which can be described as follows. Consider the composition of ring homomorphisms
$$KK_G({\bf C}, {\bf C})\rightarrow KK_G(A, A)\rightarrow KK(C_r^*(G, A), C_r^*(G,A))\rightarrow {\rm End}(K_*(C_r^*(G,A)))$$
and take the image of the idempotent $\gamma$ by the above map: 
$$\tilde\gamma_{A}\in{\rm End}(K_*(C_r^*(G,A))).$$
The results of Kasparov  \cite{KaspConsp} \cite{Kasparov88} can then be summarized as follows:

\begin{Thm}\label{StrongNovikov} The map $\tilde\alpha_A$  is injective\footnote{The injectivity of $\tilde\alpha_A$ is responsible for the Novikov conjecture, conjecture \ref{Nov}: see subsection \ref{InjectivityNovikov}.}. Its image in $K_*(C_r^*(G,A))$ is equal to the image of the idempotent map
$\tilde\gamma_A$.
\end {Thm}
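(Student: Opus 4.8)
The plan is to establish the two assertions separately, using the idempotent decomposition of $KK_G({\bf C},{\bf C})$ provided by Theorem \ref{dualdirac} and Lemma \ref{restriction}, and the compatibility of the descent map $j_{G,r}$ with the Kasparov product. First I would record the functorial picture: the maps $\tilde\alpha_A$ and $\tilde\beta_A$ are obtained from $\alpha\in KK_G^j(C_0(X),{\bf C})$ and $\beta\in KK_G^j({\bf C},C_0(X))$ by applying $\tau_A$ and then $j_{G,r}$ (together with the Morita equivalence $C^*_r(G,C_0(X)\otimes A)\sim C^*(K,A)$). Because the descent map is a functor compatible with Kasparov products, and because $\tau_A$ is a ring homomorphism on $KK_G({\bf C},{\bf C})$, the relation $\alpha\otimes_{\bf C}\beta=1$ from Theorem \ref{dualdirac} descends to the statement that $\tilde\beta_A\circ\tilde\alpha_A$ is the identity on $K_{*+j}(C^*(K,A))$.

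For \emph{injectivity} of $\tilde\alpha_A$, this one-sided inverse suffices immediately: if $\tilde\beta_A\circ\tilde\alpha_A=\mathrm{Id}$, then $\tilde\alpha_A$ must be injective. This is the formal skeleton of the Dirac--dual Dirac method, and the work is entirely in justifying that the product $\alpha\otimes_{\bf C}\beta=1$ survives the passage through $\tau_A$ and $j_{G,r}$ unchanged, i.e. that these operations are multiplicative and natural; this is where one invokes Kasparov's theory of the descent homomorphism and the associativity of the cup product.

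For the statement about the \emph{image}, I would argue that $\tilde\alpha_A\circ\tilde\beta_A$ equals the idempotent $\tilde\gamma_A$ acting on $K_*(C^*_r(G,A))$. Indeed, $\gamma=\beta\otimes_{C_0(X)}\alpha$ by definition (Theorem \ref{dualdirac}), so after applying $\tau_A$ and $j_{G,r}$ and using the ring-homomorphism property of the composite $KK_G({\bf C},{\bf C})\to\mathrm{End}(K_*(C^*_r(G,A)))$ that defines $\tilde\gamma_A$, one gets $\tilde\alpha_A\circ\tilde\beta_A=\tilde\gamma_A$. Since $\gamma$ is idempotent, so is $\tilde\gamma_A$. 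Now the image of $\tilde\alpha_A$ contains the image of $\tilde\gamma_A=\tilde\alpha_A\circ\tilde\beta_A$; conversely, writing any $x=\tilde\alpha_A(y)$ in the image and using $\tilde\beta_A\circ\tilde\alpha_A=\mathrm{Id}$ gives $\tilde\gamma_A(x)=\tilde\alpha_A(\tilde\beta_A(\tilde\alpha_A(y)))=\tilde\alpha_A(y)=x$, so $x$ lies in the range of the idempotent $\tilde\gamma_A$. Hence $\mathrm{im}(\tilde\alpha_A)=\mathrm{im}(\tilde\gamma_A)$, as claimed.

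The main obstacle is not the final algebra, which is a short diagram chase once the descent is in place, but rather the verification that the Kasparov-product identities hold at the level of the $KK$-groups \emph{before} descent and are preserved by $\tau_A$ and $j_{G,r}$. Concretely, one must know that $j_{G,r}$ is compatible with the external product (so that products of descended elements compute as descents of products) and that the Morita equivalence used to identify $C^*_r(G,C_0(X)\otimes A)$ with $C^*(K,A)$ is itself implemented by an invertible $KK$-element that intertwines everything coherently. These are exactly the deep structural results of Kasparov's \cite{KaspConsp} and \cite{Kasparov88}; granting them, the proof reduces to the formal manipulation above.
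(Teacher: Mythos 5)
Your proof is correct and is precisely the argument the paper has in mind: the relation $\alpha\otimes_{\bf C}\beta=1$ of Theorem \ref{dualdirac} descends through $\tau_A$ and $j_{G,r}$ (with the Morita equivalence) to give $\tilde\beta_A\circ\tilde\alpha_A=\mathrm{Id}$ and $\tilde\alpha_A\circ\tilde\beta_A=\tilde\gamma_A$, after which your diagram chase yields both injectivity and the equality of images. The paper states this theorem only as a summary of Kasparov's results in \cite{KaspConsp} and \cite{Kasparov88} rather than writing out a proof, but the machinery it assembles beforehand (Theorem \ref{dualdirac} and the definitions of $\tilde\alpha_A$, $\tilde\beta_A$, $\tilde\gamma_A$ via $\tau_A$ and the descent map) is set up exactly so that your formal argument is the intended one.
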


\begin{Cor} The Connes-Kasparov conjecture with coefficients in $A$ (i.e. the statement that $\tilde\alpha_A$ is an isomorphism) is equivalent to the equality $\tilde\gamma_A={\rm Id}$.
\end{Cor}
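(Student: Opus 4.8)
The plan is to treat this as a purely formal consequence of Theorem \ref{StrongNovikov}; all the analytic content has already been spent there, and what remains is an elementary observation about idempotent endomorphisms of abelian groups. Write $M = K_*(C^*_r(G,A))$ and $N = K_{*+j}(C^*(K,A))$, so that $\tilde{\alpha}_A \colon N \to M$ and $\tilde{\gamma}_A \in \mathrm{End}(M)$. Two facts are available for free. First, $\tilde{\gamma}_A$ is an idempotent of $\mathrm{End}(M)$, being the image of the idempotent $\gamma \in KK_G(\mathbf{C},\mathbf{C})$ under a composition of ring homomorphisms; so $\tilde{\gamma}_A \circ \tilde{\gamma}_A = \tilde{\gamma}_A$. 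Second, by Theorem \ref{StrongNovikov} the map $\tilde{\alpha}_A$ is injective and its image coincides with the image of $\tilde{\gamma}_A$, i.e. $\mathrm{Im}(\tilde{\alpha}_A) = \mathrm{Im}(\tilde{\gamma}_A)$.

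The key elementary lemma I would isolate first is the following: an idempotent $p$ on an abelian group $M$ equals the identity if and only if $\mathrm{Im}(p) = M$. Indeed, if $\mathrm{Im}(p) = M$ then every $x \in M$ is of the form $x = p(y)$, whence $p(x) = p(p(y)) = p(y) = x$; the converse is trivial. With this in hand the equivalence follows in both directions.

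For the forward implication, suppose $\tilde{\alpha}_A$ is an isomorphism. Then it is surjective, so $\mathrm{Im}(\tilde{\gamma}_A) = \mathrm{Im}(\tilde{\alpha}_A) = M$, and by the lemma $\tilde{\gamma}_A = \mathrm{Id}$. For the converse, suppose $\tilde{\gamma}_A = \mathrm{Id}$. Then $\mathrm{Im}(\tilde{\alpha}_A) = \mathrm{Im}(\tilde{\gamma}_A) = M$, so $\tilde{\alpha}_A$ is surjective; combined with the injectivity furnished by Theorem \ref{StrongNovikov}, it is an isomorphism.

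There is no genuine obstacle at this level, since Theorem \ref{StrongNovikov} already encapsulates the two nontrivial inputs, namely injectivity of Dirac induction and the identification of its image with the range of the $\gamma$-idempotent. The only point demanding any care is to be sure that $\tilde{\gamma}_A$ really is idempotent \emph{on K-theory}, which rests on the functoriality of the descent map $j_{G,r}$ and of the action on $K_*$, so that the ring structure of $KK_G(\mathbf{C},\mathbf{C})$ together with the relation $\gamma \otimes_{\mathbf{C}} \gamma = \gamma$ of Theorem \ref{dualdirac} is transported faithfully to $\mathrm{End}(M)$.
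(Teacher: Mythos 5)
Your proof is correct and follows exactly the route the paper intends: the corollary is an immediate formal consequence of Theorem \ref{StrongNovikov} (injectivity of $\tilde\alpha_A$ plus $\mathrm{Im}(\tilde\alpha_A)=\mathrm{Im}(\tilde\gamma_A)$) combined with the observation that an idempotent endomorphism of an abelian group is the identity precisely when it is surjective. Nothing is missing.
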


\begin{Cor} If $\gamma=1$ in $KK_G({\bf C},{\bf C})$, then the Connes-Kasparov conjecture with coefficients is true.
\end{Cor}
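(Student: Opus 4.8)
The plan is to deduce the statement directly from the preceding corollary, so that all the analytic content is already packaged in Theorem \ref{StrongNovikov} and in the equivalence ``$\tilde\alpha_A$ is an isomorphism $\iff \tilde\gamma_A=\mathrm{Id}$''. Concretely, it suffices to show that the hypothesis $\gamma=1$ in $KK_G(\C,\C)$ forces $\tilde\gamma_A=\mathrm{Id}$ for \emph{every} $G$-$C^*$-algebra $A$; the corollary then yields that $\tilde\alpha_A$ is an isomorphism for every $A$, which is exactly the Connes-Kasparov conjecture with coefficients.

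First I would recall the defining composition of the idempotent $\tilde\gamma_A$, namely
$$KK_G(\C,\C)\xrightarrow{\tau_A} KK_G(A,A)\xrightarrow{j_{G,r}} KK(C_r^*(G,A),C_r^*(G,A))\rightarrow {\rm End}(K_*(C_r^*(G,A))),$$
with $\tilde\gamma_A$ the image of $\gamma$. The crucial observation is that each of these three maps is not merely a ring homomorphism (as already stated in the excerpt) but a \emph{unital} one. Indeed, $\tau_A$ sends the unit $1\in KK_G(\C,\C)$ to the unit $1_A\in KK_G(A,A)$, since tensoring by $A$ respects the Kasparov product and carries the identity class to the identity class; the descent map $j_{G,r}$ likewise sends $1_A$ to the identity of $KK(C_r^*(G,A),C_r^*(G,A))$; and the final action map, given by Kasparov product of a class in $KK(B,B)$ with $B=C_r^*(G,A)$ against elements of $K_*(B)$, sends the identity class $1_B$ to the identity endomorphism $\mathrm{Id}\in{\rm End}(K_*(B))$. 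Hence the full composition is a unital ring homomorphism.

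Consequently, if $\gamma=1$ in $KK_G(\C,\C)$, then its image under this composition is $\tilde\gamma_A=\mathrm{Id}$ in ${\rm End}(K_*(C_r^*(G,A)))$, and this holds for every $A$. Applying the preceding corollary, $\tilde\gamma_A=\mathrm{Id}$ is equivalent to $\tilde\alpha_A$ being an isomorphism, i.e. to the Connes-Kasparov conjecture with coefficients in $A$; as $A$ was arbitrary, the conjecture with coefficients holds. I do not expect any genuine obstacle here: the entire difficulty has already been absorbed into Theorem \ref{StrongNovikov} and the equivalence of the previous corollary. The only point requiring care is the verification that each arrow in the defining composition preserves units, which is a formal property of $\tau$, of the reduced descent map $j_{G,r}$, and of the $KK$-action on K-theory.
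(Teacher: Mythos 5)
Your proof is correct and is precisely the argument the paper intends: since $\tilde\gamma_A$ is by definition the image of $\gamma$ under the composition of unital ring homomorphisms $KK_G({\bf C},{\bf C})\rightarrow KK_G(A,A)\rightarrow KK(C_r^*(G,A),C_r^*(G,A))\rightarrow {\rm End}(K_*(C_r^*(G,A)))$, the hypothesis $\gamma=1$ forces $\tilde\gamma_A={\rm Id}$ for every $G$-$C^*$-algebra $A$, and the preceding corollary then gives that $\tilde\alpha_A$ is an isomorphism for all $A$. Your care in checking unitality of each arrow is exactly the one formal point the paper leaves implicit.
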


\subsection {From K-theory to K-homology}\label{Poincare}

All the constructions above rest upon the assumption that the space $X=G/K$ carries a $G$-equivariant structure of a {\it spin manifold}, or equivalently that the representation of $K$ on $V^*=T_{x_0}^*X$ is spinorial.

In the case of a general connected Lie group, this is not necessarily the case, and Kasparov's constructions have to be modified as follows.
Consider the cotangent bundle $T^*X$ which has an almost-complex structure. There is therefore a Dirac operator on $T^*X$, which defines an element $\alpha\in KK_G(C_0(T^*X),{\bf C})$. 
Applying the same procedure as above yields an element of $KK(C^*(K,A\otimes C_0(V^*)), C^*_r(G,A))$ since $C^*(G,A\otimes C_0(T^*X))$ is Morita equivalent to $C^*(K,A\otimes C_0(V^*))$.

Therefore the element $\alpha$ defines a map $$K_*(C^*(K,A\otimes C_0(V^*)))\rightarrow K_*(C^*_r(G,A)).$$
Note that there is no dimension shift but that $A$ is replaced by $A\otimes C_0(V^*)$.
As usual, note the special cases $A={\bf C}$ and $A=C(G/\Gamma )$
\begin{enumerate}
\item[1)] $K_*(C^*(K, C_0(V^*)))\rightarrow K_*(C^*_r(G));$
\item[2)] $K^*(T^*M)\rightarrow K_*(C^*_r(\Gamma))$ where $M=\Gamma\backslash G/K$.
\end{enumerate}
\bigskip
In the same way one can define a dual-Dirac element $\beta\in KK_G({\bf C}, C_0(T^*X))$ and an element $\gamma\in KK_G({\bf C},{\bf C})$. The same results as above do hold.

The role of the cotangent bundle $T^*X$ or equivalently the representation of $K$ on $V^*=T_{x_0}^*X$ is closely related to Poincar\'e duality in K-theory. The latter is conveniently formulated in Kasparov theory as follows. As we shall see, 
the left-hand side of the conjecture should in fact be interpreted, rather than a K-theory group, as a K-homology group. The Dirac induction map appears rather as the composition of the assembly map with the Poincar\'e duality map.

Let us explain that point. In Kasparov theory, the K-homology $K^*(A)$ of a $C^*$-algebra is defined as the group $KK(A, {\bf C})$. There is a duality pairing 
$$K_*(A)\otimes K^*(A)\rightarrow {\bf Z}$$
with the K-theory $K_*(A)=KK({\bf C},A)$, defined by  the cup product $$KK({\bf C},A)\otimes KK(A,{\bf C})\rightarrow KK({\bf C},{\bf C})={\bf Z}$$

For example if $M$ is a compact manifold, the K-homology group $K_*(M)=K^*(C(M))$ can be described, according to Atiyah \cite{Atiyah70} , as the group ${\rm Ell}(M)$ of classes of elliptic operators on the manifold $M$. The pairing $K^*(M)\otimes K_*(M)\rightarrow {\bf Z}$ associates to a vector bundle $E$ and an elliptic operator $D$ the index of the operator $D_E$ with coefficients in $E$.
Poincar\'e duality in K-theory is a canonical  isomorphism
 $$K^*(T^*M)\rightarrow K_*(M)$$ between the K-homology of $M$  and the K-theory of the total space $T^*M$ of its cotangent bundle.
Such a map can be interpreted as follows: an element of $K^*(T^*M)$ is the homotopy class of an elliptic symbol on $M$. Its image in $K_*(M)$ is the class of an elliptic pseudodifferential operator associated to that symbol. 
In Kasparov theory, one can interpret Poincar\'e duality as the existence of two elements, respectively of $KK(C(M)\otimes C_0(T^*M), {\bf C})$ and of $KK({\bf C},C(M)\otimes C_0(T^*M))$, inverse to each other for the cup product. See the details in \cite{Kasparov88}.

This allows to reformulate the conjecture as follows. For the case of a torsion-free discrete cocompact subgroup $\Gamma$ as above, the map $K^*(T^*M)\rightarrow K_*(C^*_r(\Gamma))$ becomes\footnote{This is actually the same map as the map $\beta$ from section \ref{Novikov}.}
$$K_*(M)\rightarrow K_*(C^*_r(\Gamma)).$$
In general, one needs the $G$-equivariant version of Poincar\'e duality for the space $X=G/K$. There are two elements one of 
$KK_G(C_0(X)\otimes C_0(T^*X), {\bf C})$ and the other of $KK_G({\bf C},C_0(X)\otimes C_0(T^*X))$ that are inverse to each other.

Then for any $G-C^*$-algebra $A$, one has an isomorphism
$$KK^G({\bf C}, C_0(T^*X)\otimes A)\rightarrow KK^G(C_0(X), A).$$
One can show that the first group is isomorphic to $$KK({\bf C}, C^*(G,C_0(T^*X)\otimes A)=KK({\bf C}, C^*(K,C_0(V^*)\otimes A)).$$

The Dirac induction with coefficients in $A$ can therefore be defined as a map $$KK^G(C_0(X), A))\rightarrow K_*(C^*_{\rm red}(G,A))$$
which in the case without coefficients can be written as
$K_*^G(X))\rightarrow K_*(C^*_{\rm red}(G))$.

\subsection {Generalisation to the $p$-adic case}\label{p-adic}

Shortly after the work of Kasparov, it became natural to investigate the analogue of the Kasparov Dirac-dual Dirac method when real Lie groups are replaced by $p$-adic groups. According to the philosophy of F. Bruhat and J. Tits the $p$-adic analogue of the symmetric space is a building of affine type (see \cite{BT72}, \cite{T75}). It shares with symmetric spaces the property of unique geodesics between two points, and the fact that the stabilisers of vertices are maximal compact subgroups (note that there may be several conjugacy classes of such subgroups). In the rank one case, e.g. $SL(2,{\bf Q}_p$), the Bruhat-Tits building is the Bass-Serre tree. 
P. Julg and A. Valette \cite{Julg-Valette88} have constructed an element $\gamma$ for buildings using an operator on the Hilbert space $\ell^2 (X)$ (the set $X$ is seen as the set of objects of all dimensions) which may be seen as the "vector pointing to the origin", generalizing the Julg-Valette element for trees \cite{JV84}.

\medskip
The question of an analogue of the Connes-Kasparov conjecture for $p$-adic groups has been considered by Kasparov and Skandalis in \cite{Kasparov-Skandalis91}. They met the following
difficulty: the building is not a manifold, and it does not satisfy the Poincar\'e duality in the usual sense. However, if $X$ is a simplicial complex, there is an algebra ${\cal A}_X$ which plays the role played by the algebra $C^*(TM)=C_0(T^*M)$ 
in the case of a manifold $M$. The algebra ${\cal A}_X$ is not commutative, it is in fact the algebra of a groupoid associated to the simplicial complex $X$. Moreover, ${\cal A}_X$ is Poincar\'e dual in K-theory to the commutative algebra $C_0(\vert X\vert )$ of continuous functions on the geometric realisation of $X$:  there is a canonical isomorphism $$K_*({\cal A}_X)\rightarrow K^*(C_0(\vert X\vert ))$$
from the K-theory of the algebra $A_X$ to the K-homology of the space $\vert X\vert$. 

Let us now assume that
 $X$ is the Bruhat-Tits building of a reductive linear group over a non-Archimedean local field (e.g. ${\bf Q}_p$). Then the above form of the Poincar\'e duality, in a $G$-equivariant way, shows the isomorphism
 $$KK_G(C_0(\vert X\vert ),A)=K_*(C^*(G, {\cal A}_X\otimes A))$$
 for any $G-C^*$-algebra $A$. 
 
 By analogy with the Lie group case, it was natural to construct a map from the group above to the K-theory group $K_*(C^*_r(G,A))$. G. Kasparov and G. Skandalis  \cite{Kasparov-Skandalis91} construct a Dirac element $\alpha\in KK_G({\cal A}_X,{\bf C})$ which defines as above maps in K-theory:
$$K_*(C^*(G, {\cal A}_X\otimes A))\rightarrow K_*(C^*_r(G,A)).$$
The left-hand side can be computed by Morita equivalence from the K-theory of crossed products of $A$ by the compact subgroups of $G$ stabilizing the vertices of a simplex viewed as a fundamental domain. A special case is the Pimsner exact sequence for trees \cite{Pimsner86}.

G. Kasparov and G. Skandalis have shown the injectivity of the above map (which implies the Novikov conjecture for discrete subgroups of $p$-adic groups) by constructing a dual-Dirac element $\beta\in KK_G({\bf C},{\cal A}_X)$. They show that 
$$\beta\otimes_{A_X}\alpha=\gamma\in KK_G({\bf C},{\bf C}),$$
the Julg-Valette element of  \cite{Julg-Valette88}. A rotation trick shows that $\alpha\otimes_{\bf C}\beta=1$.

At this point we note that the Lie group case and the $p$-adic group case can be unified by the K-homology formulation of the conjecture. If $Z$ denotes the locally compact $G$-space which is the symmetric space $G/K$ in the Lie case, the geometric realization $\vert X\vert$
of the Bruhat-Tits building in the $p$-adic case, the conjecture is that a certain map

$$KK_G(C_0(Z),A)\rightarrow K_*(C^*(G, A))$$
is an isomorphism. This will become more precise with the Baum-Connes-Higson formulation of the conjecture for general locally compact groups: the role of the symmetric spaces or Bruhat-Tits buildings will be clarified as classifying spaces for proper actions, see sections \ref{classifying} and \ref{official}.
In both cases injectivity can be proved by a Dirac-dual-Dirac method, which hints to a general notion of $\gamma$-element, as explained in section \ref{viveTu}.

\section{Towards the official version of the conjecture}

\subsection{Time-dependent left-hand side}\label{timedep}
There is a certain time-dependency in the left-hand side of the Baum-Connes conjecture, hence also in the assembly map. Let us first recall the fundamental concept of proper actions.

\begin{Def} \begin{enumerate}
\item Let $G$ be a locally compact group. A $G$-action on a locally compact space $X$ is said to be {\it proper} if the action map $$G\times X\rightarrow X:(g,x)\mapsto gx$$ is proper, i.e. the inverse image of a compact subset of $X$, is compact.  
\item If $X$ is a locally compact, proper $G$-space, then the quotient space $G\backslash X$ is locally compact, and $X$ is said to be {\it $G$-compact} if $G\backslash X$ is compact.
\end{enumerate}
\end{Def}

In the original paper of Baum-Connes \cite{BC}, the conjecture is formulated only for Lie groups - possibly with infinitely many connected components, so as to include discrete groups. However, the authors take great care in allowing coefficients, in the form of group actions on smooth manifolds. So if $G$ is a Lie group (not necessarily connected) and $M$ is a manifold, the goal is to identify the analytical object $K_*(C^*_r(G,C_0(M)))$ (the K-theory of the reduced crossed product $C^*$-algebra), with something of geometrical nature. 

This is done in two steps. First, let $Z$ be a proper $G$-manifold. Denote by $V^0_G(Z)$ the collection of all $G$-elliptic complexes of vector bundles $(E_+,E_-,\sigma)$ where $E_+,E_-$ are $G$-vector bundles over $Z$, and $\sigma:E_+\rightarrow E_-$ is a $G$-equivariant vector bundle map, which is invertible outside of a $G$-compact set. One also defines $V^1_G(Z)=V^0_G(Z\times \R)$, where $G$ acts trivially on $\R$.

The second - and main - step is to consider an {\it arbitrary} $G$-manifold $M$ and to ``approximate'' it by proper $G$-manifolds; here one can identify, in germ, the presence of the classifying space for $G$-proper actions that will come to the forefront in the ``official'' version of the conjecture in \cite{BCH}; see section \ref{official} below. In \cite{BC}, a {\it K-cocycle} for $M$ will be a triple $(Z,f,\xi)$ where:
\begin{itemize}
\item $Z$ is a proper, $G$-compact, $G$-manifold;
\item $f:Z\rightarrow M$ is a $G$-map;
\item $\xi\in V^*_G(T^*Z\oplus f^*T^*M)$.
\end{itemize}
We denote by $\Gamma(G,M)$ the set of K-cocycles for $M$. If $(Z,f,\xi)$ and $(Z',f',\xi')$ are two equivariant K-cycles for $X$, then their disjoint union is the equivariant K-cycle $(Z\coprod Z',f\coprod f',\xi\coprod\xi')$. It is assumed that manifolds are not necessarily connected, and their connected components do not always have the same dimension. The operation of disjoint union will give addition.

Suppose that the manifolds $Z_1,Z_2,M$ and the $G$-maps $f_1,f_2,g$ fit into a commutative diagram 

$$\xymatrix{Z_1\ar[rr]^{h}\ar[dr]_{f_1}&&Z_2\ar[dl]^{f_2}\\
&M&
}
$$

Then, using the Thom isomorphism, it is possible to construct a ``{\it wrong way functoriality''} Gysin map
$$h_!: K^*_G(T^*Z_1\oplus f_1^*T^*M)\rightarrow K^*_G(T^*Z_2\oplus f_2^*T^*M).$$
Two K-cocycles $(Z_1,f_1,\xi_1),(Z_2,f_2,\xi_2)$ are said to be equivalent\footnote{The fact that it is indeed an equivalence relation does not appear in \cite{BC}.} if there exists a K-cocycle $(\tilde{Z},\tilde{f},\tilde{\xi})$ and $G$-maps $h_1:Z_1\rightarrow\tilde{Z}, h_2:Z_2\rightarrow\tilde{Z}$ making the following diagram commutative:

$$\xymatrix{Z_1\ar[r]^{h_1}\ar[dr]_{f_1}&\tilde{Z}\ar[d]_{\tilde{f}}&Z_2\ar[l]_{h_2}\ar[dl]^{f_2}\\
&M&
},$$

and such that $h_{1,!}(\xi_1)=\tilde{\xi}=h_{2,!}(\xi_2)$. Then we define $K^{top}(G,M)$ as the quotient of $\Gamma(G,M)$ by this equivalence relation. 

To construct the assembly map $\mu_{r,M}: K^{top}(G,M)\rightarrow K_*(C^*_r(G,C_0(M)))$, the construction is roughly as follows. Start from a K-cocycle $(Z,f,\xi)\in\Gamma(G,M)$. Observe that $f=p\circ i$, where $i:Z\rightarrow Z\times M:z\mapsto(z,f(z))$ and $p:Z\times M\rightarrow M$ is the projection onto the second factor. Replacing $Z$ by $Z\times M$ and $f$ by $p$, we may assume that $f$ is a submersion. Let then $\tau$ be the cotangent bundle along the fibers of $f$. By the Thom isomorphism, the class $\xi\in V^*_G(T^*Z\oplus f^*T^*M)$ determines a unique class $\eta\in V^*_G(\eta)$. For $x\in M$, set $Z_x=f^{-1}(x)$. Then, restricting $\eta$ to $Z_x$ we get $\eta_x\in V^*(Z_x)$, which can be viewed as the symbol of some elliptic differential operator $D_x$ on $Z_x$. Then the family $(D_x)_{x\in M}$ is a $G$-equivariant family of elliptic differential operators on $M$, so its $G$-index belongs to $K_*(C^*_r(G,C_0(M)))$ and we set:
$$\tilde{\mu}_{r,M}(Z,f,\xi)=Ind_G(D_x)_{x\in M}.$$
It is stated in Theorem 5 of \cite{BC} that this map $\tilde{\mu}_{r,M}$ is compatible with wrong way Gysin maps, so it descends to a homomorphism of abelian groups:
$$\mu_{r,M}:K^{top}(G,M)\rightarrow K_*(C^*_r(G,C_0(M))),$$
and the main conjecture in \cite{BC} is that $\mu_{r,M}$ is an isomorphism for every Lie group $G$ and every $G$-manifold $M$.

\subsection{The classifying space for proper actions, and its K-homology}\label{classifying}

In the paper \cite{BCH}, P. Baum, A. Connes and N. Higson consider the class of all 2nd countable, locally compact groups $G$. They make a systematic use of the classifying space for proper actions $\underline{EG}$, first introduced in this context in \cite{BC88}. The $G/K$ space associated to a connected Lie group and the Bruhat-Tits building of a $p$-adic group are special cases of classifying space of proper actions as we mentioned already in section \ref{p-adic}. 

\begin{Def} Let $G$ be a 2nd countable locally compact group. A classifying space for proper actions for $G$, is a proper $G$-space $\underline{EG}$ with the properties that, if $X$ is any proper $G$-space, then there exists a $G$-map $X\rightarrow \underline{EG}$, and any two $G$-maps from $X$ to $\underline{EG}$ are $G$-homotopic. 
\end{Def}

When $\Gamma$ is a countable discrete group, we could also define $\underline{E\Gamma}$ as a $\Gamma$-CW-complex such that the fixed point set $\underline{E\Gamma}^H$ is empty whenever $H$ is an infinite subgroup of $\Gamma$, and is contractible whenever $H$ is a finite subgroup (in particular $\underline{E\Gamma}$ is itself contractible).

Back to the general case: even if we refer to $\underline{EG}$ as ``the'' universal space for proper actions of $G$, it is important to keep in mind that $\underline{EG}$ is only unique up to $G$-equivariant homotopy, and the definition of the left hand side $K^{top}_*(G,A)$ will have to account for this ambiguity. So we define
$$K^{top}_*(G,A)=\lim_X KK^G_*(C_0(X),A),$$
where $X$ runs in the directed set of closed, $G$-compact subsets of $\underline{EG}$. This is the left hand side of the assembly map for $G\curvearrowright A$.

\subsection{The Baum-Connes-Higson formulation of the conjecture}\label{official}

For any proper, $G$-compact $G$-space $X$, the space $C_0(X)$ is a module of finite type over the algebra $C^*(G, C_0(X))$ (which is both the full and the reduced one) whose class in  $K_0(C^*(G, C_0(X)))=KK({\bf C},C^*(G, C_0(X)))$ will be denoted by $e_X$. Then for any $G-C^*$-algebra $A$,  Kasparov's descent map $$j_{G,r}:KK_G(C_0(X),A)\rightarrow KK(C^*(G, C_0(X)),C_r^*(G,A))$$
can be composed with the left multiplication by $e_X$ : $$KK(C^*(G, C_0(X)),C_r^*(G,A))\rightarrow KK(\C,C_r^*(G,A))$$
to define a map $KK_G(C_0(X),A)\rightarrow K_*(C_r^*(G,A))$.

When $X$ runs in the directed set of closed, $G$-compact subsets of $\underline{EG}$, those maps are compatible with the direct limit, hence define the {\it assembly map} or {\it index map}:
$$\mu_{A,r}:K^{top}_*(G,A)\rightarrow K_*(C^*_r(G,A)).$$

For $A=\C$, the map $\mu_{A,r}$ is simply denoted by $\mu_r$. The Baum-Connes conjecture is then stated as follows, in its two classical versions:

\begin{Conj}\label{ConjBC}[The Baum-Connes conjecture]
For all locally compact, 2nd countable groups $G$ the assembly map $\mu_r$ is an isomorphism.
\end{Conj}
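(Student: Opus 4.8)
The plan is not to compute either side of the assembly map directly --- both $K^{top}_*(G)$ and $K_*(C^*_r(G))$ are inaccessible in general --- but rather to manufacture a homotopy inverse to $\mu_r$ out of the Dirac and dual-Dirac elements, thereby reducing the isomorphism statement to a single identity in the commutative ring $KK_G(\C,\C)$. Concretely, I would take as input the $\gamma$-element $\gamma=\beta\otimes_{C_0(X)}\alpha$ furnished by Theorem \ref{dualdirac}, where $X$ is a model for $\underline{EG}$ (the symmetric space in the Lie case, the Bruhat--Tits building in the $p$-adic case). Since $\gamma$ is an idempotent that acts on $K_*(C^*_r(G,A))$ through $\tilde\gamma_A$, and since by Theorem \ref{StrongNovikov} the image of the assembly map coincides with the image of $\tilde\gamma_A$, proving that $\mu_r$ is an isomorphism would follow at once from the single equality $\gamma=1$ in $KK_G(\C,\C)$.

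The injectivity half is the robust part of this programme, and I would dispose of it first. The mere existence of an idempotent $\gamma$ with $\mathrm{Rest}^G_K(\gamma)=1$ (Lemma \ref{restriction}), together with the factorisation of Theorem \ref{dualdirac}, realises the assembly map as a direct summand; this is exactly the content of the injectivity statement in Theorem \ref{StrongNovikov}, and it needs no global hypothesis beyond the existence of a $\gamma$-element. For the groups at hand --- those acting properly isometrically on a CAT(0) space or a building, and more generally the class $\mathcal C$ of \cite{Kasparov-Skandalis03} --- such an element exists, so injectivity (and with it the Novikov conjecture) comes essentially for free.

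Surjectivity is where the whole difficulty concentrates, and the natural strategy is to \emph{deform} $\gamma$ to $1$. When $G$ has the Haagerup property this can actually be carried out: a proper affine isometric action on a Hilbert space supplies an infinite-dimensional Bott map along which $\gamma$ is homotopic to the unit, yielding $\gamma=1$ and hence the conjecture, even with coefficients, after Higson--Kasparov \cite{HigKas}. I would try to push this homotopy argument as far as it will go.

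The hard part --- and the reason the statement is \emph{not} in fact a theorem --- is Kazhdan's property (T). For a non-compact group with property (T) the trivial representation is isolated in the unitary dual, so it produces a nonzero spectral projection and a nontrivial idempotent in the maximal completion; this obstruction is precisely what forbids any homotopy of $\gamma$ to $1$, and indeed $\gamma=1$ essentially forces a-T-menability. The Haagerup--Kazhdan dichotomy therefore guarantees that the $\gamma=1$ route cannot reach higher-rank semisimple groups and their lattices such as $SL_3(\Z)$, and for those groups the conjecture remains open. To make progress there one must abandon the $C^*$-algebraic picture and reinterpret $\alpha,\beta,\gamma$ inside Lafforgue's Banach $KK$-theory, where property (RD) permits an unbounded representative that the spectral gap no longer detects; surmounting this obstacle in full generality is, at the time of writing, exactly the point where the problem stands unresolved.
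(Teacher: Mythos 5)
The statement you were asked to prove is Conjecture \ref{ConjBC} itself: it is an open conjecture, and the paper contains no proof of it, so there is nothing to compare your argument against. What you have written is not a proof but a survey of the standard strategy and of why it fails in general, and as such it is essentially faithful to the paper's own account: the existence of a $\gamma$-element gives injectivity of $\mu_{A,r}$ (Theorem \ref{StrongNovikov}, and in Tu's abstract form Theorem \ref{abstractgamma}); the equality $\gamma=1$ gives surjectivity and is achievable exactly in the Haagerup regime (Theorem \ref{HigsonKasparov}); and for non-compact property (T) groups the Kazhdan projection kills any hope of $\gamma=1$ (Corollary \ref{TnotKamen}), forcing the detour through Lafforgue's Banach $KK$-theory and unconditional completions. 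You also correctly locate the genuine open frontier at higher-rank lattices such as $SL_3(\Z)$.

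Two points deserve sharpening. First, "$\gamma=1$ essentially forces a-T-menability" overstates what the paper establishes: Corollary \ref{TnotKamen} shows only that a non-compact Kazhdan group admitting a $\gamma$-element cannot have $\gamma=1$; the converse implication (that $\gamma=1$ implies the Haagerup property) is not claimed anywhere. Second, the property (T) obstruction is an obstruction to the \emph{method}, not to the \emph{statement}: the conjecture without coefficients is in fact a theorem for all semisimple Lie groups (Lafforgue, Theorem \ref{LafforgueSS}), for cocompact lattices in $Sp(n,1)$ and $F_{4(-20)}$, and with coefficients for all hyperbolic groups (Theorem \ref{hypcoeff}) --- many of which have property (T). Your text reads as if property (T) blocks the conjecture itself in those cases, when in fact it only blocks the $C^*$-algebraic homotopy $\gamma\simeq 1$, and the surviving difficulty (strong property (T), Schur property (T)) concerns higher rank specifically. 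Since no proof exists, no proof can be validated; your proposal should be read as a correct statement of the problem, not a solution to it.
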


\begin{Conj}\label{ConjBCcoeff}[The Baum-Connes conjecture with coefficients]
For all locally compact 2nd countable groups $G$ and for all $G$-$C^*$-algebras $A$, the assembly map $\mu_{A,r}$ is an isomorphism. 
\end{Conj}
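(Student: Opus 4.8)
The plan is to push the Dirac--dual-Dirac strategy of Section \ref{gamma} beyond the Connes--Kasparov setting to arbitrary coefficients, realizing the assembly map $\mu_{A,r}$ as a split injection whose image is the range of an idempotent $\tilde\gamma_A$, so that the full conjecture collapses to the single equality $\tilde\gamma_A=\Id$. Concretely, I would first equip $G$ with a model $Z$ for $\underline{EG}$ and manufacture a \emph{$\gamma$-element}: a Dirac class $\alpha\in KK_G(C_0(Z),\C)$, a dual-Dirac class $\beta\in KK_G(\C,C_0(Z))$, and the product $\gamma:=\beta\otimes_{C_0(Z)}\alpha\in KK_G(\C,\C)$. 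For a semisimple Lie group acting on its symmetric space, or a reductive $p$-adic group acting on its Bruhat--Tits building, these are exactly the elements built in Sections \ref{gamma} and \ref{p-adic}, and the rotation argument (Lemma \ref{rotation}) yields $\alpha\otimes_\C\beta=1$, whence $\gamma$ is idempotent (Theorem \ref{dualdirac}). In the general case I would invoke the abstract $\gamma$-element foreshadowed in Section \ref{viveTu}, available whenever $G$ acts properly and isometrically on a space with enough nonpositive curvature (a building, a CAT(0) space, or a Hilbert space via a proper affine action).

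With $\gamma$ in hand, injectivity of $\mu_{A,r}$ is the accessible half. Tensoring $\alpha$ and $\beta$ by $A$ and applying Kasparov's descent $j_{G,r}$, exactly as in the construction of $\mu_{A,r}$ in Section \ref{official} via the classes $e_X$, produces maps between $K_*(C^*_r(G,A))$ and the topological side $K^{top}_*(G,A)=\lim_X KK^G_*(C_0(X),A)$ whose composites are governed by $\gamma$. As in Theorem \ref{StrongNovikov}, the dual-Dirac construction then splits the assembly map, so $\mu_{A,r}$ is injective for every $G$-$C^*$-algebra $A$, and its image is precisely the range of the idempotent $\tilde\gamma_A\in\End(K_*(C^*_r(G,A)))$. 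This already yields the Strong Novikov consequences uniformly in $A$.

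The hard part — and the only remaining point — is surjectivity, that is, proving $\tilde\gamma_A=\Id$ on $K_*(C^*_r(G,A))$ for every $A$. By the corollaries to Theorem \ref{StrongNovikov} this follows if one can establish $\gamma=1$ in $KK_G(\C,\C)$. The restriction lemma (Lemma \ref{restriction}) already gives ${\rm Rest}^G_K(\gamma)=1$, so $\gamma$ and $1$ become indistinguishable after restriction to the maximal compact subgroup; the real task is to upgrade this to a genuine $G$-equivariant homotopy $\gamma\sim 1$. For groups with the Haagerup property I would build such a homotopy from a proper affine isometric action on a Hilbert space, letting the Bott element degenerate to the identity as the metric is rescaled to infinity — this is the mechanism that makes the conjecture attainable for the Haagerup class.

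I expect this homotopy step to be the decisive obstacle. For groups with property (T) the trivial and the $\gamma$ representations are separated by a Kazhdan-type projection, so the naive deformation is genuinely obstructed, and forcing $\tilde\gamma_A=\Id$ for \emph{all} coefficient algebras $A$ appears to require abandoning Hilbert-module $KK$-theory in favor of Lafforgue's Banach $KK$-theory (Section \ref{Laff}), where the Kazhdan projection can be circumvented by working with representations of small exponential growth. Making the equality $\gamma=1$ — or at least $\tilde\gamma_A=\Id$ — survive this passage, uniformly over every $G$-$C^*$-algebra $A$, is where the essential difficulty concentrates, and controlling it for the widest possible class of $A$ is exactly the frontier of the problem.
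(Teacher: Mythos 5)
The decisive problem is not a technical slip inside your argument but the target itself: the statement you are asked to prove is Conjecture \ref{ConjBCcoeff}, which the paper does not prove and which is in fact \emph{false} in the generality stated. The paper itself exhibits the counterexamples: if $\Gamma$ is a Gromov monster, then for the commutative coefficient algebra $A=\ell^\infty(\N,c_0(\Gamma))$ the assembly map $\mu_{A,r}$ is not onto (Higson--Lafforgue--Skandalis, section \ref{coarseHilbert}); more structurally, Theorem \ref{BCnotonto} shows that any group failing half-K-exactness admits a $C^*$-algebra $C$ with \emph{trivial} action for which $\mu_{C,r}$ is not surjective, because $\mu_{C,r}$ factors through $K_*(C\otimes_{\rm max}C^*_{\rm max}(\Gamma))$, which vanishes, while $K_*(C\otimes_{\rm min}C^*_r(\Gamma))\neq 0$; and Theorem \ref{monsters} shows Gromov monsters do fail half-K-exactness. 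So no strategy, yours included, can establish $\tilde\gamma_A=\Id$ for all $G$ and all $A$: the endpoint of your program contradicts these theorems. Note also that your injectivity half quietly assumes a $\gamma$-element, i.e.\ a proper isometric action on a space of the required kind; Gromov monsters admit no such action (they contain coarsely embedded expanders), so even the ``accessible half'' of your plan is not available for all second countable locally compact groups.

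Within the class of groups where your machinery does apply, your own diagnosis of the obstacle is accurate but the proposed escape routes do not close it. The equality $\gamma=1$ in $KK_G(\C,\C)$ fails for \emph{every} non-compact property (T) group (Corollary \ref{TnotKamen}), so that route is limited to the Haagerup class, where it is exactly the Higson--Kasparov theorem (Theorem \ref{HigsonKasparov}). Passing to Lafforgue's Banach KK-theory, as you suggest, proves the Bost conjecture --- it computes $K_*(L^1(G,A))$ or the K-theory of unconditional completions --- and the remaining step from $K_*(L^1)$ to $K_*(C^*_r)$ is genuinely nontrivial: the discrete-series idempotents of $C^*_r(SL_2(\R))$ do not lie in $L^1$, and for higher-rank groups strong property (T) and Schur property (T) (section \ref{strong(T)}) rule out precisely the homotopies through representations of arbitrarily small exponential growth that you propose, even for the conjecture with coefficients. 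What your sketch correctly recovers is the conditional statement the paper does prove (Theorem \ref{abstractgamma}): given a $\gamma$-element, $\mu_{A,r}$ is split injective with image the range of $\tilde\gamma_A$, and surjectivity is \emph{equivalent} to $\tilde\gamma_A=\Id$. The honest status of the conjecture is then: true with coefficients for Haagerup groups and for hyperbolic groups (Theorem \ref{hypcoeff}), open for higher-rank lattices, and false in general.
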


Conjecture \ref{ConjBCcoeff} has the advantage of being stable under passing to closed subgroups (see \cite{ChabertEchterhoff}), and the disadvantage of being false in general: see sections \ref{countergroupoids} and \ref{exact}.

\medskip
If $G$ is discrete, the classifying space $BG$ classifies actions of $G$ which are free and proper. By forgetting about freeness of the action we get a canonical map
$$\iota_G: K_*(BG)\rightarrow K^{top}_*(G)$$
which is rationally injective. The {\it Strong Novikov conjecture} for $G$ is the rational injectivity of $\mu_r\circ\iota_G$.

\begin{Rem} If $p\in K_*(C^*(G,C_0(X)))=KK_*(\C,C^*(G;C_0(X)))$ is a fixed element, the Kasparov product $p\otimes_{C^*(G,C_0(X))}: x\mapsto p\otimes_{C^*(G,C_0(X))x}$ provides a map $KK_*(C^*(G,C_0(X)),C^*_r(G,A))\rightarrow KK_*(\C,C^*_r(G,A))$. Observe that if $p$ is given by an idempotent of $C^*(G,C_0(X))$, and $x=(E_+,E_-,F)$, with $E_+,E_-$ Hilbert $C^*$-modules over $C^*_r(G,A)$ and $F\in\mathcal{B}_{C^*_r(G,A)}(E_+,E_-)$, then $p\otimes_{C^*(G,C_0(X))}x$ is described simply as $(pE_+,pE_-,pFp)$.  
It turns out that $e_X$ can be described by such an idempotent. Indeed, by properness and $G$-compactness, there exists a Bruhat function on $X$, i.e. a non-negative function $f\in C_c(X)$ such that $\int_G f(g^{-1}x)\,dg=1$ for every $x\in X$. Set then $e(x,g)=\sqrt{f(x)f(g^{-1}x)}$. Recalling that the product in $C_c(X\times G)$ is given by $(a\star b)(x,g)=\int_G a(x,h)b(h^{-1}x,h^{-1}g)\,dh$, one sees immediately that $e^2=e$. Since the set of Bruhat functions is clearly convex, we have a canonical K-theory class $[e_X]\in K_0(C^*(G,C_0(X)))$.
\end{Rem}

\begin{Rem} Assume that $A=\C$. Let $x=(E_+,E_-,F)$ be an element of $KK^G_0(C_0(X),\C)$. Denote by $\pi_\pm$ the representation of $C_0(X)$ on $E_\pm$. Say that $F$ is {\it properly supported} if for every $\phi\in C_c(X)$ there exists $\psi\in C_c(X)$ such that $\pi_-(\psi)F\pi_+(\phi)=F\pi_+(\phi)$. Replacing $F$ by some homotopical operator (so not changing the K-homology class of $(E_+,E_-,F)$, we may assume that $F$ is properly supported. Consider then the linear subspaces $\pi_\pm(C_c(X))E_\pm$ of $E_\pm$: those are not Hilbert spaces in general, but these are $C_c(G)$-modules and $F$ induces a $G$-intertwiner between them. These spaces carry the $C_c(G)$-valued scalar product:
$$\langle \xi,\eta\rangle(g)=:\langle\xi,\rho_\pm(g)\eta\rangle\;(\xi,\eta\in E_\pm),$$
where $\rho_\pm$ denotes the unitary representation of $G$ on $E_\pm$. Completing those spaces into $C^*$-modules over $C^*_r(G)$, and extending $F$ to the completion, we get a triple $\mu_r(x)=(\mathcal{E}_+,\mathcal{E}_-,\mathcal{F})\in KK_*(\C,C^*_r(G))=K_*(C^*_r(G))$, also called the $G$-index of $F$.

The two above approaches, for $A=\C$, were shown to be equivalent in Corollary 2.16 of Part 2 of \cite{MisVal}\footnote{Note that the proof is given there only for discrete groups, but the proof goes over to locally compact group.}. 
\end{Rem}

\begin{Rem}\label{reconcile} It was only in 2009 that P. Baum, N. Higson and T. Schick \cite{BaumHigsonSchick} reconciled the original approach of \cite{BC} with the Kasparov-based approach of \cite{BCH}, in the case of discrete groups. 

For general Lie groups (with arbitrarily many connected components), the equivalence between the approaches in \cite{BC} and \cite{BCH} has not been proved in print so far. However for connected Lie groups both approaches reduce to the Connes-Kasparov conjecture so there is no problem.
\end{Rem}

\begin{Rem} There is also a homotopical approach to the Baum-Connes conjecture, developed by J.L. Davis and W. L\"uck \cite{DL98}; it is valid for discrete groups only. It uses homotopy spectra over the orbit category. More precisely, let $G$ be a group, and denote by $\mathbb{O}_\mathcal{F}(G)$ the category whose objects are homogeneous spaces $G/H$, with $H$ a finite subgroup, and morphisms are $G$-equivariant maps. Equivariant K-homology is obtained by defining some functor from $\mathbb{O}_\mathcal{F}(G)$ to the category of $\Omega$-spectra, extending it to a functor from $G$-spaces to $\Omega$-spectra, and then applying the $i$-th homotopy group to get $K_i^G$ (with $i\geq 0$). It turns out that the value of their functor on $G/H$, for every subgroup $H$ on $G$, is $K_*(C^*_r(G))$. Hence the assembly map, in that framework, is the map functorially associated to the projection $\underline{EG}\rightarrow G/G=\{*\}$. The equivalence with the approach in \cite{BCH} was worked out by I. Hambleton and E. Pedersen \cite{HamPe}.

For the operator algebra inclined reader, we emphasize that the Davis-L\"uck approach, abstract as it may seem, allows for explicit computations of the left-hand side $K_*^{top}(G)$, for $G$ discrete: this is due to the existence of an Atiyah-Hirzebruch spectral sequence relating Bredon homology $H_*^{\mathcal{F}}(\underline{EG}, R_\C)$ to equivariant K-homology. In favorable circumstances (e.g. $\dim\underline{EG}\leq 3$), there are exact sequences allowing one to compute exactly (i.e integrally, not just rationally) $K^{top}_*(G)$ from Bredon homology (see \cite{MisVal}, Theorem I.5.27). For specific classes of groups, the Baum-Connes conjecture can be checked by hand in this way (see e.g. \cite{FPV} for the case of lamplighter groups $F\wr\Z$, with $F$ a finite group).
\end{Rem}

\subsection {Generalizing the $\gamma$-element method}\label{viveTu}
\subsubsection{The case of groups acting on bolic spaces}
The general formulation of the Baum-Connes conjecture suggests the problem of generalizing the $\gamma$-element method, which was first elaborated in the realm of Riemannian symmetric spaces and of their $p$-adic analogues, Bruhat-Tits buildings. Kasparov and Skandalis \cite{Kasparov-Skandalis03}  have explored the case of a combinatorial analogue of simply connected Riemannian manifold with non positive curvature. The good framework is that of
weakly bolic, weakly geodesic metric spaces of bounded coarse geometry (see the definition in their paper). They prove the following:

\begin {Thm} Let $G$ be a group acting properly by isometries on a weakly bolic, weakly geodesic metric space of bounded coarse geometry. Then the Baum-Connes assembly map is injective.
\end{Thm}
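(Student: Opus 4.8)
The plan is to run the abstract Dirac--dual Dirac ($\gamma$-element) method of sections \ref{gamma} and \ref{p-adic}, but now extracting every geometric input from the axioms of weak bolicity, weak geodesy and bounded coarse geometry, rather than from a Riemannian or simplicial structure. Concretely, I would produce three equivariant $KK$-classes attached to the space $X$ on which $G$ acts: a Dirac element $\alpha\in KK_G(\mathcal{A}_X,\C)$, a dual-Dirac element $\beta\in KK_G(\C,\mathcal{A}_X)$, and their Kasparov product $\gamma=\beta\otimes_{\mathcal{A}_X}\alpha\in KK_G(\C,\C)$. Injectivity of the assembly map $\mu_{A,r}$ (for every $G$-$C^*$-algebra $A$, in particular for $A=\C$) then follows formally once one knows that $\gamma$ \emph{acts as the identity on the left-hand side} $K^{top}_*(G,A)$: the dual-Dirac class $\beta$ induces a backward map $K_*(C^*_r(G,A))\to K^{top}_*(G,A)$ whose composition with $\mu_{A,r}$ is exactly multiplication by $\gamma$ on $K^{top}_*(G,A)$, so if that multiplication is the identity then $\mu_{A,r}$ is split injective. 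I emphasize that, since we only want injectivity, we do \emph{not} need the full equality $\gamma=1$ in $KK_G(\C,\C)$ (which would also yield surjectivity); we only need $\gamma$ to restrict to $1$ over proper, $G$-compact spaces.

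First I would set up the algebra $\mathcal{A}_X$ playing the role that $C_0(T^*X)$ plays in the manifold case. Because a bolic space need not be a manifold, I follow the $p$-adic template of section \ref{p-adic}: $\mathcal{A}_X$ is a (generally noncommutative) Clifford-type algebra, Poincar\'e dual in $K$-theory to $C_0(|X|)$, and the Dirac element $\alpha$ is the class of the natural degree-one operator built from the combinatorial/metric data of $X$. This part I expect to be comparatively soft: ellipticity is a local, compactly supported condition, so the bounded coarse geometry hypothesis suffices to make $\alpha$ a well-defined $G$-equivariant cycle.

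The heart of the matter is the construction of the dual-Dirac element $\beta\in KK_G(\C,\mathcal{A}_X)$, and here weak geodesy and weak bolicity are used decisively. Weak geodesy provides, for each pair of points, a preferred approximate geodesic, which lets me define a ``vector cocycle pointing towards a basepoint'', generalizing the Julg--Valette construction for trees and buildings \cite{JV84,Julg-Valette88}; the operator underlying $\beta$ is Clifford multiplication by this normalized vector. Weak bolicity is precisely the quantitative control needed to verify that the commutators $[T,\pi(g)]$ and the defects $1-T^2$ are $\mathcal{A}_X$-compact and depend continuously on $g$, so that $\beta$ is a genuine Kasparov module; without an exact convexity or midpoint structure, these compactness estimates only hold ``up to bounded error'', and bolicity is exactly what makes the bounded errors controllable.

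The main obstacle, and the technical core of the whole argument, is the \emph{rotation lemma}, i.e. the bolic analogue of Lemma \ref{rotation}, which is what ultimately forces $\mathrm{Rest}^G_H(\gamma)=1$ in $KK_H(\C,\C)$ for every compact subgroup $H$ (compare Lemma \ref{restriction}). In the symmetric-space setting this is a true rotation homotopy exploiting geodesic convexity and equivariant Bott periodicity; in the bolic setting none of the clean Riemannian identities survive, so I would replace exact midpoints and $CAT(0)$-convexity by their coarse, approximate substitutes guaranteed by weak bolicity, and carry out the homotopy directly at the level of $KK$-cocycles, checking that the accumulated bounded errors do not obstruct an honest homotopy of Fredholm modules. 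This is the step where all three axioms must be used simultaneously and where the estimates are delicate. Once it is in place, $\gamma$ is an idempotent whose restriction to each compact stabilizer is $1$; a localization/Mayer--Vietoris argument over the $G$-compact proper subsets $X\subset\underline{EG}$ then upgrades this to ``$\gamma$ acts as the identity on $K^{top}_*(G,A)$'', and combined with the formal mechanism of Theorem \ref{StrongNovikov} this yields injectivity of $\mu_{A,r}$, as claimed in \cite{Kasparov-Skandalis03}.
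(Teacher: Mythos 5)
Your overall architecture --- factor $\gamma=\beta\otimes_B\alpha$ through a proper $G$-$C^*$-algebra $B$, check that $\gamma$ restricts to $1$ on compact subgroups, and then run the formal mechanism (the commutative diagram of Theorem \ref{abstractgamma}, using Theorem \ref{BCproper} for the proper coefficients $A\otimes B$) --- is exactly the route of Kasparov--Skandalis as presented in the paper, and your observation that only the restriction condition, not $\gamma=1$, is needed for injectivity is correct. The genuine problem is where you locate the technical core: you make ``the bolic analogue of the rotation lemma'' (Lemma \ref{rotation}) the decisive step, and you claim it is what forces ${\rm Rest}^G_H(\gamma)=1$. This inverts the actual logical structure. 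In Kasparov's original argument the restriction lemma (Lemma \ref{restriction}) is proved \emph{independently}, by $K$-equivariant Bott periodicity; it is then fed, via Corollary \ref{induction}, into the rotation lemma in order to deduce $\alpha\otimes_{\C}\beta=1$ (Theorem \ref{dualdirac}). In other words, the rotation lemma consumes the restriction statement rather than producing it, and what it buys is the identity for the \emph{opposite-order} product $\alpha\otimes_{\C}\beta$.

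Worse, in the bolic setting that identity is precisely what fails: as the paper stresses, for the Kasparov--Skandalis proper algebra $B$ it is no longer true that $\alpha\otimes_{\C}\beta=1$ in $KK_G(B,B)$, ``and this is in fact not needed''. The whole point of their proof --- and of Tu's axiomatization of the $\gamma$-element --- is to dispense with the rotation argument altogether, retaining only (i) the factorization of $\gamma$ through a proper algebra and (ii) ${\rm Rest}^G_H(\gamma)=1$ for every compact subgroup $H$ (equivalently $p^*(\gamma)=1$ over proper $G$-spaces), the latter being proved directly, essentially by equivariant Bott periodicity localized at a compact subgroup, whose orbits are bounded and hence controllable by bolicity. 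So your plan would stall on a step that is both unnecessary for injectivity and, in this generality, unavailable; the repair is to delete the rotation step entirely and prove the restriction condition by a direct argument.
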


The proof involves analogues of the Dirac, dual Dirac and $\gamma$-elements. However $\alpha$ and $\beta$ should no more be thought as defining the Baum-Connes assembly map and the candidate for its inverse. They rather give maps imbedding the K-theory of arbitrary crossed products into the K-theory of crossed products by some proper $G$-algebras, for which the conjecture is known to be true:

\begin{Def} Let $X$ be a $G$-space.  A $G-X-C^*$-algebra is a $G-C^*$-algebra $B$ equipped with a $G$-equivariant homomorphism $C_0(X)\rightarrow Z(M(B))$, the center of the multiplier algebra of $B$.
A $G-C^*$-algebra $B$ is {\it proper} if there exists a proper $G$-space $X$ such that $B$ is a $G-X-C^*$ algebra.
\end{Def}

The following was proved by J. Chabert, S. Echterhoff and R. Meyer \cite{Chabert-Echterhoff-Meyer}\footnote{See also Higson-Guentner \cite[Theorem 2.19]{Higson-Guentner} and Kasparov-Skandalis \cite{Kasparov-Skandalis03}). The case where $G$ is a connected Lie group and $B=C_0(X)$, where $X$ is a proper $G$-space, was previously treated by A. Valette \cite{Valette88}.}:

\begin{Thm}\label{BCproper} The Baum-Connes morphism with coefficients in a {\it proper} $G$-algebra is an isomorphism. 
\end{Thm}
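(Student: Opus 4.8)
The plan is to reduce the statement, in two stages, to a situation where the assembly map can be identified with a (generalized) Green--Julg isomorphism. Write $B$ for the proper $G$-algebra, so that $B$ is a $G$-$X$-algebra for some proper $G$-space $X$. Since $X$ is proper it admits a $G$-map $X\to\underline{EG}$, and composing $C_0(\underline{EG})\to C_b(X)=M(C_0(X))\to Z(M(B))$ we may regard $B$ as a $G$-$\underline{EG}$-algebra; recall also that properness forces reduced and full crossed products to agree, so there is no ambiguity on the right-hand side. The first reduction is to a \emph{$G$-compact base}: filter $\underline{EG}$ by an increasing family $(X_n)$ of closed $G$-compact subsets and set $B_n=C_0(X_n)\cdot B$, so that $B=\varinjlim_n B_n$ with each $B_n$ proper over $X_n$. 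Both sides are continuous for such limits, since $K$-theory of $C^*$-algebras commutes with inductive limits, $C^*_r(G,-)$ preserves them, and the left-hand side $K^{top}_*(G,B)=\varinjlim_Y KK^G_*(C_0(Y),B)$ is itself a colimit over $G$-compact $Y\subseteq\underline{EG}$. By naturality of $\mu$ it therefore suffices to treat the case where $X$ is $G$-compact.

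The second reduction is to \emph{induced algebras}. For a $G$-compact proper space $X$ the slice theorem provides a finite cover by $G$-invariant open sets, each $G$-homeomorphic to an induced space $G\times_H S$ with $H\le G$ compact; correspondingly $B$ is glued from finitely many pieces of the form $\mathrm{Ind}_H^G D$ for compact subgroups $H$ and $H$-algebras $D$. A Mayer--Vietoris argument in both $KK^G(C_0(X),-)$ and $K_*(C^*_r(G,-))$, compared via the five lemma and the naturality of $\mu$, then reduces the problem to a single induced algebra $B=\mathrm{Ind}_H^G D$.

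In the induced case there is a Morita equivalence $C^*_r(G,\mathrm{Ind}_H^G D)\sim C^*(H,D)$ (Green imprimitivity, with reduced equal to full because $H$ is compact), while on the topological side the class $e_X$ together with the induction isomorphism in equivariant $KK$-theory identifies $KK^G(C_0(X),\mathrm{Ind}_H^G D)$ with the $H$-equivariant side. Under these two identifications the composite defining $\mu$ becomes precisely the Green--Julg isomorphism $KK^H(\C,D)\xrightarrow{\ \sim\ }K_*(C^*(H,D))$ for the \emph{compact} group $H$, which is known to be an isomorphism; this establishes the theorem.

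The genuinely hard work is concentrated in the second and third stages. One must (i) make precise the structure theory of proper $G$-compact spaces for a non-discrete locally compact $G$, together with the resulting presentation of $B$ as glued from induced pieces, and (ii) verify that, under the Morita equivalences and Kasparov descent maps, the composite defining $\mu$ genuinely coincides with Green--Julg. The latter is delicate bookkeeping about the compatibility of the idempotent $e_X$, the descent homomorphism $j_{G,r}$, and the Kasparov product; the single deepest analytic input is the generalized Green--Julg isomorphism itself in the proper (induced) case.
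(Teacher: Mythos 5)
First, a point of orientation: the paper does not prove Theorem \ref{BCproper} at all -- it is quoted from Chabert--Echterhoff--Meyer \cite{Chabert-Echterhoff-Meyer} (with the footnote pointing also to Higson--Guentner, Kasparov--Skandalis and Valette for special cases) -- so your proposal can only be measured against the published proofs. Your overall strategy (reduce to a $G$-compact base, decompose into algebras induced from compact subgroups, identify the assembly map with Green--Julg via Green imprimitivity and compatibility of assembly with induction) is the standard one for discrete groups and for Lie groups, and each named ingredient -- full equals reduced for proper algebras, the identification of $B$ over an induced piece with $\mathrm{Ind}_H^G D$, the Morita equivalence $C^*_r(G,\mathrm{Ind}_H^G D)\sim C^*(H,D)$, and Green--Julg for compact $H$ -- is correct. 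But two of your justifications have genuine gaps. The first is the continuity step: you deduce that $K^{top}_*(G,-)$ commutes with the limit $B=\varinjlim_n B_n$ from the fact that it is a colimit over $G$-compact $Y\subseteq\underline{EG}$. That is a non sequitur: colimits in $Y$ commute with colimits in $B$ only if each $KK^G_*(C_0(Y),-)$ already commutes with inductive limits in its second variable, which $KK$-theory does not do in general. Continuity of $K^{top}_*(G,-)$ in the coefficient algebra is true, but it is itself a theorem of Chabert--Echterhoff (``Permanence properties of the Baum--Connes conjecture''), proved by other means; it must be cited or reproved, not read off from the colimit description. The same caveat applies to the excision in the coefficient variable that your Mayer--Vietoris comparison needs.

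The more serious gap is the slice theorem. For an arbitrary second countable locally compact group $G$, the statement that every $G$-compact proper $G$-space is covered by finitely many $G$-invariant open sets of the form $G\times_H S$ with $H$ compact is not available in the literature in that generality; it is known for Lie groups (Palais, Abels) and for discrete groups, and assembling the general case from van Dantzig's theorem and the structure theory of almost connected groups is a substantial piece of work, not bookkeeping. This is precisely why the proof in \cite{Chabert-Echterhoff-Meyer} does not localize over $X$ at all: it establishes directly, for $X$ proper and $G$-compact and $B$ an $X$-$G$-algebra, that the map $KK^G_*(C_0(X),B)\rightarrow K_*(C^*(G,B))$ given by descent followed by cap product with $e_X$ is an isomorphism (a ``generalized Green--Julg theorem''), exploiting the properness of the transformation groupoid $X\rtimes G$ rather than a decomposition of $X$ into induced pieces. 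So your step (i), which you describe as making the structure theory ``precise'', is in fact the crux of the matter for general $G$, and the published argument is organized specifically to avoid it. As a proof for discrete groups or connected Lie groups your sketch is essentially complete modulo citations; as a proof of the theorem as stated it is not.
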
 

In the case of a  discrete group $G$ acting properly by isometries on a weakly bolic, weakly geodesic metric space of bounded coarse geometry, Kasparov and Skandalis define a proper algebra $B$, Dirac and dual-Dirac elements $\alpha\in KK_G(B,\C),\beta\in KK_G(\C,B)$ and consider the product $\gamma=\beta\otimes_B \alpha\in KK_{G}(\C,\C)$. In that case, it is no more the case that $\alpha\otimes_{\C}\beta$ is equal to 1 in $KK_G(B,B)$, and this is in fact not needed. However one still has the fact that $\gamma$ becomes 1 when restricted to finite subgroups. This is enough to prove injectivity of the assembly map for such a group $G$.

\subsubsection{Tu's abstract gamma element}
The Kasparov-Skandalis method has been formalized by J.-L. Tu who defined a general notion of $\gamma$ element for a locally compact group, such that the mere existence of $\gamma\in KK_G(\C,\C)$ implies the injectivity of the Baum-Connes map, and that the surjectivity is equivalent to the fact that $\tilde\gamma_A={\rm Id}$ with notations as in Theorem \ref{StrongNovikov}. The techniques use the representable KK-theory of Kasparov and can also be beautifully interpreted in the framework of equivariant KK-theory for groupoids as introduced by P.-Y. Le Gall \cite{LeGall99}.
See Chapter \ref{groupoids} below for details on the groupoid framework.

\begin{Def} A \emph{$\gamma$-element} for $G$ is an element $\gamma$ of the ring $KK_{G}(\C,\C)$ satisfying the following two conditions:
\begin {enumerate}
\item[1)] there exists a proper $G$-$C^*-$algebra $B$
and two elements $\alpha\in KK_G(B,\C)$ and $\beta\in KK_G(\C,B)$ such that $\gamma=\beta\otimes_B \alpha\in KK_{G}(\C,\C)$;
\item[2)]  for any compact subgroup $K$ of $G$, the image of $\gamma$ by the restriction map $KK_G({\bf C}, {\bf C})\rightarrow R(K)$ is the trivial representation $1_K$.
\end{enumerate}
\end{Def}

\begin{Rem} The second condition is technically formulated as follows: for any 
proper $G$-space $X$, we have $p_*(\gamma)=1$ in $RKK_{G}(X;\C,\C)$ (where $p_*$ denotes the induction  homomorphism $KK_{G}(\C,\C)\rightarrow RKK_{G}(X;\C,\C)$). The notations are as follows.
For $X$ a $G$-space, $A$ and $B$ two $G-X-C^*$-algebras, Kasparov defines ${\cal R}KK_G(X;A,B)$ as the set of homotopy classes of $(A,B)$-Fredhom bimodules equipped with a covariant action of the $C^*$-algebra $C_0(X)$, with the usual assumption of compactness of commutators. The beautiful language of groupoids allows to think of $A$ and $B$ as ${\cal G}-C^*$-algebras with ${\cal G}=X\rtimes G$ the groupoid given by the action of $G$ on $X$. Then 
$${\cal R}KK_G(X;A,B)=KK_{\cal G}(A,B).$$
Now for two $G-C^*$-algebras $A$ and $B$ (no action of $C_0(X)$ is needed), Kasparov defines $$RKK_G(A,B)={\cal R}KK_G(X;A\otimes C_0(X),B\otimes C_0(X))=KK_{\cal G}(A\otimes C_0(X),B\otimes C_0(X)).$$
In the definition of a $\gamma$-element, the map $$ p_*: KK_{G}(\C,\C)\rightarrow RKK_{G}(X;\C,\C)$$ is the pull-back by the groupoid homomorphism $p:{\cal G}=X\rtimes G\rightarrow G$.
Note that if $X=G/K$ with $K$ a compact subgroup, then $RKK_{G}(X;\C,\C)=R(K)$.
\end{Rem}

\medskip
J.-L. Tu has proved the following \cite{Tu-survey}:

\begin{Prop} If an element $\gamma$ exists, then it is unique. Moreover, it is an idempotent of the ring $KK_{G}(\C,\C)$, namely $\gamma\otimes_{\C}\gamma = \gamma$.
\end{Prop}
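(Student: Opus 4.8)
The plan is to prove idempotency first, then deduce uniqueness, as idempotency is the more structural fact and uniqueness will follow by a clean product argument. For idempotency, the key is condition (2) in the definition of a $\gamma$-element together with the existence of the factorization $\gamma=\beta\otimes_B\alpha$ through a proper $G$-$C^*$-algebra $B$. First I would recall, as in the discussion following Theorem \ref{dualdirac} and in the treatment of proper algebras (Theorem \ref{BCproper}), that whenever $B$ is proper, the Baum-Connes-type statements hold, and more to the point, elements of $KK_G$ factoring through proper algebras have strong rigidity properties when restricted to compact subgroups. The technically correct form of condition (2) is the equality $p_*(\gamma)=1$ in $RKK_G(X;\C,\C)=KK_{\mathcal G}(C_0(X),C_0(X))$ for every proper $G$-space $X$, where $\mathcal G=X\rtimes G$.

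The heart of the argument is to compute $\gamma\otimes_{\C}\gamma$ by exploiting the factorization through the proper algebra $B$. I would take a proper $G$-space $X$ such that $B$ is a $G$-$X$-$C^*$-algebra, so that $B$ is naturally a $\mathcal G$-$C^*$-algebra for $\mathcal G=X\rtimes G$. The Kasparov product $\gamma\otimes_{\C}\gamma=\beta\otimes_B(\alpha\otimes_{\C}\beta)\otimes_B\alpha$ can be reorganized: the middle factor $\alpha\otimes_{\C}\beta\in KK_G(B,B)$ need not be $1$ (this is precisely the point raised in the bolic-spaces discussion above), but when we push everything into $RKK_G(X;-,-)$, the presence of the $C_0(X)$-action coming from properness of $B$ lets us replace $\beta\otimes_B\alpha$ by its image under $p_*$, which by condition (2) equals $1$. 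Concretely, I would show that $\gamma\otimes_{\C}\gamma=\beta\otimes_B\bigl(p^{B}_*(\gamma)\bigr)\otimes_B\alpha$ where $p^B_*(\gamma)$ denotes the action of $\gamma$ on $B$ as a proper ($=$ $X$-)algebra, and that $p^B_*(\gamma)=1_B$ because the $C_0(X)$-structure reduces the relevant restriction to the situation of condition (2). This yields $\gamma\otimes_{\C}\gamma=\beta\otimes_B\alpha=\gamma$.

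For uniqueness, suppose $\gamma$ and $\gamma'$ are two $\gamma$-elements, with factorizations through proper algebras $B$ and $B'$ respectively. I would compute the product $\gamma\otimes_{\C}\gamma'$ in two ways. Using that $\gamma'$ factors through the proper algebra $B'$ and that $\gamma$ restricts to the trivial representation on compact subgroups (condition (2) for $\gamma$), the same $RKK_G$-rigidity argument shows $\gamma\otimes_{\C}\gamma'=\gamma'$; symmetrically, using condition (2) for $\gamma'$ and the factorization of $\gamma$ through $B$, one gets $\gamma\otimes_{\C}\gamma'=\gamma$. Since $KK_G(\C,\C)$ is a commutative ring, these two evaluations give $\gamma=\gamma\otimes_{\C}\gamma'=\gamma'$.

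The main obstacle I anticipate is making rigorous the replacement of the factor $\beta\otimes_B\alpha$ (or the middle term $\alpha\otimes_{\C}\beta$) by $1$ via condition (2). This requires carefully transporting the statement ``$p_*(\gamma)=1$ in $RKK_G(X;\C,\C)$'' through the descent to the proper-algebra level, i.e. checking that the $C_0(X)$-module structure on $B$ interacts correctly with the Kasparov products and with the identification $RKK_G(X;\C,\C)=KK_{\mathcal G}(C_0(X),C_0(X))$. The bookkeeping of which $RKK$ or $KK_{\mathcal G}$ group each intermediate element lives in, and verifying that the products are associative and compatible across these groups, is where the real care is needed; the rest is formal manipulation in the commutative ring $KK_G(\C,\C)$.
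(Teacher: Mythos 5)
Your proposal is correct and follows exactly the standard argument (Tu's): the key point, which you identify, is that for a proper $G$-$X$-algebra $B$ the map $\tau_B:KK_G(\C,\C)\rightarrow KK_G(B,B)$ factors through $p_*:KK_G(\C,\C)\rightarrow RKK_G(X;\C,\C)$, so any class satisfying condition (2) acts as $1_B$ on anything factoring through $B$; applying this with $x\otimes_\C y$ for $x$ factoring through a proper algebra and $y$ satisfying condition (2) gives both idempotency and, combined with commutativity of $KK_G(\C,\C)$, uniqueness. Note that the paper itself states this proposition without proof, citing Tu's survey, and your argument coincides with the proof given there.
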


Observe that, if a $\gamma$-element does exist, then it acts as the identity on any group $K^{top}_*(G,A)$, for every $G-C^*$-algebra $A$. 
The relation with the Baum-Connes conjecture can be stated as follows 

\begin{Thm}\label{abstractgamma}[Theorem 4.2 and 4.4 \cite{Tu99}]
Let $G$ be a locally compact group admitting a $\gamma$-element. 
\begin{enumerate} 
\item[1)]The map $\mu_{A,r}$ is injective for every $G$-$C^*$-algebra $A$. 
\item[2)] The map $\mu_{A,r}$ is surjective if and only if the map $\tilde\gamma_A$ (i.e. Kasparov product by $j_{G,r}(\tau_A(\gamma))$) is the identity on $K_*(C^*_r(G,A))$. This is in particular true if $\gamma=1$.
\end{enumerate}
\end{Thm}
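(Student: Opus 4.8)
The plan is to exploit the naturality of the assembly map in the coefficient variable together with the fact that the conjecture is already known for proper coefficient algebras (Theorem \ref{BCproper}). First I would record the two inputs attached to the $\gamma$-element. On the topological side, since $\gamma$ acts as the identity on $K^{top}_*(G,A)$ for every $A$, the composite
$$K^{top}_*(G,A)\xrightarrow{\tau_A(\beta)_*}K^{top}_*(G,B\otimes A)\xrightarrow{\tau_A(\alpha)_*}K^{top}_*(G,A)$$
is the identity: it is the action of $\tau_A(\beta\otimes_B\alpha)=\tau_A(\gamma)$. On the analytic side, using that both the descent $j_{G,r}$ and the functor $\tau_A$ respect the Kasparov product, the composite
$$K_*(C^*_r(G,A))\xrightarrow{j_{G,r}(\tau_A(\beta))}K_*(C^*_r(G,B\otimes A))\xrightarrow{j_{G,r}(\tau_A(\alpha))}K_*(C^*_r(G,A))$$
equals $\tilde\gamma_A$, the product by $j_{G,r}(\tau_A(\gamma))$.

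Next I would assemble these into the commutative ladder whose three rows are $\mu_{A,r}$, $\mu_{B\otimes A,r}$, $\mu_{A,r}$, whose left verticals are $\tau_A(\beta)_*$ then $\tau_A(\alpha)_*$, and whose right verticals are $j_{G,r}(\tau_A(\beta))$ then $j_{G,r}(\tau_A(\alpha))$; commutativity of each square is exactly naturality of the assembly map under a coefficient morphism. The crucial point is that $B\otimes A$ is again a proper $G$-$C^*$-algebra (the structural map $C_0(X)\to Z(M(B))$ lands in $Z(M(B\otimes A))$), so by Theorem \ref{BCproper} the middle assembly map $\mu_{B\otimes A,r}$ is an isomorphism.

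Now part 1) follows from the top square alone: there $\mu_{B\otimes A,r}\circ\tau_A(\beta)_*=j_{G,r}(\tau_A(\beta))\circ\mu_{A,r}$. The left composite is injective, since $\tau_A(\beta)_*$ is a split injection (its retraction is $\tau_A(\alpha)_*$, by the identity composite above) and $\mu_{B\otimes A,r}$ is an isomorphism; hence the right composite, and \emph{a fortiori} $\mu_{A,r}$, is injective. For part 2), chasing both squares yields $\mu_{A,r}=\tilde\gamma_A\circ\mu_{A,r}$ as well as $\mu_{A,r}\circ\tau_A(\alpha)_*=j_{G,r}(\tau_A(\alpha))\circ\mu_{B\otimes A,r}$. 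If $\tilde\gamma_A=\mathrm{Id}$ then $j_{G,r}(\tau_A(\alpha))$ is a split surjection, so the right-hand side of the second relation is onto, forcing $\mu_{A,r}$ surjective; conversely, if $\mu_{A,r}$ is surjective then $\mu_{A,r}=\tilde\gamma_A\circ\mu_{A,r}$ shows every element of $K_*(C^*_r(G,A))$ is fixed by $\tilde\gamma_A$, i.e. $\tilde\gamma_A=\mathrm{Id}$. Finally, if $\gamma=1$ then $\tau_A(\gamma)=1$ and $j_{G,r}(1)=1$, so $\tilde\gamma_A=\mathrm{Id}$ and $\mu_{A,r}$ is onto, hence (with part 1) an isomorphism.

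The main obstacle is not this diagram chase but justifying the two structural facts that feed it: the naturality of the reduced assembly map with respect to coefficient $KK$-elements (which requires compatibility of the limit defining $K^{top}_*$ both with descent and with the exterior product $\tau_A$), and the claim that $\gamma$ acts as the identity on the left-hand side. The latter is where condition 2) in the definition of a $\gamma$-element is used: on each $G$-compact piece of $\underline{EG}$ the action of $\gamma$ is controlled by its restrictions to the compact isotropy groups through the $RKK_G(X;\C,\C)$ formalism, where $p_*(\gamma)=1$ by hypothesis. Everything else is formal manipulation of Kasparov products.
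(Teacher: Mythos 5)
Your proposal is correct and follows essentially the same route as the paper: the two-square commutative ladder through the proper coefficient algebra $B\otimes A$, where $\mu_{B\otimes A,r}$ is an isomorphism by Theorem \ref{BCproper}, the top composite is the identity because $\gamma$ acts as the identity on $K^{top}_*(G,\cdot)$, and the bottom composite is $\tilde\gamma_A$. You supply slightly more detail than the paper's terse diagram chase (the split-injection reasoning and the converse implication in part 2), but the argument is the same.
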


\begin{proof}[Proof] Let $\gamma=\beta\otimes_B\alpha$ be a $\gamma$-element, with $B$ a proper $G-C^*$-algebra. Let $A$ be any $G-C^*$-algebra. Then we have a commutative diagram:

$$\xymatrix{ K^{top}(G,A)\ar[rr]^{\scriptscriptstyle\otimes_A \tau_A(\beta)}\ar[d]^{\mu_{A,r}}& & K^{top}_*(G,A\otimes B)\ar[rrr]^{\scriptscriptstyle{\otimes_{A\otimes B}}(\tau_{A}(\alpha))}\ar[d]^{\mu_{A\otimes B,r}}_{\simeq}&&& K^{top}(G,A)\ar[d]^{\mu_{A,r}} \\ 
K_*(C^*_r(G,A))\ar[rr]^{\scriptscriptstyle\otimes_{C^*_r(G,A)}j_G(\tau_A(\beta))}&  & K_*(C^*(G;A\otimes B))\ar[rrr]^{\scriptscriptstyle\otimes_{\scriptscriptstyle{C^*(G,A\otimes B)}}j_G(\tau(\alpha))}&& &K_*(C^*_r(G,A)),}
$$

with $j_G$ the descent map as in section \ref{bifunctor}. Since $A\otimes B$ is a proper $G-C^*$-algebra, the map $\mu_{A\otimes B,r}$ is an isomorphism, by Theorem \ref{BCproper}. The assumption in (1) is that the composition of the two maps on the top row is the identity: this implies that $\mu_{A,r}$ is injective. The assumption in (2) is that moreover the composition of the two maps on the bottom row is the identity: this implies that $\mu_{A,r}$ is also surjective.
\end{proof}

\begin{Rem}
The element $\gamma$ initially defined by Kasparov in \cite{KaspConsp} is of course a special case of $\gamma$-element in the sense of Tu. Note that if $K$ is a maximal compact subgroup of a connected Lie group $G$, the element $\gamma$ is simply characterized by the conditions (cf.  Proposition 4.1 in \cite{Tu-survey}) that 
 it factorizes through a proper  $G-C^*$-algebra and that the image of $\gamma$ by the restriction map $KK_G({\bf C}, {\bf C})\rightarrow R(K)$ is the trivial representation $1_K$.
\end{Rem}

\subsubsection{Nishikawa's new approach}

Very recently (March 2019), Nishikawa \cite{Nishikawa} introduced a new idea in the subject, that amounts to constructing the $\gamma$ element {\it without} having to construct the Dirac and dual Dirac elements. We briefly explain his approach. The standing assumption is that the group $G$ admits a cocompact model for $\underline{EG}$ (in particular $\underline{EG}$ is locally compact).

\begin{Def} Let $x$ be an element of $KK_G(\C,\C)$.
Say that $x$ has property $(\gamma)$ if it can be represented by a Fredhom module $KK_G(\C,\C)$ such that:
\begin{enumerate}
\item For every compact subgroup $K$ of $G$, $x$ restricts to $1_K$ in $R(K)$.
\item The Hilbert space $\HH$ carries a $G$-equivariant non-degenerate representation of $C_0(\underline{EG})$ such that, for every $f\in C_0(\underline{EG})$, the map $g\mapsto[g(f),T]$ is a norm continuous map vanishing at infinity on $G$, with values in the ideal of compact operators.
\item Moreover,  the integral $$\int_G g(c)Tg(c)dg-T=-\int_G g(c)[g(c),T]dg$$ is compact, where $c$ is a compactly supported function on ${\bf E}G$ such that $\int_Gg(c)^2dg=1$.
\end{enumerate}
\end{Def}

It is not known whether the technical condition 3 follows from condition 2 or is really needed.
Nishikawa shows that such a Fredholm module allows to define, for every $G-C^*$-algebra $A$, a map $\nu_A^x:K_*(C^*_r(G,A))\rightarrow K_*^{top}(G,A)=KK_G(C_0(\underline{EG}),A)$, which is a left inverse for the assembly map $\mu_{A,r}$. One has the following theorem:

\begin{Thm} Assume that there exists a Fredholm module $x=(\HH,F)$ with property $(\Gamma)$. Then:
\begin{enumerate}
\item For every $G-C^*$-algebra $A$, the map $\mu_{A,r}$ is injective.
\item For every $G-C^*$-algebra $A$, the map $\mu_{A,r}$ is surjective if and only if the element $x$ defines the identity on $K_*(C^*_r(G,A))$. In particular, 
if $x=1$ in $KK_G(\C,\C)$,  Conjecture \ref{ConjBCcoeff} holds for $G$.
\end{enumerate}
\end{Thm}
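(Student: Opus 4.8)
The plan is to follow the same logical skeleton as the proof of Theorem \ref{abstractgamma}, but to replace the factorization $\gamma=\beta\otimes_B\alpha$ through a proper algebra by the direct construction of the candidate left inverse $\nu^x_A$ out of the Fredholm module $x$. Concretely, I would establish two identities of maps. First, the left-inverse relation $\nu^x_A\circ\mu_{A,r}=\mathrm{id}$ on $K^{top}_*(G,A)$, which is exactly what yields injectivity in part (1). Second, the identification $\mu_{A,r}\circ\nu^x_A=\tilde x_A$ of the reverse composite with the endomorphism induced by $x$ on $K_*(C^*_r(G,A))$, namely Kasparov product by $j_{G,r}(\tau_A(x))$ in the notation of Theorem \ref{StrongNovikov}. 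Granting these, part (2) is formal: since $\nu^x_A$ is a left inverse, $\mu_{A,r}$ is injective and $\nu^x_A$ is surjective, so $\mu_{A,r}$ is bijective if and only if it is also a right inverse of $\nu^x_A$, i.e. if and only if $\mu_{A,r}\circ\nu^x_A=\mathrm{id}$; by the second identity this happens precisely when $\tilde x_A=\mathrm{id}$, and in particular when $x=1$ in $KK_G(\C,\C)$, which is Conjecture \ref{ConjBCcoeff}.

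First I would make the construction of $\nu^x_A$ precise. The essential datum carried by a Fredholm module with property $(\gamma)$, beyond its class in $KK_G(\C,\C)$, is the $G$-equivariant non-degenerate representation of $C_0(\underline{EG})$ on $\HH$ from condition 2. Using the cocompact model of $\underline{EG}$ and the cutoff function $c$ of condition 3 (the analogue of the Bruhat function defining the projection $e_X$ in the assembly map), I would average $F$ against $c$; applying Kasparov's descent $j_{G,r}$ together with the cutoff and Morita identifications then produces, out of a $KK$-cycle representing a given class of $K_*(C^*_r(G,A))=KK(\C,C^*_r(G,A))$, a $G$-equivariant $(C_0(\underline{EG}),A)$-cycle. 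The compactness of $\int_G g(c)Tg(c)\,dg-T$ from condition 3, together with the norm-continuous behaviour of $g\mapsto[g(f),T]$ vanishing at infinity from condition 2, are exactly what certifies that this output is a genuine cycle, hence an element of $K^{top}_*(G,A)=KK_G(C_0(\underline{EG}),A)$. I would then check independence of the representative and of the choice of $c$ (the admissible cutoff functions forming a convex set), so that $\nu^x_A$ is a well-defined group homomorphism, natural in $A$.

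Next I would prove the left-inverse identity $\nu^x_A\circ\mu_{A,r}=\mathrm{id}$. Here condition 1 is the crucial input: it is the precise analogue of Lemma \ref{restriction}, ${\rm Rest}^G_K(\gamma)=1$, asserting that $x$ restricts to the trivial representation $1_K$ for every compact subgroup $K$. Since $K^{top}_*(G,A)=\varinjlim_X KK_G(C_0(X),A)$ with $X$ running over $G$-compact subsets of $\underline{EG}$, I would argue by reducing to such $X$; on each orbit type $G/K$ the composite $\nu^x_A\circ\mu_{A,r}$ is computed by the restriction of $x$ to the compact stabilizer $K$, and condition 1 forces this to be the identity, exactly as Corollary \ref{induction} turns ${\rm Rest}^G_K(\gamma)=1$ into $\tau_{C_0(G/K)}(\gamma)=1$. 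Passing to the direct limit then gives the identity on all of $K^{top}_*(G,A)$.

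Finally I would establish $\mu_{A,r}\circ\nu^x_A=\tilde x_A$ by unwinding the definitions: both sides result from applying $j_{G,r}$ and multiplication by $e_X$ to the same underlying $G$-equivariant data, so that composing the backward map $\nu^x_A$ with the forward assembly map reassembles precisely the Kasparov product by $j_{G,r}(\tau_A(x))$. I expect the genuine difficulty to lie in the first two steps rather than in this formal conclusion: namely in showing that the averaged, cutoff-twisted operator really defines a Kasparov cycle over $C_0(\underline{EG})$ (this is where condition 3 enters, and where the open question whether condition 3 follows from condition 2 becomes visible), and in making rigorous the reduction of $\nu^x_A\circ\mu_{A,r}$ to a computation over the proper homogeneous pieces $G/K$ where condition 1 applies. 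The surjectivity result of Theorem \ref{BCproper} for proper coefficient algebras is not invoked literally, but the same mechanism, triviality of $x$ on compact subgroups, plays here the role it did in the proper case.
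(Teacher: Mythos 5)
Your proposal follows exactly the route the paper indicates for Nishikawa's theorem: construct from the Fredholm module a map $\nu^x_A:K_*(C^*_r(G,A))\rightarrow K^{top}_*(G,A)$ that is a left inverse of $\mu_{A,r}$ (giving injectivity), and identify $\mu_{A,r}\circ\nu^x_A$ with the endomorphism $\tilde x_A$ induced by $x$ (giving the surjectivity criterion), with the formal deduction of part (2) from these two identities being correct. You also correctly locate where each of the three conditions of property $(\gamma)$ enters, so this matches the paper's (sketched) argument.
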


Nishikawa also proves the following result:

\begin{Thm} \begin{enumerate}
\item If there exists an element $x$ of $KK_G(\C,\C)$ with property $(\gamma)$, then it is unique and is an idempotent in $KK_G(\C,\C)$. 
\item If $G$ admits a  $\gamma$ element in the sense of Tu, then $x=\gamma$ has the $(\gamma)$ property. 
\end{enumerate}
\end{Thm}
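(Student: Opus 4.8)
The guiding principle is that property $(\gamma)$ turns the cycle $(\HH,T)$ into an \emph{almost $C_0(\underline{EG})$-linear} Fredholm module: condition (2) says that $T$ commutes with the $C_0(\underline{EG})$-action modulo compacts, locally uniformly in $G$, so $(\HH,T)$ is morally a Dirac-type class over the proper space $\underline{EG}$, while condition (3) records, via the cutoff $c$ coming from $G$-compactness of the chosen model, that $T$ agrees up to compacts with its $c$-averaged version. All the identities below are to be established by representing the relevant Kasparov products by concrete cycles on tensor powers of $\HH$ and then deforming the operators by explicit operator homotopies; the averaging map $a\mapsto\int_G g(c)\,a\,g(c)\,dg$, which by $\int_G g(c)^2\,dg=1$ restricts to the identity on scalars, is the device that links such products back to $T$.

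I would first dispose of part (2), since it also clarifies the mechanism. Condition (1) for $x=\gamma$ is exactly Tu's second defining condition for the $\gamma$-element, so it holds verbatim. For conditions (2) and (3), write $\gamma=\beta\otimes_B\alpha$ with $B$ a proper $G-C^*$-algebra, i.e. a $G-Y-C^*$-algebra for some proper $G$-space $Y$; after composing with a classifying $G$-map $Y\to\underline{EG}$ one obtains a $G$-equivariant homomorphism $C_0(\underline{EG})\to Z(M(B))$. I would then compute $\beta\otimes_B\alpha$ by an explicit cycle whose underlying Hilbert space inherits the $C_0(\underline{EG})$-representation from $B$ and whose operator is built from $\alpha$ together with a connection for $\beta$. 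Because $B$ is proper and the $C_0(\underline{EG})$-action factors through the action groupoid, the commutators $[\,g(f),T\,]$ become $B$-locally compact and decay in $g$, yielding condition (2); condition (3) then follows by inserting the normalized cutoff $c$ and using properness to see that the correction $\int_G g(c)[g(c),T]\,dg$ is compact. This explicit computation of the product is the technical heart of part (2).

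For part (1), idempotency and uniqueness are proved by the same homotopy machinery. To show $x\otimes_{\C}x=x$, I would represent $x\otimes_\C x$ on $\HH\hat\otimes\HH$, with operator the Kasparov product of $T$ with itself along a connection, equipped with the exterior $C_0(\underline{EG})$-representation, and then build an operator homotopy to $(\HH,T)$: using $\int_G g(c)^2\,dg=1$ one collapses the second tensor factor by the averaging map, condition (3) guarantees that the resulting perturbation is compact, hence preserves the $KK$-class, and condition (2) guarantees that $G$-continuity and local compactness survive the deformation. For uniqueness I would run the identical argument on a pair $x,x'$ with property $(\gamma)$ to obtain $x\otimes_\C x'=x'$ and $x\otimes_\C x'=x$, whence $x=x'$; here each factor supplies the $C_0(\underline{EG})$-structure needed to average over the other, while condition (1) forces both restrictions to equal $1_K$ in $R(K)$ for every compact $K$ (equivalently $p_*(x)=p_*(x')=1$ in $RKK_G(\underline{EG};\C,\C)$), which is what makes the two collapsed cycles agree on the proper side. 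Taking $x'=x$ re-derives idempotency, consistently.

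I expect the main obstacle to be the explicit operator homotopy underlying idempotency, and identically uniqueness: one must compute the Kasparov product on $\HH\hat\otimes\HH$ concretely and verify, uniformly along the deformation, the three standing requirements of a $G$-equivariant Kasparov cycle, namely self-adjointness modulo compacts, local compactness of the commutators with the $C_0(\underline{EG})$-action, and norm-continuity with decay at infinity of $g\mapsto[\,g(f),T\,]$. Condition (3) is precisely tailored to make this compactness bookkeeping go through, but checking that it persists through the averaging and the passage to the product cycle is the delicate point, and it is exactly here that the cocompactness of the model for $\underline{EG}$, which furnishes the compactly supported cutoff $c$, is indispensable.
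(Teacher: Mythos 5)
A preliminary remark: this survey does not prove the statement at all; it is quoted as a theorem of Nishikawa with a pointer to \cite{Nishikawa}, so there is no in-paper proof to measure your attempt against. Judged on its own terms, your part (2) is a reasonable outline of the expected argument: condition (1) of property $(\gamma)$ is indeed verbatim Tu's restriction axiom, and representing $\beta\otimes_B\alpha$ by an explicit product cycle whose Hilbert space carries the nondegenerate $C_0(\underline{EG})$-representation obtained by pulling back the $C_0(Y)$-structure of the proper algebra $B$ along a classifying map $Y\to\underline{EG}$ is the natural route; properness of the $Y$-action is what makes $g\mapsto[g(f),T]$ vanish at infinity, and you rightly flag that exhibiting a representative of the product class satisfying (2) and (3) is where all the work is.

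Part (1), however, contains a genuine gap. The sentence ``using $\int_G g(c)^2\,dg=1$ one collapses the second tensor factor by the averaging map'' is not an argument: the map $a\mapsto\int_G g(c)\,a\,g(c)\,dg$ is completely positive but not multiplicative, and you give no mechanism by which applying it to a connection operator for $x\otimes_\C x$ on $\HH\hat\otimes\HH$ yields a cycle operator-homotopic to $(\HH,T)$ rather than to something else. What is actually needed is a lemma of the form: if $x$ has property $(\gamma)$ and $y\in KK_G(\C,\C)$ is represented by a cycle carrying a compatible nondegenerate $C_0(\underline{EG})$-structure as in conditions (2)--(3), then $x\otimes_\C y=y$ (and symmetrically $y\otimes_\C x=y$ when the roles are exchanged). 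Proving this requires two separate steps: (a) using (2)--(3) to replace $T$ by its average $T_c=\int_G g(c)Tg(c)\,dg$ and thereby lift the cycle to a class over $C_0(\underline{EG})$; and (b) showing that the \emph{other} factor becomes $1$ after pullback to $\underline{EG}$. Step (b) is itself nontrivial: condition (1) only asserts $\mathrm{Rest}^G_K(x)=1_K$ for compact subgroups $K$, and passing from that to $p^*(x)=1$ in $RKK_{G}(\underline{EG};\C,\C)$ --- which is what your phrase ``the two collapsed cycles agree on the proper side'' silently invokes --- requires an induction over a $G$-CW structure on a cocompact model, with possible gluing obstructions; this is precisely why Tu's definition states the $RKK$ version as the official axiom and the compact-subgroup version only as its specialization to $G/K$. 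Your uniqueness scheme via $x\otimes_\C x'=x'$ and $x\otimes_\C x'=x$ is the correct high-level strategy, but both equalities rest on the unproved lemma above, so as written part (1) is a plan rather than a proof.
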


In particular, in the case of groups admitting an abstract $\gamma$ element, any element with the $(\gamma)$ property is in fact equal to $\gamma$.

Using this new approach, Nishikawa can reprove Conjecture \ref{ConjBCcoeff} for Euclidean motion groups, as well as the injectivity of the Baum-Connes map with coefficients $\mu_{A,r}$ for $G$ a semisimple Lie group. He also reproves the conjecture
for groups acting properly on locally finite trees and announces a generalization (with J. Brodzki, E. Guentner and N. Higson) to groups acting properly on $CAT(0)$ cubic complexes.

\subsection{Consequences of the Baum-Connes conjecture}\label{BCconsequences}

\subsubsection{Injectivity: the Novikov conjecture}\label{InjectivityNovikov}

In section \ref{Novikov}, we already emphasized that the Novikov conjecture (conjecture \ref{Nov}) on homotopy invariance of higher signature followed from the (rational) injectivity of Kasparov's map 
$$\beta:K_0(B\Gamma)\rightarrow K_0(C^*_r(\Gamma)).$$
In the case of a cocompact, torsion-free lattice of a connected Lie group $G$, the map $\beta$ coincides with the Dirac induction map
$$K_0(M)\rightarrow K_0(C^*_r(\Gamma))$$
of section \ref{bifunctor}. In general there is a natural injection group $\iota_{\Gamma}: K_0(B\Gamma)\rightarrow K_0^\Gamma(\underline{E\Gamma})$ and its composition with the assembly map $\mu_r$ gives $\beta$. 
That fact, taken for granted for a long time, was proved only fairly recently by M. Land \cite{Land}. 

Therefore, the Novikov conjecture follows from the Strong Novikov conjecture, i.e. from the rational injectivity of the map $\mu_r\circ\iota_{\Gamma}$.
In particular, the Novikov conjecture follows from the injectivity of the assembly map $\mu_r$. 

We must here mention the beautiful recent approach of P. Antonini, S. Azzali and G. Skandalis \cite{AAS} on K-theory with coefficients in the real numbers. They make use of von Neumann theory of $II_1$-factors. For such a factor $N$, 
the trace defines naturally an isomorphism from $K_0(N)$ to ${\R}$ whereas $K_1(N)=0$. The  KK-theory with real coefficients $KK^G_{\R}(A,B)$ is defined as the inductive limit:
of the groups $KK_G(A,B\otimes N)$ for all $N$ a $II_1$-factors $N$. Note that there is a map $KK^G(A,B)\otimes\R\rightarrow KK^G_{\R}(A,B)$ but it is in general not an isomorphism. Any trace on $A$ defines an element of $KK_{\R}(A,\C)$. 
In particular for $\Gamma$ a discrete group, the canonical trace $\tau$ defines an element $[\tau]$ of $KK_{\R}(C^*_r(\Gamma),\C)=KK^{\Gamma}_{\R}(\C,\C)$. The crucial remark of \cite{AAS}  is the following:
\begin{Prop} The element $[\tau]$ is an idempotent of the ring $KK^{\Gamma}_{\R}(\C,\C)$. Moreover for any proper and free space $X$, the identity $1_{C_0(X)}$ of the ring $KK^{\Gamma}_{\R}(\C,\C)$ saisfies $1_{C_0(X)}\otimes [\tau]=1_{C_0(X)}$.
\end{Prop}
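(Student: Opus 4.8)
My plan is to transport the whole computation to the side $KK_\R(C^*_r(\Gamma),\C)$ afforded by the stated identification $KK_\R(C^*_r(\Gamma),\C)=KK^\Gamma_\R(\C,\C)$, and to exploit the two structural facts that make the trace class tractable. First, the assignment sending a finite trace $\sigma$ on a $C^*$-algebra $A$ to its class $[\sigma]\in KK_\R(A,\C)$ is natural and multiplicative: it sends an external tensor product of traces to the external Kasparov product of their classes and is compatible with pullback along $*$-homomorphisms. Second, under the identification above the ring multiplication $\otimes_\C$ of $KK^\Gamma_\R(\C,\C)$ corresponds to the product on $KK_\R(C^*_r(\Gamma),\C)$ induced by the comultiplication $\Delta\colon C^*_r(\Gamma)\to C^*_r(\Gamma)\otimes C^*_r(\Gamma)$, $g\mapsto g\otimes g$. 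This last point is the real-coefficient avatar of the familiar fact that for a compact group $K$ the ring $KK_K(\C,\C)=R(K)=KK(C^*K,\C)$ carries the product dual to the coproduct of $C^*K$. I would establish both facts once and for all at the level of the defining inductive limit over $II_1$-factors.

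Granting these, idempotency is immediate. The external product $[\tau]\otimes[\tau]$ is the class of the product trace $\tau\otimes\tau$ on $C^*_r(\Gamma)\otimes C^*_r(\Gamma)$, and the ring product $[\tau]\otimes_\C[\tau]$ is its pullback $\Delta^*$, namely the class of the trace $(\tau\otimes\tau)\circ\Delta$. Evaluating on a group element, $((\tau\otimes\tau)\circ\Delta)(g)=(\tau\otimes\tau)(g\otimes g)=\tau(g)^2$; since the canonical trace satisfies $\tau(g)=\delta_{g,e}\in\{0,1\}$ we have $\tau(g)^2=\tau(g)$, so $(\tau\otimes\tau)\circ\Delta=\tau$ as traces on $C^*_r(\Gamma)$. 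Hence $[\tau]\otimes_\C[\tau]=[\tau]$.

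For the second assertion I read $1_{C_0(X)}$ as the unit of the ring $RKK^\Gamma_\R(X;\C,\C)=KK_{X\rtimes\Gamma,\R}(\C,\C)$ attached to the transformation groupoid $X\rtimes\Gamma$, and the claim as the statement that the image $p^*[\tau]$ of $[\tau]$ under the induction map $p^*\colon KK^\Gamma_\R(\C,\C)\to RKK^\Gamma_\R(X;\C,\C)$ equals this unit — the real-coefficient analogue of Tu's condition $p_*(\gamma)=1$, with compact subgroups replaced by free orbits. Since $1_{C_0(X)}$ is the unit, $1_{C_0(X)}\otimes[\tau]$ equals $p^*[\tau]$, so the claim reduces to $p^*[\tau]=1_{C_0(X)}$. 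When the action of $\Gamma$ on $X$ is free and proper the groupoid $X\rtimes\Gamma$ is Morita equivalent to the space $X/\Gamma$, so $RKK^\Gamma_\R(X;\C,\C)\cong KK_\R(C_0(X/\Gamma),C_0(X/\Gamma))$ and its unit is the identity class. I would then compute $p^*[\tau]$ through this equivalence: pulling the trace back to $X\rtimes\Gamma$ amounts to taking, orbit by orbit, the von Neumann trace of the regular representation of the stabilizer; freeness makes every stabilizer trivial, so the orbitwise trace is the honest rank functional and $p^*[\tau]$ is the identity, i.e. $1_{C_0(X)}$.

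The genuine difficulty lies in this second assertion: making the passage through the Morita equivalence for the free proper groupoid $X\rtimes\Gamma$ fully rigorous with real (i.e. $II_1$-factor) coefficients, and verifying that the pulled-back trace class is exactly the identity rather than a nonzero multiple of it. This is precisely where freeness is indispensable — on an orbit with a nontrivial finite stabilizer $F$ the von Neumann trace would register the factor $1/|F|$ and $p^*[\tau]$ would fail to be the unit — so the heart of the proof is a careful, stabilizer-sensitive computation of the trace along the fibres of $X\to X/\Gamma$, exactly parallel to the way the classical $\gamma$-element restricts to $1$ only on compact subgroups.
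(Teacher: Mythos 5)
The paper itself contains no proof of this Proposition: it is quoted as ``the crucial remark of [AAS]''. So your argument has to be measured against what the statement actually requires, i.e.\ against the arguments of Antonini--Azzali--Skandalis.

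Your first part (idempotency) is essentially correct, and it is a legitimate repackaging of the AAS mechanism. The reduced comultiplication $\Delta\colon C^*_r(\Gamma)\to C^*_r(\Gamma)\otimes_{\min}C^*_r(\Gamma)$ does exist (this needs Fell's absorption principle: $\lambda\otimes\lambda$ is unitarily equivalent to an amplification of $\lambda$, so $g\mapsto\lambda_g\otimes\lambda_g$ is isometric on $\C\Gamma$), and your identity $(\tau\otimes\tau)\circ\Delta=\tau$ is exactly the crux: it says that the diagonal embedding $L(\Gamma)\to L(\Gamma)\bar\otimes L(\Gamma)$ is trace preserving, which is how AAS see the idempotency. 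Two caveats. First, the displayed identification $KK_{\R}(C^*_r(\Gamma),\C)=KK^{\Gamma}_{\R}(\C,\C)$ is not a theorem as stated; for discrete $\Gamma$ the clean statement is $KK^{\Gamma}(\C,N)\cong KK(C^*_{\rm max}(\Gamma),N)$, where the ring structure is dual to the \emph{full} coproduct, and your computation should be routed through $C^*_{\rm max}$ using that all classes in play are pulled back along $\lambda\colon C^*_{\rm max}(\Gamma)\to C^*_r(\Gamma)$ and that $(\lambda\otimes\lambda)\circ\Delta_{\rm max}=\Delta_r\circ\lambda$. Second, your two ``structural facts'' (naturality and multiplicativity of trace classes) are precisely where the content of the theory sits: they reduce to the statement that any unital (hence trace-preserving) embedding of $\mathrm{II}_1$ factors induces the canonical map to the inductive limit, so that the class of a trace depends only on the trace. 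Deferring them is acceptable, but they are not formalities.

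The second part has a genuine gap, and it is not where you locate it. Under the Morita equivalence you invoke, $p^*[\tau]$ becomes the class of the \emph{flat} bundle of Hilbert $L(\Gamma)$-modules $X\times_{\Gamma}L(\Gamma)\to X/\Gamma$, whose holonomy is the regular representation, while $1_{C_0(X)}$ becomes the trivial bundle $C_0(X/\Gamma)\otimes L(\Gamma)$. These two bundles are isomorphic over every point of $X/\Gamma$ -- for any action and any unitary holonomy whatsoever -- so your ``orbit by orbit the trace is the honest rank functional'' computation is vacuous: a fibrewise statement cannot distinguish the two classes. Indeed, in the finite-dimensional analogue the corresponding claim is false integrally (flat bundles have nonzero torsion Chern classes), which shows that no pointwise argument can suffice. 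The entire content of the second assertion is global: one must prove that the holonomy twist dies in $\varinjlim_M$ over $\mathrm{II}_1$ factors. This is AAS's flat-bundle theorem, and it genuinely uses the von Neumann structure of the fibre ($K_1(M)=0$ and $K_0(M)\cong\R$ detected by the trace, via obstruction theory or a Chern--Weil type argument). Note also that the two tempting shortcuts fail: you cannot conjugate $\lambda$ to the trivial representation inside any larger $\mathrm{II}_1$ factor (conjugation preserves faithfulness), and you cannot use contractibility of $U(L(\Gamma))$ in the strong topology to trivialize the bundle, because strongly continuous unitary fields are not adjointable Hilbert-module maps. Finally, freeness does not enter through a ``stabilizer-sensitive trace computation'' (for a free action that computation is empty); your observation that a finite stabilizer $F$ would produce $\frac{1}{|F|}[\lambda_F]\neq 1$ in $R(F)\otimes\R$ correctly explains why the statement \emph{fails} without freeness, but in the proof freeness is used only to obtain the Morita equivalence of $X\rtimes\Gamma$ with $X/\Gamma$; after that, the flat-bundle theorem is the heart of the matter, and your sketch does not contain it.
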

The authors define the $KK_{\R}$-groups localized at the identity as the products by the idempotent $[\tau]$, i.e.
$KK^{\Gamma}_{\R}(A,B)_{\tau}=KK^{\Gamma}_{\R}(A,B)\otimes_{\C} [\tau].$
In particular the Baum-Connes map can be localized as  $$\mu_{\tau}: K^{top}_{*,{\R}}(\Gamma)_{\tau}\rightarrow K_{*,{\R}}(C^*_r(\Gamma))_{\tau}$$, where the righthand side is nothing but $KK^{\Gamma}_{\R}({\C},C^*_r(\Gamma))_{\tau}$
and the lefthand side is  $KK^{\Gamma}_{\R}(C_0(X),{\C})_{\tau}$ (assume for simplicity that ${\underline E\Gamma}$ is cocompact).

The results of \cite{AAS} can be summarized as follows
\begin{Thm} 
\begin{enumerate}Let $\Gamma$ be a discrete group. 
\item If the Baum-Connes conjecture (with coefficients) holds for $\Gamma$, then $\mu_{\tau}$ is an isomorphism. 
\item If the map $\mu_{\tau}$ is injective, then the Strong Novikov conjecture holds for $\Gamma$. 
\end{enumerate}
\end{Thm}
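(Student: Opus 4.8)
The plan is to identify $\mu_\tau$ with the $[\tau]$-localization of a ``real'' assembly map built from the ordinary Baum-Connes map by allowing $II_1$-factor coefficients, and then to extract both statements using the two properties of $[\tau]$ recorded in the preceding Proposition. To make the real assembly map explicit, take a $II_1$-factor $N$ (with trivial $\Gamma$-action) and any $\Gamma$-$C^*$-algebra $A$; the ordinary assembly map $\mu_{A\otimes N,r}\colon K^{top}_*(\Gamma,A\otimes N)\to K_*(C^*_r(\Gamma,A\otimes N))$ is functorial in $N$, so passing to the inductive limit over all $II_1$-factors yields a map $\mu^{\R}_{A}\colon K^{top}_{*,\R}(\Gamma,A)\to K_{*,\R}(C^*_r(\Gamma,A))$, natural in $A$ and a morphism of modules over the ring $KK^{\Gamma}_{\R}(\C,\C)$. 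By construction the localized map $\mu_\tau$ is the image of $\mu^{\R}_{\C}$ under multiplication by the idempotent $[\tau]$.

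For part (1), assume the Baum-Connes conjecture with coefficients holds for $\Gamma$. Then $\mu_{A\otimes N,r}$ is an isomorphism for every $\Gamma$-$C^*$-algebra $A$ and every $II_1$-factor $N$, since $A\otimes N$ is again a $\Gamma$-$C^*$-algebra; taking the inductive limit, $\mu^{\R}_{\C}$ is an isomorphism. As $[\tau]$ is an idempotent of $KK^{\Gamma}_{\R}(\C,\C)$ and $\mu^{\R}_{\C}$ is a module map for this ring, multiplication by $[\tau]$ splits off a direct summand on source and target, and the restriction of an isomorphism to a direct summand is again an isomorphism; hence $\mu_\tau$ is an isomorphism.

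For part (2), the relevant input is the second assertion of the Proposition: for the free and proper space $X=E\Gamma$ one has $1_{C_0(E\Gamma)}\otimes[\tau]=1_{C_0(E\Gamma)}$, so on the free part localization at $\tau$ acts trivially. I would set up the commutative square
$$
\xymatrix{
K_*(B\Gamma)\otimes_\Z\R \ar[r]\ar[d]_{(\mu_r\circ\iota_\Gamma)\otimes\R} & K^{top}_{*,\R}(\Gamma)_\tau \ar[d]^{\mu_\tau} \\
K_*(C^*_r(\Gamma))\otimes_\Z\R \ar[r] & KK^{\Gamma}_{\R}(\C,C^*_r(\Gamma))_\tau,
}
$$
in which the horizontal arrows are the canonical comparison maps from integral $K$-theory to $\tau$-localized real $KK$-theory and the left vertical arrow is $(\mu_r\circ\iota_\Gamma)\otimes\R$. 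The decisive claim is that the top horizontal map is injective: since $K_*(B\Gamma)=KK^{\Gamma}(C_0(E\Gamma),\C)$ by freeness of the action on $E\Gamma$, the identity $1_{C_0(E\Gamma)}\otimes[\tau]=1_{C_0(E\Gamma)}$ shows that $\tau$-localization is trivial on this summand, while the trace on $II_1$-factor coefficients realizes the comparison map as a Chern-character type map into $H_*(B\Gamma;\R)$, which is rationally injective. Granting this, if $\mu_\tau$ is injective then the composite along the top and right is injective, hence so is $(\mu_r\circ\iota_\Gamma)\otimes\R$. Because $\R$ is faithfully flat over $\Q$, injectivity after $\otimes\R$ is equivalent to injectivity after $\otimes\Q$, which is precisely rational injectivity of $\mu_r\circ\iota_\Gamma$, i.e. the Strong Novikov conjecture for $\Gamma$.

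The step I expect to be the main obstacle is the injectivity of the top horizontal comparison map in part (2): one must show that the $\tau$-localized real $K$-homology genuinely detects the whole free summand $K_*(B\Gamma)\otimes\R$. This requires controlling the inductive-limit construction defining $KK_\R$ and identifying the trace pairing with an honest Chern character onto real homology; the freeness identity $1_{C_0(E\Gamma)}\otimes[\tau]=1_{C_0(E\Gamma)}$ is exactly what keeps this summand alive under localization, but converting it into injectivity is the delicate point. By comparison, part (1) and the commutativity of the square are formal consequences of naturality of the assembly map and of the idempotent splitting.
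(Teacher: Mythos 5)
Your proposal follows the same route as the paper. Part (1) is exactly the paper's argument: apply Baum--Connes with coefficients in each $II_1$-factor $N$ (with trivial action), pass to the inductive limit defining the $KK_{\R}$-groups, and cut down by the central idempotent $[\tau]$; an isomorphism of modules over $KK^{\Gamma}_{\R}(\C,\C)$ restricts to an isomorphism between the direct summands determined by $[\tau]$. For part (2), your commutative square and the reduction of Strong Novikov to injectivity of the composite $K_*(B\Gamma)\otimes_{\Z}\R\rightarrow K^{top}_{*,\R}(\Gamma)_\tau\rightarrow K_{*,\R}(C^*_r(\Gamma))_\tau$ is also the paper's reduction.

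The place where your write-up is not a proof is the one you yourself flag: the injectivity of the top horizontal map. The paper does not attempt to derive this from the Proposition; it quotes the theorem of Antonini--Azzali--Skandalis \cite{AAS} that the map induced by $E\Gamma\rightarrow\underline{E\Gamma}$,
$$K_*(B\Gamma)\otimes\R=KK^{\Gamma}_{\R}(C_0(E\Gamma),\C)\longrightarrow KK^{\Gamma}_{\R}(C_0(\underline{E\Gamma}),\C)_\tau=K^{top}_{*,\R}(\Gamma)_\tau,$$
is an \emph{isomorphism} (you only need injectivity, so you are asking for less than what \cite{AAS} provide). Your attempted justification cannot be completed as stated: the identity $1_{C_0(E\Gamma)}\otimes[\tau]=1_{C_0(E\Gamma)}$ only shows that $\tau$-localization acts trivially on $KK^{\Gamma}_{\R}(C_0(E\Gamma),\C)$, i.e. that the free part is not killed \emph{before} mapping to $\underline{E\Gamma}$; it gives no control on the kernel of the map $KK^{\Gamma}_{\R}(C_0(E\Gamma),\C)\rightarrow KK^{\Gamma}_{\R}(C_0(\underline{E\Gamma}),\C)_\tau$, nor does it identify $KK^{\Gamma}_{\R}(C_0(E\Gamma),\C)$ with $K_*(B\Gamma)\otimes\R$ --- both of these are the actual content of the \cite{AAS} theorem, and the ``Chern-character via the trace'' heuristic is not a substitute. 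Since the statement under review is explicitly a summary of the results of \cite{AAS}, replacing your heuristic by a citation of that isomorphism makes your argument complete and essentially identical to the paper's.
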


The first point uses the Baum-Connes map with coefficients in any $II_1$-factor. 
The second point rests upon the observation that the map from $E\Gamma$ to $\underline E\Gamma$ induces an isomorphism from 
$$K_*(B\Gamma)\otimes{\R}=KK^{\Gamma}_{\R}(C_0(E\Gamma), {\C})
\rightarrow K^{top}_{*,{\R}}(\Gamma)_{\tau}
=KK^{\Gamma}_{\R}(C_0({\underline E}{\Gamma}),{\C})_{\tau}.$$

In other words, the conjecture that $\mu_{\tau}$ is an isomorphism is intermediate between the Baum-Connes conjecture (without coefficents) and the Strong Novikov conjecture.

\subsubsection{Injectivity: the Gromov-Lawson-Rosenberg conjecture}
Let $M$ be a Riemannian manifold of dimension $n$. The {\it scalar curvature} is a smooth function $\kappa:M\rightarrow\R$ that, at a point $p\in M$, measures how fast the volume of small balls centered at $p$ grows when compared to the volume of small balls of the same radius in Euclidean space $\mathbf{E}^n$. More precisely we expand the ratio $\frac{Vol\,B_M(p,r)}{Vol\,B_{\mathbf{E}^n}(0,r)}$ as a power series in $r$:
$$\frac{Vol\,B_M(p,r)}{Vol\,B_{\mathbf{E}^n}(0,r)}=1 -\frac{\kappa(p)}{6(n+2)}r^2 + o(r^2);$$
so positive scalar curvature means that small balls in $M$ grow more slowly than corresponding Euclidean balls.

Let $M$ be now a closed spin manifold, and $D$ the Dirac operator of $M$, the Atiyah-Singer index formula for $D$ is
$$Ind(D)=\langle {\bf \hat{A}(M)},[M]\rangle,$$
where ${\bf \hat{A}(M)}$ is a polynomial in the Pontryagin classes, and $[M]$ is the fundamental class of $M$; see \cite{BoossBleecker}. Let $\Gamma=\pi_1(M)$ be the fundamental group of $M$, and let $f:M\rightarrow B\Gamma$ be the classifying map. Fix $x\in H^*(B\Gamma,\Q)$. The number $\langle {\bf \hat{A}(M)},[M]\rangle$ being called the {\it $\hat{A}$-genus}, it is natural to call the numbers 
$$\hat{A}_x(M)=:\langle {f^*(x)\cup\bf \hat{A}(M)},[M]\rangle$$ 
{\it higher $\hat{A}$-genera}, by analogy with higher signatures. The {\it Gromov-Lawson-Rosenberg conjecture} (GLRC) states:

\begin{Conj}\label{GLRC}(GLRC) Let $M$ be a closed spin manifold $M$ with fundamental group $\Gamma$. If $M$ admits a Riemannian metric with positive scalar curvature, then all higher $\hat{A}$-genera do vanish: $\hat{A}_x(M)=0$ for all $x\in H^*(B\Gamma,\Q)$.
\end{Conj}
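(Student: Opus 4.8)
The plan is to transcribe, for the spin Dirac operator and positive scalar curvature, the very argument that produced the Novikov conjecture (Conjecture \ref{Nov}) from rational injectivity of the assembly map in section \ref{Novikov}. Let $D$ be the spin Dirac operator of $M$; as an elliptic operator it defines a class $[D]\in K_0(M)$, and the Atiyah-Singer index theorem for twisted Dirac operators, $Ind(D_\xi)=\langle Ch^*(\xi)\cup\hat{A}(M),[M]\rangle$ (see \cite{BoossBleecker}), yields --- exactly as equation (\ref{AS}) produced (\ref{Chern}) for the signature operator --- the homology identity
$$Ch_*([D])=\hat{A}(M)\cap[M]\in H_*(M,\Q).$$
Pushing forward by the classifying map $f:M\rightarrow B\Gamma$ and using naturality of the homology Chern character gives $Ch_*(f_*[D])=f_*(\hat{A}(M)\cap[M])$. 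By the projection formula, for every $x\in H^*(B\Gamma,\Q)$ we have $\langle x,f_*(\hat{A}(M)\cap[M])\rangle=\langle f^*(x)\cup\hat{A}(M),[M]\rangle=\hat{A}_x(M)$. Since $Ch_*$ is a rational isomorphism, it therefore suffices to prove that $f_*[D]=0$ in $K_0(B\Gamma)\otimes_\Z\Q$.

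Next I would feed $[D]$ into the analytic machinery via Kasparov's map $\beta:K_0(B\Gamma)\rightarrow K_0(C^*_r(\Gamma))$ of section \ref{Novikov}. Applying $\beta$ amounts to forming the Dirac operator $D_{\mathcal{L}_\Gamma}$ twisted by the Mishchenko line bundle $\mathcal{L}_\Gamma=E\Gamma\times_\Gamma C^*_r(\Gamma)$ (pulled back to $M$ along $f$) and taking its index in $K_0(C^*_r(\Gamma))$, so that $\beta(f_*[D])=[Ind\,D_{\mathcal{L}_\Gamma}]$. The crucial point is a twisted Lichnerowicz formula on the Hilbert $C^*_r(\Gamma)$-module of sections: because $\mathcal{L}_\Gamma$ is a \emph{flat} bundle (it is associated to the covering $E\Gamma\rightarrow B\Gamma$), its twisting connection contributes no curvature term, and the Bochner identity reads
$$D_{\mathcal{L}_\Gamma}^2=\nabla^*\nabla+\frac{\kappa}{4}.$$
As $M$ is compact and carries a metric of positive scalar curvature, $\kappa\geq\kappa_0>0$, whence $D_{\mathcal{L}_\Gamma}^2\geq\frac{\kappa_0}{4}$ as a self-adjoint operator on the Hilbert module. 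Thus $D_{\mathcal{L}_\Gamma}$ is invertible and its index vanishes, giving $\beta(f_*[D])=0$ in $K_0(C^*_r(\Gamma))$.

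To conclude I would invoke the injectivity hypothesis. By Land's theorem \cite{Land}, $\beta$ factors as $\mu_r\circ\iota_\Gamma$ with $\iota_\Gamma:K_0(B\Gamma)\rightarrow K_0^\Gamma(\underline{E\Gamma})$ rationally injective; hence rational injectivity of $\mu_r$ --- known for all the groups surveyed in section \ref{known} --- makes $\beta$ rationally injective. Since $\beta(f_*[D])=0$, this forces $f_*[D]=0$ in $K_0(B\Gamma)\otimes_\Z\Q$, and by the reduction above $\hat{A}_x(M)=0$ for every $x\in H^*(B\Gamma,\Q)$, proving the (GLRC) of Conjecture \ref{GLRC} for every $\Gamma$ for which $\mu_r$ is rationally injective.

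The main obstacle is the twisted Lichnerowicz step in the Hilbert-module setting: one must realize $D_{\mathcal{L}_\Gamma}$ as a regular self-adjoint operator on a Hilbert $C^*_r(\Gamma)$-module, verify that the Bochner identity and the spectral positivity $D_{\mathcal{L}_\Gamma}^2\geq\frac{\kappa_0}{4}$ genuinely survive in this noncommutative context, and deduce invertibility --- hence vanishing of the $K_0(C^*_r(\Gamma))$-valued index --- from the spectral gap. Everything else is a faithful transcription of the Novikov argument, with the $L$-class and homotopy invariance replaced by the $\hat{A}$-class and positivity of scalar curvature.
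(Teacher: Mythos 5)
The statement you were asked to prove is a \emph{conjecture}, open in general, and the paper offers no proof of it: it only records the remark that GLRC for manifolds with fundamental group $\Gamma$ follows from injectivity of the assembly map for $\Gamma$, citing Theorem 7.11 of \cite{BCH}, together with Lichnerowicz's vanishing of the ordinary $\hat{A}$-genus. What you have written is a correct and standard account of precisely that implication (Rosenberg's argument): the Chern-character reduction to $f_*[D]=0$ in $K_0(B\Gamma)\otimes_\Z\Q$ faithfully mirrors the paper's treatment of the Novikov conjecture in section \ref{Novikov}, the vanishing of the Mishchenko--Fomenko index via the twisted Lichnerowicz formula (no curvature contribution from the flat bundle $\mathcal{L}_\Gamma$, hence $D_{\mathcal{L}_\Gamma}^2\geq\kappa_0/4$ and invertibility on the Hilbert $C^*_r(\Gamma)$-module) is the right key step and does go through in the Hilbert-module setting, and the appeal to \cite{Land} to pass from injectivity of $\mu_r$ to injectivity of $\beta$ is exactly how the paper organizes this. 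Be clear, however, that what you obtain is GLRC \emph{conditionally}, for those $\Gamma$ whose assembly map is rationally injective (as your last sentence honestly states); the conjecture as stated, for arbitrary fundamental group, remains open, so no blind proof of the unconditional statement was possible.
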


GLRC for manifolds with given fundamental group $\Gamma$, follows from injectivity of the assembly map for $\Gamma$, see Theorem 7.11 in \cite{BCH}. The fact that the usual $\hat{A}$-genus vanishes for a closed spin manifold with positive scalar curvature, is a famous result by Lichnerowicz.

See \cite{RosenbergStolz} for a lucid discussion of GLRC, together with speculations about a suitable converse: does the vanishing of a certain K-theory class in the real K-theory of $C^*_r(\Gamma)$ implies the existence of a metric with positive scalar curvature on $M$?

\subsubsection{Surjectivity: the Kadison-Kaplansky conjecture}

Let $\Gamma$ be a discrete group. If $g\in\Gamma$ is a group element of finite order $n>1$, then $e=\frac{1}{n}\sum_{k=0}^{n-1}g^k$ defines a non-trivial element in the complex group ring $\C\Gamma$ (``non-trivial'' meaning: distinct from 0 and 1). When $\Gamma$ is torsion-free, it is not clear that $\C\Gamma$ admits non-trivial idempotents, and around 1950, I. Kaplansky turned this into a conjecture:

\begin{Conj}\label{Kap} If $\Gamma$ is a torsion-free group, then $\C\Gamma$ has no non-trivial idempotent.
\end{Conj}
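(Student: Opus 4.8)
The plan is to deduce this conjecture from the surjectivity half of the Baum-Connes conjecture (Conjecture \ref{ConjBC}) by establishing the stronger \emph{Kadison conjecture}: for $\Gamma$ torsion-free, the reduced $C^*$-algebra $C^*_r(\Gamma)$ itself has no non-trivial idempotent. Since $\C\Gamma$ embeds as a dense subalgebra of $C^*_r(\Gamma)$, the Kaplansky statement follows at once. The main tool is the \emph{canonical trace} $\tau:C^*_r(\Gamma)\rightarrow\C$, $\tau(a)=\langle a\delta_e,\delta_e\rangle$, which is faithful, positive and normalized ($\tau(1)=1$), together with the induced homomorphism $\tau_*:K_0(C^*_r(\Gamma))\rightarrow\R$ on K-theory.

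The key step is an integrality statement, sometimes called the \emph{trace conjecture}: if $\Gamma$ is torsion-free then $\tau_*\big(K_0(C^*_r(\Gamma))\big)\subseteq\Z$. First I would invoke surjectivity of the assembly map: assuming $\mu_r$ is onto, every class in $K_0(C^*_r(\Gamma))$ has the form $\mu_r(x)$ with $x\in K^{top}_0(\Gamma)$. Since $\Gamma$ is torsion-free one has $\underline{E\Gamma}=E\Gamma$ and $K^{top}_0(\Gamma)=K_0(B\Gamma)$, so $x$ is represented by (the symbol of) an elliptic operator $D$ and $\mu_r(x)=ind_\Gamma(D)$. By the identity $\tau_*(ind_\Gamma(D))=Ind_\Gamma(D)$ recorded in section \ref{ConMosco}, and by Atiyah's $L^2$-index theorem (Theorem \ref{L2index}), which equates the $\Gamma$-index $Ind_\Gamma(D)$ with an ordinary Fredholm index $Ind(D)\in\Z$, I would conclude $\tau_*(\mu_r(x))\in\Z$, giving the desired inclusion.

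The conclusion is then a short positivity argument. Suppose $e\in C^*_r(\Gamma)$ is an idempotent with $e\neq 0,1$. Any idempotent in a $C^*$-algebra is similar to a self-adjoint projection $p=p^*=p^2$, say $e=upu^{-1}$ with $u$ invertible; traciality of $\tau$ gives $\tau(e)=\tau(upu^{-1})=\tau(p)$, and $[e]=[p]$ in $K_0$. From $0\le p\le 1$ we get $0\le\tau(p)\le 1$, while faithfulness forces $\tau(p)=0\Leftrightarrow p=0$ and (applying this to $1-p$) $\tau(p)=1\Leftrightarrow p=1$. As $e\neq 0,1$ we have $p\neq 0,1$, hence $0<\tau(e)=\tau_*[e]<1$, contradicting $\tau_*[e]\in\Z$. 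This rules out non-trivial idempotents.

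The hard part is the integrality step, and this is exactly where torsion-freeness is essential. One needs both a workable identification of $K^{top}_0(\Gamma)$ by means of elliptic operators with the compatibility of $\mu_r$ with index maps, and Atiyah's $L^2$-index theorem to pass from the a priori real number $Ind_\Gamma(D)$ to an integer. If $\Gamma$ has torsion the argument collapses: the range of $\tau_*$ then also contains the fractions $\frac{1}{|F|}$ coming from finite subgroups $F\leq\Gamma$ (reflecting the representation-ring contributions to $K^{top}_0(\Gamma)$), so $0<\tau(e)<1$ no longer yields a contradiction, consistent with the genuine non-trivial idempotents produced by torsion elements as in the discussion preceding the statement.
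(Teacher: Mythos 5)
Your proposal is correct and follows essentially the same route as the paper: the paper also deduces the Kaplansky statement (via the stronger Kadison--Kaplansky Conjecture \ref{KadKap}) from surjectivity of $\mu_r$, by proving integrality of the canonical trace (Conjecture \ref{integraltrace}) through realizing classes in the left-hand side by $\Gamma$-invariant elliptic operators, using torsion-freeness to turn proper actions into free ones, and invoking Atiyah's $L^2$-index theorem (Theorem \ref{L2index}) together with $\tau_*(ind_\Gamma(D))=Ind_\Gamma(D)$, before concluding with the same positivity-and-faithfulness argument for the trace. The only cosmetic difference is that you phrase the left-hand side as $K_0(B\Gamma)$ via $\underline{E\Gamma}=E\Gamma$, while the paper works with the K-cocycle description $(Z,\xi)$ of section \ref{timedep} via Remark \ref{reconcile}; these are the same identification.
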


Around 1954, R.V. Kadison and I. Kaplansky conjectured that this should be even true by replacing $\C\Gamma$ by the larger reduced group $C^*$-algebra:

\begin{Conj}\label{KadKap} If $\Gamma$ is a torsion-free group, then $C^*_r(\Gamma)$ has no non-trivial idempotent.
\end{Conj}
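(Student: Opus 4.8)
The statement is really that \emph{surjectivity} of the assembly map $\mu_r$ implies Conjecture \ref{KadKap} for torsion-free $\Gamma$, and the plan is to extract this from the canonical trace. Let $\tau:C^*_r(\Gamma)\to\C$ be the canonical faithful normalized trace, with induced map $\tau_*:K_0(C^*_r(\Gamma))\to\R$. First I would reduce to an \emph{integrality} statement. Any idempotent $e\in C^*_r(\Gamma)$ is similar to a self-adjoint projection $p$ with $[e]=[p]$ in $K_0$, so $\tau(e)=\tau(p)=\tau_*([p])$ and $0\le\tau(p)\le 1$ by positivity of $\tau$. Thus it suffices to prove $\tau_*(K_0(C^*_r(\Gamma)))\subseteq\Z$: granting this, $\tau(p)\in\Z\cap[0,1]=\{0,1\}$, and faithfulness of $\tau$ forces $p=0$ (if $\tau(p)=0$) or, applying the same to $1-p$, $p=1$ (if $\tau(p)=1$); in either case $e$ is trivial.

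The hypothesis now enters: if $\mu_r$ is surjective then $\tau_*(K_0(C^*_r(\Gamma)))=\tau_*\bigl(\mu_r(K_0^{top}(\Gamma))\bigr)$, so I only need the composite $\tau_*\circ\mu_r:K_0^{top}(\Gamma)\to\R$ to be $\Z$-valued. Because $\Gamma$ is torsion-free we have $\underline{E\Gamma}=E\Gamma$ and $K_0^{top}(\Gamma)=RK_0(B\Gamma)$, so a generator is represented by an elliptic operator $D$ on a compact piece of $B\Gamma$, and $\mu_r$ sends its class to the index $ind_\Gamma(D_{\mathcal{L}_\Gamma})\in K_0(C^*_r(\Gamma))$ of $D$ twisted by the Mishchenko line bundle $\mathcal{L}_\Gamma$.

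The crux is to identify $\tau_*\bigl(ind_\Gamma(D_{\mathcal{L}_\Gamma})\bigr)$ with an \emph{ordinary} Fredholm index. This is precisely the relation $\tau_*(ind_\Gamma(D))=Ind_\Gamma(\tilde{D})$ noted after \cite{ConMos}, together with Atiyah's $L^2$-index theorem (Theorem \ref{L2index}), which equates the right-hand side with the integer $Ind(D)$. Concretely I would pass to the $\Gamma$-covering picture over $B\Gamma$, under which $\tau$ becomes the von Neumann dimension $\dim_\Gamma$ on the commutant, and read off $\tau_*(\mu_r[D])=Ind(D)\in\Z$. This yields the integrality statement and closes the argument.

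The genuinely substantive step is the last one: verifying that the von Neumann trace of the assembled $K$-theory class coincides with the classical index, i.e.\ the compatibility of $\tau_*\circ\mu_r$ with the $L^2$-index theorem. By contrast the algebraic reduction of the first paragraph and the formal invocation of surjectivity are routine. It is also exactly here that torsion-freeness is indispensable: in the presence of finite subgroups $F$ the image of $\tau_*$ enlarges to the subgroup of $\Q$ generated by the numbers $1/|F|$ (the Baum--Connes ``trace conjecture''), and the clean conclusion $\tau_*(K_0(C^*_r(\Gamma)))\subseteq\Z$ fails.
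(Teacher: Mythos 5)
Your proposal is correct and follows essentially the same route as the paper's proof of Proposition \ref{BCimpliesKK}: reduce the Kadison--Kaplansky statement to integrality of the canonical trace on $K_0(C^*_r(\Gamma))$ (via similarity to a projection, positivity and faithfulness of $\tau$), then show $\tau_*\circ\mu_r$ is integer-valued by using torsion-freeness to make the relevant $\Gamma$-actions free, identifying $\tau_*$ of the assembled index with the $\Gamma$-index $Ind_\Gamma$, and invoking Atiyah's $L^2$-index theorem (Theorem \ref{L2index}). The only cosmetic difference is that the paper describes the left-hand side by cycles $(Z,\xi)$ with $Z$ a proper $\Gamma$-compact manifold (via Remark \ref{reconcile}), whereas you use the $RK_0(B\Gamma)$/Mishchenko line bundle picture; for torsion-free groups these descriptions coincide, so the arguments are the same in substance.
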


In contrast with the Novikov conjecture (Conjecture \ref{Nov}), Conjecture \ref{KadKap} is easy to state. It is interesting that it follows too from the Baum-Connes conjecture (Conjecture \ref{ConjBC}), actually from the surjectivity part.

\begin{Prop}\label{BCimpliesKK} Let $\Gamma$ be a torsion-free group. If the assembly map $\mu_r$ is onto, then Conjecture \ref{KadKap} holds for $\Gamma$.
\end{Prop}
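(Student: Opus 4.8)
The plan is to use the canonical trace $\tau$ on $C^*_r(\Gamma)$ as a bridge between a real-valued invariant of idempotents and an integrality constraint coming from the $L^2$-index theorem. Recall that $\tau(a)=\langle a\delta_e,\delta_e\rangle$ is a faithful positive trace with $\tau(1)=1$, and that it induces a homomorphism $\tau_*:K_0(C^*_r(\Gamma))\rightarrow\R$. The two facts I would assemble are: (i) a non-trivial idempotent forces $\tau_*([e])$ to lie strictly between $0$ and $1$; and (ii) surjectivity of $\mu_r$ forces $\tau_*(K_0(C^*_r(\Gamma)))\subseteq\Z$. These are incompatible, which yields the statement.

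For (i) I would first reduce from a general idempotent $e$ to a genuine projection. A standard fact is that an idempotent in a $C^*$-algebra is similar to a self-adjoint projection $p$, say $p=vev^{-1}$ with $v$ invertible, so that $[e]=[p]$ in $K_0$ and, using the trace property, $\tau(p)=\tau(vev^{-1})=\tau(ev^{-1}v)=\tau(e)$. Since $0\le p\le 1$ and $\tau$ is positive with $\tau(1)=1$, we have $\tau(p)\in[0,1]$; faithfulness upgrades this to $\tau(p)=\tau(p^*p)=0\Leftrightarrow p=0$ and similarly $\tau(1-p)=0\Leftrightarrow p=1$. As $e$ is non-trivial exactly when $p$ is (because $e=0\Leftrightarrow p=0$ and $e=1\Leftrightarrow p=1$), this gives $\tau_*([e])=\tau(e)\in\,]0,1[$.

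For (ii), the key point, I would use surjectivity to write $\tau_*(K_0(C^*_r(\Gamma)))=\tau_*(\mu_r(K^{top}_0(\Gamma)))$ and then evaluate $\tau_*\circ\mu_r$ on the left-hand side. Since $\Gamma$ is torsion-free, $\underline{E\Gamma}=E\Gamma$ and $K^{top}_0(\Gamma)=K_0(B\Gamma)$ (taking the direct limit $RK_0(B\Gamma)=\varinjlim_X K_0(X)$ over compact subsets if $B\Gamma$ is not finite, and checking integrality at each finite stage). A class in $K_0(B\Gamma)$ is represented by an elliptic operator $D$, and by the discussion at the end of section \ref{ConMosco} one has $\tau_*(\mu_r[D])=\tau_*(\mathrm{ind}_\Gamma(D))=Ind_\Gamma(D)$, the $\Gamma$-index. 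Atiyah's $L^2$-index theorem (Theorem \ref{L2index}) then identifies $Ind_\Gamma(D)$ with the ordinary Fredholm index $Ind(D)\in\Z$. Hence $\tau_*\circ\mu_r$ takes only integer values, so by surjectivity $\tau_*(K_0(C^*_r(\Gamma)))\subseteq\Z$.

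Combining (i) and (ii): were a non-trivial idempotent $e$ to exist, $\tau_*([e])$ would be simultaneously an integer and a real number in $]0,1[$, which is absurd; therefore $C^*_r(\Gamma)$ has no non-trivial idempotent. The step I expect to be the main obstacle is (ii), the integrality of the trace, since it is the only place where both the hypothesis (surjectivity of $\mu_r$) and the arithmetic input ($L^2$-indices) really enter. It is precisely here that torsion-freeness is essential, both to identify the left-hand side with $K_0(B\Gamma)$ and to ensure that the relevant $\Gamma$-indices are honest integers rather than carrying fractional contributions as would arise from finite-order elements.
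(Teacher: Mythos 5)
Your proposal is correct and follows essentially the same route as the paper: reduce the Kadison--Kaplansky statement to integrality of the trace via the idempotent-to-projection and faithfulness argument, then establish integrality on the image of $\mu_r$ by identifying (using torsion-freeness, so that proper actions are free) the left-hand side with K-homology classes of elliptic operators and invoking Atiyah's $L^2$-index theorem, exactly as in the paper's proof of Proposition \ref{BCimpliesKK}.
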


The proof of Proposition \ref{BCimpliesKK} goes through an intermediate conjecture. To motivate this one, recall that any trace $\sigma$ on a complex algebra $A$ defines a homomorphism 
$$\sigma_*:K_0(A)\rightarrow\C:[e]\mapsto (Tr_n\otimes\sigma)(e)$$
where $e=e^2\in M_n(A)$ and $Tr_n:M_n(A)\rightarrow A$ is the usual trace. If $A$ is a $C^*$-algebra and $\sigma$ is a positive trace, then the image of $\sigma_*$ is contained in $\R$. Consider now the canonical trace $\tau$ on $C^*_r(\Gamma)$. The following conjecture is known as {\it conjecture of integrality of the trace}.

\begin{Conj}\label{integraltrace} If $\Gamma$ is a torsion-free group, then the canonical trace $\tau_*$ maps $K_0(C^*_r(\Gamma))$ to $\Z$.
\end{Conj}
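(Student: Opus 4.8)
The plan is to establish the statement under the standing hypothesis that the assembly map $\mu_r$ is surjective --- the regime in which Conjecture \ref{integraltrace} enters the proof of Proposition \ref{BCimpliesKK} --- and to reduce the integrality of $\tau_*$ to the ordinary Fredholm index by means of Atiyah's $L^2$-index theorem. First I would record that, since $\Gamma$ is torsion-free, $\underline{E\Gamma}=E\Gamma$ is the universal cover of $B\Gamma$ and the $\Gamma$-action on it is free, so the left-hand side collapses to ordinary K-homology: $K_0^{top}(\Gamma)=K_0(B\Gamma)$ (or $RK_0(B\Gamma)=\varinjlim_X K_0(X)$ over finite subcomplexes $X$ when $B\Gamma$ is not itself a finite complex). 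Because $\mu_r$ is onto we have $\tau_*(K_0(C^*_r(\Gamma)))=(\tau_*\circ\mu_r)(K_0^{top}(\Gamma))$, so it suffices to show that the composite $\tau_*\circ\mu_r$ takes values in $\Z$.

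The heart of the argument is the computation of $\tau_*\circ\mu_r$ on geometric K-homology cycles. In the Baum-Douglas model, $K_0(B\Gamma)$ is generated by classes $f_*[D_M]$ where $M$ is a closed even-dimensional spin${}^{c}$ manifold, $f\colon M\to B\Gamma$ is continuous, and $D_M$ is the associated Dirac operator defining $[D_M]\in K_0(M)$. For such a cycle I would form the $\Gamma$-cover $\tilde{M}=f^*(E\Gamma)\to M$ and lift $D_M$ to a $\Gamma$-invariant operator $\tilde{D}$ on $\tilde{M}$; the assembly map is then identified with the $\Gamma$-index, $\mu_r(f_*[D_M])=ind_\Gamma(\tilde{D})\in K_0(C^*_r(\Gamma))$, which is precisely Kasparov's Mishchenko-bundle construction of $\beta$ from section \ref{Novikov}. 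Applying the trace and using the identity $\tau_*(ind_\Gamma(\tilde{D}))=Ind_\Gamma(\tilde{D})$ recorded at the end of section \ref{ConMosco}, together with Atiyah's $L^2$-index theorem (Theorem \ref{L2index}), gives $\tau_*(\mu_r(f_*[D_M]))=Ind_\Gamma(\tilde{D})=Ind(D_M)\in\Z$. Since such cycles generate $K_0^{top}(\Gamma)$ and $\tau_*\circ\mu_r$ is a homomorphism, its image lies in $\Z$, and surjectivity of $\mu_r$ then forces $\tau_*(K_0(C^*_r(\Gamma)))\subseteq\Z$.

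The step I expect to be the main obstacle is the precise identification $\mu_r(f_*[D_M])=ind_\Gamma(\tilde{D})$: one must show that the analytic assembly map of \cite{BCH}, defined through Kasparov's descent and the idempotent class $e_X$, actually computes the $\Gamma$-index of the lifted Dirac operator on geometric cycles. This compatibility between the geometric/topological picture and the analytic assembly map is exactly what required the reconciliation carried out by Baum, Higson and Schick (see Remark \ref{reconcile}), and it is the nontrivial input that makes Atiyah's theorem applicable here. Two secondary points must also be handled carefully: when $B\Gamma$ is not a finite complex one passes to the direct limit $RK_0(B\Gamma)=\varinjlim_X K_0(X)$ and uses that every K-homology class, as well as every element of $K_0(C^*_r(\Gamma))$, factors through some finite subcomplex, with $\tau_*$ compatible with the limit; and one must invoke the Baum-Douglas representation of K-homology classes by Dirac-type operators on spin${}^{c}$ manifolds, so that Theorem \ref{L2index} applies without modification.
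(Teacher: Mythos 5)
Your argument is correct and is essentially the paper's own proof: in the proof of Proposition \ref{BCimpliesKK} the paper likewise shows $\tau_*$ is integer-valued on the image of $\mu_r$, by using Remark \ref{reconcile} to represent classes by geometric cycles $(Z,\xi)$ with $\mu_r(Z,\xi)=Ind_\Gamma(\tilde{D})$, noting that torsion-freeness makes $Z\rightarrow\Gamma\backslash Z$ a genuine $\Gamma$-covering, and concluding via Atiyah's $L^2$-index theorem (Theorem \ref{L2index}) that $\tau_*(\mu_r(Z,\xi))=Ind(D)\in\Z$. Your Baum--Douglas packaging over $B\Gamma$, with $M=\Gamma\backslash Z$, $\tilde{M}=Z$ and the Mishchenko-bundle description of $\beta$, is an equivalent formulation of the same cycles and rests on exactly the same reconciliation input (Baum--Higson--Schick, together with Land's identification of $\beta$ with $\mu_r\circ\iota_\Gamma$).
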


It is then easy to see that Conjecture \ref{integraltrace} implies the Kadison-Kaplansky conjecture (Conjecture \ref{KadKap}). Indeed, take $e=e^2\in C^*_r(\Gamma)$. Since an idempotent in a unital $C^*$-algebra is similar to a projection, we may assume that $e=e^*=e^2$. As $0\leq e\leq 1$ and $\tau$ is a positive trace, we have $0\leq\tau(e)\leq 1$. But $\tau(e)\in\Z$ by Conjecture \ref{integraltrace}, so $\tau(e)$ is either 0 or 1. If $0=\tau(e)=\tau(e^*e)$, then $e=0$ by faithfulness of $\tau$. Replacing $e$ by $1-e$, we see that if $\tau(e)=1$, then $e=1$.

\begin{proof}[Proof of Proposition \ref{BCimpliesKK}] By the previous remarks, it is enough to see that, for a torsion-free group $\Gamma$ such that $\mu_r$ is onto, Conjecture \ref{integraltrace} holds. Actually we prove that, assuming $\Gamma$ to be torsion-free, $\tau_*$ is always integer-valued on the image of $\mu_r$ in $K_0(C^*_r(\Gamma))$.

Thanks to Remark \ref{reconcile}, the domain of $\mu_r$, i.e. the left-hand side of the Baum-Connes conjecture, is the group $K_0(\Gamma,pt)$, whose cycles are of the form $(Z,\xi)$ with $Z$ a proper $\Gamma$-compact manifold and $\xi\in V_\Gamma(T^*Z)$, and by section \ref{timedep} we have $\mu_r(Z,\xi)=Ind_\Gamma(\tilde{D})$, where $\tilde{D}$ is some $\Gamma$-invariant elliptic differential operator on $Z$. As $\Gamma$ is torsion-free, any proper $\Gamma$-action is free and proper, so that the map $Z\rightarrow\Gamma\backslash Z$ is a $\Gamma$-covering and we may appeal to Atiyah's $L^2$-index theorem (Theorem \ref{L2index}): the operator $\tilde{D}$ descends to an elliptic operator on the compact manifold $\Gamma\backslash Z$ and
$$\tau_*(\mu_r(Z,\xi))=Ind_\Gamma(\tilde{D})=Ind(D).$$
Since $Ind(D)\in\Z$, this concludes the proof.\footnote{For a nice proof of that result NOT appealing to Atiyah's $L^2$-index theorem, see lemma 7.1 in \cite{MisVal}.}
\end{proof}

\subsubsection{Surjectivity: vanishing of a topological Whitehead group}

For a group $\Gamma$, denote by $\Z\Gamma$ its integral group ring, and let 
$$K_1^{alg}(\Z\Gamma)=:\varinjlim GL_n(\Z\Gamma)/E_n(\Z\Gamma)$$
be the first algebraic K-theory group of $\Z\Gamma$, where $E_n(\Z\Gamma)$ is the subgroup of elementary matrices. We denote by $[\pm \Gamma]$ the subgroup of $K_1^{alg}(\Z\Gamma)$ generated by the image of the inclusion of $\Gamma\times\{\pm 1\}$ into $GL_1(\Z\Gamma)$. The {\it Whitehead group} $Wh(\Gamma)$ is then
$$Wh(\Gamma)=K_1^{alg}(\Z\Gamma)/[\pm \Gamma].$$
By analogy, using the inclusion of $\Gamma$ in the unitary group of $C^*_r(\Gamma)$, we may define the {\it topological Whitehead group} as $Wh^{top}(\Gamma)=:K_1(C^*_r(\Gamma))/[\Gamma]$. So the vanishing of $Wh^{top}(\Gamma)$ is equivalent to to the fact that every unitary matrix in $M_\infty(C^*_r(\Gamma))$ is in the same connected component as a diagonal matrix $diag(\gamma,1,1,1,...)$ for some $\gamma\in\Gamma$.

\begin{Conj}\label{Whitehead} Assume that there is a 2-dimensional model for $B\Gamma$. Then $Wh^{top}(\Gamma)=0$.
\end{Conj}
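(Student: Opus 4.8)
The plan is to deduce the vanishing of $Wh^{top}(\Gamma)$ from surjectivity of the assembly map $\mu_r$ (the ``hard'' half of the Baum-Connes conjecture), by computing both $K_1(C^*_r(\Gamma))$ and its subgroup $[\Gamma]$ through the left-hand side. First I would record that a $2$-dimensional model for $B\Gamma$ forces $\Gamma$ to be torsion-free, so that $\underline{E\Gamma}=E\Gamma=\widetilde{B\Gamma}$ and the topological side is ordinary $K$-homology: $K_*^{top}(\Gamma)=K_*(B\Gamma)$ (a direct limit over compact subcomplexes, i.e. representable $K$-homology, if $B\Gamma$ is infinite). The whole point will then be that, in dimension $2$, $K_1(B\Gamma)$ is as small as possible and is generated by classes that the assembly map carries exactly onto the ``obvious'' unitaries $[\gamma]$.

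Second, I would compute $K_1(B\Gamma)$. Running the Atiyah-Hirzebruch spectral sequence for $K$-homology, $E^2_{p,q}=H_p(B\Gamma;K_q(\mathrm{pt}))$, the only term contributing to $K_1$ from a $2$-dimensional complex is $E^2_{1,0}=H_1(B\Gamma;\Z)$: the potential higher contributions $H_3,H_5,\dots$ all vanish, and the relevant differentials are forced to be zero, so there are no extension problems. Hence $K_1(B\Gamma)\cong H_1(B\Gamma;\Z)\cong\Gamma^{\mathrm{ab}}$. By Hurewicz this group is generated by the classes $[c_\gamma]$ of loops $c_\gamma\colon S^1\to B\Gamma$ representing the elements $\gamma\in\Gamma$; the restriction to a $2$-dimensional $B\Gamma$ is precisely what guarantees that these loop classes generate \emph{all} of $K_1(B\Gamma)$.

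Third, and this is the technical heart, I would identify $\mu_r([c_\gamma])$ with the unitary class $[\gamma]\in K_1(C^*_r(\Gamma))$. For $\gamma\neq 1$ the cyclic group $\Lambda=\langle\gamma\rangle$ is infinite, $B\Lambda=S^1$, and functoriality of the assembly map under the inclusion $\Lambda\hookrightarrow\Gamma$ gives a commuting square relating $\mu_r^{\Lambda}$ and $\mu_r^{\Gamma}$. Since Baum-Connes holds for $\Z$, the map $\mu_r^{\Lambda}\colon K_1(S^1)\to K_1(C^*_r(\Z))=K_1(C(S^1))$ is the standard isomorphism sending the fundamental class to the class of the coordinate unitary; the inclusion of reduced group $C^*$-algebras sends that unitary to $\gamma$, while $B\Lambda\to B\Gamma$ sends the fundamental class of $S^1$ to $[c_\gamma]$. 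Chasing the square yields $\mu_r^{\Gamma}([c_\gamma])=[\gamma]$. Combining the three steps: if $\mu_r$ is onto, then $K_1(C^*_r(\Gamma))=\mu_r(K_1(B\Gamma))$ is generated by the $[\gamma]$, i.e. equals $[\Gamma]$, whence $Wh^{top}(\Gamma)=0$.

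The genuine obstacle is, of course, surjectivity of $\mu_r$ itself, which is unknown in general and is exactly why the statement is phrased as a conjecture; everything else is a computation. Among the computational points, the most delicate is the naturality argument of the third step---one must check that the assembly map is genuinely functorial for the inclusion $\langle\gamma\rangle\hookrightarrow\Gamma$ and that the $\Z$-computation is the expected one---together with the insistence on dimension $2$, without which $K_1(B\Gamma)$ could acquire contributions from $H_{\mathrm{odd}}$ in degrees $\geq 3$ that are not hit by single loops and would survive in the Whitehead quotient.
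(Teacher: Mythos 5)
Your proposal is correct and is essentially the paper's own argument: like the paper, you deduce the statement from surjectivity of $\mu_r$, use $2$-dimensionality to identify $K_1(B\Gamma)\cong H_1(B\Gamma,\Z)=\Gamma^{ab}$ (the paper invokes the integral Chern character isomorphism of lemma 7.5 in \cite{BettaiebMattheyValette}, which is the same content as your Atiyah--Hirzebruch computation, the higher odd homology vanishing being exactly where dimension $2$ enters in both versions), and conclude that the image of $\mu_r$ is the subgroup $[\Gamma]\subset K_1(C^*_r(\Gamma))$ generated by the unitaries coming from group elements, so that surjectivity forces $Wh^{top}(\Gamma)=0$. The one point where you supply more than the paper is the commutativity of the triangle relating $\mu_r$, the Chern character and the map $\beta:\Gamma^{ab}\rightarrow K_1(C^*_r(\Gamma))$, which the paper merely asserts and which you verify on the generating loop classes via naturality of the assembly map under $\langle\gamma\rangle\hookrightarrow\Gamma$ together with the Baum--Connes isomorphism for $\Z$ --- a legitimate and worthwhile completion of that step.
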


The following result appears in \cite{BettaiebMattheyValette}:

\begin{Prop} When $\Gamma$ has a 2-dimensional model for $B\Gamma$, conjecture \ref{Whitehead} follows from the surjectivity of the assembly map $\mu_r$.
\end{Prop}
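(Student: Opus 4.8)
The plan is to prove that the image of $\mu_r$ in $K_1(C^*_r(\Gamma))$ is contained in the subgroup $[\Gamma]$; since $\mu_r$ is assumed onto, this forces $[\Gamma]=K_1(C^*_r(\Gamma))$ and hence $Wh^{top}(\Gamma)=K_1(C^*_r(\Gamma))/[\Gamma]=0$. First I would record that a $2$-dimensional model for $B\Gamma$ forces $\Gamma$ to have finite geometric, hence finite cohomological, dimension, so $\Gamma$ is torsion-free. Consequently $\underline{E\Gamma}=E\Gamma$ and, as explained in Section \ref{classifying}, the left-hand side is the ordinary K-homology $K_*(B\Gamma)$, with $\mu_r\colon K_*(B\Gamma)\to K_*(C^*_r(\Gamma))$.

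The topological input is a generation statement: $K_1(B\Gamma)$ is generated by classes pushed forward from circles. Taking a $2$-dimensional CW model $X$ for $B\Gamma$ and collapsing a spanning tree, I may assume $X$ has a single $0$-cell, so the $1$-skeleton $X^{(1)}$ is a wedge of circles $\bigvee_i S^1_i$ and $X/X^{(1)}$ is a wedge of $2$-spheres. Each $1$-cell represents an element $\gamma_i\in\pi_1(X)=\Gamma$, and these $\gamma_i$ generate $\Gamma$ (their free images already generate $\pi_1(X^{(1)})$, which surjects onto $\Gamma$). The cofiber sequence $X^{(1)}\hookrightarrow X\to X/X^{(1)}$ gives a long exact sequence in reduced K-homology, and since $\widetilde{K}_1$ of a wedge of $2$-spheres vanishes, the map $\widetilde{K}_1(X^{(1)})\to \widetilde{K}_1(X)=K_1(B\Gamma)$ is onto. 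As $\widetilde{K}_1(\bigvee_i S^1_i)=\bigoplus_i\widetilde{K}_1(S^1_i)$ is free on the fundamental classes $[S^1_i]$, I conclude that $K_1(B\Gamma)$ is generated by the images $(f_{\gamma_i})_*[S^1]$, where $f_{\gamma_i}\colon S^1\to B\Gamma$ classifies $\gamma_i$.

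Next I would evaluate the assembly map on these generators by naturality with respect to subgroup inclusions. For each generator $\gamma=\gamma_i$ (of infinite order, as $\Gamma$ is torsion-free) the cyclic subgroup $\langle\gamma\rangle\cong\Z$ produces a commutative square relating $\mu_r^{\Z}$ and $\mu_r^{\Gamma}$, whose vertical arrows are induced by $B\Z=S^1\to B\Gamma$ and by the canonical inclusion $C^*_r(\Z)\hookrightarrow C^*_r(\Gamma)$. Since Baum-Connes holds for $\Z$, the map $\mu_r^{\Z}\colon K_1(S^1)\to K_1(C^*_r(\Z))$ is an isomorphism sending the fundamental class $[S^1]$ to the class $[u_1]$ of the canonical unitary generator; under the inclusion $u_1\mapsto u_\gamma$, whence $\mu_r^{\Gamma}\bigl((f_\gamma)_*[S^1]\bigr)=[u_\gamma]\in[\Gamma]$. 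Since these classes generate $K_1(B\Gamma)$ and $[\Gamma]$ is a subgroup, $\mu_r(K_1(B\Gamma))\subseteq[\Gamma]$, and surjectivity of $\mu_r$ yields $K_1(C^*_r(\Gamma))=[\Gamma]$.

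The main obstacle is to pin down the naturality of the assembly map for $\langle\gamma\rangle\hookrightarrow\Gamma$ together with the precise value $\mu_r^{\Z}([S^1])=[u_1]$: both are standard, but one must carefully match the chosen fundamental K-homology class of $S^1$ with the generator of $K_1(C^*_r(\Z))=K_1(C(S^1))$ under the Fourier isomorphism $C^*_r(\Z)\cong C(S^1)$. A secondary point worth verifying is that the cofiber argument really exhibits the generators of $K_1(B\Gamma)$ as group elements rather than merely as associated graded pieces; here the vanishing of $\widetilde{K}_1$ of the wedge of $2$-spheres makes the relevant filtration trivial in degree one, so no extension ambiguity arises and the generation by the $(f_{\gamma_i})_*[S^1]$ is genuine.
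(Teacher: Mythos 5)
Your proof is correct, and it reaches the paper's key point --- that the image of $\mu_r$ in $K_1(C^*_r(\Gamma))$ lies in the subgroup $[\Gamma]$ --- by a genuinely different route. The paper's proof is shorter but relies on an external citation: by lemma 7.5 in \cite{BettaiebMattheyValette}, for a $2$-dimensional $B\Gamma$ the Chern character $Ch:K_1(B\Gamma)\rightarrow H_1(B\Gamma,\Z)=\Gamma^{ab}$ is an \emph{integral} isomorphism, and $\mu_r$ factors as $\beta\circ Ch$ where $\beta:\Gamma^{ab}\rightarrow K_1(C^*_r(\Gamma))$ is induced by the inclusion of $\Gamma$ in the unitary group of $C^*_r(\Gamma)$; surjectivity of $\mu_r$ then forces $\beta$ to be onto, which is exactly $K_1(C^*_r(\Gamma))=[\Gamma]$. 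You instead prove the needed topological input by hand (collapse a maximal tree, use the cofibration $X^{(1)}\hookrightarrow X\rightarrow X/X^{(1)}$ and the vanishing of $\widetilde{K}_1$ of a wedge of $2$-spheres to see that $K_1(B\Gamma)$ is generated by circle classes), and then evaluate $\mu_r$ on those generators via naturality of the reduced assembly map under the inclusions $\langle\gamma\rangle\cong\Z\hookrightarrow\Gamma$ together with the Baum--Connes isomorphism for $\Z$. In effect you are reproving the content of the cited lemma and of the commutative triangle, which makes your argument self-contained and makes the mechanism (image of assembly $=$ subgroup generated by the $[u_\gamma]$) completely transparent, at the cost of length. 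Two technical points you should make explicit to match the paper's generality: since a $2$-dimensional model for $B\Gamma$ need not be a finite complex, the K-homology throughout must be the representable one, $RK_*(B\Gamma)=\varinjlim_X K_*(X)$ (as in the paper's footnote), under which your wedge and cofibration arguments survive because the direct limit over finite subcomplexes is exact; and the normalization $\mu_r^{\Z}([S^1])=\pm[u_1]$ is only determined up to orientation, which is harmless since $[\Gamma]$ is a subgroup and hence contains $[u_\gamma]^{-1}=[u_{\gamma^{-1}}]$.
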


\begin{proof}[Proof] Let $\Gamma^{ab}$ denote the abelianization of $\Gamma$. The inclusion of $\Gamma$ in the unitary group of $C^*_r(\Gamma)$ induces a map $\beta:\Gamma^{ab}\rightarrow K_1(C^*_r(\Gamma))$, as $K_1$ is an abelian group.

By lemma 7.5 in \cite{BettaiebMattheyValette}, as $B\Gamma$ is 2-dimensional, the Chern character $Ch:K_1(B\Gamma)\rightarrow H_1(B\Gamma,\Z)$ is an isomorphism. Of course we have $H_1(B\Gamma,\Z)=H_1(\Gamma,\Z)=\Gamma^{ab}$. Moreover we have a commutative diagram
$$\xymatrix{K_1(B\Gamma)\ar[dr]^{Ch}\ar[rr]^{\mu_r}   & & K_1(C^*_r(\Gamma)) \\  
&\Gamma^{ab}\ar[ur]^{\beta} &}.$$

So $\beta$ is onto as soon as $\mu_r$ is, and this implies $Wh^{top}(\Gamma)=0$
\end{proof}

\subsubsection{Surjectivity: discrete series of semisimple Lie groups}

Let $G$ be a semisimple connected Lie group with finite centre and maximal compact subgroup $K$. As we shall see in Theorem \ref{LafforgueSS} below, Lafforgue has given a proof of the Baum-Connes conjecture without coefficients for $G$ which is independent of Harish-Chandra theory. On the other hand, let us present here a beautiful argument, also due to Lafforgue \cite{LafforgueICM02}, showing that the surjectivity of the assembly map does say something on the representation theory: namely, surjectivity implies that the Dirac induction $\mu_G$ maps bijectively a subset of the dual $\hat K$ to the discrete series of $G$; compare with Remark \ref{CoKadiscrete}.

Recall that semisimple groups are CCR, i.e. any unitary irreducible representation $\sigma$ of $G$ maps $C^*(G)$ onto the compact operators on $\HH_\sigma$; so in K-theory $\sigma$ induces a homomorphism $\sigma_*:K_0(C^*(G))\rightarrow {\Z}$.

As the main ingredient for Lafforgue's observation, we just need to recall from Remark \ref{CoKadiscrete} that any discrete series $\pi$ of $G$ defines a K-theory class $[\pi]\in K_0(C^*_r(G))$  such that $\pi_*([\pi])=1$. In particular $[\pi]\neq 0$. Note that if $G/K$ is odd dimensional, then the surjectivity part of the conjecture implies that $K_0(C^*_r(G))=0$ so that $G$ has no discrete series, reproving a well-known fact in Harish-Chandra theory. We therefore now assume that $G/K$ has even dimension.

Assume for simplicity that $G/K$ has a $G$-invariant spin structure, i.e. the adjoint representation of $K$ in $V=\mathfrak{g}/\mathfrak{k}$ lifts to $Spin (V)$. 
The Connes-Kasparov map $\mu_G$ then coincides with Kasparov's Dirac map $\tilde\alpha: R(K)=K_0(C^*(K))\rightarrow K_0(C^*_r(G))$. The inverse of the map is Kasparov's dual Dirac map $\tilde\beta : K_0(C^*_r(G))\rightarrow R(K)$. Lafforgue's observation is the following duality:

\begin{Lem}\label{Laffdual} For any discrete series $\pi$ of $G$ and any irreducible representation $\rho$ of $K$, the following integers are equal:
$$\pi_*(\tilde\alpha ([\rho])) =\rho_*(\tilde\beta ([\pi]))$$
\end{Lem}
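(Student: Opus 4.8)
The plan is to rewrite both integers as Kasparov products along a single chain of composable $KK$-groups and then exploit associativity of the cup product. First I would realise the two evaluation maps as index pairings. Viewing the discrete series representation $\pi$ as the surjection $C^*_r(G)\to\mathcal{K}(H_\pi)$ produces a $K$-homology class $[\pi]^{\vee}\in KK(C^*_r(G),\C)$ with $\pi_*(x)=x\otimes_{C^*_r(G)}[\pi]^{\vee}$ for $x\in K_0(C^*_r(G))$; similarly $\rho$ gives $[\rho]^{\vee}\in KK(C^*(K),\C)$ with $\rho_*(y)=y\otimes_{C^*(K)}[\rho]^{\vee}$. Since $\tilde\alpha$ and $\tilde\beta$ are themselves cup products with classes in $KK(C^*(K),C^*_r(G))$ and $KK(C^*_r(G),C^*(K))$, writing $[\rho]\in R(K)=KK(\C,C^*(K))$ and $[\pi]\in K_0(C^*_r(G))=KK(\C,C^*_r(G))$ for the K-theory classes, associativity gives
$$\pi_*(\tilde\alpha([\rho]))=[\rho]\otimes_{C^*(K)}\tilde\alpha\otimes_{C^*_r(G)}[\pi]^{\vee},\qquad \rho_*(\tilde\beta([\pi]))=[\pi]\otimes_{C^*_r(G)}\tilde\beta\otimes_{C^*(K)}[\rho]^{\vee}.$$

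Next I would observe that both right-hand sides lie in $KK(\C,\C)=\Z$ and differ only in the order in which $\tilde\alpha$ (resp.\ $\tilde\beta$) and the two representation classes are multiplied, so that the asserted equality is equivalent to the adjunction identity
$$[\rho]\otimes_{C^*(K)}\bigl(\tilde\alpha\otimes_{C^*_r(G)}[\pi]^{\vee}\bigr)=\bigl([\pi]\otimes_{C^*_r(G)}\tilde\beta\bigr)\otimes_{C^*(K)}[\rho]^{\vee},$$
namely that $\tilde\alpha$ and $\tilde\beta$ are transpose to one another under the canonical index pairings between K-theory and K-homology of $C^*(K)$ and of $C^*_r(G)$. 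This can be sanity-checked in matrix form: writing $T_{\pi\sigma}=\pi_*(\tilde\alpha[\sigma])$ in the basis of $\hat K$ and the (partial) basis of $K_0(C^*_r(G))$ furnished by the discrete series as in Remark \ref{CoKadiscrete}, and using that a discrete series defines an isolated point so that $\pi_*$ annihilates the continuous part, the statement reads $T_{\pi\rho}=(T^{-1})_{\rho\pi}$. As $\tilde\beta$ is the inverse map of $\tilde\alpha$, this is exactly the orthogonality $T^{-1}=T^{\mathrm{tr}}$ of Dirac induction.

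Finally I would establish this self-duality at the level of the Kasparov elements $\alpha\in KK_G(C_0(X),\C)$ and $\beta\in KK_G(\C,C_0(X))$ from which $\tilde\alpha,\tilde\beta$ descend. The mechanism is the symmetry built into Kasparov's construction: $\beta$ is the dual-Dirac partner of the Dirac element $\alpha$ for $X=G/K$, and the two are adjoint with respect to the $G$-equivariant Poincar\'e duality of $X$ recalled in section \ref{Poincare}. I would then push this adjunction through the descent map $j_{G,r}$ and the Morita equivalence $C^*_r(G,C_0(X))\sim C^*(K)$, both of which respect cup products and the index pairing, to obtain the transpose relation between $\tilde\alpha$ and $\tilde\beta$. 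The main obstacle is precisely this transport step: one must verify that descent and the Morita equivalence intertwine the two index pairings (K-theory against K-homology on each side) with the equivariant Poincar\'e pairing upstairs, so that the adjointness of $\alpha$ and $\beta$ is faithfully carried to that of $\tilde\alpha$ and $\tilde\beta$. Once this compatibility is in hand, the desired equality is a formal consequence of associativity, as displayed above.
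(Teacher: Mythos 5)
Your first display is correct but contentless: rewriting both integers as triple Kasparov products via associativity merely restates the lemma, and the entire burden now falls on the ``adjunction identity'' asserting that $\tilde\alpha$ and $\tilde\beta$ are mutually transpose under the index pairings. That identity is never proved in your proposal, and it cannot be obtained by the formal means you invoke. The appeal to ``the orthogonality $T^{-1}=T^{\mathrm{tr}}$ of Dirac induction'' is circular: in the paper, that orthogonality-type statement (in the form $\sum_\rho n_{\pi,\rho}^2=1$) is precisely what gets \emph{deduced} from Lemma \ref{Laffdual} together with the assumed surjectivity of $\mu_G$; it is not an independently available fact you may quote. Nor is a map and its inverse being mutually transpose ever automatic: in $KK(\C^2,\C^2)\cong M_2(\Z)$, any $M\in GL_2(\Z)$, viewed as an element $u$ with inverse $v=M^{-1}$, satisfies $u\otimes v=1$ and $v\otimes u=1$, with canonical dual pairs of K-theory and K-homology classes exactly as in the lemma, yet the analogue of your identity reads $M^{\mathrm{tr}}=M^{-1}$ and fails for $M=\bigl(\begin{smallmatrix}1&1\\0&1\end{smallmatrix}\bigr)$. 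Finally, there is no ``symmetry built into Kasparov's construction'' making $\alpha$ and $\beta$ adjoint in the sense you need: the two products are genuinely asymmetric, $\alpha\otimes_{\C}\beta=1$ while $\beta\otimes_{C_0(X)}\alpha=\gamma$, and $\gamma\neq1$ precisely for the property (T) groups this section concerns, so no purely formal duality between $\alpha$ and $\beta$ exists to be transported through descent and Morita equivalence. The gap is therefore not the ``transport step'' you flag at the end, but the adjointness claim itself, which is equivalent to the lemma.

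The paper's proof is of a completely different nature: it is a concrete representation-theoretic computation with no formal $KK$ manipulation. Both integers are identified with $\dim\mathrm{Hom}_K(S\otimes V_\rho,H_\pi)$, where $S$ is the spinor representation of $K$. On one side, $\pi_*(\tilde\alpha([\rho]))$ is the multiplicity of the discrete series $\pi$ in the $L^2$-index of the Dirac operator with coefficients in $V_\rho$, which Atiyah--Schmid/Harish-Chandra theory expresses as that intertwiner dimension; on the other side, $\rho_*(\tilde\beta([\pi]))$ is computed by pairing the projection realizing $[\pi]$ (available because $\pi$ is an isolated point of $\hat{G}_r$, cf. Remark \ref{CoKadiscrete}) against the explicit dual-Dirac module, yielding the same dimension. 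This explicit identification is the actual substance of the lemma, and it is entirely absent from your argument.
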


Indeed, one can show that both are equal to the dimension of the intertwining space ${\rm Hom}_K(S\otimes V_{\rho},H_{\pi})$ where $S$ is the spinor representation of $K$.

Fix $\pi$ a discrete series of $G$. Viewing $R(K)$ as the free abelian group on $\hat{K}$, we may write 
$$\tilde\beta ([\pi])=\sum_{\rho\in\hat{K}} n_{\pi,\rho}[\rho],$$ 
 where $n_{\pi,\rho}$ is the integer defined in two different ways in lemma \ref{Laffdual}.
Now the assumed surjectivity of $\mu_G$ translates into $\tilde\alpha\circ\tilde\beta={\rm Id}$, which implies the following decomposition in $K_0(C^*_r(G))$: 
$$[\pi]=\sum_{\rho}n_{\pi,\rho}\tilde\alpha([\rho]).$$
Now the equality $\pi_*([\pi])=1$ and lemma \ref{Laffdual} yield: 
$$1=\sum_{\rho}n_{\pi,\rho}\pi_*(\tilde\alpha([\rho]))=\sum_{\rho}n_{\pi,\rho}^2.$$

So the integers $n_{\pi,\rho}$ satisfy $\sum_{\rho}n_{\pi,\rho}^2=1$, hence there is a unique $\rho$ such that $n_{\pi,\rho}=\pm 1$, the others being zero. Then $\tilde\alpha([\rho])=\pm [\pi]$, and the Dirac induction maps bijectively a subset of the dual $\hat K$ to the discrete series of $G$; in other words, we have recovered Theorem \ref{AtiyahSchmid} in a qualitative way.

\section{Full and reduced $C^*$-algebras}

\subsection{Kazhdan vs. Haagerup: property (T) as an obstruction}\label{KvsH}

The assembly map could as well be constructed using maximal $C^*$-algebras instead of reduced. There is indeed a map 
$$\mu_{A,{\rm max}}:K_*^G(\underline{EG},A)\rightarrow K_*(C^*_{\rm max}(G,A))$$
so that $\mu_{A,r}$ is the composition of $\mu_{A,{\rm max}}$ with the map $\lambda_A^*$ obtained by functoriality in K-theory from the map $$\lambda_A:C^*_{\rm max}(G,A)\rightarrow C^*_{r}(G,A).$$

In other words we have a commutative diagram

$$\xymatrix{s
K_*^G(\underline{EG},A)\ar[drr]_{\mu_{A,r}}\ar[rr]^{\mu_{A,{\rm max}}}&&K_*(C^*_{\rm max}(G,A))\ar[d]^{\lambda_A^*}\\
&&K_*(C^*_{r}(G,A))),}
$$

The main difficulty in that the Baum-Connes conjecture is a conjecture about  $\mu_{A,{r}}$, not  $\mu_{A,{\rm max}}$. In order to understand that crucial point, it will be enlightening to consider two classes of groups: one for which both 
$\mu_{A,r}$ and $\mu_{A,{\rm max}}$ are isomorphisms, hence also $\lambda_A^*$.; another for which $\lambda_A^*$ is not injective, $\mu_{A,{\rm max}}$ not surjective, and for which the conjectural bijectivity of $\mu_{A,r}$ is difficult and proved only in very few cases. We refer to \cite{Julg98} for more details.

\begin {Def} 
A locally compact second countable group $G$ has the Haagerup property\footnote{Or is a-(T)-menable, according to M. Gromov.} if the following equivalent conditions are satisfied:
\begin{enumerate}
\item[(i)] There exists an action of $G$ by affine isometries on a Hilbert space which is metrically proper.
\item[(ii)] There exists a unitary representation $\pi$ of $G$ on a Hilbert space $\HH$, and a 1-cocycle (i.e. a map $b:G\rightarrow \HH$ such that $b(gg')=b(g)+\pi (g)b(g')$) which is proper.
\item[(iii)] There exists a function of conditional negative type on $G$ which is proper.
\end{enumerate}
\end {Def}

\begin {Def} A locally compact second countable group $G$ has Kazhdan's property (T) if the following equivalent conditions are satisfied:
\begin{enumerate}
\item[(i)] Any action of $G$ by affine isometries on a Hilbert space admits a fixed point.
\item[(ii)] For any unitary representation $\pi$ of $G$ on a Hilbert space $\HH$, any 1-cocycle is bounded.
\item[(iii)] Any function of conditional negative type on $G$ is bounded.
\end{enumerate}
\end {Def}

Note that only compact groups are both Haagerup and Kazhdan. The above definitions can also be expressed in terms of the {\it almost invariant vectors} property: a unitary representation $\pi$ of $G$ on $\HH$ almost admits invariant vectors if for any $\varepsilon >0$ and any compact subset $C$ of $G$, there is a unit vector $x\in\HH$ such that
$\Vert \pi (g)x-x\Vert\leq\varepsilon$ for any $g\in C$.

\begin {Prop}\label{almostinv} A  locally compact group $G$ has property (T) if and only if any unitary representation almost admitting invariant vectors has a non zero invariant vector. It has the Haagerup property if and only if there exists a unitary representation with coefficients vanishing at infinity and almost admitting invariant vectors.
\end {Prop}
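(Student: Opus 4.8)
The plan is to run both equivalences through two standard devices: the dictionary between almost invariant vectors and weak containment of the trivial representation, and Schoenberg's theorem. Recall that $\pi$ almost admits invariant vectors if and only if $1_G\prec\pi$ (weak containment in the Fell topology), since by definition the latter means that the constant function $1$ is, uniformly on compacta, a limit of normalized diagonal coefficients $g\mapsto\langle\pi(g)\xi,\xi\rangle$. Schoenberg's theorem says that if $\psi$ is a continuous function of conditional negative type with $\psi(e)=0$, then $\varphi_t:=e^{-t\psi}$ is of positive type for every $t>0$; writing $(\pi_t,\HH_t,\xi_t)$ for the associated GNS triple, one has $\norme{\pi_t(g)\xi_t-\xi_t}^2=2(1-\mathrm{Re}\,\varphi_t(g))=2\bigl(1-e^{-t\psi(g)}\bigr)$. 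I will use formulation (iii) of each property: for (T), that every such $\psi$ is bounded; for the Haagerup property, that some such $\psi$ is proper.

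For property (T) I would establish the equivalence of the almost-invariant-vectors condition with formulation (i), the fixed-point property. The link in both directions is the following correspondence attached to a real continuous conditionally negative type $\psi$, with its affine isometric action $\alpha_\psi$ (orthogonal part $\pi$, cocycle $b$, $\norme{b(g)}^2=\psi(g)$): the representations $\pi_t$ of $e^{-t\psi}$ almost admit invariant vectors as $t\downarrow 0$ (from the displayed formula, since $\varphi_t\to 1$ uniformly on compacta), while $\pi_t$ admits a nonzero invariant vector if and only if $\alpha_\psi$ has a fixed point, if and only if $\psi$ is bounded. Granting this, if $\alpha$ is a fixed-point-free affine action then $\psi=\norme{b}^2$ is unbounded, the family $\{\pi_{1/n}\}_{n\ge 1}$ assembles into $\bigoplus_n\pi_{1/n}$, a representation with almost invariant vectors and no nonzero invariant vector, so the almost-invariant characterization forces $\psi$ bounded; this yields the implication from the almost-invariant condition to (i), contrapositively. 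The reverse implication, namely that a representation with almost invariant vectors but no invariant vector produces a fixed-point-free affine action, is Guichardet's theorem, and this, together with the invariant-vector/fixed-point correspondence above (the heart of Delorme--Guichardet theory), is the step I expect to be the main obstacle.

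For the Haagerup property the argument is an explicit two-sided Schoenberg construction. Assume first the Haagerup property and pick, by (iii), a proper $\psi$ of conditional negative type with $\psi(e)=0$. Each $\varphi_t=e^{-t\psi}$ is of positive type, and properness gives $\varphi_t\in C_0(G)$. The GNS representation $\pi_t$ is then a $C_0$-representation: its diagonal coefficient is $\varphi_t$, and the coefficients attached to vectors $\pi_t(a)\xi_t$ with $a\in C_c(G)$ are obtained by integrating translates of $\varphi_t$ against compactly supported functions, hence lie in $C_0(G)$; since such vectors are dense and coefficients depend on vectors uniformly in $g$, every coefficient of $\pi_t$ is in $C_0(G)$. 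Thus $\pi:=\bigoplus_{n\ge 1}\pi_{1/n}$ is again a $C_0$-representation (each of its coefficients is a uniformly convergent sum of $C_0$ functions) and, exactly as above, almost admits invariant vectors; this is the required representation.

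Conversely, suppose $\pi$ is a $C_0$-representation almost admitting invariant vectors. Fix an exhaustion $K_1\subseteq K_2\subseteq\cdots$ of $G$ by compacta and unit vectors $\xi_n$ with $\norme{\pi(g)\xi_n-\xi_n}\le 2^{-n}$ for $g\in K_n$. Put $\varphi_n(g)=\langle\pi(g)\xi_n,\xi_n\rangle\in C_0(G)$ and $\psi_n=1-\mathrm{Re}\,\varphi_n=\tfrac12\norme{\pi(g)\xi_n-\xi_n}^2$, a conditionally negative type function with $0\le\psi_n\le 2$. Then $\psi_n\le 2^{-2n-1}$ on $K_n$, while $\varphi_n\in C_0(G)$ gives a compactum $L_n$ with $\psi_n\ge\tfrac12$ off $L_n$. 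Setting $\psi:=\sum_{n\ge 1}\psi_n$, the first estimate forces uniform convergence on each compactum (so $\psi$ is a continuous function of conditional negative type), and the second forces properness: given $R$, choose $N$ with $N/2>R$, so that $g\notin L_1\cup\cdots\cup L_N$ yields $\psi(g)\ge N/2>R$, whence $\{\psi\le R\}$ is relatively compact. Thus $\psi$ is a proper function of conditional negative type and $G$ has the Haagerup property by (iii); the only delicate point to verify carefully is this simultaneous local-uniform convergence and properness of the series.
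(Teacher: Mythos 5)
Your argument is correct, but there is essentially nothing in the paper to compare it against: the authors give no proof of Proposition~\ref{almostinv}, contenting themselves with the remark that the property (T) statement is Kazhdan's original definition (equivalent to the fixed-point/bounded-cocycle formulations by the Delorme--Guichardet theorem) and that the Haagerup statement is due to Jolissaint, with pointers to \cite{BHV} and \cite{CCJJV}. What you have written is the standard proof from those references, and it is sound. For property (T) you correctly reduce everything to the Delorme--Guichardet correspondence and you honestly isolate the two external inputs: Guichardet's theorem (a representation with almost invariant but no nonzero invariant vectors produces an unbounded cocycle, hence a fixed-point-free affine action --- note this direction needs $\sigma$-compactness, which is available since the standing hypothesis in this section is second countability), and the standard lemma that for a real conditionally negative type $\psi$ the GNS representation of $e^{-t\psi}$ contains a nonzero invariant vector if and only if $\psi$ is bounded, if and only if the associated affine action has a fixed point. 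For the Haagerup property your two-sided Schoenberg argument is exactly Jolissaint's: in the forward direction the upgrade from ``the diagonal coefficient of the cyclic vector is in $C_0(G)$'' to ``all coefficients of $\pi_t$, and hence of $\bigoplus_n\pi_{1/n}$, are in $C_0(G)$'' via density and Cauchy--Schwarz is correct; in the converse the function $\psi=\sum_n\bigl(1-\mathrm{Re}\,\langle\pi(\cdot)\xi_n,\xi_n\rangle\bigr)$ is indeed continuous and conditionally negative type because the almost-invariance estimates force local uniform convergence on the exhaustion by the $K_n$, and proper because each summand is at least $\tfrac12$ off a compact set $L_n$. I see no gap beyond the black boxes you explicitly acknowledge, all of which are standard and correctly invoked.
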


The above characterization of property (T) is the original definition of Kazhdan. As to the characterization of the Haagerup property, it is due to P. Jolissaint and implies that all amenable groups have the Haagerup property. 
For examples of groups having Haagerup or Kazhdan property, we refer to \cite{BHV} and to \cite{CCJJV}. Typical examples of non amenable discrete groups with Haagerup property are the free groups $F_n (n\geq 2 )$ or $SL_2({\bf Z})$, whereas typical discrete groups having Kazhdan property are $SL_n({\bf Z})$, $n\geq 3$.

Let us now explain the link with the Baum-Connes conjecture. We begin with a $C^*$-algebraic characterization of property (T) (see \cite{AkemannWalter}), in terms of the existence of a {\it Kazhdan projection}.

\begin{Prop}\label{Kazhdanproj} The locally compact group $G$ has property (T) if and only if there exists an idempotent $e_G\in C^*_{\rm max}(G)$ such that, for every unitary representation $\pi$ of $G$, the idempotent $\pi(e_G)$ is the orthogonal projector on the space of $\pi(G)$-fixed vectors in $\HH_\pi$.
\end{Prop}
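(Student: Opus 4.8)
The plan is to run everything through the universal property of $C^*_{\rm max}(G)$: unitary representations of $G$ correspond bijectively to non-degenerate $*$-representations of $C^*_{\rm max}(G)$, so for a fixed $a\in C^*_{\rm max}(G)$ the operator $\pi(a)$ is defined for \emph{every} unitary representation $\pi$ simultaneously, and the trivial representation $1_G$ becomes a character $\epsilon=1_G$ of $C^*_{\rm max}(G)$. The two implications are then quite asymmetric: one is a two-line weak-containment argument, while the other requires producing a single element whose image under every $\pi$ is the right projection, which I would obtain by functional calculus.

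For the implication ($\Leftarrow$), I would assume $e_G$ exists and invoke Proposition \ref{almostinv}. Applying the defining property to $\pi=1_G$, whose space $\C$ is entirely fixed, gives $1_G(e_G)=1$, so $e_G\neq 0$. Now if $\pi$ almost admits invariant vectors, this means by Fell's criterion that $1_G\prec\pi$, i.e. $\norme{1_G(a)}\leq\norme{\pi(a)}$ for all $a\in C^*_{\rm max}(G)$. Taking $a=e_G$ yields $\norme{\pi(e_G)}\geq\norme{1_G(e_G)}=1$, so the orthogonal projection $\pi(e_G)$ is nonzero, its range $\HH_\pi^G$ is nonzero, and $\pi$ has a nonzero invariant vector. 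By Proposition \ref{almostinv}, $G$ has property (T).

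For the harder implication ($\Rightarrow$), I would build $e_G$ from one averaging element. Choose a symmetric, compactly supported probability measure $\mu$ on $G$ whose support is a compact generating Kazhdan set (property (T) forces $G$ to be compactly generated), and regard $\mu$ as a self-adjoint contraction in $C^*_{\rm max}(G)$. For every unitary $\pi$ the operator $\pi(\mu)=\int_G\pi(g)\,d\mu(g)$ is self-adjoint, and since $\mathrm{supp}(\mu)$ generates $G$ the identity $\langle(1-\pi(\mu))\xi,\xi\rangle=\tfrac12\int_G\norme{\pi(g)\xi-\xi}^2\,d\mu(g)$ shows that the eigenspace of $\pi(\mu)$ for the eigenvalue $1$ is exactly $\HH_\pi^G$. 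The essential input from property (T) is a \emph{uniform} spectral gap: there is $\delta>0$ with $\mathrm{spec}(\pi(\mu))\cap(1-\delta,1)=\emptyset$ for every $\pi$. Granting this, the spectrum of $\mu$ computed in $C^*_{\rm max}(G)$ — the closure of $\bigcup_\pi\mathrm{spec}(\pi(\mu))$ via the faithful universal representation — lies in $[-1,1-\delta]\cup\{1\}$ and contains $1$ (from $1_G$). Hence $\{1\}$ is isolated in $\mathrm{spec}(\mu)$, the indicator $\chi=\mathbf{1}_{\{1\}}$ is continuous there, and $e_G:=\chi(\mu)$ is a well-defined self-adjoint idempotent of $C^*_{\rm max}(G)$. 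Because functional calculus commutes with $*$-homomorphisms, $\pi(e_G)=\chi(\pi(\mu))$ is precisely the spectral projection of $\pi(\mu)$ onto $\{1\}$, i.e. the orthogonal projection onto $\HH_\pi^G$.

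The hard part will be establishing the uniform spectral gap, the only place where property (T) is genuinely used. I would argue by contradiction: absent such a $\delta$, one finds representations $\pi_n$ without invariant vectors and unit vectors $\xi_n$ with $\int_G\norme{\pi_n(g)\xi_n-\xi_n}^2\,d\mu(g)\to 0$, and then $\pi=\bigoplus_n\pi_n$ almost admits invariant vectors yet has no nonzero invariant vector, contradicting Proposition \ref{almostinv}. The delicate technical point is upgrading $L^2(\mu)$-smallness of $g\mapsto\norme{\pi_n(g)\xi_n-\xi_n}$ to almost-invariance uniformly on compact sets; I expect to handle this via the subadditivity $\norme{\pi(gh)\xi-\xi}\leq\norme{\pi(g)\xi-\xi}+\norme{\pi(h)\xi-\xi}$ together with the fact that $\mathrm{supp}(\mu)$ is a compact generating set, so smallness on a set of large $\mu$-measure propagates across $G$.
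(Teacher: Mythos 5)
The paper itself contains no proof of Proposition \ref{Kazhdanproj}: it is quoted from Akemann--Walter \cite{AkemannWalter}, and the route implicit there (and exploited in the Lemma that follows in the paper) is ideal-theoretic -- property (T) says exactly that $1_G$ is isolated, hence clopen, in $\widehat{G}=\widehat{C^*_{\rm max}(G)}$, and the corresponding decomposition into ideals gives $C^*_{\rm max}(G)=\ker(\epsilon_G)\oplus\C e_G$ with $e_G$ a central projection. Your construction -- spectral gap of an averaging element plus functional calculus -- is a genuinely different route, and it is essentially the one used in the more recent literature on Kazhdan projections (e.g. \cite{DrNo}, \cite{dlS16}). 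Your ($\Leftarrow$) direction is correct and complete, and the functional-calculus half of ($\Rightarrow$) is also sound, including the point (worth making explicit) that $\mathbf{1}_{\{1\}}$ vanishes at $0$, so $\chi(\mu)$ lands in the non-unital algebra $C^*_{\rm max}(G)$ and not merely in its unitization. One small repair there: for non-discrete $G$, a compactly supported probability measure is generally \emph{not} an element of $C^*_{\rm max}(G)$, only a multiplier; you must take $\mu=f\,dg$ with $f\in C_c(G)$ and $f=f^*$ (property (T) forces $G$ to be unimodular, so this just means $f$ symmetric).

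The genuine gap sits exactly where you flag it, and the mechanism you propose does not close it as stated. Subadditivity of $F_n(g)=\|\pi_n(g)\xi_n-\xi_n\|$ gives $F_n(ab)\le F_n(a)+F_n(b)$, but to use this at a given point you must \emph{produce} a factorization both of whose factors lie in the good set $G_n=\{g\in Q:\ F_n(g)\le t\}$, and ``$G_n$ has $\mu$-measure close to $1$'' does not by itself produce one: for a general symmetric $\mu$ with wildly non-uniform density, the set $G_n$, though of nearly full measure, can avoid every factorization of a given point. What saves the argument is left-invariance of Haar measure together with a structured choice of support. Take $S$ a compact symmetric generating set with $e$ in its interior, put $Q=S^2$, and let $\mu$ be normalized Haar measure on $Q$. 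For $s\in S$, the admissible factorizations $s=a\cdot(a^{-1}s)$ with $a,\,a^{-1}s\in Q$ are parametrized by $a\in Q\cap sQ\supseteq sS$, a set of Haar measure at least $|S|$; the bad parameters lie in $B_n\cup sB_n$, where $B_n=Q\setminus G_n$, of Haar measure at most $2|B_n|$. So as soon as $|B_n|<|S|/2$ -- which Chebyshev guarantees for $n$ large, since $|B_n|\le \frac{2|Q|}{t^2}\bigl(1-\langle\pi_n(\mu)\xi_n,\xi_n\rangle\bigr)\to 0$ -- every $s\in S$ admits a good factorization, whence $F_n\le 2t$ on \emph{all} of $S$ simultaneously. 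Only now does subadditivity apply: every compact $C$ lies in some $S^m$, so $F_n\le 2mt$ on $C$, and choosing $t=\varepsilon/2m$ shows that $\bigoplus_n\pi_n$ almost admits invariant vectors, which is your contradiction with Proposition \ref{almostinv}. (Also record the reduction you use tacitly when negating the uniform gap: one may assume each $\pi_n$ has no invariant vectors, because $\pi_n(\mu)$ splits as $1\oplus\pi_n^0(\mu)$ along $\HH_{\pi_n}^{G}\oplus(\HH_{\pi_n}^{G})^{\perp}$, so spectrum in $(1-\delta,1)$ survives the restriction.) With these repairs your proof is complete.
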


From this we deduce a key observation made by A. Connes in the early 1980's: let us consider, for a locally compact group, the map  $\lambda : C^*_{\rm max}(G)\rightarrow C^*_{r}(G)$ associated with the left regular representation of $G$.

\begin {Lem}
If $G$ is non compact with property (T), the map induced in K-theory 
$$\lambda_*: K_*(C^*_{\rm max}(G))\rightarrow K_*(C^*_{r}(G))$$
is not injective: its kernel contains a copy of ${\bf Z}$ which is a direct summand in $K_0(C^*_{\rm max}(G))$.
\end {Lem}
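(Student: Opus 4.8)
The whole argument hinges on the Kazhdan projection $e_G\in C^*_{\rm max}(G)$ furnished by Proposition \ref{Kazhdanproj}, which I will use to produce an explicit class $[e_G]\in K_0(C^*_{\rm max}(G))$ that dies under $\lambda_*$ yet survives as a generator of a $\Z$-summand. The plan is thus to evaluate $e_G$ in two representations: the regular one, to kill the class, and the trivial one, to detect it.

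First, I would show $\lambda_*([e_G])=0$. By the defining property of $e_G$, the operator $\lambda(e_G)$ is the orthogonal projection of $L^2(G)$ onto the subspace of $\lambda_G(G)$-invariant vectors. An invariant vector in $L^2(G)$ is an (almost everywhere) constant function, and since $G$ is non-compact its Haar measure is infinite, so no nonzero constant is square-integrable. Hence $\lambda(e_G)=0$ and $\lambda_*([e_G])=0$ in $K_0(C^*_r(G))$.

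Next I would exhibit the splitting using the trivial representation $1_G$, which integrates to a $*$-homomorphism $\epsilon:C^*_{\rm max}(G)\to\C$ (the augmentation $f\mapsto\int_G f\,dg$ on $L^1(G)$). Applying the defining property of $e_G$ to $\pi=1_G$, the scalar $\epsilon(e_G)$ is the projection of $\C$ onto its (one-dimensional) space of invariant vectors, so $\epsilon(e_G)=1$. Therefore the induced map $\epsilon_*:K_0(C^*_{\rm max}(G))\to K_0(\C)=\Z$ sends $[e_G]$ to $1$. This means that the homomorphism $\Z\to K_0(C^*_{\rm max}(G))$, $n\mapsto n[e_G]$, admits $\epsilon_*$ as a left inverse, so its image $\Z\cdot[e_G]$ is a direct summand isomorphic to $\Z$; in particular $[e_G]\neq 0$. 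Combining with the first step, this $\Z$-summand lies inside $\ker\lambda_*$, which is therefore nonzero.

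The argument is short precisely because Proposition \ref{Kazhdanproj} does the heavy lifting, so there is no serious obstacle to overcome. The only points requiring a little care are bookkeeping about non-unitality: when $G$ is non-discrete, $C^*_{\rm max}(G)$ has no unit, but a projection (or idempotent) living in the algebra still determines a class in $K_0$, and the two evaluations $\lambda$ and $\epsilon$ are honest $*$-homomorphisms inducing the required maps on $K_0$. One should also note that $e_G$ may a priori be only an idempotent rather than a self-adjoint projection, which is harmless for the K-theory class.
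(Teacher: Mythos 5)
Your proof is correct and follows essentially the same route as the paper: both rest on the Kazhdan projection $e_G$, killed by $\lambda$ because $L^2(G)$ has no invariant vectors when $G$ is non-compact, and detected by the trivial representation $\epsilon_G$. The paper phrases the splitting as the algebra decomposition $C^*_{\rm max}(G)=\ker(\epsilon_G)\oplus\C e_G$, while you extract the same $\Z$-summand via the retraction $\epsilon_*$ on $K_0$; these are two write-ups of one and the same argument.
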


\begin{proof}[Proof] Because of property (T), we have a direct sum decomposition 
$$C^*_{\rm max}(G)=\ker(\epsilon_G)\oplus\C e_G,$$
where $\epsilon_G$ is the trivial one-dimensional representation of $G$. So $K_0(C^*_{\rm max}(G))=K_0(\ker(\epsilon_G))\oplus\mathbf{Z}$. On the other hand, as $G$ is not compact: $\lambda_*(e_G)=0$, which ends the proof.
\end{proof}

\begin {Cor}\label{TnotKamen}
Assume that $G$ is non compact with property (T), and admits a $\gamma$-element. Then $\mu_{\rm max}$ is not surjective. In particular, $\gamma\neq 1$ in $KK_G({\bf C}, {\bf C})$.
\end {Cor}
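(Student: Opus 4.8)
The plan is to argue by contradiction, exhibiting an explicit class in $K_0(C^*_{\rm max}(G))$ that cannot lie in the image of $\mu_{\rm max}$. The obstruction is the Kazhdan projection: by Proposition \ref{Kazhdanproj}, property (T) furnishes an idempotent $e_G\in C^*_{\rm max}(G)$, and by the preceding Lemma its class $[e_G]\in K_0(C^*_{\rm max}(G))$ is nonzero — indeed it generates a $\Z$ direct summand lying in $\ker\lambda_*$, since $\lambda_*([e_G])=0$ because $G$ is non-compact. On the other hand, since $G$ admits a $\gamma$-element, Theorem \ref{abstractgamma}(1) (Tu) guarantees that the reduced assembly map $\mu_r$ is injective. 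The two facts collide through the factorization $\mu_r=\lambda_*\circ\mu_{\rm max}$.

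Concretely, suppose $\mu_{\rm max}$ were surjective. Then there would exist $x\in K^{top}_*(G)$ with $\mu_{\rm max}(x)=[e_G]$. Applying $\lambda_*$ and using the factorization gives
$$\mu_r(x)=\lambda_*(\mu_{\rm max}(x))=\lambda_*([e_G])=0.$$
Injectivity of $\mu_r$ then forces $x=0$, whence $[e_G]=\mu_{\rm max}(0)=0$, contradicting the fact that $[e_G]$ generates a $\Z$-summand of $K_0(C^*_{\rm max}(G))$. Hence $\mu_{\rm max}$ is not surjective.

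For the final assertion I would deduce $\gamma\neq 1$ by contraposition: it suffices to show that $\gamma=1$ would make $\mu_{\rm max}$ surjective. The point is that the proof of Theorem \ref{abstractgamma} goes through verbatim with the maximal crossed product in place of the reduced one. Writing $\gamma=\beta\otimes_B\alpha$ with $B$ a proper $G$-$C^*$-algebra, one obtains the analogue of the commutative diagram in that proof, with every $C^*_r(G,-)$ replaced by $C^*_{\rm max}(G,-)$; the top composition is still the identity because a $\gamma$-element acts as the identity on $K^{top}_*(G)$, while the bottom composition is $j_{G,{\rm max}}(\gamma)=\tilde\gamma^{\rm max}$, which equals the identity as soon as $\gamma=1$. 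A diagram chase then yields surjectivity of $\mu_{\rm max}$, contradicting the previous paragraph, so $\gamma\neq 1$.

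The step I expect to be the genuine obstacle is justifying that the middle vertical arrow of the maximal diagram, namely $\mu_{B,{\rm max}}$ for the proper coefficient algebra $B$, is still an isomorphism: Theorem \ref{BCproper} is stated for the reduced assembly map, so one must invoke that for a proper $G$-$C^*$-algebra the canonical surjection $C^*_{\rm max}(G,B)\to C^*_r(G,B)$ is an isomorphism, ensuring that the reduced and maximal assembly maps with proper coefficients coincide. Granting this, the maximal version of Tu's argument closes; by contrast the first part, the non-surjectivity of $\mu_{\rm max}$, is comparatively routine once the Kazhdan projection and the injectivity of $\mu_r$ are in hand.
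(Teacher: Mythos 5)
Your proof is correct and takes essentially the same route as the paper's: non-surjectivity of $\mu_{\rm max}$ follows from the Kazhdan projection class lying in $\ker\lambda_*$, the factorization $\mu_r=\lambda_*\circ\mu_{\rm max}$, and the injectivity of $\mu_r$ guaranteed by the $\gamma$-element (Theorem \ref{abstractgamma}); then $\gamma\neq 1$ because $\gamma=1$ would make the Dirac--dual Dirac machinery run for full crossed products and force $\mu_{\rm max}$ to be an isomorphism, a contradiction. Your closing observation --- that the maximal version of the diagram needs $C^*_{\rm max}(G,B)\to C^*_r(G,B)$ to be an isomorphism for proper coefficients $B$ --- is exactly the detail the paper compresses into the phrase ``the Kasparov machine, which works also for full crossed products,'' so it is a welcome clarification rather than a departure.
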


\begin{proof}[Proof] We have $\mu_r=\lambda_*\circ \mu_{\rm max}$, and the injectivity of $\mu_r$ (see Theorem \ref{abstractgamma}) trivially implies that a non zero element of the kernel of $\lambda_*$ cannot be in the image of $\mu_{\rm max}$. Moreover, if $\gamma=1$, the Kasparov machine, which works also for full crossed products, shows that $\mu_{A,{\rm max}}$ is an  isomorphism, a contradiction.
\end{proof}

\medskip
On the other hand, Higson and Kasparov have proved in the 1990's the following beautiful result: 

\begin {Thm}\label{HigsonKasparov}
Let $G$ be a locally compact group having the Haagerup property. Then $G$ has a $\gamma$-element equal to 1. As a consequence, the three maps $\mu_{A,r}$, $\mu_{A,{\rm max}}$ and $(\lambda_A)_*$ are isomorphisms. In particular Conjecture \ref{ConjBCcoeff} holds for $G$.
\end{Thm}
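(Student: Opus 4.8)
The plan is to run an infinite-dimensional version of the Dirac--dual Dirac method, where the role previously played by the symmetric space $G/K$ is taken over by the Hilbert space on which $G$ acts. By the Haagerup property (condition (i) of the definition) I fix a continuous, metrically proper action of $G$ on a real Hilbert space $H$ by affine isometries, with linear part $\pi$ and proper cocycle $b$. The central object is a graded $G$-$C^*$-algebra $\mathcal{A}(H)$, an infinite-dimensional ``Clifford--Bott'' algebra built as an inductive limit $\mathcal{A}(H)=\varinjlim_{V}\mathcal{S}\,\hat\otimes\,\mathcal{C}(V)$ over the finite-dimensional affine subspaces $V\subset H$, where $\mathcal{S}=C_0(\R)$ is graded by parity and $\mathcal{C}(V)$ is built from the complexified Clifford algebra of $V$. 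Because the affine action is \emph{proper}, the induced $G$-action makes $\mathcal{A}(H)$ a \emph{proper} $G$-$C^*$-algebra; this is exactly the feature that will let me invoke Theorem \ref{BCproper}.

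Next I would construct the two Kasparov elements. The dual-Dirac (Bott) element $\beta\in KK_G(\C,\mathcal{A}(H))$ is assembled from the finite-dimensional Bott maps $\beta_V:\mathcal{S}\to\mathcal{S}\,\hat\otimes\,\mathcal{C}(V)$, with the cocycle $b$ entering through the basepoint translations needed to make the assembly $G$-equivariant. The Dirac element $\alpha\in KK_G(\mathcal{A}(H),\C)$ is built from the unbounded Dirac operators on the finite-dimensional pieces, combined with the harmonic-oscillator (number) operator in the $\mathcal{S}$-direction so that the family assembles to a single class over $H$. Setting $\gamma:=\beta\otimes_{\mathcal{A}(H)}\alpha\in KK_G(\C,\C)$, I obtain by construction an element satisfying condition (1) of Tu's definition of a $\gamma$-element, since it factors through the proper algebra $\mathcal{A}(H)$.

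The heart of the proof, and the main obstacle, is the infinite-dimensional equivariant Bott periodicity theorem asserting $\gamma=1$ in $KK_G(\C,\C)$. In finite dimensions, $\beta_V\otimes\alpha_V=1$ is classical Bott periodicity; restricting to any compact subgroup $K$ (which fixes a point of $H$ and acts linearly on a $K$-invariant subspace) this finite-dimensional statement already yields that $\gamma$ restricts to $1_K$ in $R(K)$, verifying condition (2) of a $\gamma$-element. The difficulty is passing to the limit over all of $H$: there is no genuine Dirac operator on an infinite-dimensional space, so both $\alpha$ and the identity $\beta\otimes_{\mathcal{A}(H)}\alpha=1$ must be formulated and proved using asymptotic morphisms, i.e.\ in E-theory. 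One must control the spectral behaviour of the rescaled harmonic oscillators uniformly as $\dim V\to\infty$, and verify the $G$-equivariance of all the asymptotic homotopies implementing periodicity. Once this analytic core is established, $\gamma$ is a genuine Tu $\gamma$-element and $\gamma=1$.

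Finally I would conclude. Since $G$ admits a $\gamma$-element equal to $1$, Theorem \ref{abstractgamma} gives at once that $\mu_{A,r}$ is injective for every $A$, and that it is surjective because $\tilde\gamma_A=\mathrm{Id}$ (as $\gamma=1$); hence $\mu_{A,r}$ is an isomorphism and Conjecture \ref{ConjBCcoeff} holds. The identical Kasparov argument applies verbatim to full crossed products, since the descent map $j_{G,\mathrm{max}}$ and the module-over-$KK_G(\C,\C)$ structure behave in the same way, so $\mu_{A,\mathrm{max}}$ is also an isomorphism. Combining these with the factorisation $\mu_{A,r}=(\lambda_A)_*\circ\mu_{A,\mathrm{max}}$ forces $(\lambda_A)_*$ to be an isomorphism as well.
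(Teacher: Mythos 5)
Your outline is essentially the proof that the paper itself points to: the survey does not prove Theorem \ref{HigsonKasparov}, but refers to \cite{HigKas} and \cite{Julg98} for the E-theoretic Dirac--dual-Dirac argument over the Hilbert space, and only details one ingredient, namely the locally compact proper $G$-space $Z=H\times[0,+\infty[$ with the topology pulled back from $H_w\times[0,+\infty[$. That space is exactly the point of contact with your sketch: your algebra $\mathcal{A}(H)$ is a $G$-$C_0(Z)$-algebra, and it is this structure, rather than properness of the affine action alone, that certifies $\mathcal{A}(H)$ as a proper $G$-algebra; it would be worth making that identification explicit. So you are on the cited route, and, like the paper (and unavoidably, since it is the whole content of \cite{HigKas}), you defer the analytic core, equivariant infinite-dimensional Bott periodicity.

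Two steps in your outline are glibber than they should be. First, you define $\gamma=\beta\otimes_{\mathcal{A}(H)}\alpha$ in $KK_G$ but propose to prove $\gamma=1$ in E-theory; Tu's Theorem \ref{abstractgamma} needs the identity in $KK_G(\C,\C)$ (or else you must redo the whole assembly-map formalism E-theoretically). The repair is standard but must be stated: because $\mathcal{A}(H)$ is proper, the comparison map $KK_G(\mathcal{A}(H),B)\rightarrow E_G(\mathcal{A}(H),B)$ is an isomorphism, so the asymptotic-morphism Dirac element and the relevant homotopies transfer to $KK$; this is how the $KK$-statement asserted in the theorem (a genuine Tu $\gamma$-element equal to $1$) is obtained. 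Second, condition (2) of a $\gamma$-element cannot literally be checked by finite-dimensional Bott periodicity: ${\rm Rest}^G_K(\gamma)$ is still computed through the infinite-dimensional algebra $\mathcal{A}(H)$, not through a single block $\mathcal{S}\,\hat{\otimes}\,\mathcal{C}(V)$. What saves the claim is that for compact $K$ every $K$-$C^*$-algebra is proper (a point is a proper $K$-space), so $KK_K(\C,\C)\rightarrow E_K(\C,\C)$ is the identity of $R(K)$, and restricting the E-theoretic identity $\gamma=1$ pins down ${\rm Rest}^G_K(\gamma)=1_K$. Neither issue is fatal---both are handled in the literature you would be following---but as written your verification of the two defining conditions of a $\gamma$-element is not self-contained.
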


For a proof (using $E$-theory instead of KK-theory) we refer to  \cite{HigKas} and \cite{Julg98}. We shall only explain how a locally compact proper $G$-space can be constructed from an affine action on a Hilbert space. Consider the space $Z=H\times [0,+\infty[$ equipped by the topology pulled back by the map $(x,t)\mapsto (x, \sqrt {\Vert x\Vert^2+t^2})$ of the topology of the space $H_w\times [0,+\infty[$ where $H_w$ is the space $H$ with weak topology. The space  $Z$ is a locally compact space and carries a proper action defined by $g.(x,t)=(g.x,t)$ for $g\in G$. It is a representative of the classifying space of proper actions $\underline{EG}$.
The space $Z$ can also be defined as a projective limit of spaces 
 $[0,+\infty [\times V$ over all affine subspaces $V$ of $H$, with the maps $[0,+\infty [\times V'\rightarrow [0,+\infty [\times V$ (for all $V\subset V'$) combining the projection to $V$ with the map $x\mapsto \sqrt {\Vert x\Vert^2+t^2}$ on the vector subspace orthogonal to $V$ in $V'$.

A locally compact group $G$ is {\it K-amenable} (see e.g.  \cite{JV84}) if, for any $G-C^*$-algebra $A$, the  full crossed product $C^*_{\rm max}(G,A)$ and the reduced crossed product $C^*_r(G,A)$ do have the same K-theory via the map $(\lambda_A)_*$. So Theorem \ref{HigsonKasparov} says that groups with the Haagerup property are K-amenable, while Corollary \ref{TnotKamen} says that non-compact groups with property (T), are not.

\begin{Rem}
In a recent preprint, S. Gong, J. Wu and G. Yu \cite{GWY} prove the Strong Novikov conjecture for discrete groups acting isometrically and metrically properly on a Hilbert-Hadamard manifold (i.e. an infinite dimensional analogue of simply connected and non-positively curved manifold). This contains of course the case of groups with the Haagerup property, but also the case of geometrically discrete subgroups of the group of volume preserving diffeomorphisms of a compact smooth manifold. Their proof uses a generalization of the 
Higson-Kasparov construction, but also the techniques of Antonini, Azzali and Skandalis \cite{AAS}.
\end{Rem}

\subsection{A trichotomy for semisimple Lie groups}\label{Trichotomy}

Let us now assume that $G$ is a semisimple Lie group, connected
with finite centre. The conjecture without coefficients (Conjecture \ref{ConjBC}) for $G$ is known to be true. There are now three
completely distinct proofs of that fact. In 1984, A. Wassermann \cite{Wassermann87} (following the work of Penington-Plymen \cite {PeningtonPlymen} and
Valette \cite{Valette83, Valette84K}) proved the conjecture using the whole machinery of Harish-Chandra theory together with the work of Knapp-Stein and Arthur, allowing for a precise description of the reduced dual of such groups. The second proof, due to  V. Lafforgue, only uses Harish-Chandra's Schwartz space, but appeals to the whole of his Banach KK-theory, sketched in Chapter \ref{Banach} below. Another idea of proof had been suggested by Baum, Connes and Higson \cite{BCH}
following the idea of Mackey correspondence, i.e. of a very subtle correspondance between the reduced dual of a semisimple Lie group $G$ and the dual of its Cartan motion group, i.e. the semidirect product $G_0=\mathfrak{g}/\mathfrak{k}\rtimes K$ where $K$ is a maximal compact subgroup of $G$. Very recently A. Afgoustidis \cite{Afgoustidis} has given such a proof using the notion of minimal $K$-types introduced by D. Vogan \cite{Vogan}.

But the most difficult problem arises when one is interested in the conjecture 
for a discrete subgroup  $\Gamma$ of $G$. Such groups inherit the geometry from $G$, but there is of course no hope to describe their reduced dual. However, 
 the conjecture (with or without coefficients) for $\Gamma$ follows from the conjecture {\it with coefficients} (Conjecture \ref{ConjBCcoeff}) for the Lie group $G$, a fact stated without proof in \cite{BCH} and first proved by H. Oyono-Oyono \cite {OO}.

As a result  the question of Baum-Connes for $\Gamma$ can be
summarized as follows, resulting from Kasparov's work:
\begin{itemize}
\item[1)] injectivity of the Baum-Connes assembly map for $G$ holds with coefficients in
any $G-C^*$-algebra, hence it also holds for the discrete group $\Gamma$.

\item[2)] the question of surjectivity of the Baum-Connes assembly map for the discrete group $\Gamma$, or more generally
the surjectivity of the Baum-Connes assembly map with coefficients in any $A$ for the Lie group $G$,  are difficult
questions and can be considered as a crucial test for Conjecture \ref{ConjBC}.
\end{itemize}

We shall have to distinguish, among simple Lie groups, the real rank 1 and the higher rank cases. We need to recall the classification of real rank 1 simple Lie groups. Up to local isomorphism, the list is: $SO_0(n,1), SU(n,1), Sp(n,1), F_{4(-20)}$, i.e. the isometry group of the $n$-dimensional hyperbolic space over the division algebras $\R,\C,\mathbf{H}$ (the Hamilton quaternions), and $\mathbf{O}$ (the Cayley octonions); over $\R$, we restrict to orientation-preserving isometries; over $\mathbf{O}$, there is only $n=2$.

\medskip
Assume that $G$ is locally isomorphic to a simple Lie group. There is the following trichotomy:

\begin{enumerate}
\item[(i)] {\it  If $G$ is (locally isomorphic to) one of the real rank one groups $SO_0(n,1)$ or $SU(n,1)$ ($n\geq 2$):} then $G$ is known to have the Haagerup property. Therefore, by Theorem \ref{HigsonKasparov}, $G$ satisfies the Baum-Connes conjecture with
coefficients (conjecture \ref{ConjBCcoeff}), and so do all its discrete subgroups. 

However, it is worth noting that the  $SO_0(n,1)$ and $SU(n,1)$ cases had been solved {\it before} the Higson-Kasparov theorem by more geometric methods in the works of Kasparov \cite{Kasparov84}, Chen \cite{Chen} and Julg-Kasparov \cite{Julg-Kasparov} .  Indeed, the above authors have produced a construction of a representative of $\gamma$ combining a complex on the flag manifold (which is the boundary of the symmetric space) and a Poisson transform, as explained in section \ref{Flag} below. Then a homotopy using the so-called complementary series yields the required equality $\gamma=1$ in $KK_G({\bf C}, {\bf C})$.

\item[(ii)]{\it  If $G$ is (locally isomorphic to) one of the real rank one groups $Sp(n,1)$ ($n\geq 2$) or $F_{4(-20)}$}: then by a result of Kostant, $G$ has Kazhdan's property (T). This fact makes the Baum-Connes conjecture more difficult since the full and reduced crossed product do not have in general the same K-theory. The first deep result in that direction was obtained by V. Lafforgue in 1998 \cite{Lafforgue00} by combining the Banach analogue of the conjecture, explained in Chapter \ref{Banach}, with Jolissaint's rapid decay property (see section \ref{(RD)} below): if $\Gamma$ is a cocompact discrete subgroup of such a group $G$, then $\Gamma$ satisfies Conjecture \ref{ConjBC} (i.e. without coefficients). 

 Moreover, P. Julg has been able to extend to those cases the method of flag manifolds and Poisson transforms, which gives again the construction of a Fredholm module representing $\gamma$. However, 
 it is no longer possible to use  the theory of unitary representations since the complementary series stays away from the trivial representation, in accordance with property (T). An idea proposed by P. Julg in 1994 is to use a family of uniformly bounded representations approaching the trivial representation. Such a family of uniformly bounded representations has been constructed by M. Cowling \cite{Cowling82}: see section \ref{Cowling} for more details. 

It should be possible to show that the element $\gamma$, though not equal to 1 in $KK_G({\bf C}, {\bf C})$, still gives the identity map in $K_*(C^*_r(G,A))$ (but of course not in $K_*(C^*_{\rm max}(G,A))$. Technically the notion of uniformly bounded representations has to be extended to representations with an arbitrary slow exponential growth, following an idea of N. Higson explained in section \ref{slowgrowth} below. The details of the proof announced by P. Julg \cite{Julg02} have not yet been fully written, we refer to \cite{Julg}.

On the other hand, there is a detailed proof of a similar result by V. Lafforgue \cite{LaffHyp} : any Gromov hyperbolic group $\Gamma$ satisfies Conjecture \ref{ConjBCcoeff} (with coefficients). His proof uses the same idea of arbitrary slow exponential growth representations, see section \ref{hyperLafforgue} below. 

The result of Lafforgue and the announced result of P. Julg have in common the following important case, namely the case of a cocompact lattice $\Gamma$ of $Sp(n,1)$ ($n\geq 2$) or $F_{4(-20)}$. Note however that Lafforgue's result applies to general Gromov hyperbolic groups (many do have property (T)), whereas Julg's claim would apply to all discrete subgroups of $Sp(n,1)$ ($n\geq 2$) or $F_{4(-20)}$, including non-cocompact lattices\footnote{A concrete example of a non-cocompact lattice in $Sp(n,1)$, is $Sp(n,1)(\mathbf{H}(\Z))$, the group of points of the real algebraic group $Sp(n,1)$ over the ring $\mathbf{H}(\Z)$ of integral quaternions. For such a group Conjecture \ref{ConjBCcoeff} is still open.}, which also have property (T).

\item[(iii)] {\it If $G$ is a simple group of real rank greater or equal to 2}: this is the very difficult case. Actually Lafforgue found that for higher rank Lie groups an obstruction persists : they satisfy a stronger version of property (T), explained in section \ref{strong(T)}, that prevents the use of representations of arbitrary small exponential growth to succeed (see \cite{LafforgueT} and \cite{Lafforgue2010}).
In this case very few is known. The only results are for the cocompact discrete subgroups $\Gamma$ of a simple Lie group 
$G$ of rank 2 locally isomorphic to $SL_3({\bf R})$, $SL_3({\bf C})$, $SL_3({\bf H})$ or $E_{6(-26)}$. The proof combines 
 again V. Lafforgue's result on the Banach analogue of the Baum-Connes conjecture (see Chapter \ref{Banach}), and Jolissaint's $(RD)$ property that we recall in \ref{(RD)}.
\end{enumerate}

\subsection{Flag manifolds and KK-theory}\label{Flag}
Let $G$ be a semisimple Lie group, connected with finite centre. Kasparov \cite {Kasparov84} has made the following remark. Let $P=MAN$ be the minimal parabolic (or Borel) subgroup. The flag manifold $G/P$ is a compact $G$-space satisfying the following proposition:

 \begin{Prop}
  An element of $KK_G({\bf C}, {\bf C})$ which is in the image of the map $KK_G(C(G/P),{\bf C})\rightarrow KK_G({\bf C}, {\bf C})$ and restricts
to 1 in $R(K)$ is equal to $\gamma$.
\end{Prop}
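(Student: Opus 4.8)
The plan is to identify $x$ with $\gamma$ by showing that $x$ lies in the summand $\gamma\, KK_G(\C,\C)$ and has the same restriction to $K$ as $\gamma$. Recall from Section \ref{gamma} that the idempotent $\gamma$ induces, for all $G$-$C^*$-algebras $A,B$, a splitting
$$KK_G(A,B)=\gamma\, KK_G(A,B)\oplus(1-\gamma)\,KK_G(A,B),$$
on which the restriction ${\rm Rest}^G_K$ is an isomorphism from $\gamma\, KK_G(A,B)$ onto $KK_K(A,B)$ and vanishes on the complement. First I would dispose of the restriction condition. Since ${\rm Rest}^G_K$ is a ring homomorphism, since ${\rm Rest}^G_K(\gamma)=1$ by Lemma \ref{restriction}, and since ${\rm Rest}^G_K(x)=1$ by hypothesis, the product $\gamma x=\gamma\otimes_{\C}x$ lies in $\gamma\, KK_G(\C,\C)$ (as $\gamma$ is idempotent) and satisfies ${\rm Rest}^G_K(\gamma x)={\rm Rest}^G_K(\gamma)\,{\rm Rest}^G_K(x)=1={\rm Rest}^G_K(\gamma)$. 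As restriction is injective on the summand $\gamma\, KK_G(\C,\C)$, this forces $\gamma x=\gamma$.

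It then remains to prove $x=\gamma x$, i.e.\ that $x$ already lies in $\gamma\, KK_G(\C,\C)$. Here I would use the second hypothesis. Writing $[\iota]\in KK_G(\C,C(G/P))$ for the class of the unital inclusion $\iota\colon\C\to C(G/P)$ (dual to the collapse $G/P\to\mathrm{pt}$), the assumption means $x=[\iota]\otimes_{C(G/P)}y$ for some $y\in KK_G(C(G/P),\C)$. By associativity of the Kasparov product,
$$(1-\gamma)\,x=\bigl((1-\gamma)\otimes_{\C}[\iota]\bigr)\otimes_{C(G/P)}y,$$
so it suffices to show that $(1-\gamma)\otimes_{\C}[\iota]=0$ in $KK_G(\C,C(G/P))$. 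Since this element lies in $(1-\gamma)\,KK_G(\C,C(G/P))$, it is enough to prove that this whole summand vanishes, that is, that $\gamma$ acts as the identity on $KK_G(\C,C(G/P))$. This is precisely the assertion $\tau_{C(G/P)}(\gamma)=1$ in $KK_G(C(G/P),C(G/P))$, the flag-manifold analogue of Corollary \ref{induction}.

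The main obstacle is exactly this identity $\tau_{C(G/P)}(\gamma)=1$. It cannot be obtained as in Corollary \ref{induction}, because $G/P$ is a \emph{compact} space on which the non-compact group $G$ does \emph{not} act properly, so the inductive argument behind Lemma \ref{restriction} and Corollary \ref{induction} does not apply. The crucial extra input is that the minimal parabolic $P=MAN$ is amenable (a compact-by-solvable group), so that the $G$-action on the flag manifold $G/P$ is amenable and the transformation groupoid $(G/P)\rtimes G$ is an amenable groupoid. In the groupoid language of Section \ref{viveTu}, where $RKK_G(G/P;\C,\C)=KK_{(G/P)\rtimes G}(\C,\C)$ and $\tau_{C(G/P)}(\gamma)$ corresponds to the image $p_*(\gamma)$ under the pull-back along $p\colon(G/P)\rtimes G\to G$, one invokes the Higson--Kasparov/Tu theorem that an amenable groupoid admits $\gamma=1$; by uniqueness and naturality of the $\gamma$-element, $p_*(\gamma)$ is the $\gamma$-element of $(G/P)\rtimes G$, hence equal to $1$. (This is the non-proper extension of condition (2) in the definition of a $\gamma$-element, which Section \ref{viveTu} states only for \emph{proper} $G$-spaces.) Granting this, $(1-\gamma)x=0$, so $x=\gamma x=\gamma$, which is the claim.

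Equivalently, one may run the argument through Tu's characterization recalled in the Remark of Section \ref{viveTu}: $\gamma$ is the unique element of $KK_G(\C,\C)$ that factorizes through a proper $G$-$C^*$-algebra and restricts to $1_K$. The restriction condition is the second hypothesis, and the factorization through a proper algebra is again supplied by the amenability of $G/P$, which lets one replace the non-proper coefficient algebra $C(G/P)$ by a genuinely proper one (e.g.\ via a Poisson transform landing in $C_0(G/K)$). In either formulation the single non-formal ingredient is the amenability of the boundary action, which is precisely what makes Kasparov's flag-manifold construction of $\gamma$ possible.
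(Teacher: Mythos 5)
Your argument is correct and is in substance the paper's own proof: the paper disposes of the statement in two lines by noting that the restriction of $\gamma$ to the amenable group $P$ equals $1$, hence $(1-\gamma)\,KK_G(C(G/P),\C)=0$, and then the restriction isomorphism $\gamma\, KK_G(\C,\C)\xrightarrow{\sim}R(K)$ pins down the element as $\gamma$ --- exactly your two steps. The only (cosmetic) difference is that you phrase the key input as amenability of the transformation groupoid $(G/P)\rtimes G$ and invoke Tu's theorem, whereas the paper restricts $\gamma$ directly to the amenable subgroup $P$ via the induction isomorphism $KK_G(C(G/P),\C)\cong KK_P(\C,\C)$; since the groupoid $(G/P)\rtimes G$ is equivalent to the group $P$, these are the same statement in two languages.
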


This result follows from the fact that the restriction of $\gamma$ to the amenable connected Lie group $P$ is equal to 1. Hence $(1-\gamma )KK_G(C(G/P),{\bf C})=0$.

A stronger statement is used by Julg-Kasparov\cite{Julg-Kasparov} and Julg [Julg]. Let us compactify the symmetric space $X=G/K$ by adding at infinity the flag manifold $G/P$. Consider $\bar X=G/K\cup G/P$. They prove the following:

\begin{Prop}\label{gammaflag}
  An element of $KK_G({\bf C}, {\bf C})$ which is in the image of the map $KK_G(C(\bar X),{\bf C})\rightarrow KK_G({\bf C}, {\bf C})$ and restricts to 1 in $R(K)$ is equal to $\gamma$.
\end{Prop}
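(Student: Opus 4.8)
The plan is to imitate the proof of the preceding Proposition for $G/P$, the only new ingredient being the two-orbit structure of the compactification $\bar X = G/K\cup G/P$. As there, everything reduces to a single assertion about the idempotent $\gamma$: that $\gamma$ acts as the identity on $KK_G(C(\bar X),\C)$, equivalently that $(1-\gamma)KK_G(C(\bar X),\C)=0$. Granting this, the statement follows formally. Writing $x=\varphi(y)$ with $y\in KK_G(C(\bar X),\C)$ and $\varphi$ the $KK_G(\C,\C)$-linear map induced by the unital inclusion $\C\hookrightarrow C(\bar X)$, linearity together with $\gamma\otimes_\C y=y$ gives $\gamma\otimes_\C x=\varphi(\gamma\otimes_\C y)=\varphi(y)=x$, so $x$ lies in the summand $\gamma KK_G(\C,\C)$. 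Since $\mathrm{Rest}^G_K$ is injective on $\gamma KK_G(\C,\C)$ (Lemma \ref{restriction}) and $\mathrm{Rest}^G_K(x)=1=\mathrm{Rest}^G_K(\gamma)$ with both $x,\gamma\in\gamma KK_G(\C,\C)$, we would conclude $x=\gamma$.

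To prove the vanishing I would exploit that $G/K$ is open and $G/P$ is closed in $\bar X$, yielding a $G$-equivariant short exact sequence of $G$-$C^*$-algebras
$$0\to C_0(G/K)\to C(\bar X)\to C(G/P)\to 0.$$
Applying $KK_G(-,\C)$ produces the associated six-term exact sequence, all of whose maps, including the boundary maps, are $KK_G(\C,\C)$-module homomorphisms. Because $1-\gamma$ is an idempotent of $KK_G(\C,\C)$, multiplication by it is a direct-summand projection compatible with every module map; applying it therefore yields an exact six-term sequence of the summands $(1-\gamma)KK_G(-,\C)$.

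The two outer summands in this sequence vanish. On $KK_G(C(G/P),\C)$ this is exactly the input used in the preceding Proposition: the minimal parabolic $P$ is amenable, so $\mathrm{Rest}^G_P(\gamma)=1$ and hence $(1-\gamma)KK_G(C(G/P),\C)=0$. On $KK_G(C_0(G/K),\C)$ it holds because $G/K$ is a proper, $G$-compact $G$-space, the stabiliser $K$ being compact; in fact $G/K$ is a model of $\underline{EG}$, so this group is $K^{top}_*(G)$, on which a $\gamma$-element acts as the identity. With both ends zero, exactness of the six-term sequence of summands forces $(1-\gamma)KK_G(C(\bar X),\C)=0$, completing the reduction.

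The main obstacle, and the one genuinely new point relative to the $G/P$ case, is the passage through the six-term sequence: I would need to check that $\bar X$ carries a genuine compact $G$-space topology making $0\to C_0(G/K)\to C(\bar X)\to C(G/P)\to 0$ an exact sequence of $G$-$C^*$-algebras, and that the resulting connecting maps are $KK_G(\C,\C)$-linear, so that multiplication by the idempotent $1-\gamma$ is an exact operation on it. The two vanishing inputs themselves are not new: one is the amenability of $P$ already used above, the other is the triviality of the $\gamma$-action on the proper, $G$-compact space $G/K$.
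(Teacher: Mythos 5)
The paper itself does not prove Proposition \ref{gammaflag}: it is quoted from Julg--Kasparov \cite{Julg-Kasparov} and Julg \cite{Julg}, and only the preceding $G/P$-statement receives a one-line justification ($\mathrm{Rest}^G_P(\gamma)=1$ by amenability of $P$, hence $(1-\gamma)KK_G(C(G/P),\C)=0$). So your proposal must be judged on its own merits, and its architecture is indeed the standard one underlying the cited works. The formal reduction is correct: the map $KK_G(C(\bar X),\C)\rightarrow KK_G(\C,\C)$ induced by the unital inclusion is $KK_G(\C,\C)$-linear, so $(1-\gamma)KK_G(C(\bar X),\C)=0$ forces any $x$ in its image to lie in the summand $\gamma KK_G(\C,\C)$, where the restriction map to $R(K)$ is injective and sends both $x$ and $\gamma$ to $1$ (Lemma \ref{restriction} and the discussion following it). The two vanishing inputs are also correctly sourced: the boundary term from amenability of $P$, and the interior term from Corollary \ref{induction} (equivalently, from properness of the $G$-space $G/K$, via Theorem \ref{BCproper}-type reasoning), since $\tau_{C_0(G/K)}(\gamma)=1$ makes $\gamma$ act as the identity on $KK_G(C_0(G/K),\C)$.

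The step you flag at the end is, however, a genuine gap, and you have slightly misidentified where the difficulty sits. The $KK_G(\C,\C)$-linearity of the connecting maps is not the problem: the boundary maps are Kasparov products with the class of the extension, and multiplication by a central element of $KK_G(\C,\C)$ commutes with all Kasparov products, so cutting an exact sequence of modules by the central idempotent $1-\gamma$ preserves exactness. The real issue is the \emph{existence} of the six-term exact sequence at all: in $G$-equivariant KK-theory for a noncompact group, excision holds for extensions admitting a $G$-\emph{equivariant} completely positive contractive section. Nuclearity of $C(G/P)$ gives a Choi--Effros section, but it cannot be averaged into an equivariant one, because $G$ acts on $G/P$ (and on $\bar X$) amenably but not properly. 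One must therefore exhibit an equivariant section by hand, and this is where the geometry of the situation enters: the Poisson transform, $f\mapsto\bigl(x\mapsto\int_{G/P}f\,d\nu_x\bigr)$ with $\nu_x$ the harmonic measure at $x\in G/K$, is a unital, positive (hence completely positive, the algebras being commutative), $G$-equivariant section $C(G/P)\rightarrow C(\bar X)$; its continuity up to the boundary of $\bar X$ is the solution of the Dirichlet problem at infinity, available in the rank-one cases where this proposition is applied. With that section in hand the extension $0\rightarrow C_0(G/K)\rightarrow C(\bar X)\rightarrow C(G/P)\rightarrow 0$ is $G$-semisplit, Kasparov's excision theorem \cite{Kasparov88} yields your six-term sequence, and the rest of your argument goes through as written.
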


\subsubsection{The BGG complex}
An important object associated to flag manifolds is the so called Bernstein-Gelfand-Gelfand (BGG) complex on $G/P$. The following  construction is due to A. \v Cap, J. Slov\'ak and V. Sou\v cek \cite{CSS}.

\begin{Lem} The cotangent bundle $T^*G/P$ carries a $G$-equivariant structure of Lie algebra bundle.
\end{Lem}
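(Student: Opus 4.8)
The plan is to realize both the cotangent bundle and the claimed Lie algebra structure as bundles associated to the principal $P$-bundle $G\to G/P$, so that $G$-equivariance is automatic and the whole statement reduces to a fact about the $P$-module $(\mathfrak{g}/\mathfrak{p})^*$. First I would recall the standard description of homogeneous vector bundles: writing $\mathfrak{p}$ for the Lie algebra of $P$, the tangent bundle is $T(G/P)=G\times_P(\mathfrak{g}/\mathfrak{p})$, where $P$ acts on $\mathfrak{g}/\mathfrak{p}$ through the quotient of the adjoint representation, and dually $T^*(G/P)=G\times_P(\mathfrak{g}/\mathfrak{p})^*$ with $P$ acting by the contragredient representation. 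Both are $G$-equivariant for the left $G$-action, and a $G$-invariant fiberwise Lie bracket on such an associated bundle is the same datum as a $P$-invariant Lie algebra structure on the model fiber $(\mathfrak{g}/\mathfrak{p})^*$.

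The key step is to identify the $P$-module $(\mathfrak{g}/\mathfrak{p})^*$ with the nilradical $\mathfrak{n}=\mathrm{Lie}(N)$. Since the Killing form $B$ of $\mathfrak{g}$ is nondegenerate and $\mathrm{Ad}(G)$-invariant, it provides a $G$-equivariant isomorphism $\mathfrak{g}\xrightarrow{\sim}\mathfrak{g}^*$ carrying the annihilator of $\mathfrak{p}$, namely $(\mathfrak{g}/\mathfrak{p})^*$, onto the orthogonal complement $\mathfrak{p}^\perp$. Using the restricted root space decomposition $\mathfrak{g}=\mathfrak{n}^-\oplus(\mathfrak{m}\oplus\mathfrak{a})\oplus\mathfrak{n}$ and the fact that $B$ pairs $\mathfrak{g}_\alpha$ nontrivially only with $\mathfrak{g}_{-\alpha}$, one checks that $\mathfrak{p}^\perp=\mathfrak{n}$. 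As $\mathfrak{n}$ is an ideal of $\mathfrak{p}$, it is preserved by $\mathrm{Ad}(P)$, so this is an isomorphism of $P$-modules $(\mathfrak{g}/\mathfrak{p})^*\cong\mathfrak{n}$.

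To finish, I observe that $\mathfrak{n}$ is a nilpotent Lie algebra under the bracket inherited from $\mathfrak{g}$, and that $P$ normalizes $N$, so $\mathrm{Ad}(p)|_{\mathfrak{n}}$ is a Lie algebra automorphism of $\mathfrak{n}$ for every $p\in P$. Consequently the fiberwise bracket on $G\times_P\mathfrak{n}$ is well defined and $G$-invariant, and transporting it through the Killing isomorphism equips $T^*(G/P)=G\times_P(\mathfrak{g}/\mathfrak{p})^*$ with a $G$-equivariant Lie algebra bundle structure. The only point requiring genuine care is the root bookkeeping in the identification $\mathfrak{p}^\perp=\mathfrak{n}$ (as opposed to $\mathfrak{n}^-$): one must match the convention that $\mathfrak{p}=\mathfrak{m}\oplus\mathfrak{a}\oplus\mathfrak{n}$ contains the positive restricted root spaces, and that $(\mathfrak{g}/\mathfrak{p})^*$ rather than $\mathfrak{g}/\mathfrak{p}$ is the fiber carrying the bracket. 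This is the main thing to get right, and it is where the statement about the \emph{co}tangent bundle (the positive nilradical $\mathfrak{p}_+$ of parabolic geometry) is essential, though the verification itself is entirely routine.
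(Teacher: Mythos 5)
Your proof is correct and takes essentially the same route as the paper's: both rest on the Killing-form identification of $(\mathfrak{g}/\mathfrak{p})^*$ with the nilradical $\mathfrak{n}$ of the parabolic, together with the observation that $\mathrm{Ad}(P)$ acts on $\mathfrak{n}$ by Lie algebra automorphisms, so the fiberwise bracket is well defined and $G$-invariant. The only difference is presentational: the paper states the identification pointwise, taking as fiber over $x\in G/P$ the nilradical $\mathfrak{n}_x$ of the stabilizer $P_x$, whereas you package the same data via the associated-bundle formalism $G\times_P(\mathfrak{g}/\mathfrak{p})^*\cong G\times_P\mathfrak{n}$.
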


\begin{proof}[Proof] The group $G$ acts transitively on the flag manifold $G/P$. Let us consider a point $x\in G/P$. Its stabilizer in $G$ is a parabolic subgroup $P_x$, a conjugate of $P$.  The tangent space at $x$  is the quotient of Lie algebras ${\mathfrak g}/{\mathfrak p}_x$. The Killing form on $G$ identifies the cotangent space $T^*_xG/P=({\mathfrak g}/{\mathfrak p}_x)^*$ with the Lie algebra ${\mathfrak n}_x$ of the maximal nilpotent normal subgroup $N_x$ of $P_x$. 
The Lie algebras ${\mathfrak n}_x$ form a Lie algebra bundle on $G/P$, which is, as a vector bundle, $G$-equivariantly isomorphic to $T^*G/P$.
\end{proof}

Let $\delta_x :{ \bigwedge}^k{\mathfrak n}_x\rightarrow { \bigwedge}^{k-1}{\mathfrak n}_x$ be the boundary operator defining the {\it homology} of the Lie algebra ${\mathfrak n}_x$ for each $x\in G/P$. Recall the formula for $\delta_x$:

$$\delta_x (X_1\wedge ...\wedge X_k)=\sum_{i<j }(-1)^{i+j}[X_i,Xi_j]\wedge X_1\wedge...\wedge \hat X_i\wedge...\wedge \hat X_j\wedge ...\wedge X_k.$$
 Transporting $\delta_x$ from ${\mathfrak n}_x$ to $T_x^*G/P$ defines a bundle map $$\delta :{ \bigwedge}^kT^*G/P\rightarrow { \bigwedge}^{k-1}T^*G/P.$$ 

Let $\Omega =\Omega (G/P)$ be the graded algebra of differential forms on the flag manifold $G/P$. We consider on $\Omega$ the two operators $d$ and $\delta$, respectively of degree $1$ and $-1$.
Since $d^2$ and $\delta^2$ are both zero, the degree zero map $d\delta+\delta d$ commutes both with $d$ and $\delta$. In fact, as proved by \v Cap and Sou\v cek \cite{CS}:

\begin{Prop} The Casimir operator of $G$ acting on $\Omega$ is equal to $-2(d\delta+\delta d)$.
\end {Prop}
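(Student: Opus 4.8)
The plan is to bypass any direct manipulation of the fibrewise operators and instead obtain the identity by pure Cartan calculus, reducing it to the single assertion that a certain $G$-invariant degree $-1$ operator equals $-2\delta$. Write $B$ for the Killing form and fix dual bases $\{e_i\},\{e^i\}$ of $\mathfrak{g}$ with $B(e_i,e^j)=\delta_i^j$, adapted to the decomposition $\mathfrak{g}=\mathfrak{n}^-\oplus\mathfrak{l}\oplus\mathfrak{n}$ (with $\mathfrak{l}=\mathfrak{m}\oplus\mathfrak{a}$ and $\mathfrak{p}=\mathfrak{l}\oplus\mathfrak{n}$), so that $B$ pairs $\mathfrak{n}^-$ with $\mathfrak{n}$ and is non-degenerate on $\mathfrak{l}$. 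The group acts on $\Omega=\Omega(G/P)$ by Lie derivatives $L_\xi$ along the fundamental vector fields $\xi^\sharp$, and the Casimir is $\mathrm{Cas}_G=\sum_i L_{e_i}L_{e^i}$.

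First I would insert Cartan's magic formula $L_{e^i}=d\iota_{e^i}+\iota_{e^i}d$ into this sum. Using $[L_\xi,d]=0$ together with the identity $\sum_i[e_i,e^i]=0$ (which follows from $\mathrm{tr}\,\mathrm{ad}=0$ and non-degeneracy of $B$), one gets $\sum_i L_{e_i}\iota_{e^i}=\sum_i\iota_{e^i}L_{e_i}$, and hence the clean formula $\mathrm{Cas}_G=dQ+Qd$, where $Q:=\sum_i L_{e_i}\iota_{e^i}$ is $G$-invariant of degree $-1$. Since $d^2=\delta^2=0$ give $-2(d\delta+\delta d)=d(-2\delta)+(-2\delta)d$, the whole Proposition is now reduced to the operator identity $Q=-2\delta$ on $\Omega$.

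The next step is to see that $Q$, although a priori first order, is in fact a zeroth-order bundle map, so that it can be compared with $\delta$ fibrewise. By $G$-homogeneity it suffices to work at the base point $x_0=eP$. I would compute the principal symbol of $Q$ there: each summand contributes $\sigma_{L_{e_i}}(\xi)\,\iota_{e^i}$, and a nonzero contribution needs both $e_i^\sharp(x_0)\neq 0$ (so $e_i\in\mathfrak{n}^-$) and $e^{i\sharp}(x_0)\neq 0$ (so $e^i\in\mathfrak{n}^-$); but the Killing pairing forces $e^i\in\mathfrak{n}$ whenever $e_i\in\mathfrak{n}^-$, so every symbol term vanishes and $Q$ is zeroth order. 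Evaluating $Q\omega$ at $x_0$ then collapses to $\sum_i(L_{e_i^\sharp}\omega)(e^{i\sharp})|_{x_0}$, the remaining Cartan term $\sum_i\omega([e_i^\sharp,e^{i\sharp}])|_{x_0}=-\omega\big((\textstyle\sum_i[e_i,e^i])^\sharp\big)|_{x_0}$ vanishing once more by $\sum_i[e_i,e^i]=0$.

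Finally I would identify this fibrewise operator with $-2\delta$. Only indices with $e_i\in\mathfrak{n}$ (hence $e^i\in\mathfrak{n}^-$) survive, and because $e_i^\sharp(x_0)=0$ the surviving quantity is governed by the $1$-jet of the vertical field $e_i^\sharp$, i.e. by the isotropy representation $\mathfrak{p}\to\mathfrak{gl}(\mathfrak{g}/\mathfrak{p})$, $e_i\mapsto -\mathrm{ad}(e_i)\bmod\mathfrak{p}$. As $L_{e_i^\sharp}$ is a derivation of $\Omega$, this inserts the brackets $[e_i,\,\cdot\,]$ into each wedge factor, and summing over the dual pairs while projecting back onto the cotangent directions $\mathfrak{n}$ reproduces precisely the structure-constant expression defining the Lie algebra homology boundary $\delta$. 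On $1$-forms this specializes to $(Q\omega)(x_0)=-\sum_a\omega([Y_a,X^a]_{\mathfrak{n}^-})$, which vanishes because each $[Y_a,X^a]$ lies in $\mathfrak{l}$, matching $\delta|_{\Omega^1}=0$. The main obstacle is exactly this last identification for general form-degree: carefully extracting the zeroth-order part from the transverse derivatives of the vertical fundamental fields and pinning down the overall constant $-2$. The cleanest way to secure the constant is to compare the resulting algebraic operator with Kostant's computation of the Laplacian $d\delta+\delta d$ on $\bigwedge^\bullet\mathfrak{n}$ in terms of Casimir eigenvalues (the rank-one case $\mathfrak{g}=\mathfrak{sl}_2$, where $\delta\equiv 0$ and $Q\equiv 0$, already serves as a normalization check).
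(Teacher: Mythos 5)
A preliminary remark on the comparison itself: the paper offers no proof of this Proposition at all --- it is quoted from \v{C}ap and Sou\v{c}ek \cite{CS} --- so your argument has to stand or fall on its own merits. Its skeleton is sound, and the first three steps are correct: Cartan's formula and $[L_{\xi^\sharp},d]=0$ do give $\mathrm{Cas}=dQ+Qd$ with $Q=\sum_i L_{e_i^\sharp}\iota_{e^{i\sharp}}$; your justification of $\sum_i[e_i,e^i]=0$ is right; the principal symbol of $Q$ does vanish at $x_0$ in a basis adapted to $\mathfrak{g}=\mathfrak{n}^-\oplus\mathfrak{l}\oplus\mathfrak{n}$, so by $G$-invariance $Q$ is a bundle map of degree $-1$ and everything reduces to a computation on the fibre $\Lambda^\bullet\mathfrak{n}$ at $x_0$.

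The genuine gap is that this fibrewise identity $Q=-2\delta$ --- which is where the \emph{entire} content of the Proposition sits, including the constant --- is never established, and the device you propose for ``securing the constant'' cannot do so even in principle. In your normalization case $\mathfrak{g}=\mathfrak{sl}_2$ the algebra $\mathfrak{n}$ is one-dimensional, so $\delta\equiv 0$ \emph{and} $Q\equiv 0$: the check ``$0=c\cdot 0$'' is passed by every constant $c$, and Kostant's Laplacian degenerates in exactly the same way, so neither the factor $2$ nor its sign can be extracted from it. Any honest normalization requires $\mathfrak{n}$ nonabelian (the smallest case is $\mathfrak{g}=\mathfrak{sl}_3(\mathbf{R})$, where $\mathfrak{n}$ is a Heisenberg algebra). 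In fact no external comparison is needed, because your own reduction makes the remaining step elementary: with $\{X_a\}\subset\mathfrak{n}$, $\{Y_a\}\subset\mathfrak{n}^-$ Killing-dual bases, the fibrewise operator is $\sum_a\iota_{X_a^*}\circ\lambda(X_a)$, where $\lambda(X_a)$ is the derivation extension to $\Lambda^\bullet\mathfrak{n}$ of $\mp\mathrm{ad}(X_a)|_{\mathfrak{n}}$ and $X_a^*$ is the basis of $\mathfrak{n}^*$ dual to $\{X_a\}$. Expanding on a decomposable $W_1\wedge\cdots\wedge W_k$, three facts finish the proof: (i) the ``diagonal'' contractions carry the coefficient $\sum_a\langle X_a^*,[X_a,W_j]\rangle=-\mathrm{tr}_{\mathfrak{n}}(\mathrm{ad}\,W_j)$, which vanishes because $\mathrm{ad}(W_j)$ is nilpotent on $\mathfrak{n}$; (ii) the off-diagonal ones collapse via $\sum_a\langle X_a^*,W_i\rangle\,[X_a,W_j]=[W_i,W_j]$; (iii) the sign bookkeeping shows that each unordered pair $\{i,j\}$ is produced \emph{twice with the same sign}, which is precisely the origin of the factor $2$. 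None of (i)--(iii) appears in your proposal.

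A related point that the incomplete write-up cannot catch: the sign of $\delta$, viewed as an operator on $\Omega$, is odd in the Killing identification $T^*_{x_0}(G/P)\cong\mathfrak{n}$ (replacing $X\mapsto B(X,\cdot)$ by its negative flips $\delta$), and your intermediate formulas mix two conventions --- you take the isotropy linearization to be $-\mathrm{ad}(e_i)\bmod\mathfrak{p}$ while contracting against $+\bar Y_a$, whereas $Q$ itself is quadratic in the fundamental-field convention and thus insensitive to it only if the two choices are made consistently. Since nothing in the argument ever evaluates both sides on a case with $[\mathfrak{n},\mathfrak{n}]\neq 0$, the proposal as written cannot certify the constant $-2$, nor even distinguish it from $+2$; carrying out the computation in (i)--(iii) above, with one fixed set of conventions, is what closes the gap.
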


Let $\Omega_0$ be  kernel of the Casimir operator in $\Omega$, which is  a subcomplex of the de Rham complex.
 \begin{Thm}
\begin{enumerate}
\item[1)] $\Omega_0 ={\rm ker}(d\delta+\delta d)={\rm ker}\delta\cap{\rm ker}{\delta}d$. 
\item[2)] The  canonical injection $\Omega_0 \rightarrow\Omega$ induces an isomorphism in cohomology. 
\item[3)] The canonical map ${\rm ker}\delta\cap{\rm ker} \delta d\rightarrow {\rm ker}\delta/{\rm im}\delta$ is a $G$-equivariant isomorphism from $\Omega_0$ to the space of sections of the bundle ${\rm ker}\delta/{\rm im}\delta$, whose  fibers are the {\it homology groups} $H_k( {\mathfrak n}_x )$
  of the Lie algebra $T_x^*G/P={\mathfrak n}_x$.
 \item[4)] The complex $D$ transported from the complex $d$ on $\Omega_0$ is a complex of differential operators on the space of sections of the above bundle.
\end{enumerate}
\end{Thm}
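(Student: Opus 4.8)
The plan is to treat $\square:=d\delta+\delta d$ as a Hodge-type Laplacian for the (non-adjoint) pair $(d,\delta)$ and to feed in Kostant's algebraic Hodge theory for the fibrewise boundary operator $\delta$. By the \v{C}ap--Sou\v{c}ek identity that the Casimir equals $-2\square$, one gets $\Omega_0=\ker\square$ at once. Since $G/P=K/M$ is compact and $K$-homogeneous, I would fix a $K$-invariant fibre metric on $\bigwedge^\bullet T^*G/P$ and let $\delta^*$ be the fibrewise adjoint of the bundle map $\delta$; concretely $\delta^*_x$ is the Chevalley--Eilenberg differential computing the Lie algebra cohomology of $\mathfrak n_x$. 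Then $\Delta_{\mathfrak n}:=\delta\delta^*+\delta^*\delta$ is a non-negative self-adjoint bundle endomorphism, and Kostant's theorem provides the fibrewise orthogonal decomposition
$$\textstyle\bigwedge^k T_x^*G/P=\mathcal H_x\oplus\operatorname{im}\delta_x\oplus\operatorname{im}\delta_x^*,\qquad \mathcal H_x:=\ker\Delta_{\mathfrak n,x}=\ker\delta_x\cap\ker\delta_x^*,$$
together with $\ker\delta_x=\mathcal H_x\oplus\operatorname{im}\delta_x$ and the identification $\mathcal H_x\cong H_k(\mathfrak n_x)$. This already yields the fibre statement of part (3) and identifies $\ker\delta/\operatorname{im}\delta$ with the harmonic bundle $\mathcal H$.

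The crux is to pass from the algebraic $\Delta_{\mathfrak n}$ to $\square$. I would filter $\bigwedge^\bullet\mathfrak n$ by the height grading coming from $\mathfrak n=\bigoplus_{j>0}\mathfrak g_j$; with respect to it $\delta$ is grading-preserving, while the part of the de Rham $d$ preserving the grading is exactly the algebraic differential $\delta^*$ (the remaining terms strictly raise the filtration). Hence the operator induced by $\square$ on the associated graded is Kostant's Laplacian $\Delta_{\mathfrak n}$. Since $\Delta_{\mathfrak n}$ is non-negative with kernel $\mathcal H$, a standard filtration (spectral-sequence) argument lifts its fibrewise Hodge decomposition to a global splitting
$$\Omega=\ker\square\oplus\operatorname{im}\square,$$
on which $\square$ is invertible off $\ker\square$; write $G_0$ for the resulting Green operator, which commutes with $d$ and $\delta$ because $\square$ does. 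The inclusion $\ker\delta\cap\ker\delta d\subseteq\ker\square$ is the one-line check $\square\omega=d\delta\omega+\delta d\omega=0$, and the reverse inclusion (a $\square$-harmonic form satisfies $\delta\omega=0$ and $\delta d\omega=0$) falls out of the same filtration argument together with the positivity of $\Delta_{\mathfrak n}$; this establishes part (1).

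Part (2) is then formal. Because $\square$ commutes with $d$, the operator $h:=\delta G_0$ is a contracting homotopy on the complement: $dh+hd=\square G_0=\operatorname{id}$ on $\operatorname{im}\square$, so $(\operatorname{im}\square,d)$ is acyclic and the inclusion $(\Omega_0,d)\hookrightarrow(\Omega,d)$ is a quasi-isomorphism (note $d\,\Omega_0\subseteq\Omega_0$, since $\delta d\omega=0$ and $\delta d(d\omega)=0$, so $\Omega_0$ is genuinely a subcomplex). For part (3), the natural map $\Omega_0\to\Gamma(\ker\delta/\operatorname{im}\delta)$ is the fibrewise quotient $\omega\mapsto[\omega]$, manifestly $G$-equivariant; composed with the fibre isomorphism $\ker\delta/\operatorname{im}\delta\cong\mathcal H$ it is the harmonic projection $\Pi$. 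Injectivity holds because an element of $\Omega_0\subseteq\Gamma(\ker\delta)$ that is fibrewise in $\operatorname{im}\delta$ is orthogonal to $\mathcal H$ yet harmonic, hence $0$; surjectivity is furnished by the splitting operator $L:\Gamma(\mathcal H)\to\Omega_0$ built from $G_0$, the unique lift of a harmonic section to the $\square$-harmonic representative with prescribed harmonic part.

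Finally, transporting $d$ through the isomorphism $L:\Gamma(\mathcal H)\xrightarrow{\sim}\Omega_0$ defines $D:=\Pi\,d\,L$, automatically a complex since $d^2=0$ and $G$-equivariant since every ingredient is. The genuinely hard point --- the heart of the Bernstein--Gelfand--Gelfand construction --- is the assertion in part (4) that each $D$ is a \emph{differential} (not merely pseudodifferential) operator: $\Pi$ is an order-zero bundle map, but $L$ is built by inverting $\square$ and is a priori only pseudolocal. The resolution, following \v{C}ap--Slov\'ak--Sou\v{c}ek, is that the nilpotency of $\mathfrak n$ forces the iterative formula for $L$, and hence for $\Pi\,d\,L$, to terminate after finitely many steps, expressing $D$ as a finite $G$-invariant composition of invariant derivatives on $G/P$. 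This finiteness, rather than any soft Hodge-theoretic input, is what I expect to be the main obstacle to a clean self-contained proof.
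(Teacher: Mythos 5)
The first thing to say is that the paper itself contains \emph{no} proof of this theorem: it is a survey statement, placed immediately after the Casimir identity and attributed to \v{C}ap--Slov\'ak--Sou\v{c}ek \cite{CSS} and \v{C}ap--Sou\v{c}ek \cite{CS}, so your proposal can only be measured against that cited machinery. Measured that way, it is essentially a faithful reconstruction of it: fibrewise Kostant Hodge theory for $\delta$; the fact that $\delta$ preserves the height grading while the grading-preserving part of $d$ is the Lie algebra cohomology differential, so that $\mathrm{gr}(\square)$, $\square:=d\delta+\delta d$, is Kostant's Laplacian; inversion of $\square$ by a Neumann series which terminates because the height filtration is finite (which is exactly why the inverse, hence the splitting operator $L$ and the BGG operators $D=\Pi\, d\, L$, remain \emph{differential} operators); your singling out of part (4) as the real content is exactly right. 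One small caution: the harmonic subbundle $\mathcal H$ is only a $K$-subbundle, not a $G$-subbundle (the grading is not $P$-invariant), so for the $G$-equivariance in part (3) one must phrase everything through the quotient bundle ${\rm ker}\,\delta/{\rm im}\,\delta$, as you in fact do.

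The one step I would not let pass as written is ``a standard filtration (spectral-sequence) argument lifts the fibrewise Hodge decomposition to a global splitting $\Omega=\ker\square\oplus\operatorname{im}\square$''. For a general filtration-preserving operator whose associated graded is semisimple this is simply false (take $Te_1=e_2$, $Te_2=0$ on $\C^2$ filtered by $\C e_2$: then $\operatorname{gr}T=0$ is semisimple, yet $\ker T=\operatorname{im}T=\C e_2$), and $\Omega$ is infinite-dimensional besides, so no soft argument produces the splitting. What makes it true here is structure you already have in hand but do not use at this point: since $\delta^2=0$, one has $\square=\delta d$ on $\Gamma(\operatorname{im}\delta)$, whose graded part $\partial^*\partial$ is invertible on $\operatorname{im}\partial^*$ by Kostant, so the terminating Neumann series yields a differential operator $Q=\bigl(\square|_{\Gamma(\operatorname{im}\delta)}\bigr)^{-1}$. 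With $Q$ everything becomes explicit: injectivity of $\square$ on $\Gamma(\operatorname{im}\delta)$ applied to $\delta\omega$ and to $\delta d\omega$ gives part (1); the operator $P:=\operatorname{id}-dQ\delta-Q\delta d$ satisfies $\square P=P\square=0$ and $P=\operatorname{id}$ on $\ker\square$, and one checks $\operatorname{id}-P=\square\,(dQ^2\delta+Q^2\delta d)$, which in one stroke gives the splitting $\Omega=\ker\square\oplus\operatorname{im}\square$, the Green operator, the homotopy for part (2), and $L=P\circ(\text{any lift})$ for part (3). So the gap is reparable, and reparable by the very mechanism you invoke for part (4); but as stated, the splitting is asserted where the actual argument -- the heart of \cite{CSS} -- should be.
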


\begin{Rem} The adjoint action of $N_x$ on ${\mathfrak n}_x $ induces the identity on $H_k( {\mathfrak n}_x )$, a classical fact about Lie algebra homology. Therefore the BGG complex is defined on a space of sections of a bundle on $G/P$  
obtained from a representation of $P$ {\it which is trivial on the nilpotent normal subgroup $N$}, i.e. factors through $P/N=MA$. In the language of representation theory, it means that the representation involved in the BGG complex are finite sums of (non-unitary) principal series of $G$. 
\end{Rem}

\subsubsection {The model: $SO_0(n,1)$}

Let us now explain Kasparov's proof \cite {Kasparov88}  of the Connes-Kasparov conjecture with coefficients for the Lorentz groups $G=SO_0(2n+1,1)$. The flag manifold $G/P$ is the sphere $S^{2n}$, which is the boundary of the hyperbolic space of dimension $2n+1$. Because the nilpotent group $N$ is abelian, the operator $\delta$ is zero and the BGG complex is nothing but the de Rham complex. Kasparov 
constructs a Fredholm module representing the element $\gamma$ using the crucial fact that $G/P=S^{2n}$ carries 
 a $G$-invariant {\it conformal structure}.  Indeed, let us equip the sphere with its $K$-invariant metric. The action of $g\in G$ transforms the metric into its multiple by some scalar function 
$\lambda_g^2$. 
\begin {enumerate}
\item[1)]  We make the action of $G$ unitary by twisting the representation by a cocycle thanks to the conformal structure. More precisely, let $$\pi (g)\alpha=\lambda_g^{n-k}g^{-1*}\alpha.$$ The representation $\pi$ is unitary on the Hilbert space of $L^2$ forms of degree $k$.

\item[2)]We make the operator $d$ bounded by considering $F=d(1+\Delta )^{-1/2}$, where $\Delta=dd^*+d^*d$ is the Laplace-Beltrami operator. The bounded complex $F$ is no more $G$-invariant, but the natural action of $g\in G$ takes the zero order pseudodifferential operator $F$ to $\lambda_gF$ plus a negative order pseudodifferential operator, as easily seen at the principal symbol level. 

\item[3)]Combining the two preceeding items (and the fact that $F$ maps $k$-forms to $(k+1)$-forms) we easily see that the conjugate $\pi (g)F\pi(g)^{-1}$ equals $F$ plus a negative order pseudodifferential operator, hence the compactness of the 
commutator $[F,\pi (g)]$.
\end{enumerate}

The Fredholm module thus obtained is not quite the good one, since its index is 2 (the Euler characteristic of $S^{2n}$). To solve the problem,
 Kasparov cuts the complex in the middle: the group acts on the sphere $S^{2n}$ by conformal transformations and the Hodge $*$ operation on forms of degree $n$ is therefore $G$-invariant. The half-complex consists in taking the forms of degree $0$ to $n-1$, and in degree $n$ only half of them, those for which $*=i^n$. Then the index is 1. In the smallest dimension case $n=1$ ($G=PSL(2,{\bf C})$) it amounts to take the $\bar\partial$ operator instead of the $d$ operator on $S^2=P^1({\bf C})$.
The $G$-Fredholm module thus obtained represents the element $\gamma$ by proposition 5.10.
 
In \cite{Kasparov84}, the case of $SO_0(2n,1)$ was settled as a mere corollary of the case of $SO_0(2n+1,1)$. Indeed  $SO_0(2n, 1)$ is a subgroup of $SO_0(2n+1,1)$ and the element  $\gamma$ restricts to closed subgroups. However it was most interesting to treat the case of $SO_0(2n, 1)$ in itself before passing to the other rank one groups. The direct proof for $SO_0(2n, 1)$ has been treated by
Z. Chen in his thesis \cite{Chen}. The $G$-equivariant de Rham complex on $S^{2n-1}$ is again turned, thanks to the conformal structure,  into a $G$-Fredholm module, but this time the index is 0 (the Euler characteristic of $S^{2n-1}$). To get an Fredholm module of index 1, something new is needed, which has no analogue in the $SO_0(2n+1,1)$ case. One must use the $L^2$-cohomology of the hyperbolic space of dimension $2n$, i.e. the Hilbert space ${\cal H}^n$ of square integrable harmonic forms (which are of degree $n$), which is a sum of two discrete series of $G$. The truncated module (with index 1) is obtained by considering forms of degree $\leq n-1$, and completing by a map from $\Omega^{n-1}(S^{2n-1})$ to ${\cal H}^n$. For $n=1$, the map $\Omega^0(S^1)\rightarrow {\cal H}^1$ is just the composition of the classical Poisson transform with the de Rham differential. In general one must use P.-Y. Gaillard's Poisson transform for forms \cite{Gaillard}. One thus obtains an element of $KK_G({\bf C}, {\bf C})$ which is equal to $\gamma$ by proposition \ref{gammaflag}.

\subsubsection{Generalization to other rank one groups}

 The de Rham complex is replaced by the BGG complex on the flag manifold. This is done by P. Julg and G. Kasparov in \cite{Julg-Kasparov} for $G=SU(n,1)$ where the BGG complex is the so called Rumin complex associated to the $G$-invariant contact structure on $G/P=S^{2n-1}$, and for $Sp(n,1)$ or $F_{4(-20)}$ by Julg \cite{Julg}. In order to turn the BGG-complex into a $G$-Fredholm module, one has to replace, in the above $SO_0(n,1)$-picture, conformal structure by quasi-conformal structure: the tangent bundle has a $G$-equivariant subbundle $E$ of codimension 1, 3 or 7 respectively for $G=SU(n,1), Sp(n,1)$ or $F_{4(-20)}$, whose fiber $E_x$  at any point $x\in G/P$  is defined as the subspace of 
$T_xG/P={\mathfrak n}_x^*$  orthogonal to the subalgebra $[{\mathfrak n}_x,{\mathfrak n}_x]$  of the Lie algebra ${\mathfrak n}_x$. The $K$-invariant metric is no more conformal, but quasi-conformal in the following sense: consider the action of $G$ on the subbundle $E$ and on the quotient $TM/E$ (note that there is no invariant supplementary subbundle), then under $g\in G$ the metric on $E$ is multiplied by some scalar function $\lambda_g^2$, and  on the quotient $TM/E$ by $\lambda_g^4$. The action of $G$ on forms is not conformal, but  after passing to the $\delta$-homology $H_*( {\mathfrak n}_x )$ splits into conformal components. Such a splitting is defined by the weight, i.e. the action of the abelian group ${\bf R}_+^*$ seen as a subgroup of the quotient $P_x/N_x$. This is closely related to the splitting of the representation in the BGG complex into (non-unitary) principal series of $G$ mentioned in remark 5.15.
It follows that one can modify the action of $G$ into a unitary representation $\pi (g)$ on the space of $L^2$ sections of the BGG complex.

Exploring the analytical properties of the complex $D$ requires to replace the ordinary $K$-invariant Laplacian by the $K$-equivariant {\it sub-Laplacian} on $G/P$. Namely, $\Delta=-\sum X_i^2$ where the vector fields $X_j$ form at each point $x\in G/P$ an orthonormal basis (for a $K$-equivariant metric) of the subspace of $T_xG/P={\mathfrak n}_x^*$ orthogonal to the subalgebra $[{\mathfrak n}_x,{\mathfrak n}_x]$ of ${\mathfrak n}_x$. The operator $\Delta$ is not elliptic but hypoelliptic. It has a parametrix which is not a classical pseudodifferential operator, but belongs to a new pseudodifferential calculus in which Fourier analysis is replaced by  representation theory of nilpotent Lie groups. 
Such calculi have been constructed in special cases by Beals and Greiner \cite{BG} or by Christ, Geller, G\l owacki and Polin \cite{CGGP}. However what is needed here is the general construction, which seems to appear only in Melin's 1982 preprint \cite{Melin}, unfortunately very difficult to find. It is worth to mention that noncommutative geometry has motivated a revival of work on the subject, in particular the groupoid approach. The groupoid adapted to the situation has been constructed  by various authors: R. Ponge \cite{Ponge}, 
E. van Erp and R. Yuncken \cite{vE-Y} , see also \cite{vE-J}. The most beautiful construction of the groupoid using the functoriality of the deformation to the normal cone can be found in the recent thesis of O. Mohsen \cite{Moh}. The link between the groupoid and the pseudodifferential calculus is discussed in \cite{DS} and \cite{vE-Y19}.

The following theorem explains how to combine the sublaplacian and the weight grading to produce an element of $KK_G(C(G/P),{\bf C})$ out of the BGG complex. See \cite{Ru}, \cite{Julg} and \cite{DH}.

\begin {Thm} Let $\Delta^{W/2}$ be the pseudo-differential operator equal to the power $\Delta^{w/2}$ on the $w$ weight component of the BGG complex. Then the conjugate $F=\Delta^{W/2}D\Delta^{-W/2}$ is a bounded operator satisfying $F^2=0$ on the Hilbert space of $L^2$ sections of the BGG complex. The commutators $[F,f]$ and $[F,\pi (g)]$ are compact operators for any $f\in C(G/P)$ and $g\in G$. Moreover $F$ admits a parametrix, i.e. a bounded operator $Q$ such that $FQ+QF-1$, $Q^2$, as well as $[Q,f]$ and $[Q,\pi (g)]$ for $f\in C(G/P)$ and $g\in G$ are compact operators.
\end {Thm}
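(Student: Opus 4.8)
The plan is to carry out every estimate in the Heisenberg (osculating) pseudodifferential calculus adapted to the $G$-invariant filtration $E\subset T(G/P)$, rather than in the classical calculus; this is the calculus whose symbols are modelled on the representation theory of the nilpotent groups $N_x$, constructed by Melin and, in modern form, by van Erp--Yuncken and Mohsen. In this calculus a horizontal derivative (a section of $E$) has order $1$ and a transverse derivative (a section of $T(G/P)/E$) has order $2$, so the sub-Laplacian $\Delta$ is a hypoelliptic operator of order $2$ admitting a parametrix, and its complex powers $\Delta^{s}$ are well defined of order $2s$. The structural input is that the BGG arrow $D$ sending the weight-$w$ component of the homology bundle $\ker\delta/\operatorname{im}\delta$ to its weight-$w'$ component is a differential operator of Heisenberg order $w-w'$ (the weights being arranged to decrease along the complex). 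Hence on that component the weight-normalized conjugate has order $w'+\mathrm{ord}(D)-w=w'+(w-w')-w=0$, so $F=\Delta^{W/2}D\Delta^{-W/2}$ is of Heisenberg order $0$ and therefore bounded on $L^2$.

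The relation $F^2=0$ is purely formal: on the weight-$w$ summand the two inner factors $\Delta^{-W/2}\Delta^{W/2}$ telescope to the identity, so $F^2=\Delta^{W/2}D^2\Delta^{-W/2}=0$ because $D$ is a complex. For $[F,f]$ with $f\in C(G/P)$ I would first take $f$ smooth: the principal Heisenberg symbol of a function is the central multiplication by its value, hence commutes with the symbol of the order-$0$ operator $F$, so $[F,f]$ has Heisenberg order $\le -1$ and is compact. The case of continuous $f$ follows by uniform approximation, since $C^\infty(G/P)$ is dense in $C(G/P)$ and $f\mapsto[F,f]$ is bounded linear with norm controlled by $\|F\|$.

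The geometric heart is the compactness of $[F,\pi(g)]$, which I would reduce, via $[F,\pi(g)]=-(\pi(g)F\pi(g)^{-1}-F)\pi(g)$, to showing that $\pi(g)F\pi(g)^{-1}-F$ has negative Heisenberg order. Here one exploits the quasi-conformal covariance: under $g\in G$ the $K$-invariant sub-metric is scaled by $\lambda_g^2$ on $E$ and by $\lambda_g^4$ on $T(G/P)/E$, and the unitary twist built into $\pi$ (a power of the cocycle $\lambda_g$ chosen separately on each conformal weight component of $H_*(\mathfrak n_x)$) is designed precisely so that $g$ acts trivially on the principal Heisenberg symbol of $F$. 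Consequently $\pi(g)F\pi(g)^{-1}$ and $F$ share the same principal symbol, their difference has order $\le -1$ and is compact; the same symbol estimate, taken uniformly on compacta of $G$, yields the norm-continuity of $g\mapsto[F,\pi(g)]$.

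Finally, for the parametrix I would use $F^2=0=(F^*)^2$ and form the Heisenberg Laplacian $\square=FF^*+F^*F$, which commutes with $F$ and $F^*$ (indeed $F\square=FF^*F=\square F$). Because the BGG complex is exact at the level of principal symbols (Hodge theory for the model nilpotent groups), $\square$ is a hypoelliptic order-$0$ operator invertible modulo compacts; setting $Q=F^*\square^{-1}$ and using $\square^{-1}F\equiv F\square^{-1}$ and $\square^{-1}F^*\equiv F^*\square^{-1}$ modulo compacts, one gets $FQ+QF-1\equiv(FF^*+F^*F)\square^{-1}-1=\square\square^{-1}-1\equiv0$ and $Q^2\equiv(F^*)^2\square^{-2}=0$, while $[Q,f]$ and $[Q,\pi(g)]$ are compact since $Q$ is assembled from $F$, $F^*$ and the order-$0$ parametrix $\square^{-1}$, all of which have compact commutators with $f$ and $\pi(g)$. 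The main obstacle is analytic rather than formal: everything rests on the hypoelliptic calculus, so the genuine work is to construct the complex powers $\Delta^{W/2}$, to verify that the BGG arrows carry the asserted Heisenberg orders, and above all to prove the invertibility of $\square$ modulo compacts; the quasi-conformal covariance underlying the $\pi(g)$-commutator is exactly the point where the special rank-one geometry of $SU(n,1)$, $Sp(n,1)$ and $F_{4(-20)}$ is indispensable.
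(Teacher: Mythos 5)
Your proposal follows essentially the same route that the paper indicates for this theorem (which it states with references to Rumin, Julg and Dave--Haller rather than proving in the text): weight-normalization in the hypoelliptic Heisenberg calculus built from the osculating groups $N_x$, the quasi-conformal covariance ($\lambda_g^2$ on $E$, $\lambda_g^4$ on $T(G/P)/E$) cancelled by the cocycle twist in $\pi$ at the level of principal symbols, and a parametrix $Q=F^*\square^{-1}$ coming from a Rockland-type exactness of the symbol complex. You also correctly locate where the genuine analytic work lies (complex powers $\Delta^{W/2}$, the Heisenberg orders of the BGG arrows, invertibility of $\square$ modulo compacts), which is precisely what the cited references supply.
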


As above in the $SO_0(2n,1)$ case, one has to modify the complex in order to get a truncated complex of index 1 in $R(K)$. Then proposition 5.11 will ensure that its class in $KK_G({\bf C},{\bf C})$ is the element $\gamma$. Here again discrete series must be involved, namely 
 those entering the $L^2$-cohomology of the symmetric space $G/K$, i.e. the Hilbert space $\mathcal{H}^m$ of harmonic $L^2$ forms of degree $m={\dim G/K\over 2}$, namely $m=n$, $n$, $2n$ and $8$ respectively in the cases of  $SO_0(2n,1)$, $SU(n,1)$, $Sp(n,1)$ and $F_{4(-20)}$. Note that the Casimir operator vanishes on $\mathcal{H}^m$ (since the Casimir operator is equal to $-\Delta$ (the Laplace Beltrami operator) on $L^2(\Omega (G/K)$), allowing to build an adequate Poisson transform \cite{CHJ} sending the BGG component in degree $m$ to harmonic forms of degree $m$ in $G/K$. 
 The half complex is then obtained by taking the BGG complex up to degree $m-1$ and to complete by the composition of $D$ with  such a Poisson transform \cite{Julg}.
 
\subsubsection {Generalization to higher rank groups}
More difficult is the case where $G$ is a simple Lie group of real rank $\geq 2$. So far only the case of $SL(3,{\bf C})$ has been treated, by R. Yuncken \cite{Yuncken} who has been able to produce a $G$-Fredholm module representing $\gamma$  out of the BGG complex. Here the flag manifold $G/P$, where $P$ is the minimal parabolic of $G$ comes with two $G$-equivariant fibrations $G/P\rightarrow G/P_i$ ( $i=1,2$) onto smaller flag manifolds coresponding to $P_1$ and $P_2$ the two other parabolics containing $P$. The operators in the BGG complex turn out to be longitudinally elliptic differential operators along the fibers. Considering a class of pseudodifferential operators on multifiltered manifolds, and making an unexpected use of Kasparov's technical lemma yields a Fredholm module representing an element of $KK_G(C(G/P),{\bf C})$. Its index can be taken as 1 in $R(K)$ if one considers the holomorphic BGG complex (as in the $SL(2,{\bf C})$ case of \cite{Kasparov84}, where $d$ is replaced by $\bar\partial$). Its class in $KK_G({\bf C},{\bf C})$ is therefore $\gamma$ by proposition \ref{gammaflag}.

\section{Banach algebraic methods}\label{Banach}
As Julg pointed out in \cite{Julg97}, once non-unitary representations appear, one can no longer work with $C^*$-algebras but with more general topological algebras, for instance, Banach algebras. Unfortunately, Kasparov's KK-theory is a purely $C^*$-algebraic tool. However K-theory can be defined for all kind of topological algebras (see the appendix of \cite{Bost90} for the notion of good topological algebras for which the K-theory can be defined); consequently, one has to be able to work in a more flexible framework whose foundations were laid by Lafforgue.

\subsection{Lafforgue's approach}\label{Laff}

\subsubsection{Banach KK-theory}\label{KKban}

In \cite{LaffInv}, Lafforgue defined a bi-equivariant KK-theory, denoted by $KK^{\mathrm{ban}}$, for general Banach algebras. The basic idea to start with, was to define a group $KK^{\mathrm{ban}}_G(\C,\C)$, in the same way as Kasparov defined $KK_G(\C,\C)$, but where unitary representations on Hilbert spaces are replaced by isometric representations on Banach spaces and, therefore, replacing $C^*$-algebras by Banach algebras. More generally, Lafforgue defined a group $KK^{\mathrm{ban}}_{G,\ell}(\C,\C)$ using representations on Banach spaces that are not necessarily isometric but for which the growth is exponentially controlled by a length function on the group $G$. 

Knowing that the trivial representation is not isolated among representations on Banach spaces \footnote{See the discussion of strong property (T) in \ref{strong(T)}.}, Lafforgue was able to prove for a class of groups called $\mathcal{C'}$ in \cite{LaffInv}, which is contained in the class $\mathcal{C}$ and hence for which a $\gamma$-element has been constructed, that such a $\gamma$ is equal to $1$ in $KK^{\mathrm{ban}}_{G,\ell}(\C,\C)$. The class $\mathcal{C}'$ contains:
\begin{itemize}
\item semisimple real Lie groups and their closed subgroups;
\item simple algebraic groups over non-Archimedean local fields, and their closed subgroups;
\item hyperbolic groups.
\end{itemize}

The equality $\gamma=1$ in $KK^{\mathrm{ban}}_{G,\ell}(\C,\C)$ allowed Lafforgue to prove, for all groups in $\mathcal{C'}$, an analogue of the Baum-Connes conjecture where $C^*(G)$ is replaced by $L^1(G)$, which for general $G$ is a conjecture of Bost. More precisely, Lafforgue used his equivariant $KK^{\mathrm{ban}}$ to define a morphism 
$$\mu^A_{L^1}:K^{\mathrm{top}}_*(G,A)\to K_*(L^1(G,A)),$$ 
for all locally compact groups $G$ and all $G$-$C^*$-algebra $A$. \\
More precisely, Lafforgue used his equivariant $KK^{\mathrm{ban}}$ to define a morphism 
$$\mu^A_{L^1}:K^{\mathrm{top}}_*(G,A)\to K_*(L^1(G,A)),$$ 
for all locally compact groups $G$ and all $G$-$C^*$-algebras $A$. \\

Let us recall the important features of Lafforgue's Banach KK-theory that allow one to define the morphism $\mu^A_{L^1}$. If $A$ and $B$ are two Banach algebras endowed with an action of a locally compact group $G$ then there exists a descent map 
$$j_G^{L^1}:KK^{\mathrm{ban}}_G(A,B)\to KK^{\mathrm{ban}}(L^1(G,A),L^1(G,B)).$$
Unlike Kasparov's bivariant theory, Banach KK-theory does not have a product but nevertheless, it still acts on K-theory, i.e there is a morphism 
$$K_0(A)\times KK^{\mathrm{ban}}(A,B)\to KK^{\mathrm{ban}}(\C,B)$$
and for every Banach algebra $B$, the group $KK^{\mathrm{ban}}(\C,B)$ is isomorphic to $K_0(B)$. Consequently, following the Baum-Connes-Higson formulation of the conjecture and hence the construction of the assembly map (see \ref{official}), one gets, without too much effort, the morphism $\mu^A_{L^1}:K^{\mathrm{top}}_*(G,A)\to K_*(L^1(G,A)).$\\

Let us stress in addition that, unlike Hilbert spaces, Banach spaces are in general not self-dual; so to define the groups $KK^{\mathrm{ban}}(A,B)$ Lafforgue has to replace Hilbert modules by pairs of Banach modules together with a duality condition. For details, see Chapter 10 in Valette's book \cite{Valette02}.

\subsubsection{Bost conjecture and unconditional completions}\label{Bostcon}
Bost's conjecture (with coefficients) is stated as follows 
\begin{Conj}[Bost]
For all locally compact groups $G$ and for all $G$-$C^*$-algebras $A$ the map $\mu^A_{L^1}$ is an isomorphism.
\end{Conj}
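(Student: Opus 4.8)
The plan is to run Lafforgue's Banach $\gamma$-element machine in as much generality as possible, transposing the $C^*$-algebraic Dirac--dual-Dirac scheme of Section \ref{gamma} to the $L^1$-setting. Concretely, for a given $G$ I would first seek a proper $G$-$C^*$-algebra $B$ together with a Dirac element $\alpha\in KK_G(B,\C)$ and a dual-Dirac element $\beta\in KK_G(\C,B)$, forming $\gamma=\beta\otimes_B\alpha\in KK_G(\C,\C)$. The decisive gain in the Banach world is that, although $\gamma$ cannot equal $1$ in $KK_G(\C,\C)$ once $G$ is non-compact with property (T) (Corollary \ref{TnotKamen}), the trivial representation is no longer isolated among exponentially controlled Banach representations; so one may hope to prove $\gamma=1$ after pushing it forward along the natural map $KK_G(\C,\C)\to KK^{\mathrm{ban}}_{G,\ell}(\C,\C)$. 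For the base case of amenable or, more generally, Haagerup groups, one does not even need the Banach refinement: Theorem \ref{HigsonKasparov} already supplies $\gamma=1$ in $KK_G(\C,\C)$, hence a fortiori in $KK^{\mathrm{ban}}_{G,\ell}(\C,\C)$.

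Granting $\gamma=1$ in $KK^{\mathrm{ban}}_{G,\ell}(\C,\C)$, the conclusion would follow by transporting the commutative diagram in the proof of Theorem \ref{abstractgamma} into the Banach framework. Applying the descent $j^{L^1}_G:KK^{\mathrm{ban}}_G(A,B)\to KK^{\mathrm{ban}}(L^1(G,A),L^1(G,B))$ and the action of $KK^{\mathrm{ban}}$ on K-theory, I would factor the relevant endomorphisms of $K_*(L^1(G,A))$ and of $K^{\mathrm{top}}_*(G,A)$ through the row passing by $K_*(L^1(G,A\otimes B))$ and $K^{\mathrm{top}}_*(G,A\otimes B)$. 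Since $A\otimes B$ is again a proper $G$-$C^*$-algebra, the $L^1$-analogue of Theorem \ref{BCproper} (that $\mu^{A\otimes B}_{L^1}$ is an isomorphism, which for proper coefficients reduces by Morita-type arguments to crossed products by the compact stabilizers) makes the middle vertical arrow an isomorphism. The top composite being the identity then forces injectivity of $\mu^A_{L^1}$; the bottom composite being the identity, which is exactly the descended statement $\gamma=1$, forces surjectivity.

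The hard part --- and the reason the statement as worded is genuinely out of reach --- is twofold. First, there is no construction of a $\gamma$-element for an \emph{arbitrary} locally compact group: the Dirac--dual-Dirac data above rest on $G$ acting properly and isometrically on a symmetric space, a Bruhat--Tits building, or more generally a weakly bolic, weakly geodesic space, and no such geometry is available in general. Second, even when $\gamma$ exists, proving $\gamma=1$ in $KK^{\mathrm{ban}}_{G,\ell}$ is delicate and genuinely group-dependent: it proceeds through families of uniformly bounded, or slow exponential growth, representations deforming the trivial representation, and Lafforgue's strong property (T) (Section \ref{strong(T)}) shows that for higher-rank simple Lie groups \emph{no} such deformation can exist, so this precise route must fail there. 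Consequently the realistic outcome of this plan is exactly Lafforgue's theorem --- $\mu^A_{L^1}$ is an isomorphism for all groups in the class $\mathcal{C}'$ (semisimple real Lie groups and their closed subgroups, simple algebraic groups over non-Archimedean local fields and their closed subgroups, and hyperbolic groups), together with the Haagerup case. Establishing the conjecture for \emph{all} locally compact $G$ would require either a geometry-free, universal construction of $\gamma$, or an entirely different mechanism bypassing the $\gamma$-element altogether; I expect this to be the principal obstacle.
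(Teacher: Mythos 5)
The statement you were asked to prove is Bost's conjecture, which the paper records as genuinely open: it offers no proof, only the remarks that no counterexample is known and that Lafforgue established it for the class $\mathcal{C}'$. You were right not to manufacture a complete argument, and your sketch is the correct mechanism behind every known case: Banach descent $j^{L^1}_G$, the action of $KK^{\mathrm{ban}}$ on K-theory, factorization through proper coefficients $A\otimes B$, and the identity on the top and bottom rows coming from $\gamma=1$. This matches Theorem \ref{Unconditionnal} (note that the hypothesis there is $\gamma=1$ in $KK^{\mathrm{ban}}_{G,s\ell}(\C,\C)$ for \emph{all} $s>0$, not at a single length scale, since the homotopy must be realized with arbitrarily slow exponential growth), and your first stated obstacle --- no universal, geometry-free construction of a $\gamma$-element for arbitrary locally compact $G$ --- is indeed the honest reason the conjecture as worded is out of reach.

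Your second obstacle, however, contains a genuine error that contradicts your own conclusion. You assert that strong property (T) shows no deformation of $\gamma$ to $1$ can exist in $KK^{\mathrm{ban}}_{G,\ell}$ for higher-rank simple Lie groups, "so this precise route must fail there." But the class $\mathcal{C}'$ you correctly list at the end contains \emph{all} semisimple real Lie groups and their closed subgroups, higher rank included; Lafforgue proved $\gamma=1$ in $KK^{\mathrm{ban}}_{G,s\ell}(\C,\C)$ precisely for those groups, which is why the Bost conjecture (and, combined with property (RD), Baum--Connes without coefficients) is known for cocompact lattices in $SL_3(\R)$. Strong property (T), as in Definition \ref{StrongT} and Theorem \ref{StrongThigher}, concerns exponentially controlled representations on \emph{Hilbert} spaces, or on Banach spaces of nontrivial type; the Banach representations occurring in Lafforgue's homotopy live on spaces escaping that class, so the trivial representation genuinely ceases to be isolated there even in higher rank --- this is exactly the point of the footnote in subsection \ref{KKban}. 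What strong property (T) and its Schur variant actually obstruct is the \emph{subsequent} step: deforming $\gamma$ to $1$ through slow-exponential-growth Hilbert space representations, or passing from $L^1(G,A)$ (or an unconditional completion) to $C^*_r(G,A)$ in K-theory --- i.e., the Baum--Connes conjecture with coefficients, not the Bost conjecture. With that obstacle deleted, your plan correctly reproduces the state of the art: the realistic barrier to the full conjecture is solely the absence of a $\gamma$-element (and of the requisite norm estimates on $\C[\Gamma]$-parametrices) beyond $\mathcal{C}'$.
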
 

The moral is that when using representations with controlled growth on Banach spaces to construct an homotopy between a $\gamma$-element and $1$, as one gets out of the $C^*$-algebraic context, the K-theory that we are able to compute is the K-theory of a Banach algebra. In the case of the Bost conjecture, the Banach algebra that we get is $L^1(G)$. \\

There are two good things about the Bost conjecture, the first one is that it is easier to prove than the Baum-Connes conjecture (meaning that it has been proven by Lafforgue for a wide class of groups containing all semisimple Lie groups as well as their lattices) and no counter-example to the Bost conjecture is known, to the best of our knowledge. Secondly, the original map $\mu_{A,r}$ of Baum-Connes-Higson  factors through the map $\mu^B_{L^1}$ so that the following diagram is commutative :

$$\xymatrix{
K^{\mathrm{top}}_*(G,B)\ar[dr]_{\mu^B_{L^1}}\ar[r]^{\mu^B_r}&K_*(C^*_r(G,B))\\
&K_*(L^1(G,B))\ar[u]^{\iota_*},}
$$
where $\iota$ denotes inclusion $L^1(G,B)\to C^*_r(G,B)$. Therefore, if we take $G$ to be a group for which the Bost conjecture has been proven, for example a semisimple Lie group or a lattice in such a group, trying to prove the Baum-Connes conjecture for $G$ amounts to prove that $\iota_*$ is an isomorphism, in other words that $\iota$ induces an isomorphism in K-theory. \\

Unfortunately, the usual criteria to prove that the continuous inclusion of $L^1(G)$ in $C^*_r(G)$ induces an isomorphism in K-theory, is not true for most of the locally compact groups. For example, the algebra $L^1(G)$ is not stable under holomorphic calculus if $G$ is a non-compact semisimple Lie group \cite{Leptin-Poguntke79}. To illustrate this, let us recall the usual criterion to determine whether an injective morphism of Banach algebras induces an isomorphism at the level of their K-theory groups.

\begin{Def} 
Let $A$ and $B$ be two \emph{unital} Banach algebras and $\phi: A\to B$ is a morphism of Banach algebras between them. Then $\phi$ is called spectral, if for every $x\in A$ the spectrum of $x$ in $A$ equals the spectrum of $\phi(x)$ in $B$. It is called dense if $\phi(A)$ is dense in $B$.
\end{Def}
This terminology is taken from \cite{Nica2008}. When $\phi$ is injective, $A$ can be considered as a subalgebra of $B$. In this case, $A$ is said to be "{\it stable under holomorphic calculus in $B$}", because, for every $x\in A$ and every holomorphic function $f$ on a neighborhood of the spectrum of $x$ in $B$, the element $f(x)$ constructed using holomorphic functional calculus in $B$ belongs to $A$ (see \cite{Bost90}).\\

The theorem below is a classical result known as the {\it Density Theorem}; it is due to Swan and Karoubi (see \cite[Section 2.2 and 3.1]{Swan}, \cite[p. 109]{Karoubi}, \cite[Appendix 3]{ConThom} and \cite[Th\'eor\`eme A.2.1]{Bost90}).

\begin{Thm}
If $A$ and $B$ are two unital Banach algebras and $\phi: A\to B$ is dense and spectral morphism of Banach algebras then $\phi$ induces an isomorphism $\phi_*:K_*(A)\to K_*(B)$. 
\end{Thm}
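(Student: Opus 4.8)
The plan is to deduce everything from two elementary consequences of the hypotheses and then to run an approximation argument at the level of the general linear groups. First I would record that both properties pass to matrix algebras: the map $M_n(\phi)\colon M_n(A)\to M_n(B)$ is again dense, and it is again \emph{spectral}, i.e. $\sigma_{M_n(A)}(x)=\sigma_{M_n(B)}(M_n(\phi)(x))$ for every $x\in M_n(A)$. Density is immediate; the stability of the spectral condition (equivalently, of stability under holomorphic functional calculus) under matrix amplification is the one genuinely technical input, and I would invoke it as a known lemma (see the appendix of \cite{Bost90}). The two consequences I will use repeatedly are: (i) \emph{invertibility is reflected}, so that $x\in M_n(A)$ is invertible in $M_n(A)$ if and only if $\phi(x)$ is invertible in $M_n(B)$, whence $GL_n(A)=M_n(A)\cap GL_n(B)$; and (ii) the holomorphic functional calculi are compatible, $\phi(f(x))=f(\phi(x))$ whenever $f$ is holomorphic on a neighbourhood of $\sigma_{M_n(A)}(x)=\sigma_{M_n(B)}(\phi(x))$.

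The engine of the proof is an approximation lemma: for a compact space $X$ and a continuous map $g\colon X\to GL_n(B)$, density of $M_n(A)$ together with a partition of unity produces a continuous $a\colon X\to M_n(A)$ with $\sup_{x}\|a(x)-g(x)\|$ as small as we wish; by (i) and the openness of $GL_n(B)$ the map $a$ in fact takes values in $GL_n(A)$, and for a good enough approximation the straight-line homotopy $t\mapsto t\,a+(1-t)\,g$ stays inside $GL_n(B)$. Thus every continuous map into $GL_n(B)$ is, up to homotopy in $GL_n(B)$, a map into $GL_n(A)$, and the same scheme applied to paths and homotopies (approximating rel endpoints, resp.\ rel boundary) transports null-homotopies from $B$ back to $A$.

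With these tools the two K-groups are handled in parallel. For $K_1(A)=\pi_0(GL_\infty(A))$: surjectivity of $\phi_*$ is the case $X=\mathrm{pt}$ of the approximation lemma, and injectivity follows by approximating, rel endpoints, a path in $GL_\infty(B)$ joining $\phi(u)$ to $1$ by a path in $GL_\infty(A)$ joining $u$ to $1$. For $K_0$ I would argue with idempotents: given an idempotent $e\in M_n(B)$, approximate it by $\phi(a)$ with $a\in M_n(A)$; then $\sigma_{M_n(A)}(a)=\sigma_{M_n(B)}(\phi(a))$ avoids the line $\mathrm{Re}=\tfrac12$, so applying (ii) to the holomorphic function $f$ equal to $1$ near the part of the spectrum with real part $>\tfrac12$ and to $0$ elsewhere yields an idempotent $\tilde e=f(a)\in M_n(A)$ with $\phi(\tilde e)=f(\phi(a))$ close to $e$, whence $[\phi(\tilde e)]=[e]$ and surjectivity follows. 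Injectivity comes from transporting, via the approximation lemma, a conjugacy or homotopy of idempotents realising $[\phi(p)]=[\phi(q)]$ from $GL_\infty(B)$ back to $GL_\infty(A)$. By Bott periodicity there is nothing further to check, so $\phi_*\colon K_*(A)\to K_*(B)$ is an isomorphism.

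The step I expect to be the main obstacle is the clean execution of the relative approximation arguments — ensuring that paths, homotopies, and null-homotopies can be pushed from $B$ into $A$ while remaining inside the invertibles and respecting the prescribed endpoints — together with the matrix amplification of the spectral hypothesis, which is exactly what makes holomorphic functional calculus available on $M_n(A)$ in the first place.
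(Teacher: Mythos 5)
The paper does not prove this theorem at all: it states it as the classical Density Theorem and simply refers to \cite{Swan}, \cite{Karoubi}, the appendix of \cite{ConThom} and Th\'eor\`eme A.2.1 of \cite{Bost90}, so there is no in-paper argument to compare yours against; what you have written is, in substance, the standard proof from those references, and it is correct. Two comments on where the weight actually lies. First, you are right to single out the matrix amplification of the spectral hypothesis as the genuinely technical input; it is proved in the appendix of \cite{Bost90}, and invoking it is legitimate, but note it is not a formality: without it your consequence (i), that invertibility in $M_n(B)$ of $\phi(x)$ forces invertibility of $x$ in $M_n(A)$, is unavailable for $n\geq 2$. Second, the phrase ``approximating rel endpoints'' conceals the one real trap of this argument: closeness of $\phi(a)$ to $\phi(u)$ in $B$ does \emph{not} give closeness of $a$ to $u$ in $A$ (the morphism is not assumed injective, let alone bounded below), so endpoints cannot be repaired after the fact by a norm estimate in $A$. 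The correct execution, which your toolbox does support, is to insert the exact lifts at the endpoints when building the relative approximation: for a path $g$ in $GL_n(B)$ with $g(0)=\phi(u)$ and $g(1)=1$, take the approximants at the two endpoint nodes of the partition of unity to be $u$ and $1$ themselves, so that $a(0)=u$ and $a(1)=1$ hold on the nose while $\phi\circ a$ stays uniformly close to $g$; invertibility of each $a(t)$ then comes from your point (i), never from an estimate in $A$. The same device settles $K_0$-injectivity: approximate a path of idempotents from $\phi(p)$ to $\phi(q)$ rel endpoints by a path $a(t)$ in $M_m(A)$ and apply your holomorphic calculus $f$, noting that $f(a(0))=f(p)=p$ and $f(a(1))=f(q)=q$ because $p,q$ are already idempotents; alternatively, approximate a conjugating invertible $g$ satisfying $g\phi(p)g^{-1}=\phi(q)$ by some $\phi(a)$ with $a\in GL_m(A)$, and check that $z=(apa^{-1})q+\bigl(1-apa^{-1}\bigr)(1-q)\in M_m(A)$ has $\phi(z)$ close to $1$, hence is invertible in $M_m(A)$ by (i) and conjugates $q$ into $apa^{-1}$. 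With these details made explicit, your argument is complete and agrees with the proof in the cited literature.
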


What Bost noticed is that the condition of been spectral is, somehow, too strong: if $\phi$ is spectral it induces \emph{strong isomorphisms} in K-theory :

\begin{Def}{\cite{Bost90}}\label{Bost90Strong}
An injective morphism between two unital Banach algebras $\phi: A\to B$, induces a strong isomorphism in K-theory if for every $n\geq 1$ the maps
$$M_n(\phi):P_n(A)\to P_n(B)\quad\text{and}\quad GL_n(\phi): GL_n(A)\to GL_n(B),$$
induced by $\phi$ are homotopy equivalences. 

\end{Def} 
Here for an algebra $A$ and for an integer $n$, we denote by $M_n(A)$ the set of $n\times n$ matrices with coefficients in $A$ and $P_n(A)=\{p\in M_n(A)\mid p^2=p\}$ is the set of idempotent matrices.\\
If the maps $M_n(\phi)$ and $GL_n(\phi)$ above are homotopy equivalences then the morphism induced by $\phi$, say $\phi_*:\mathcal{P}(A)\to\mathcal{P}(B)$ is an isomorphism (where $\mathcal{P}(A)$ denotes the semi-group of isomorphism classes of projective $A$-modules of finite type). 
This is stronger than inducing an isomorphism in K-theory. \\

The next example shows that, because of this strength, it is not easy to pass from Bost conjecture to the Baum-Connes conjecture.

\begin{Ex} Set $G=SL_2(\R)$. Then $G$ has two representations in the discrete series (i.e. square integrable representations) that are not integrable (i.e the matrix coefficients do not  belong to $L^1(G)$) and are therefore not isolated in the dual space of $L^1(SL_2(\R))$. This implies that the idempotent of $C^*_r(SL_2(\R))$ associated to one of these discrete series (which is equal to a matrix coefficient) does not belong to $L^1(SL_2(\R))$; hence, if $\pi_0$ denotes the set of connected components, the map from $\pi_0\Big(P_1(L^1(SL_2(\R))\Big)\to \pi_0\Big(P_1(C^*_r(SL_2(\R))\Big)$, which is induced by $\iota$, is not surjective. Therefore applying what is known for the Bost conjecture to the Baum-Connes conjecture is not in any case automatic. 
\end{Ex}

Fortunately, Lafforgue's proof of the Bost conjecture can actually be used to compute the K-theory of a class of Banach algebras more general than $L^1(G)$ called \emph{unconditional completions} of $C_c(G)$.

\begin{Def}
Let $G$ be a locally compact group. A Banach algebra completion $\mathcal{B}(G)$ of $C_c(G)$ is called \emph{unconditional} if the norm $\|f\|_{\small\mathcal{B}(G)}$ only  depends on the map $g\mapsto |f(g)|$, i.e. for $f_1,f_2\in C_c(G)$, $\|f_1\|_{\small\mathcal{B}(G)}\leq \|f_2\|_{\small\mathcal{B}(G)}$ if $|f_1(g)|\leq |f_2(g)|$ for all $g\in G$. 
\end{Def}

\begin{Ex}
For a locally compact group $G$, the algebra $L^1(G)$ is an unconditional completion of $C_c(G)$. 
\end{Ex}

\begin{Ex}\label{schwartz}
If $G$ is a connected semisimple Lie group and $K$ is a maximal compact subgroup, let $t\in \R^+$ and let $\mathcal{S}_t(G)$ be the completion of $C_c(G)$ for the norm given by :
$$\|f\|_{\small{\mathcal{S}_t(G)}}=\sup\limits_{g\in G} |f(g)|\phi(g)^{-1}(1+d(g))^t,$$
where $\phi$ is the Harish-Chandra function on $G$ (see Chapter 4 in \cite{Knapp01}) and for $g\in G$, $d(g)$ is the distance of $gK$ to the origin in $G/K$. Then, for $t$ large enough, $\mathcal{S}_t(G)$ is an unconditional completion (see Section 4 in \cite{LaffInv}).
\end{Ex}

Another important example of unconditional completions appears in connexion with the rapid decay property, to be discussed in subsection \ref{(RD)} below.

Inspired by the definitions of the algebras $L^1(G,A)$, one can define analogues of crossed products in the context of Banach algebras using unconditional completions as follows: if $A$ is a $G$-$C^*$-algebra and $\mathcal{B}(G)$ is an unconditional completion of $C_c(G)$, we define the algebra $\mathcal{B}(G,A)$ as the completion of $C_c(G,A)$ for the norm $$\|f\|_{\small\mathcal{B}(G,A)}=\big\|g\mapsto\|f(g)\|_A\big\|_{\small\mathcal{A}(G)}.$$
For all locally compact group $G$, all $G$-$C^*$-algebra $A$ and all unconditional completions $\mathcal{B}(G)$ Lafforgue used his Banach KK-theory to construct a morphism  $$\mu^A_{\mathcal{B}(G)}:K_{*}^{\mathrm{top}}(G,A)\to K_{*}(\mathcal{B}(G,A)).$$

He then obtained an analogue of the "Dirac-dual Dirac method" in this context :

\begin{Thm}[Lafforgue]\label{Unconditionnal}
If the group $G$ has a $\gamma$-element in $KK_G(\C,\C)$ and if there exists a length function $\ell$ on $G$, such that, for all $s>0$, $\gamma=1$ in $KK_{G,s\ell}^{\mathrm{ban}}(\C,\C)$, then $\mu^A_{\mathcal{B}(G)}$ is an isomorphism for all unconditional completions $\mathcal{B}(G)$ and for all $G$-algebras $A$.
\end{Thm}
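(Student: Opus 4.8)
The plan is to transport into Lafforgue's Banach framework the diagram-chase that underlies Tu's Theorem \ref{abstractgamma}, replacing the reduced crossed product $C^*_r(G,-)$ everywhere by the unconditional completion $\mathcal{B}(G,-)$. Writing the hypothesised $\gamma$-element as $\gamma=\beta\otimes_B\alpha$ with $B$ a proper $G$-$C^*$-algebra, $\alpha\in KK_G(B,\C)$ and $\beta\in KK_G(\C,B)$, I would push $\tau_A(\beta)$ and $\tau_A(\alpha)$ into Banach $KK$-theory, apply Lafforgue's descent $j^{\mathcal{B}}_G$ to land in $KK^{\mathrm{ban}}(\mathcal{B}(G,-),\mathcal{B}(G,-))$, and let these classes act on $K$-theory to form the bottom row of the commutative diagram
$$\xymatrix{ K^{top}_*(G,A)\ar[rr]^{\scriptscriptstyle\otimes_A \tau_A(\beta)}\ar[d]^{\mu^A_{\mathcal{B}(G)}}& & K^{top}_*(G,A\otimes B)\ar[rrr]^{\scriptscriptstyle{\otimes_{A\otimes B}}\tau_{A}(\alpha)}\ar[d]^{\mu^{A\otimes B}_{\mathcal{B}(G)}}_{\simeq}&&& K^{top}_*(G,A)\ar[d]^{\mu^A_{\mathcal{B}(G)}} \\
K_*(\mathcal{B}(G,A))\ar[rr]^{\scriptscriptstyle j^{\mathcal{B}}_G(\tau_A(\beta))}&  & K_*(\mathcal{B}(G,A\otimes B))\ar[rrr]^{\scriptscriptstyle j^{\mathcal{B}}_G(\tau_A(\alpha))}&& &K_*(\mathcal{B}(G,A)),}
$$
whose commutativity is the Banach counterpart of the naturality of the assembly map under Kasparov products.

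The heart of the argument is then to exploit three facts. First, since $A\otimes B$ is again a proper $G$-$C^*$-algebra, the middle vertical arrow $\mu^{A\otimes B}_{\mathcal{B}(G)}$ is an isomorphism; this is the unconditional analogue of Theorem \ref{BCproper}, which Lafforgue establishes for proper coefficients and every unconditional completion. Second, the top composite $\otimes_{A\otimes B}\tau_A(\alpha)\circ\otimes_A\tau_A(\beta)$ is the action of $\gamma$ on $K^{top}_*(G,A)$, and a $\gamma$-element restricts to $1$ on every compact subgroup, hence acts as the identity on the left-hand side (as noted after the definition of a $\gamma$-element). The top composite being the identity makes the top-left arrow split injective; combined with the commutativity of the left square and the isomorphism $\mu^{A\otimes B}_{\mathcal{B}(G)}$, this forces $\mu^A_{\mathcal{B}(G)}$ to be injective. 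Third, the hypothesis $\gamma=1$ in $KK^{\mathrm{ban}}_{G,s\ell}(\C,\C)$ for all $s>0$ is exactly what makes the \emph{bottom} composite equal to the identity: it is the action of the Banach class of $\gamma$ on $K_*(\mathcal{B}(G,A))$, hence of $1$. Then the bottom-right arrow is surjective, and commutativity of the right square together with $\mu^{A\otimes B}_{\mathcal{B}(G)}$ being an isomorphism forces $\mu^A_{\mathcal{B}(G)}$ to be surjective. Quantifying over all $s>0$ is precisely what allows a single argument to cover every unconditional completion $\mathcal{B}(G)$, whose built-in exponential growth is controlled by some multiple of $\ell$.

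The main obstacle is not this formal diagram-chase, which is identical to the $C^*$-algebraic one, but the verification of the Banach ingredients that feed it. One must have at hand the descent map $j^{\mathcal{B}}_G$ into the Banach $KK$-theory of unconditional crossed products, prove its compatibility with the construction of $\mu^A_{\mathcal{B}(G)}$ so that both squares commute, and establish the proper-coefficient isomorphism for $\mu^{A\otimes B}_{\mathcal{B}(G)}$. The genuinely delicate step is that $KK^{\mathrm{ban}}$ carries \emph{no} general Kasparov product, so the identity ``bottom composite $=$ action of $\gamma$'' cannot be read off by composing two $KK^{\mathrm{ban}}$-classes. Instead one must check directly that the composite of the two $K$-theory maps induced by $j^{\mathcal{B}}_G(\tau_A(\beta))$ and $j^{\mathcal{B}}_G(\tau_A(\alpha))$ coincides with the map induced by the Banach image of the $C^*$-product $\gamma=\beta\otimes_B\alpha$, and only then invoke $\gamma=1$ in $KK^{\mathrm{ban}}_{G,s\ell}$. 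Making this identification valid uniformly over arbitrary unconditional completions is what pins down the need for the equality $\gamma=1$ at every exponential growth rate $s\ell$.
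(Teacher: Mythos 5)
Your skeleton --- transplanting the proof of Theorem \ref{abstractgamma} to the Banach setting, with a descent map into $KK^{\mathrm{ban}}$ of unconditional crossed products, naturality of $\mu_{\mathcal{B}(G)}$ under the action of $C^*$-classes, and a proper-coefficient isomorphism playing the role of Theorem \ref{BCproper} --- is indeed the architecture of Lafforgue's argument (the present survey states the theorem without proof, referring to \cite{LaffInv}), and your injectivity half is sound. But the surjectivity half has a genuine gap, concentrated in the sentence claiming that the hypothesis ``$\gamma=1$ in $KK^{\mathrm{ban}}_{G,s\ell}(\C,\C)$ for all $s>0$'' is exactly what makes the bottom composite equal to the identity. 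The hypothesis cannot be used this way: an element of $KK^{\mathrm{ban}}_{G,s\ell}$ is represented by modules whose $G$-representation grows like $e^{s\ell}$, and such a class has no reason to descend to an \emph{endomorphism} of $K_*(\mathcal{B}(G,A))$; for $\mathcal{B}(G)=L^1(G)$ it does not, for any $s>0$, since integrating a representation of growth $e^{s\ell}$ against the $L^1$-norm does not give bounded operators. What the descent at growth rate $s\ell$ actually produces is a map $K_*(\mathcal{B}_{s\ell}(G,A))\rightarrow K_*(\mathcal{B}(G,A))$, where $\mathcal{B}_{s\ell}(G)$ is the \emph{weighted} unconditional completion with norm $f\mapsto\Vert fe^{s\ell}\Vert_{\mathcal{B}(G)}$, and the homotopy $\gamma\sim 1$ at scale $s$ only yields that the map induced by $\gamma$ from the weighted algebra coincides with the map induced by the inclusion $\mathcal{B}_{s\ell}(G,A)\hookrightarrow\mathcal{B}(G,A)$.

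To close the argument you therefore need the ingredient you omitted: the images of $K_*(\mathcal{B}_{s\ell}(G,A))\rightarrow K_*(\mathcal{B}(G,A))$ exhaust $K_*(\mathcal{B}(G,A))$ as $s\rightarrow 0$. This is proved by a density-plus-spectral-radius argument (for $f\in C_c(G,A)$ one shows $\rho_{\mathcal{B}_{s\ell}}(f)\rightarrow\rho_{\mathcal{B}}(f)$ as $s\rightarrow 0$, using subadditivity of $\ell$, and then applies holomorphic functional calculus); it is exactly the mechanism of the Higson--Lafforgue lemma recorded in section \ref{slowgrowth}, and it is the analytic heart of Lafforgue's proof. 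Once it is available, your chase is repaired: the bottom composite, precomposed with the inclusion-induced map from $K_*(\mathcal{B}_{s\ell}(G,A))$, equals that inclusion-induced map (this is where your product-compatibility caveat gets used, applied to the $C^*$-classes $\alpha,\beta$ and the weighted descent of $\gamma$), and exhaustion as $s\rightarrow 0$ upgrades this to the statement that the bottom composite is the identity. This also corrects your reading of the quantifier: one needs $\gamma=1$ at \emph{every} rate $s\ell$ not in order to ``cover every unconditional completion, whose built-in exponential growth is controlled by some multiple of $\ell$'' --- unconditional completions carry no such growth bound, and no single $s$ works even for $L^1(G)$ --- but in order to run the limit $s\rightarrow 0$ for each fixed $\mathcal{B}(G)$.
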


Lafforgue proved the equality $\gamma=1$ in $KK_{G,s\ell}^{\mathrm{ban}}(\C,\C)$ for all groups in the class $\mathcal{C'}$ (see \cite[Introduction]{LaffInv}). All real semisimple Lie groups and all $p$-adic reductive Lie groups as well as their closed subgroups, all discrete groups acting properly, cocompactly, continuously and by isometries on a $CAT(0)$ space and all hyperbolic groups belong to this class. For all these groups $G$ and all $G$-algebras $A$ the map $\mu^A_{\mathcal{B}(G)}$ is an isomorphism and hence the Bost conjecture holds (see \cite{LaffInv}). For a nice expository explanation on how the homotopy between $\gamma$ and $1$ is constructed using Banach representations, see \cite{Skandalis02} where the combinatorial case is explained in details, i.e the case containing $p$-adic groups.\\

\subsubsection{Application to the Baum-Connes conjecture}

Let $\mathcal{B}(G)$ be an unconditional completion of $C_c(G)$ that embeds in $C^*_r(G)$. In that case, the Baum-Connes map $\mu_r$ factors through $\mu_{\mathcal{B}(G)}$ so that the following diagram is commutative 

$$\xymatrix{ K^{\mathrm{top}}_*(G)\ar[rr]^{\mu_{\mathcal{B}(G)}}\ar[drr]_{\scriptstyle{\mu_r}} & &K_*(\mathcal{B}(G))\ar[d]^{i_*} \\
&&K_*(C^*_r(G)) },$$
where $i_*$ is the inclusion map induced by the map $i : \mathcal{B}(G)\to C^*_r(G)$. \\

\begin{Prop}
Let $G$ be a group in Lafforgue's class $\mathcal{C'}$. Suppose there exists an unconditional completion $\mathcal{B}(G)$ which is a dense subalgebra stable under holomorphic calculus in $C^*_r(G)$. Then the Baum-Connes assembly map $\mu_r$ is an isomorphism.
\end{Prop}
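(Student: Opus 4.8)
The plan is to read off the proof from the commutative triangle displayed just before the statement, by exhibiting $\mu_r$ as a composite of two isomorphisms. That triangle says precisely that $\mu_r = i_*\circ\mu_{\mathcal{B}(G)}$, where $i_*:K_*(\mathcal{B}(G))\to K_*(C^*_r(G))$ is induced by the inclusion $i:\mathcal{B}(G)\to C^*_r(G)$. So it suffices to prove separately that $\mu_{\mathcal{B}(G)}$ is an isomorphism and that $i_*$ is an isomorphism, and then compose.

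For the first factor I would invoke Lafforgue's Theorem \ref{Unconditionnal}. Membership of $G$ in the class $\mathcal{C'}$ supplies exactly its hypotheses: since $\mathcal{C'}\subset\mathcal{C}$, the group $G$ admits a $\gamma$-element in $KK_G(\C,\C)$, and by Lafforgue's theorem there is a length function $\ell$ with $\gamma=1$ in $KK^{\mathrm{ban}}_{G,s\ell}(\C,\C)$ for every $s>0$. Applying Theorem \ref{Unconditionnal} with coefficient algebra $\C$ and the given unconditional completion $\mathcal{B}(G)$ then yields that
$$\mu_{\mathcal{B}(G)}:K^{\mathrm{top}}_*(G)\to K_*(\mathcal{B}(G))$$
is an isomorphism. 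This step carries all the genuine analytic content of the argument, namely the Banach $KK$-theoretic homotopy realizing $\gamma=1$ with exponentially controlled growth, but it is available to us as a quoted theorem.

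For the second factor I would use the hypothesis that $\mathcal{B}(G)$ is a dense subalgebra of $C^*_r(G)$ stable under holomorphic functional calculus. Stability under holomorphic calculus makes the inclusion a dense spectral morphism, so the Density Theorem (Swan--Karoubi, in the form recalled above) gives that $i_*$ is an isomorphism in K-theory. The one point requiring care is that $\mathcal{B}(G)$ and $C^*_r(G)$ are non-unital as soon as $G$ is non-compact, whereas the Density Theorem is stated for unital algebras; I would therefore apply it after adjoining units, checking that density and spectrality pass from $i$ to the unitalized inclusion $\mathcal{B}(G)^+\hookrightarrow C^*_r(G)^+$ (they do, since adjoining a scalar unit does not alter spectra of non-scalar elements and preserves the holomorphic-calculus stability), and then descend the isomorphism to the non-unital algebras via the augmentation splitting defining $K_*$.

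Combining the two isomorphisms with the factorization $\mu_r=i_*\circ\mu_{\mathcal{B}(G)}$ shows that $\mu_r$ is an isomorphism. The assembly is essentially formal once the two inputs are in place, so the real difficulty of the statement is not in this proof but entirely inside Theorem \ref{Unconditionnal}; the only genuinely delicate hypothesis left to the user of the proposition is the \emph{existence} of an unconditional completion $\mathcal{B}(G)$ that is stable under holomorphic calculus inside $C^*_r(G)$, a property that, as the preceding $SL_2(\R)$ example warns, typically fails for $L^1(G)$ and requires a rapid-decay type completion.
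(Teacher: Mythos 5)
Your proof is correct and follows exactly the route the paper intends: the factorization $\mu_r=i_*\circ\mu_{\mathcal{B}(G)}$ from the displayed commutative triangle, Theorem \ref{Unconditionnal} (whose hypotheses are supplied by membership in $\mathcal{C}'$) for the first factor, and the Swan--Karoubi Density Theorem for the second. The paper leaves this argument implicit rather than writing it out, so your only addition is the careful unitalization step needed to apply the Density Theorem to non-unital algebras, which is a standard point handled correctly.
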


Using Example \ref{schwartz}, we can state the first result of Lafforgue concerning connected Lie groups (see the discussion in subsection \ref{Trichotomy} regarding those groups)

\begin{Thm}\label{LafforgueSS}[Lafforgue]
Let $G$ be a connected semisimple Lie group. Then Conjecture \ref{ConjBC} (without coefficients) is true for $G$.
\end{Thm}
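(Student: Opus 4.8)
The plan is to realize the K-theory of $C^*_r(G)$ through an unconditional completion whose assembly map is already known to be an isomorphism, and then to transport that isomorphism along a holomorphically stable inclusion. Concretely, the argument is designed to run through the preceding Proposition, so the entire task reduces to exhibiting one unconditional completion $\mathcal{B}(G)$ of $C_c(G)$ that sits as a dense subalgebra of $C^*_r(G)$ stable under holomorphic functional calculus.

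First I would record that a connected semisimple Lie group $G$ belongs to Lafforgue's class $\mathcal{C}'$, so by Lafforgue's theorem (stated just after Theorem \ref{Unconditionnal}) it possesses a $\gamma$-element and admits a length function $\ell$ --- for instance $\ell(g)=d(gK,eK)$, the distance in $G/K$ --- with $\gamma=1$ in $KK^{\mathrm{ban}}_{G,s\ell}(\C,\C)$ for every $s>0$. Theorem \ref{Unconditionnal} then applies and shows that for \emph{every} unconditional completion $\mathcal{B}(G)$ the Banach assembly map $\mu_{\mathcal{B}(G)}:K^{\mathrm{top}}_*(G)\to K_*(\mathcal{B}(G))$ is an isomorphism.

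Next I would take $\mathcal{B}(G)=\mathcal{S}_t(G)$ with $t$ large, as in Example \ref{schwartz}, so that $\mathcal{S}_t(G)$ is genuinely an unconditional completion. Using the Harish-Chandra estimate for the spherical function $\phi$ together with the growth $e^{2\langle\rho,\cdot\rangle}$ of the Haar density in polar coordinates, one checks that for $t$ large enough every $f\in\mathcal{S}_t(G)$ defines a bounded convolution operator on $L^2(G)$; this realizes $\mathcal{S}_t(G)$ as a dense subalgebra of $C^*_r(G)$ and makes the triangle relating $\mu_{\mathcal{S}_t(G)}$, $\mu_r$ and the induced map $i_*:K_*(\mathcal{S}_t(G))\to K_*(C^*_r(G))$ commute.

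The hard part is the remaining analytic input: showing that the inclusion $\mathcal{S}_t(G)\hookrightarrow C^*_r(G)$ is a \emph{spectral} (dense) morphism in the sense of the Density Theorem above, i.e. that $\mathcal{S}_t(G)$ is stable under holomorphic functional calculus. This is precisely where Harish-Chandra's fine analysis of the Schwartz space enters: one must show that if $f\in\mathcal{S}_t(G)$ and $\lambda$ lies outside the spectrum of $f$ in $C^*_r(G)$, then the resolvent $(\lambda-f)^{-1}$, a priori merely an element of $C^*_r(G)$, again satisfies the $\mathcal{S}_t$-decay estimate, so that the spectra of $f$ in $\mathcal{S}_t(G)$ and in $C^*_r(G)$ agree. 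Granting this stability, the Density Theorem gives that $i_*$ is an isomorphism. Since $i_*\circ\mu_{\mathcal{S}_t(G)}=\mu_r$ and both $i_*$ and $\mu_{\mathcal{S}_t(G)}$ are isomorphisms, $\mu_r$ is an isomorphism; equivalently, $\mathcal{S}_t(G)$ verifies the hypotheses of the preceding Proposition, which delivers Conjecture \ref{ConjBC} for $G$.
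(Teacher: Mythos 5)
Your proposal is correct and follows essentially the same route as the paper: the paper's proof consists exactly of invoking the preceding Proposition with the Schwartz-type unconditional completion $\mathcal{S}_t(G)$ of Example \ref{schwartz}, for $t$ large enough, citing Section 4 of \cite{LaffInv} for the fact that it is dense and stable under holomorphic calculus in $C^*_r(G)$. Your elaboration of the commuting triangle and of the Harish-Chandra estimates simply makes explicit what the paper (like you) ultimately defers to Lafforgue's analysis, so there is no substantive difference.
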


\begin{proof}
For $t\in\R^+$ large enough, the algebra $\mathcal{S}^t(G)$ from example \ref{schwartz} is an unconditional completion which is dense and stable under holomorphic calculus in $C^*_r(G)$ (cf. Section 4 in \cite{LaffInv}).

\end{proof} 

As a matter of fact, Lafforgue's theorem is much more general. Let $G$ be a locally compact group. A quadruplet $(G,K,d,\phi)$ is a {\it Harish-Chandra quadruplet} if $G$ is unimodular with Haar measure denoted by $dg$, $K$ is a compact subgroup endowed with his unique Haar measure of mass equal to $1$, $d$ is a length function on $G$ such that $d(k)=0$ for all $k\in K$ and $d(g^{-1})=d(g)$ for all $g\in G$ and $\phi:G\to ]0,1]$ is a continuous function satisfying the following 5 properties : 
\begin{enumerate}
\item $\phi(1)=1$, 
\item  $\forall g\in G$, $\phi(g^{-1})=\phi(g)$,
\item $\forall g\in G$, $\forall k, k'\in K$, $\phi(kgk')=\phi(g)$,
\item $\forall g,g'\in G$, $\int_K\phi(gkg') dk =\phi(g)\phi(g'),$
\item for all $ t\in \R_+$ large enough, $\int_G\phi^2(g)(1+d(g))^{-t}dg$ converges. 

\end{enumerate}

When one has a Harish-Chandra quadruplet, then one can define a Schwartz space on $G$ following Example \ref{schwartz} : $\mathcal{S}_t(G)$ is the Banach space completion of $C_c(G)$ with respect to the norm given by $$\|f\|_{\small{\mathcal{S}_t(G)}}=\sup\limits_{g\in G} |f(g)|\phi(g)^{-1}(1+d(g))^t.$$
Lafforgue's result is then stated a follows:

\begin{Prop}
Let $(G,K,d,\phi)$ be a Harish-Chandra quadruplet. Then, for $t\in\R_+$ large enough, $\mathcal{S}_t(G)$ is a unconditional completion of $C_c(G)$ which is a subalgebra of $C^*_r(G)$ dense and stable under holomorphic calculus.

\end{Prop}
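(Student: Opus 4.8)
The plan is to verify the three asserted properties in turn---unconditionality, the continuous dense $*$-embedding into $C^*_r(G)$, and stability under holomorphic functional calculus---the last being where the real difficulty lies.

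\textbf{Unconditionality and the Banach algebra structure.} By the very shape of the norm, $\|f\|_{\mathcal{S}_t(G)}=\sup_g|f(g)|\phi(g)^{-1}(1+d(g))^t$ depends only on $g\mapsto|f(g)|$ and is monotone in $|f|$, so once we know $\mathcal{S}_t(G)$ is a Banach algebra it is automatically an unconditional completion of $C_c(G)$. The involution $f^*(g)=\overline{f(g^{-1})}$ is isometric by properties (1)--(2) for $\phi$ together with $d(g^{-1})=d(g)$. Submultiplicativity (up to a constant, which can be absorbed by rescaling) reduces, after writing out the convolution and using $|f_i(h)|\le\|f_i\|_{\mathcal{S}_t(G)}\phi(h)(1+d(h))^{-t}$, to the master convolution estimate
\[
\int_G\phi(h)\phi(h^{-1}g)(1+d(h))^{-s}\,dh\le C_s\,\phi(g)
\]
valid for $s$ large enough. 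I would derive this from the triangle inequality for the length function $d$ (to split the two polynomial weights and extract $(1+d(g))^{-t}$, using that at least one of $d(h),d(h^{-1}g)$ is $\ge d(g)/2$), the functional equation (4), namely $\int_K\phi(g_1kg_2)\,dk=\phi(g_1)\phi(g_2)$, which is exactly the zonal-spherical-function identity controlling $\phi(h)\phi(h^{-1}g)$, and the integrability condition (5).

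\textbf{Embedding into $C^*_r(G)$.} The inclusion $C_c(G)\hookrightarrow C^*_r(G)$ by left convolution $\lambda$ extends to $\mathcal{S}_t(G)$ once we bound $\|\lambda(f)\|_{op}\le C\|f\|_{\mathcal{S}_t(G)}$. Here I would invoke the Harish-Chandra-type estimate $\|\lambda(f)\|_{op}\le C\int_G|f(g)|\phi(g)(1+d(g))^N\,dg$ for a suitable exponent $N$ (this is where properties (1)--(4), i.e. the fact that $\phi$ dominates the tempered spherical coefficients, enter), and then majorize $|f(g)|\le\|f\|_{\mathcal{S}_t(G)}\phi(g)(1+d(g))^{-t}$ to obtain $\|\lambda(f)\|_{op}\le C\|f\|_{\mathcal{S}_t(G)}\int_G\phi(g)^2(1+d(g))^{N-t}\,dg$, which is finite for $t$ large by property (5). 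Density is immediate since $C_c(G)$ is dense in both algebras, and injectivity follows because the $\mathcal{S}_t(G)$-norm dominates the sup norm on compacta, so $\mathcal{S}_t(G)\subseteq C_0(G)$ and convolution is injective on functions.

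\textbf{Stability under holomorphic functional calculus (the main obstacle).} Passing to unitalizations $\widetilde{\mathcal{S}_t(G)}\hookrightarrow\widetilde{C^*_r(G)}$, it suffices, by the relative Oka principle of Bost, to prove spectral invariance: every element of $\widetilde{\mathcal{S}_t(G)}$ that is invertible in $\widetilde{C^*_r(G)}$ already has its inverse in $\widetilde{\mathcal{S}_t(G)}$. Equivalently, the spectral radius of convolution computed in $\mathcal{S}_t(G)$ must coincide with the one computed in $C^*_r(G)$. Since the former always dominates the latter, the content is the reverse inequality, which I would extract from a mixed Leibniz-type estimate of the form $\|f*g\|_{\mathcal{S}_t(G)}\le C\bigl(\|f\|_{C^*_r(G)}\|g\|_{\mathcal{S}_{t'}(G)}+\|f\|_{\mathcal{S}_{t'}(G)}\|g\|_{C^*_r(G)}\bigr)$ for some $t'>t$, playing off the scale of Schwartz norms; iterating it on the powers $a^n$ of a self-adjoint element yields at most polynomial growth of $\|a^n\|_{\mathcal{S}_t(G)}$ relative to $\|a\|_{C^*_r(G)}^n$, forcing the two spectral radii to agree. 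This comparison of the weighted sup-norm with the operator norm on high convolution powers is the genuinely delicate point; it is precisely the abstract shadow of the classical fact, in the semisimple case, that the Harish-Chandra Schwartz algebra is stable under holomorphic functional calculus in $C^*_r(G)$, and the whole strategy of passing from Bost to Baum-Connes (Theorem \ref{Unconditionnal} and the factorization diagram) hinges on it.
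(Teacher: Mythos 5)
The first thing to note is that the survey does not actually prove this Proposition: it is stated as Lafforgue's result and the proof is deferred to Section 4 of \cite{LaffInv}, so your attempt has to be measured against what a complete argument must contain rather than against a proof printed in the paper. Your scaffolding is the natural (and standard) one: unconditionality is indeed immediate from the shape of the norm; the three genuine statements are submultiplicativity, continuity of $\lambda:\mathcal{S}_t(G)\to C^*_r(G)$, and spectral invariance. The problem is that at each of these three points you assert the key inequality rather than prove it, and the soft derivations you sketch do not work.

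The central gap is the ``master convolution estimate'' $\int_G\phi(h)\phi(h^{-1}g)(1+d(h))^{-s}\,dh\le C_s\,\phi(g)$, which you claim follows from the triangle inequality for $d$, axiom (4) and axiom (5). Axiom (4) only rewrites $\phi(h)\phi(h^{-1}g)$ as the average $\int_K\phi(hkh^{-1}g)\,dk$; to get the displayed bound one must compare this \emph{average} pointwise with the \emph{single value} $\phi(g)$, i.e.\ one needs a Harnack-type statement that $\phi$ has bounded oscillation along the sets $hKh^{-1}g$, and nothing of that sort is among, or follows from, the axioms. Direct use of (4) is circular (substituting $u=hk$ in $\int_G\int_K(1+d(h))^{-s}\phi(hkh^{-1}g)\,dk\,dh$ just reproduces the integral you started from), while Cauchy--Schwarz together with (5) only yields bounds of the form $C(1+d(g))^{N}$, exponentially weaker than $C\phi(g)$ since $\phi$ decays exponentially in the cases of interest. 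In the model case $\phi=\Xi$, $G$ semisimple, this estimate is a nontrivial theorem of Harish-Chandra, proved from the explicit asymptotics of $\Xi$ in the Cartan decomposition, not from the functional equation. The same criticism applies to your other two pillars: the Herz-type majorization $\|\lambda(f)\|\le C\int_G|f(g)|\phi(g)(1+d(g))^{N}\,dg$ invoked for the embedding is a theorem about the genuine Harish-Chandra function (its proof uses the Iwasawa decomposition and amenability of the minimal parabolic, structure an abstract quadruplet does not provide), and the mixed Leibniz estimate in your last paragraph is only announced; proving it needs a lower submultiplicativity $\phi(a)\phi(b)\le C\,\phi(ab)(1+d(a)+d(b))^{N}$ of the weight, again not an axiom. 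What the axioms do give softly is only multiplicativity of the functional $u\mapsto\int_G u\phi\,dg$ on bi-$K$-invariant functions, hence the integrated bound $\int_G\bigl((\phi\sigma^{-s})*(\phi\sigma^{-s})\bigr)\phi\,dg=\bigl(\int_G\phi^2\sigma^{-s}\,dg\bigr)^2<\infty$ with $\sigma=1+d$, which is strictly weaker than the pointwise estimate you need. So what you have is a correct frame with all three load-bearing estimates missing; supplying them is precisely the hard content of Lafforgue's Section 4.
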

In Section 4 of \cite{LaffInv}, Lafforgue constructed a Harish-Chandra quadruplet for all linear reductive Lie groups on local fields.

\begin{Rem} 
The method of finding a Schwartz type unconditional completion dense and stable under holomorphic calculus in $C^*_r(G)$ like the algebra $\mathcal{S}_t(G)$ for semisimple Lie groups, does not work with coefficients (see the remark after the Proposition 4.8.2 of \cite{LaffInv}). If $\Gamma$ is a lattice in a semisimple Lie group $G$, we can define an algebra $\mathcal{S}_t(\Gamma)$ in the same manner as for $G$: it is the completion of $C_c(\Gamma)$ for the norm $\|f\|_{\mathcal{S}^t(\Gamma)}=\sup\limits_{\gamma\in\Gamma}|f(\gamma)|(1+d(\gamma))^t\phi(\gamma)^{-1}$, where $\phi$ is the Harish-Chandra function of $G$ and the $d$ is the appropriate distance in $G$ (see \cite{Boyer} where this algebras are studied). Suppose now that $\Gamma$ is cocompact. Then $\mathcal{S}_t(G,C(G/\Gamma))$ is not stable under holomorphic calculus in $C^*_r(G,C(G/\Gamma))$ as these algebras are Morita equivalent to $\mathcal{S}_t(\Gamma)$ and $C^*_r(\Gamma)$, respectively, and $\mathcal{S}_t(\Gamma)$ is not stable under holomorphic calculus in $C^*_r(\Gamma)$. Indeed, if $\gamma\in\Gamma$ is an hyperbolic element, since $d(\gamma^n)$ grows linearly in $n$ if we denote by $e_\gamma$ the corresponding Dirac function in $\C\Gamma$, its spectral radius as an element of $C^*_r(\Gamma)$ is $1$ whereas its spectral radius in $\mathcal{S}_t(\Gamma)$ is $>1$. To see this we use the following classical estimate on the Harish-Chandra $\phi$-function (see Proposition 7.15 in \cite{Vogan}): there are positive constants $C,\ell>0$ such that for every $g\in G$
: 
$$\phi(g)\leq Ce^{-d(g)}(1+d(g))^\ell.$$
Hence 
$$\|e_\gamma^n\|_{\mathcal{S}_t(\Gamma)}=\frac{(1+d(\gamma^n))^t}{\phi(\gamma^n)}\geq C^{-1}(1+d(\gamma^n))^{t-\ell}e^{d(\gamma^n)}.$$
Since $d(\gamma^n)$ grows linearly in $n$, we have for the spectral radius of $e_\gamma$ in $\mathcal{S}_t(\Gamma)$:
$$\lim_{n\rightarrow\infty}\|e_\gamma^n\|_{\mathcal{S}_t(\Gamma)}^{1/n} > 1.$$

\end{Rem}

\subsubsection{The rapid decay property}\label{(RD)}

To state Lafforgue's results concerning lattices in connected Lie groups, and hence examples of discrete groups having property (T) and verifying the Baum-Connes conjecture (without coefficients), we need to introduce the property of rapid decay, denoted by (RD).

Recall that, for $G$ a locally compact group, a continuous function $\ell:G\rightarrow\R^+$ is a {\it length function} if $\ell(1)=0$ and $\ell(gh)\leq\ell(g)+\ell(h)$ for every $g,h\in G$. 

\begin{Ex} If $\Gamma$ is a finitely generated group and $S$ is a finite generating subset, then $\ell(g)=|g|_S$ (word length with respect to $S$) defines a length function on $\Gamma$.
\end{Ex}

The following definition is due to P. Jolissaint \cite{Jolissaint}.

\begin{Def} Il $\ell$ is a length function on the locally compact group $G$, we say that $G$ has the property of rapid decay with respect to $\ell$ (abridged property (RD)) if there exists positive constants $C,k$ such that, for every $f\in C_c(G)$:
$$\|\lambda(f)\|\leq C\cdot\|f(1+\ell)^k\|_2.$$
\end{Def}

In other words the norm of $f$ in $C^*_r(G)$, i.e. the operator norm of $f$ as a convolutor on $L^2(G)$, is bounded above by a weighted $L^2$-norm given by a polynomial in the length function.\\

The relevance of property (RD) regarding Baum-Connes comes from the following fact. If $\Gamma$ is a discrete group with property (RD)  with respect to a length function $\ell$, then, for a real number $s$ which is large enough, the space 
$$H_{\ell}^s(\Gamma)=\{f:\Gamma\to\C\,|\,\|f\|_{\ell,s}=\big(\sum\limits_{\gamma\in\Gamma}|f(\gamma)|^2(1+\ell(\gamma))^{2s}\big)^{\frac{1}{2}}<\infty\},$$
is a convolution algebra and an unconditional completion of $C_c(\Gamma)$ that is stable under holomorphic calculus in $C^*_r(\Gamma)$ (see for example \cite{Valette02}, 8.15, Example 10.5). Note that functions in $H_\ell^s(\Gamma)$, with $s\gg 0$, are decaying fast at infinity on $\Gamma$, hence the name {\it rapid decay}.

We can now state Lafforgue's result concerning discrete groups (Corollaire 0.0.4 in \cite{LaffInv}):

\begin{Thm}\label{RDimpliesBC}

Let $\Gamma$ be a group with property (RD) in Lafforgue's class $\mathcal{C}'$ (see subsection \ref{KKban}). Then Conjecture \ref{ConjBC} (without coefficients) for $\Gamma$ is true.
\end{Thm}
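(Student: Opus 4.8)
The plan is to realize $\mu_r$ as a composite of two maps, each of which can be shown to be an isomorphism: Lafforgue's assembly map $\mu_{\mathcal{B}(\Gamma)}$ into the K-theory of a suitable unconditional completion $\mathcal{B}(\Gamma)$ of $C_c(\Gamma)$, followed by the map $i_*$ induced on K-theory by the inclusion $i:\mathcal{B}(\Gamma)\hookrightarrow C^*_r(\Gamma)$. The guiding idea is that the two hypotheses of the theorem play complementary roles: membership in $\mathcal{C}'$ is exactly what makes $\mu_{\mathcal{B}(\Gamma)}$ an isomorphism, while property (RD) is exactly what produces an unconditional completion sitting densely and holomorphically stably inside $C^*_r(\Gamma)$, so that $i_*$ is an isomorphism.

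First I would exploit the hypothesis $\Gamma\in\mathcal{C}'$. Every group in $\mathcal{C}'$ admits a $\gamma$-element in $KK_\Gamma(\C,\C)$, and by Lafforgue's computation in Banach KK-theory there is a length function $\ell$ for which $\gamma=1$ in $KK^{\mathrm{ban}}_{\Gamma,s\ell}(\C,\C)$ for every $s>0$. These are precisely the two hypotheses of Theorem \ref{Unconditionnal}, so that theorem, specialized to $A=\C$, yields that the assembly map $\mu_{\mathcal{B}(\Gamma)}:K^{\mathrm{top}}_*(\Gamma)\to K_*(\mathcal{B}(\Gamma))$ is an isomorphism for \emph{every} unconditional completion $\mathcal{B}(\Gamma)$.

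Next I would bring in property (RD). Fixing a length function $\ell'$ with respect to which $\Gamma$ has (RD), for $s\gg 0$ the weighted space $H_{\ell'}^s(\Gamma)$ is a convolution algebra, an unconditional completion of $C_c(\Gamma)$, and, crucially, a dense subalgebra of $C^*_r(\Gamma)$ stable under holomorphic functional calculus. Stability under holomorphic calculus forces the inclusion to induce an isomorphism $i_*:K_*(H_{\ell'}^s(\Gamma))\to K_*(C^*_r(\Gamma))$. Taking $\mathcal{B}(\Gamma)=H_{\ell'}^s(\Gamma)$ in the previous step, the commutative triangle of the preceding subsection gives the factorization $\mu_r=i_*\circ\mu_{H_{\ell'}^s(\Gamma)}$, exhibiting $\mu_r$ as a composite of two isomorphisms; hence $\mu_r$ is an isomorphism and Conjecture \ref{ConjBC} holds for $\Gamma$.

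The genuinely hard input, which I would invoke rather than reprove, is Theorem \ref{Unconditionnal} itself, resting on Lafforgue's Banach KK-theory and the homotopy $\gamma=1$ in the Banach setting; the remaining steps are formal. One point that deserves care is that the two length functions need not coincide: since the conclusion of Theorem \ref{Unconditionnal} holds simultaneously for all unconditional completions, it applies in particular to $H_{\ell'}^s(\Gamma)$ built from the (RD) length function $\ell'$, even when $\ell'\neq\ell$. Thus the sole role of (RD) is to manufacture, inside $C^*_r(\Gamma)$, the holomorphically closed unconditional completion needed to transport Lafforgue's Banach-algebra computation back to the reduced $C^*$-algebra.
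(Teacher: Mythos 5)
Your proposal is correct and follows essentially the same route as the paper: membership in $\mathcal{C}'$ feeds Theorem \ref{Unconditionnal} to make $\mu_{\mathcal{B}(\Gamma)}$ an isomorphism for every unconditional completion, property (RD) supplies the completion $H^s_\ell(\Gamma)$ dense and stable under holomorphic calculus in $C^*_r(\Gamma)$, and the commutative triangle factoring $\mu_r$ through $K_*(H^s_\ell(\Gamma))$ finishes the argument. Your remark that the two length functions need not coincide is a valid point of care, resolved exactly as you say, since Lafforgue's theorem applies to all unconditional completions at once.
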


Jolissaint \cite{Jolissaint} has shown that property (RD) holds for cocompact lattices in real rank 1 groups, a fact generalized in two directions:
\begin{itemize} 
\item by P. de la Harpe \cite{dlH} to all Gromov hyperbolic groups;
\item by I. Chatterji and K. Ruane \cite{ChatterjiRuane} to {\it all} lattices in real rank 1 groups.
\end{itemize}
By Theorem \ref{RDimpliesBC}, those groups do satisfy the Baum-Connes conjecture (without coefficients).

\begin {Rem}The first spectacular application of property (RD) was the proof 
of the Novikov conjecture for Gromov-hyperbolic groups by A. Connes and H. Moscovici \cite{ConnesMosco90}. Their result is the following:

\begin {Thm}\label{Novikovhyper} Assume that the group $\Gamma$ satisfies both Jolissaint's (RD) property and the bounded 
cohomology property (i.e. that any group $n$-cocycle is cohomologous to a bounded one, for $n\geq 2$).
Then $\Gamma$ satisfies the Novikov conjecture.
\end{Thm}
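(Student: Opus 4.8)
The plan is to detect each higher signature $\sigma_x(M,f)$ directly, by pairing the homotopy-invariant signature index class with an explicit cyclic cocycle built from a bounded group cocycle, and to use property (RD) to make this pairing legitimate at the level of $K$-theory, where homotopy invariance is already available. Concretely, I would fix a class $x\in H^n(B\Gamma,\Q)=H^n(\Gamma,\Q)$ and represent it by a group $n$-cocycle $c$. For $n\geq 2$ the bounded cohomology hypothesis lets me choose $c$ bounded; for $n=1$ (a homomorphism) and $n=0$ a representative of at most linear growth with respect to the length function $\ell$ is available, which is all that is needed below. To $c$ I associate the cyclic $n$-cocycle $\tau_c$ on $\C\Gamma$ given on the canonical basis by $\tau_c(g_0,\dots,g_n)=c(g_1,\dots,g_n)$ when $g_0g_1\cdots g_n=1$ and $0$ otherwise (suitably antisymmetrized); the cyclic cocycle identity for $\tau_c$ in the $(b,B)$-bicomplex follows from the group cocycle identity for $c$.

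The crucial second step is to push $\tau_c$ off $\C\Gamma$ onto a holomorphically closed smooth subalgebra of $C^*_r(\Gamma)$. Estimating $|\tau_c(a_0,\dots,a_n)|\leq \|c\|_\infty\,(|a_0|\ast\cdots\ast|a_n|)(e)$ and invoking property (RD) — which controls iterated convolutions of functions in $H^s_\ell(\Gamma)$ by products of the norms $\|\cdot\|_{\ell,s}$ — shows that, for $s$ large enough, $\tau_c$ extends to a continuous cyclic $n$-cocycle on the rapid decay algebra $H^s_\ell(\Gamma)$. By (RD) this algebra is dense in $C^*_r(\Gamma)$ and stable under holomorphic functional calculus, so the inclusion induces an isomorphism $K_*(H^s_\ell(\Gamma))\xrightarrow{\sim}K_*(C^*_r(\Gamma))$. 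Hence the signature index class $\beta(f_*[d+d^*])\in K_*(C^*_r(\Gamma))$, which is a homotopy invariant of $M$ by Theorem \ref{KasNovikov}, lifts canonically to a homotopy-invariant class in $K_*(H^s_\ell(\Gamma))$, on which the pairing with $\tau_c$ is defined. Therefore the number $\langle\tau_c,\beta(f_*[d+d^*])\rangle$ is itself a homotopy invariant of $M$.

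Finally I would invoke the higher index theorem to identify this pairing topologically: $\langle\tau_c,\beta(f_*[d+d^*])\rangle=c_n\,\sigma_x(M,f)$ for a nonzero universal constant $c_n$ depending only on $n$. Combined with the previous paragraph, this gives the homotopy invariance of each $\sigma_x(M,f)$; and since $x$ was an arbitrary rational cohomology class of $B\Gamma$ (the degrees $0$ and $1$ being covered by the bounded, resp. linearly bounded, representatives), the Novikov conjecture holds for $\Gamma$.

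The main obstacle is precisely this last index computation — matching the abstract cyclic-cohomology/$K$-theory pairing with the topological higher signature. This is the analytic heart of the argument, namely Connes--Moscovici's index formula applied to the signature operator twisted by the Mishchenko line bundle, and it is where the differential-geometric content of the Atiyah--Singer theorem (already visible in equation \ref{AS}) re-enters. A secondary technical point is the continuity estimate of the second step with the correct Sobolev exponent $s$: this is exactly the junction at which the two hypotheses are used simultaneously — property (RD) for the holomorphic stability of $H^s_\ell(\Gamma)$, and bounded cohomology for the growth bound on $c$ that guarantees convergence — and neither hypothesis can be dispensed with.
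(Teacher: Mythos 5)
Your proposal is correct and follows essentially the same route as the paper's own sketch of the Connes--Moscovici argument: both construct a cyclic cocycle from a bounded group cocycle, use property (RD) to extend it to a dense, holomorphically closed subalgebra of $C^*_r(\Gamma)$ so that the pairing is defined on $K_0(C^*_r(\Gamma))$, invoke the homotopy invariance of the signature index class from Theorem \ref{KasNovikov}, and identify the pairing with the higher signature via the higher index formula. The only cosmetic difference is that you work directly with the Sobolev algebra $H^s_\ell(\Gamma)\subset C^*_r(\Gamma)$, whereas the paper phrases the extension in terms of ${\bf C}(\Gamma)\otimes{\cal R}$ sitting inside $C^*_r(\Gamma)\otimes\mathcal{K}$.
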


\begin{proof}[Sketch of proof]
Let $x\in H^n(\Gamma,\Q)$ be a cohomology class. Let $M$ be a closed, Spin manifold and $f:M\rightarrow B\Gamma$ a map; let $\tilde{M}$ be the pull-back of $\widetilde{B\Gamma}$ via $f$. Let $D$ be a $\Gamma$-invariant Dirac operator on $\tilde{M}$. Connes and Moscovici show that the index of $D$ in $K_0(C^*_r(\Gamma ))$ has a more refined version living in $K_0({\bf C}(\Gamma)\otimes{\cal R})$ where ${\cal R}$ is the algebra of smoothing operators. They deduce a cohomological formula for the higher signature $\sigma_x(M,f)$ (defined in section \ref{Novikov}) by evaluating a cyclic cocycle $\tau_x$ associated with $x$ on the index in $K_0({\bf C}(\Gamma)\otimes{\cal R})$. The two assumptions of Theorem \ref{Novikovhyper} ensure that the cocycle $\tau_x$ extends from ${\bf C}(\Gamma)\otimes{\cal R}$ to a subalgebra of the $C^*$-algebra $C^*_r(\Gamma )\otimes \mathcal{K}$ which is stable under holomorphic functional calculus. Therefore $\sigma_x(M,f)$ only depends on the index $\mu_r(f_*[D])\in K_0(C^*_r(\Gamma ))$, which is a homotopy invariant by Theorem \ref{KasNovikov}. The hypothesis in the theorem hold in particular for Gromov's hyperbolic groups:
the fact that they do satisfy the bounded cohomology property is a result stated by Gromov and proved by Mineyev \cite{Mineyev}.
\end{proof}
\end{Rem}

 In higher rank it can be proved that non-cocompact lattices do not satisfy property (RD). 
However we have a conjecture by A. Valette (see p.74 in \cite{FRR}):

 \begin{Conj} Let $\Gamma$ be a group acting properly, isometrically, with compact quotient, either on a Riemannian symmetric space or on a Bruhat-Tits building. Then $\Gamma$ has the $(RD)$ property. 
 \end{Conj}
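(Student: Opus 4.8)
The plan is to reduce property (RD) to a purely geometric counting estimate on the space $X$ on which $\Gamma$ acts, and then to exploit the nonpositive curvature of $X$. Since the action is proper, isometric and cocompact, $\Gamma$ is finitely generated (Milnor--\v{S}varc) and the orbit map $\gamma\mapsto\gamma\cdot o$, for a fixed basepoint $o$, is a quasi-isometry from $\Gamma$ with a word length to $X$; hence the word length $\ell(\gamma)$ is comparable, up to affine constants, to the distance $d(o,\gamma\cdot o)$. It therefore suffices to establish (RD) with respect to $\ell(\gamma)=d(o,\gamma\cdot o)$. Using Jolissaint's characterization, this amounts to producing a polynomial $P$ such that, for every $r\in\N$ and every $f$ supported on the sphere $S_r=\{\gamma:\ell(\gamma)=r\}$, one has $\|\lambda(f)\|\leq P(r)\,\|f\|_2$.

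First I would expand the operator norm: writing $\|\lambda(f)\|=\sup_{\|g\|_2=\|h\|_2=1}|\langle f*g,h\rangle|$ and decomposing $g,h$ according to the value of $\ell$, a standard Cauchy--Schwarz ($TT^*$) argument reduces the estimate to controlling the \emph{triangle counting function}
\begin{equation*}
N(r,s,t)=\sup_{\gamma}\#\{(\alpha,\beta): \alpha\beta=\gamma,\ \ell(\alpha)=r,\ \ell(\beta)=s,\ \ell(\gamma)=t\}.
\end{equation*}
Concretely, one must show that $N(r,s,t)$ is bounded by a polynomial in $r$, uniformly in $s,t$, whenever the triangle inequalities among $r,s,t$ hold. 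Geometrically, each factorization $\gamma=\alpha\beta$ corresponds to a geodesic triangle in $X$ with vertices $o,\alpha\cdot o,\gamma\cdot o$ and prescribed side lengths, so $N$ measures how many orbit points can sit near the ``third vertex'' of such a triangle.

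The crux is to bound this counting function using the geometry of $X$. In rank one, $X$ is Gromov hyperbolic, geodesic triangles are uniformly thin, and the third vertex is pinned down up to bounded distance; together with the fact that balls contain at most polynomially many orbit points (bounded geometry and cocompactness), this yields the desired polynomial, recovering the known results of Jolissaint, de la Harpe and Chatterji--Ruane. The genuinely hard part is higher rank: $X$ contains flats $\R^k$ with $k\geq 2$ in which triangles are fat, so the third vertex is no longer localized and the naive count fails. To cope with this I would replace the scalar distance by the vector-valued (Cartan) distance taking values in a Weyl chamber, and analyze factorizations through the structure at infinity---the spherical building (resp.\ the visual boundary) and the associated sector/retraction combinatorics. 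For a Bruhat--Tits building one can retract onto a fixed apartment and count chambers explicitly, which is how the $\tilde{A}_n$ cases and Lafforgue's $SL_3$ lattices were settled. I expect the main obstacle to be twofold: making such a counting bound uniform across all Weyl-group types, including the exceptional ones; and, more seriously, carrying it out for Riemannian symmetric spaces, where there is no simplicial apartment structure and one must instead control the concentration of the Cartan projection of triangles with prescribed side lengths. It is exactly this type-free, flat-aware counting estimate that has resisted proof in general, which is why the conjecture remains open beyond rank one and a handful of low-rank building cases.
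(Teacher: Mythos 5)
You were asked to prove a statement that the paper itself records as an \emph{open problem}: this is Valette's conjecture on property (RD), stated in the survey precisely because no proof is known. There is therefore no proof in the paper to compare your attempt against. The paper only lists the cases in which the conjecture has been verified --- cocompact lattices in real rank one (Jolissaint), all Gromov hyperbolic groups (de la Harpe), all lattices in rank one (Chatterji--Ruane), and, in rank two, cocompact lattices in groups locally isomorphic to $SL_3(\R)$, $SL_3(\C)$, $SL_3(\mathbf{H})$ or $E_{6(-26)}$ (Lafforgue, Chatterji, building on Ramagge--Robertson--Steger for $SL_3(\Q_p)$) --- and then states explicitly that for $Sp_4(\R)$, and for every simple group of real rank at least three, absolutely nothing is known.

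Judged as a proof, your proposal therefore has a gap, and to your credit you name it yourself: everything hinges on a uniform, type-free polynomial bound on the triangle-counting function $N(r,s,t)$ in higher rank, and you supply no argument for it --- indeed none exists, which is exactly why the statement is a conjecture. What precedes that point is an accurate account of the strategy behind all known cases: Milnor--\v{S}varc to replace word length by the orbit distance $d(o,\gamma\cdot o)$ (this is also where cocompactness enters essentially, consistently with the fact that in higher rank non-cocompact lattices \emph{fail} (RD)), Jolissaint's sphere-wise criterion, the $TT^*$/Cauchy--Schwarz reduction to counting factorizations, thinness of triangles in the hyperbolic case, and retraction onto apartments in the $\tilde A_2$-type building cases. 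One technical caution: the counting bound is a \emph{sufficient} condition for (RD), not an equivalent reformulation (the operator-norm bound only controls averages of $N(r,s,t)$ over $\gamma$, not the supremum), so your word ``reduces'' overstates the relationship. In short, your text is a sound and honest survey of why the conjecture is plausible and where it is stuck, but it does not --- and at present cannot --- constitute a proof of the statement.
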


Valette's conjecture holds in higher rank for the following special cases, all in rank 2: assume $G$ is locally isomorphic to $SL_3({\bf R})$ or $SL_3({\bf C})$: V. Lafforgue has shown  that any cocompact lattice $\Gamma$ of $G$ satisfies property (RD). I. Chatterji has generalized this proof to $SL_3({\bf H})$ and $E_{6(-26)}$, see \cite{Chatterji03}. Their proofs are based on ideas of Ramagge, Robertson and Steger for $SL_3(\Q_p)$ (\cite{RRS98}). Conjecture \ref{ConjBC} therefore follows for such lattices. As mentioned in section \ref{Trichotomy} above, this gave the first examples of infinite discrete groups having property (T) and satisfying the Baum-Connes conjecture.

It is not known whether such a group $\Gamma$ (or the Lie group $G$ itself) satisfies the conjecture with coefficients. Moreover, nothing is known about the Baum-Connes conjecture for general discrete subgroups of $G$. In particular it is not known whether $SL_3({\Z})$ satisfies Conjecture \ref{ConjBC}, or similarly whether $SL_3({\bf R})$ satisfies Conjecture \ref{ConjBCcoeff}.

On the other hand, regarding lattices in another real rank 2 simple Lie group (e.g. the symplectic group $Sp_4(\R)$), or in a simple group with real rank at least 3, absolutely nothing is known, in particular for lattices in $SL_n({\bf R})$ or $SL_n({\bf C})$ when $n\geq 4$.

\begin{Rem}
The group $\Gamma=SL_3(\mathbf{Z})$ does not have property (RD) (see \cite{Jolissaint}). Moreover, there is no unconditional completion $\mathcal{B}(\Gamma)$ that is a dense subalgebra of  $C^*_r(\Gamma)$ stable under holomorphic calculus. The following argument is due to Lafforgue (see \cite{Lafforgue2010}). Let us consider the action of $\mathbf{Z}$ on $\mathbf{Z}^2$ induced by the map from $\mathbf{Z}$ to $\mathbf{Z}^2$ that sends $n\in\mathbf{Z}$ to $\begin{pmatrix}3&1\\2&1\end{pmatrix}^n$ and the semi-direct product $H:=\mathbf{Z}\rtimes \mathbf{Z}^2$ constructed using this action. The group $H$ is solvable, hence amenable, and can be embedded as a subgroup of $SL_3(\mathbf{Z})$ using the map: $(n,\begin{pmatrix}a\\b\end{pmatrix})\mapsto \begin{pmatrix}\begin{pmatrix}3&1\\ 2&1\end{pmatrix}^n &\begin{pmatrix}a\\b\end{pmatrix}\\ 0&1 \end{pmatrix}$. Suppose by contradiction that there is an unconditional completion $\mathcal{B}(G)$ that is a subalgebra of $C_r^*(G)$. Then the algebra $\mathcal{B}(H)=\mathcal{B}(G)\cap C^*_r(H)$ is contained in $\ell^1(H)$ because as $H$ is amenable, for every non-negative function $f$ on $H$, one has $\norme{f}_{C^*_r(H)}=\norme{f}_{L^1(H)}$. However, $\ell^1(H)$ is not spectral in $C^*_r(H)$ (see \cite{Jenkins}).
\end{Rem}

 \subsection {Back to Hilbert spaces}
 
 The motto of this section is the following: in the case where property (T) imposes that $\gamma\neq 1$ in $KK_G({\bf C},{\bf C})$, the idea for showing that $\gamma$ nevertheless acts by the identity in the K-theory groups $K_*(C^*_{r}(G,A))$ is to make the  $\gamma$-element homotopic to the trivial representation in a weaker sense, getting out of the class of unitary representations, but staying within the framework of Hilbert spaces.
 
 \subsubsection {Uniformly bounded and slow growth representations}\label{slowgrowth}
 
 The idea of  using uniformly bounded representations is a remark that P. Julg made  in 1994. A uniformly bounded representation of a locally group $G$ is a strongly continuous representation by bounded operators on a Hilbert space $H$, such that 
 there is a constant $C$ with 
$\Vert\pi (g)\Vert\leq C$ for any $g\in G$. Equivalently, it is a representation by isometries for a Banach norm equivalent to a Hilbert norm.
 
Following Kasparov \cite{KaspConsp}, let us denote $R(G)=KK_G({\bf C},{\bf C})$. Let $R_{\rm ub}(G)$ be the group of homotopy classes of $G$-Fredholm modules, with uniformly bounded representations replacing unitary representations, as in \cite{Julg97}. 

\begin{Prop}
 For any $G-C^*$-algebra $A$, the Kasparov map $$R(G)\rightarrow {\rm End}K_*(C_r^*(G,A))$$ factors
through the map $R(G)\rightarrow R_{\rm ub}(G)$.
\end{Prop}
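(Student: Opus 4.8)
The plan is to construct directly the factoring map $R_{\rm ub}(G)\to \mathrm{End}\,K_*(C^*_r(G,A))$ and then to verify that precomposing it with the natural forgetful map $R(G)\to R_{\rm ub}(G)$ (a unitary representation being uniformly bounded with constant $1$) recovers the Kasparov action of Section \ref{gamma}. Since that action is by construction the composite of $\tau_A$, the descent $j_{G,r}$, and Kasparov product against a class in $K_*(C^*_r(G,A))$, the whole problem reduces to making the descent step work when the covariant representation is merely uniformly bounded instead of unitary, and to checking that the output still lands in the endomorphisms of the genuine $C^*$-algebraic K-theory group.

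First I would recall the explicit form of the action. A $G$-Fredholm module $x=(H,\pi,T)$ acts by forming, out of $(H\otimes A,\pi\otimes\mathrm{id},T\otimes 1)$, a Hilbert $C^*_r(G,A)$-module by integration: the completion $\mathcal{E}$ of $C_c(G,H\otimes A)$ for a $C^*_r(G,A)$-valued inner product, on which $T$ induces an operator $\mathcal{T}$; the pair $(\mathcal{E},\mathcal{T})$ is a Kasparov $(C^*_r(G,A),C^*_r(G,A))$-bimodule whose Kasparov product realises $\tilde{x}_A$. The positivity of the inner product, hence the fact that $\mathcal{E}$ is an honest Hilbert module, is exactly where the unitarity of $\pi$ is used.

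Second, and this is the heart of the matter, I would invoke Fell's absorption principle to remove unitarity. On $L^2(G,H)$ set $(W\xi)(g)=\pi(g)^{-1}\xi(g)$; a direct computation gives $W(\lambda\otimes\pi)_sW^{-1}=(\lambda\otimes 1)_s$, and for a uniformly bounded $\pi$ the operator $W$ is bounded with bounded inverse, with $\max(\norme{W},\norme{W^{-1}})\le \sup_g\norme{\pi(g)}$. This is the precise and only point at which uniform boundedness, rather than boundedness of each $\pi(g)$ separately, is used. Conjugation by $W$ identifies the would-be module $\mathcal{E}$ with the genuine Hilbert $C^*_r(G,A)$-module $L^2(G)\otimes H\otimes A$ carrying the amplified regular representation, and transports $\mathcal{T}$ to an adjointable operator $W\mathcal{T}W^{-1}$. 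Because $T$ commutes with $\pi(g)$ modulo compacts, preserves the grading, and satisfies $T=T^*$ and $T^2-1$ compact --- all conditions stable under a bounded invertible change of inner product and, after symmetrising by the innocuous compact perturbation $\tfrac12(F+F^*)$, under restoration of self-adjointness --- the transported data defines a bona fide class in the $C^*$-algebraic group $KK(C^*_r(G,A),C^*_r(G,A))$, hence an endomorphism of $K_*(C^*_r(G,A))$.

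Finally I would settle the two compatibilities. A homotopy of uniformly bounded Fredholm modules with uniformly controlled bounds yields a uniformly bounded family of intertwiners $W$, so the transported Kasparov modules vary by a homotopy, and the endomorphism depends only on the class in $R_{\rm ub}(G)$; and when $\pi$ is unitary one may take $W$ unitary, so the construction reduces verbatim to the descent of Section \ref{gamma}, proving that the triangle $R(G)\to R_{\rm ub}(G)\to \mathrm{End}\,K_*(C^*_r(G,A))$ commutes with the Kasparov action. The main obstacle is the middle step: guaranteeing that after the non-unitary similarity $W$ the data still represents a class in the $C^*$-algebraic $KK$-group --- equivalently, that the relevant Fredholm index is unchanged under the bounded, non-isometric perturbation --- which is exactly what the uniform control $\max(\norme{W},\norme{W^{-1}})\le\sup_g\norme{\pi(g)}<\infty$ secures, and which would fail for a representation that is bounded pointwise but not uniformly bounded.
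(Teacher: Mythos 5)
Your first half is exactly the paper's proof: the key lemma is the generalized Fell absorption principle --- for a uniformly bounded $\pi$ there is a bounded invertible $W$ on $L^2(G,H)$, $(W\xi)(g)=\pi(g)^{-1}\xi(g)$, conjugating $\pi(g)\otimes\lambda(g)$ into $1\otimes\lambda(g)$ with $\Vert W\Vert\,\Vert W^{-1}\Vert\le(\sup_g\Vert\pi(g)\Vert)^2$ --- and you correctly identify this as the only point where uniform (rather than pointwise) boundedness enters. This is what the paper uses too: it shows that the covariant pair $a\mapsto 1\otimes a$, $g\mapsto\pi(g)\otimes\lambda(g)$ on the \emph{standard} Hilbert module $E=H\otimes C^*_r(G,A)$ integrates to a homomorphism $\pi_A$ which is bounded for the reduced norm, hence extends to $C^*_r(G,A)$ (a feature special to the regular representation, which is why nothing analogous exists for $C^*_{\rm max}$).

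The gap is in your middle step, where you conjugate the whole triple by $W$ and claim a bona fide class in $KK(C^*_r(G,A),C^*_r(G,A))$. The operator $W$ is bounded on the localized Hilbert space $H\otimes L^2(G)$, but it is \emph{not} an operator of the Hilbert module $E$: it fails to commute with the right action of $C^*_r(G,A)$ (concretely, with $1\otimes\rho(G)$ in the localization), because $g\mapsto\pi(g)^{-1}$ is not right-translation invariant. Consequently $W\mathcal{T}W^{-1}$, which is the decomposable field $g\mapsto\pi(g)^{-1}T\pi(g)$, is a module map only when $T$ commutes with $\pi$ exactly, not merely modulo compacts; it is not adjointable on $E$. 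Equivalently, in the unconjugated picture the defect of self-adjointness is $F-F^*=WS^{-1}[S,T\otimes 1]W^{-1}$ with $S=W^*W$ the field $g\mapsto(\pi(g)\pi(g)^*)^{-1}$: the commutator $[S,T\otimes 1]$ is pointwise compact by your hypotheses, but it is not a compact operator of the module (it is not even a module operator), so symmetrizing by $\tfrac12(F+F^*)$ is \emph{not} an ``innocuous compact perturbation''. This is not repairable bookkeeping: if descent of uniformly bounded Fredholm modules produced honest $C^*$-algebraic $KK$-classes, the entire Banach-algebraic machinery of this chapter would be unnecessary. The correct, weaker conclusion --- which is the paper's, and which suffices because the target of the proposition is only ${\rm End}(K_*(C^*_r(G,A)))$ and not $KK$ --- is to keep the standard module $E$, accept that $\pi_A$ is a bounded Banach-algebra homomorphism rather than a $*$-homomorphism (this, not positivity of the inner product, is what unitarity really provided), and use that the resulting \emph{Banach} Fredholm module $(E,\pi_A,T\otimes 1)$ still acts on K-theory by the Fredholm-index construction: for an idempotent $p$ over $C^*_r(G,A)$, the index of $\pi_A(p)(T\otimes 1)\pi_A(p)$ on $\pi_A(p)E^n$ lies in $K_*(C^*_r(G,A))$.
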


This follows from an easy generalization of the classical Fell lemma: indeed, if $\pi$ is a uniformly bounded representation of a group $G$ in a Hilbert space $H$, and $\lambda$ 
is the left regular representation of $G$ on $L^2(G)$, there exists a bounded invertible operator $U$ on $H\otimes L^2(G)$, such that
$$\pi (g)\otimes\lambda (g)=U(1\otimes \lambda (g))U^{-1}$$
When $\pi$ is a unitary representation, $U$ is of course a unitary operator.

To any Hilbert space $H$ equipped with a uniformly bounded representation $\pi$,  let us associate as in the construction of the map $j_{G,r}$ from \cite{KaspConsp, Kasparov88}, the
Hilbert module $E=H\otimes C_r^*(G,A)$ and  the covariant representation of $(G,A)$ with
values in ${\cal L}_{C_r^*(G,A)}(E)$ defined by:

$$a\mapsto 1\otimes a,\  g\mapsto\pi(g)\otimes\lambda (g).$$ 

Then the representation
 $\pi_A : C_c(G,A)\rightarrow {\cal L}_{C_r^*(G,A)}(E)$ extending the above covariant
representation factors through the reduced crossed product $C_r^*(G,A)$.

To a $G$-Fredholm module $(H,\pi ,T)$ we can therefore associate the triple $(H\otimes C_r^*(G,A), \pi_A, T_A)$ where 
$\pi_A: C_r^*(G,A)\rightarrow {\cal L}_{C_{\rm red}^*(G,A)}(E)$ is the Banach algebra homomorphism defined above, and
$T_A=T\otimes 1\in {\cal L}_{C_{\rm red}^*(G,A)}(E)$.
The Banach $G$-Fredholm module thus obtained defines a map from the group $K_*(C_{\rm red}^*(G,A))$ to itself. Note that such a construction has no analogue for $C_{\rm max}^*(G,A)$ since it relies upon a specific feature of the regular representation. 
\medskip

As in the case of Lafforgue's Banach representation, it often happens that a family of representations can be deformed to a representation containing the trivial representation, but with a uniform boundedness constant tending to infinity. One must therefore use a more general class, as we now explain. Fix $\varepsilon >0$. Let $l$ be a length function on $G$.

\begin {Def} We say that a representation $\pi$ of $G$ is {\it of $\varepsilon$-exponential type} if there is
a constant $C$ such that for any
$g\in G$, 
$$\Vert\pi (g)\Vert\leq C e^{\varepsilon l(g)}$$
\end{Def}

The following ideas come from a discussion between N. Higson, P. Julg and V. Lafforgue in 1999. We define as above a $G$-Fredholm module of $\varepsilon$-exponential
type, and similarly a homotopy of such modules. Let $R_{\varepsilon}(G)$ be the abelian group of homotopy classes.
The obvious maps $R_{\varepsilon}(G)\rightarrow R_{\varepsilon '}(G)$ for $\varepsilon <\varepsilon '$ form a projective system and we 
consider the projective limit $\varprojlim R_{\varepsilon}(G)$ when $\varepsilon\rightarrow 0$.

We would like to have an analogue of the above proposition with  the group $\varprojlim R_{\varepsilon}(G)$ instead of $R_{\rm ub}(G)$. In fact there is a slightly weaker result, due to N. Higson and V. Lafforgue (cf \cite{LaffHyp} Th\'eor\`eme 2.3) which is enough for our purpose. We assume now that $G$ is a connected Lie group.

 \begin{Thm} The kernel of the map $$R(G)\rightarrow \varprojlim R_{\varepsilon}(G)$$ is included in the kernel of the map $$R(G)\rightarrow {\rm
End}K_*(C^*_r(G,A)).$$
\end{Thm}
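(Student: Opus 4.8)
The plan is to analyze the endomorphism $\tilde x_A$ attached to $x\in R(G)=KK_G(\mathbf{C},\mathbf{C})$ by the very mechanism used in the uniformly bounded proposition, but to track what happens when a unitary representative of $x$ is regarded as a Fredholm module of $\varepsilon$-exponential type. Recall that a unitary module $(H,\pi,T)$ representing $x$ produces the Banach Fredholm module $(H\otimes C^*_r(G,A),\pi_A,T_A)$ over $C^*_r(G,A)$, whose action on K-theory is $\tilde x_A$; here $\pi_A$ factors through $C^*_r(G,A)$ precisely because the Fell-type intertwiner $V$ (pointwise multiplication by $\pi(\cdot)^{-1}$, satisfying $V^{-1}(1\otimes\lambda)(g)V=\pi(g)\otimes\lambda(g)$) is bounded. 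First I would redo this construction for an $\varepsilon$-exponential type module. Now $\|V^{\pm 1}\|$ is no longer bounded but is controlled by $e^{\varepsilon\ell}$, so $\pi_A$ no longer extends to $C^*_r(G,A)$; instead it extends to an auxiliary Banach completion $B_\varepsilon$ of $C_c(G,A)$ carrying the exponential weight $e^{\varepsilon\ell}$, namely the completion of $C_c(G,A)$ for $\|f\|_{B_\varepsilon}=\|\,g\mapsto e^{\varepsilon\ell(g)}f(g)\,\|_{C^*_r(G,A)}$, which is submultiplicative because $\ell$ is subadditive. This is the single structural point where the $\varepsilon$-exponential case departs from the uniformly bounded one.

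Second, I would organize the resulting maps. For $\varepsilon'<\varepsilon$ one has continuous morphisms $B_\varepsilon\to B_{\varepsilon'}\to C^*_r(G,A)$ (weakening the weight), and the $\varepsilon$-exponential construction is additive, homotopy-invariant on $R_\varepsilon(G)$, and compatible with the projective system $R_\varepsilon(G)\to R_{\varepsilon'}(G)$. Thus a class $z\in R_\varepsilon(G)$ yields a Banach Fredholm module over $B_\varepsilon$ with coefficients in $C^*_r(G,A)$, hence a homomorphism $W_\varepsilon(z)\colon K_*(B_\varepsilon)\to K_*(C^*_r(G,A))$. The crucial compatibility is that, for $z$ the image of a unitary class $x$, restricting $\pi_A$ along the natural map $\iota_\varepsilon\colon B_\varepsilon\to C^*_r(G,A)$ recovers the $\varepsilon$-exponential module itself; in K-theory this reads
$$W_\varepsilon(x)=\tilde x_A\circ(\iota_\varepsilon)_*\colon K_*(B_\varepsilon)\to K_*(C^*_r(G,A)).$$

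Third comes the conclusion. Suppose $x$ lies in the kernel of $R(G)\to\varprojlim R_\varepsilon(G)$, i.e. $x=0$ in $R_\varepsilon(G)$ for every $\varepsilon>0$. Then, for each $\varepsilon$, the image of $x$ is homotopic within $\varepsilon$-exponential type modules to a degenerate module; since the construction sends such a homotopy to a homotopy of Banach modules over the \emph{fixed} algebra $B_\varepsilon$, and degenerate modules give the zero class, we get $W_\varepsilon(x)=0$. By the displayed identity, $\tilde x_A$ vanishes on the image of $(\iota_\varepsilon)_*\colon K_*(B_\varepsilon)\to K_*(C^*_r(G,A))$ for every $\varepsilon$. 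It remains to see that these images exhaust $K_*(C^*_r(G,A))$: the algebras $B_\varepsilon$ increase as $\varepsilon\downarrow 0$ and all contain the dense subalgebra $C_c(G,A)$, so one expects the natural map $\varinjlim_{\varepsilon\to 0}K_*(B_\varepsilon)\to K_*(C^*_r(G,A))$ to be surjective; granting this, $\tilde x_A=0$, which is the assertion.

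The main obstacle is exactly this last surjectivity of $\varinjlim_\varepsilon K_*(B_\varepsilon)\to K_*(C^*_r(G,A))$, and it is also the reason the statement is only a kernel inclusion rather than a clean factorization through a single $R_\varepsilon(G)$: for a fixed $\varepsilon$ the weighted algebra $B_\varepsilon$ need not be K-theoretically large inside $C^*_r(G,A)$, since the exponential weight keeps $\pi_A$ away from $C^*_r(G,A)$, so one genuinely must pass to the limit $\varepsilon\to 0$ and argue that $\bigcup_\varepsilon B_\varepsilon$, the functions of arbitrarily small exponential growth relative to $\ell$, exhausts the K-theory of $C^*_r(G,A)$. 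This is where the hypothesis that $G$ is a connected Lie group (hence carries a well-behaved length function) enters, and it is the analytic heart of the Higson--Lafforgue result; the remainder of the argument is formal once the intermediate algebras $B_\varepsilon$ and the identity $W_\varepsilon(x)=\tilde x_A\circ(\iota_\varepsilon)_*$ are in place.
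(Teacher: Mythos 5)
Your overall architecture is the right one and matches the Higson--Lafforgue proof sketched in the paper: intermediate Banach completions of $C_c(G,A)$ adapted to $\varepsilon$-exponential representations, maps $R_\varepsilon(G)\rightarrow \mathrm{Hom}(K_*(C_\varepsilon),K_*(C^*_r(G,A)))$ compatible with the projective system (your identity $W_\varepsilon(x)=\tilde x_A\circ(\iota_\varepsilon)_*$ is exactly the commutative diagram in the paper), and a final exhaustion lemma. But there are two genuine gaps. First, your concrete definition of $B_\varepsilon$ does not work as stated. The norm $\|f\|_{B_\varepsilon}=\|e^{\varepsilon\ell}f\|_{C^*_r(G,A)}$ is not submultiplicative ``because $\ell$ is subadditive'': subadditivity only yields the pointwise estimate $e^{\varepsilon\ell(g)}|(f_1\ast f_2)(g)|\leq\bigl((e^{\varepsilon\ell}|f_1|)\ast(e^{\varepsilon\ell}|f_2|)\bigr)(g)$, and the reduced $C^*$-norm is not unconditional (it is not monotone under pointwise domination of $|f|$), so no inequality between operator norms follows. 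The same defect ruins the other properties you need: an $\varepsilon$-exponential $\pi$ only gives $\|\pi_A(f)\|\leq C\,\|e^{\varepsilon\ell}f\|_{L^1(G,A)}$, and since the $L^1$-norm dominates the $C^*_r$-norm (not the reverse), $\pi_A$ has no visible extension to your $B_\varepsilon$; even the comparison map $\iota_\varepsilon:B_\varepsilon\rightarrow C^*_r(G,A)$ is unjustified for the same reason. Higson and Lafforgue instead define $C_\varepsilon$ as the completion of $C_c(G,A)$ for the supremum of the norms $\|\pi_A(\cdot)\|$ over the relevant class of representations; then submultiplicativity, the extension property, and the map to $C^*_r(G,A)$ (the regular representation belongs to the family) hold by construction, and all the difficulty is displaced into comparing $C_\varepsilon$ with $C^*_r(G,A)$.

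Second, and more seriously, the exhaustion statement $K_*(C^*_r(G,A))=\bigcup_\varepsilon \mathrm{Im}\bigl(K_*(C_\varepsilon)\rightarrow K_*(C^*_r(G,A))\bigr)$ is the actual content of the theorem, and you leave it as ``one expects.'' Your heuristic --- the algebras increase and contain the dense subalgebra $C_c(G,A)$ --- is not a valid argument: dense Banach subalgebras need not be K-theoretically large, a leitmotiv of this very paper (the failure of $L^1(G)\subset C^*_r(G)$ to be spectral for noncompact semisimple $G$, and the $SL_2(\R)$ example where the discrete-series idempotent of $C^*_r(SL_2(\R))$ does not lie in the dense subalgebra $L^1$). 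In the actual proof this lemma is where the geometry enters: one uses the finite asymptotic dimension of the symmetric space $G/K$ to prove $\|f\|_{C_\varepsilon}\leq k_\varepsilon e^{\varepsilon(ar+b)}\|f\|_{C^*_r(G,A)}$ for $f\in C_c(G,A)$ supported in a ball of radius $r$, deduces $\rho_{C_\varepsilon}(f)\leq e^{\varepsilon ar}\rho_{C^*_r(G,A)}(f)$, hence $\rho_{C^*_r(G,A)}(f)=\inf_\varepsilon\rho_{C_\varepsilon}(f)$, and concludes by holomorphic functional calculus. You correctly locate this as the analytic heart and as the place where the Lie group hypothesis enters, but a proof must contain it; what you have established is only the formal, diagrammatic part of the argument, which the paper also treats as formal.
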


Let us sketch the proof following \cite{LaffHyp}. As above, to any representation $\pi$ of $G$ is associated an algebra homomorphism $$\pi_A : C_c(G,A)\rightarrow {\cal
L}_{C^*_r(G,A)}(E)$$ where $E=H\otimes C_{\rm red}^*(G,A)$.

For all $\varepsilon>0$ there is a Banach algebra $C_{\varepsilon}$ which is a completion of $C_c(G,A)$ such that for any representation $\pi$ of $\varepsilon$-exponential type,
 the above map
$\pi_A$ extends to a bounded homomorphism $C_{\varepsilon}\rightarrow{\cal L}_{C^*_r(G,A)}(E)$. The Banach Fredholm module thus obtained defines a map 
$$R_{\varepsilon}(G)\rightarrow {\rm Hom}(K_*(C_{\varepsilon}),K_*(C^*_r(G,A))).$$
This being done for each $\varepsilon$, we have a system of maps compatible with the maps $C_{\varepsilon}\rightarrow C_{\varepsilon'}$ for $\varepsilon'<\varepsilon$, so that 
there is a commutative diagram (cf \cite{LaffHyp} prop 2.5)

$$\xymatrix{R(G)\ar[d]\ar[r]&{\rm End}K_*(C^*_r(G,A))\ar[d]\\
 \varprojlim R_{\varepsilon}(G)\ar[r]& \varprojlim {\rm Hom}(K_*(C_{\varepsilon}),K_*(C^*_r(G,A))).}$$

The theorem of Higson-Lafforgue then follows immediately, thanks to the following lemma:

\begin{Lem} The group $K_*(C^*_r(G,A))$ is the union of the images of the maps $K_*(C_{\varepsilon})\rightarrow K_*(C^*_r(G,A))$.
\end{Lem}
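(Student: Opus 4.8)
The plan is to realize $C^*_r(G,A)$ as the Banach-algebra inductive limit of the family $(C_\varepsilon)_{\varepsilon>0}$ and to invoke continuity of topological K-theory under such limits, so that the lemma becomes a formal consequence. First I would set up the directed system: for $\varepsilon'<\varepsilon$ the connecting homomorphism $C_\varepsilon\to C_{\varepsilon'}$ is the contractive map extending the identity of $C_c(G,A)$, and each $C_\varepsilon$ carries a canonical contractive homomorphism $\iota_\varepsilon\colon C_\varepsilon\to C^*_r(G,A)$ extending the identity on $C_c(G,A)$ (induced by the trivial, hence $\varepsilon$-exponential type, representation, whose associated $\pi_A$ is left multiplication on $C^*_r(G,A)$ and thus has norm the reduced norm). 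These are compatible with the connecting maps and assemble into a single homomorphism $\iota\colon\varinjlim_\varepsilon C_\varepsilon\to C^*_r(G,A)$.

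The heart of the matter is that $\iota$ is an isomorphism, i.e. $\varinjlim_\varepsilon C_\varepsilon=C^*_r(G,A)$. As $C_c(G,A)$ is dense in each $C_\varepsilon$, it is dense both in the inductive limit and in $C^*_r(G,A)$, so it suffices to check that the limit norm agrees with the reduced norm on $C_c(G,A)$, namely $\lim_{\varepsilon\to 0}\|f\|_{C_\varepsilon}=\|f\|_{C^*_r(G,A)}$ for every $f\in C_c(G,A)$. Here I would use the decisive feature of the construction: the $C_\varepsilon$ are exponentially weighted completions built on the \emph{reduced} side (weight $e^{\varepsilon\ell}$), so that $\|f\|_{C_\varepsilon}\ge\|f\|_{C^*_r(G,A)}$ and the reduced norm is the $\varepsilon=0$ endpoint. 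For a fixed $f\in C_c(G,A)$ the length $\ell$ is bounded on the compact support of $f$, so $e^{\varepsilon\ell}\to 1$ uniformly there, and a routine estimate then yields the required norm convergence.

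Finally I would apply continuity of topological K-theory for inductive limits of good Banach algebras, $K_*(\varinjlim_\varepsilon C_\varepsilon)\cong\varinjlim_\varepsilon K_*(C_\varepsilon)$, together with $\varinjlim_\varepsilon C_\varepsilon\cong C^*_r(G,A)$, to obtain $K_*(C^*_r(G,A))\cong\varinjlim_\varepsilon K_*(C_\varepsilon)$. By the very definition of an inductive limit of abelian groups, every class on the right is the image under some $(\iota_\varepsilon)_*$ of a class in $K_*(C_\varepsilon)$, which is exactly the assertion of the lemma.

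The main obstacle is the identification $\varinjlim_\varepsilon C_\varepsilon=C^*_r(G,A)$: everything hinges on the $C_\varepsilon$ being weighted \emph{reduced} completions, so that the limit is $C^*_r(G,A)$ rather than, say, $L^1(G,A)$, whose image in $K_*(C^*_r(G,A))$ need not be onto for groups with property (T). Granting this, the norm convergence is elementary precisely because compactly supported functions feel only a bounded part of the weight. As an alternative that sidesteps the limit, one may argue directly on cycles: represent a class of $K_0(C^*_r(G,A))$ by an idempotent $e$ over $C^*_r(G,A)$, approximate $e$ in reduced norm by a matrix $x$ over $C_c(G,A)\subseteq\bigcap_\varepsilon C_\varepsilon$; since $x^2-x$ is compactly supported, $\|x^2-x\|_{C_\varepsilon}$ is small for $\varepsilon$ small, so holomorphic functional calculus in $C_\varepsilon$ yields an idempotent over $C_\varepsilon$ mapping to $[e]$, and the analogous argument with invertibles handles $K_1$.
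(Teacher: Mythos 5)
There is a genuine gap, and it sits exactly at the point you dismiss as ``a routine estimate.'' Your whole strategy (in both the inductive-limit route and the cycle-level alternative) rests on the claim that for a fixed compactly supported element the $C_\varepsilon$-norm is controlled by the \emph{reduced} norm as $\varepsilon\to 0$. But $C_\varepsilon$ is by definition a completion with respect to \emph{all} representations of $\varepsilon$-exponential type, so the only estimate that follows from $e^{\varepsilon\ell}\to 1$ uniformly on the support of $f$ is
$$\Vert\pi_A(f)\Vert\leq C e^{\varepsilon R}\Vert f\Vert_{L^1(G,A)},$$
i.e.\ control of $\Vert f\Vert_{C_\varepsilon}$ by the $L^1$-norm, not by $\Vert f\Vert_{C^*_r(G,A)}$. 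Since the $L^1$-norm is not dominated by the reduced norm, this gives no information; a priori $\varinjlim_\varepsilon C_\varepsilon$ could be strictly larger than $C^*_r(G,A)$, and indeed for property (T) groups the $L^1$-picture and the reduced picture differ at the level of K-theory, as you yourself point out. The actual content of the lemma is precisely the non-trivial converse estimate, which in the paper is Higson--Lafforgue's Proposition 2.6 of \cite{LaffHyp}: using that the symmetric space $G/K$ has \emph{finite asymptotic dimension} (this is where the hypothesis that $G$ is a connected Lie group enters), they prove
$$\Vert f\Vert_{C_{\varepsilon}}\leq k_{\varepsilon}e^{\varepsilon (ar+b)}\Vert f\Vert_{C^*_r(G,A)}$$
for $f\in C_c(G,A)$ supported in a ball of radius $r$. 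Your proposal never produces anything of this kind, so both of your arguments are circular at the decisive step.

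Two further remarks. First, even granting the Higson--Lafforgue estimate, your inductive-limit route still does not go through: because of the constants $k_\varepsilon$ (which are not claimed to stay bounded as $\varepsilon\to 0$), one cannot deduce $\lim_{\varepsilon\to 0}\Vert f\Vert_{C_\varepsilon}=\Vert f\Vert_{C^*_r(G,A)}$, only the spectral radius inequality $\rho_{C_{\varepsilon}}(f)\leq e^{\varepsilon ar}\rho_{C^*_r(G,A)}(f)$ obtained by applying the estimate to powers $f^n$ and using the spectral radius formula. So the identification $\varinjlim_\varepsilon C_\varepsilon\cong C^*_r(G,A)$ as Banach algebras is not available; what is available is equality of spectral radii in the limit, which is exactly what holomorphic functional calculus needs. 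Second, your ``alternative'' argument on cycles is, structurally, the paper's actual proof (approximate an idempotent by a compactly supported matrix, control its spectrum in $C_\varepsilon$, apply holomorphic functional calculus) --- but it must be run with spectral radii rather than norms, and it only works once the finite-asymptotic-dimension estimate is in place to convert smallness of $\Vert x^2-x\Vert_{C^*_r(G,A)}$ into smallness of $\rho_{C_\varepsilon}(x^2-x)$ for $\varepsilon$ small relative to the support radius of $x$.
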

To prove the lemma, Higson and Lafforgue use the fact that the symmetric space $Z=G/K$ has finite asymptotic dimension. They give an estimate of the form (prop 2.6 in \cite{LaffHyp})
$$\Vert f\Vert_{C_{\varepsilon}}\leq k_{\varepsilon}e^{\varepsilon (ar+b)}\Vert f\Vert_{C^*_r(G,A)}$$
for $f\in C_c(G,A)$ with support in a ball of radius $r$ (for the length $l$).

The spectral radius formula in Banach algebras then implies for such an $f$,
$$\rho_{C_{\varepsilon}}(f)\leq e^{\varepsilon ar}\rho_{C^*_r(G,A)}(f),$$
so that $\rho_{C^*_r(G,A)}(f)=\inf \rho_{C_{\varepsilon}}(f)$. This fact, by standard holomorphic calculus techniques, implies the lemma.

 \subsubsection {Cowling representations and $\gamma$}\label{Cowling}
 
 The beautiful work of M. Cowling and U. Haagerup on completely bounded multipliers of the Fourier algebras for rank one simple Lie groups \cite{Cowling-Haagerup} inspired P. Julg to use Cowling's strip of uniformly bounded representations to prove the Baum-Connes conjecture for such groups.
  Consider the Hilbert space $L^2(G/P)$ associated to a $K$-invariant measure on the flag manifold $G/P$. Let $\pi_1$ be the natural action of $G$, i.e. $\pi_1 (g)f=f\circ g^{-1}$, and let $\pi_0$ be the unitary representation obtained by twisting $\pi_1$ by a suitable cocycle: $\pi_0 (g)=\lambda_g^{N/2}\pi_1 (g)$. One can interpolate between $\pi_0$ and $\pi_1$ by taking 
 $$\pi_s(g)=\lambda_g^{\frac{(1-s)N}{2}}\pi_1(g),$$ 
 with $s$ being a complex number. The result of M. Cowling \cite{Cowling82, ACdB} is the following:
 
 \begin {Thm} The representation
 $(1+\Delta_E)^{(1-s)N/4}\pi_s (g)(1+\Delta_E)^{-(1-s)N/4}$ is uniformly bounded for any $s$ in the strip $-1< \Re s <1$.
 \end {Thm}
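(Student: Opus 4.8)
The plan is to read the family $\pi_s$ as the (compact picture of the) non-unitary principal series of $G$ on $L^2(G/P)=L^2(K/M)$, and to exploit the quasi-conformal covariance of $\Delta_E$ under the geometric action in order to recognize the conjugated operators as a family of order-zero operators whose symbol estimates are uniform in $g$. First I would rewrite the twist multiplicatively: since $\pi_s(g)=\lambda_g^{(1-s)N/2}\pi_1(g)$ and $\pi_0(g)=\lambda_g^{N/2}\pi_1(g)$ is unitary, one has $\pi_s(g)=\lambda_g^{-sN/2}\pi_0(g)$, i.e. $\pi_s$ is the unitary representation $\pi_0$ pre-composed with multiplication by the order-zero, positive multiplier $\lambda_g^{-sN/2}$. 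Thus, setting $\alpha=(1-s)N/4$, the object to bound is $\rho_s(g)=(1+\Delta_E)^{\alpha}\,\lambda_g^{-sN/2}\pi_0(g)\,(1+\Delta_E)^{-\alpha}$.

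The decisive geometric input is that $G$ acts on $G/P$ by quasi-conformal transformations, so that the hypoelliptic, $K$-invariant operator $\Delta_E$ is covariant up to the conformal cocycle: $\pi_1(g)^{-1}\Delta_E\pi_1(g)$ agrees with $\lambda_g^{-2}\Delta_E$ modulo lower order, exactly as in the principal-symbol computation already used by Kasparov in the $SO_0(2n+1,1)$ model. I would feed this into the pseudodifferential (more precisely Fourier-integral) calculus: $\pi_0(g)$ is a unitary order-zero Fourier integral operator attached to the diffeomorphism induced by $g$, and conjugating it by the complex power $(1+\Delta_E)^{\alpha}$ multiplies its symbol along the canonical relation by a ratio of the form $\lambda_g^{\pm 2\alpha}$. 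Combining this with the explicit factor $\lambda_g^{-sN/2}$ and the choice $\alpha=(1-s)N/4$, a symbol-level bookkeeping shows that $\rho_s(g)$ is again an order-zero operator, equal to a bounded (indeed unitary-symbol) operator plus a genuinely negative-order, compact remainder. This last structural point is what makes the whole construction produce, in the end, a Fredholm module representing $\gamma$.

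The heart of the matter, and the step I expect to be the main obstacle, is the passage from ``order zero'' to a bound that is uniform in $g$. Here one must control the finitely many symbol seminorms of $\rho_s(g)$, together with the negative-order remainder, by constants independent of $g$; the mechanism is that a quasi-conformal action on a compact manifold transforms these seminorms only through the cocycle $\lambda_g$ and its covariant derivatives, whose contributions cancel in $\rho_s(g)$ by the very choice of exponent. The restriction to the open strip enters precisely at this level: writing $\alpha=(1-s)N/4$, the conjugation stays within the order-zero calculus and the comparison symbol is nowhere degenerate exactly when $\Re\alpha$ lies strictly between $0$ and $N/2$, that is when $-1<\Re s<1$; at the walls $\Re s=\pm1$ the symbol of the comparison operator degenerates (this is the analytic counterpart of the poles and zeros of the rank-one intertwining operator) and the two-sided control is lost.

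Finally I would address the complex parameter. For fixed $s$ in the strip the imaginary parts of $\alpha$ and of the exponent in $\lambda_g^{-sN/2}$ contribute only oscillatory, modulus-one factors together with the imaginary power $(1+\Delta_E)^{-i\,\mathrm{Im}(\alpha)}$, whose operator norm is controlled (polynomially in the imaginary part) by a spectral multiplier estimate for the sub-Laplacian $\Delta_E$; since the asserted conclusion is uniformity in $g$ for each fixed $s$, it suffices that this constant be finite for each such $s$. As a cross-check, one may reformulate the whole argument representation-theoretically: decomposing $L^2(K/M)=\bigoplus_\tau V_\tau$ into $K$-types, both the Knapp--Stein intertwiner and $(1+\Delta_E)$ act by scalars, and the symbol estimate above becomes the statement that the relevant ratio of Gamma-function scalars against a power of the $\Delta_E$-eigenvalue stays bounded on each closed substrip, an asymptotic one settles by Stirling's formula.
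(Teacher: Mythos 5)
The first thing to say is that the survey does not prove this theorem: it is quoted from Cowling's work \cite{Cowling82, ACdB}, so there is no proof in the paper to compare yours with, and your sketch has to stand on its own. Its algebraic skeleton is fine --- writing $\pi_s(g)=\lambda_g^{-sN/2}\pi_0(g)$ with $\pi_0$ unitary, so that everything reduces to bounding $(1+\Delta_E)^{\alpha}M_{\lambda_g^{-sN/2}}\,\pi_0(g)(1+\Delta_E)^{-\alpha}\pi_0(g)^{-1}$ (with $M_f$ denoting multiplication), and identifying the quasi-conformal covariance of $\Delta_E$ as the geometric input. The gap is the step you yourself flag as the heart of the matter: passing from a principal-symbol cancellation to a bound \emph{uniform in} $g$. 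Operator norms in the pseudodifferential (or Fourier integral) calculus are controlled, via Calder\'on--Vaillancourt, by finitely many symbol seminorms, and for your operators those seminorms involve derivatives of $\lambda_g$ and of the diffeomorphism of $G/P$ induced by $g$; these blow up exponentially as $g\to\infty$, since the action of $G$ on $G/P$ concentrates almost all of the flag manifold at a point. The choice of exponent can at best cancel the leading symbol; the lower-order terms produced by the composition calculus carry $g$-dependent constants, and observing that the remainder is compact gives nothing uniform --- compactness of commutators at each fixed $g$ is exactly what Kasparov's much easier Fredholm-module argument \cite{Kasparov84} needs, and is strictly weaker than Cowling's theorem. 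The actual proofs handle uniformity by a reduction your sketch never makes: since $(1+\Delta_E)^{\alpha}$ commutes with $K$ and $\pi_s|_K$ is unitary, the Cartan decomposition $G=KA^+K$ reduces everything to $g=a_t\in A^+$; transported by the Cayley transform to the Iwasawa nilpotent group $\bar N$, the action of $a_t$ is a dilation interacting homogeneously with the sub-Laplacian, and the uniform bound then rests on genuine Mikhlin--H\"ormander-type spectral multiplier estimates on $\bar N$, which are the analytic core of \cite{Cowling82, ACdB} and cannot be replaced by soft symbol bookkeeping.

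Two further points show that the bookkeeping you invoke was not actually carried out. First, do the computation at $g=a_t$: using the homogeneity $\mathcal{L}(f\circ\delta_t)=t^2(\mathcal{L}f)\circ\delta_t$ of the sub-Laplacian under dilations, the conjugating exponent must vanish at the unitary point $s=0$ and equal $\pm N/4$ at the untwisted representation; with the survey's indexation of $\pi_s$ this forces a conjugator of the form $(1+\Delta_E)^{\pm sN/4}$, whereas with the literal exponent $(1-s)N/4$ the conjugated operator at $a_t$ has norm growing like a nonzero power of $t$ --- already at $s=0$ the statement would assert that the unitary representation $\pi_0$ is uniformly bounded on the Sobolev space $H^{N/2}$, which fails under dilations. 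So the exponent in the survey's statement is mis-indexed, and a cancellation occurring ``by the very choice of exponent'' cannot be something you verified. Second, the K-type cross-check cannot close the argument: $\Delta_E$ and the Knapp--Stein intertwiner act by scalars on K-types because they commute with $K$ (in rank one $(K,M)$ is a Gelfand pair, so $L^2(K/M)$ is multiplicity-free), but the conjugated operators at $a_t\notin K$ do not preserve K-types; comparing the two families of scalars via Stirling only verifies that the conjugator has the size of the square root of the modulus of the intertwiner --- i.e.\ it recovers unitarizability on the complementary series --- and says nothing about uniform boundedness on the rest of the strip. (A minor point: $(1+\Delta_E)^{i\tau}$ is unitary on $L^2$ by the spectral theorem, so no multiplier estimate is needed for it; but these unitary factors sit inside the product and do not simply peel off, so even the reduction to real $s$ requires an argument.)
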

 
 In particular this holds for $-1< s <1$. The important point is to compare with Kostant's result on the unitarizability of $\pi_s$. The representations $\pi_s$ are by construction unitary if $\Re s=0$. Otherwise they are unitarizable (i.e. admit an intertwining operator  $T_s$ such that $T_s^{-1}\pi_s(g)T_s$ is unitary) if and only if $-c<s<c$  for a certain $c\leq 1$. This is the so called complementary series. The critical value $s$ is as follows:
 \begin{enumerate}
\item[1)]
If $G=SO_0(n,1)$ or $SU(n,1)$, $c=1$.
\item[2)]
If $G=Sp(n,1)$, $c={2n-1\over 2n+1}$
\item[3)] If  $G=F_{4(-20)}$, $c={5\over 11}$.
\end{enumerate}
In case 1, $G$ has the Haagerup property, and the complementary series approaches the trivial representation. In cases 2 and 3 one has $c<1$ so that there is a gap between the complementary series and the trivial representation, as expected from property (T). 

The above family $\pi_s$ ($0\leq s<1$) and its generalizations to the other principal series are the tool for constructing a homotopy between $\gamma$ and $1$. Indeed the proofs of $\gamma=1$ by Kasparov \cite{Kasparov84}, Chen \cite{Chen} and Julg-Kasparov \cite{Julg-Kasparov}  rest upon the complementary series. In the general case, Julg \cite{Julg} constructs a similar homotopy involving Cowling uniformly bounded representations. Modulo some (not yet fully clarified) estimates, that would prove that $\gamma$ is $1$ in $R_{\varepsilon}(G)$ for all $\varepsilon >0$ (with the above notations).

 \subsubsection{Lafforgue's result for hyperbolic groups}\label{hyperLafforgue}
 
 In 2012, in a very long and deep paper, Vincent Lafforgue has proved the following result.

 \begin{Thm}\label{hypcoeff} Let $G$ be a word-hyperbolic group. Then $G$ satisfies the Baum-Connes conjecture with coefficients (conjecture \ref{ConjBCcoeff}).
  \end{Thm}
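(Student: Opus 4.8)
The plan is to run the $\gamma$-element strategy of Section~\ref{viveTu}, but to extract surjectivity from the ``slow exponential growth'' deformation of Section~\ref{slowgrowth} rather than from the equality $\gamma=1$ in $KK_G(\C,\C)$, which cannot hold for those hyperbolic groups that have property (T). First I would produce a $\gamma$-element. A word-hyperbolic group $G$ acts properly, cocompactly and isometrically on a weakly geodesic, weakly bolic space of bounded coarse geometry (e.g.\ its Rips complex, a cocompact model for $\underline{EG}$), so it lies in the Kasparov--Skandalis class and admits $\gamma=\beta\otimes_B\alpha$ with $B$ a proper $G$-$C^*$-algebra and Dirac/dual-Dirac elements $\alpha\in KK_G(B,\C)$, $\beta\in KK_G(\C,B)$. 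By Theorem~\ref{abstractgamma}(1) this already gives injectivity of $\mu_{A,r}$ for every $G$-$C^*$-algebra $A$, and by Theorem~\ref{abstractgamma}(2) the remaining content of Conjecture~\ref{ConjBCcoeff} is exactly the statement that $\tilde\gamma_A=\mathrm{Id}$ on $K_*(C^*_r(G,A))$ for every $A$.

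For the surjectivity one cannot argue $\gamma=1$: if $G$ is, say, a cocompact lattice in $Sp(n,1)$ it has property (T), and then Corollary~\ref{TnotKamen} forbids $\gamma=1$. Instead I would follow the ``back to Hilbert spaces'' principle and show that $\gamma$ and $1$, although distinct in $KK_G(\C,\C)$, induce the same endomorphism of $K_*(C^*_r(G,A))$, by deforming one into the other through $G$-Fredholm modules built on Hilbert-space representations of $\varepsilon$-exponential type. Concretely, using the discrete-group analogue of the Higson--Lafforgue factorization of Section~\ref{slowgrowth} (the version in the excerpt is stated for connected Lie groups and must be adapted), the map $R(G)\to\mathrm{End}\,K_*(C^*_r(G,A))$ kills the kernel of $R(G)\to\varprojlim_\varepsilon R_\varepsilon(G)$. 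Hence it suffices to prove that $\gamma$ and $1$ have the same image in $R_\varepsilon(G)$ for every $\varepsilon>0$, i.e.\ that $\gamma=1$ in $\varprojlim_\varepsilon R_\varepsilon(G)$. The Banach-algebra completions $C_\varepsilon$ of $C_c(G,A)$, together with the spectral-radius estimate and the fact that $K_*(C^*_r(G,A))$ is covered by the images of the $K_*(C_\varepsilon)$, then transport that equality into the required identity $\tilde\gamma_A=\mathrm{Id}$.

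The heart of the proof is therefore the construction, for each $\varepsilon>0$, of a homotopy of $\varepsilon$-exponential-type $G$-Fredholm modules joining a representative of $\gamma$ to the trivial module. Here I would exploit the action of $G$ on its Gromov boundary $\partial G$ and build, on an $L^2$-space over $\partial G$ (or over $\partial G\times\partial G$), a holomorphic family of representations $\pi_s$ interpolating between a unitary endpoint and the trivial representation, in the spirit of the complementary and principal series and of Cowling's uniformly bounded family of Section~\ref{Cowling}. Whereas property (T) pushes the trivial representation away from the genuinely unitary part of such a family, one can still reach it through representations whose operator norms grow only like $e^{\varepsilon\ell(g)}$; fine estimates on the Gromov product and on the quasi-conformal kernels defining $\pi_s$ should supply exactly this slow exponential control as well as the compactness of the relevant commutators.

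The main obstacle is precisely this last step. General hyperbolic groups carry no symmetric-space or homogeneous structure, so Cowling's explicit rank-one representations are unavailable and the whole analysis must be carried out intrinsically on $\partial G$ using only hyperbolicity (thinness of triangles, exponential divergence of geodesics, estimates on the Gromov product). Producing representations that simultaneously (i) realize the homotopy class of $\gamma$, (ii) satisfy the compact-commutator axioms of a Fredholm module after completion, and (iii) obey uniform $e^{\varepsilon\ell(g)}$-bounds with $\varepsilon\to 0$, demands delicate harmonic analysis on the boundary and constitutes the deep, lengthy technical core of Lafforgue's argument.
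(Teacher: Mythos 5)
Your scaffolding agrees with the paper's: you invoke the Kasparov--Skandalis $\gamma$-element for hyperbolic groups to get injectivity via Theorem \ref{abstractgamma}, and you reduce surjectivity, through the Higson--Lafforgue factorization of section \ref{slowgrowth} (correctly noting it must be adapted beyond connected Lie groups, which Lafforgue does using finite asymptotic dimension), to the statement that $\gamma$ and $1$ have the same image in $R_\varepsilon(G)$ for every $\varepsilon>0$. Up to that point you are following the proof of section \ref{hyperLafforgue} faithfully.

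The gap is in the step where the entire difficulty is concentrated: how the $\varepsilon$-exponential homotopy between $\gamma$ and $1$ is actually built. You propose to construct, on an $L^2$-space over the Gromov boundary $\partial G$, a holomorphic family of representations $\pi_s$ interpolating between a unitary endpoint and the trivial representation, in the spirit of the complementary series and of Cowling's uniformly bounded families (section \ref{Cowling}). That is not Lafforgue's argument, and it is not a route known to work: Cowling's families exist only because of the homogeneous structure of rank-one symmetric spaces (the parameter comes from the principal series, the estimates from explicit Poisson kernels and intertwiners), and a general word-hyperbolic group carries no such structure on its boundary. This boundary program is precisely Julg's strategy for $Sp(n,1)$ and $F_{4(-20)}$, and the paper records in section \ref{Trichotomy} that even in that maximally homogeneous case the details have not been fully written. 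Lafforgue's proof never touches $\partial G$. He works on the Rips complex $\Delta=P_R(G)$: he takes the simplicial coboundary $\partial$ together with a carefully averaged parametrix $h$ (so that $\partial h+h\partial=1$ away from degree zero and $h^2=0$), conjugates to get $F_t=e^{t\rho}(\partial+h)e^{-t\rho}$ with $\rho$ a smoothed distance to the origin, and --- this is the two-hundred-page core --- constructs exotic weighted Hilbert completions $H_\varepsilon$ of $\C[\Delta]$ (the paper gives the tree prototype of the norm) on which the \emph{fixed} representation $\pi$ of $G$ satisfies $\Vert\pi(g)\Vert_\varepsilon\leq Ce^{\varepsilon d(gx_0,x_0)}$, each $F_t$ is bounded, and the commutators $[F_t,\pi(g)]$ are compact. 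The homotopy variable is $t$: for $t$ large the module represents $\gamma$, and at $t=0$ the parametrix identity identifies the class of $\partial+h$ with $1$. So the representation stays fixed and only the operator moves, exactly the opposite of your picture in which the representation varies; and the geometric carrier is the Rips complex with weighted $\ell^2$-norms, not harmonic analysis on the boundary. Without these weighted norms there is no known mechanism making your requirements (i)--(iii) compatible for a general hyperbolic group, so as written the central step of your proposal is a program, not a proof, and a program different from the one the theorem rests on.
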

  
 \begin {Rem} Lafforgue proves more generally the same result for $G$ a locally compact group acting continuously, isometrically and properly on a metric space $X$ which is hyperbolic, weakly geodesic and uniformly locally finite.  \end{Rem} 
 
 Let us sketch the main steps of Lafforgue's proof. The basic geometric object is the Rips complex $\Delta =P_R(G)$ of the group $G$ seen as a metric space with respect to the word-metric $d_S$ associated with a set of generators $S$. 

\begin{Def}\label{Rips} Let $Y$ be a locally finite metric space (i.e. every ball in $Y$ is finite). Fix $R\geq 0$. The {\it Rips complex} $P_R(Y)$ is the simplicial complex with vertex set $Y$, such that a subset $F$ with $(n+1)$-elements spans a $n$-simplex if and only if $diam(F)\leq R$.
\end{Def}

Because $G$ is hyperbolic, one can choose the radius $R$ big enough so that  $\Delta$ is contractible.
 Let $\partial$ be 
 the coboundary 
 $${\bf C}[\Delta^0]\leftarrow {\bf C}[\Delta^1]\leftarrow {\bf C}[\Delta^2]\leftarrow ...$$
 of the Rips complex. Let us recall the formula for $\partial$:
 $$\partial \delta_{g_0,g_1,...,g_k}=\sum_{i=0}^k (-1)^i\delta_{g_0,...,\hat g_i,...,g_k}$$
 
Contractibility of the Rips complex implies that the homology of the complex $\partial$ is zero in all degrees, except in degree $0$ where it is one-dimensional. But a concrete contraction onto the origin $x_0$ of the graph gives rise to a parametrix, i.e. maps $h: {\bf C}[\Delta^k] \rightarrow {\bf C}[\Delta^{k+1}]$ such that $\partial h+h\partial=1$ (except in degree zero where it is $1-p_{x_0}$ where $p_0$ has image in ${\bf C}\delta_{x_0}$)
 ) and $h^2=0$.  
 The prototype is the case of a tree, where $h\delta_x=\sum \delta_e$, the sum being extended to the edges on the geodesic from $x_0$ to $x$. The case of a hyperbolic group is more subtle, and the construction of $h$ has to involve some averaging over geodesics. Suitable parametrices have been considered by V. Lafforgue in the Banach framework. 

 Kasparov and Skandalis  in  \cite{Kasparov-Skandalis91} have shown that hyperbolic groups admit a $\gamma$-element which can be represented by an operator on the space $\ell^2(\Delta)$.
 Lafforgue considers the following variant of the Kasparov-Skandalis construction. 
  Let us conjugate the operator $\partial+h$  by a suitable function of the form $e^{t\rho}$ where $\rho$ is the (suitably averaged) distance function to the point $x_0$. Then for $t$ big enough, the operator $e^{t\rho}(\partial+h)e^{-t\rho}$, on the Hilbert space $\ell^2(\Delta)$ equipped with the even/odd grading and the natural representation $\pi$ of $G$, represents the $\gamma$-element. 
 
Lafforgue's {\it tour de force} is to modify the construction of the operator $h$ and to construct Hilbert norms $\Vert .\Vert_{\varepsilon}$ on ${\bf C}[\Delta]$ such that the operators $e^{t\rho}(\partial +h)e^{-t\rho}$ become a homotopy between $\gamma$ (for $t$ big ) and $1$ (for $t=0$), this homotopy being through $\varepsilon$-exponential representations. Let us give the precise statement:

\begin{Thm} Let  $G$ be a word hyperbolic group; let $\Delta$ and $\partial$ be  as above. Fix $\varepsilon >0$. There exists a suitable parametrix $h$ satisfying the conditions above, a Hilbert completion $H_{\varepsilon}$  of the space ${\bf C}[\Delta]$, and a distance function $d$ on $G$ differing from $d_S$ by a bounded function such that : 
\begin{enumerate}
\item 
the operator $F_t=e^{t\rho}(\partial +h)e^{-t\rho}$ (where $\rho$ is the distance to the origin $x_0$) extends to a bounded operator on $H_{\varepsilon}$ for any $t$, \item the representation $\pi$ of $G$ extends to a representation on $H_{\varepsilon}$ with estimates 
$\Vert\pi (g)\Vert_{\varepsilon}\leq Ce^{\varepsilon d(gx_0,x_0)}$, \item the operators $[F_t,\pi (g)]$ are compact on $H_{\varepsilon}$. \end{enumerate}
\end{Thm}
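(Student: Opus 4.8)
The plan is to realize the entire homotopy on a single weighted $\ell^2$-space built from the Rips complex, following the Kasparov--Skandalis construction of $\gamma$ but deforming it through representations of $\varepsilon$-exponential type. First I would fix the geometric data: choose a finite generating set $S$, form the word metric $d_S$, and take $R$ large enough that $\Delta=P_R(G)$ is contractible --- this is possible precisely because $G$ is hyperbolic. Contractibility supplies a chain contraction, but the real work is to produce a \emph{geometric} parametrix $h$ of controlled propagation: a degree $+1$ operator on $\mathbf{C}[\Delta]$ with $\partial h+h\partial=1-p_{x_0}$ and $h^2=0$. On a tree one simply sets $h\delta_x=\sum\delta_e$ over the edges of the unique geodesic $[x_0,x]$; in the hyperbolic case uniqueness fails, so I would average over geodesics (or a canonical family of quasi-geodesics) emanating from $x_0$, using the thin-triangle inequality to keep this averaging concentrated in a bounded neighbourhood of any genuine geodesic.

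Next I would define the completion $H_\varepsilon$ by weighting the standard $\ell^2$-norm on $\mathbf{C}[\Delta]$ with factors built from distances to $x_0$ and Gromov products, chosen to balance two competing demands. The translation action $\pi$ must grow no faster than $Ce^{\varepsilon d(gx_0,x_0)}$ (condition 2), which forces the weights not to vary too fast along the action; simultaneously $\partial+h$, and hence each conjugate $F_t=e^{t\rho}(\partial+h)e^{-t\rho}$, must stay bounded (condition 1), which forces the weights to tame the matrix coefficients of $h$ after conjugation. Since $\partial$ has finite propagation, the factor $e^{t(\rho(\sigma)-\rho(\tau))}$ produced by conjugation is uniformly bounded on its support; for $h$ the same factor is controlled because the geodesics defining $h$ move $\rho$ monotonically up to a bounded error, so $\rho(\sigma)-\rho(\tau)$ stays bounded along the support of $h$. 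This monotonicity, a direct consequence of hyperbolicity, is exactly what keeps $F_t$ bounded for every $t$.

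For the compactness of $[F_t,\pi(g)]$ (condition 3) I would note that both $F_t$ and $\pi(g)$ act by kernels of controlled propagation, so the commutator is supported in a region whose tails are summable after the $\varepsilon$-weighting; a Schur-type estimate together with finite-rank approximation then yields compactness. Finally, for the homotopy itself: at $t=0$ the operator $F_0=\partial+h$ is invertible up to the rank-one term $p_{x_0}$, so $(H_\varepsilon,\pi,F_0)$ represents $1$ in $R_\varepsilon(G)$; for $t$ large, $e^{t\rho}(\partial+h)e^{-t\rho}$ degenerates into the Kasparov--Skandalis operator representing $\gamma$ on $\ell^2(\Delta)$. The continuous family $(F_t)_{t\geq 0}$ is then the desired homotopy, proving $\gamma=1$ in $R_\varepsilon(G)$ for every $\varepsilon>0$; combined with the Higson--Lafforgue theorem this shows $\gamma$ acts as the identity on $K_*(C^*_r(G,A))$, hence Conjecture \ref{ConjBCcoeff} for $G$.

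The hard part --- the core of Lafforgue's \emph{tour de force} --- is the simultaneous construction of $h$ and of the norm $\|\cdot\|_\varepsilon$. The obstacle is that in a hyperbolic space geodesics diverge exponentially, so a naive averaging spreads $h$ too widely and destroys boundedness; one must exploit the fine structure of hyperbolicity, in particular the exponential contraction controlled by the Gromov product, to show that the spreading occurs at a rate the $\varepsilon$-weight can absorb. Forcing all three estimates to hold at once, uniformly in $t$ and with a single norm, is precisely where the entire difficulty concentrates.
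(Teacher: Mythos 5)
There is a genuine gap, and it sits exactly where you locate the difficulty: the structure of the norm $\Vert\cdot\Vert_\varepsilon$. You propose a \emph{diagonal} weighting of the standard $\ell^2$-norm on $\mathbf{C}[\Delta]$ (weights built from $\rho$ and Gromov products), and you argue boundedness of $F_t$ from the boundedness of the conjugation factors on the support of $\partial+h$. Two things go wrong. First, since $\partial h+h\partial=1-p_{x_0}$, the parametrix must reach all the way back to the origin (on a tree $h\delta_x=\sum_{e\in[x_0,x]}\delta_e$), so $\rho(\sigma)-\rho(\tau)$ ranges over all of $[-\rho(x),0]$ on the support of $h$; the conjugation factors $e^{t(\rho(\sigma)-\rho(\tau))}$ are indeed $\leq 1$ for $t\geq 0$, but entrywise bounds on a kernel with unbounded propagation say nothing about its operator norm (at $t=0$ the entries of $h$ are just $0$'s and $1$'s, yet $h$ is unbounded on $\ell^2$). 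Second, and fatally, \emph{no} diagonal weight can satisfy conditions (1) and (2) simultaneously once $\varepsilon$ is small. Condition (2) forces $w(gx)/w(x)\leq C^2e^{2\varepsilon |g|_S}$, i.e.\ $\log w$ is $2\varepsilon$-Lipschitz up to an additive constant. Now test $h$ on functions $f$ supported on $S^n\cap C(e_0)$, the sphere of radius $n$ inside the cone over an edge $e_0$ at the origin: the coefficient of $hf$ at $e_0$ is $\sum_x f(x)$, and optimizing by Cauchy--Schwarz gives
$$\Vert h\Vert^2\;\geq\; w(e_0)\sum_{x\in S^n\cap C(e_0)}w(x)^{-1}\;\gtrsim\; q^{\,n}e^{-2\varepsilon n},$$
where $q>1$ is the exponential growth rate of the (non-elementary) group $G$. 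So $F_0=\partial+h$ is unbounded as soon as $\varepsilon<\tfrac12\log q$, which is precisely the regime the theorem must reach: the whole point is arbitrarily small $\varepsilon$, since for large $\varepsilon$ one is back to uniformly-bounded-type representations, which property (T) obstructs.

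What Lafforgue actually does (and what the paper sketches in the tree model) is of a different nature: the norm is not an $\ell^2$-norm of the \emph{values} of $f$ but of its \emph{partial sums over cones}. For $f$ supported on the sphere $S^n$ one sets
$$\Vert f\Vert_\varepsilon^2=e^{2\varepsilon n}\sum_{z\in B^n}\Big\vert\sum_{x\rightarrow z}f(x)\Big\vert^2,$$
the inner sum running over the $x\in S^n$ whose geodesic to $x_0$ passes through $z$, and $\Vert f\Vert_\varepsilon^2=\sum_n\Vert f_n\Vert_\varepsilon^2$ in general. Because cone sums at different levels telescope consistently (the coefficients of $hf$ at level $k$ \emph{are} cone sums of $f$), this norm makes $e^{t\rho}(\partial+h)e^{-t\rho}$ bounded uniformly in $t\geq 0$, with constant of order $(1-e^{-2\varepsilon})^{-1}$ --- exactly the feature a diagonal weight cannot reproduce. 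The price is that the estimate you treat as the easy compatibility condition, namely condition (2) on $\pi(g)$, becomes the hard point: it amounts to comparing the norms built from the two basepoints $x_0$ and $gx_0$, and yields only $\Vert\pi(g)\Vert_\varepsilon\leq P(l(g))e^{\varepsilon l(g)}$ with $P$ a polynomial; carrying this out when $h$ is an average over geodesics in a general hyperbolic group, where the cone sums only quasi-telescope, is where the bulk of Lafforgue's work lies. Your compactness argument for $[F_t,\pi(g)]$ must also be redone in this framework, since the Schur-type tests you invoke presuppose the diagonal kernel structure that the correct norm abandons.
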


Let us give an idea of how the Hilbert norms $\Vert .\Vert_{\varepsilon}$ on ${\bf C}[\Delta]$ are contructed. It is most enlightening to consider the prototype case of trees. Let $S^n$ denote the sphere of radius $n$, i.e. the set of vertices at distance $n$ from the origin $x_0$ and $B^n$ the ball of radius $n$, i.e. the set of vertices at distance $\leq n$ of $x_0$. Suppose that $f\in {\bf C}[\Delta^0]$ has support in $S^n$. Then $$\Vert f\Vert_{\varepsilon}^2=e^{2\varepsilon n}\sum_{z\in B^n}\vert \sum_{x\rightarrow z} f(x)\vert^2$$
where the last sum is over all $x\in S^n$ such that $z$ lies on the path from $x_0$ to $x$. For general $f\in {\bf C}[\Delta^0]$, one defines $\Vert f\Vert_{\varepsilon}^2=\sum_{n=0}^{\infty} \Vert f_n\Vert_{\varepsilon}^2$ where $f$ is the restriction of $f$ to $S^n$.
A similar formula defines the norm $\Vert .\Vert_{\varepsilon}$ on ${\bf C}[\Delta^1]$. The way the norm $\Vert .\Vert_{\varepsilon}$ is constructed makes relatively easy to prove the continuity of the operator $e^{t\rho}(\partial +h)e^{-t\rho}$ for any $t$ (and uniformly with respect to $t$). More subtle is the estimate for the action $\pi (g)$ of a group element $g$.  Equivalently, it amounts to compare the norms $\Vert .\Vert_{\varepsilon}$ for two choices of $x_0$. Lafforgue establishes an inequality of the form 
 $$\Vert \pi (g)\Vert_{\varepsilon}\leq P(l(g))e^{\varepsilon l(g)}$$ with a certain polynomial $P$. In particular $\Vert \pi (g)\Vert_{\varepsilon}\leq C e^{\varepsilon' l(g)}$ for any $\varepsilon'>\varepsilon$.
 
 According to the philosophy of Gromov, the geometry of trees is a model for the geometry of general hyperbolic spaces. The implementation of that principle can however be technically hard. In our case, Lafforgue needs almost 200 pages of difficult calculations to construct the analogue of the norms $\Vert .\Vert_{\varepsilon}$ above and for all the required estimates. We refer to \cite{LaffHyp} and \cite{Puschnigg} for the details.

\subsection{Strong property (T)}\label{strong(T)}
 
Theorem \ref{hypcoeff} yields examples of discrete groups with property (T) satisfying Conjecture \ref{ConjBCcoeff}. Indeed, many hyperbolic groups have property (T). On the other hand, as a by-product of his proof, Lafforgue shows that hyperbolic groups do not satisfy a certain strengthening of property (T), in which unitary representations are replaced by $\varepsilon$-exponential representations. To that effect, let us consider the representation $\pi$ of $G$ on the completion of ${\bf C}[\Delta^0]$ for the norm 
$\Vert .\Vert_{\varepsilon}$. 

\begin{Lem} The representation $\pi$ on $H_{\varepsilon}$ has no non zero invariant vector, whereas its contragredient  $\check {\pi}$ does have non zero invariant vectors.
\end {Lem}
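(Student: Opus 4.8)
The plan is to carry out the computation in the prototype case of trees, where the norm $\|\cdot\|_\varepsilon$ is given by the explicit formula recalled above, and to isolate the two structural features responsible for the asymmetry between $\pi$ and $\check\pi$; the general word-hyperbolic case then follows the same scheme with Lafforgue's averaged norm in place of the tree norm. First I would record that $H_\varepsilon$ embeds into the space of functions on the vertex set $\Delta^0=G$, with continuous coordinate evaluations. Indeed, writing $(A_nf)(z)=\sum_{x\to z}f(x)$ for the averaging map, one checks that for $z\in S^n$ the only $x\in S^n$ with $z$ on the geodesic $x_0\to x$ is $x=z$ itself, so $(A_nf)(z)=f(z)$; hence for $f$ supported on $S^n$ one has $\|f\|_\varepsilon\geq e^{\varepsilon n}\|f\|_{\ell^2(S^n)}$, and in general $\|f\|_\varepsilon^2\geq\sum_n e^{2\varepsilon n}\|f_n\|_{\ell^2(S^n)}^2$. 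In particular every vector of $H_\varepsilon$ is a genuine function on $G$.

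For the first assertion I would use that $\pi$ permutes the basis vectors $\delta_g$ by left translation on $\Delta^0=G$. A vector $v\in H_\varepsilon$ fixed by all $\pi(g)$ therefore satisfies $v(g^{-1}x)=v(x)$ for all $g,x$, i.e. $v$ is a constant function $c$. But the weighted lower bound gives $\|c\|_\varepsilon^2\geq |c|^2\sum_n e^{2\varepsilon n}|S^n|=+\infty$ for $c\neq 0$, so no nonzero constant lies in $H_\varepsilon$. Hence $v=0$ and $\pi$ has no nonzero invariant vector.

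For the second assertion I would exhibit the augmentation functional $\phi(f)=\sum_{x\in\Delta^0}f(x)$ as a nonzero $\check\pi$-invariant vector. Invariance is immediate from the fact that $G$ permutes $\Delta^0$: one has $\phi(\pi(g)f)=\sum_x f(g^{-1}x)=\phi(f)$, which is exactly the condition that $\phi$ be fixed by the contragredient representation. Boundedness on $H_\varepsilon$ is where the shape of the norm enters: since $x_0$ lies on every geodesic issued from $x_0$, one has $(A_nf_n)(x_0)=\sum_{x\in S^n}f_n(x)$, whence $\bigl|\sum_{x\in S^n}f_n(x)\bigr|\leq e^{-\varepsilon n}\|f_n\|_\varepsilon$; summing over $n$ and applying Cauchy--Schwarz yields $|\phi(f)|\leq (1-e^{-2\varepsilon})^{-1/2}\|f\|_\varepsilon$. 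As $\phi(\delta_{x_0})=1\neq 0$, the functional $\phi$ is a nonzero invariant vector for $\check\pi$.

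Finally, the main obstacle is the passage from trees to an arbitrary word-hyperbolic group, where $\|\cdot\|_\varepsilon$ is defined through Lafforgue's delicate averaging over geodesics rather than by the closed formula above. The whole argument hinges on preserving the two estimates used here: a sphere-wise lower bound $\|f\|_\varepsilon\gtrsim e^{\varepsilon n}\|f\|_{\ell^2(S^n)}$, so that invariant vectors are constants of infinite norm, and an upper bound showing that the total mass read off at the base point $x_0$ costs only a factor $e^{-\varepsilon n}$, so that the augmentation functional stays bounded. Conceptually, the norm is engineered so that \emph{spreading} mass uniformly -- what a $\pi$-invariant vector would require -- is exponentially expensive, whereas \emph{collecting} mass at the origin -- what a $\check\pi$-invariant functional does -- is cheap. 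This is precisely the asymmetry that cannot occur for unitary representations (for which property (T) forces rigidity), and it is exactly what makes word-hyperbolic groups fail strong property (T).
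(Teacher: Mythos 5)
Your proof is correct and follows essentially the same route as the paper: the first assertion reduces to the fact that an invariant vector would be a constant function, which cannot lie in $H_{\varepsilon}$, and the second uses exactly the paper's estimate $e^{2\varepsilon n}\bigl|\sum_{x\in S^n}f_n(x)\bigr|^2\leq \Vert f_n\Vert_{\varepsilon}^2$ (coming from the $z=x_0$ term of the norm) plus Cauchy--Schwarz to show the augmentation functional is continuous, hence a nonzero invariant vector for $\check{\pi}$. The extra detail you supply on why invariant vectors must be constants (continuity of point evaluations from the sphere-wise lower bound) is a welcome elaboration of what the paper dismisses as obvious, but it is not a different argument.
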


\begin{proof}[Proof] The first fact is obvious since a constant function is not in $H_{\varepsilon}$. On the other hand, the $G$-invariant form $f\mapsto\sum_{g\in G} f(g)$ extends to a continuous form on $H_{\varepsilon}$. Let us explain that point in the case of  a tree: it follows immediately from the definition of the norm $\Vert .\Vert_{\varepsilon}$ that any $f\in {\bf C}[\Delta^0]$ satisfies the inequality

 $$\sum_{n=0}^{\infty}e^{2\varepsilon n}\vert \sum_{x\in S^n} f(x)\vert^2\leq \Vert f\Vert_{\varepsilon}^2$$
 hence by Cauchy-Schwarz inequality,
$$\vert \sum f(x)\vert^2\leq 
(\sum_{n=0}^{\infty}e^{-2\varepsilon n})(\sum_{n=0}^{\infty}e^{2\varepsilon n}\vert\sum_{x\in S^n} f(x)\vert^2)\leq
(1-e^{-2\varepsilon})^{-1}\Vert f\Vert_{\varepsilon}^2$$

The identification of $H_{\varepsilon}$  with its dual therefore gives a non zero invariant vector for the contragredient representation $\check {\pi}$.
\end{proof}

Let $G$ be a locally compact group, $l$ a length function on $G$, and real numbers $\varepsilon >0$, $K>0$. Let ${\cal F}_{\varepsilon , K}$ the family of representations $\pi$ of $G$ on a Hilbert space satisfying $\Vert \pi (g)\Vert \leq Ke^{\varepsilon l(g)}$, and let 
 ${\cal C}_{\varepsilon , K}(G)$ be the Banach algebra defined as the completion of $C_c(G)$ for the norm
$\sup \Vert \pi (f)\Vert $, where the supremum is taken over representations $\pi$ in ${\cal F}_{\varepsilon , K}$.

\begin{Def} A Kazhdan projection in the Banach algebra ${\cal C}_{\varepsilon , K}(G)$ is an idempotent element $p$ satisfying the following  condition:
for any representation $\pi$ belonging to ${\cal F}_{\varepsilon , K}$,  on a Hilbert space $H$, the range of the idempotent $\pi (p)$ is the space $H^{\pi}$ of $G$-invariant vectors.
\end{Def}

\begin {Rem} The above definition is given in a more general setting by M. de la Salle \cite{dlS16}, whose  Proposition 3.4 and Corollary 3.5 also show that, since the family ${\cal F}_{\varepsilon , K}$ is stable under contragredient, a Kazhdan projection is necessarily central, hence unique and self-adjoint.
\end {Rem}

 The above lemma has the following consequence:

\begin {Cor} Let $G$ be a hyperbolic group. Then for any $\varepsilon>0$ there exists $K>0$ such that the Banach algebra ${\cal C}_{\varepsilon , K}(G)$ has no Kazhdan projection.
\end {Cor}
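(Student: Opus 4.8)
The plan is to argue by contradiction, playing the two halves of the preceding Lemma against the defining property of a Kazhdan projection. Fix $\varepsilon>0$ and choose $\varepsilon_0$ with $0<\varepsilon_0<\varepsilon$. First I would recall from the hyperbolic-group construction that the representation $\pi$ of $G$ on the completion $H_{\varepsilon_0}$ of ${\bf C}[\Delta^0]$ satisfies the growth estimate $\Vert\pi(g)\Vert_{\varepsilon_0}\leq P(l(g))e^{\varepsilon_0 l(g)}$ for some polynomial $P$, and hence $\Vert\pi(g)\Vert_{\varepsilon_0}\leq Ke^{\varepsilon l(g)}$ for a suitable constant $K=K(\varepsilon,\varepsilon_0)$, precisely because $\varepsilon>\varepsilon_0$. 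This $K$ is the constant asserted in the statement: with it, $\pi$ belongs to the family ${\cal F}_{\varepsilon,K}$. Since the word-length is symmetric, $l(g^{-1})=l(g)$, the contragredient $\check\pi$ obeys the same bound, so $\check\pi\in{\cal F}_{\varepsilon,K}$ as well; this is the contragredient-stability of the family invoked in the de la Salle Remark.

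Suppose now, for contradiction, that ${\cal C}_{\varepsilon,K}(G)$ admits a Kazhdan projection $p$. Applying its defining property to $\pi$ and to $\check\pi$, and invoking the preceding Lemma, I would obtain $\pi(p)=P_{H_{\varepsilon_0}^{\pi}}=0$, because $\pi$ has no non-zero invariant vector, while $\check\pi(p)=P_{(\check H_{\varepsilon_0})^{\check\pi}}\neq 0$, because $\check\pi$ does have non-zero invariant vectors. Thus the single element $p$ is annihilated by $\pi$ but not by $\check\pi$.

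The heart of the matter is to turn this asymmetry into a genuine contradiction, and this is where I expect the main difficulty to lie: one must show that, for the Kazhdan projection, the vanishing $\pi(p)=0$ already forces $\check\pi(p)=0$. The mechanism is the elementary identity $\check\sigma(f)={}^{t}\sigma(f^{\vee})$, valid for every representation $\sigma$ and every $f\in C_c(G)$, where $f^{\vee}(g)=f(g^{-1})$ and ${}^{t}$ denotes transpose; it gives $\check\pi(p)={}^{t}\pi(p^{\vee})$, so that $\check\pi(p)=0$ as soon as $p^{\vee}=p$. To secure $p^{\vee}=p$ I would use the structural facts from the de la Salle Remark: a Kazhdan projection is unique, central, and self-adjoint. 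Self-adjointness, $p=p^{*}$ for the involution $p^{*}(g)=\overline{p(g^{-1})}$, gives $p^{\vee}=\overline{p}$; and uniqueness forces $p$ to be real, since $\overline{p}$ is again a Kazhdan projection (the family ${\cal F}_{\varepsilon,K}$ is also stable under complex conjugation of representations, and conjugating a projection onto invariants again projects onto invariants). Combining $p^{\vee}=\overline{p}$ with $\overline{p}=p$ yields $p^{\vee}=p$, whence $\check\pi(p)={}^{t}\pi(p)=0$, contradicting $\check\pi(p)\neq 0$. Hence no Kazhdan projection exists in ${\cal C}_{\varepsilon,K}(G)$, which is exactly the corollary. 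The delicate points to check carefully are the contragredient identity at the level of the completed Banach algebra and the assertion that conjugation preserves the class of Kazhdan projections; the rest is bookkeeping with the previous Lemma.
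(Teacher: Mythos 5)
Your proposal is correct and follows essentially the same route as the paper: assume a Kazhdan projection $p$ exists, use the structural facts from the de la Salle remark (self-adjointness and uniqueness) to show that $\pi(p)=0$ forces $\check\pi(p)=0$, and contradict the Lemma's assertion that $\check\pi$ has non-zero invariant vectors while $\pi$ does not. The paper compresses the transfer step into the single identity $\pi(p)^*=\check\pi(p)$ for self-adjoint $p$, whereas you route it through $p^\vee=p$ via realness and the transpose; this is the same argument with more explicit bookkeeping (including your careful choice of $\varepsilon_0<\varepsilon$ to produce the constant $K$, which the paper leaves implicit).
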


Indeed, assume there is such a projection $p$.  By the above remark $p$ is self-adjoint, so that $\pi (p)^*=\check{\pi}(p)$, where $\pi$ is the representation of $G$ in $H_{\varepsilon}$. 
But by the lemma, $\pi (p)=0$ and $\check{\pi}(p)\neq 0$,  a contradiction.

The following definition should be thought as a strengthening of the caracterization of Kazhdan's property (T) by a Kazhdan projection in $C^*_{\rm max}(G)$, cf. proposition \ref{Kazhdanproj}. 

\begin{Def}\label{StrongT} The group $G$ has strong property (T) for Hilbert spaces if for any length function $l$, there exists an  $\varepsilon >0$ such that for every $K$ there is a Kazhdan projection in ${\cal C}_{\varepsilon ,K}(G)$.
\end{Def}

We thus conclude:

\begin{Thm} Gromov-hyperbolic groups do not satisfy the strong property (T) for Hilbert spaces.
\end {Thm}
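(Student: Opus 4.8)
The plan is to argue by contradiction, the essential content being to play the quantifier structure of Definition~\ref{StrongT} against the Corollary that immediately precedes the statement. First I would fix on $G$ a word-length function $\ell$ attached to a finite generating set $S$; this is the natural choice, since the Rips complex $\Delta$, the Hilbert norms $\Vert\cdot\Vert_\varepsilon$, and the representation $\pi$ on $H_\varepsilon$ appearing in the preceding Lemma and Corollary are all built relative to the word metric $d_S$. So strong property (T) for Hilbert spaces, if it held for $G$, would in particular have to hold for this $\ell$.

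Next I would spell out precisely what Definition~\ref{StrongT} forces for this $\ell$: there would exist a \emph{single} $\varepsilon>0$ such that for \emph{every} $K>0$ the Banach algebra $\mathcal{C}_{\varepsilon,K}(G)$ admits a Kazhdan projection. The decisive point is the order of quantifiers: the $\varepsilon$ must be chosen uniformly, before and independently of $K$.

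The contradiction then falls out of the Corollary just established, which says that for a hyperbolic group and for \emph{any} $\varepsilon>0$ there is \emph{some} $K>0$ with $\mathcal{C}_{\varepsilon,K}(G)$ having no Kazhdan projection. Applying this Corollary to the very $\varepsilon$ furnished in the previous step produces a $K$ for which no Kazhdan projection exists, directly contradicting the conclusion that a Kazhdan projection exists for all $K$. Hence no uniform $\varepsilon$ can work, and $G$ fails strong property (T) for Hilbert spaces, as claimed.

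Since the genuine analytic substance is already packaged in the Lemma (the representation $\pi$ on $H_\varepsilon$ has no nonzero invariant vector while its contragredient $\check\pi$ does, forcing a self-adjoint candidate projection to be simultaneously $0$ and nonzero) and in its Corollary, the final step is essentially bookkeeping of quantifiers. The only point I would flag as requiring care — even though it is not difficult — is the asymmetry between Definition~\ref{StrongT}, whose pattern is $\exists\varepsilon\,\forall K$, and the Corollary, whose pattern is $\forall\varepsilon\,\exists K$: one must ensure that the same length function $\ell$ threads through both statements and that the $K$ supplied by the Corollary is allowed to depend on the $\varepsilon$ coming from the hypothesis. It is exactly this mismatch of quantifier order that makes the contradiction go through.
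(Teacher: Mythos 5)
Your proposal is correct and coincides with the paper's own argument: the paper offers no separate proof beyond the words ``We thus conclude,'' because the theorem is exactly the quantifier flip you describe, with the analytic content carried by the preceding Lemma (the projection would have to be both zero and nonzero on $H_\varepsilon$) and its Corollary. Your added care about threading the same word-length function through both statements, and letting $K$ depend on $\varepsilon$, is precisely the bookkeeping the paper leaves implicit.
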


On the other hand, it follows from the works of V. Lafforgue, B. Liao, T. de Laat and M. de la Salle (see \cite{LafforgueT, Benben14, delaSalleb, dLdlS}) that in higher rank the situation is completely different.

\begin {Thm}\label{StrongThigher} Let $G$ be a simple connected Lie group of real rank $\geq 2$ or a simple algebraic group of split rank $\geq 2$ over a non archimedian local field. Then $G$ has strong property (T) in Hilbert spaces. The same holds for any lattice in such a $G$.
\end {Thm}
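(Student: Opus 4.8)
The plan is to construct, for a suitable $\varepsilon>0$ depending only on $G$ and for every $K>0$, the Kazhdan projection of Definition \ref{StrongT} directly as a limit of spherical averages, following Lafforgue's strategy for $SL_3$ and its refinements by Liao, de Laat and de la Salle (\cite{LafforgueT, Benben14, delaSalleb, dLdlS}). Fix a maximal compact subgroup $K_0\subset G$ and use the Cartan decomposition $G=K_0\overline{A^+}K_0$, respectively the analogous $K_0$-double-coset decomposition over the Bruhat-Tits building in the non-archimedean case. For each integer $n$ let $\chi_n\in C_c(G)$ be the normalised bi-$K_0$-invariant probability density carried by the shell $\{g:\ell(g)\in[n,n+1)\}$. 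Since $\mathcal{C}_{\varepsilon,K}(G)$ is the completion of $C_c(G)$ for the norm $\sup_{\pi\in\mathcal{F}_{\varepsilon,K}}\|\pi(f)\|$, the central claim is that $(\chi_n)_n$ is Cauchy in $\mathcal{C}_{\varepsilon,K}(G)$; its limit $p$ would then be checked, representation by representation, to be an idempotent with $\pi(p)$ the orthogonal projection onto $H^\pi$, which is exactly the Kazhdan projection property.

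Everything reduces to one uniform spectral-gap estimate: there exist $C,c>0$ and $\varepsilon_0>0$, depending only on the root datum of $G$, so that for all $0<\varepsilon<\varepsilon_0$, all $K$, all $\pi\in\mathcal{F}_{\varepsilon,K}$ and all $n$,
$$\bigl\|\,\pi(\chi_n)|_{(H^\pi)^\perp}\,\bigr\|\le CK\,(1+n)^{\dim G}\,e^{-cn}.$$
Granting this, the averages $\pi(\chi_n)$ converge exponentially fast, uniformly over $\mathcal{F}_{\varepsilon,K}$, to the projection onto $H^\pi$, yielding both the Cauchy property and the identification of the limit. To prove the estimate the plan is to expand $\pi(\chi_n)$ along $K_0\overline{A^+}K_0$ and to bound the resulting radial operator by comparison with Harish-Chandra's elementary spherical function $\Xi$, whose decay $\Xi(a)\asymp e^{-\rho(\log a)}$ on $\overline{A^+}$ must be shown to dominate the slow exponential growth $e^{\varepsilon\ell(g)}$.

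The delicate point, and the reason the result is specific to higher rank, is that for non-unitary representations in $\mathcal{F}_{\varepsilon,K}$ one cannot invoke Howe-Moore, so the naive comparison fails: in real rank one the complementary series, uniformly bounded by Cowling's theorem (section \ref{Cowling}), approach the trivial representation, so no uniform $\varepsilon_0$ exists, which is precisely why hyperbolic groups lack strong property (T). In rank $\ge 2$ the decisive input is a \emph{bootstrapping over two non-proportional restricted roots}: via the two maximal parabolics of an $SL_3$-type rank-two split subgroup, the bi-$K_0$-invariant average factors into a composition in which the modest contraction along one root direction is amplified by the second, upgrading mere boundedness into exponential decay at a rate bounded below independently of the deformation. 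Concretely one proves the estimate first for $G=SL_3$ over the relevant field (Lafforgue), then propagates it to an arbitrary higher-rank $G$ by covering $G$ with enough rank-two split (e.g.\ $SL_3$-type) subgroups and recombining the estimates, with the uniformity in $\varepsilon$ and the non-archimedean cases handled by de Laat--de la Salle.

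For a lattice $\Gamma<G$ the plan is to pass by induction of representations: as $\Gamma$ is undistorted in $G$ (its word length is comparable to the restriction of $\ell$), a Hilbert representation of $\Gamma$ of growth $\le Ke^{\varepsilon\ell_\Gamma}$ induces a representation of $G$ of comparable growth, so the Kazhdan projection built for $G$ descends to one for $\Gamma$; this is the strong-(T) analogue of the classical inheritance of property (T) by lattices. The main obstacle is, unmistakably, the uniform decay estimate above: it constitutes the entire analytic content of the theorem, occupies the bulk of \cite{LafforgueT} and \cite{dLdlS}, and seems to require both the fine Harish-Chandra spherical-function analysis and the rank-two combination trick, with no softer substitute available—precisely because the rank-one failure shows the statement is false without the higher-rank geometry.
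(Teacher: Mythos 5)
A preliminary remark: the paper gives no proof of Theorem \ref{StrongThigher} at all. It is recorded as a survey statement, attributed to Lafforgue, Liao, de Laat and de la Salle, with the reader sent to \cite{LafforgueT, Benben14, delaSalleb, dLdlS}. So your proposal cannot be compared to an in-paper argument; it can only be judged as a reconstruction of the cited works, and as such it contains a genuine gap that you in fact concede yourself: the uniform estimate
$$\bigl\|\,\pi(\chi_n)|_{(H^\pi)^\perp}\,\bigr\|\le CK\,(1+n)^{\dim G}\,e^{-cn}$$
\emph{is} the theorem. Everything else in your outline (the Cauchy property of the averages in $\mathcal{C}_{\varepsilon,K}(G)$, the identification of the limit as the Kazhdan projection) is the soft formal part, and for the estimate itself you offer nothing beyond an appeal to the very papers the survey cites. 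A proposal whose entire analytic content is deferred to the literature is a plan, not a proof.

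Beyond this, several of the steps you do sketch would fail as stated. First, the proposed mechanism --- bounding the radial operator by Harish-Chandra's spherical function $\Xi$ --- is not available in this setting: $\Xi$-type bounds on matrix coefficients (Cowling--Haagerup--Howe, quantitative Howe--Moore) are consequences of unitarity, obtained from coefficients of the regular representation and tensor-power tricks, which is exactly what representations in $\mathcal{F}_{\varepsilon,K}$ lack. Lafforgue's actual mechanism is of a different nature: a propagation argument comparing a bi-$K_0$-invariant coefficient at neighbouring points of the Weyl chamber along two non-proportional directions, which needs no positivity; your ``bootstrap over two roots'' sentence gestures at it but proves nothing. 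Second, the reduction of a general higher-rank group to rank two is wrong as formulated: a simple Lie group of real rank $\geq 2$ need not contain a split $SL_3$-type subgroup; the correct dichotomy is a closed subgroup locally isomorphic to $SL_3(\R)$ or to $Sp_4(\R)$, and the $Sp_4(\R)$ case (together with its universal cover) is a separate, substantial proof of de Laat--de la Salle, not a corollary of the $SL_3$ case. Moreover strong property (T) does not pass from closed subgroups to overgroups, so the ``recombination'' step needs the refined, $K_0$-type-dependent form of the estimates rather than the formal covering you invoke. Third, for lattices, induction plus undistortion settles only the cocompact case: for a non-uniform lattice the growth of the induced representation is governed by the induction cocycle, and the required integrability control of that cocycle deep in the cusps is precisely the content of de la Salle's paper \cite{delaSalleb} --- the survey cites it separately exactly because this step is hard. (A minor misattribution: the non-archimedean groups are due to Liao \cite{Benben14}, not to de Laat--de la Salle.)
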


Lafforgue more generally defines strong property (T) for a given class ${\cal E}$ of Banach spaces. The theorem above also holds provided the class of Banach spaces ${\cal E}$ has a nontrivial type, i.e. if the Banach space $\ell^1$ is not finitely representable in ${\cal E}$.

Strong property (T) had been introduced by Lafforgue \cite{Lafforgue2010}  to understand the obstruction, if not to the Baum-Connes conjecture, at least to the proofs considered so far. But in fact, he has been led to introduce the following variant of strong property (T). We consider a locally compact group $G$ and a compact subgroup $K$. Let $l$ be a $K$-biinvariant length function on $G$ and $\varepsilon>0$.

\begin {Def} An $\varepsilon$-exponential $K$-biinvariant Schur multiplier is a $K$-biinvariant function $c$ on $G$ such that for any $K$-biinvariant function $f$ on $G$ with values in $C_c(G)$ and support in the ball of radius $R$ for the length $l$, $$\Vert cf\Vert\leq e^{\varepsilon R}\Vert f\Vert$$ where $cf$ is the pointwise product on $G$ and $\Vert .\Vert$ is the norm in the crossed product $C^*(G, C_0(G))=\mathcal{K}(L^2(G))$.
\end{Def}

\begin {Def} The group $G$ has Schur property (T) relative to the compact subgroup $K$ if for any $K$-biinvariant length function $l$, there exists $\varepsilon >0$ and a $K$-biinvariant function $\varphi$ on $G$ with nonnegative values and vanishing at infinity satisfying the following property:  any $\varepsilon$-exponential $K$-biinvariant Schur multiplier $c$ has a limit $c_{\infty}$ at infinity and  satisfies $\vert c(g)-c_{\infty}\vert\leq\varphi (g)$ for any $g\in G$.
\end{Def}

Lafforgue explains in \cite{Lafforgue2010} that Schur property (T) for a group $G$ relative to a compact subgroup $K$ is an obstacle to the above attempts to prove the Baum-Connes conjecture. It contradicts the existence, for any $G-C^*$ algebra $A$ and any $\varepsilon>0$, of a Banach subalgebra ${\cal B}$ of the reduced crossed product $C^*_r(G,A)$ satisfying the inequality $\Vert f\Vert_{\cal B}\leq e^{\varepsilon R}\Vert f\Vert_{C^*_r(G,A)}$ for any $f\in C_c(G,A)$ supported in the ball of radius $R$. In particular, supposing that $G$ admits a $\gamma$ element, it is hopeless to try to prove the Baum-Connes conjecture with coefficients using a homotopy of $\gamma$ to 1 through $\varepsilon$-exponential representations as suggested above. It is also shown in  \cite{Lafforgue2010} that $SL_3(\R)$ and $SL_3(\Q_p)$ do satisfy Schur property (T) with respect to their maximal compact subgroups. B. Liao \cite{Benben16} has a similar result for the group $Sp_4$ over a nonarchimedian local field of finite characteristic. It is very likely, but as far as we know not yet proved, that it is also the case for simple groups of higher rank and with finite centre. 

\begin{Rem} The logical link between strong property (T) and Schur property (T) is not completely clear. One would expect that Schur property (T) for $G$ relative to some compact subgroup $K$ implies strong property (T) for $G$. But as noted by Lafforgue, this is not quite the case. As suggested to us by M. de la Salle, there should be a natural strengthening of Schur property (T) implying strong property (T). 
\end{Rem}

\subsection{Oka principle in Noncommutative Geometry}

As explained in the previous section, Lafforgue observed that the "Dirac-dual Dirac"-like methods used so far, would probably not work to prove the Baum-Connes conjecture with arbitrary coefficients for simple Lie groups of higher rank, mainly because of the presence of a variant of strong property (T) (see section \ref{strong(T)}). In \cite{Lafforgue2010}, he even gave a necessary condition for this kind of approach to work and proved that these methods would certainly not succeed, leaving very few hope in proving further cases of the conjecture using the classical techniques. Nonetheless, he indicates that Bost's ideas on Oka principle are still open and he leaves them as a path for investigating the problem of surjectivity.\\

\subsubsection{Isomorphisms in K-theory}

In analytic geometry, the reduction of holomorphic problems to topological problems is known as Oka principle, whose classical version is the so-called \emph{Oka-Grauert principle}. In its simplest form, it states that the holomorphic classification of complex vector bundles over an analytic Stein space agrees with their topological classification. The case of line bundles was proven by Oka in 1939 and it was then generalized by Grauert in 1958 (\cite{Grauert}; see also \cite{GromovOka} for a seminal paper on the theory and \cite{Forstneric} for a survey). Let us state Grauert's Theorem regarding complex vector bundles. 

\begin{Thm}[Grauert]\label{grauert}
Let $X$ be an analytic Stein space. Then,
\begin{enumerate}
\item if $E$ and $F$ are two complex holomorphic  vector bundles over $X$ which are continuously isomorphic, then $E$ and $F$ are holomorphically isomorphic. 
\item every continuous vector bundle over $X$ carries an holomorphic vector bundle structure that is uniquely determined.
\item the inclusion $\iota : \mathcal{O}(X,GL_n(\C))\hookrightarrow {C}(X,GL_n(\C))$ of the space of all holomorphic maps $X\to GL_n(\C)$ into the space of all continuous maps is a \emph{weak homotopy equivalence} with respect to the compact-open topology, i.e. $\iota$ induces isomorphisms of all homotopy groups : $$\xymatrix{\pi_k(\iota):\pi_k\big(\mathcal{O}(X,GL_n(\C))\big)\ar[r]^-{\simeq}&\pi_k\big({C}(X,GL_n(\C))\big),\quad k=0,1,2,...}$$
\end{enumerate}
\end{Thm}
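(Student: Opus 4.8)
The plan is to recognize all three statements as facets of a single fact: the passage from holomorphic to continuous classification of $GL_n(\C)$-valued objects over $X$ is a bijection (for (1) and (2)) and a weak homotopy equivalence (for (3)). First I would reformulate everything in terms of a single comparison map. A holomorphic (resp. continuous) rank-$n$ vector bundle over $X$ is given by a cocycle representing a class in $H^1(X,\mathcal{O}(GL_n))$ (resp. $H^1(X,C(GL_n))$) for a suitable open cover, and two bundles are isomorphic precisely when their cocycles are cohomologous. Thus (1) is the injectivity and (2) the bijectivity of the natural map
\[
H^1(X,\mathcal{O}(GL_n)) \longrightarrow H^1(X,C(GL_n)),
\]
while (3) is the parametric strengthening that upgrades this bijection on $\pi_0$ of the mapping spaces to an isomorphism on all homotopy groups. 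So the whole theorem reduces to showing that this comparison is a weak homotopy equivalence.

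Second, I would isolate the geometric reason that $GL_n(\C)$ is the right kind of target. It is a complex Lie group, hence an \emph{elliptic} (or Oka) manifold in the sense of Gromov: the map $\mathfrak{gl}_n(\C)\to GL_n(\C)$, $(g,v)\mapsto g\exp(v)$, built from the group multiplication and the matrix exponential, is a dominating spray. This ellipticity is exactly the ingredient that makes the Oka principle run, and it is what replaces, in the nonlinear setting, the vanishing theorems available for linear (coherent) data.

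Third, the analytic core: I would prove the Oka principle by the Cartan--Grauert induction over an exhaustion. Fix a strictly plurisubharmonic exhaustion of the Stein space $X$ and exhaust $X$ by its compact sublevel sets. One pushes a holomorphic solution (or a homotopy of solutions) from one sublevel set to the next across two types of step: noncritical ``bumps,'' handled by a gluing lemma over a Cartan pair $(A,B)$, and critical points, handled by attaching a cell and extending over it. The gluing lemma is the heart. Given a transition that is holomorphic and close to the identity on $A\cap B$, one splits it multiplicatively as $\gamma=\gamma_A\,\gamma_B^{-1}$ with $\gamma_A,\gamma_B$ holomorphic near $A$ and $B$; writing $\gamma=\exp(c)$ and linearizing turns this into the additive Cousin problem $c=c_A-c_B$, which is solvable because Cartan's Theorem B gives the vanishing of $H^1$ for the relevant coherent sheaves on the Stein set. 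Combined with Oka--Weil approximation on holomorphically convex compacts, this lets me deform any continuous cocycle to a holomorphic one and any two continuously cohomologous holomorphic cocycles to genuinely cohomologous ones, yielding (1) and (2); carrying the same induction over a parameter space, and checking each step is continuous in the parameter, yields (3).

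The hard part will be the nonlinear, non-abelian gluing together with the uniformity needed for the parametric statement. The linear additive Cousin problem is dispatched cleanly by Theorem B, but upgrading its solution to an \emph{exact} multiplicative splitting requires a rapidly convergent Newton-type iteration, and one must control this iteration uniformly in the deformation, uniformly in the auxiliary parameters of (3), and across the handle attachments at the critical points of the exhaustion. That is where all the estimates live; everything else (the cohomological reformulation, the identification of the spray, and the reduction of (1)--(3) to the single comparison map) is formal once this analytic engine is in place.
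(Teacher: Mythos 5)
The paper does not prove this theorem: it is quoted as classical background from complex analytic geometry, with the proof delegated to Grauert's original paper \cite{Grauert} and to \cite{GromovOka}, \cite{Forstneric}, and its role in the paper is purely motivational, as the model that Bost's Theorem \ref{Bost} imitates in the Banach-algebra setting. So there is no in-paper argument to measure your attempt against. What can be said is that your sketch is a faithful outline of the standard modern proof, essentially Grauert's induction as reorganized by Gromov and Forstneri\v{c}: ellipticity of $GL_n(\C)$ via the exponential spray, induction over sublevel sets of a strictly plurisubharmonic exhaustion, the Cartan-pair gluing lemma whose multiplicative splitting is linearized by $\exp$ into an additive Cousin problem solved by Theorem B and then corrected by a rapidly convergent iteration, Oka--Weil approximation, and the same scheme run with parameters to get statement (3). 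This is the right architecture, and the ``hard part'' you identify is indeed where all the content lies.

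Two points need repair. First, (1)--(2) do not literally ``reduce to'' the weak homotopy equivalence in (3): the bundle statements concern the comparison $H^1(X,\mathcal{O}(GL_n))\to H^1(X,C(GL_n))$ on nonabelian cocycles --- for (1), equivalently, the Oka principle for sections of the holomorphic fiber bundle $\mathrm{Iso}(E,F)$, which has fiber $GL_n(\C)$ but is in general nontrivial --- whereas (3) is the Oka principle for sections of the trivial bundle $X\times GL_n(\C)$; neither formally implies the other, and your own third step tacitly concedes this by running the induction separately for cocycles and for maps. Second, and more substantively, the theorem is asserted for Stein \emph{spaces}, which may be singular, while your exhaustion argument (Morse critical points of a strictly plurisubharmonic function, attaching cells) presupposes a manifold. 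Grauert's theorem does hold for reduced Stein spaces, but the induction must be adapted in the singular case (see \cite{Forstneric}); as written, your proof establishes only the Stein manifold case.
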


Let us assume $X$ is compact. Let $\mathcal{O}(X)$ be the set of all continuous functions on $X$ which are holomorphic on the interior of $X$, as a Banach subalgebra of $C(X)$. Then the injection $\iota : \mathcal{O}(X)\rightarrow C(X)$ is a strong isomorphism in K-theory. \\

In \cite{Bost90}, Bost asks the following question : \emph{Let $A$ and $B$ be two Banach algebras and $\iota: A\to B$ a continuous injective  morphism with dense image. What can be said about the map $\iota_*:K(A)\to K(B)$ ? More precisely, under which conditions on $\iota$ is the map $\iota_*$ an isomorphism?  } As we have already mentioned in subsection \ref{Bostcon}, the most classical criteria for the map $\iota_*$ to be an isomorphism is the fact that $A$ is a dense subalgebra stable under holomorphic calculus in $B$ (see \cite[p. 209]{Karoubi}, \cite[2.2 and 3.1]{Swan}). The discussion from section \ref{Laff} makes it clear why having a good criteria to ensure that $\iota_*$ is an isomorphism, can be very helpful when trying to prove the Baum-Connes conjecture. We will see that a closer relation can be stated. 

The link between Bost's question and Grauert's Theorem \ref{grauert} can be philosophically thought as follows. We start from a Banach algebra $B$, e.g. a $C^*$-algebra, that we may think as the algebra of continuous functions on some non-commutative space $T$. Assume that  $T$ can be imbedded in some neighborhood $X$ which is homotopic to $T$ and carries a (non commutative analogue of) complex structure. The dense subalgebra $A$ is the set of functions on $T$ which extend to functions on $X$ which are holomorphic. Then the injection $\iota: A\to B$ can be seen as the composition of the Banach space injection $A=\mathcal{O}(X)\subset C(X)$ and a restriction map $C(X)\rightarrow C(T)=B$. The first should be an isomorphism in K-theory by a non-commutative analogue of Oka-Grauert's principle, and the second by the homotopy invariance of K-theory.\\

More precisely, Bost considers the following situation. 
Let $B$ be a Banach algebra endowed with a continuous action of $\R^n$ denoted by $\alpha$. Let $F$ be a compact and convex subset of $\R^n$ containing $0$ and with nonempty interior. 
Then one defines $A=\mathcal{O}(B,\alpha,F)$  as the set of elements $a$ in $B$ such that the continuous map $t\mapsto \alpha_t(a)$ from $\R^n$ to $B$ has a continuous extension on $\R^n+i F\subset \C^n$ which is holomorphic on $\R^n+i\overset{\circ}{F}$, where $\overset{\circ}{F}$ is the interior of $F$. 

For $z\in\R^n+iF$, denote by $\alpha_z(a)\in B$ the value of the map that extends $\alpha$ at $z$. Then $A=\mathcal{O}(B,\alpha,F)$ is a Banach algebra endowed with the norm $$\norme{a}_F=\sup_{z\in\R^n+ iF}\norme{\alpha_z(a)},$$
and the inclusion map $\iota:A=\mathcal{O}(B,\alpha,F)\to B$ is dense (see \cite[3.1 and Corollaire 3.2.4]{Bost90}). As mentioned by Bost, the algebra $\mathcal{O}(B,\alpha,F)$ is not in general stable under holomorphic calculus in $B$  (see \cite[1.3.1]{Bost90}), but the map $\iota$ still induces a strong isomorphism in K-theory (see Definition \ref{Bost90Strong}, see also \cite{Nica2008} for other criteria on $\iota$ so that $\iota_*$ is an isomorphism):

\begin{Thm}{\cite[Th\'eor\`eme 2.2.1]{Bost90}} \label{Bost} Let $B$ be a complex Banach algebra endowed with an action of $\R^n$ denoted by $\alpha$. For all compact and convex subset $F$ of $\R^n$, containing $0$ and with non zero interior, the inclusion map  $\iota: A=\mathcal{O}(B,\alpha,F)\to B$ induces a strong isomorphism in K-theory.
\end{Thm}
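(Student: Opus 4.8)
The plan is to verify directly the two homotopy-equivalence requirements in Definition~\ref{Bost90Strong}, namely that $M_n(\iota)\colon P_n(A)\to P_n(B)$ and $GL_n(\iota)\colon GL_n(A)\to GL_n(B)$ are homotopy equivalences for every $n$. First I would record two reductions. Since the action $\alpha$ amplifies to $M_n(B)$ and $\mathcal{O}(M_n(B),\alpha,F)=M_n(\mathcal{O}(B,\alpha,F))$, the construction commutes with matrix amplification, so the pair $(M_n(A),M_n(B))$ is again of the given type; likewise, passing to $C(S^k,B)$ with the $\R^n$-action acting only on the $B$-factor produces a pair $(C(S^k,A),C(S^k,B))$ of the same type. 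As $GL_n$ and $P_n$ of a Banach algebra are open subsets, respectively real-analytic submanifolds, of a Banach space, they carry the homotopy type of a CW complex, so by Whitehead's theorem it suffices to show that $\iota$ is a weak homotopy equivalence on each. Representing $\pi_k$ by $\pi_0$ of the $C(S^k,\cdot)$-pair, and noting that a path in $GL(B)$ is an invertible of $C([0,1],B)$, both injectivity and surjectivity on all homotopy groups reduce to a single relative statement: for every Banach algebra of the given form, $\iota$ induces bijections $\pi_0(GL(A))\to\pi_0(GL(B))$ and $\pi_0(P(A))\to\pi_0(P(B))$.

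Next I would set up the mechanism that lets one stay inside $A$. The decisive observation is a characterisation of $\iota(GL(A))$ and $\iota(P(A))$ in terms of the holomorphic orbit $z\mapsto\alpha_z(a)$ on the tube $\R^n+iF$. If $a=a^2$ in $B$ with $a\in A$, then $\alpha_z(a)^2=\alpha_z(a)$ for all $z$, by analytic continuation from the real axis where $\alpha_t$ is multiplicative; so idempotents of $B$ lying in $A$ are automatically idempotents of $A$. For invertibility, $a\in A$ lies in $GL(A)$ if and only if $\alpha_z(a)\in GL(B)$ for every $z\in\R^n+iF$ with $\sup_z\|\alpha_z(a)^{-1}\|<\infty$: the forward direction is clear, and conversely the bounded holomorphic map $z\mapsto\alpha_z(a)^{-1}$ is the holomorphic orbit of the $B$-inverse of $a$, which therefore lies in $A$. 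The same principle applies to holomorphic functional calculus performed \emph{fibrewise over the tube}: although $A$ is not stable under holomorphic calculus in $B$, if $f$ is holomorphic near the spectra of all the $\alpha_z(a)$, $z\in\R^n+iF$, and the resulting contour integrals are uniformly bounded on the tube, then $z\mapsto f(\alpha_z(a))$ is a bounded holomorphic orbit and hence defines an element $f(a)\in A$. This is the tool for manufacturing inverses and spectral (Riesz) projections inside $A$.

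With this in hand, surjectivity would proceed by approximation followed by correction. Given $b\in GL(B)$ I would mollify along the action by an entire kernel (a Gaussian, or a band-limited kernel of Paley--Wiener type), producing $x=\int_{\R^n}\chi(t)\alpha_t(b)\,dt\in A$ close to $b$ in $B$; for $x$ close enough to $b$ one has $x\in GL(B)$ and $x$ is joined to $b$ by a segment in $GL(B)$. To land in $\iota(GL(A))$ I would use the orbit characterisation: it suffices that $\alpha_z(x)$ stay invertible, with bounded inverse, for all $z\in\R^n+iF$, after which its $B$-inverse automatically lies in $A$. For an idempotent $e\in P(B)$ the analogous step approximates $e$ by $x\in A$ with $\|\alpha_z(x)^2-\alpha_z(x)\|$ small uniformly over the tube and then replaces $x$ by the Riesz projection $\tfrac1{2\pi i}\oint(\lambda-\alpha_z(x))^{-1}\,d\lambda$, which by the fibrewise-calculus principle is an idempotent of $A$ close to $e$. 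Injectivity is then obtained from the same relative statement applied to $C([0,1],B)$: a homotopy in $GL(B)$ (resp.\ $P(B)$) between images of $a_0,a_1\in A$ is an invertible (resp.\ idempotent) of the tensored algebra, which may be pushed into $A$ by the surjectivity argument, yielding a homotopy in $GL(A)$ (resp.\ $P(A)$).

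The main obstacle is precisely the phrase ``\emph{uniformly over the tube}'' in the previous paragraph. Smoothing by a kernel of width $\sigma$ controls $\alpha_z(x)$ only up to an amplitude factor growing like $e^{|\mathrm{Im}\,z|^2/2\sigma}$ (Gaussian), or like $e^{R|\mathrm{Im}\,z|}$ for a band-limited kernel of radius $R$, in the imaginary directions, so one cannot simply pass to the limit $\sigma\to0$ (or $R\to\infty$) and retain invertibility of the whole orbit. This is exactly where the noncommutative Oka--Grauert phenomenon lives, and the hypotheses are tailored to it: $F$ is \emph{compact and convex} with $0\in F$, so the imaginary window is bounded, and the rescaled family $F_\tau=\tau F$, $\tau\in[0,1]$, interpolates between $A_0=B$ and $A_1=A$ through algebras $A_\tau=\mathcal{O}(B,\alpha,\tau F)$ of the same type. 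Because strong isomorphisms compose, it suffices to prove the local statement that $A_{\tau'}\to A_\tau$ is a strong isomorphism for $\tau'$ near $\tau$; for a thin imaginary window the amplitude growth above is mild, and a compactness argument over the compact set $F$, together with the openness of the invertibles and the standard bound $\|\alpha_t\|\le Me^{\omega|t|}$ for the strongly continuous action, lets one carry out the approximation-and-correction step while keeping the entire orbit inside the invertibles. Making this estimate uniform over the tube, and thereby closing the induction on scale, is the technical heart of the argument, in complete analogy with the passage from continuous to holomorphic trivialisations in Grauert's Theorem~\ref{grauert}.
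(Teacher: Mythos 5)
Your soft reductions are fine, and so is the orbit characterisation: for $a\in A=\mathcal{O}(B,\alpha,F)$, if $\alpha_z(a)\in GL(B)$ for every $z\in\R^n+iF$ with $\sup_z\Vert\alpha_z(a)^{-1}\Vert<\infty$, then $z\mapsto\alpha_z(a)^{-1}$ is a bounded holomorphic extension of $t\mapsto\alpha_t(a^{-1})$, so $a^{-1}\in A$; the same principle gives fibrewise Riesz projections. The genuine gap is that your entire proof is then funnelled into the single claim that $\mathcal{O}(B,\alpha,\tau'F)\to\mathcal{O}(B,\alpha,\tau F)$ is a strong isomorphism for $\tau'$ close to $\tau$, and this claim is never proved --- you flag it yourself as ``the technical heart''. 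Worse, the mechanism you offer cannot establish it as stated, for a quantifier reason. For a \emph{fixed} pair $\tau<\tau'$ the statement quantifies over all $b\in GL(A_\tau)$ (and over $GL$ of all the matrix and $C(S^k,\cdot)$ amplifications), whereas your mollification produces, for each individual $b$, a kernel width $\sigma$ depending on $b$ (through $\sup_z\Vert\alpha_z(b)^{-1}\Vert$ and the modulus of continuity of its orbit), and an admissible thickness of the extra strip that shrinks with $\sigma$: by your own estimate the entire extension of the mollified element is controlled only by a factor of order $e^{c\vert\mathrm{Im}\,z\vert^2/2\sigma}$, which blows up as $\sigma\to0$ for any fixed thickness $\tau'-\tau>0$. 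So there is no $\delta(\tau)>0$ valid for all elements simultaneously, which is exactly what the compactness-and-composition argument on the scale interval $[0,1]$ requires. The local statement is in fact true, but only as a consequence of the theorem itself; your scheme reproduces the difficulty at a smaller scale instead of resolving it.

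For comparison, the proof in the paper (following Bost) avoids both the direct comparison of $A$ with $B$ and any induction on scale. One embeds $A$ isometrically into $C(F,B)$ by $a\mapsto(\tau\mapsto\alpha_{i\tau}(a))$, proves that this injection $A\to C(F,B)$ is a strong isomorphism in K-theory by imitating the proof of Grauert's theorem (Theorem \ref{grauert}) --- the Oka--Grauert-type patching over the compact parameter space $F$ is where the real work sits --- and then composes with evaluation at $0$, which maps $C(F,B)\to B$ and is a strong isomorphism by the usual homotopy argument since $F$ is convex; the composite is exactly $\iota$. In other words, the convexity of $F$ is spent on the easy half (homotopy invariance), and the analytic difficulty is recast as a comparison of ``holomorphic'' versus ``continuous'' families over $F$, for which Grauert-style patching lemmas exist. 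To complete your approach you would need an ingredient of that type; no refinement of the mollification estimate will close the gap.
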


The idea of the proof is the following. The map which to $a\in A$ associates the function $\tau\mapsto \alpha_{i\tau}(a)$ provides an isometric embedding of the Banach algebra $A=\mathcal{O}(B,\alpha,F)$ into $C(F,B)$. Bost's proof then imitates the proof of Theorem \ref{grauert} to show that the canonical injection $A\rightarrow C(F,B)$ is a strong isomorphism in K-theory. Composing with the evaluation at 0 from $C(F,B)$ to $B$ (which is also a strong isomorphism theorem by the usual homotopy argument) yields the result.

The following examples are the basic examples of \cite{Bost90}. Example \ref{exemple1Bost} is equivalent to Grauert's theorem for a corona $U=\{z\in\C\,|\,\rho_1\leq|z|\leq\rho_2\}$:

\begin{Ex}\label{exemple1Bost}
Let $\mathbf{S}^1=\{z\in\C\,|\, |z|=1\}$ denote the unit circle, and let $B$ be the algebra $C(\mathbf{S}^1)$ of continuous functions on $\mathbf{S}^1$ with complex values. Let $\rho_1$ and $\rho_2$ be two real numbers such that $0<\rho_1<1<\rho_2$, consider the closed corona $U=\{z\in\C\,|\,\rho_1\leq|z|\leq\rho_2\}$ and let $A$ be the subalgebra of $C(U)$ of continuous functions $\phi:U\to\C$ which are holomorphic in $\stackrel{\circ}{U}$ . The algebra $A$, endowed with the norm of uniform convergence, is closed in $C(U)$ and hence it is a Banach algebra. Then, Theorem \ref{Bost} says that the inclusion map $\iota:A\to B$ induces an isomorphism in K-theory. Indeed, let $(\alpha_t f)(z)=f(e^{-it}z)$, then $(\alpha_t)_{t\in\R}$ defines a one parameter group of isometric algebra automorphisms of $B$ and $\mathcal{O}(B,\alpha,I)=A$ for $I=[\log\rho_1,\log\rho_2]\subset\R$.
\end{Ex}

\begin{Ex}
Let $B$ be the convolution algebra $l^1(\mathbf{Z})$. Let $R>0$ be a real number and let $A=\big\{(a_n)\in\C^{\mathbf{Z}}\,\big|\,\sum\limits_{n=-\infty}^{+\infty}e^{R|n|}|a_n|<+\infty\big\}$. Hence $A$ endowed with the norm $\|(a_n)\|_R=\sum\limits_{n=-\infty}^{+\infty}e^{R|n|}|a_n|$ is a Banach algebra which is densely embedded in $B$. Theorem \ref{Bost} says that the inclusion map $\iota:A\hookrightarrow B$ induces an isomorphism in K-theory. In this case, the one parameter group of isometric automorphisms of $B$ is defined by $(\alpha_t(a_n)=(e^{int}a_n)$, and if $I=[-R,R]\subset\R$, then $\mathcal{O}(B,\alpha,I)=A$.
\end{Ex}

\begin{Ex}
The previous example can be also considered with coefficients so that things can be formulated in a noncommutative way : if $A$ is a Banach algebra and $\alpha$ is an action of $\mathbf{Z}$ by isometric automorphism of $A$, let $B:=\ell^1(\mathbf{Z},A)$ be the completion of the convolution algebra $C_c(\mathbf{Z},A)$ given by $\|(b_n)_n\|_{1}=\sum\limits_{n\in\mathbf{Z}}\|b_n\|_A$, for $(b_n)_n\in C_c(\mathbf{Z},A)$. The product in $B$ is given by twisted convolution, i.e $(bb')_n=\sum\limits_{k\in\mathbf{Z}}b_k\alpha(k)(b_{n-k})$, for $b,b'\in C_c(\mathbf{Z},A)$. For all $t\in\R$, set $\beta_t((b_n)_n)=(e^{-int}b_n)_n$ and  
$$\mathcal{O}(B,\beta,I)=\big\{(b_n)_n\in \ell^1(\mathbf{Z},A)\,\big|\,\sum\limits_{n=-\infty}^{+\infty}e^{R|n|}\|a_n\|_A<+\infty\big\},$$ where $I=[-R,R]$. Then Theorem \ref{Bost} applies and $\mathcal{O}(B,\beta,I)\hookrightarrow  B=\ell^1(\mathbf{Z},A)$ induces an isomorphism in K-theory. 
\end{Ex}

Theorem \ref{Bost} can be applied to more general crossed products algebras for which it states that a certain subalgebra defined using an exponential decay condition on $L^1(G)$ has the same K-theory as $\L^1(G)$. For a general locally compact group $G$, a Banach $G$-algebra $B$ and a continuous function $a:G\to\R^+$ such that $a(g_1g_2)\leq a(g_1)+a(g_2)$, for $g_1,g_2\in G$, define a subspace  $\mathrm{Exp}_a(G,B)$ of $L^1(G,B)$ by the following decay condition :  $$\phi\in\mathrm{Exp}_a(G,B)\quad\text{if and only if}\quad e^a\phi\in L^1(G,B).$$  Then, endowed with the norm given by $\|\phi\|_a=\|e^a\phi\|_1$, $\mathrm{Exp}_a(G,B)$ is a Banach dense subalgebra of $L^1(G,B)$. Bost proved that if $G$ is an elementary abelian group, then $K_*(\mathrm{Exp}_a(G,B))$ is isomorphic to $K_*(L^1(G,B))$. Let us state his result more precisely, 

\begin{Thm}{\cite[Th\'eor\`eme 2.3.2]{Bost90}}\label{elementaryabelian}
Let $G$ be a locally compact group and $B$ a Banach algebra endowed with an action of $G$. If $G$ is an extension by a compact group of a group of the form $\mathbf{Z}^p\times \mathbf{R}^q$ (i.e. there is a compact group $K$ and a short exact sequence $1\to K\to G \to \mathbf{Z}^p\times \mathbf{R}^q\to 1$), then, for every subadditive function $a:G\to \R_+$, the inclusion morphism $$\mathrm{Exp}_a(G,B)\hookrightarrow L^1(G,B)$$ induces an isomorphism in K-theory.
\end{Thm}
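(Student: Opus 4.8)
The plan is to deduce the statement from the Oka-type isomorphism of Theorem~\ref{Bost}, the engine being a \emph{dual $\R^n$-action} on $L^1(G,B)$. Write $\pi\colon G\to Q=\mathbf{Z}^p\times\mathbf{R}^q$ for the quotient map and set $n=p+q$. Every element of the Pontryagin dual $\widehat{Q}\cong\mathbf{T}^p\times\mathbf{R}^q$ pulls back through $\pi$ to a character of $G$, and pointwise multiplication $(\chi\cdot\phi)(g)=\chi(\pi(g))\,\phi(g)$ defines an isometric automorphism of the twisted convolution algebra $L^1(G,B)$: it is multiplicative precisely because $\chi$ is a character and the scalar $\chi(\pi(g))$ commutes with the $G$-action on $B$. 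Lifting this $\widehat{Q}$-action along the universal covering $\R^n\twoheadrightarrow\mathbf{T}^p\times\mathbf{R}^q$ yields a strongly continuous isometric action $\alpha$ of $\R^n$ on $B'=L^1(G,B)$, to which Theorem~\ref{Bost} applies for every convex compact $F\subseteq\R^n$ with $0\in\overset{\circ}{F}$.

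First I would compute $\mathcal{O}(L^1(G,B),\alpha,F)$. Extending $t\mapsto\alpha_t\phi$ to $t+is$ with $s\in F$ multiplies the mass of $\phi$ at $g$ by $e^{-\langle s,\pi(g)\rangle}$, so boundedness of the extension on the tube $\R^n+iF$ is equivalent, up to equivalent norms, to $e^{h_{-F}\circ\pi}\phi\in L^1(G,B)$, where $h_{-F}(q)=\sup_{s\in F}\langle -s,q\rangle$ is the support function of $-F$. Thus $\mathcal{O}(L^1(G,B),\alpha,F)=\mathrm{Exp}_{h_{-F}\circ\pi}(G,B)$, and Theorem~\ref{Bost} gives at once that the inclusion $\mathrm{Exp}_{h_{-F}\circ\pi}(G,B)\hookrightarrow L^1(G,B)$ is a (strong) isomorphism in K-theory. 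Since $0\in\overset{\circ}{F}$, the weight $h_{-F}$ is a genuine norm on $Q$, comparable to the Euclidean one; this settles the theorem for every weight factoring through $\pi$ that is comparable to a norm.

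Two reductions remain. For the compact kernel $K$: as $a$ is continuous and $K$ compact, $a$ is bounded on $K$, and subadditivity gives $|a(kg)-a(g)|\le\sup_K a$ for $k\in K$ (and similarly on the right), so $a$ differs by a bounded amount from a function $a'\circ\pi$ with $a'\ge 0$ subadditive on $Q$. Since $\mathrm{Exp}_a(G,B)$ depends on $a$ only up to bounded perturbation (with equivalent norm), one may assume $a=a'\circ\pi$. It then remains to drop the comparability-to-a-norm hypothesis. Here I would compare $a'$ with the dilated norms $c\,h_{-F}$ ($c>0$), all covered by the second paragraph. When $a'$ is itself comparable to a norm it is squeezed, up to a bounded error, between two dilates $c_1h_{-F}$ and $c_2h_{-F}$ with $c_1>0$, and one transports the isomorphism across this sandwich; for weights with \emph{sublinear} directions one instead squeezes $\mathrm{Exp}_{a'\circ\pi}$ between $L^1(G,B)$ and the inductive limit $\varinjlim_{c\to 0}\mathrm{Exp}_{c\,h_{-F}\circ\pi}$, whose connecting maps are K-isomorphisms by Theorem~\ref{Bost}, and invokes continuity of K-theory under inductive limits.

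The main obstacle is precisely this last transport. The dual-action description underlying Theorem~\ref{Bost} produces only \emph{homogeneous} (support-function) weights, whereas a general subadditive $a'$ need not be comparable to any norm — the prototype being $a'(n)=\sqrt{|n|}$ on $\mathbf{Z}$, which grows strictly between $0$ and every linear weight. A naive two-sided sandwich is not enough: chasing the resulting commutative diagram only shows that the inclusion $\mathrm{Exp}_{a'\circ\pi}\hookrightarrow L^1(G,B)$ is injective on one side and surjective on the other in K-theory, not bijective. To close the gap one must deform the weight, e.g. through the family $(1-t)a'$ for $t\in[0,1]$, and prove that the resulting field of Banach algebras has locally constant K-theory, each infinitesimal step reducing to Theorem~\ref{Bost} together with the invariance of K-theory under boundedly-comparable weights. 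Making this deformation/continuity argument uniform over all subadditive weights, rather than only norm-comparable ones, is the technical heart of the proof.
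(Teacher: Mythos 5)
First, a point of calibration: the survey does not actually prove Theorem \ref{elementaryabelian}; it quotes it as Th\'eor\`eme 2.3.2 of \cite{Bost90}. So your attempt can only be compared with the strategy the paper sets up around it, namely Theorem \ref{Bost} and the dual-action device used again at the end of the sketch of Proposition \ref{TheoBost} (where $\mathcal{A}^{\pi}_r(G,B)$ is identified with $\mathcal{O}(K,C^*_r(G,B),\alpha)$ for an action by pointwise multiplication by characters). Your first two steps follow exactly this route and are sound: the $\R^{p+q}$-action on $L^1(G,B)$ by characters pulled back from $\mathbf{Z}^p\times\R^q$, the identification $\mathcal{O}(L^1(G,B),\alpha,F)=\mathrm{Exp}_{h_{-F}\circ\pi}(G,B)$ (with the caveat that you should take $F$ a polytope, e.g.\ a cube: for a general convex body the supremum over $s\in F$ of the integrals is a priori smaller than the integral against $e^{h_{-F}\circ\pi}$, and it is the finitely many vertices that give equivalence of norms), and the reduction modulo the compact kernel.

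The genuine gap is the one you flag yourself, and your proposed repair would not work: scaling a subadditive weight, $a'\mapsto(1-t)a'$, is not implemented by any holomorphic extension of the dual action --- tubes only ever produce \emph{homogeneous} (support-function) weights --- so the ``infinitesimal steps'' of your deformation cannot be reduced to Theorem \ref{Bost}, and no ``locally constant K-theory'' statement for that family of algebras is available. The missing idea is to apply Theorem \ref{Bost} a \emph{second} time, to the Banach algebra $\mathrm{Exp}_a(G,B)$ itself. Since characters have modulus one, the dual action preserves $\mathrm{Exp}_a(G,B)$ isometrically, is strongly continuous and multiplicative there, and the same computation as before gives $\mathcal{O}(\mathrm{Exp}_a(G,B),\alpha,F)=\mathrm{Exp}_{a+h}(G,B)$, where $h=h_{-F}\circ\pi$ (note that $a+h$ is again subadditive). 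Hence $K_*(\mathrm{Exp}_{a+h})\to K_*(\mathrm{Exp}_a)$ is an isomorphism for every polytope weight $h$, and likewise with $a$ replaced by $h$ or by $a+h$ or by $0$. Now choose $h$ with $a\le h+C$ (possible, since $a$ is subadditive and locally bounded, and $G\to\mathbf{Z}^p\times\R^q$ is a quasi-isometry, so $a$ has at most linear growth), and consider the chain of inclusions
$$\mathrm{Exp}_{a+2h}\subseteq\mathrm{Exp}_{2h}\subseteq\mathrm{Exp}_{a+h}\subseteq\mathrm{Exp}_{h}\subseteq\mathrm{Exp}_{a}\subseteq L^1(G,B).$$
Writing $f_1,\dots,f_5$ for the induced maps on K-theory, the composites $f_2f_1$, $f_3f_2$, $f_4f_3$, $f_5f_4$ are precisely the four isomorphisms just obtained (Theorem \ref{Bost} applied to $\mathrm{Exp}_{a+h}$, $\mathrm{Exp}_h$, $\mathrm{Exp}_a$ and $L^1(G,B)$ respectively). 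Then $f_2$ is surjective because $f_2f_1$ is, injective because $f_3f_2$ is, hence an isomorphism; consequently $f_3=(f_3f_2)f_2^{-1}$, then $f_4=(f_4f_3)f_3^{-1}$, then $f_5=(f_5f_4)f_4^{-1}$ are isomorphisms. In particular $f_5\colon K_*(\mathrm{Exp}_a(G,B))\to K_*(L^1(G,B))$ is an isomorphism, which is the theorem. This interleaving argument --- not a deformation of the weight --- is what turns your correct observation (``a sandwich gives surjectivity on one side and injectivity on the other'') from an obstruction into a proof, and it stays entirely within the toolkit you had already assembled.
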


\subsubsection{Relation with the Baum-Connes conjecture}
Since we are dealing with $K$-theoretic issues, we focus on the right hand side of the assembly map and therefore we are interested in surjectivity: let $G$ be a group for which injectivity of the Baum-Connes assembly map is known (take for example any group in Lafforgue's class $\mathcal{C}$), and let $A$ be a $G$-$C^*$-algebra. Let $\rho:G\to\mathrm{End}(V)$ be a representation of $G$ on a complex hermitian vector space $V$ of finite dimension. Then the norm of $\rho(g)$ can be used as a weight to define exponential decay subalgebras of crossed product algebras. In the case of $L^1(G,A)$, these are easy to define : using the notation of the previous paragraph and taking $a(g)=\log{\|\rho(g)\|}$, denote by $\mathrm{Exp}_{\rho}(G,B):=\mathrm{Exp}_a(G,B)$ which is the completion of $C_c(G,A)$ for the norm $$\|f\|_{\small 1,\rho}=\int_G\|f(g)\|_A(1+\|\rho(g)\|_{\small\mathrm{End}(V)})dg.$$

Hence $\mathrm{Exp}_{\rho}(G,A)$ is a dense subalgebra of $L^1(G,A)$ and the representation $\rho$ is used as a weight to define exponential decay subalgebras of $L^1$. An Oka principle applied to this case, would state that these two algebras have the same K-theory. \\
Notice that for all groups belonging to the class $\mathcal{C'}$, as the algebra $\mathrm{Exp}_{\rho}(G,\C)$ is an unconditional completion, by Theorem \ref{Unconditionnal} we know that $$K_*(\mathrm{Exp}_{\rho}(G,A))\simeq K_*(L^1(G,A)).$$

Furthermore, we can use $\rho$ to define exponential decay subalgebras of any unconditional completion, 
\begin{Def}
Let $\mathcal{B}(G)$ be an unconditional completion of $C_c(G)$ and $A$ a $G$-$C^*$-algebra. Let $\mathcal{B}^{\rho}(G,A)$ be the completion of $C_c(G,A)$ for the norm 
$$\norme{f}_{\mathcal{B}^{\rho}}=\norme{g\mapsto \norme{f(g)}_A\norme{\rho(g)}_{\mathrm{End}(V)}}_{\mathcal{B}(G)}.$$
\end{Def}
When $\mathcal{B}(G)=L^1(G)$, if $\rho$ is satisfies the following growth condition $$\int_G\frac{1}{\|\rho(g)\|}\,dg<+\infty,$$then $L^{\rho}(G,A)$ is embedded in $L^1(G,A)$.\\ 

In the case of the reduced (resp. maximal) $C^*$-crossed products, an algebra that we call {\it weighted crossed product} and denoted by $\mathcal{A}^{\rho}_r(G,A)$ (resp. $\mathcal{A}^{\rho}(G,A)$) was defined in \cite{Gomez2010} (for more details see \ref{weighted} below). Taking $\rho$ to be {\it very large} (meaning that $\int_G\frac{1}{\|\rho(g)\|}\,dg<+\infty$) this algebra plays the same role in $C^*_r(G,A)$ as $\mathrm{Exp}_{\rho}(G,A)$ in $L^1(G,A)$; they are constructed to be some kind of ``exponential decay subalgebras'' of $C^*_r(G,A)$. Suppose now that $G$ is a group for which the Bost conjecture is known to be true, in other words, the map $\mu^A_{L^1}:K_*^{\mathrm{top}}(G,A)\to K_*(L^1(G,A))$ is an isomorphism. We will see that taking $\rho$ very large allows us to have a morphism $\iota : K_*(\mathcal{A}^{\rho}_r(G,A))\to K_*(L^1(G,A))$ and hence a morphism $\varphi: K_*(\mathcal{A}^{\rho}_r(G,A))\to K_*(C^*_r(G,A))$ (see Proposition \ref{iota} below), so that the following diagram is commutative : 

$$\xymatrix{
&K_*(C^*_r(G,A))& K_*(\mathcal{A}^{\rho}_r(G,A))\ar[l]_{\varphi}\ar@{.>}[ddl]^{\iota}\\
K_*^{\mathrm{top}}(G,A)\ar[ur]^{\mu_{A,r}}\ar[dr]_{\simeq}^{\mu^A_{L^1}}&&\\
&K_*(L^1(G,A))\ar[uu]&K_*(L^{1,\rho}(G,A))\ar[l]_{\simeq}\ar[uu]
}$$

A suitable Oka principle applied to these crossed products states that the weighted group algebras $\mathcal{A}^{\rho}_r(G,A)$, have the same K-theory as $C^*_r(G,A)$, i.e. $\varphi$ is an isomorphism. This would then imply the surjectivity of $\mu_{A,r}$ and hence the Baum-Connes conjecture with coefficients for $G$.

\subsubsection{Weighted group algebras}\label{weighted}
In this section, we will recall the construction of weighted group algebras constructed in \cite{Gomez2010}. 
Let us first recall some definition and establish some notation.\\ 
Let $G$ be a locally compact group and let $dg$ a left Haar measure on $G$. Let $\Delta$ be the modular function on $G$ (i.e $dg^{-1}=\Delta(g)^{-1}dg$ for all $g\in G$).\\

 Let $A$ be a  $G$-$C^*$-algebra. For all $g\in G$ and for all $a\in A$, let $g.a$, or $g(a)$, be the action of $g$ on $a$. The space of continuous functions with compact support on $G$ with values in $A$, denoted by $C_c(G,A)$, is endowed with a structure of involutive algebra where the multiplication and the involution are given, respectively, by the formulas:
$$(f_1*f_2)(g)=\int_Gf_1(g_1)g_1(f_2(g_1^{-1}g))dg_1,$$ for $f_1,f_2\in C_c(G,A)$  and 
$$f^*(g)=g(f(g^{-1}))^*\Delta(g^{-1}),$$ for $f\in C_c(G,A)$ and $g\in G$.
In a general, we write every element  $f$ in $C_c(G,A)$ as the formal integral $\int_Gf(g)e_gdg$, where $e_g$ is a formal letter satisfying the following conditions:
$$e_ge_{g'}=e_{gg'},\quad e_g^*=(e_g)^{-1}=e_{g^{-1}}\quad\text{and}\quad e_gae_g^*=g.a,$$ for all $g,g'\in G$ and for all $a\in A$.

We denote by $C^*_{\rm max}(G,A)$ and $C^*_r(G,A)$ the maximal and the reduced crossed product of $G$ and $A$, respectively.  Moreover, we denote by
$$ L^2(G,A)=\{f\in C_c(G,A)|\int_G f(g)^*f(g)dg\,\,\text{converges in}\, A\},$$
and $\lambda_{G,A}$ the left regular representation of $C_c(G,A)$ on $L^2(G,A)$
which is given by the formula: $$\lambda_{G,A}(f)(h)(t)=\int_Gt^{-1}(f(s))h(s^{-1}t)ds,$$ for $f\in C_c(G,A)$, $h\in L^2(G,A)$ and $t\in G$. Recall that $\lambda_{G,A}$ induces a unique morphism of $C^*$-algebras from $C^*_{\rm max}(G,A)$ to $C^*_r(G,A)$; we also denote that morphism by $\lambda_{G,A}$, by abuse of notation.\\

Let $(\rho,V)$ be a finite dimensional representation of $G$. We then consider the map\begin{align*}
 C_c(G,A)&\rightarrow C_c(G,A)\otimes\End(V)\\
\int_Gf(g)e_gdg&\mapsto\int_Gf(g)e_g\otimes\rho(g)dg.
\end{align*}

\begin{Def}
The reduced crossed product weighted by $\rho$ of $G$ and $A$, denoted by  $\mathcal{A}_r^{\rho}(G,A)$, is the completion of $C_c(G,A)$
for the norm : 
$$\|\int_Gf(g)e_gdg\|_{A\G}=\|\int_Gf(g)e_g\otimes\rho(g)dg\|_{C_r^*(G,A)\otimes\End(V)},$$
for $f\in C_c(G,A)$. If $A=\C$, we denote it by $\mathcal{A}^{\rho}_r(G):=\mathcal{A}_r^{\rho}(G,\C).$
\end{Def}

It is then easy to prove that the reduced weighted crossed product $\mathcal{A}_r^{\rho}(G,A)$ is a Banach algebra. When $\rho$ is an unitary representation of $G$ then  $\mathcal{A}_r^{\rho}(G,A)=C_r^*(G,A)$, up to norm equivalence.
\begin{Rem}
In the same manner, we can define weighted maximal crossed products, however, we don't treat them here because of the discussion held in \ref{KvsH}.
\end{Rem}

\begin{Ex}
Let $G=\mathbf{Z}$ and let $\rho:\mathbf{Z}\to \C^*$ be a character of $\mathbf{Z}$. Let $S^{\rho}:=\{z\in \C\,|\,|z|=|\rho(1)|\}$ the circle of radius $|\rho(1)|$. Hence, $\A$ is the algebra of continuous functions on $S^{\rho}$.
\end{Ex}

\begin{Ex}
Let $G=\mathbf{Z}$ and let $\rho_1:\mathbf{Z}\to \C^*$ and $\rho_2:\mathbf{Z}\to \C$ be two characters of $\mathbf{Z}$ such that $R_1<R_2$, where $R_1=|\rho_1(1)|$ and $R_2=|\rho_2(1)|$. Then, $\mathcal{A}^{\rho_1\oplus\rho_2}(G)$ is the algebra of continuous functions on the closed corona $U:=\{z\in\C\,|\,|\rho_1(1)|\leq|z|\leq|\rho_2(1)|\}$ holomorphic on $\stackrel{\circ}{U}$. Indeed, we have the following diagram :
$$\xymatrix{
\mathcal{A}^{\rho_1\oplus\rho_2}(G)\ar[r]&C(\mathbf{S}^1,\mathrm{End}(\C^{2}))\\
\ell^{1,\rho_1\oplus\rho_2}(G)\ar[r]\ar[u]&\ell^1(\mathbf{Z},\mathrm{End}(\C^{2}))\ar[u],
}$$
where the vertical arrows are given by Fourier series and the norm in $\ell^{1,\rho_1\oplus\rho_2}(G)$ is given by $\|(a_n)_n\|=\sum\limits_{n\in\mathbf{Z}}|a_n|\|(\rho_1\oplus\rho_2)(n)\|$. It is then clear that the algebra $\ell^{1,\rho_1\oplus\rho_2}(G)$ can be identified with the algebra $$A=\big\{(a_n)\in\C^{\mathbf{Z}}\,\big|\,\sum\limits_{n=-\infty}^{+\infty}e^{|n|\log r}|a_n|<+\infty,\text{ for all }r\in ]R_1,R_2[\big\},$$
which is identified by Fourier series with the algebra of continuous functions on $U$ holomorphic on $\stackrel{\circ}{U}$. Applying Theorem \ref{Bost}, taking $R_1<1<R_2$, we get that the algebras $\mathcal{A}^{\rho_1\oplus\rho_2}(\mathbf{Z})$ and $C^*_r(\mathbf{Z})$ have the same K-theory.
\end{Ex}

In \cite{Gomez2010}, a weighted version of the Baum-Connes morphism was constructed using Lafforgue's Banach KK-theory : $$\mu_{r,A}^{\rho}:K^{\mathrm{top}}(G,A)\to K(\mathcal{A}_r^{\rho}(G,A));$$ it computes the K-theory of this weighted algebras. Analogues of Kasparov's and Lafforgue's Dirac-dual Dirac methods were proven in this context. We state them as the following two theorems,

\begin{Thm}[\cite{Gomez2010}]
Let $G$ be a locally compact group with a $\gamma$-element. Then, for every $G$-$C^*$-algebra $A$ and every finite dimensional representation $\rho$ of $G$, the weighted morphism $\mu_{r,A}^{\rho}$ is injective. If moreover, $\gamma=1$ in $KK_G(\C,\C)$, then $\mu_{r,A}^{\rho}$ is surjective.

\end{Thm}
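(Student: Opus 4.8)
The plan is to reproduce, in the Banach-algebraic weighted setting, exactly the argument that established Theorem \ref{abstractgamma}. Write $\gamma=\beta\otimes_B\alpha$ with $B$ a proper $G$-$C^*$-algebra, $\alpha\in KK_G(B,\C)$ and $\beta\in KK_G(\C,B)$. For every $G$-$C^*$-algebra $A$ I would assemble the commutative diagram

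$$\xymatrix{ K^{top}_*(G,A)\ar[r]^-{\otimes\tau_A(\beta)}\ar[d]_{\mu_{r,A}^\rho}& K^{top}_*(G,A\otimes B)\ar[r]^-{\otimes\tau(\alpha)}\ar[d]^{\mu_{r,A\otimes B}^\rho}_{\simeq}& K^{top}_*(G,A)\ar[d]^{\mu_{r,A}^\rho}\\ K_*(\mathcal{A}_r^\rho(G,A))\ar[r]_-{\otimes j^\rho(\tau_A(\beta))}& K_*(\mathcal{A}_r^\rho(G,A\otimes B))\ar[r]_-{\otimes j^\rho(\tau(\alpha))}& K_*(\mathcal{A}_r^\rho(G,A))}$$

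where the top row lives on the topological side and the bottom row on the K-theory of the weighted reduced crossed products. The vertical maps are the weighted assembly maps $\mu_{r,-}^\rho$, and the bottom horizontal arrows are the Kasparov-type products with the images of $\tau_A(\beta)$ and $\tau(\alpha)$ under a \emph{weighted Banach descent map} $j^\rho$. This is precisely where Lafforgue's $KK^{\mathrm{ban}}$ enters, since $\mathcal{A}_r^\rho(G,A)$ is only a Banach algebra and no $C^*$-descent is available.

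Two facts drive the formal part. First, $\gamma$ acts as the identity on $K^{top}_*(G,A)$ (this follows from condition 2) in the definition of a $\gamma$-element, exactly as recalled just before Theorem \ref{abstractgamma}), so the composite along the top row is the identity. Second, for the proper coefficient algebra $A\otimes B$ the middle vertical map $\mu_{r,A\otimes B}^\rho$ is an isomorphism. Granting these, injectivity is a diagram chase: if $\mu_{r,A}^\rho(x)=0$, commutativity of the left square and injectivity of $\mu_{r,A\otimes B}^\rho$ give $x\otimes\tau_A(\beta)=0$, and applying the remaining top arrow yields $x=x\otimes\gamma=0$. When moreover $\gamma=1$, the bottom composite is $\otimes\, j^\rho(\tau(1))=\mathrm{Id}$; given $y\in K_*(\mathcal{A}_r^\rho(G,A))$, set $u=y\otimes j^\rho(\tau_A(\beta))$, pull it back through the isomorphism $\mu_{r,A\otimes B}^\rho$ to some $w$, and push $w$ down the right square to obtain $\mu_{r,A}^\rho(w\otimes\tau(\alpha))=y$, which is surjectivity.

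The hard part will be establishing the second fact, namely the weighted analogue of Theorem \ref{BCproper}: that $\mu_{r,-}^\rho$ is an isomorphism on proper coefficients. The natural route is a reduction to compact subgroups. By properness (and a Morita/induction argument over a cocompact model of $\underline{EG}$) the group $K_*(\mathcal{A}_r^\rho(G,A\otimes B))$ should be computable from crossed products by the compact stabilizers; on a compact subgroup $K$ the restriction $\rho|_K$ is unitarizable, so the weight becomes harmless and $\mathcal{A}_r^\rho(K,\cdot)$ is $C^*_r(K,\cdot)$ up to norm equivalence. Thus the weighted map for proper coefficients should coincide with the ordinary one and be an isomorphism by Theorem \ref{BCproper}. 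The delicate points are the functoriality of $j^\rho$ needed for the diagram to commute and its compatibility with the Morita equivalences used in the reduction. I would finally emphasize the asymmetry with Tu's theorem: here surjectivity genuinely requires $\gamma=1$, because for general $\gamma$ one would instead have to prove that the weighted idempotent $\tilde\gamma_A^\rho$ acts as the identity on $K_*(\mathcal{A}_r^\rho(G,A))$, which is exactly the open Oka-type statement that the weighted construction is designed to attack.
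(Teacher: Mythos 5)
Your proposal is correct and follows essentially the same route as the proof in \cite{Gomez2010} that the paper is quoting: the theorem is exactly the weighted transplant of the $\gamma$-element argument of Theorem \ref{abstractgamma}, with the two inputs you isolate — a weighted descent $j^\rho$ built from Lafforgue's $KK^{\mathrm{ban}}$ (since $\mathcal{A}^\rho_r(G,A)$ is only a Banach algebra), and the fact that $\mu^\rho_{r,-}$ is an isomorphism on proper coefficients because $\rho$ restricted to compact stabilizers is unitarizable, making the weighted and ordinary crossed products agree there. The diagram chase for injectivity and, under $\gamma=1$, for surjectivity is then identical to the one in the paper's proof of Theorem \ref{abstractgamma}, so nothing essential is missing.
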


\begin{Thm}[\cite{Gomez2009}]
Let $G$ be a locally compact group with a $\gamma$-element. If $\gamma=1$ in $KK^{\mathrm{ban}}_{G,\ell}(\C,\C)$ and there is an unconditional completion stable under holomorphic calculus in $C^*_r(G)$, then $\mu_{r}^{\rho}$ is an isomorphism for every finite-dimensional representation $\rho$ of $G$.

\end{Thm}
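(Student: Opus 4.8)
The plan is to treat injectivity and surjectivity of $\mu_r^\rho$ (with trivial coefficients $A=\C$) separately, the existence of a $\gamma$-element yielding the first and the pair of remaining hypotheses being reserved for the second. For injectivity I would simply invoke the first assertion of the theorem of \cite{Gomez2010} recalled above: since $G$ possesses a $\gamma$-element, $\mu_r^\rho$ is injective for every finite-dimensional $\rho$. That argument in fact produces, exactly as in Theorem \ref{StrongNovikov}, a weighted dual-Dirac class splitting $\mu_r^\rho$ and identifying its image with the range of the idempotent $\tilde\gamma^\rho$ that $\gamma$ induces on $K_*(\mathcal{A}^\rho_r(G))$; hence all that remains is to prove $\tilde\gamma^\rho=\mathrm{Id}$, equivalently that $\mu_r^\rho$ is onto.

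The surjectivity argument would go through a weighted unconditional completion. First I would record that $\mathcal{A}^\rho_r(G)$ is the closed subalgebra of $C^*_r(G)\otimes\mathrm{End}(V)$ generated by the image of $C_c(G)$ under the multiplicative map $\Phi_\rho:\int_G f(g)e_g\,dg\mapsto\int_G f(g)e_g\otimes\rho(g)\,dg$ (multiplicative because $g\mapsto e_g\otimes\rho(g)$ is a homomorphism), its norm being the restriction of the $C^*$-norm. Next, starting from the given unconditional completion $\mathcal{B}(G)$, I would form its $\rho$-weighting $\mathcal{B}^\rho(G)$, the completion of $C_c(G)$ for $\|f\|_{\mathcal{B}^\rho}=\big\|\,g\mapsto|f(g)|\,\|\rho(g)\|\,\big\|_{\mathcal{B}(G)}$; submultiplicativity follows from $\|\rho(gh)\|\le\|\rho(g)\|\,\|\rho(h)\|$, and since the norm depends only on $g\mapsto|f(g)|$ the algebra $\mathcal{B}^\rho(G)$ is again an unconditional completion, realized as the closure of $\Phi_\rho(C_c(G))$ in $\mathcal{B}(G)\otimes\mathrm{End}(V)$. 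As $\rho$ is finite-dimensional, $\|\rho(g)\|\le e^{s_0\ell(g)}$ for some fixed $s_0$, so its growth is controlled by the length function; the hypotheses (a $\gamma$-element and $\gamma=1$ in $KK^{\mathrm{ban}}_{G,\ell}(\C,\C)$) therefore place us within the scope of Lafforgue's Theorem \ref{Unconditionnal}, which gives that $\mu_{\mathcal{B}^\rho(G)}:K_*^{top}(G)\to K_*(\mathcal{B}^\rho(G))$ is an isomorphism.

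It then remains to compare the two completions. Because $\mathcal{B}(G)\hookrightarrow C^*_r(G)$ is continuous, $\Phi_\rho$ extends to a continuous dense homomorphism $j:\mathcal{B}^\rho(G)\to\mathcal{A}^\rho_r(G)$, and by naturality of the assembly map in the target completion one has $\mu_r^\rho=j_*\circ\mu_{\mathcal{B}^\rho(G)}$; thus $\mu_r^\rho$ is onto as soon as $j_*$ is an isomorphism. To produce the latter I would transport the holomorphic stability of $\mathcal{B}(G)$ in $C^*_r(G)$: stability is inherited by matrix amplification, so $\mathcal{B}(G)\otimes\mathrm{End}(V)$ is stable under holomorphic calculus in $C^*_r(G)\otimes\mathrm{End}(V)$, and one wants to deduce that $\mathcal{B}^\rho(G)$ is stable in $\mathcal{A}^\rho_r(G)$, after which the Density Theorem makes $j_*$ an isomorphism. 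I expect this transfer to be the main obstacle: $\mathcal{A}^\rho_r(G)$ is only a Banach (not $C^*$) subalgebra of $C^*_r(G)\otimes\mathrm{End}(V)$, so its spectra can be strictly larger, and one must check both that the holomorphic functional calculus computed inside $\mathcal{A}^\rho_r(G)$ agrees with the one computed in the ambient $C^*$-algebra and, more delicately, that the resulting element lands in the $\mathcal{B}(G)$-closure of $\Phi_\rho(C_c(G))$ rather than merely in $(\mathcal{B}(G)\otimes\mathrm{End}(V))\cap\mathcal{A}^\rho_r(G)$, i.e. that the $\mathcal{B}$-closure and the $C^*$-closure of $\Phi_\rho(C_c(G))$ are compatible. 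This is precisely the spectral-permanence subtlety that the unweighted holomorphic-stability hypothesis is designed to control, and pinning it down for the weighted algebra is where the real work lies.
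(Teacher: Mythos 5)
Your strategy is the right one, and it is in outline the strategy of \cite{Gomez2009} itself (the theorem is, as the survey says, the ``Lafforgue analogue'' of the Dirac--dual Dirac method): injectivity comes from the $\gamma$-element via the theorem of \cite{Gomez2010}; $\mathcal{B}^{\rho}(G)$ is indeed an unconditional completion, so Lafforgue's Theorem \ref{Unconditionnal} gives that $\mu_{\mathcal{B}^{\rho}(G)}$ is an isomorphism; the factorization $\mu_r^{\rho}=j_*\circ\mu_{\mathcal{B}^{\rho}(G)}$ holds by compatibility of the two descent constructions; and everything is thereby reduced to showing that $j_*\colon K_*(\mathcal{B}^{\rho}(G))\to K_*(\mathcal{A}^{\rho}_r(G))$ is an isomorphism. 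All of this bookkeeping in your proposal is correct.

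But what you have written is a reduction, not a proof: the one step carrying the analytic content --- that $\mathcal{B}^{\rho}(G)$ is spectral (or at least relatively spectral in the sense of \cite{Nica2008}) in $\mathcal{A}^{\rho}_r(G)$ --- is precisely the step you leave open, and it is a genuine gap rather than a verification. Your transfer argument stalls exactly where you say it does: for $x\in\mathcal{B}^{\rho}(G)^+$ invertible in $\mathcal{A}^{\rho}_r(G)^+$, hence in $(C^*_r(G)\otimes\End(V))^+$, matrix-amplified stability of $\mathcal{B}(G)$ only yields $x^{-1}\in\mathcal{A}^{\rho}_r(G)^+\cap(\mathcal{B}(G)\otimes\End(V))^+$, and nothing soft identifies this intersection with $\mathcal{B}^{\rho}(G)^+$: the $\mathcal{B}$-closure of $\Phi_\rho(C_c(G))$ could a priori be strictly smaller than the intersection of its $C^*$-closure with $\mathcal{B}(G)\otimes\End(V)$. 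One cannot argue by analyticity of the resolvent either: the set of $\lambda$ with $(x-\lambda)^{-1}\in\mathcal{B}^{\rho}(G)^+$ is open and closed in the resolvent set of $x$ in $C^*_r(G)\otimes\End(V)$ (open by Neumann series, closed by continuity), so it is a union of connected components containing the unbounded one, but there is no continuation principle across components --- and the bounded holes are exactly where the interesting phenomena live, as the example $G=\Z$, $\rho=\rho_1\oplus\rho_2$ shows: there $\mathcal{A}^{\rho}_r(\Z)$ is the corona algebra, whose spectra fill the hole of the $C^*$-spectrum.

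What is missing is a structural identification $\mathcal{A}^{\rho}_r(G)\cap\bigl(\mathcal{B}(G)\otimes\End(V)\bigr)=\mathcal{B}^{\rho}(G)$, and unconditionality is what makes it available. For $\Gamma$ discrete, the fibrewise Hilbert--Schmidt projection onto the line $\C\rho(\gamma)$, namely $Q(F)(\gamma)=\langle F(\gamma),\rho(\gamma)\rangle_{\mathrm{HS}}\,\rho(\gamma)\,\norme{\rho(\gamma)}_{\mathrm{HS}}^{-2}$, satisfies the pointwise bound $\norme{Q(F)(\gamma)}\leq\sqrt{\dim V}\,\norme{F(\gamma)}$, hence extends (by unconditionality of the norm of $\mathcal{B}(\Gamma,\End(V))$) to a bounded idempotent on $\mathcal{B}(\Gamma)\otimes\End(V)$ whose range is exactly $\mathcal{B}^{\rho}(\Gamma)$; on the other hand, every $y\in\mathcal{A}^{\rho}_r(\Gamma)$ has all its Fourier coefficients $\hat y(\gamma)$ in $\C\rho(\gamma)$ (the condition holds on $\Phi_\rho(C_c(\Gamma))$ and is closed under $C^*$-limits), and $Q$ fixes any element of $\mathcal{B}(\Gamma)\otimes\End(V)$ with this property, since Fourier coefficients separate points and $Q$ acts on them by the fibrewise projections. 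Hence the intersection equals $\mathcal{B}^{\rho}(\Gamma)$, inverse-closedness follows, and the Density Theorem makes $j_*$ an isomorphism. For general locally compact $G$ one must engineer a substitute for Fourier coefficients; this analytic step, and not the K-theoretic formalities you carried out, is where the content of \cite{Gomez2009} lies, so you should not present the theorem as proved until you have supplied it.
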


Hence the morphism $\mu^{\rho}_{r,A}$ is an isomorphism for example, for all  groups with the Haagerup property and more general, for all K-amenable groups; and  when $A=\C$, the morphism $\mu_r^{\rho}$ is an isomorphism for all semisimple Lie groups and all cocompact lattices in a semisimple Lie group.\\

It is worth nothing to mention that, proving that the weighted map is an isomorphism is not easier than proving the Baum-Connes conjecture; one of the reasons is that, even though the algebras $\mathcal{A}^{\rho}_r(G,A)$ are in general not $C^*$-algebras, there are constructed in a very $C^*$-algebraic way. However, the following proposition shows that the weighted crossed products can be very small when the representation $\rho$ is very large.

\begin{Prop}{\cite[Proposition 1.5]{Gomez2010}}\label{iota}
Let $\Gamma$ be a discrete group and $A$ a $\Gamma$-$C^*$-algebra. Let $\rho:\Gamma\to\mathrm{End}(V)$ a finite-dimensional representation of $\Gamma$ such that $\sum\limits_{\gamma\in\Gamma}\frac{1}{\norme{\rho(\gamma)}}$ converges. Then 
$\mathcal{A}^{\rho}_r(\Gamma,A)$ embeds into $\ell^1(\Gamma,A)$.
 \end{Prop}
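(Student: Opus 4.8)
The plan is to realize the embedding as a bounded injective algebra homomorphism $\mathcal{A}^{\rho}_r(\Gamma,A)\to\ell^1(\Gamma,A)$ extending the identity on $C_c(\Gamma,A)$. Since both sides are completions of the convolution algebra $C_c(\Gamma,A)$ for two norms, and the identity map is tautologically an algebra homomorphism on $C_c(\Gamma,A)$, it suffices to prove the single norm estimate
$$\norme{f}_{\ell^1(\Gamma,A)}=\sum_{\gamma\in\Gamma}\norme{f(\gamma)}_A\leq C\,\norme{f}_{\mathcal{A}^{\rho}_r(\Gamma,A)}\qquad(f\in C_c(\Gamma,A)),$$
where $C=\sum_{\gamma\in\Gamma}\norme{\rho(\gamma)}^{-1}<\infty$ is exactly the finite constant supplied by the hypothesis. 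Boundedness for this pair of norms then forces Cauchy sequences in the stronger $\mathcal{A}^{\rho}_r$-norm to be Cauchy in $\ell^1$, giving the desired map on completions.

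The heart of the argument is a coefficient-extraction estimate. First I would identify $C^*_r(\Gamma,A)\otimes\End(V)$ with $C^*_r(\Gamma,A\otimes\End(V))$, where $\Gamma$ acts through its action on $A$ and \emph{trivially} on $\End(V)\cong M_d(\C)$ (legitimate because $\End(V)$ is finite-dimensional, so the tensor product carries a single $C^*$-norm and crossed products commute with matrix amplification). Under this identification the element defining the weighted norm, $\sum_{\gamma}f(\gamma)e_\gamma\otimes\rho(\gamma)$, becomes $\sum_{\gamma}\big(f(\gamma)\otimes\rho(\gamma)\big)\tilde e_\gamma$ with $\tilde e_\gamma=e_\gamma\otimes1$ the group unitaries, so its Fourier coefficient at $\gamma$ is precisely $f(\gamma)\otimes\rho(\gamma)$. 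I then invoke the canonical faithful conditional expectation $E\colon C^*_r(\Gamma,A\otimes\End(V))\to A\otimes\End(V)$ onto the identity coefficient: since $E$ is contractive and right multiplication by the unitary $\tilde e_{\gamma^{-1}}$ is isometric, $\norme{f(\gamma)\otimes\rho(\gamma)}=\norme{E(y\,\tilde e_{\gamma^{-1}})}\leq\norme{y}=\norme{f}_{\mathcal{A}^{\rho}_r(\Gamma,A)}$. Because the norm of an elementary tensor is multiplicative, $\norme{f(\gamma)\otimes\rho(\gamma)}=\norme{f(\gamma)}_A\,\norme{\rho(\gamma)}_{\End(V)}$, whence $\norme{f(\gamma)}_A\leq\norme{f}_{\mathcal{A}^{\rho}_r(\Gamma,A)}/\norme{\rho(\gamma)}$.

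Summing this last inequality over $\gamma\in\Gamma$ yields the estimate with constant $C$, hence the continuous homomorphism $\mathcal{A}^{\rho}_r(\Gamma,A)\to\ell^1(\Gamma,A)$. For injectivity I would use the same coefficient maps: each $f\mapsto f(\gamma)$ is continuous for both norms, and the family of coefficients separates points of both completions (for $\mathcal{A}^{\rho}_r(\Gamma,A)$ because it embeds isometrically in $C^*_r(\Gamma,A\otimes\End(V))$, where elements are determined by their Fourier coefficients, using that each $\rho(\gamma)$ is invertible hence nonzero). An element mapping to $0$ in $\ell^1(\Gamma,A)$ therefore has all coefficients zero and vanishes already in $\mathcal{A}^{\rho}_r(\Gamma,A)$.

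The step I expect to demand the most care is the coefficient bound $\norme{f(\gamma)\otimes\rho(\gamma)}\leq\norme{f}_{\mathcal{A}^{\rho}_r(\Gamma,A)}$, i.e. that a single Fourier coefficient of a reduced crossed-product element is dominated by its norm. This is classical for discrete groups via the faithful conditional expectation, but one must handle the non-unital case of $A$ (and of $A\otimes\End(V)$) by passing to the multiplier algebra so that $\tilde e_{\gamma^{-1}}$ genuinely implements an isometry, and one must verify that the identification $C^*_r(\Gamma,A)\otimes\End(V)\cong C^*_r(\Gamma,A\otimes\End(V))$ carries $\sum_\gamma f(\gamma)e_\gamma\otimes\rho(\gamma)$ to the stated normal form — the key point being that $\End(V)$ is acted on trivially, so the scalars $\rho(\gamma)$ sit inside the coefficients while the group unitaries are the untwisted $e_\gamma\otimes1$. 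Everything else reduces to the routine summation above.
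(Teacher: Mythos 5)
Your proof is correct and takes essentially the same route as the cited source \cite{Gomez2010} (whose proof this survey does not reproduce): one dominates each Fourier coefficient, of norm $\norme{f(\gamma)}_A\,\norme{\rho(\gamma)}$ by multiplicativity of elementary tensor norms, by the weighted norm via the contractive faithful conditional expectation on $C^*_r(\Gamma,A\otimes\End(V))\cong C^*_r(\Gamma,A)\otimes\End(V)$, and then sums against the hypothesis $\sum_{\gamma\in\Gamma}\norme{\rho(\gamma)}^{-1}<\infty$. Your handling of the delicate points — the non-unital case via multipliers, and injectivity of the extended map via faithfulness of the expectation on the isometric copy of $\mathcal{A}^{\rho}_r(\Gamma,A)$ inside $C^*_r(\Gamma,A\otimes\End(V))$ — is sound.
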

We then have the inclusions $\mathcal{A}^{\rho}_r(\Gamma,A)\hookrightarrow \ell^1(\Gamma,A)\hookrightarrow C^*_r(\Gamma,A)$ and hence, if we take a group $\Gamma$ for which we know that $K_*^{\mathrm{top}}(\Gamma)\simeq K_*(\ell^1(\Gamma,A))$, proving that $\mathcal{A}^{\rho}_r(\Gamma,A)$ and $C^*_r(\Gamma,A)$ have the same K-theory would prove the surjectivity of the Baum-Connes map with coefficients for $\Gamma$.  These ideas also work for more general locally compact groups, but we don't always have a continuous map from  $\mathcal{A}^{\rho}_r(G,A)$ to $C^*_r(G,A)$ (this map exists if and only if the regular representation $\lambda_G$ is weakly contained in $\lambda_G\otimes \rho$). Nevertheless, thanks to the following proposition, we have a map at the level of K-theory:
\begin{Prop}\label{rhomap}
Let $G$ be a locally compact group and let $\rho:G\to\mathrm{End}(V)$ a finite dimensional representation of $G$ such that $\int_G\frac{1}{\norme{\rho(g)}}dg$ converges. Then, if $A$ is a $G$-$C^*$-algebra, $\mathcal{A}^{\rho}_r(G,A)\cap L^1(G,A)$ is relatively spectral in $\mathcal{A}^{\rho}_r(G,A)$.
\end{Prop}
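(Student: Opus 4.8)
The plan is to show that the inclusion $D:=\mathcal{A}^\rho_r(G,A)\cap L^1(G,A)\hookrightarrow\mathcal{A}^\rho_r(G,A)$ is relatively spectral, that is: in every matrix size, an element of $D$ that becomes invertible in the unitalization $\mathcal{A}^\rho_r(G,A)^\sim$ already has its inverse in $D^\sim$. This is exactly the inverse-closedness property that, together with density of $D$, makes the map $\iota$ of the previous diagram well defined and an isomorphism in K-theory. Since $M_n(\mathcal{A}^\rho_r(G,A))=\mathcal{A}^\rho_r(G,M_n(A))$ and $M_n(L^1(G,A))=L^1(G,M_n(A))$, and the hypothesis $\int_G\norme{\rho(g)}^{-1}\,dg<\infty$ is insensitive to replacing $A$ by $M_n(A)$, one reduces at once to $n=1$. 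Thus, fixing $a\in D$ such that $1+a$ is invertible in $\mathcal{A}^\rho_r(G,A)^\sim$ with inverse $1+b$, $b\in\mathcal{A}^\rho_r(G,A)$, the entire task is to prove $b\in L^1(G,A)$.

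The tool I would use is the isometric algebra homomorphism $\Phi\colon\mathcal{A}^\rho_r(G,A)\hookrightarrow C^*_r(G,A\otimes\End V)$ given on $C_c(G,A)$ by $\Phi(f)(g)=f(g)\otimes\rho(g)$; it is multiplicative precisely because $\rho$ is a homomorphism, and isometric by the very definition of the weighted norm. The role of the hypothesis is then the following quantitative estimate, which is the locally compact analogue of Proposition \ref{iota}: if $c\in\mathcal{A}^\rho_r(G,A)$ is represented by an essentially bounded function, then $c\in L^1(G,A)$ with $\norme{c}_{L^1}\le\big(\int_G\norme{\rho(g)}^{-1}\,dg\big)\cdot\norme{\Phi(c)}_\infty$, where $\norme{\Phi(c)}_\infty$ is the essential supremum of $g\mapsto\norme{c(g)}_A\norme{\rho(g)}$. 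This reduces the problem to a regularity statement: one must show that the resolvent $b$ is represented by a bounded function, i.e. $\Phi(b)\in L^\infty(G,A\otimes\End V)$.

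For elements of small norm this regularity is not even needed: when $\norme{a}_{\mathcal{A}^\rho_r}<1$ and $\norme{a}_{L^1}<1$, the Neumann series $b=\sum_{n\ge 1}(-1)^n a^{*n}$ converges in $\mathcal{A}^\rho_r(G,A)$, while the submultiplicativity $\norme{a^{*n}}_{L^1}\le\norme{a}_{L^1}^{\,n}$ shows it converges in $L^1(G,A)$ as well; since both completions embed compatibly into the space of $A$-valued distributions on $G$ (the two embeddings agreeing on $C_c(G,A)$), the limits coincide and $b\in L^1$. The general case would be reached from this one by a holomorphic functional calculus argument: the resolvent $R(\lambda)=(\lambda-a)^{-1}$ lies in $D^\sim$ for $|\lambda|$ large (same Neumann series), the set of $\lambda$ in the resolvent set of $a$ in $\mathcal{A}^\rho_r(G,A)$ for which $R(\lambda)\in D^\sim$ is open, and one propagates this property inward from $\lambda=\infty$.

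The main obstacle is exactly this propagation, and it is where the passage from discrete to general locally compact groups is concentrated. For $G$ discrete every element of $C^*_r(G,A)$ has bounded Fourier coefficients, so $\Phi(b)\in\ell^\infty$ automatically and in fact $\mathcal{A}^\rho_r\subseteq\ell^1$ outright by Proposition \ref{iota}, whence $D=\mathcal{A}^\rho_r$ and the statement is trivial. For general $G$ there is no bounded point evaluation on $C^*_r(G,A\otimes\End V)$: the resolvent kernel $\Phi(b)$ need not a priori be essentially bounded, and its $L^1$-norm cannot be controlled uniformly by the $\mathcal{A}^\rho_r$-norm. The required regularity must instead be extracted from the fact that $a\in L^1$, so that convolution by $a$ is smoothing, and combined with a careful continuation of $R(\lambda)$ that handles the possible bounded components of the resolvent set. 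Establishing this regularity and the attendant globalization is the technical heart of the proof.
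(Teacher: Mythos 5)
Your proposal is not a proof: the step you yourself call ``the technical heart'' (regularity of the resolvent and the globalization) is exactly the content of the proposition, and it is left unestablished; moreover, the route you sketch for it would not work. Note first that the paper states Proposition \ref{rhomap} without proof (it comes from the first author's work on weighted crossed products, cf. \cite{Gomez2010}), and that the definition of ``relatively spectral'' recalled right after it only asks for equality of spectra for elements of \emph{some dense subalgebra} of $D=\mathcal{A}^{\rho}_r(G,A)\cap L^1(G,A)$. By aiming at full inverse-closedness of $D$ in $\mathcal{A}^{\rho}_r(G,A)$ you are trying to prove a strictly stronger statement, and one your tools cannot reach: for a general $a\in D$, which is merely $L^1$, there is no mechanism forcing the resolvent to be represented by an essentially bounded function --- the ``smoothing'' you invoke at the end requires the localizing element to be square-integrable against the weight, i.e. $\int_G\norme{a(g)}^2\norme{\rho(g)}^2dg<\infty$, which elements of $D$ need not satisfy. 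Second, the continuation argument fails as stated: the set $U$ of $\lambda$ in the $\mathcal{A}^{\rho}_r$-resolvent set of $a$ with $R(\lambda)\in D^\sim$ is indeed open, but openness alone propagates nothing; you would need $U$ to be relatively closed, which requires uniform bounds on $\norme{R(\lambda_n)}_{L^1}$ along convergent sequences --- precisely the control that is missing --- and bounded components of the resolvent set are unreachable from $\lambda=\infty$ in any case (the disc algebra inside $C(\mathbf{S}^1)$ shows how this kind of continuation breaks down).

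The correct argument exploits the dense-subalgebra weakening in an essential way. Take $X=C_c(G,A)$. For $f\in C_c(G,A)$ and $\lambda\neq 0$ outside $\mathrm{sp}_{\mathcal{A}^{\rho}_r}(f)$, write $(\lambda-f)^{-1}=\lambda^{-1}+c_0$ with $c_0\in\mathcal{A}^{\rho}_r(G,A)$; the two resolvent identities $c_0=\lambda^{-2}f+\lambda^{-1}f\ast c_0$ and $c_0=\lambda^{-2}f+\lambda^{-1}c_0\ast f$ combine to give $c_0=\lambda^{-2}f+\lambda^{-3}f\ast f+\lambda^{-2}f\ast c_0\ast f$, so everything reduces to showing that the \emph{two-sided localization} $f\ast c_0\ast f$ lies in $L^1(G,A)$ --- not the resolvent itself. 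This is done by a matrix-coefficient lemma: for $\xi,\eta\in C_c(G,B)$ and $x\in C^*_r(G,B)$, the product $\xi\ast x\ast\eta$ is represented by a continuous $B$-valued function with $\sup_g\norme{(\xi\ast x\ast\eta)(g)}\leq C(\xi,\eta)\,\norme{x}_{C^*_r(G,B)}$, since its value at each $g$ is a pairing of the operator $x$ against two $L^2$-vectors built from $\xi$ and $\eta$. Applying this through your embedding $\Phi$, with $B=A\otimes\End(V)$ and $\xi=\eta=\Phi(f)\in C_c(G,B)$, yields $\norme{(f\ast c_0\ast f)(g)}_A\,\norme{\rho(g)}\leq C\norme{c_0}_{\mathcal{A}^{\rho}_r}$ for every $g$; integrating against $\norme{\rho(g)}^{-1}$ --- the only place the hypothesis enters, exactly as in your weight estimate --- gives $f\ast c_0\ast f\in L^1(G,A)$, hence $c_0\in D$ and $\mathrm{sp}_D(f)=\mathrm{sp}_{\mathcal{A}^{\rho}_r}(f)$. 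The compact support of $f$ is what allows $f$ to serve as its own localizer; this is precisely why the proposition asserts relative spectrality rather than the inverse-closedness you set out to prove. Your closing intuition about smoothing points in the right direction, but completing the argument requires retreating to $C_c(G,A)$, proving the localization lemma, and replacing analytic continuation by the algebraic resolvent identity.
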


\begin{Def}
A morphism $\phi: A\to B$ between two algebras is \emph{relatively spectral} if $\mathrm{sp}_B(\phi(x))=\mathrm{sp}_A(x)$ for all $x$ in some dense subalgebra $X$ of $A$. It is a weaker condition than being stable under holomorphic calculus and it induces an isomorphism in K-theory (see \cite{Nica2008}).
\end{Def}

As a result, we have a map from $K_*(\mathcal{A}^{\rho}_r(G,A))$ to $K_*(L^1(G,A))$ defined through $K(\mathcal{A}^{\rho}_r(G,A)\cap L^1(G,A))$ and we can prove that the following diagram is commutative:

$$\xymatrix{ K_*^{\mathrm{top}}(G,A)\ar[rd]_{\mu^{\rho}_r}\ar@/^3pc/[rr]^{\mu^A_r} \ar[r]_{\simeq}^{\mu_{L^1} }&K_*(L^1(G,A))\ar[r] &K_*(C^*_r(G,A)) \\
&K_*(\mathcal{A}^{\rho}_r(G,A))\ar[u]\ar@{.>}[ur]_{\varphi}& }.$$

Hence, we get a morphism $\varphi:K_*(\mathcal{A}^{\rho}_r(G,A))\to K_*(C^*_r(G,A))$. The following result is then straightforward:

\begin{Thm}\label{sslarge}
Let $G$ be a locally compact group with a $\gamma$-element and let $(\rho,V)$ be a finite dimensional representation of $G$ such that $\int_G\frac{1}{\norme{\rho(g)}}dg$ converges. If $\varphi$ is an isomorphism then the Baum-Connes conjecture with coefficients in $A$ is true for $G$. 
\end{Thm}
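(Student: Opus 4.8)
The plan is to deduce the statement from a purely formal chase in the commutative diagram displayed just before it, with the analytic substance entirely absorbed into the hypothesis on $\varphi$. First I would dispose of injectivity: since $G$ is assumed to possess a $\gamma$-element, Theorem \ref{abstractgamma}(1) already gives that $\mu_{A,r}$ is injective for the given coefficient algebra $A$. Consequently the entire content to be proved is the \emph{surjectivity} of $\mu_{A,r}\colon K_*^{\mathrm{top}}(G,A)\to K_*(C^*_r(G,A))$, and this is where the weighted crossed product $\mathcal{A}^\rho_r(G,A)$ intervenes.

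Next I would set up the three arrows of the lower part of the diagram and check they are well defined. The integrability hypothesis $\int_G\|\rho(g)\|^{-1}\,dg<\infty$ is precisely what is needed here: by Proposition \ref{rhomap} the subalgebra $\mathcal{A}^\rho_r(G,A)\cap L^1(G,A)$ is relatively spectral in $\mathcal{A}^\rho_r(G,A)$, so it computes the same K-theory, and composing the resulting identification with the inclusion into $L^1(G,A)$ produces the comparison map $\iota\colon K_*(\mathcal{A}^\rho_r(G,A))\to K_*(L^1(G,A))$ --- even when, for non-discrete $G$, there is no continuous homomorphism $\mathcal{A}^\rho_r(G,A)\to C^*_r(G,A)$ at the algebra level (for discrete $G$ this is even more transparent through Proposition \ref{iota}). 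Writing $j\colon K_*(L^1(G,A))\to K_*(C^*_r(G,A))$ for the map induced by the canonical inclusion and $\mu^A_{L^1}$ for the Bost assembly map (the arrow labelled $\simeq$ in the diagram), the commutativity of the diagram records the relations $\varphi=j\circ\iota$, $\mu_{A,r}=j\circ\mu^A_{L^1}$ and $\mu^A_{L^1}=\iota\circ\mu^\rho_{r,A}$, which together reproduce the factorization $\mu_{A,r}=\varphi\circ\mu^\rho_{r,A}$.

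The surjectivity argument is then short. Because the discussion takes place under the standing assumption that the Bost conjecture holds for $G$ (subsection \ref{Bostcon}), the map $\mu^A_{L^1}$ is an isomorphism. The hypothesis that $\varphi$ is an isomorphism makes $\varphi=j\circ\iota$ surjective, and surjectivity of a composite forces surjectivity of the outer map $j$. Feeding this into the top relation $\mu_{A,r}=j\circ\mu^A_{L^1}$ exhibits $\mu_{A,r}$ as the composite of an isomorphism with a surjection, hence $\mu_{A,r}$ is onto. Combined with the injectivity obtained in the first step, $\mu_{A,r}$ is an isomorphism, which is exactly Conjecture \ref{ConjBCcoeff} for $G$ with coefficients in $A$. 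Note that only the surjectivity of $\varphi$ is actually consumed by the argument, although the natural Oka-type statement delivers a full isomorphism.

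The genuinely hard point is thus not in this chase, which is why the authors rightly call the conclusion straightforward. The main obstacle I would flag is twofold. Conceptually, all the difficulty has been packaged into the hypothesis: proving that $\varphi$ is an isomorphism is a noncommutative analogue of the Oka--Grauert principle comparing the ``holomorphically small'' weighted algebra $\mathcal{A}^\rho_r(G,A)$ with $C^*_r(G,A)$, and it is exactly this comparison that resists the available methods for higher-rank groups. Technically, the step requiring real care is the verification that every arrow of the diagram is well defined and that the triangles commute --- in particular the construction of $\iota$ (hence of $\varphi$) through the relatively spectral intersection with $L^1(G,A)$, since there is no honest algebra homomorphism $\mathcal{A}^\rho_r(G,A)\to C^*_r(G,A)$ to appeal to in the non-discrete case.
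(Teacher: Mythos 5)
Your proof is correct and is essentially the argument the paper has in mind: the paper gives no written proof (it declares the result ``straightforward'' from the displayed commutative diagram), and your chase --- injectivity of $\mu_{A,r}$ from Theorem \ref{abstractgamma}(1) via the $\gamma$-element, surjectivity from the Bost isomorphism $\mu^A_{L^1}$ together with surjectivity of $j$ forced by $\varphi = j\circ\iota$ being an isomorphism --- is precisely that implicit diagram chase. You also correctly flagged the one delicate point, namely that the Bost conjecture for $G$ (encoded by the $\simeq$ decorating $\mu_{L^1}$ in the paper's diagram, though absent from the theorem's stated hypotheses, and not a consequence of merely having a $\gamma$-element) is what powers the surjectivity half.
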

Let us give two examples of groups having a "very large" finite dimensional representation.
\begin{Ex} 
\begin{enumerate}
\item Let $G=\R$ and let $\rho:\R\to\mathrm{GL}_3(\C)$, be the representation of $G$ defined by $t\mapsto \mathrm{Exp}(tX)$ where 
$X=\begin{pmatrix}
0&1&0\\
0&0&1\\
0&0&0
\end{pmatrix}$. Then, $$\mathrm{Exp}(tX)=1+tX+\frac{t^2}{2}X^2=\begin{pmatrix}
1&t&\frac{t^2}{2}\\
0&1&t\\
0&0&1
\end{pmatrix}$$ and hence, $\norme{\mathrm{Exp}(tX)}\geq (\frac{t^4}{4}+t^2+1)^{\frac{1}{2}}=1+\frac{t^2}{2}$. It follows that,

$$\int_{-\infty}^{+\infty}\frac{dt}{\norme{\mathrm{Exp}(tX)}}\leq\int_{-\infty}^{+\infty}\frac{dt}{1+\frac{t^2}{2}}<+\infty.$$
\item Take $G=SL_2(\R)$. Set $K=SO(2)$, and let
$$A=\{a_t=\left(\begin{array}{cc}e^t & 0 \\0 & e^{-t}\end{array}\right):t\in\R\}$$
be the diagonal subgroup. Recall that the Haar measure in the Cartan decomposition $G=KA^+K$ is expressed as
$$\int_G f(g)\,dg=\int_K\int_0^\infty \int_K \sinh(2t)f(k_1a_tk_2)\;dk_1\,dt\,dk_2$$
for $f\in C_c(G)$. Let $\rho_n$ be the $(n+1)$-dimensional representation of $G$ on homogeneous polynomials of degree $n$ on $\C^2$. Then $\|\rho_n(a_t)\|=e^{nt}$ for $t\geq 0$, so that $\rho_n$ is very large exactly when $n\geq 3$.
\end{enumerate}
\end{Ex}

Accordingly, proving the Baum-Connes conjecture with coefficients for a group for which injectivity is known (for example a semisimple Lie group or one of its closed subgroups) amounts to prove that the map $\varphi$ is surjective. To illustrate the fact that proving the surjectivity of $\varphi$ fits in the framework of Oka's principle as introduced by Bost in \cite{Bost90}, let us state the following proposition. The first point is a generalization of Theorem \ref{elementaryabelian} concerning $L^1$ algebras; even though this result does not appear in \cite{Bost90}, the proof is due to Bost.

\begin{Prop}\label{TheoBost} 
Let $G$ be a locally compact group and let $\rho: G\to \mathrm{GL}_n(\R)$ a representation of $G$. 

\begin{enumerate}
\item If $\overline{\rho(G)}$ is amenable and $a(g)=\log(\norme{\rho(g)})$, then the map $K_*(\mathrm{Exp}_a(G,B))\to K_*(L^1(G,B))$ is an isomorphism.
\item If $\overline{\rho(G)}$ is amenable and $\int_G\frac{1}{\norme{\rho(g)}}dg$ converges then the map $$K_*(\mathcal{A}^{\rho}_r(G,B))\to K_*(C^*_r(G,B))$$ defined using proposition \ref{rhomap} is an isomorphism. 
\end{enumerate}
\end{Prop}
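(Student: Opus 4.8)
The plan is to deduce both statements from Bost's Oka principle, Theorem \ref{Bost}, by manufacturing an action of a Euclidean group $\R^N$ on the relevant ambient Banach algebra whose subalgebra of holomorphically extendable elements is, up to inclusions that are transparent in K-theory, exactly the weighted algebra in question. The amenability of $H:=\overline{\rho(G)}$ is precisely what makes such an action exist: it forces the exponential growth of $g\mapsto\norme{\rho(g)}$ to be governed by genuine additive characters, and hence by a homomorphism $G\to\R^N$.

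First I would analyse the weight $a(g)=\log\norme{\rho(g)}$, which is subadditive and factors as $a=\bar a\circ\rho$ with $\bar a(h)=\log\norme{h}$ on $H\subset\mathrm{GL}_n(\R)$. Since $H$ is amenable it is a Lie group whose identity component is compact-by-solvable; triangularising the solvable part over $\C$ (Lie--Kolchin) produces finitely many continuous homomorphisms $\chi_i(h)=\log|\lambda_i(h)|$, where the $\lambda_i$ are the eigenvalue characters, while the compact and unipotent directions contribute only bounded, resp. polynomially bounded, terms. Thus $\bar a$ is comparable to $\max_i\chi_i$ up to an error $O(\log(1+\norme{h}))$. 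Composing, $\Phi:=(\chi_i\circ\rho)_i\colon G\to\R^N$ is a continuous homomorphism, so for each $t\in\R^N$ the function $g\mapsto e^{i\langle t,\Phi(g)\rangle}$ is a unitary character of $G$; pointwise multiplication by it is an isometric automorphism $\alpha_t$ of $L^1(G,B)$, respecting convolution precisely because $\Phi$ is a homomorphism.

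For (1) I would identify $\mathrm{Exp}_a(G,B)$ with the holomorphic-extension algebra $\mathcal{O}(L^1(G,B),\alpha,F)$ for a suitable box $F\subset\R^N$ containing $0$: extending $\alpha$ to the imaginary directions $t=is$ amounts to multiplying by $e^{-\langle s,\Phi(g)\rangle}$, and $L^1$-boundedness of this extension over $s\in F$ is, by the comparability $a\sim\max_i\chi_i\circ\rho$, equivalent to the decay condition defining $\mathrm{Exp}_a$. The polynomial corrections from unipotent directions only interpose subexponentially weighted algebras, each stable under holomorphic calculus in the next, so they are invisible in K-theory; Theorem \ref{Bost} then gives $K_*(\mathrm{Exp}_a(G,B))\cong K_*(L^1(G,B))$. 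For (2) I would run the same construction on the reduced crossed product: multiplication by the unitary character $e^{i\langle t,\Phi(\cdot)\rangle}$ is a $*$-automorphism $\alpha_t$ of $C^*_r(G,B)$, since it implements a unitary equivalence and hence preserves the reduced norm. Amenability of $H$ again gives $\mathcal{O}(C^*_r(G,B),\alpha,F)=\mathcal{A}^{\rho}_r(G,B)$ up to relatively spectral inclusions; here the hypothesis $\int_G\norme{\rho(g)}^{-1}dg<\infty$ is what makes Proposition \ref{rhomap} applicable, showing $\mathcal{A}^{\rho}_r(G,B)\cap L^1(G,B)$ relatively spectral in $\mathcal{A}^{\rho}_r(G,B)$, which is the device that replaces the weighted crossed product by the exact holomorphic-extension algebra and absorbs the unipotent corrections. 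Applying Theorem \ref{Bost} to $(C^*_r(G,B),\alpha)$ then yields $K_*(\mathcal{A}^{\rho}_r(G,B))\cong K_*(C^*_r(G,B))$, the map being the one of Proposition \ref{rhomap}.

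The hard part will be the structure-theoretic step: showing that on the amenable group $H$ the matrix-norm growth $\log\norme{h}$ is captured, up to K-theoretically invisible corrections, by finitely many real additive characters, and organising the relatively spectral inclusions so that $\mathrm{Exp}_a$ and $\mathcal{A}^{\rho}_r$ are genuinely squeezed between holomorphic-extension algebras $\mathcal{O}(\cdot,\alpha,F)$. This is exactly where amenability is indispensable: for a non-amenable $H$ (for instance one containing a copy of $SL_2(\R)$) the exponential growth of $\norme{\rho(g)}$ is not accounted for by any homomorphism $G\to\R^N$, no such Euclidean action exists, and the argument collapses — consistently with the obstruction, discussed in the preceding sections, to using ``very large'' representations of higher-rank groups.
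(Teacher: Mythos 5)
Your proposal has the same overall architecture as the paper's proof: reduce, via amenability, the growth of $\norme{\rho(g)}$ to finitely many positive characters of $G$; let the corresponding unitary characters generate an action $\alpha$ of a finite-dimensional real vector space on $L^1(G,B)$, resp.\ $C^*_r(G,B)$, by pointwise multiplication; identify the weighted algebra with Bost's algebra $\mathcal{O}(\cdot,\alpha,F)$ up to dense inclusions stable under holomorphic calculus (this is what Lemmas \ref{observation} and \ref{semisimplify} do, while Proposition \ref{rhomap} supplies the comparison map to $K_*(C^*_r(G,B))$ --- it is Lemma \ref{semisimplify}, not Proposition \ref{rhomap}, that absorbs the unipotent corrections); and conclude by Theorem \ref{Bost}. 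The genuine gap is in the structure-theoretic step that you yourself flag as ``the hard part'', and it is exactly where the paper uses different tools. Lie--Kolchin only simultaneously triangularizes the \emph{connected solvable radical} of the identity component of $H=\overline{\rho(G)}$, so your $\chi_i(h)=\log|\lambda_i(h)|$ are characters of that radical only: moduli of eigenvalues are not multiplicative on a nonabelian group, conjugation by elements of $H$ permutes the $\chi_i$, and they need not extend to $H$; consequently $\Phi=(\chi_i\circ\rho)_i$ is not a well-defined homomorphism on $G$, and without it the isometric $\R^N$-action feeding Theorem \ref{Bost} does not exist. This cannot be repaired inside your framework: let $H$ be the infinite dihedral group generated by $t=\mathrm{diag}(2,1/2)$ and the coordinate swap in $GL_2(\R)$. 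This is a closed, amenable (virtually cyclic) subgroup, $\norme{\rho(t^n)}=2^{|n|}$ grows exponentially, yet its abelianization is finite, so $\mathrm{Hom}(H,\R)=0$: no homomorphism to any $\R^N$ sees the growth. Nor is the discrepancy ``K-theoretically invisible'' by soft spectral arguments: $\delta_t$ has spectral radius $1$ in $\ell^1(H)$ but $2$ in $\mathrm{Exp}_a(H)$, so the inclusion is not relatively spectral, and only an Oka-type mechanism could give the isomorphism --- precisely the mechanism whose input (characters of $G$) is missing. Your comparability claim ``$\log\norme{h}=\max_i\chi_i(h)+O(\log(1+\norme{h}))$'' is symptomatic of the same problem: for unipotent or dihedral-type elements the whole of $\log\norme{h}$ is that ``error''.

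What the paper does instead is the Moore--Furstenberg reduction: by Moore's theorem the Zariski closure of $\rho(G)$ is amenable, and Furstenberg's Lemma conjugates $\rho(G)$ into block upper triangular form with diagonal blocks $\R^*_+\times SO(n_i)$. The $\R^*_+$-components of the blocks are then genuine continuous characters $\chi_k$ of $G$; the compact $SO(n_i)$-parts disappear by Lemma \ref{observation}(2), the strictly upper triangular part by the holomorphic-calculus-stable inclusion of Lemma \ref{semisimplify}, and one is reduced to $\pi=\bigoplus_k\chi_k$, for which $\mathcal{A}^{\pi}_r(G,B)=\mathcal{O}(K,C^*_r(G,B),\alpha)$ with $\alpha_{\xi}(f)(g)=f(g)e^{-i\xi(g)}$, $\xi\in W=\sum_k\R\log\chi_k$, and $K$ the convex hull of $\{0,\log\chi_k\}$; Theorem \ref{Bost} concludes. (To be fair, your instinct that this step is delicate is correct: in the dihedral example above even the Furstenberg containment holds only for a finite-index subgroup of $\rho(G)$, since the component group of the Zariski closure permutes the characters, so strictly speaking an additional passage to a finite-index subgroup, and a crossed-product argument for the finite quotient, is implicit in the paper as well.) The concrete missing idea in your write-up is thus Furstenberg's Lemma together with Moore's theorem; eigenvalue characters obtained from Lie--Kolchin on the solvable radical are not a substitute, because they are not characters of $G$.
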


The conditions that $\int_G\frac{1}{\norme{\rho(g)}}dg$ converges and that $\overline{\rho(G)}$ is amenable imply that $G$ is amenable. This is because the condition that $\int_G\frac{1}{\norme{\rho(g)}}dg$ converges implies that $\rho $ is proper. Hence, Theorem \ref{TheoBost} does not give anything new apart from proving that the Baum-Connes conjecture is true for an amenable group. Yet, it seemed to us that this result gives a good idea of how Bost's version of Oka principle works, and therefore we give the main ideas of the proof below.\\
We will use the following properties of weighted algebras. Analogous properties are satisfied by $\mathrm{Exp}_{\rho}(G,B)$.
\begin{Lem}\label{observation} Let $\rho,\rho',\pi,\sigma$ finite dimensional representations of a locally compact group $G$.
\begin{enumerate}
\item If $\rho'$ is either a sub-representation or a quotient of $\rho$, then $\mathcal{A}^{\rho}_r(G,B)\subset \mathcal{A}^{\rho'}_r(G,B)$.
\item If $\rho=\pi\otimes \sigma$ and $\sigma$ is unitary, then $\mathcal{A}^{\rho}_r(G,B)=\mathcal{A}^{\pi}_r(G,B)$.
\item If $\rho=\bigoplus\limits_{k}\rho_k$, then $\mathcal{A}^{\rho}_r(G,B)\subset \bigcap\limits_k \mathcal{A}^{\rho_k}_r(G,B)$.
\end{enumerate}
\end{Lem}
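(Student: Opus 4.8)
The plan is to reduce all three statements to a single computation. The key preliminary observation is that the twisting assignment
$\Phi_\rho:\int_G f(g)e_g\,dg\mapsto\int_G f(g)e_g\otimes\rho(g)\,dg$
is an algebra homomorphism from $C_c(G,B)$ into $C^*_r(G,B)\otimes\End(V)$; this is a one-line check using $e_{g_1}f(g_2)e_{g_2}=g_1(f(g_2))e_{g_1g_2}$ together with $\rho(g_1)\rho(g_2)=\rho(g_1g_2)$, exactly as in the homomorphism property of the regular representation. By the very definition of the weighted norm, $\|f\|_{\mathcal{A}^{\rho}_r(G,B)}=\|\Phi_\rho(f)\|$, so everything comes down to comparing, on $C_c(G,B)$, the quantities $\|\Phi_\rho(f)\|$ for the various $\rho$.

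For (1), suppose first that $\rho'$ is realized on an invariant subspace $V'\subseteq V$, with inclusion $\iota:V'\hookrightarrow V$ and some (not necessarily equivariant) projection $P:V\to V'$ onto it. Invariance gives $\rho'(g)=P\rho(g)\iota$ for every $g$, whence $\Phi_{\rho'}(f)=(1\otimes P)\,\Phi_\rho(f)\,(1\otimes\iota)$. Since left and right multiplication by the scalar intertwiners $P$ and $\iota$ is bounded by their operator norms, I obtain $\|f\|_{\rho'}\le\|P\|\,\|\iota\|\,\|f\|_\rho$, so the identity on $C_c(G,B)$ extends to the claimed continuous inclusion $\mathcal{A}^{\rho}_r(G,B)\subset\mathcal{A}^{\rho'}_r(G,B)$; the quotient case is identical after replacing $\iota,P$ by a linear section $s:V'\to V$ and the equivariant quotient map $q:V\to V'$, using $\rho'(g)=q\rho(g)s$. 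Statement (3) is then immediate: in $\rho=\bigoplus_k\rho_k$ each summand is in particular a subrepresentation, so (1) gives $\mathcal{A}^{\rho}_r(G,B)\subset\mathcal{A}^{\rho_k}_r(G,B)$ for every $k$, hence $\mathcal{A}^{\rho}_r(G,B)\subset\bigcap_k\mathcal{A}^{\rho_k}_r(G,B)$.

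The heart of the matter is (2), where I need genuine \emph{equality} of norms, and the mechanism is Fell's absorption principle. Writing $\rho=\pi\otimes\sigma$ with $\sigma$ unitary, the element $\Phi_\rho(f)$ carries the twist $\lambda(g)\otimes\pi(g)\otimes\sigma(g)$ in the regular representation on $L^2(G,B)\otimes V_\pi\otimes V_\sigma$. First I would introduce the pointwise unitary $W$ on $L^2(G,B)\otimes V_\sigma$ given by $(W\xi)(h)=(1\otimes\sigma(h))\xi(h)$, which satisfies $W(\lambda(g)\otimes 1)W^{*}=\lambda(g)\otimes\sigma(g)$ and commutes both with the covariant representation of $B$ (acting on the $B$-factor) and with the $\End(V_\pi)$-factor. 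Conjugating $\Phi_\rho(f)$ by $1\otimes W$ then replaces each $\sigma(g)$ by $1_{V_\sigma}$, carrying $\Phi_{\pi\otimes\sigma}(f)$ isometrically to $\Phi_\pi(f)\otimes 1_{V_\sigma}$, whose norm equals $\|\Phi_\pi(f)\|$. Thus $\|f\|_\rho=\|f\|_\pi$ on $C_c(G,B)$ and the two completions coincide; taking $\pi$ trivial recovers the earlier remark that $\mathcal{A}^{\rho}_r(G,A)=C^*_r(G,A)$ for unitary $\rho$.

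The hard part will be getting this last step exactly right: one must verify carefully that the absorbing unitary $W$ commutes with the coefficient algebra $B$ and with the $\pi$-factor, so that it untwists the $G$-variable \emph{alone} and therefore does not disturb the reduced-crossed-product norm. This is precisely the point where the hypothesis that $\sigma$ is genuinely unitary (rather than merely uniformly bounded) is essential. By contrast, the subrepresentation and quotient estimates of (1) are comparatively soft, requiring only boundedness of multiplication by the fixed scalar intertwiners, and (3) follows with no further work.
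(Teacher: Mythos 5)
Your proof is correct. Note that the paper itself gives no proof of this lemma: it is stated as a known property of the weighted crossed products, imported from \cite{Gomez2010}, so there is no in-text argument to compare with, and your proposal has to be judged on its own terms. It holds up. The reduction of all three statements to the homomorphism $\Phi_\rho(f)=\int_G f(g)e_g\otimes\rho(g)\,dg$ of $C_c(G,B)$ into $C^*_r(G,B)\otimes\End(V)$, whose norm computes $\|f\|_{\mathcal{A}^{\rho}_r(G,B)}$, is the right move. The identities $\rho'(g)=P\rho(g)\iota$ (invariant subspace) and $\rho'(g)=q\rho(g)s$ (quotient) let you pull the \emph{constant} intertwiners out of the integral, giving $\|f\|_{\rho'}\le\|P\|\,\|\iota\|\,\|f\|_{\rho}$ and hence (1), and (3) follows. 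For (2), your Fell-type unitary $(W\xi)(t)=(1\otimes\sigma(t))\xi(t)$ on $L^2(G,B)\otimes V_\sigma$ does satisfy $W^*\,\Phi_{\pi\otimes\sigma}(f)\,W=\Phi_{\pi}(f)\otimes 1_{V_\sigma}$: the computation uses exactly the identity $\sigma(t)^{-1}\sigma(s)\sigma(s^{-1}t)=1$ together with the fact that $W$ commutes with the pointwise action $\xi(t)\mapsto (t^{-1}(b)\otimes 1)\xi(t)$ of the coefficient algebra $B$, which is the point you flagged as needing care. Hence the $\rho$- and $\pi$-norms agree \emph{exactly} on $C_c(G,B)$ and the completions literally coincide.

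Two remarks. First, what your estimate in (1) literally yields is a canonical continuous homomorphism $\mathcal{A}^{\rho}_r(G,B)\to\mathcal{A}^{\rho'}_r(G,B)$ extending the identity on $C_c(G,B)$; injectivity of such a map between completions in inequivalent norms is not automatic, so the symbol $\subset$ should be read as this canonical map. This is not a defect of your argument relative to the paper, whose statement carries the same abuse of notation, but it is worth being aware of. Second, in (3) block-diagonality of $\Phi_{\oplus_k\rho_k}(f)$ gives the sharper fact $\|f\|_{\rho}=\max_k\|f\|_{\rho_k}$ directly, with no need to invoke (1); and in (2) unitarity of $\sigma$ is not quite as essential as your closing paragraph suggests: a merely uniformly bounded $\sigma$ would make $W$ invertible with $\|W\|\,\|W^{-1}\|$ controlled by the uniform bound, still giving equivalent norms and hence the same completion. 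What genuine unitarity buys is exact equality of norms.
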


\begin{Lem}\label{semisimplify}
Let $\rho:G\to \mathrm{GL}_n(\R)$ be a representation of a locally compact group. If $\R^n$ has a $G$-invariant filtration of the form $0=V_0\subset V_1\subset \dots \subset V_r=\R^n$ and $\sigma_k:G\to \mathrm{End}(V_k/V_{k-1})$ is the corresponding representation on $V_k/V_{k-1}$ and $\sigma=\bigoplus\limits_k \sigma_k$ is its semi-simplification, then $\mathcal{A}^{\rho}_r(G,B)\subset \mathcal{A}^{\sigma}_r(G,B)$ and, moreover, $\mathcal{A}^{\rho}_r(G,B)$ is stable under holomorphic calculus in $\mathcal{A}^{\sigma}_r(G,B)$.
\end{Lem}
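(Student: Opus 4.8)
The plan is to read off the statement from the concrete realisation of these weighted algebras. Writing $M=C^*_r(G,B)\otimes\mathrm{End}(V)$, the very definition of the weighted norm identifies $\mathcal{A}^{\rho}_r(G,B)$ with the closure in $M$ of $\{(\lambda_{G,B}\otimes\rho)(f):f\in C_c(G,B)\}$, and similarly for $\sigma$. Fixing a splitting $V=\bigoplus_{k=1}^{r}W_k$ with $W_k\cong V_k/V_{k-1}$, invariance of the filtration means each $\rho(g)$ is block upper triangular with diagonal blocks $\sigma_k(g)$, so every $(\lambda_{G,B}\otimes\rho)(f)$ is block upper triangular with diagonal $(\lambda_{G,B}\otimes\sigma)(f)$. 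First I would establish the continuous map underlying the inclusion: each $\sigma_k$ is a subquotient of $\rho$, so $\|\cdot\|_{\sigma_k}\le\|\cdot\|_{\rho}$ as in Lemma \ref{observation}, and since $\sigma=\bigoplus_k\sigma_k$ is block diagonal one gets $\|\cdot\|_{\sigma}=\max_k\|\cdot\|_{\sigma_k}\le\|\cdot\|_{\rho}$. The resulting contraction $\iota\colon\mathcal{A}^{\rho}_r(G,B)\to\mathcal{A}^{\sigma}_r(G,B)$ is exactly the restriction to $\mathcal{A}^{\rho}_r(G,B)$ of the block-diagonal compression $E\colon M\to\bigoplus_k C^*_r(G,B)\otimes\mathrm{End}(W_k)$.

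The key observation is that $\mathcal{A}^{\rho}_r(G,B)$ is the \emph{graph of a (closed) derivation}. In the two-block case the strictly upper triangular part squares to zero, so the off-diagonal map $\delta(f):=E^{\perp}\big((\lambda_{G,B}\otimes\rho)(f)\big)$ satisfies the Leibniz rule $\delta(f_1\ast f_2)=\delta(f_1)\,d(f_2)+a(f_1)\,\delta(f_2)$ with respect to the two diagonal actions $a,d$; thus $\delta$ is a derivation of the diagonal algebra $\mathcal{A}^{\sigma}_r(G,B)$ into the off-diagonal bimodule, and $\mathcal{A}^{\rho}_r(G,B)$ is precisely its graph. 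For the general case I would argue by induction on the length $r$ of the filtration, coarsening to $0\subset V_1\subset V$ so that the inclusion factors as $\mathcal{A}^{\rho}_r(G,B)\subset\mathcal{A}^{\sigma_1\oplus\bar\rho}_r(G,B)\subset\mathcal{A}^{\sigma_1\oplus\bar\sigma}_r(G,B)=\mathcal{A}^{\sigma}_r(G,B)$, where $\bar\rho$ is the representation on $V/V_1$ and $\bar\sigma$ its semisimplification. The first step is a genuine two-block derivation-graph inclusion, while the second is the inductive hypothesis for $\bar\rho$; since stability under holomorphic calculus is transitive along a tower $A\subset B\subset C$ and block-diagonal decompositions reduce invertibility to the individual blocks, it suffices to treat the single-step case.

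The engine for that step is the classical inverse-closedness of domains of closed derivations: if $a$ lies in $\mathrm{dom}(\delta)$ and is invertible in the ambient algebra then $a^{-1}\in\mathrm{dom}(\delta)$, with $\delta(a^{-1})=-a^{-1}\delta(a)a^{-1}$. Concretely, an element $T=\left(\begin{smallmatrix}a&b\\0&d\end{smallmatrix}\right)\in\mathcal{A}^{\rho}_r(G,B)$ whose diagonal $\iota(T)$ is invertible in $\widetilde{\mathcal{A}^{\sigma}_r(G,B)}$ has block inverse $\left(\begin{smallmatrix}a^{-1}&-a^{-1}bd^{-1}\\0&d^{-1}\end{smallmatrix}\right)$, and the off-diagonal entry $-a^{-1}bd^{-1}$ is exactly $\delta$ applied to the diagonal of $T^{-1}$; this is the matrix incarnation of the quotient rule, and it forces $T^{-1}$ back into the graph. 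This is the honest operator-algebraic analogue of the fact that $C^1(\mathbf{T})$ is stable under holomorphic calculus in $C(\mathbf{T})$ via the closed derivation $d/d\theta$.

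The hard part will be the \emph{closability} of $\delta$, which is simultaneously what makes $\iota$ injective (so that $\mathcal{A}^{\rho}_r(G,B)$ is an honest subalgebra rather than a quotient) and what guarantees that the derivation has a closed graph; everything else is bookkeeping. I would prove it from the cocycle identity $c(gh)=\sigma_1(g)c(h)+c(g)\bar\rho(h)$ satisfied by the off-diagonal block $c$ of $\rho$, combined with the canonical faithful conditional expectation $\Phi\colon C^*_r(G,B)\to B$. If a net $f_n$ has $a(f_n),d(f_n)\to 0$ while $\delta(f_n)\to\beta$, then localising matrix coefficients at each group element through $\Phi$ expresses the coefficients of $a(f_n)$ as $f_n(\cdot)$ tested against the invertible matrices $\sigma_1(\cdot)$, forcing $f_n(s)\to 0$ pointwise in $B$; since the corresponding coefficients of $\beta$ are limits of $f_n(s)$ tested against the \emph{fixed} matrices $c(s)$, every matrix coefficient of $\beta$ vanishes, whence $\beta=0$. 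Once closability is secured, the inverse-closedness identity and the induction close the argument and yield the isomorphism $K_*(\mathcal{A}^{\rho}_r(G,B))\cong K_*(\mathcal{A}^{\sigma}_r(G,B))$ used in the Oka-principle reduction.
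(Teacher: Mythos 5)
The paper states this lemma without proof (it appears as a black box inside an argument the authors attribute to unpublished work of Bost), so there is no proof of record to compare you against; your attempt has to stand on its own. Its architecture is, in my view, the right one and largely correct: the realization of $\mathcal{A}^{\rho}_r(G,B)$ as the closure of the block upper triangular elements $(\lambda_{G,B}\otimes\rho)(f)$, the identification of the two-block case with the graph of a bimodule-valued derivation $\delta$ whose Leibniz rule comes from the cocycle identity, and the inverse-closedness of domains of closed derivations (Neumann series plus closedness) are all sound, and you correctly isolate closability of $\delta$ as the crux. For $G$ \emph{discrete} your closability argument is correct: the Fourier coefficient maps $C^*_r(\Gamma,B)\to B$ are contractive and jointly faithful, so vanishing of the diagonal blocks forces $f_n(s)\to 0$ for each $s$, hence the off-diagonal limit vanishes.

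The genuine gap is that the lemma is stated, and used in the paper, for general locally compact $G$ (e.g.\ $G=\R$ or a semisimple Lie group), and there your closability proof fails essentially, not just technically. For non-discrete $G$ there is no conditional expectation $C^*_r(G,B)\to B$ and no evaluation of elements of the reduced crossed product at group elements; worse, the key step ``$a(f_n)\to 0$ forces $f_n(s)\to 0$ pointwise'' is simply false: for $G=\R$, $B=\C$, $\sigma_1$ trivial, take $f_n(x)=\sqrt{n}\,\phi(nx)$ with $\phi\in C_c(\R)$, $\phi(0)\neq 0$; then $\|\lambda(f_n)\|=\|\hat{f}_n\|_\infty=n^{-1/2}\|\hat{\phi}\|_\infty\to 0$ while $f_n(0)\to\infty$. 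So in the locally compact case closability needs a different mechanism (for instance, when the cocycle is of the form $\ell\cdot\sigma_1$ with $\ell:G\to\R$ a continuous homomorphism, $\delta$ sits inside the generator of the one-parameter group of automorphisms of $C^*_r(G,B)$ given by twisting by the unitary characters $e^{is\ell}$, and generators of strongly continuous isometry groups are closed; some argument of this kind has to be supplied in general, and you do not provide one). A second, smaller gap is the induction: the inductive hypothesis for $\bar\rho$ gives spectral invariance of $\mathcal{A}^{\bar\rho}_r(G,B)\subset\mathcal{A}^{\bar\sigma}_r(G,B)$, but your tower needs $\mathcal{A}^{\sigma_1\oplus\bar\rho}_r(G,B)\subset\mathcal{A}^{\sigma_1\oplus\bar\sigma}_r(G,B)$, and the phrase ``block-diagonal decompositions reduce invertibility to the individual blocks'' is not justified: these algebras are completions of a single diagonally embedded copy of $C_c(G,B)$, not direct sums, so knowing each block of the inverse lies in the corresponding block algebra does not place the inverse pair in the diagonal completion. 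This one is repairable: strengthen the inductive hypothesis to ``for every auxiliary finite-dimensional representation $\tau$, the inclusion $\mathcal{A}^{\tau\oplus\rho}_r(G,B)\subset\mathcal{A}^{\tau\oplus\sigma}_r(G,B)$ is spectrally invariant,'' which turns every step of your tower into a genuine two-block case.
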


If $\overline{\rho(G)}$ is amenable then the Zariski closure of $\rho(G)$ is also amenable by a result of Moore (see for example \cite[page 64]{Zimmerbook}). Using Furstenberg's Lemma we may suppose that $\rho(G)$ is contain in the a subgroup of $GL_n(\R)$ of the form $$\begin{pmatrix}
\R^*_+\times SO(n_1)&*&\dots&*\\
0& \R^*_+\times SO(n_2)&*&\vdots\\
\vdots&&\ddots&*\\
0&\dots&0&\R^*_+\times SO(n_k)

\end{pmatrix}.$$
Hence, we may apply lemma \ref{observation} with $\sigma_i=\chi_i\otimes u_i$ where $\chi_i$ is a character of $\R^*_+$ and $u_i$ is an unitary representation of $SO(n_i)$. Using the fact that $\mathcal{A}^{\sigma_i}_r(G,B)=\mathcal{A}^{\chi_i}_r(G,B)$, we get a injective morphism $$\mathcal{A}^{\rho}_r(G,B)\to \mathcal{A}^{\pi}_r(G,B)$$ where $\pi=\bigoplus\limits_{k=1}^{m}\chi_k$ and this morphism is dense and stable under holomorphic calculus. Therefore,

$$K_*\big(\mathcal{A}^{\rho}_r(G,B)\big)\simeq K_*\big(\mathcal{A}^{\pi}_r(G,B)\big).$$

It remains to prove that the inclusion $$\mathcal{A}^{\pi}_r(G,B)\to C_r^*(G,B)$$ induces an isomorphism in K-theory. \\

Let $W$ be the space of real-valued functions on $G$ defined as $W=\sum\limits_{k}\R\log(\chi_k)$. We define an action of $W$ on $C^*_r(G,B)$ by the formula  
$\alpha_{\xi}(f)(g)=f(g)e^{-i\xi(g)}$, for $f\in C_c(G,B)$ and $\xi\in W$. Then, we need to check that $$\mathcal{A}^{\pi}_r(G,B)=\mathcal{O}(K,C^*_r(G,B),\alpha)$$ where $K$ is the convex hull of $\{0, \log{\chi_k}\}$. We conclude by applying Theorem \ref{Bost}.

\section{The Baum-Connes conjecture for groupoids}\label{groupoids}
Let $\mathcal{G}$ be a locally compact, $\sigma$-compact, Hausdorff groupoid with Haar system and let $C^*_r(\mathcal{G})$ be its reduced $C^*$-algebra (see the definition below). The Baum-Connes conjecture for $\mathcal{G}$ states that a certain map $$\mu_r:K_*^{\mathrm{top}}(\mathcal{G})\to K_*(C^*_r(\mathcal{G}))$$ is an isomorphism.
Many important examples of operator algebras may be realized as the $C^*$-algebra associated to a groupoid. This is the case for example for $C^*$-algebras associated to a foliation, to an action of a group on a space as well as the $C^*$-algebra associated to a group. Therefore, a version of the Baum-Connes conjecture for groupoids allows to study the K-theory of all of these algebras in a very general framework; we will see that it is also the case for the coarse Baum-Connes conjecture developed in Chapter \ref{Coarse}. \\
The Baum-Connes map $\mu_r$ for groupoid $C^*$-algebras appeared in the work of Baum and Connes on the Novikov conjecture for foliations (see \cite{Connes-survey} for a very nice survey on the subject). In \cite{Baum-Connes83}, Baum and Connes gave a proof of the injectivity of $\mu_r$ in the case of groupoids coming from foliations that have negatively curved leaves which is based on the construction of a Dual-Dirac element following ideas of both Kasparov and Mishchenko. Using a construction of a Kasparov bivariant theory which is equivariant with respect to the action of a groupoid defined by Le Gall in \cite{LeGall99}, Tu stated in \cite{Tu99} the Dirac-dual Dirac method in a very general context. He then proved injectivity of $\mu_r$ for a class of groupoids called bolic, generalizing Kasparov and Skandalis's work for groups, and that $\mu_r$ is an isomorphism for amenable groupoids generalizing the results of Higson and Kasparov (see \cite{Tu99,Tu99Moyennables}).
\subsection{Groupoids and their $C^*$-algebras}

In this section, we recall the definition of the $C^*$-algebras associated to groupoids and the Baum-Connes conjecture for those. It is mostly taken from the survey written by Tu on the subject (\cite{Tu-survey}).

A groupoid is a small category in which all morphisms are invertible. More concretely, it is given by the following data :  

\begin{enumerate}
\item the set of objects $\mathcal{G}^{0}$, also called the unit space,
\item the set of morphisms $\Group$,
\item an inclusion $i:\Group^{0}\hookrightarrow \Group$,
\item two maps "range" and "source" $r,s:\Group\to\Group^{0}$ such that $r\circ i=s\circ i=\mathrm{Id}$,
\item an involution $\Group\to \Group$, denoted by $g\mapsto g^{-1}$ such that $r(g)=s(g^{-1})$ for every $g\in\Group$,
\item a partially defined product $\Group^{2}\to \Group$, denoted by $(g,h)\mapsto gh$, where $\Group^{2}:=\{(g,h)\in \Group\times\Group\,|\,s(g)=r(h)\}$ is the set of composable pairs.
\end{enumerate}
It is assumed moreover that the product is associative (i.e if $(g,h),(h,k)\in\Group^{2}$ then the products $(gh)k$ and $g(hk)$ are defined and are equal), that for all $g\in\Group$, $i(r(g))g=gi(s(g))=g$ and for all $g\in\Group$, $gg^{-1}=i(r(g))$.\\

A topological groupoid is a groupoid such that $\Group$ and $\Group^{0}$ are topological spaces and all maps appearing in the definitions are continuous. When a topological groupoid $\Group$ is locally compact and Hausdorff,  it is said to be 
\begin{enumerate}[label=(\alph*)]
\item \emph{principal} if $(r,s):\Group\to \Group^{0}\times \Group^{0}$ is injective,
\item \emph{proper} if $(r,s):\Group\mapsto  \Group^{0}\times \Group^{0} $ is proper,
\item \emph{\'etale}, or \emph{$r$-discrete}, if the range map $r:\Group\mapsto \Group^{0}$ is local homeomorphism, i.e if every $x\in\Group$ admits an open neighborhood $U$ such that $r(U)$ is an open subset of $\Group^{0}$ and $r:U\mapsto r(U)$ is a homeomorphism
\end{enumerate}

Before giving some examples of groupoids, let us introduce some notations : for all $x,y\in\Group^0$, $\Group_x:=s^{-1}(x)$, $\Group^x=r^{-1}(x)$, $\Group^x_y=\Group_x\cap \Group^y$.

\begin{Ex}
\begin{enumerate}
\item \emph{Groups and Spaces}. A group $G$ is a groupoid with $G^0=\{1\}$, the unit element. A space $X$ is a groupoid where $\Group=\Group^0=X$ and $r=s=\mathrm{Id}_X$.
\item An equivalence relation $R\subset X\times X$ on a set $X$ can be endowed with a groupoid structure; the unit space is $X$, the range and source maps are $r(x,y)=x$, $s(x,y)=y$, respectively, composition is defined by $(x,y)(z,t)=(x,t)$ if $y=z$ and inverses by $(x,y)^{-1}=(y,x)$. In particular, the space $X\times X$ is a groupoid. 
\item If a group $\Gamma$ acts on the right on a space $X$, then one obtains a groupoid $\Group=X\rtimes \Gamma$ by taking as a set $\Group=X\times \Gamma$ as unit space $\Group^0=X\times \{1\}\simeq X$, $r(x,\gamma)=x$, $s(x,\gamma)=x\gamma$, $x,\gamma)^{-1}=(x\gamma,\gamma^{-1})$, $(x,\gamma)(x\gamma,\gamma')=(x,\gamma\gamma')$. If $X$ is a topological space, $\Gamma$ a topological group and the cation is continuous then $X\rtimes$ is a topological groupoid, which is Hausdorff if $X$ and $\Gamma$ are. In that case, if $\Gamma$ is discrete, $X\rtimes \Gamma$ is \'etale and it is principal if the action is free. 
\item Let $X$ be topological space and take $\Group$ to be the set of equivalence classes of paths $\varphi : [0,1]\to X$ where two paths are equivalent if and only if they are homotopic with fixed endpoints. Then $\Group^0\simeq X$ is the set of equivalence classes of constant paths on $X$. If $\varphi$ is a path on $X$ and $g=[\varphi]$ is its class in $\Group$, then $r(g)=\varphi(1)$, $s(g)=\varphi(0)$, $g^{-1}=[\varphi^{-1}]$, where $\varphi^{-1}(t)=\varphi(1-t)$ and $[\varphi][\psi]=[\varphi*\psi]$, where $\varphi*\psi(t)=\varphi(2t)$ for $t\in[0,\frac{1}{2}]$ and $\varphi*\psi(t)=\psi(2t-1)$ for $t\in [\frac{1}{2},1]$. $\Group$ is called \emph{the fundamental groupoid of $X$}.
\item Let $(V,F)$ be a foliation. \emph{The holonomy groupoid} $\Group$ is the set of equivalence classes of paths whose support is contained in one leaf, where two paths are identified if they have the same end points and they define the same holonomy element. Composition and inverse are defined in the same way as for the fundamental groupoid. The space of units is $V$; if $V$ is of dimension $n$ and the foliation of codimension $q$ then $\Group$ is a differentiable groupoid of dimension $2n-q$. It is not Hausdorff in general. If $T$ is a transversal that meets all leaves of the foliation, then the restriction of the holonomy groupoid to $T$ is an \'etale groupoid equivalent to $\Group$.

\end{enumerate}
\end{Ex}

From now on let $\Group$ be a locally compact Hausdorff groupoid. 
An action (on the right) of $\Group$ on a space $Z$ is given by a map $p:Z\to \Group^{0}$, called the source map, and a continuous map from $Z\times_{\Group^{0}}\Group=\{(z,g)\,|\,p(z)=r(g)\}$ to $Z$, denoted by $(z,g)\mapsto zg$, such that $(zg)h=z(gh)$ whenever $p(z)=r(g)$ and $s(g)=r(h)$ and $zp(z)=z$. A space endowed with an action of $\Group$ is called a $\Group$-space. \\
We can then define a groupoid denoted by $Z\rtimes \Group$ with underlying set $Z\times\Group$, unit space $Z\simeq\{(z,p(z))\,|\,z\in Z\}$, source and range maps $s(z,g)=zg$, $r(z,g)=z$, inverse $(z,g)^{-1}=(zg,g^{-1})$ and products $(z,g)(zg,h)=(z,gh)$. Note that $Z\rtimes \Group$ is \'etale if $\Group$ is. If $Z$  and $\Group$ are locally compact Hausdorff, the action of $\Group$ on $Z$ is free (resp. proper) if and only if the groupoid $Z\rtimes \Group$ is principal (resp. proper). A $\Group$-space $Z$ is said to be $\Group$-compact if the action is proper and the quotient $Z/\Group$ is compact. \\

A $\Group$-algebra is an algebra $A$ endowed with an action of $\Group$ i.e $A$ is a $C(\Group^{0})$-algebra and the action of $\Group$ on $A$ is given by an isomorphism of $C(\Group)$-algebras $\alpha:s^*A\to r^*A$ such that the morphisms $\alpha_g:A_{s(g)}\to A_{r(g)}$ satisfy the relation $\alpha_g\circ\alpha_h=\alpha_{gh}$. Recall that if $X$ is a locally compact Hausdorff space, a $C(X)$-algebra is a $C^*$-algebra endowed with a $*$-homomorphism $\theta$ from $C_0(X)$ to the center $Z(M(A))$ of the multiplier algebra of $A$, such that $\theta(C_0(X))A=A$. If $p:X\to Y$ is a map between two locally compact Hausdorff spaces and $A$ is a $C(X)$-algebra, then $p^*A=A\otimes_{C_0(X)} C_0(Y)$ is a $C(Y)$-algebra. If $x\in X$, the fiber $A_x$ of $A$ over $x$ is defined by $i^* A$ where $i_x:\{x\}\to X$ is the inclusion map. \\

Suppose $\Group$ is $\sigma$-compact and has a Haar system $\lambda=\{\lambda_x\,|\, x\in \Group^{0}\}$ (we can take for example $\Group$ to be \'etale and then $\lambda_x$ is the counting measure on $\Group^{x}$). A \emph{cutoff function} on $\Group$ is a continuous function $c:\Group^0\to \mathbb{R}^+$ such that for every $x\in\Group^0$, $\int_{g\in\Group^x}c(s(g))d\lambda^x(g)=1$, and for every compact $K\subset \Group^0$, $\mathrm{supp}(c)\cap s(G^K)$ is compact. Such a function exists if and only if  $\Group$ is proper \cite[Propositions 6.10, 6.11]{Tu99}. \\

Let $A$ be a $\Group$-algebra. The full and reduced crossed-products of $A$ by $\Group$, denoted $C^*(\Group,A)$ and $C^*_r(\Group,A)$ respectively are defined in the following way : let $C_c(\Group, r^*A)$ be the space of functions with compact support $g\mapsto \varphi(g)\in A_{r(g)}$ continuous in the sense of \cite{LeGall99}. The product and adjoint are defined respectively by 
\begin{align*}
&\varphi*\psi(g)=\int\limits_{h\in \Group^{r(g)} }\varphi(h)\alpha_h(\psi(h^{-1}g))d\lambda^{r(g)}(h),\\
&\varphi^*(g)=\alpha_g(\varphi(g^{-1}))^*
\end{align*}
Then,  $L^1(\Group,r^*A)$ denotes the completion of $C_c(\Group,r^*A)$ for the norm $$\|\varphi\|=\max(|\varphi|_1,|\varphi^*|_1),$$ where $|\varphi |_1=\sup\limits_{x\in\Group^0}\int_{g\in\Group^x}\|\varphi(g)\|d\lambda^{x}(g)$ and $C^*(\Group,A)$ is the enveloping $C^*$-algebra of $L^1(\Group,r^*A)$ and $C^*_r(\Group,A)$ is the closure of $L^1(\Group,r^*A)$ in $\mathcal{L}(L^2(\Group,r^*A))$.\\

When the $\Group$-algebra $A$ is  the algebra $C_0(\Group^{0})$ of continuous functions vanishing at infinity on the space of objects $\Group^{0}$, the crossed products $C^*(\Group,A)$ and $C^*_r(\Group,A)$ will simply be denoted $C^*(\Group)$ and $C^*_r(\Group)$, and called groupoid full and reduced $C^*$-algebras.

In \cite{LeGallCRAS, LeGall99}, for every pair $(A,B)$ of graded $\Group$-algebras, Le Gall defined a bifunctor $KK_{\Group}(A,B)$ generalizing Kasparov's $KK$-bifuntor for groups (see section \ref{bifunctor}) that has mostly the same features, in particular, there is an associative product $KK_{\Group}(A,D)\times KK_{\Group}(D,B)\to KK_{\Group}(A,B)$ that satisfies the same naturality properties as in case of the non-equivariant $KK$-functor. The product of two elements $\alpha\in KK_\Group(A,D)$, $\beta\in KK_\Group(D,B)$ is denoted by $\alpha\otimes_D\beta$. And there are descent morphisms $$j_{\Group}:KK_{\Group}(A,B)\to KK(C^*(\Group,A), C^*(\Group,B)),$$ $$j_{\Group,r}:KK_{\Group}(A,B)\to KK(C^*_r(\Group,A), C^*_r(\Group,B)),$$
compatible with the product.\\

Suppose that $\Group$ is proper and that $\Group^0/\Group$ is compact and let $c$ be a cutoff function for $\Group$. The function $g\mapsto \sqrt{c(r(g))c(s(g))}$, which is continuous with compact support, defines a projection in $C^*(\Group)=C^*_r(\Group)$ whose homotopy class is independent of the choice of the cutoff function and hence defines a canonical element $\lambda_{\Group}\in K_0(C^*(\Group))$. If $Z$ is a $\Group$-compact proper space and $B$ is a $\Group$-algebra, the map

$$\xymatrix{ KK^*(C_0(Z),B)\ar[r]^-{j_{\Group,r}} &KK^*(C^*(Z\rtimes\Group),C^*_r(\Group,B))\ar[rr]^-{\scriptstyle\lambda_{Z\rtimes G}\otimes .} & &K_*(C^*_r(\Group,B))}$$

induces the Baum-Connes map with coefficients
$$\mu^B_r: K^{\mathrm{top}}_*(\Group;B)=\lim \limits_{\rightarrow}KK_{\Group}^*(C_0(Z),B)\to K_*(C^*_r(\Group,B)),$$
where the inductive limit is taken among all the $Z$ subspace of $\underline{E}\Group$ that are $\Group$-compact and $\underline{E}\Group$ is the classifying space for proper actions of $\Group$. As shown in  \cite{Tu99}, one can take $\underline{E}\Group$ to be the se of positive measures $\mu$ on $\Group$ such that $s_*\mu$ is a Dirac measure on $\Group^0$ and $|\mu |\in(\frac{1}{2},1]$.\\
The Baum-Connes conjecture with coefficients for groupoids can be stated as follows 
\begin{Conj}
For every locally compact Haussdorf groupoid with Haar system $\Group$ and every $\Group$-algebra, $\mu^B_r(\Group)$ is an isomorphism.
\end{Conj}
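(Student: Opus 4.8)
The plan is to transplant the Dirac--dual-Dirac ($\gamma$-element) method into Le Gall's $\Group$-equivariant $KK$-theory $KK_{\Group}$ \cite{LeGall99}, exactly as it was done for groups in Section \ref{gamma} and abstracted by Tu. The argument divides into a formal reduction, valid for every groupoid, and a geometric construction that has to be supplied case by case; the whole difficulty sits in the latter.

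For the formal part, one looks for a $\gamma$-element for $\Group$: a \emph{proper} $\Group$-algebra $P$ together with elements $\alpha\in KK_{\Group}(P,C_0(\Group^0))$ and $\beta\in KK_{\Group}(C_0(\Group^0),P)$, whose product $\gamma=\beta\otimes_P\alpha\in KK_{\Group}(C_0(\Group^0),C_0(\Group^0))$ restricts to the unit over every isotropy group $\Group^x_x$. Applying the descent $j_{\Group,r}$ and the functor $\tau_B$ (tensoring by the coefficient $\Group$-algebra $B$) to $\alpha$ and $\beta$ produces the groupoid analogue of the commutative diagram in the proof of Theorem \ref{abstractgamma}. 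Since $P\otimes B$ is again proper, the middle vertical map is an isomorphism by the groupoid version of Theorem \ref{BCproper} (proved by Tu, \cite{Tu99}); the top row being the identity then forces $\mu^B_r$ to be a split injection, while surjectivity becomes equivalent to the statement that the idempotent $\tilde\gamma_B$, obtained by acting with $j_{\Group,r}(\tau_B(\gamma))$, is the identity on $K_*(C^*_r(\Group,B))$. This reduction uses only the associative product and descent morphisms of \cite{LeGall99} and is independent of any geometry.

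The geometric part is where one must actually build $\alpha$, $\beta$ and prove $\gamma=1$. The template is Higson--Kasparov (Theorem \ref{HigsonKasparov}): one realizes the classifying space $\underline{E}\Group$ as a bundle of fibrewise proper, fibrewise non-positively curved (bolic) spaces over $\Group^0$, takes $\alpha$ to be a fibrewise Dirac element and $\beta$ a fibrewise dual-Dirac element, and contracts $\gamma$ to $1$ through a fibrewise homotopy modelled on the complementary series or the heat kernel. Tu has executed precisely this for two families: for \emph{bolic} groupoids he obtains $\gamma$ and hence, by the reduction above, injectivity of $\mu^B_r$; for \emph{amenable} groupoids he obtains $\gamma=1$ and hence the full isomorphism. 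For these classes the conjecture therefore follows from the machine assembled above.

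The main obstacle is the step $\gamma=1$ for an \emph{arbitrary} $\Group$, and I expect the method to break exactly there. For a general groupoid with Haar system there is no reason for a fibrewise bolic model of $\underline{E}\Group$ to exist, so $\alpha$ and $\beta$ cannot even be written down; and when $\gamma$ does exist, the equality $\tilde\gamma_B=\Id$ encodes an amenability or exactness property of the fibres of $\Group$ that has no uniform justification. Thus, beyond the amenable and bolic ranges, the plan offers no handle: the last step cannot be carried out by a single geometric homotopy, and any honest attempt at the full statement as worded would have to confront the failure of $\tilde\gamma_B=\Id$ rather than establish it.
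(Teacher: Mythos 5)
Your formal reduction is sound, and it is exactly the machinery the paper records: the existence of a $\gamma$-element for $\Group$ gives split injectivity of $\mu^B_r$, and surjectivity is equivalent to $\tilde\gamma_B=\Id$ (Theorem \ref{Tugamma}, the groupoid version of Theorem \ref{abstractgamma}, using Le Gall's $KK_\Group$ \cite{LeGall99} and the properness theorem). Your positive ranges are likewise precisely Tu's results: injectivity for bolic groupoids and the full isomorphism for amenable (Haagerup) groupoids \cite{Tu99, Tu99Moyennables}. But there is no proof of the full statement to compare against, because the statement is a \emph{conjecture} in the paper — and the paper itself refutes it. Your closing suspicion that any honest attempt ``would have to confront the failure of $\tilde\gamma_B=\Id$'' is correct but understated: Section \ref{countergroupoids}, following Higson--Lafforgue--Skandalis \cite{HLS}, constructs explicit locally compact Hausdorff groupoids with Haar system for which $\mu_r$ fails to be surjective, so no completion of your plan (or of any plan) can exist for the statement as worded.

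Concretely, the obstruction is the Kazhdan-type projection you alluded to, made into a K-theory class that the reduced assembly map provably misses. Take $\Gamma_\infty$ an infinite discrete subgroup of $SL_n(\R)$ with property $(\tau)$ relative to a filtration $(N_k)_{k>0}$ (e.g.\ $SL_n(\Z)$, $n\geq 3$, or $SL_2(\Z)$ with congruence subgroups, Example \ref{T+tau}), and let $\Group$ be the field of groups over $\overline{\N}$ with fiber $\Gamma_\infty/N_k$ over $k$ and $\Gamma_\infty$ over $\infty$; set $F=\{\infty\}$, $U=\N$. Property $(\tau)$ produces a Kazhdan projection $e_\pi\in C^*_\pi(\Gamma_\infty)$ whose image $\alpha(e_\pi)\in C^*_r(\Group)$ defines a class in $K_0(C^*_r(\Group))$ that vanishes in $K_0(C^*_r(\Group_F))$ (the regular representation of the infinite group $\Gamma_\infty$ has no invariant vectors) yet is not in the image of $K_0(C^*_r(\Group_U))=\oplus_k K_0(C^*(\Gamma_\infty/N_k))$, since it pairs nontrivially with \emph{every} finite quotient. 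Because assembly is injective for $\Group_F=\Gamma_\infty$ (a discrete subgroup of a connected Lie group), Lemma \ref{notonto} then forces $\mu_r$ for $\Group$ to be non-surjective (Proposition \ref{counter}). Note that the defect is a failure of exactness of the reduced $C^*$-functor detected in K-theory — the corresponding maximal sequence is exact — so it cannot be repaired by a cleverer homotopy of $\gamma$ to $1$; the reduced completion itself is the problem. What survives of your proposal is its honest scope, namely Tu's bolic and amenable cases, which is exactly where the paper leaves the matter as well.
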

When $B=C_0(\Group^0)$, we get the Baum-Connes map without coefficients : 
$$\mu_r: K^{\mathrm{top}}_*(\Group)=K^{\mathrm{top}}_*(\Group;C_0(\Group))=\lim KK_{\Group}^*(C_0(Z),B)\to K_*(C^*_r(\Group)),$$
And the conjecture without coefficients states that $\mu_r(\Group)$ is an isomorphism.\\

Tu's general definition of the dual Dirac method as discussed in section \ref{viveTu} is stated in terms of groupoids as follows. Let $\Group$ be a locally compact, $\sigma$-compact groupoid with Haar system. Suppose there exists a proper $\Group$-algebra $A$ and elements $$\eta\in KK_{\Group}(C_0(\Group^0,A),\quad D\in KK_{\Group}(A,C_0(\Group^0)),$$
$$\gamma\in KK_\Group(C_0(\Group^0),\Group^0))$$
such that $\eta\otimes_A D=\gamma$ and $p^*\gamma=1\in KK_{\underline{E}\Group\rtimes\Group}(C_0(\underline{E}\Group),C_0(\underline{E}\Group))$, where $p:\underline{E}\Group\to\Group^0$ is the source map for the action of $\Group$ on $\underline{E}\Group$. Then this element is unique and $\Group$ is said to have a $\gamma$-element. It is the same element as the one constructed by Kasparov for every connected locally compact group \cite{KaspConsp} (see section \ref{gamma}). Tu's result is stated as follows,

\begin{Thm}[{\cite[Proposition 5.23]{Tu99},\cite[Theorem 2.2]{Tu99Trees}}]\label{Tugamma}
If the groupoid $\Group$ has a $\gamma$-element, then the Baum-Connes maps with coefficients $\mu$ and $\mu_r$ are split injective. Moreover, if $\gamma=1$ in $KK_{\Group}(C_0(\Group^0),C_0(\Group^0))$, then $\mu$ and $\mu_r$ with coefficients are isomorphisms and $\Group$ is K-amenable. 

\end{Thm}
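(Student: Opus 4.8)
The plan is to transpose, essentially verbatim, the commutative-diagram argument used to prove Theorem \ref{abstractgamma} into Le Gall's groupoid-equivariant $KK$-theory, replacing the group descent $j_{G,r}$ by $j_{\Group,r}$ and the tensoring homomorphisms $\tau_A$ by the corresponding $\tau_B$ for $\Group$-algebras. Write $\gamma=\eta\otimes_A D$, with $A$ the proper $\Group$-algebra supplied by the $\gamma$-element. For an arbitrary $\Group$-algebra $B$, I would set up the two-story diagram whose vertical arrows are the assembly maps $\mu^B_r$, $\mu^{A\otimes B}_r$, $\mu^B_r$, whose top row is
$$K^{\mathrm{top}}_*(\Group;B)\xrightarrow{\,\otimes\tau_B(\eta)\,}K^{\mathrm{top}}_*(\Group;A\otimes B)\xrightarrow{\,\otimes\tau_B(D)\,}K^{\mathrm{top}}_*(\Group;B),$$
and whose bottom row is the analogous composition of Kasparov products by $j_{\Group,r}(\tau_B(\eta))$ and $j_{\Group,r}(\tau_B(D))$ on $K_*(C^*_r(\Group,B))$. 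Commutativity is the naturality of descent and of the assembly map with respect to Kasparov products, part of the formalism of \cite{LeGall99, Tu99}.

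Three ingredients then carry the proof. First, since $A$ is proper and a balanced tensor product of a proper $\Group$-algebra with any $\Group$-algebra is again proper, the middle vertical map $\mu^{A\otimes B}_r$ is an isomorphism by the groupoid version of Theorem \ref{BCproper}, established in \cite{Tu99}. Second, the composite along the top row equals the action of $\tau_B(\gamma)=\tau_B(\eta)\otimes_{A\otimes B}\tau_B(D)$ on $K^{\mathrm{top}}_*(\Group;B)$, and I would show this action is the identity. The point is that on each approximating term $KK_\Group(C_0(Z),B)$, where $Z\subset\underline{E}\Group$ is a $\Group$-compact proper subset with source map $p\colon Z\to\Group^0$, the action of $\gamma$ is given by exterior product with the pullback $p^*\gamma\in KK_{Z\rtimes\Group}(C_0(Z),C_0(Z))$; condition (2) of the $\gamma$-element, namely $p^*\gamma=1$, makes this the identity compatibly with the structure maps, hence the identity in the limit. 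This immediately yields that $\mu^B_r$ is split injective, proving the first assertion.

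For the ``moreover'' part, suppose $\gamma=1$ in $KK_\Group(C_0(\Group^0),C_0(\Group^0))$. Then the composite along the bottom row, namely the endomorphism $\tilde\gamma_B$ given by Kasparov product with $j_{\Group,r}(\tau_B(\gamma))$, is also the identity on $K_*(C^*_r(\Group,B))$; chasing the diagram now produces a two-sided inverse, so $\mu^B_r$ is an isomorphism. The identical argument run with the full descent $j_\Group$ in place of $j_{\Group,r}$ shows that the full assembly map $\mu^B$ is an isomorphism as well. Finally, K-amenability follows by comparison: the regular representation induces a homomorphism $C^*(\Group,B)\to C^*_r(\Group,B)$ compatible with both assembly maps, and since $\mu^B$ and $\mu^B_r$ are isomorphisms for every $\Group$-algebra $B$, the induced map $K_*(C^*(\Group,B))\to K_*(C^*_r(\Group,B))$ is an isomorphism, which is exactly K-amenability.

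The hard part will be the second ingredient: making rigorous that $\gamma$ acts as the identity on $K^{\mathrm{top}}_*(\Group;B)$. This demands the precise dictionary between the action of an element of $KK_\Group(C_0(\Group^0),C_0(\Group^0))$ on the inductive system $\{KK_\Group(C_0(Z),B)\}_Z$ and the representable groupoid groups $KK_{Z\rtimes\Group}(C_0(Z),C_0(Z))$, together with the compatibility of the pullback $p^*$ with the structure maps of the inductive limit defining $\underline{E}\Group$, so that $p^*\gamma=1$ on all of $\underline{E}\Group$ descends to the identity on each $\Group$-compact piece. The remaining steps — properness of $A\otimes B$, naturality of descent, and the K-amenability comparison — are formal once Le Gall's equivariant $KK$-machinery and Tu's proper-coefficient theorem are in hand.
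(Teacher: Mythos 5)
Your proposal is correct and takes essentially the same route as the paper: the paper itself only cites Tu for the groupoid statement, but its proof of the group-case analogue (Theorem \ref{abstractgamma}) is exactly the commutative-diagram argument you transpose, with properness of $A\otimes B$, the proper-coefficient theorem (Theorem \ref{BCproper}), and the identity action of $\gamma$ on the left-hand side playing the same three roles, and with the splitting, the full-crossed-product variant, and the K-amenability comparison following formally just as you describe. The one point deserving the care you already flag is the rigorous identification of the $\gamma$-action on $K^{\mathrm{top}}_*(\Group;B)$ with the identity via $p^*\gamma=1$, which is precisely where Le Gall's equivariant machinery is needed and is carried out in Tu's paper.
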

As explained by Tu in \cite{Tu-survey}, proofs of injectivity of $\mu_r$ based in Theorem \ref{Tugamma} are constructive : they require explicit constructions of a proper $C^*$-algebra and the elements in $KK_{\cal G}$ appearing in the definition of a $\gamma$-element; and to do so one uses the existence of an action of the corresponding groupoid on some space with particular geometric properties. \\

Using Theorem \ref{Tugamma} Tu proved that the assembly map $\mu_r$ is injective for bolic foliations (cf. \cite{Tu99}[D\'efinition 1.15]) and that it is a isomorphism for groupoids satisfying the Haagerup property, for example, amenable groupoids (cf. \cite{Tu99Moyennables}). \\

As an example, let us mention that Higson and Roe proved that a discrete group $\Gamma$ has property $A$ if and only if the groupoid $\beta \Gamma\rtimes \Gamma$ is amenable, where $\beta\Gamma$ is the Stone-\v{C}ech compactification of $\Gamma$ (see section \ref{propA} for a discussion on property $A$ and \cite{Higson-Roe}). \\
Higson also proved that if $\Gamma$ has property $A$, then the Baum-Connes map with coefficients $\mu_r$ for $\Gamma$ is injective and $C^*_r(\Gamma)$ is an exact $C^*$-algebra (\cite{Higson2000})\\

On the other hand, Skandalis, Tu and Yu proved in \cite{Skandalis-Tu-Yu} that $\Gamma$ can be coarsely embedded into a Hilbert space if and only if $\beta \Gamma\rtimes \Gamma$ has Haagerup property. If this is the case, then the Baum-Connes map with coefficients for $\Gamma$ is injective. \\

We mention here that there is also a Banach version of the dual Dirac technique for groupoids developped by Lafforgue in \cite{Lafforguegroupoides}. He defined a KK-theory for Banach algebras that is equivariant with respect to the action of a groupoid and he used a notion of unconditionnal completion that he established in this context to prove the Baum-Connes conjecture with commutative coefficients for hyperbolic groups.

\subsection{Counterexamples for groupoids}\label{countergroupoids}

This section is based on sections 1 and 2 of \cite{HLS}. Let $\Group$ be a locally compact, Hausdorff groupoid. Say that a closed subset $F$ of the unit space $\Group^0$ is {\it saturated} if every morphism with source in $F$ has also range in $F$. Set $U=\Group\backslash F$. Let $\Group_F$ be the groupoid obtained by restricting $\Group$ to $F$, and let $\Group_U$ be the open subgroupoid of $\Group$ comprising those morphisms with source and range in $U$. Then there is a short exact sequence at the level of maximal $C^*$-algebras:
$$0\rightarrow C^*_{\rm max}(\Group_U)\rightarrow C^*_{\rm max}(\Group)\rightarrow C^*_{\rm max}(\Group_F)\rightarrow 0,$$
but the corresponding sequence at the level of {\it reduced} $C^*$-algebras
$$0\rightarrow C^*_r(\Group_U)\rightarrow C^*_r(\Group)\rightarrow C^*_r(\Group_F)\rightarrow 0$$
may fail to be exact; in favorable circumstances this lack of exactness can even be detected at the level of K-theory. This can be exploited to produce counter-examples to the Baum-Connes conjecture.

\begin{Lem}\label{notonto} Assume that the sequence
\begin{equation}\label{notexact}
K_0(C^*_r(\Group_U))\rightarrow K_0(C^*_r(\Group))\rightarrow K_0(C^*_r(\Group_F))
\end{equation}
is NOT exact in the middle term. If the assembly map $K^{top}_0(\Group_F)\rightarrow K_0(C^*_r(\Group_F))$ is injective, then the assembly map $K^{top}_0(\Group)\rightarrow K_0(C^*_r(\Group))$ is NOT surjective.
\end{Lem}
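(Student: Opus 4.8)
The plan is to argue by contradiction through a diagram chase, playing the good exactness properties of the left-hand (topological) side against the assumed failure of exactness on the right. The decomposition $\Group^0=U\sqcup F$ of the unit space induces maps on both sides of the assembly map. Analytically, the open inclusion $U\hookrightarrow\Group^0$ realizes $C^*_r(\Group_U)$ as an ideal of $C^*_r(\Group)$, while the saturated closed set $F$ yields the quotient $C^*_r(\Group)\to C^*_r(\Group_F)$; these are the horizontal arrows of the sequence (\ref{notexact}). Topologically, one may realize $\underline{E\Group_U}$ and $\underline{E\Group_F}$ as the parts of $\underline{E\Group}$ lying over $U$ and $F$, which produces an extension-by-zero map $K^{top}_0(\Group_U)\to K^{top}_0(\Group)$ and a restriction map $K^{top}_0(\Group)\to K^{top}_0(\Group_F)$. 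Since the assembly map is built from the descent morphism $j_{\Group,r}$ and the Kasparov product defining $\mu_r$, it is natural for these constructions, so I obtain a commutative ladder
$$\xymatrix{ K^{top}_0(\Group_U)\ar[r]\ar[d] & K^{top}_0(\Group)\ar[r]\ar[d]^{\mu} & K^{top}_0(\Group_F)\ar[d]^{\mu_F}\\ K_0(C^*_r(\Group_U))\ar[r] & K_0(C^*_r(\Group))\ar[r] & K_0(C^*_r(\Group_F)) }$$
in which both squares commute.

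The structural input I would invoke is that the top row is exact at the middle term. This rests on the fact that $K^{top}_0(\Group)$ is a direct limit of equivariant $KK$-groups taken over the $\Group$-compact subsets of $\underline{E\Group}$, so that the open/closed partition of the unit space gives rise to the associated six-term exact sequence on the topological side. By contrast the bottom row is in general only a complex: the composite $C^*_r(\Group_U)\to C^*_r(\Group)\to C^*_r(\Group_F)$ vanishes because the ideal maps to zero in the quotient, but by hypothesis (\ref{notexact}) it fails to be exact in the middle.

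The chase then runs as follows. Suppose, for contradiction, that $\mu$ is surjective, and fix $x\in\ker\bigl(K_0(C^*_r(\Group))\to K_0(C^*_r(\Group_F))\bigr)$. Choose $y\in K^{top}_0(\Group)$ with $\mu(y)=x$, and let $\bar y$ be its image in $K^{top}_0(\Group_F)$. Commutativity of the right square gives $\mu_F(\bar y)=0$, and injectivity of $\mu_F$ forces $\bar y=0$; by exactness of the top row, $y$ is the image of some $z\in K^{top}_0(\Group_U)$. Commutativity of the left square now shows that $x=\mu(y)$ lies in the image of $K_0(C^*_r(\Group_U))\to K_0(C^*_r(\Group))$. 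Thus the bottom row would be exact at the middle term, contradicting the hypothesis; hence $\mu$ cannot be surjective.

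The routine part is the chase itself; the real work, and the main obstacle, is the careful justification of the commutative ladder and of the exactness of the top row, i.e. checking that the extension-by-zero and restriction maps on $K^{top}$ are well defined, compatible with the assembly map, and fit into the expected six-term sequence. This is precisely the point where the good behaviour of the topological side (as opposed to that of reduced crossed products) is used, and it is what makes the lack of exactness of the reduced $C^*$-sequence detectable by the assembly map.
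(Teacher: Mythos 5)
Your diagram chase is, step for step, the same contrapositive chase as the paper's; the difference lies in where the exactness input comes from, and that is exactly where your proposal has a genuine gap. You require the \emph{topological} row $K^{top}_0(\Group_U)\rightarrow K^{top}_0(\Group)\rightarrow K^{top}_0(\Group_F)$ to be exact in the middle, and you justify this by invoking ``the associated six-term exact sequence on the topological side'' coming from the open/closed partition of $\Group^0$. No such sequence is available off the shelf, and the sketch you give does not produce it: if $X\subseteq\underline{E}\Group$ is a closed $\Group$-compact subset, then $X_F=X\cap p^{-1}(F)$ is indeed $\Group_F$-compact, but $X_U=X\cap p^{-1}(U)$ is \emph{open} in $X$, so its quotient is an open subset of the compact space $X/\Group$ and in general not compact. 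Hence the $U$-terms of the putative six-term sequences attached to the extensions $0\rightarrow C_0(X_U)\rightarrow C_0(X)\rightarrow C_0(X_F)\rightarrow 0$ are not the groups appearing in the direct limit that defines $K^{top}_0(\Group_U)$, and passing to the limit over $X$ does not yield the sequence you need. (You would also have to address equivariant semisplitness of these extensions to have six-term sequences in Le Gall's groupoid-equivariant KK-theory, and to justify your left-hand square, i.e. functoriality of the assembly map under the open saturated inclusion $\Group_U\subseteq\Group$.) The half-exactness of the left-hand side may well be true, but as written your proposal defers to it the entire difficulty of the lemma.

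The paper's proof avoids the topological row altogether by interpolating the K-theory of the \emph{maximal} crossed products. The sequence $0\rightarrow C^*_{\rm max}(\Group_U)\rightarrow C^*_{\rm max}(\Group)\rightarrow C^*_{\rm max}(\Group_F)\rightarrow 0$ is exact as a sequence of $C^*$-algebras (this is recorded at the start of the section, and in contrast with the reduced sequence it follows from the universal property of full groupoid $C^*$-algebras), so its K-theory sequence is exact in the middle term; moreover the reduced assembly map factors as $\mu_r=\lambda_*\circ\mu_{\rm max}$. One then runs your chase with the maximal K-theory row in place of your top row: given $y$ in the kernel of $K_0(C^*_r(\Group))\rightarrow K_0(C^*_r(\Group_F))$, write $y=\mu_r(x)$ by the assumed surjectivity, use injectivity of the assembly map for $\Group_F$ to see that the image of $x$ in $K^{top}_0(\Group_F)$ vanishes, deduce from exactness of the maximal row that $\mu_{\rm max}(x)$ comes from $K_0(C^*_{\rm max}(\Group_U))$, and push down to the reduced level. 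Note what this buys: the only topological map needed is the restriction $K^{top}_0(\Group)\rightarrow K^{top}_0(\Group_F)$ and its compatibility with assembly; neither a map $K^{top}_0(\Group_U)\rightarrow K^{top}_0(\Group)$ nor any exactness statement about $K^{top}$ is required.
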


\begin{proof}[Proof] By contrapositive, we assume that $K^{top}_0(\Group)\rightarrow K_0(C^*_r(\Group))$ is surjective, and prove that the sequence \ref{notexact} is exact. For this we chase around the commutative diagram:

$$\xymatrix{   &                                                           &K_0^{\mathrm{top}}(\Group)\ar[r]\ar[d]           &K_0^{\mathrm{top}}(\Group_F)\ar[d]                  &\\
0\ar[r]&K_0(C^*_{\mathrm{max}}(\Group_U))\ar[r]\ar[d]&K_0(C^*_{\mathrm{max}}(\Group))\ar[r]\ar[d]&K_0(C^*_{\mathrm{max}}(\Group_F))\ar[r]\ar[d]&0\\
0\ar[r]&K_0(C^*_{r}(\Group_U))\ar[r]&K_0(C^*_{r}(\Group))\ar[r]&K_0(C^*_{r}(\Group_F)\ar[r]&0
}$$

Let $y$ be in the kernel of $K_0(C^*_r(\Group))\rightarrow K_0(C^*_r(\Group_F))$. By the assumed surjectivity of the assembly map for $\Group$, we write $y$ as the image of $x\in K^{top}_0(\Group)$. Then the image of $x$ in $K^{top}_0(\Group_F)$ is zero, by the assumed injectivity of the assembly map for $\Group_F$. So $\mu_{\rm max}(x)$ is in the kernel of $K_0(C^*_{\rm max}(\Group))\rightarrow K_0(C^*_{\rm max}(\Group_F))$ and therefore in the image of $K_0(C^*_{\rm max}(\Group_U))$, by exactness of the middle row. So $y=\mu_r(x)$ is in the image of $K_0(C^*_r(\Group_U))$.
\end{proof}

Let us give a simple example where this happens. 

\begin{Def}\label{resfin} A group $\Gamma$ is {\it residually finite} if $\Gamma$ admits a {\it filtration}, i.e. a decreasing sequence $(N_k)_{k>0}$ of finite index normal subgroups with trivial intersection. 
\end{Def}

We recall that finitely generated linear groups are residually finite, which provides a wealth of examples. If $(N_k)_{k>0}$ is a filtration of $\Gamma$, we denote by $\lambda_{\Gamma/N_k}$ the representation of $\Gamma$ obtained by composing the regular representation of $\Gamma/N_k$ with the quotient map $\Gamma\rightarrow\Gamma/N_k$, and by $\lambda^0_{\Gamma/N_k}$ the restriction of $\lambda_{\Gamma/N_k}$ to the orthogonal of constants.

\begin{Def}\label{tau} If $(N_k)_{k>0}$ is a filtration of $\Gamma$, the group $\Gamma$ has {\it property $(\tau)$ with respect to the filtration $(N_k)_{k>0}$} if the representation $\oplus_{k>0}\lambda^0_{\Gamma/N_k}$ does not almost admit invariant vectors.
\end{Def}

It follows from Proposition \ref{almostinv} that a residually finite group with property (T) has property $(\tau)$ with respect to every filtration. For a group like the free group, this property depends crucially on the choice of a filtration.

\medskip
Fix now a filtration $(N_k)_{k>0}$ in the residually finite\footnote{Until the end of Proposition \ref{counter}, we denote a countable group by $\Gamma_\infty$ rather than $\Gamma$, as we view $\Gamma_\infty$ as the limit of its finite quotients $\Gamma_k$.} group $\Gamma_\infty$, let $q_k:\Gamma_\infty \rightarrow \Gamma_k=\Gamma_\infty /N_k$ be the quotient homomorphism. Let $\overline{\N}=\N\cup\{\infty\}$ be the one-point compactification of $\N$, endow $\overline{\N}\times\Gamma_\infty$ with the following equivalence relation: 
$$(m,g)\sim(n,h)\Leftrightarrow\left\{\begin{array}{cccc}either & m=n=\infty & and & g=h \\or & m=n\in\N & and & q_m(g)=q_m(h)\end{array}\right.$$
Let $\Group$ be the groupoid with set of objects $\Group^0=\overline{\N}$, and with set of morphisms $\Group^1=(\overline{\N}\times \Gamma_\infty)/\sim$, with the quotient topology; observe that $\Group$ is a Hausdorff groupoid, as $(N_k)_{k>0}$ is a filtration. We may view $\Group$ as a continuous field of groups over $\overline{\N}$, with $\Gamma_k$ sitting over $k\in\overline{\N}$. Set $F=\{\infty\}$ and $U=\N$.

\begin{Prop}\label{counter} Let $\Gamma_\infty$ be an infinite, discrete subgroup of $SL_n(\R)$. Assume that there exists a filtration $(N_k)_{k>0}$ such that $\Gamma_\infty$ has property $(\tau)$ with respect to it. Let $\Group$ be the groupoid construct above, associated with this filtration. The the assembly map for $\Group$ is not surjective.
\end{Prop}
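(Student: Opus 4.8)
The plan is to apply Lemma \ref{notonto} to $\Group$ with $F=\{\infty\}$ and $U=\N$, so that it suffices to check two things: that the assembly map for $\Group_F$ is injective, and that the reduced sequence \eqref{notexact} fails to be exact in the middle term. The first point is the soft one. The restricted groupoid $\Group_F$ is just the group $\Gamma_\infty$ over the point $\infty$, so $C^*_r(\Group_F)=C^*_r(\Gamma_\infty)$ and the relevant map is the ordinary Baum--Connes assembly map for $\Gamma_\infty$. Since $\Gamma_\infty$ is a discrete subgroup of the semisimple Lie group $SL_n(\R)$, it lies in Lafforgue's class $\mathcal{C}$ and hence admits a $\gamma$-element; by Theorem \ref{abstractgamma}(1) its assembly map $\mu_r$ is injective (this is also Kasparov's theorem on discrete subgroups of connected Lie groups).

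The heart of the matter is to produce an explicit projection $e\in C^*_r(\Group)$ witnessing non-exactness. First I would record that $\Group_U$ is the bundle of the \emph{finite} groups $\Gamma_k$ over $\N$, so $C^*_r(\Group_U)=\bigoplus_{k\in\N}\C[\Gamma_k]$ ($c_0$-direct sum), and that evaluation at $\infty$ gives a surjection $\mathrm{ev}_\infty:C^*_r(\Group)\to C^*_r(\Gamma_\infty)$ with the inclusion of $C^*_r(\Group_U)$ as the sections vanishing at $\infty$. Fix a finite symmetric generating set $S$ of $\Gamma_\infty$ and set $h=|S|-\sum_{s\in S}s\in\C[\Gamma_\infty]$, regarded as the self-adjoint constant section of $\Group$ whose value at $k$ is $\lambda_{\Gamma_k}(h)$ and at $\infty$ is $\lambda_{\Gamma_\infty}(h)$, an element of $C^*_r(\Group)$. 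Now the two standing hypotheses combine: property $(\tau)$ with respect to $(N_k)_{k>0}$ yields a gap $\varepsilon>0$ with $\mathrm{sp}\big(\lambda_{\Gamma_k}(h)\big)\subset\{0\}\cup[\varepsilon,2|S|]$ for every finite $k$, the eigenvalue $0$ corresponding to the trivial representation; and since an infinite group with property $(\tau)$ is non-amenable (Følner sets would project to almost invariant vectors in $\bigoplus_k\lambda^0_{\Gamma/N_k}$), Kesten's criterion gives $0\notin\mathrm{sp}\big(\lambda_{\Gamma_\infty}(h)\big)$. Hence $0$ is isolated in the full spectrum of $h$ in $C^*_r(\Group)$, and $e:=\chi_{\{0\}}(h)$ is a genuine projection whose value at each finite $k$ is the trivial-representation projection $p_k=|\Gamma_k|^{-1}\sum_{g\in\Gamma_k}g$ and whose value at $\infty$ is $0$.

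Two K-theoretic computations then close the argument. Because $\mathrm{ev}_\infty(e)=0$, the class $[e]$ lies in the kernel of $K_0(C^*_r(\Group))\to K_0(C^*_r(\Group_F))$. To see that $[e]$ is not in the image of $K_0(C^*_r(\Group_U))$, I would use the characters $\tau_k:=1_{\Gamma_k}\circ\mathrm{ev}_k:C^*_r(\Group)\to\C$, where $1_{\Gamma_k}$ is the trivial representation of the finite group $\Gamma_k$. One computes $(\tau_k)_*[e]=1_{\Gamma_k}(p_k)=1$ for \emph{every} $k$, whereas any class pushed forward from $K_0(C^*_r(\Group_U))=\bigoplus_k R(\Gamma_k)$ has finite support and therefore satisfies $(\tau_k)_*=0$ for all large $k$. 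This contradiction shows $[e]\notin\mathrm{im}\,K_0(C^*_r(\Group_U))$, so \eqref{notexact} is not exact in the middle, and Lemma \ref{notonto} gives the non-surjectivity of the assembly map for $\Group$.

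The step I expect to be the main obstacle is the assertion that $e$ really belongs to $C^*_r(\Group)$, that is, that the field $(p_k)_{k\in\N}$ together with $0$ at $\infty$ is a legitimate element of the reduced algebra even though $\|p_k\|=1\not\to 0$. This is precisely the failure of \emph{continuity} (as opposed to upper semicontinuity) of the reduced field of group $C^*$-algebras at $\infty$, and it is exactly where property $(\tau)$ and the non-amenability of $\Gamma_\infty$ are indispensable. Making it rigorous amounts to the uniform spectral-gap estimate above, showing $0$ is isolated across all levels simultaneously; this is the technical core of the construction in \cite{HLS}, and I would carry it out via functional calculus applied to the single self-adjoint element $h$ rather than gluing the projections $p_k$ by hand.
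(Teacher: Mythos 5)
Your proof is correct and follows essentially the same route as the paper: both apply Lemma \ref{notonto} with $F=\{\infty\}$, invoke Kasparov's injectivity for the discrete subgroup $\Gamma_\infty=\Group_F$ of $SL_n(\R)$, produce the projection with fibers $p_k$ over finite $k$ and $0$ at $\infty$, and rule out membership in the image of $K_0(C^*_r(\Group_U))=\bigoplus_k R(\Gamma_k)$ by a finite-support argument (your trivial characters $\tau_k$ play exactly the role of the maps $(\lambda_{\Gamma_\infty/N_k})_*$ in the paper). The only real difference is one of implementation: the paper obtains this projection as the image of the abstract Kazhdan projection $e_\pi\in C^*_\pi(\Gamma_\infty)$ under the tautological homomorphism $\alpha:C^*_\pi(\Gamma_\infty)\to C^*_r(\Group)$ (using only that the infinite group $\Gamma_\infty$ has no invariant vectors in $\ell^2(\Gamma_\infty)$), whereas you build it by functional calculus on the Laplacian directly inside $C^*_r(\Group)$, at the cost of the additional -- but correctly justified -- observation that property $(\tau)$ forces $\Gamma_\infty$ to be non-amenable, so that $0\notin\mathrm{sp}\bigl(\lambda_{\Gamma_\infty}(h)\bigr)$ by Kesten's criterion.
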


\begin{proof}[Proof] We check the two assumptions of Lemma \ref{notonto}. First, $\Group_F=\Gamma_\infty$. As the assembly map $\mu_r$ is injective for every closed subgroup of any connected Lie group (e.g. $SL_n(\R)$), it is injective for $\Group_F$. It remains to see that the sequence (\ref{notexact}) is not exact in our case. For the representation $\pi=\oplus_{k>0}\lambda_{\Gamma_\infty/N_k}$ of $\Gamma_\infty$, denote by $C^*_\pi(\Gamma_\infty)$ the completion of $\C\Gamma_\infty$ defined by $\pi$. Because of property $(\tau)$ there exists a Kazhdan projection $e_\pi\in C^*_\pi(\Gamma_\infty)$ that projects on the $\Gamma_\infty$-invariant vectors\footnote{If $\Gamma_\infty$ has property (T), $e_\pi$ is the image in $C^*_\pi(\Gamma_\infty)$ of the Kazhdan projection $e_\Group\in C^*_{\rm max}(\Gamma_\infty)$ from Proposition \ref{Kazhdanproj}.} in every representation of $C^*_\pi(\Gamma_\infty)$.

Now $C^*_r(\Group)$ is the completion of $C_c(\Group^1)$ for the norm 
$$\|f\|=\sup_{k\in\overline{\N}}\|\lambda_{\Gamma_\infty/N_k}(f_k)\|,$$
where $f\in C_c(\Group^1)$ and $f_k=f|_{\{k\}\times\Gamma_k}$. 

Consider the homomorphism 
$\C\Gamma_\infty\rightarrow C_c(\Group^1)$ 
which to $g\in\Gamma_{\infty}$ associates the characteristic function of the set of $(k,h)\in \bar\N\times\Gamma_{\infty}$ such that $h=q_k(g)$. It 
extends to a homomorphism $\alpha:C^*_\pi(\Gamma_\infty)\rightarrow C^*_r(\Group)$, as is easily checked. The projection $\alpha(e_\pi)$ is in the kernel of the map $C^*_r(\Group)\rightarrow C^*_r(\Group_F)$: as $\Gamma_\infty$ is infinite, its regular representation has no non-zero invariant vector. Therefore the class $[\alpha(e_\pi)]\in K_0(C^*_r(\Group))$ is in the kernel of the map $K_0(C^*_r(\Group))\rightarrow K_0(C^*_r(\Group_F))$.

On the other hand $\Group_U=\coprod_{k>0}(\Gamma_\infty/N_k)$, so $C^*_r(\Group_U)=\oplus_{k>0}C^*(\Gamma_\infty/N_k)$ (a $C^*$-direct sum) and $K_0(C^*_r(\Group_U))=\oplus_{k>0} K_0(C^*(\Gamma_\infty/N_k))$ (an algebraic direct sum). Considering now the natural homomorphism $\lambda_{\Gamma_\infty/N_k}: C^*_r(\Group_U)\rightarrow C^*(\Gamma_\infty/N_k)$, we see in this way that $(\lambda_{\Gamma_\infty/N_k})_*(x)\neq 0$ for only finitely many $k$'s if $x$ lies in the image of $K_0(C^*_r(\Group_U))$ in $K_0(C^*_r(\Group))$, while $(\lambda_{\Gamma_\infty/N_k})_*[\alpha(e_\pi)]\neq 0$ for every $k\in\N$. This shows that $[\alpha(e_\pi)]$ is not in the image of $K_0(C^*_r(\Group_U))$.
\end{proof}

\begin{Ex}\label{T+tau} Explicit examples where Proposition \ref{counter} applies, are $SL_n(\Z)$ with $n\geq 3$ and any filtration (because of property (T)), and $SL_2(\Z)$ with a filtration by congruence subgroups (property $(\tau)$ is established in \cite{Lub}).
\end{Ex}

The paper \cite{HLS} by Higson-Lafforgue-Skandalis contains several other counter-examples to the Baum-Connes conjecture for groupoids:
\begin{itemize}
\item injectivity counter-examples for Hausdorff groupoids;
\item injectivity counter-examples for (non-Hausdorff) holonomy groupoids of foliations;
\item surjectivity counteramples for semi-direct product groupoids $Z\rtimes \Gamma$, where $Z$ is a suitable locally compact space carrying an action of a Gromov monster $\Gamma$ (see section \ref{coarseHilbert} below for more on Gromov monsters). In terms of $C^*$-algebras, since $C^*_r(Z\rtimes\Gamma)=C^*_r(\Gamma, C_0(Z))$, this is a counter-example for the Baum-Connes conjecture with coefficients (conjecture \ref{ConjBCcoeff}).
\end{itemize}

\section{The coarse Baum-Connes conjecture (CBC)}\label{Coarse}

\hfill{\it We dedicate this section to the memory of John Roe (1959-2018)}

\medskip
The idea behind coarse, or large scale-geometry is very simple: ignore the local, small-scale features of a geometric space and concentrate on its large-scale, or long term, structure. By doing so, trends or qualities may become apparent which are obscured by small-scale irregularities. For a metric space $X$, the {\it coarse Baum-Connes conjecture} postulates an isomorphism
$$\mu_X: KX_*(X)=\lim_{d\rightarrow\infty} K_*(P_d(X))\stackrel{\simeq}{\longrightarrow} K_*(C^*(X)), $$
where the actors only depend on large scale, or coarse structure of $X$. The right-hand side is the K-theory of a certain $C^*$-algebra, the {\it Roe algebra of $X$} - a non-commutative object; while the left-hand side is the limit of the K-homology groups of certain metric spaces (i.e. commutative objects), namely Rips complexes of $X$, see Definition \ref{Rips}; and the isomorphism should be given by a concrete map, the {\it coarse assembly map $\mu_X$}. This way the analogy with the classical Baum-Connes conjecture (Conjecture \ref{ConjBC}) becomes apparent: both are in the spirit of bridging non-commutative geometry with classical topology and geometry. CBC has several applications, e.g. the Novikov conjecture (Conjecture \ref{Novikov}) when $X=\Gamma$, a finitely generated group equipped with a word metric.

Let $(X,d_X), (Y,d_Y)$ be metric spaces, and $f:X\rightarrow Y$ a map (not necessarily continuous). We say that $f$ is {\it almost surjective} if there exists $C>0$ such that $Y$ is the $C$-neighborhood of $f(X)$. Recall that $f$ is a {\it quasi-isometric embedding} if there exists $A>0$ such that 
$$\frac{1}{A}d_X(x,x')-A\leq d_Y(f(x),f(x'))\leq Ad_X(x,x')+ A,$$
for every $x,x'\in X$, and that $f$ is a {\it quasi-isometry} if $f$ is a quasi-isometric embedding which is almost surjective. A weaker condition is provided by coarse embeddings, relevant for large-scale structure and corresponding to injections in the coarse category: $f$ is a {\it coarse embedding} if there exist functions $\rho_+,\rho_-:\R^+\rightarrow\R^+$ (called control functions) such that $\lim_{t\rightarrow\infty}\rho_\pm(t)=\infty$ and
$$\rho_-(d_X(x,x'))\leq d_Y(f(x),f(x'))\leq \rho_+(d_X(x,x'))$$
for every $x,x'\in X$. Finally, $f$ is a {\it coarse equivalence} if $f$ is a coarse embedding which is almost surjective; coarse equivalences are isomorphisms in the coarse category.

\subsection{Roe algebras}

\subsubsection{Locality conditions on operators}

Let $(X,d_X)$ be a proper metric space. A {\it standard module over $C_0(X)$} is a Hilbert space $\HH_X$ carrying a faithful representation of $C_0(X)$, whose image meets the compact operators only in 0. Fix a bounded operator $T$ on $\HH_X$. A point $(x,x')\in X\times X$ is in {\it the complement of the support of $T$} if there exists $f,f'\in C_c(X)$, with $f(x)\neq 0\neq f'(x')$ and $f'Tf=0$. 

Say that $T$ is {\it pseudo-local} if the commutator $[T,f]$ is compact for every $f\in C_0(X)$, that $T$ is {\it locally compact} if $Tf$ and $fT$ are compact operators for every $f,f'\in C_0(X)$. Say that $T$ has {\it finite propagation} if the support of $T$ is contained in a neighborhood of the diagonal in $X$ of the form $\{(x,x')\in X\times X: d_X(x,x')\leq R\}.$

\begin{Def}\label{Roealgebra} The Roe algebra $C^*(X)$ is the norm closure of the set of locally compact operators with finite propagation on $\HH_X$.
\end{Def}

It can be shown that $C^*(X)$ does not depend on the choice of the standard module $\HH_X$ over $C_0(X)$. The K-theory $K_*(C^*(X))$ will be the right-hand side of the CBC.

\begin{Ex} If $X$ is a uniformly discrete metric space (i.e. the distance between two distinct points is bounded below by some positive number), then we may take $\HH_X=\ell^2(X)\otimes\ell^2(\N)$, any operator $T\in\mathcal{B}(\HH_X)$ can be viewed as a matrix $T=(T_{xy})_{x,y\in X}$. Then $T$ is locally compact if and only if $T_{xy}$ is compact for every $x,y\in X$, and $T$ has finite propagation if and only if there is $R>0$ such that $T_{xy}=0$ for $d(x,y)>R$. In particular $\ell^\infty(X,\mathcal{K})$, acting diagonally on $\HH_X$, is contained in $C^*(X)$.
\end{Ex}

\begin{Ex}\label{Roeforgroups} Let $\Gamma$ be a finitely generated group, endowed with the word metric $d(x,y)=|x^{-1}y|_S$ associated with some finite generating set $S$ of $G$. Let $|\Gamma|$ denote the underlying metric space, which is clearly uniformly discrete. Let $\rho$ be the right regular representation of $G$ on $\ell^2(G)$; observe that, because  $d(xg,x)=|g|_S$, the operator $\rho(g)\otimes 1$ has finite propagation. Actually the Roe algebra in this case is $C^*(|\Gamma|)=\ell^\infty(\Gamma,\mathcal{K})\rtimes_r \Gamma$, where $\Gamma$ acts via $\rho$.
\end{Ex}

\subsubsection{Paschke duality and the index map}

Let $X$ be a proper metric space and $\HH_X$ a standard module over $C_0(X)$, as in the previous paragraph. Denote by $\Psi_0(X,\HH_X)$ the set of pseudo-local operators, and by $\Psi_{-1}(X,\HH_X)$ the set of locally compact operators. It follows from the definitions that $\Psi_0(X,\HH_X)$ is a $C^*$-algebra containing $\Psi_{-1}(X,\HH_X)$ as a closed 2-sided ideal. 

The K-homology of $X$ may be related to the K-theory of the quotient 
$$\Psi_0(X,\HH_X)/\Psi_{-1}(X,\HH_X).$$
For $i=0,1$ there are maps 
\begin{equation}\label{paschke}
K_i(\Psi_0(X,\HH_X)/\Psi_{-1}(X,\HH_X))\rightarrow K_{1-i}(X)
\end{equation}
defined as follows. For $i=0$, let $p$ be a projection in $\Psi_0(X,\HH_X)/\Psi_{-1}(X,\HH_X)$ (or in a matrix algebra over $\Psi_0(X,\HH_X)/\Psi_{-1}(X,\HH_X)$), form the self-adjoint involution $f=2p-1$, let $F$ be a self-adjoint lift of $f$ in $\Psi_0(X,\HH_X)$. Then the pair $(\HH_X,F)$ is an odd Fredholm module over $C_0(X)$, in the sense of Definition \ref{Fredmodule}, so it defines an element of the K-homology $K_1(X)$. For $i=1$, let $u$ be a unitary in $\Psi_0(X,\HH_X)/\Psi_{-1}(X,\HH_X)$ (or in a matrix algebra over it), let $U$ be a lift of $u$ in $\Psi_0(X,\HH_X)$, form the self-adjoint operator
$$F=\left(\begin{array}{cc}0 & U \\U & 0\end{array}\right)$$
on $\HH_X\oplus\HH_X$: then $(\HH_X\oplus\HH_X,F)$ is an even Fredholm module over $C_0(X)$, defining an element of the K-homology $K_0(X)$. Paschke \cite{Paschke} proved that, when $\HH_X$ is a standard module, the homomorphisms in \ref{paschke} are isomorphisms: this is Paschke duality.

Now define $D^*(X,\HH_X)$ as the norm closure of the pseudo-local, finite propagation operators. It is clear that $C^*(X)$ is a closed 2-sided ideal in $D^*(X,\HH_X)$. It was proved by Higson and Roe (see \cite{HigsonRoeOberwolfach}, lemma 6.2), that the inclusion $D^*(X,\HH_X)\subset\Psi_0(X,\HH_X)$ induces an isomorphism $D^*(X,\HH_X)/C^*(X)\simeq \Psi_0(X,\HH_X)/\Psi_{-1}(X,\HH_X)$ of quotient $C^*$-algebras. Now consider the 6-terms exact sequence in K-theory associated with the short exact sequence
$$0\rightarrow C^*(X)\rightarrow D^*(X,\HH_X)\rightarrow D^*(X,\HH_X)/C^*(X)\rightarrow 0;$$
the connecting maps $K_{1-i}(D^*(X,\HH_X)/C^*(X))\rightarrow K_i(C^*(X)\;(i=0,1)$ can be seen as maps $K_{1-i}(\Psi_0(X,\HH_X)/\Psi_{-1}(X,\HH_X))\rightarrow K_i(C^*(X))$. Applying Paschke duality, we get an {\it index map}
$$Ind_X:K_*(X)\rightarrow K_*(C^*(X)),$$
for every proper metric space $X$.

\begin{Ex} If $X$ is compact, then $C^*(X)$ is the $C^*$-algebra of compact operators, so $K_0(C^*(X))=\Z$ and the map $Ind_X: K_0(X)\rightarrow \Z$ is the usual index map that associates its Fredholm index to an even Fredholm module over $C(X)$.
\end{Ex}

\subsection{Coarse assembly map and Rips complex}

\subsubsection{The Rips complex and its K-homology}

We now define the left-hand side of the assembly map, in terms of Rips complexes. Recall from Definition \ref{Rips} that, for $X$ a locally finite metric space (i.e. every ball in $X$ is finite) and $d\geq 0$, the {\it Rips complex} $P_d(X)$ is the simplicial complex with vertex set $X$, such that a subset $F$ with $(n+1)$-elements spans a $n$-simplex if and only if $diam(F)\leq d$. We define a metric on $P_d(X)$ by taking the maximal metric that restricts to the spherical metric on every $n$-simplex - the latter being obtained by viewing the $n$-simplex as the intersection of the unit sphere $S^n$ with the positive octant in $\R^{n+1}$.

The {\it coarse K-homology} of $X$ is then defined as:
$$KX_*(X):=\lim_{d\rightarrow \infty}K_*(P_d(X));$$
this will be the left hand side of the CBC. Observe that, for every $d\geq 0$, the spaces $X$ and $P_d(X)$ are coarsely equivalent. Then, taking K-theory, we see that $\lim_{d\rightarrow\infty}K_*(C^*(P_d(X)))$ is isomorphic to $K_*(C^*(X)).$ 
\begin{Ex} If $\Gamma$ is a finitely generated group and $X=|\Gamma|$, then $KX_*(X)=\lim_Y K_*(Y)$ where $Y$ runs in the directed set of closed, $\Gamma$-compact subsets of the classifying space for proper actions $\underline{E\Gamma}$. This is to say that CBC can really be seen as a non-equivariant version of the Baum-Connes conjecture \ref{ConjBC}.
\end{Ex}

\subsubsection{Statement of the CBC}\label{StatementCBC}

The index map $Ind_{P_d(X)}$ is compatible with the maps $K_*(P_d(X))\rightarrow K_*(P_{d'}(X))$ and $K_*(C^*(P_d(X)))\rightarrow K_*(C^*(P_{d'}(X)))$ induced by the inclusion $P_d(X)\rightarrow P_{d'}(X)$ for $d<d'$. Passing to the limit for $d\rightarrow\infty$, we get the {\it coarse assembly map}
$$\mu_X:KX_*(X)\rightarrow K_*(C^*(X)).$$
Say that $X$ has {\it bounded geometry} if, for every $R>0$, the cardinality of balls of radius $R$ is uniformly bounded over $X$. Here is now the statement of the {\it coarse Baum-Connes conjecture}.

\begin{Conj}\label{CBC}(CBC) For every space $X$ with bounded geometry, the coarse assembly map $\mu_X$ is an isomorphism.
\end{Conj}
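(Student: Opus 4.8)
The plan is to transport the Dirac--dual Dirac machinery developed in Chapter \ref{indexmaps}, and formalized abstractly in section \ref{viveTu}, into the coarse category, producing a \emph{coarse $\gamma$-element} whose equality to $1$ forces $\mu_X$ to be an isomorphism. First I would exploit that both sides of $\mu_X$ are coarse invariants: since $X$ and each Rips complex $P_d(X)$ (Definition \ref{Rips}) are coarsely equivalent, one has $K_*(C^*(X))\cong\lim_d K_*(C^*(P_d(X)))$, while by construction $KX_*(X)=\lim_d K_*(P_d(X))$. It therefore suffices to analyze the index maps $Ind_{P_d(X)}$ at each finite scale $d$ and pass to the limit, using their compatibility with the structure maps $P_d(X)\hookrightarrow P_{d'}(X)$ recorded in section \ref{StatementCBC}.

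The second step is a coarse Mayer--Vietoris (cutting-and-pasting) reduction. Using the six-term exact sequences relating the Roe algebras $C^*(A)$, $C^*(B)$, $C^*(A\cap B)$ to $C^*(X)$ for a coarsely excisive decomposition $X=A\cup B$, together with the matching sequence on the K-homology side, one shows that the class of spaces for which $\mu_X$ is an isomorphism is closed under such decompositions. This reduces the statement to spaces assembled from uniformly contractible pieces of controlled geometry, and --- crucially --- to \emph{flasque} spaces admitting a coarse self-contraction, on which both $KX_*$ and $K_*(C^*(-))$ vanish, furnishing the base case.

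The analytic heart is then to construct a coarse Dirac element $\alpha$ and a coarse dual-Dirac element $\beta$. Following the template of section \ref{viveTu}, I would encode $X$ into an infinite-dimensional Euclidean space, build the associated Bott--Dirac operator, and assemble it into a class $\beta$ landing in the K-theory of a \emph{twisted} (``localized at infinity'') Roe algebra $C^*(X;H)$ for which the assembly map is already an isomorphism --- this algebra playing the role of the proper coefficient algebra in Theorem \ref{BCproper}. A Higson--Kasparov type index computation produces the dual class $\alpha$, and the composite $\gamma=\beta\otimes\alpha\in KK(\C,\C)$ becomes the coarse $\gamma$-element. The coarse incarnation of Theorem \ref{abstractgamma} then reduces surjectivity of $\mu_X$ to the identity $\tilde\gamma=\mathrm{Id}$ on $K_*(C^*(X))$, with injectivity following from the mere existence of $\gamma$.

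The main obstacle is precisely this third step for \emph{arbitrary} bounded geometry $X$, and I expect essentially all the difficulty to concentrate there. The construction of $\beta$, of the twisted algebra $C^*(X;H)$, and of the homotopy $\gamma\sim 1$ all presuppose a way of embedding $X$ into a space carrying a Bott-periodicity phenomenon, and every known realization of this requires a coarse embedding of $X$ into Hilbert space (or finite asymptotic dimension), for which bounded geometry alone is insufficient. Expander graphs are the paradigmatic difficulty: they have bounded geometry yet admit no such embedding, and the Kazhdan-projection mechanism that defeats exactness --- exactly the phenomenon exploited via property $(\tau)$ in section \ref{countergroupoids} --- is expected to produce classes in $K_0(C^*(X))$ straddling the image of $\mu_X$. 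Thus the decisive task is to manufacture, intrinsically from bounded geometry, a substitute proper coefficient algebra together with a proof that the resulting $\tilde\gamma$ acts as the identity; overcoming this --- or locating the precise obstruction it encodes --- is where the proposal either succeeds or exposes the true content of the conjecture.
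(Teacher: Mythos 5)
There is no proof in the paper for you to be measured against, because Conjecture \ref{CBC} is stated as a conjecture --- and, worse for your proposal, the paper itself shows it is \emph{false} in the stated generality. Expanders have bounded geometry, and the paper's list of negative results records two failures of surjectivity of $\mu_X$: Willett--Yu \cite{WiYu1} for expanders with large girth, and Higson--Lafforgue--Skandalis \cite{HLS} for box spaces (Definition \ref{box}) of groups having property $(\tau)$, e.g.\ $SL_3(\Z)$ with any filtration or $SL_2(\Z)$ with congruence subgroups (Example \ref{T+tau}). The mechanism is exactly the one you name in your final paragraph: by the spectral gap, the spectral projection $p_X$ of the coarse Laplacian $\Delta_X$ lies in the Roe algebra $C^*(X)$, and Proposition \ref{inj/surj} shows that its K-theory class dies in $K_0(C^*_r(\mathcal{G}(X)_F))$ yet does not come from $K_0(C^*_r(\mathcal{G}(X)_U))$; combined with injectivity of the assembly map for the boundary groupoid (available because the group is a discrete subgroup of $SL_n(\R)$), this forces $[p_X]$ outside the image of $\mu_X$. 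So the difficulty you isolate in your third step is not a technical gap awaiting a cleverer substitute for the proper coefficient algebra: it is an actual obstruction, and no completion of your outline exists.

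What your scheme does prove, when the analytic heart can be carried out, is Yu's theorem (subsection \ref{positiveCBC}, \cite{YuA}): CBC holds for bounded geometry spaces admitting a coarse embedding into Hilbert space. Indeed your construction --- encoding $X$ into an infinite-dimensional Euclidean space, building a Bott--Dirac operator, landing $\beta$ in a twisted ``localized'' Roe algebra playing the role of the proper algebra of Theorem \ref{BCproper}, then running the abstract $\gamma$-machinery of section \ref{viveTu} --- is precisely Yu's proof scheme, and your coarse Mayer--Vietoris/flasqueness reduction is the standard (and sound) preliminary for that restricted class. But bounded geometry alone supplies no such embedding: expanders satisfy the Poincar\'e inequality of Proposition \ref{expandersPoincare} and hence, by Proposition \ref{expandersdonotembed}, do not coarsely embed into Hilbert space. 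Your closing diagnosis is therefore the correct mathematics, and the honest conclusion to draw from it is that a submission on Conjecture \ref{CBC} in full generality should be a disproof, not a proof: the Kazhdan-projection classes are genuine elements of $K_0(C^*(X))$ straddling the image of $\mu_X$, exactly as the paper demonstrates.
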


\subsubsection{Relation to the Baum-Connes conjecture for groupoids}\label{coarsegroupoid}

It is a result of G. Yu \cite{Yu1995} that, if $\Gamma$ is a finitely generated group, the CBC for the metric space $|\Gamma|$ is the usual Baum-Connes conjecture for $\Gamma$ with coefficients in the $C^*$-algebra $\ell^\infty(\Gamma,\mathcal{K})$ (compare with example \ref{Roeforgroups}). Skandalis-Tu-Yu \cite{Skandalis-Tu-Yu} generalize this by associating to every discrete metric space $X$ with bounded geometry, a groupoid $\mathcal{G}(X)$ such that the coarse assembly map for $X$ is equivalent to the Baum-Connes assembly map for $\mathcal{G}(X)$ with coefficients in the $C^*$-algebra $\ell^\infty(X,\mathcal{K})$. 

\medskip
Let us explain briefly the groupoid $\mathcal{G}(X)$. So let $X$ be a countable metric space with bounded geometry. A subset $E$ of $X\times X$ is called an \emph{entourage} if $d$ is bounded on $E$, i.e. if there exists $R>0$ such that $\forall (x,y)\in E,\quad d(x,y)\leq R.$\\
Let $$\Group(X)=\bigcup\limits_{E\,\text{entourage}}\overline{E}\subset \beta(X\times X),$$ where $\beta(X\times X)$ is the Stone-\v{C}ech compactification of $X\times X$ and $\overline{E}$ is the closure of $E$ in $\beta(X\times X)$. $\Group(X)$ is the spectrum of the abelian $C^*$-subalgebra of $\ell^{\infty}(X\times X)$ generated by the characteristic functions $\chi_E$ of entourages $E$. Skandalis, Tu and Yu proved that it can be endowed with a structure of groupoid extending the one on $X\times X$. Recall that $X\times X$ is endowed with a structure of groupoid where the source and range are defined by $s(x,y)=y$ and $r(x,y)=x$. These maps extend to maps from $\beta(X\times X)$ to $\beta X$, hence to maps from $\Group(X)$ to $\beta X$ so that $\Group(X)$ is a groupoid whose unit space is $\beta X$ and which is \'etale, locally compact, Hausdorff and principal (cf. \cite{Skandalis-Tu-Yu}[Proposition 3.2]).\\
In the case where $X$ is a finitely generated discrete group $\Gamma$ with a word metric, the groupoid $\Group(X)$ is $\beta \Gamma\rtimes \Gamma$. Skandalis, Tu and Yu proved the following result 

\begin{Thm}[\cite{Skandalis-Tu-Yu}]
Let $X$ be a discrete metric space with bounded geometry. Then $X$ has property $A$(in the sense of Definition \ref{Yu'sA} below) if and only if $\Group(X)$ is amenable. Moreover, $X$ is coarsely embedded into a Hilbert space if and only if $\Group(X)$ has Haagerup property. 
\end{Thm}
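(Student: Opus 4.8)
The plan is to prove both equivalences by the same mechanism: translate the geometric conditions on $X$ into statements about functions on the groupoid $\Group(X)$, and then invoke the function-theoretic characterizations of amenability and of the Haagerup property for groupoids (Anantharaman-Delaroche--Renault, Tu \cite{Tu99,Tu99Moyennables}). The central dictionary I would set up first is the following. Since $X\times X$ is discrete, the closure $\overline{E}$ of an entourage $E$ in $\beta(X\times X)$ is canonically its Stone-\v{C}ech compactification $\beta E$; hence \emph{every} bounded function on $E$ extends uniquely to a continuous function on $\overline{E}$. Consequently $C_c(\Group(X))$ is identified with the space of bounded functions on $X\times X$ supported in some entourage, and bounded continuous functions on $\Group(X)$ correspond to bounded kernels on $X\times X$ that are controlled (bounded on each entourage). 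Moreover, by bounded geometry, for a genuine point $x\in X\subset\beta X$ the fibre $\Group(X)^x=r^{-1}(x)$ is identified with $\{x\}\times X$ cut down to the entourages, i.e. with $X$, the part meeting a fixed entourage of radius $R$ being the finite ball $B(x,R)$. This dictionary, together with the density of $X$ in $\beta X$, drives both halves of the argument.

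For the equivalence \emph{property A $\Leftrightarrow$ $\Group(X)$ amenable}, I would use the characterization of topological amenability by an approximate invariant system of probability measures: $\Group(X)$ is amenable iff there is a net of continuous maps $x\mapsto\mu_i^x$, where $\mu_i^x$ is a probability measure on $\Group(X)^x$ supported in a fixed entourage, with $\|\gamma\cdot\mu_i^{s(\gamma)}-\mu_i^{r(\gamma)}\|_1\to 0$ uniformly on compact subsets of $\Group(X)$. Reading this over genuine points through the dictionary, $\mu_i^x$ becomes a finitely supported probability measure $\xi_x$ on $X$ with $\mathrm{supp}(\xi_x)\subset B(x,S)$ for a uniform $S$, and the invariance condition evaluated at $\gamma=(y,x)$ with $d(x,y)\le R$ becomes exactly $\|\xi_x-\xi_y\|_1\le\varepsilon$. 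This is precisely the $\ell^1$-formulation of property A (Definition \ref{Yu'sA}), equivalent to the usual one. I would run the translation in both directions, the only care being the passage from uniformity over $X$ to uniformity over compact subsets of $\Group(X)$, handled by continuity and the fact that these functions are determined by their values on the dense set $X$.

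For \emph{coarse embedding into Hilbert space $\Leftrightarrow$ $\Group(X)$ Haagerup}, I would use Tu's definition of the Haagerup property as the existence of a continuous, proper, conditionally negative type function $\psi$ on $\Group(X)$. Computing the groupoid conditional-negativity condition over a genuine unit $x\in X$ and using $\gamma_i^{-1}\gamma_j=(y_i,y_j)$ shows that the restriction of $\psi$ to $X\times X$ is exactly a conditionally negative definite kernel on $X$ vanishing on the diagonal, while properness translates into control estimates $\rho_-(d(x,y))\le\psi(x,y)\le\rho_+(d(x,y))$ with $\rho_\pm(t)\to\infty$. A Schoenberg/GNS construction then yields $f:X\to\HH$ with $\psi(x,y)=\|f(x)-f(y)\|^2$, which is a coarse embedding. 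Conversely, from a coarse embedding $f$ the kernel $\psi(x,y)=\|f(x)-f(y)\|^2$ is conditionally negative definite, bounded on each entourage (hence extends continuously to $\Group(X)$ by the dictionary), and proper with the required control, so it witnesses the Haagerup property.

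The step I expect to be the main obstacle is verifying that the \emph{uniform-over-$X$} data genuinely assemble into \emph{continuous} data on $\Group(X)$ satisfying the groupoid axioms over the entire unit space $\beta X$, not merely over $X$. Concretely, one must check that a controlled kernel extends continuously to the directed union $\bigcup_E\overline{E}=\Group(X)$ (not just to each $\overline{E}$ separately), that the extended measures form an honest continuous field, and that conditional negativity and approximate invariance survive the passage to the boundary points of $\beta X$. These are exactly the technical points where bounded geometry (uniform local finiteness of the fibres over $X$) and the \'etale, principal structure of $\Group(X)$ enter, and where I would follow the detailed arguments of \cite{Skandalis-Tu-Yu}.
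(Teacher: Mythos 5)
The paper itself contains no proof of this theorem---it is stated verbatim with the citation \cite{Skandalis-Tu-Yu}---so the only meaningful comparison is with the arguments of that source. Your proposal correctly reconstructs exactly that route: the Stone-\v{C}ech dictionary identifying $C_c(\Group(X))$ with bounded kernels supported in entourages, the identification of fibres over points of $X$ (where bounded geometry enters), the translation of approximate invariance into the $\ell^1$-form of property A and of proper conditionally negative type functions into coarse embeddings, and the density/continuity argument pushing these conditions from the dense pair groupoid $X\times X$ to the boundary units of $\beta X$ are precisely the mechanisms of the Skandalis--Tu--Yu proof.
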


The coarse Baum-Connes conjecture can be put inside the framework of the conjecture for groupoids : let $C^*(X)$ be the Roe algebra associated to $(X,d)$, see definition \ref{Roealgebra}. Then $C^*(X)$ is isomorphic to the reduced crossed product
$C^*_r(\Group(X),\ell^{\infty}(X,\mathcal{K}))$ and the coarse assembly map identifies with the Baum-Connes assembly map for the groupoid $\Group(X)$ with coefficients in $\ell^{\infty}(X,\mathcal{K})$.

\subsubsection{The descent principle}

For a finitely generated group $\Gamma$, there is a ``descent principle'' saying that the CBC for $|\Gamma|$ implies the Novikov conjecture for $\Gamma$ (see Theorem 8.4 in \cite{Roe96})

\begin{Thm}\label{descent} Let $\Gamma$ be a finitely generated group. Assume that $\Gamma$ admits a finite complex as a model for its classifying space $B\Gamma$. If CBC holds for the underlying metric space $|\Gamma|$, then the assembly map $\mu_\Gamma$ is injective; in particular the Novikov conjecture (Conjecture \ref{Nov}) holds for $\Gamma$.
\end{Thm}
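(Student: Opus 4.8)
The plan is to deduce injectivity of $\mu_\Gamma$ from injectivity of the coarse assembly map $\mu_{|\Gamma|}$ by comparing the $\Gamma$-equivariant index theory on $\widetilde{B\Gamma}$ with the coarse index theory of $|\Gamma|$. First I would record the structural consequences of the hypothesis: since $B\Gamma$ is a finite complex, $\Gamma$ is finitely generated and torsion-free, so $\underline{E\Gamma}=E\Gamma=\widetilde{B\Gamma}$ is a finite-dimensional, free, proper, $\Gamma$-cocompact model for the classifying space of proper actions. Equipping $\widetilde{B\Gamma}$ with a $\Gamma$-invariant proper metric (a lift of a metric on the finite complex $B\Gamma$), the \v Svarc--Milnor lemma gives a coarse equivalence $\widetilde{B\Gamma}\simeq|\Gamma|$, so $K_*(C^*(\widetilde{B\Gamma}))\cong K_*(C^*(|\Gamma|))$ and $KX_*(\widetilde{B\Gamma})\cong KX_*(|\Gamma|)$. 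Moreover, because $\Gamma$ acts freely, the map $\iota_\Gamma:K_*(B\Gamma)\to K_*^\Gamma(\underline{E\Gamma})$ of Section \ref{official} is an isomorphism; hence injectivity of $\mu_\Gamma$ is exactly the Strong Novikov conjecture for $\Gamma$, which by Corollary \ref{BCimpliesNov} implies the Novikov conjecture (Conjecture \ref{Nov}). It therefore suffices to prove that $\mu_\Gamma$ is injective.

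Next I would realize $\mu_\Gamma$ as an equivariant coarse index map. On $X=\widetilde{B\Gamma}$ one forms the $\Gamma$-equivariant structure algebras: the closure $C^*(X)^\Gamma$ of $\Gamma$-invariant, finite-propagation, equivariantly locally compact operators, and the corresponding pseudo-local algebra $D^*(X)^\Gamma$. Because the action is free and cocompact, $C^*(X)^\Gamma$ is Morita equivalent to $C^*_r(\Gamma)$, so $K_*(C^*(X)^\Gamma)\cong K_*(C^*_r(\Gamma))$. The equivariant version of Paschke duality identifies $K_*^\Gamma(X)$ with the K-theory of $D^*(X)^\Gamma/C^*(X)^\Gamma$, and the boundary map of the six-term sequence attached to $0\to C^*(X)^\Gamma\to D^*(X)^\Gamma\to D^*(X)^\Gamma/C^*(X)^\Gamma\to 0$ yields an equivariant index map $K_*^\Gamma(X)\to K_*(C^*(X)^\Gamma)$ which, under these identifications, coincides with the Baum--Connes assembly map $\mu_\Gamma$. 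This is the equivariant counterpart of the construction of $Ind_X$ for the non-equivariant Roe algebra (Definition \ref{Roealgebra} and the Paschke-duality discussion, formula \eqref{paschke}).

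The heart of the argument is a ``forget the $\Gamma$-action'' natural transformation comparing the two pictures. Forgetting equivariance sends $D^*(X)^\Gamma\hookrightarrow D^*(X,\HH_X)$ and $C^*(X)^\Gamma\hookrightarrow C^*(X)$, inducing a commuting square whose top row is the equivariant index map ($=\mu_\Gamma$) and whose bottom row is the index map $Ind_X:K_*(X)\to K_*(C^*(X))$, which after passing to Rips complexes and the limit $d\to\infty$ becomes the coarse assembly map $\mu_X=\mu_{|\Gamma|}$. Write the left vertical (forgetful) map as $c:K_*^\Gamma(X)\to KX_*(X)=KX_*(|\Gamma|)$ and the right vertical map as $\phi:K_*(C^*_r(\Gamma))\to K_*(C^*(|\Gamma|))$. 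The diagram chase is then immediate: if $x\in K_*^\Gamma(X)$ satisfies $\mu_\Gamma(x)=0$, commutativity gives $\mu_{|\Gamma|}(c(x))=\phi(\mu_\Gamma(x))=0$, and since CBC (Conjecture \ref{CBC}) makes $\mu_{|\Gamma|}$ injective, we conclude $c(x)=0$.

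The step I expect to be the main obstacle --- and the only place where the finiteness of $B\Gamma$ is used essentially --- is proving that the forgetful comparison map $c:K_*^\Gamma(X)\to KX_*(|\Gamma|)$ is injective (rational injectivity already suffices for Novikov). Here one must exploit that $X=\widetilde{B\Gamma}$ is finite-dimensional and $\Gamma$-cocompact: the quotient $B\Gamma$ being a \emph{finite} complex allows the construction of a transfer (multiplicity) map furnishing a one-sided inverse to $c$ after forgetting equivariance, so that $c$ is split injective; concretely this rests on the identification $K_*^\Gamma(X)\cong K_*(B\Gamma)$ and on the compatibility of the equivariant and coarse Chern characters over the finite complex $B\Gamma$. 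Granting this lemma, $c(x)=0$ forces $x=0$, so $\mu_\Gamma$ is injective, and the reduction of the first paragraph completes the proof. Everything outside this injectivity lemma is formal: the structural reductions, the Morita identification $C^*(X)^\Gamma\sim C^*_r(\Gamma)$, equivariant Paschke duality, and the naturality of the index construction under forgetting the group action.
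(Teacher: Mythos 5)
Your structural reductions are fine (torsion-freeness, $\underline{E\Gamma}=E\Gamma=\widetilde{B\Gamma}$, the Morita equivalence between the equivariant Roe algebra and $C^*_r(\Gamma)$, equivariant Paschke duality, assembly as a boundary map, and commutativity of the forgetful square), but the key lemma on which everything rests --- injectivity, even rational injectivity, of the forgetful comparison map $c:K_*^\Gamma(E\Gamma)\to KX_*(|\Gamma|)$ --- is false, and no transfer can rescue it. Take $\Gamma=\Z$, so $B\Gamma=S^1$, $E\Gamma=\R$, and $|\Gamma|$ is coarsely $\R$. Then $K_0^\Gamma(E\Gamma)\cong K_0(S^1)\cong\Z$, whereas $KX_0(|\Z|)=\lim_d K_0(P_d(\Z))=KK_0(C_0(\R),\C)=0$: the Rips complexes $P_d(\Z)$ are properly homotopy equivalent to $\R$, and the K-homology entering Conjecture \ref{CBC} is the \emph{locally finite} one, since $P_d(X)$ is non-compact. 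So $c$ kills all of $K_0$ (even rationally): the point class of $B\Gamma$, which assembles to the nonzero class $[1]\in K_0(C^*_r(\Z))$, dies in the coarse left-hand side, because a point can be pushed to infinity in $\R$ by an Eilenberg swindle. Your diagram still commutes --- it merely forces the right-hand vertical map $\phi$ to kill $[1]$ too, which it does, since $K_0(C^*(|\Z|))=0$. The transfer you invoke cannot exist: transfers split \emph{finite} covers, while $E\Gamma\to B\Gamma$ has infinite deck group; and your appeal to "compatibility of Chern characters over $B\Gamma$" conflates the ordinary K-homology of the compact space $B\Gamma$ with the locally finite K-homology of the non-compact space $E\Gamma$, which are genuinely different theories. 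The failure is structural, not technical: since the coarse left-hand side provably loses equivariant information, no commutative square of the shape you draw can transport injectivity of $\mu_{|\Gamma|}$ into injectivity of $\mu_\Gamma$.

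The proof the paper points to (Theorem 8.4 of \cite{Roe96}, due to Roe and Higson--Roe) runs the comparison in the opposite direction, on the dual (structure-algebra) side, which is exactly what avoids the problem above. Via the six-term sequence for $0\to C^*(P_d)\to D^*(P_d)\to D^*(P_d)/C^*(P_d)\to 0$ and Paschke duality, CBC for $|\Gamma|$ is equivalent to a \emph{vanishing} statement: $\lim_d K_*(D^*(P_d(\Gamma)))=0$. Unlike injectivity of an assembly map, a vanishing statement can be promoted to the equivariant setting: one forms the algebra of $\Gamma$-equivariant families over $E\Gamma$ (sections over $B\Gamma$ of the bundle with fibre the dual algebra) and computes its K-theory by a Mayer--Vietoris induction over the cells of $B\Gamma$; it is precisely here that finiteness of $B\Gamma$ is used, so that the induction terminates, each equivariant cell $\Gamma\times D^n$ contributing a copy of the non-equivariant dual algebra of $|\Gamma|$, whose K-theory vanishes by CBC. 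The upshot is a left inverse for $\mu_\Gamma$ (equivalently, the vanishing of the relevant equivariant dual K-theory), hence injectivity, and then the Novikov conjecture as in your first paragraph. In short: the correct mechanism is to take "homotopy fixed points" of the coarse assembly isomorphism over the finite complex $B\Gamma$, not to push equivariant K-homology classes forward into coarse K-homology.
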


\subsection{Expanders}

Expanders are families of sparse graphs which are ubiquitous in mathematics, from theoretical computer science to dynamical systems, to coarse geometry.

Let $X=(V,E)$ be a finite, connected, $d$-regular graph. The {\it combinatorial Laplace operator}  on $X$ is the operator $\Delta$ on $\ell^2(V)$ defined by 
$$(\Delta f)(x)=d\cdot f(x)-\sum_{y\in V: y\sim x} f(y)$$
where $f\in\ell^2(V)$ and $\sim$ denotes the adjacency relation on $X$.

It is well known from algebraic graph theory (see e.g. \cite{Lub}, \cite{DavidoffSarnakValette}) that, if $X$ has $n$ vertices, the spectrum of $\Delta$ consists of $n$ eigenvalues (repeated according to multiplicity):
$$0=\lambda_0<\lambda_1 \leq\lambda_2\leq... \leq\lambda_{n-1}\in [0,2d].$$

On the other hand, the {\it Cheeger constant}, or {\it isoperimetric constant} of $X$, is defined as 
$$h(X)=\inf_{A\subset V}\frac{|\partial A|}{\min\{|A|,|V\backslash A|\}}$$
where $\partial A$ is the {\it boundary} of $A$, i.e. the set of edges connecting $A$ with $V\backslash A$. The Cheeger constant measures the difficulty of disconnecting $X$.

The {\it Cheeger-Buser inequality} says that $h(X)$ and $\lambda_1(X)$ essentially measure the same thing:
$$\frac{\lambda_1(X)}{2}\leq h(X)\leq \sqrt{2k\lambda_1(X)}.$$

Expanders are families of large graphs which are simultaneously sparse (i.e. they have few edges, a condition ensured by $d$-regularity, with $d$ fixed) and hard to disconnect (a condition ensured by $h(X)$ being bounded away form 0).

\begin{Def} A family $(X_k)_{k>0}$ of finite, connected, $d$-regular graphs is a family of expanders if $\lim_{k\rightarrow\infty}|V_k|=+\infty$ and there exists $\varepsilon>0$ such that $\lambda_1(X_k)\geq \varepsilon$ for all $k$ (equivalently: there exists $\varepsilon'>0$ such that $h(X_k)\geq\varepsilon'$ for every $k$).
\end{Def}

The tension between sparsity of $X$ and $h(X)$ being bounded away from 0, makes the mere existence of expanders non-trivial. The first explicit construction, using property (T), is due to Margulis:

\begin{Thm}\label{Margulisexp} Let $\Gamma$ be a discrete group with property (T), let $S=S^{-1}$ be a finite, symmetric, generating set of $\Gamma$. Assume that $\Gamma$ admits a sequence of finite index normal subgroups $N_k\triangleleft\Gamma$ with $\lim_{k\rightarrow\infty}[\Gamma:N_k]=+\infty$. Then the sequence of Cayley graphs $(Cay(\Gamma/N_k,S))_{k>0}$ is a family of expanders.
\end{Thm}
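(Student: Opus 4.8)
The plan is to reduce the expander estimate to the uniform Kazhdan constant furnished by property (T). First I would record the two structural facts that require no work: each $Cay(\Gamma/N_k,S)$ is a $d$-regular graph with $d=|S|$ (finite, since $S=S^{-1}$ is a finite generating set), and it is connected because $S$ generates $\Gamma$ hence $\Gamma/N_k$; moreover its number of vertices is $[\Gamma:N_k]$, which tends to $\infty$ by hypothesis. Thus the only real content is to produce a single $\varepsilon>0$, independent of $k$, with $\lambda_1(X_k)\geq\varepsilon$. To this end I would identify the adjacency operator of $X_k$ on $\ell^2(\Gamma/N_k)$ with $A_k=\sum_{s\in S}\lambda_{\Gamma/N_k}(s)$, where $\lambda_{\Gamma/N_k}$ is the quasi-regular representation of $\Gamma$ on $\ell^2(\Gamma/N_k)$ (the regular representation of $\Gamma/N_k$ pulled back to $\Gamma$). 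Since $S=S^{-1}$, $A_k$ is self-adjoint and $\Delta_k=d\,\mathrm{Id}-A_k$ is the combinatorial Laplacian.

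Next I would invoke property (T) in the form of a Kazhdan constant. Because $S$ is a (compact) generating set, property (T) --- via the characterization of Proposition \ref{almostinv} --- yields a constant $\kappa=\kappa(\Gamma,S)>0$, depending only on $\Gamma$ and $S$, such that for every unitary representation $\pi$ of $\Gamma$ with no nonzero invariant vector and every unit vector $\xi$ one has $\max_{s\in S}\|\pi(s)\xi-\xi\|\geq\kappa$. The crucial point is that $\kappa$ does not depend on the representation, hence not on $k$. I would apply this to $\pi=\lambda^0_{\Gamma/N_k}$, the restriction of $\lambda_{\Gamma/N_k}$ to the orthogonal complement $\ell^2_0(\Gamma/N_k)$ of the constant functions: the $\Gamma$-invariant vectors in $\ell^2(\Gamma/N_k)$ are exactly the constants, so $\lambda^0_{\Gamma/N_k}$ has no nonzero invariant vector and the Kazhdan inequality applies on all of $\ell^2_0(\Gamma/N_k)$.

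Finally I would translate this into a spectral gap through the elementary identity, valid for every $\xi\in\ell^2(\Gamma/N_k)$,
\[
\sum_{s\in S}\|\lambda_{\Gamma/N_k}(s)\xi-\xi\|^2
=2d\|\xi\|^2-2\langle A_k\xi,\xi\rangle
=2\langle\Delta_k\xi,\xi\rangle,
\]
which uses that each $\lambda_{\Gamma/N_k}(s)$ is unitary together with $S=S^{-1}$ and the self-adjointness of $A_k$. For a unit vector $\xi$ orthogonal to the constants the left-hand side is at least $\max_{s\in S}\|\lambda_{\Gamma/N_k}(s)\xi-\xi\|^2\geq\kappa^2$, whence $\langle\Delta_k\xi,\xi\rangle\geq\kappa^2/2$. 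By the min-max characterization of $\lambda_1(\Delta_k)$ (the smallest eigenvalue on the orthogonal complement of the simple $0$-eigenspace of constants), this gives $\lambda_1(X_k)\geq\kappa^2/2$ for every $k$. Setting $\varepsilon=\kappa^2/2$ then completes the proof.

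I do not expect a genuine obstacle here: the entire difficulty is packaged into the existence of a uniform Kazhdan constant for the fixed generating set $S$, which is precisely what property (T) guarantees. The only points demanding a little care are bookkeeping ones --- fixing conventions so that the adjacency operator really is $\sum_{s\in S}\lambda_{\Gamma/N_k}(s)$, and confirming that the invariant vectors are exactly the constants, so that $\lambda^0_{\Gamma/N_k}$ is genuinely invariant-vector-free. As an alternative route one could instead bound the Cheeger constants $h(X_k)$ away from $0$ and invoke the Cheeger--Buser inequality stated above; but the direct spectral argument is shorter and makes the role of $\kappa$ transparent.
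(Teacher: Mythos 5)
Your proof is correct, but there is nothing in the paper to compare it against: Theorem \ref{Margulisexp} is stated there without proof, as Margulis's classical result. Your argument --- a uniform Kazhdan constant $\kappa=\kappa(\Gamma,S)$ converted into a spectral gap via the identity $\sum_{s\in S}\|\lambda_{\Gamma/N_k}(s)\xi-\xi\|^2=2\langle\Delta_k\xi,\xi\rangle$ and the min-max characterization of $\lambda_1$ --- is the standard one, and it is consistent with the framework the paper builds around the statement, whose implicit route is: property (T) $\Rightarrow$ property $(\tau)$ for every filtration (Definition \ref{tau}, said to follow from Proposition \ref{almostinv}), combined with the Lubotzky--Zimmer equivalence of $(\tau)$ with expansion, also quoted without proof.

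One step in your write-up deserves to be made explicit, since it is the only place where Proposition \ref{almostinv} does not literally suffice: the \emph{uniformity} of $\kappa$ over all unitary representations without invariant vectors. The proposition, as stated, gives for each such representation some $\varepsilon>0$ and some compact set, a priori depending on the representation. The standard repair is a direct-sum argument: if no uniform $\kappa$ existed for the family $(\lambda^0_{\Gamma/N_k})_k$, choose unit vectors $\xi_m\in\ell^2_0(\Gamma/N_{k_m})$ with $\max_{s\in S}\|\lambda^0_{\Gamma/N_{k_m}}(s)\xi_m-\xi_m\|<1/m$; since $S$ generates $\Gamma$, every element of a fixed finite subset of $\Gamma$ is a word of bounded $S$-length, so by the triangle inequality the representation $\pi=\oplus_m\lambda^0_{\Gamma/N_{k_m}}$ almost admits invariant vectors, while it has none (an invariant vector would have invariant components); this contradicts property (T). Note that this is exactly the mechanism by which the paper deduces property $(\tau)$ from property (T), so your appeal to a representation-independent Kazhdan constant is legitimate, but it should be flagged as a (standard) lemma rather than as an immediate restatement of Proposition \ref{almostinv}. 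Your other caveat --- that generators may collapse in $\Gamma/N_k$, so the Cayley graphs must be read as $d$-regular multigraphs with $A_k=\sum_{s\in S}\lambda_{\Gamma/N_k}(s)$ --- is handled correctly; the spectral computation is insensitive to loops and multiple edges.
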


\begin{Ex} Take $\Gamma=SL_d(\Z)$, with $d\geq 3$, and $N_k=\Gamma(k)$ the congruence subgroup of level $k$, i.e. the kernel of the map of reduction modulo $k$:
$$\Gamma(k)=\ker(SL_d(\Z)\rightarrow SL_d(\Z/k\Z)).$$
\end{Ex}

Coarse geometry prompts us to view a family $(X_k)_{k>0}$ of finite connected graphs as a single metric space. This is achieved by the {\it coarse disjoint union}: on the disjoint union $\coprod_{k>0} X_k$, consider a metric $d$ such that the restriction of $d$ to each component $X_k$ is the graph metric, and $d(X_k,X_\ell)\geq diam(X_k)+diam(X_\ell)$ for $k\neq\ell$. Such a metric is unique up to coarse equivalence. 

A favorite source of examples comes from {\it box spaces}, that we now define. Let $\Gamma$ be a finitely generated, residually finite group, and let $(N_k)_{k>0}$ be a filtration in the sense of Definition \ref{resfin}. If $S$ is a finite, symmetric, generating set of $\Gamma$, we may form the Cayley graph $Cay(\Gamma/N_k,S)$, as in Theorem \ref{Margulisexp}. 

\begin{Def}\label{box} The coarse disjoint union $\coprod_{k>0} Cay(\Gamma/N_k,S)$ is the box space of $\Gamma$ associated with the filtration $(N_k)_{k>0}$. 
\end{Def}

It is clear that, up to coarse equivalence, it does not depend on the finite generating set $S$, so we simple write $\coprod_{k>0} \Gamma/N_k$. By Theorem \ref{Margulisexp}, any box space of a residually finite group with property (T) is an expander. More generally, it is a result by Lubotzky and Zimmer \cite{LuZi} that $\coprod_{k>0} Cay(\Gamma/N_k,S)$ is a family of expanders if and only if $\Gamma$ has property $(\tau)$ with respect to the filtration $(N_k)_{k>0}$, in the sense of Definition \ref{tau}.

For future reference (see subsection \ref{superexp}), we give one more characterization of expanders:

\begin{Prop}\label{expandersPoincare} Let $(X_k)_{k>0}$ be a sequence of finite, connected, $d$-regular graphs with $\lim_{k\rightarrow\infty}|V_k|=+\infty$. The family $(X_k)_{k>0}$ is a family of expanders if and only if there exists $C>0$ such that, for every map $f$ from $\coprod_{k>0}X_k$ to a Hilbert space $\HH$, the following Poincar\'e inequality holds for every $k>0$:
\begin{equation}\label{Poincare}
\frac{1}{|V_k|^2}\sum_{x,y\in V_k}\|f(x)-f(y)\|^2\leq \frac{C}{|V_k|}\sum_{x\sim y}\|f(x)-f(y)\|^2.
\end{equation}
\end{Prop}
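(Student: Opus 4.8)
The plan is to reduce the Poincaré inequality to the variational characterization of the spectral gap $\lambda_1$, so that the uniform constant $C$ in (\ref{Poincare}) and the uniform lower bound $\varepsilon$ on the $\lambda_1(X_k)$ turn out to be reciprocal to one another. First I would record two elementary identities valid on any finite $d$-regular graph $X=(V,E)$ with $n=|V|$ vertices, for an arbitrary map $f:V\to\HH$. The first is the Dirichlet-form identity: writing $\sum_{x\sim y}$ for the sum over ordered adjacent pairs, expanding the square and using $d$-regularity together with the symmetry of the adjacency relation gives $\sum_{x\sim y}\|f(x)-f(y)\|^2=2\langle\Delta f,f\rangle$. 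The second is the variance identity: setting $\bar f=\frac{1}{n}\sum_{x\in V}f(x)$, a direct expansion yields
\[
\frac{1}{n^2}\sum_{x,y\in V}\|f(x)-f(y)\|^2=\frac{2}{n}\sum_{x\in V}\|f(x)-\bar f\|^2.
\]
Both identities hold verbatim for $\HH$-valued $f$: since $V$ is finite, $f$ takes values in a finite-dimensional subspace of $\HH$, so expanding in an orthonormal basis of that subspace reduces everything to the scalar case.

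Next I would invoke the variational characterization of the first nonzero eigenvalue of $\Delta$: for scalar $g$ with $\sum_x g(x)=0$ one has $\lambda_1\sum_x|g(x)|^2\leq\langle\Delta g,g\rangle$, and by the same basis-expansion argument this survives for $\HH$-valued $g$ orthogonal to the constants. Applying it to $g=f-\bar f$, which satisfies $\sum_x g(x)=0$ and $\|g(x)-g(y)\|=\|f(x)-f(y)\|$, and substituting the two identities above, I obtain
\[
\frac{1}{n^2}\sum_{x,y}\|f(x)-f(y)\|^2\leq\frac{1}{\lambda_1(X)\,n}\sum_{x\sim y}\|f(x)-f(y)\|^2.
\]
Thus if $(X_k)$ is a family of expanders, i.e. $\lambda_1(X_k)\geq\varepsilon$ for all $k$, then (\ref{Poincare}) holds with the uniform constant $C=1/\varepsilon$. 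This settles the forward implication.

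For the converse I would assume (\ref{Poincare}) holds with a uniform $C$, and test it, for each $k$, against a real-valued eigenfunction $f$ of $\Delta$ on $X_k$ associated with $\lambda_1(X_k)$; such an $f$ is orthogonal to the constants, hence $\bar f=0$. The variance identity makes the left-hand side of (\ref{Poincare}) equal to $\frac{2}{n}\sum_x|f(x)|^2$, while the Dirichlet-form identity makes the right-hand side equal to $\frac{2C}{n}\lambda_1(X_k)\sum_x|f(x)|^2$. Cancelling the common nonzero factor gives $1\leq C\,\lambda_1(X_k)$, i.e. $\lambda_1(X_k)\geq 1/C$ for every $k$, so the spectral gap is uniformly bounded below and $(X_k)$ is a family of expanders.

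All the computations are routine; the two points deserving care—and where I would be most careful—are the bookkeeping of the combinatorial factors of $2$ (ordered pairs versus edges, which only affects the constant and is therefore harmless for the statement as given), and the passage of the scalar spectral gap to $\HH$-valued maps. The latter is the single genuine step beyond classical algebraic graph theory, but it is immediate once one notes that finiteness of $V$ confines $f$ to a finite-dimensional subspace and that $\Delta\otimes\mathrm{Id}_{\HH}$ has the same gap on the orthogonal complement of the constants.
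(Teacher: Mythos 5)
Your proof is correct and follows essentially the same route as the paper's: both rest on the Rayleigh-quotient characterization of $\lambda_1$ (equivalently, comparing the two quadratic forms $\phi\mapsto\frac{1}{|V|^2}\sum_{x,y}|\phi(x)-\phi(y)|^2$ and $\phi\mapsto\frac{1}{|V|}\sum_{x\sim y}|\phi(x)-\phi(y)|^2$, whose optimal comparison constant is $1/\lambda_1$), and both pass from scalar to Hilbert-space-valued maps by expanding in an orthonormal basis. You have merely written out explicitly (variance and Dirichlet-form identities, eigenfunction test for optimality) what the paper compresses into its first step and leaves as an exercise.
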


\begin{proof}[Proof:] 
\begin{enumerate}
\item[1)] Let $X=(V,E)$ be a finite connected graph. We first re-interpret the first non-zero eigenvalue $\lambda_1$ of $\Delta$. Consider two quadratic forms on $\ell^2(V)$, both with kernel the constant functions: $\phi\mapsto \frac{1}{|V|^2}\sum_{x,y\in V}|\phi(x)-\phi(y)|^2$ and $\phi\mapsto\frac{1}{|V|}\sum_{x\sim y}|\phi(x)-\phi(y)|^2$. Then $\frac{1}{\lambda_1}$ is the smallest constant $K>0$ such that\footnote{The re-interpretation goes as follows: fix an auxiliary orientation on the edges of $E$, allowing one to define the {\it coboundary operator} $d:\ell^2(V)\rightarrow\ell^2(E):\phi\mapsto d\phi$, where $d\phi(e)=\phi(e^+)-\phi(e^-)$. Observe that $\Delta=d^*d$, so that $\langle\Delta\phi,\phi\rangle=\|d\phi\|^2=\frac{1}{2}\sum_{x\sim y}|\phi(x)-\phi(y)|^2$. By the Rayleigh quotient, $\frac{1}{\lambda_1}$ is the smallest constant $K>0$ such that $\|\phi\|^2\leq K\|d\phi\|^2$ for every $\phi\perp 1$. We leave the rest as an exercise.}
$$\frac{1}{|V|^2}\sum_{x,y\in V}|\phi(x)-\phi(y)|^2\leq \frac{K}{|V|}\sum_{x\sim y}|\phi(x)-\phi(y)|^2$$
for all $\phi\in\ell^2(V)$. 

\item[2)] By the first step, the sequence $(X_k)_{k>0}$ is an expander if and only if there exists a constant $C$ such that, for every function $\phi$ on $\coprod_{k>0} X_k$, we have:
$$\frac{1}{|V_k|^2}\sum_{x,y\in V_k}|\phi(x)-\phi(y)|^2\leq \frac{C}{|V_k|}\sum_{x\sim y}|\phi(x)-\phi(y)|^2.$$

\item[3)] Taking a map $f:\coprod_{k>0}X_k\rightarrow\HH$ and expanding in some orthonormal basis of $\HH$, we immediately deduce inequality (\ref{Poincare}) from the 2nd step.
\end{enumerate}
\end{proof}

\subsection{Overview of CBC}

\subsubsection{Positive results}\label{positiveCBC}

The CBC was formulated by J. Roe in 1993, see \cite{Roe93}. 

\begin{itemize}

\item G. Yu 2000: if a discrete metric space with bounded geometry that admits a coarse embedding into Hilbert space, then CBC holds for $X$, see \cite{YuA};
\item G. Kasparov and G. Yu 2006: if $X$ is a discrete metric space with bounded geometry that coarsely embeds into a super-reflexive Banach space, then the coarse Novikov conjecture (i.e. the injectivity of $\mu_X$) holds for $X$, see \cite{KaYu}.
\end{itemize}

\subsubsection{Negative results}

\begin{itemize}
\item G.Yu 1998: the coarse assembly map is not injective for the coarse disjoint union $\coprod_{n>0} n\cdot S^{2n}$, where $n\cdot S^{2n}$ denotes the sphere of radius $n$ in $(2n+1)$-Euclidean space, with induced metric, see \cite{Yu98}.
\item R. Willett and G. Yu 2012: the coarse assembly map is not surjective for expanders with large girth, see \cite{WiYu1}.
\item N. Higson, V. Lafforgue and G. Skandalis 2001: the coarse assembly map is not surjective for box spaces of residually finite groups $\Gamma$ which happen to be expanders, when $\Gamma$ moreover satisfies injectivity of the assembly map with coefficients, see \cite{HLS}. 

\end{itemize}

Let us describe those counter-examples of Higson-Lafforgue-Skandalis \cite{HLS} more precisely. We first observe (building on lemma \ref{notonto}) that any family of expanders provide a counter-example either to injectivity or to surjectivity of the Baum-Connes assembly map for suitable associated groupoids. To see this, let $(X_k)_{k>0}$ be a family of $d$-regular expanders, and let $X=\coprod_{k>0} X_k$ be their coarse disjoint union. Let $\mathcal{G}(X)$ be the groupoid associated to $X$, as in section \ref{StatementCBC}. Let $F=\beta(X)\backslash X$ be a saturated closed subset in the space of objects, and $U=X$ its complement. 

\begin{Prop}\label{inj/surj} Let $X$ be the coarse disjoint union of a family of $d$-regular expanders. Let $\mathcal{G}(X)$ be the associated groupoid, set $F=\beta(X)\backslash X$. Either the assembly map is not injective for the groupoid $\mathcal{G}(X)_F$ or the coarse assembly map is not surjective for the space $X$. The same holds true for the assembly map with coefficients in $\ell^\infty(X,\mathcal{K})$.
\end{Prop}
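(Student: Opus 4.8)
The plan is to exploit the dichotomy between the reduced and maximal crossed product sequences, exactly as in Lemma \ref{notonto}, but now applied to the coarse groupoid $\mathcal{G}(X)$ together with the saturated decomposition $\Group^0=\beta(X)$, $F=\beta(X)\backslash X$, $U=X$. First I would recall that, by the identification of section \ref{coarsegroupoid}, the Roe algebra satisfies $C^*(X)\cong C^*_r(\mathcal{G}(X),\ell^\infty(X,\mathcal{K}))$, and the coarse assembly map $\mu_X$ is identified with the Baum-Connes assembly map $\mu_r$ for $\mathcal{G}(X)$ with coefficients in $\ell^\infty(X,\mathcal{K})$. Similarly the restricted groupoids $\mathcal{G}(X)_U$ and $\mathcal{G}(X)_F$ give rise to the ideal $C^*_r(\mathcal{G}(X)_U)$ and the quotient $C^*_r(\mathcal{G}(X)_F)$, fitting into the (generally non-exact) reduced sequence displayed before Lemma \ref{notonto}.

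The key step is to locate a $K$-theory class in the middle term $K_0(C^*_r(\mathcal{G}(X)))$ that detects the failure of exactness, and the expander hypothesis is precisely what supplies it. Because the graphs $X_k$ form a family of expanders, the combinatorial Laplacians have spectral gap, so the spectral projections onto the kernels of the Laplacians (the constant functions on each $X_k$) assemble into a genuine projection $p$ living in $C^*_r(\mathcal{G}(X))$ — this is the analogue of the Kazhdan projection $\alpha(e_\pi)$ produced in the proof of Proposition \ref{counter}. One then argues, just as there, that $p$ maps to zero in $K_0(C^*_r(\mathcal{G}(X)_F))$ (its image involves constants, which disappear in the boundary quotient), while $p$ cannot lie in the image of $K_0(C^*_r(\mathcal{G}(X)_U))=\bigoplus_{k>0}K_0(C^*(X_k))$, since under the component evaluation maps $p$ is nonzero in \emph{every} summand $k$, whereas any class coming from the algebraic direct sum is nonzero in only finitely many. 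Hence the sequence \ref{notexact} fails to be exact at the middle term for $\mathcal{G}(X)$ with these coefficients.

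With non-exactness in hand, I would invoke Lemma \ref{notonto} in its contrapositive form. The lemma tells us that if the sequence is not exact in the middle, then \emph{either} the assembly map for $\mathcal{G}(X)_F$ is not injective, \emph{or} the assembly map for $\mathcal{G}(X)$ is not surjective. Translating back through the coarse/groupoid dictionary, the latter alternative is exactly the failure of surjectivity of the coarse assembly map $\mu_X$ for the space $X$. This yields the stated dichotomy: either injectivity fails for $\mathcal{G}(X)_F$ or surjectivity of the coarse assembly map fails for $X$. The same verbatim argument runs with general coefficients in $\ell^\infty(X,\mathcal{K})$, since every object in the diagram — ideal, total algebra, quotient, and the three assembly maps — was already set up with these coefficients via the Skandalis--Tu--Yu identification.

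The main obstacle I anticipate is constructing the projection $p$ in the \emph{reduced} algebra $C^*_r(\mathcal{G}(X))$ rigorously and checking its two nonvanishing/vanishing properties under the correct maps. Concretely, one must verify that the fibrewise kernel projections of the Laplacians genuinely define an element of the Roe algebra (finite propagation plus the spectral-gap uniformity of the expander is what makes the norm-limit an honest projection rather than merely an element of the quotient), and that the component maps $K_0(C^*_r(\mathcal{G}(X)_U))\to K_0(C^*(X_k))$ separate $[p]$ from the image of the ideal. The spectral-gap estimate $\lambda_1(X_k)\geq\varepsilon$ is used exactly to guarantee that the spectral projection onto $[0,\varepsilon/2)$ is continuous across the whole family, so that the gap is what converts the "almost invariant"/non-exactness phenomenon into a genuine $K$-theory obstruction. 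Everything else is diagram chasing identical to Lemma \ref{notonto} and bookkeeping through the groupoid identification of section \ref{coarsegroupoid}.
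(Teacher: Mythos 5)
Your overall skeleton matches the paper's proof: reduce to Lemma \ref{notonto}, build a spectral projection $p_X$ from the spectral gap of the block Laplacian $\Delta_X=\oplus_k(\Delta_k\otimes e)$, check it dies in $C^*_r(\mathcal{G}(X)_F)$, and show its class is not in the image of the ideal. The first half of this is fine. The gap is in the second half. You identify $K_0(C^*_r(\mathcal{G}(X)_U))$ with $\bigoplus_{k>0}K_0(C^*(X_k))$, as though $\mathcal{G}(X)_U$ were the disjoint union of the pair groupoids $X_k\times X_k$. It is not: while $U=X$ is saturated, the individual blocks $X_k$ are not, because in a coarse disjoint union every pair $(x,y)\in X\times X$ lies at finite distance, hence lies in an entourage and is therefore a morphism of $\mathcal{G}(X)$ --- including pairs joining different blocks. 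Thus $\mathcal{G}(X)_U$ is the pair groupoid on all of $X$, and $C^*_r(\mathcal{G}(X)_U)$ is just $\mathcal{K}(\ell^2(X))$ (the compacts on $\ell^2(X)\otimes\ell^2(\N)$ in the with-coefficients version), whose $K_0$ is a single copy of $\Z$, not $\Z^{(\N)}$. Consequently your ``component evaluation maps'' do not exist at the level of K-theory: the compression $T\mapsto T_k=\chi_{X_k}T\chi_{X_k}$ is not a $*$-homomorphism on $C^*_r(\mathcal{G}(X))$, so one cannot ``evaluate'' a K-theory class in every summand, and the finitely-many-versus-all counting argument collapses as stated. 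This is precisely where the coarse situation differs from Proposition \ref{counter}: there each singleton $\{k\}\subset\overline{\N}$ is a saturated clopen subset of the unit space, so the evaluations $\lambda_{\Gamma_\infty/N_k}$ are honest homomorphisms and the ideal genuinely is a direct sum; your proposal implicitly imports that structure into a setting where it fails.

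The missing idea --- the technical heart of the paper's argument --- is to use finite propagation to repair multiplicativity asymptotically. Since the blocks of a coarse disjoint union are farther and farther apart, any two finite propagation operators $S,T$ satisfy $(ST)_k=S_kT_k$ for all sufficiently large $k$; hence $T\mapsto (T_k)_{k>0}$, taken modulo the $c_0$-sum, defines a genuine $*$-homomorphism from $C^*(X)$ into $\bigl(\prod_{k>0}\mathcal{K}(\ell^2(X_k))\otimes\mathcal{K}\bigr)/\bigl(\oplus_{k>0}\mathcal{K}(\ell^2(X_k))\otimes\mathcal{K}\bigr)$. This homomorphism kills the compacts (a compact operator compresses to a norm-null sequence of blocks), so it factors through the quotient by the ideal. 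Only now can one run your counting argument, inside $K_0$ of this quotient: $p_X$ lifts to the block-diagonal projection $(p_k\otimes(1-e))_{k>0}$ in the product, the factor projections send its class to $(1,1,1,\dots)\in\Z^{\N}$, whereas $K_0$ of the $c_0$-sum lands in $\Z^{(\N)}$; hence the image of $[p_X]$ in the quotient is nonzero, and by half-exactness of K-theory $[p_X]$ is not in the image of $K_0$ of the ideal. With this repair your outline, including the verbatim extension to coefficients in $\ell^\infty(X,\mathcal{K})$, goes through as in the paper.
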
 

\begin{proof}[Sketch of proof] In view of lemma \ref{notonto}, we must check that
$$K_0(C^*_r(\mathcal{G}(X)_U))\rightarrow K_0(C^*_r(\mathcal{G}(X)))\rightarrow K_0(C^*_r(\mathcal{G}(X)_F))$$
is NOT exact in the middle term. Set $\HH_X=\ell^2(X)\otimes\ell^2(\N)$, fix some rank 1 projection $e\in\mathcal{K}(\ell^2(\N))$ on some unit vector $\xi$, let $\Delta_k$ denote the combinatorial Laplacian on $X_k$, and set $\Delta_X=\oplus_{k>0}(\Delta_k\otimes e)$. Then $\Delta_X$ a locally compact operator with finite propagation on $\HH_X$, as such it defines an element of the Roe algebra $C^*(X)$. The fact that $(X_k)_{k>0}$ is a family of expanders exactly means that 0 is isolated in the spectrum of $\Delta_X$. By functional calculus, the spectral projector $p_X$ associated with $\{0\}$ is also in $C^*(X)$. Now the kernel of $\Delta_k$ on $\ell^2(X_k)$ is spanned by $u_k$, with $u_k=(1,1,...,1)$, so the restriction of $p_X$ to $\ell^2(X_k)\otimes\ell^2(\N)$ is $p_k\otimes(1-e)$, where $p_k$ is the $|V_k|\times |V_k|$-matrix with all entries equal to $\frac{1}{|V_k|}$. In particular entries $(p_X)_{x,y}$ of $p_X$, go to 0 when $d(x,y)\rightarrow\infty$, so $p_X$ is in the kernel of the map $C^*_r(\mathcal{G}(X)))\rightarrow C^*_r(\mathcal{G}(X)_F)$.

It remains to show that the class $[p_X]$ in $K_0(C^*_r(\mathcal{G}(X)))$ does not lie in the image of $K_0(C^*_r(\mathcal{G}(X)_U))$. To see this, first observe that $\mathcal{G}(X)_U$ is the groupoid with space of objects $X$ and exactly one morphism between every two objects. So $C^*_r(\mathcal{G}(X)_U)$ is nothing but $\mathcal{K}(\ell^2(X))$. To proceed, for an operator $T$ with finite propagation on $X$, denote by $T_k$ the restriction of $T$ to $X_k\times X_k$. If $S,T$ are operators with finite propagation then, for $k$ large enough, we have $(ST)_k=S_kT_k$: the reason is that, given $R>0$, for $k\gg 0$ an $R$-neighborhood in $X$ coincides with an $R$-neighborhood in $X_k$, as the $X_k$'s are further and further apart. As a consequence, there exists a homomorphism 
$$C^*(X)\rightarrow (\prod_{k>0} \mathcal{K}(\ell^2(X_k))\otimes\mathcal{K})/(\oplus_{k>0}\mathcal{K}(\ell^2(X_k))\otimes\mathcal{K}),$$
that factors through $C^*(X)/\mathcal{K}(\ell^2(X))$. To conclude, it is enough to show that the image of $[p_X]$ is non-zero in $K_0\big((\prod_{k>0} \mathcal{K}(\ell^2(X_k))\otimes\mathcal{K})/(\oplus_{k>0}\mathcal{K}(\ell^2(X_k))\otimes\mathcal{K})\big)$. For this observe that $p_X$ lifts to a projector $\tilde{p}_X\in \prod_{k>0} \mathcal{K}(\ell^2(X_k))\otimes\mathcal{K}$, and that projections on all factors define a homomorphism $$K_0(\prod_{k>0} \mathcal{K}(\ell^2(X_k))\otimes\mathcal{K})\rightarrow\Z^\N$$ that maps $[\tilde{p}_X]$ to $(1,1,1,...)\in\Z^\N$. Since that homomorphism also maps the group $K_0(\oplus_{k>0} \mathcal{K}(\ell^2(X_k))\otimes\mathcal{K})$ to $\Z^{(\N)}$, we have shown that $[\tilde{p}_X]$ is not in the image of $$K_0(\oplus_{k>0} \mathcal{K}(\ell^2(X_k))\otimes\mathcal{K})\rightarrow K_0(\prod_{k>0} \mathcal{K}(\ell^2(X_k))\otimes\mathcal{K}),$$ so $[p_X]\neq 0$ in $K_0((\prod_{k>0} \mathcal{K}(\ell^2(X_k))\otimes\mathcal{K})/(\oplus_{k>0}\mathcal{K}(\ell^2(X_k))\otimes\mathcal{K}))$. 
\end{proof}

By carefully choosing the family of expanders, we get actual counter-examples to surjectivity in the CBC. For this we need a group $\Gamma$ exactly as in Proposition \ref{counter} (with explicit examples provided by Example \ref{T+tau}), and a box space in the sense of Definition \ref{box}.

\begin{Thm} Let $\Gamma$ be an infinite, discrete subgroup of $SL_n(\R)$, endowed with a filtration $(N_k)_{k>0}$ such that $\Gamma$ has property $(\tau)$ with respect to it. Then the coarse assembly map for the box space $X$ associated with this filtration, is not surjective.
\end{Thm}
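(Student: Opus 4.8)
The plan is to combine the general groupoid counter-example machinery (Lemma \ref{notonto}) with the box-space realization of a family of expanders as a groupoid $\mathcal{G}(X)$, and then to transport the surjectivity obstruction from the groupoid world to the coarse world via the identification described in section \ref{coarsegroupoid}. The essential point is that the final theorem is really just Proposition \ref{counter} rephrased at the level of the coarse assembly map: the groupoid $\Group$ built there from the filtration $(N_k)_{k>0}$ of $\Gamma=\Gamma_\infty$ is closely related to the groupoid $\mathcal{G}(X)$ attached to the box space $X=\coprod_{k>0}\Gamma/N_k$, and the non-surjectivity detected in $K_0(C^*_r(\Group))$ is exactly the phenomenon detected in $K_0(C^*_r(\mathcal{G}(X)))$ by the Kazhdan/spectral projection.

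First I would recall that, by the result of Skandalis--Tu--Yu quoted in section \ref{coarsegroupoid}, the coarse assembly map for $X$ is equivalent to the Baum--Connes assembly map for the groupoid $\mathcal{G}(X)$ with coefficients in $\ell^\infty(X,\mathcal{K})$; hence it suffices to produce a surjectivity obstruction for $\mathcal{G}(X)$. Then I would invoke Proposition \ref{inj/surj}: since $\Gamma$ has property $(\tau)$ with respect to $(N_k)_{k>0}$, the box space $X$ is a family of $d$-regular expanders (by the Lubotzky--Zimmer characterization recalled after Definition \ref{box}, or directly by Theorem \ref{Margulisexp} in the property (T) case), so Proposition \ref{inj/surj} applies and tells us that either the assembly map for $\mathcal{G}(X)_F$ fails to be injective, or the coarse assembly map for $X$ fails to be surjective. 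The whole argument therefore reduces to \emph{ruling out} the first alternative, i.e. to establishing injectivity of the assembly map for the boundary groupoid $\mathcal{G}(X)_F$, where $F=\beta(X)\setminus X$.

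The key step is thus the injectivity for $\mathcal{G}(X)_F$. Here I would exploit the hypothesis that $\Gamma$ is a discrete subgroup of $SL_n(\R)$. The fibers of $\mathcal{G}(X)_F$ over points of the corona $F$ are, up to the equivalence packaged in Proposition \ref{counter}, copies of the limit group $\Gamma=\Gamma_\infty$ itself (this is precisely why in the construction before Proposition \ref{counter} one sets $\Group_F=\Gamma_\infty$). Since $\Gamma$ is a closed discrete subgroup of a connected Lie group, Kasparov's theorem (the injectivity of $\mu_r$ with arbitrary coefficients for closed subgroups of connected Lie groups, as recalled in section \ref{known} and used in the proof of Proposition \ref{counter}) guarantees that the assembly map is split injective for $\mathcal{G}(X)_F$ and its relevant coefficient algebra. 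This kills the injectivity-failure alternative in Proposition \ref{inj/surj}, and forces the surjectivity of $\mu_X$ to fail. To make this rigorous one must verify that the boundary groupoid $\mathcal{G}(X)_F$ genuinely has the $\gamma$-element inherited from $\Gamma$, so that Theorem \ref{Tugamma} delivers injectivity with coefficients in $\ell^\infty(X,\mathcal{K})$; this is the technical heart and the step most likely to require care.

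The main obstacle, as I see it, is precisely controlling the boundary groupoid $\mathcal{G}(X)_F$ and confirming that injectivity of its assembly map (with the $\ell^\infty(X,\mathcal{K})$ coefficients dictated by the Skandalis--Tu--Yu picture) really follows from Kasparov's injectivity result for the single group $\Gamma$. The fibers over the corona are non-canonical ultralimits of the finite quotients, and one must argue that the $\gamma$-element of $\Gamma$ can be assembled into a $\gamma$-element for the whole field, uniformly over $F$, so that Theorem \ref{Tugamma} applies. Once this uniform injectivity is in hand, the conclusion is immediate: Proposition \ref{inj/surj} leaves only the non-surjectivity of $\mu_X$, and the explicit non-trivial class is the image of the spectral projection $[p_X]$ of the Laplacian $\Delta_X$ constructed in the proof of Proposition \ref{inj/surj}, whose non-vanishing in $K_0\big((\prod_{k>0}\mathcal{K}\otimes\mathcal{K})/(\oplus_{k>0}\mathcal{K}\otimes\mathcal{K})\big)$ was already established there. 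Concretely, $[p_X]$ lies in the kernel of $K_0(C^*_r(\mathcal{G}(X)))\to K_0(C^*_r(\mathcal{G}(X)_F))$ but not in the image of $K_0(C^*_r(\mathcal{G}(X)_U))$, exactly as required by Lemma \ref{notonto}.
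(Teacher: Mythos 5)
Your overall skeleton coincides with the paper's: use the Skandalis--Tu--Yu identification of the coarse assembly map for $X$ with the Baum--Connes map for $\mathcal{G}(X)$ with coefficients in $\ell^\infty(X,\mathcal{K})$, note that property $(\tau)$ makes $X$ an expander so that Proposition \ref{inj/surj} applies, and then rule out the injectivity-failure alternative using the hypothesis $\Gamma\subset SL_n(\R)$. However, there is a genuine gap at exactly the step you yourself flag as the ``technical heart'': you never identify the boundary groupoid $\mathcal{G}(X)_F$, and the route you propose --- assembling the $\gamma$-element of $\Gamma$ ``uniformly over $F$'' across what you call non-canonical ultralimit fibers, so that Theorem \ref{Tugamma} applies --- is neither necessary nor carried out. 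The fact that makes the argument work is structural and much simpler: \emph{because $X$ is a box space}, the restriction of the coarse groupoid to the corona is canonically isomorphic to the transformation groupoid $(\beta X\setminus X)\rtimes\Gamma$; partial translations by a fixed element of $\Gamma$, defined on $\Gamma/N_k$ for all large $k$ (using that the filtration has trivial intersection), extend to an honest action of $\Gamma$ on the corona. Once this identification is in place there is no uniformity issue at all: the assembly map for a transformation groupoid $Z\rtimes\Gamma$ with coefficients in a $(Z\rtimes\Gamma)$-algebra is the assembly map for the group $\Gamma$ with those coefficients viewed as a $\Gamma$-$C^*$-algebra, and Kasparov's injectivity of $\mu_{A,r}$ for \emph{arbitrary} coefficients $A$ (valid for any discrete subgroup of a connected Lie group) applies verbatim.

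A second, related confusion feeds this gap: you justify ``the fibers of $\mathcal{G}(X)_F$ are copies of $\Gamma$'' by pointing to the construction preceding Proposition \ref{counter}, where $\Group_F=\Gamma_\infty$. But that groupoid is a different object: it is the bundle of groups over $\overline{\N}$ with fiber $\Gamma_k$ over $k\in\N$ and $\Gamma_\infty$ over $\infty$, whereas $\mathcal{G}(X)$ has unit space $\beta X$ and its restriction to $F=\beta X\setminus X$ is the action groupoid above, not a single copy of $\Gamma$. Conflating the two constructions is what leads you to worry about ultralimits and about gluing $\gamma$-elements over the corona. With the correct identification of $\mathcal{G}(X)_F$ as $(\beta X\setminus X)\rtimes\Gamma$, the remainder of your argument --- Proposition \ref{inj/surj}, Lemma \ref{notonto}, and the spectral projection $[p_X]$ of the Laplacian as the explicit witness of non-surjectivity --- goes through exactly as in the paper.
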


\begin{proof}[Proof] Because of property $(\tau)$, the space $X$ is the coarse disjoint union of a family of expanders, and Proposition \ref{inj/surj} will apply. Since by \cite{Skandalis-Tu-Yu} the coarse assemply map for $X$ is the Baum-Connes assembly map for the groupoid $\mathcal{G}(X)$ with coefficients in $\ell^\infty(X,\mathcal{K})$, by lemma \ref{notonto} it is enough to check that the assembly map for the groupoid $\mathcal{G}(X)_F$ is injective with coefficients in $\ell^\infty(X,\mathcal{K})$. Now, because $X$ is a box space, $\mathcal{G}(X)_F$ identifies with the semi-direct product groupoid $(\beta(X)\backslash X)\rtimes\Gamma$. Since $\Gamma$ is a discrete subgroup of $SL_n(\R)$, the assembly map $\mu_{A,r}$ is injective for any coefficient $C^*$-algebra $A$: this proves the desired injectivity, so the coarse assembly map for $X$ is not surjective by Proposition \ref{inj/surj}.
\end{proof}

\subsection{Warped cones}\label{warped}

Warped cones were introduced by J. Roe in 2005, see \cite{Roe05}; he had the intuition that they might lead to counter-examples to CBC. Let $(Y,d_Y)$ be a compact metric space. Let $\Gamma$ be a finitely generated group, with a fixed finite generating set $S$. Assume that $\Gamma$ acts on $Y$ by Lipschitz homeomorphisms, not necessarily preserving $d_Y$. The {\it warped metric} $d_S$ on $Y$ is the largest metric $d_S \leq d_Y$ such that, for every $x\in Y, s\in S$, $d_S(sx,x)\leq 1$. It is given by 
$$d_S(x,y)=\inf\{n+\sum_{i=0}^n d_Y(x_i,y_i): x_0=x, y_n=y, x_i=s_i(y_{i-1}), s_i\in S\cup S^{-1}, n\in\N\}.$$
Intuitively, we modify the metric $d_Y$ by introducing ``group shortcuts'', as two points $x,\gamma x$ will end end at distance $d_S(x,\gamma x)\leq |\gamma|_S$, where $|.|_S$ denotes word length on $\Gamma$.

Form the ``cone'' $Y\times ]1,+\infty[$, with the distance $d$ given by:
$$ d_{Cone}((y_1,t_1),(y_2,t_2))=:|t_1-t_2| + \min\{t_1,t_2\}\cdot d_Y(y_1,y_2).$$
Let $\Gamma$ act trivially on the second factor. The {\it warped cone} $\mathcal{O}_\Gamma Y$ is the cone $Y\times ]1,+\infty[$, with the warped metric obtained from $d_{Cone}$. To get an intuition of what the warped metric does on the level sets $Y \times \{t\}$: assume for a while that $Y$ is a closed Riemannian manifold, fix a $\frac{1}{t}$-net on $Y$, and consider the Voronoi tiling of $Y$ associated to this net (if $y$ is a point in the net, the tile around $y$ is the set of points of $Y$ closer to $y$ that to any other point in the net). Define a graph $X_t$ whose vertices are closed Voronoi tiles, and two tiles $T_1,T_2$ are adjacent if there exists $s\in S\cup S^{-1}\cup\{1\}$ such that $s(\overline{T_1})\cap\overline{T_2}\neq\emptyset$. Then the family of level sets $(Y\times\{t\})_{t>1}$ is uniformly quasi-isometric to the family of graphs $(X_t)_{t>1}$ (i.e. the quasi-isometry constants do not depend on $t$).

In 2015, C. Dru\c tu and P. Nowak \cite{DrNo} made Roe's intuition more precise with the following conjecture. Assume that, on top of the above assumptions, $Y$ carries a $\Gamma$-invariant probability measure $\nu$ such that the action $\Gamma\curvearrowright (Y,\nu)$ is ergodic. Assume that the measure $\nu$ is adapted to the metric $d_Y$ in the sense that $\lim_{r\rightarrow 0} \sup_{y\in Y}\nu(B(y,r))=0$. 

\begin{Conj}\label{DruNow} If the action of $\Gamma$ on $Y$ has a spectral gap (i.e the $\Gamma$-representation on $L^2_0(Y,\
nu)$ does not have almost invariant vectors), then $\mathcal{O}_\Gamma Y$ violates CBC.
\end{Conj}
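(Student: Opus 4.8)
My plan is to follow the template of the Higson--Lafforgue--Skandalis surjectivity counterexamples, transplanting the \emph{Kazhdan projection} argument from families of expanders (Proposition \ref{inj/surj}) to the warped-cone setting, with the spectral gap of the action playing the role of the spectral gap of the combinatorial Laplacian. First I would set $X=\mathcal{O}_\Gamma Y$ and invoke Skandalis--Tu--Yu (section \ref{coarsegroupoid}) to replace the coarse assembly map for $X$ by the Baum--Connes assembly map for the coarse groupoid $\Group(X)$ with coefficients in $\ell^\infty(X,\mathcal{K})$. I would then decompose the unit space $\beta X = U \sqcup F$ with $U=X$ and $F=\beta X\setminus X$ saturated, aiming to apply Lemma \ref{notonto}: it suffices to exhibit a K-theory class in $K_0(C^*_r(\Group(X)))$ that dies in $K_0(C^*_r(\Group(X)_F))$ but is not pulled back from $K_0(C^*_r(\Group(X)_U))$, together with injectivity of the boundary assembly map.

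The crucial object is a projection $p_X$ in the Roe algebra $C^*(X)$. The warped metric makes each shortcut $y\mapsto sy$ ($s\in S$) have propagation $\leq 1$, so the $\Gamma$-averaging (Markov) operator $M=\frac{1}{|S|}\sum_{s\in S}s$ acting on $\HH_X = L^2(X)\otimes\ell^2(\N)$ is a finite-propagation operator, and $I-M$ is the analog of the combinatorial Laplacian. The spectral gap hypothesis on $L^2_0(Y,\nu)$ says precisely that, uniformly over the level sets $Y\times\{t\}$, the value $1$ is \emph{isolated} in the spectrum of $M$; by functional calculus the associated spectral projector $p_X$ lies in $C^*(X)$ and, on each level, projects onto the almost-invariant vectors. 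This is the continuous analog of the rank-one-per-component projector $p_k\otimes(1-e)$ appearing in the proof of Proposition \ref{inj/surj}. I would then show $[p_X]$ is killed over the boundary: ergodicity of $\Gamma\curvearrowright(Y,\nu)$ guarantees $L^2_0(Y,\nu)$ has no genuine $\Gamma$-invariant vector, so that in the fibered representations indexed by points of $F$ the image of $p_X$ collapses, exactly as the regular representation of an infinite group kills the Kazhdan projection in Proposition \ref{counter}.

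To close the argument I would verify the two hypotheses of Lemma \ref{notonto}. For \emph{non-triviality} of $[p_X]$, one expects $C^*_r(\Group(X)_U)\cong\mathcal{K}(L^2(X))$ (the pair groupoid on $X$), so that a homomorphism ``restriction to levels, modulo vanishing at infinity'' detects $[p_X]$ via a $\Z^{\N}$-valued invariant that is nonzero on the product but vanishes on the direct sum, showing $[p_X]$ is not in the image from $U$. For the \emph{injectivity} of the boundary assembly map, I would choose $\Gamma$ exactly as in the explicit Examples \ref{T+tau} (e.g. a discrete subgroup of $SL_n(\R)$), so that the boundary groupoid $\Group(X)_F$ is a transformation groupoid of the form $(\beta X\setminus X)\rtimes\Gamma$, for which $\mu_{A,r}$ is injective with arbitrary coefficients by Kasparov's theorem (section \ref{known}); note this is compatible with the spectral gap, which forces property-(T)-type behaviour on $\Gamma$.

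The hardest step, I expect, is the passage from the \emph{measure}-theoretic spectral gap on $L^2_0(Y,\nu)$ to a \emph{uniform combinatorial} spectral gap on the discretized level graphs $X_t$, and the replacement of the expander ``components are infinitely far apart'' mechanism. In Proposition \ref{inj/surj} the asymptotic multiplicativity $(ST)_k=S_kT_k$ was essential and relied on the discrete pieces $X_k$ being arbitrarily separated; for a warped cone the scale parameter $t$ is continuous, so one must instead control how finite-propagation products behave as $t\to\infty$ and show the level graphs $X_t$ form a genuine expander family uniformly in $t$. Making the spectral projector $p_X$ simultaneously finite-propagation and detectably nontrivial in K-theory, while reconciling the continuum of $Y$ (where ``projection onto constants'' is governed by ergodic theory rather than counting) with the bounded-geometry framework underlying the Roe algebra, is where the genuine work lies; the remainder is a diagram chase through Lemma \ref{notonto}.
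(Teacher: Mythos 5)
You are attempting to prove a statement that the paper itself does not prove: Conjecture \ref{DruNow} is recorded there as an open conjecture of C. Dru\c tu and P. Nowak, and the only progress the paper cites is Sawicki's theorem, which establishes a \emph{weakened} form under three additional hypotheses: $\Gamma$ has Yu's property A, the action on $Y$ is free, and the cone is replaced by the sub-space $\mathcal{O}'_\Gamma Y=Y\times\{2^n:n\in\N\}$. Your proposal is, in outline, exactly the strategy behind that theorem (the Higson--Lafforgue--Skandalis Kazhdan-projection template), and the two places where your argument is vague are precisely the places where those extra hypotheses get forced on you; as written, the proposal cannot yield the conjecture in its stated generality.

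Concretely, two steps fail. First, the boundary. You assert that $\Group(X)_F$ is a transformation groupoid $(\beta X\setminus X)\rtimes\Gamma$ and propose to ``choose'' $\Gamma$ to be a discrete subgroup of $SL_n(\R)$ as in Example \ref{T+tau}. In the box-space situation of Proposition \ref{counter} that identification came from the fact that each component \emph{is} a Cayley graph of a quotient group $\Gamma/N_k$; a level set $Y\times\{t\}$ of a warped cone is a Voronoi discretization of the continuum $Y$, not a group, and $\Gamma$ acts on it only up to bounded error. Getting a boundary groupoid whose assembly map is injective requires freeness of the action (so that large balls in the level sets are bi-Lipschitz to balls in $\Gamma$) and then a property of $\Gamma$ such as property A to handle the resulting groupoid --- this is exactly why Sawicki's theorem carries those hypotheses, neither of which follows from a spectral gap. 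Moreover, when proving a statement quantified over all $\Gamma$ and $Y$, you are not free to choose $\Gamma$. Second, non-triviality of $[p_X]$. The detection mechanism of Proposition \ref{inj/surj} --- the $\Z^{\N}$-valued invariant and the asymptotic multiplicativity $(ST)_k=S_kT_k$ --- depends on the pieces being coarsely disjoint, i.e.\ further and further apart. The full cone $\mathcal{O}_\Gamma Y$ is coarsely \emph{connected}: the distance between $Y\times\{t\}$ and $Y\times\{t'\}$ is just $|t-t'|$, so there is no decomposition into separated components, and your ``restriction to levels modulo vanishing at infinity'' homomorphism does not exist on the Roe algebra of the cone. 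This is precisely why Sawicki passes to $Y\times\{2^n\}$, where consecutive levels are at distance $2^n\to\infty$. You flag both difficulties honestly in your final paragraph, but they are not technical refinements of an otherwise complete argument: they are the open content of the conjecture, which remains unproved for the full cone both in this paper and in the literature it cites.
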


At the time of writing, warped cones are a hot topic: 
\begin{itemize}
\item P. Nowak and D. Sawicki 2015: warped cones do not embed coarsely into a large class of Banach spaces (those with non-trivial type), containing in particular all $L^p$-spaces ($1\leq p<+\infty$), see \cite{NoSa}.
\item F. Vigolo 2016: relates warped cones and expanders, therefore getting new families of expanders \cite{Vig16}. 
\item D. Sawicki 2017: the level sets $Y\times \{t\}$ of warped cones provide new examples of super-expanders, i.e. expanders not embedding coarsely into any Banach space with non-trivial type, see \cite{Saw17}. 
\item T. de Laat and F. Vigolo 2017: those examples of super-expanders are different (i.e. not coarsely equivalent) to V. Lafforgue's super-expanders, see \cite{deLaVi}.
\item D. Fisher, T. Nguyen and W. Van Limbeek 2017: there is a continuum of coarsely pairwise inequivalent super-expanders obtained from warped cones, see \cite{FNVL}. See subsection \ref{superexp} for super-expanders.
\end{itemize}

In 2017, D. Sawicki \cite{Saw17b} confirmed Roe's intuition by proving the following form of Conjecture \ref{DruNow}.

\begin{Thm} Let $\Gamma$ having Yu's property A. Assume that $\Gamma$ acts on $Y$ by Lipschitz homeomorphisms, freely, and with a spectral gap. Set $A=\{2^n:n\in\N\}\subset ]1,+\infty[$, let $\mathcal{O}'_\Gamma Y$ be the subspace $Y\times A\subset\mathcal{O}_\Gamma Y$, equipped with the warped cone metric. Then $\mu_{CBC}$ is not surjective for $\mathcal{O}'_\Gamma Y$.
\hfill$\square$
\end{Thm}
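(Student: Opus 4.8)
The plan is to disprove surjectivity of the coarse assembly map for $X:=\mathcal{O}'_\Gamma Y$ by running the Higson--Lafforgue--Skandalis machine of Lemma \ref{notonto} and Proposition \ref{inj/surj}, with property $A$ of $\Gamma$ playing the role that ``$\Gamma$ is a discrete subgroup of a Lie group'' played in the box-space argument. First I would discretise: on each level $Y\times\{2^n\}$ choose a maximal $2^{-n}$-separated net and record that, as in the Voronoi picture of Section \ref{warped}, the level sets $Y\times\{2^n\}$ are uniformly quasi-isometric to a sequence of finite graphs $X_n$ whose adjacency is generated by the shortcuts $s\in S\cup S^{-1}$. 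The coarse disjoint union $X=\coprod_{n} X_n$ is then coarsely equivalent to $\mathcal{O}'_\Gamma Y$, and since CBC is a coarse invariant it suffices to treat $X$. The choice $A=\{2^n\}$ is essential here: consecutive levels are roughly $2^n$ apart, so the $X_n$ drift to infinity, which is exactly what makes the localisation argument ``$(ST)_k=S_kT_k$ for $k\gg0$'' of Proposition \ref{inj/surj} available.

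Next I would pass to groupoids. By the Skandalis--Tu--Yu correspondence (Section \ref{coarsegroupoid}) I form the coarse groupoid $\Group(X)$ and identify $\mu_X$ with the Baum--Connes assembly map for $\Group(X)$ with coefficients in $\ell^\infty(X,\mathcal{K})$. Setting $F=\beta X\setminus X$ (a saturated closed subset of the unit space) and $U=X$, I am reduced by Lemma \ref{notonto} to two points: (a) injectivity of the assembly map for the boundary groupoid $\Group(X)_F$ with coefficients in $\ell^\infty(X,\mathcal{K})$, and (b) failure of exactness in the middle of $K_0(C^*_r(\Group(X)_U))\to K_0(C^*_r(\Group(X)))\to K_0(C^*_r(\Group(X)_F))$.

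For (a) I would exploit property $A$ in place of a Lie-group embedding. Because $\Gamma$ acts freely and the metric on each level is the warped metric, the shortcuts organise the boundary groupoid $\Group(X)_F$ as a groupoid carrying the $\Gamma$-action at infinity; the content of the step is that property $A$ of $\Gamma$ forces $\Group(X)_F$ to be amenable (this is the analogue, for the warped-cone boundary, of the Higson--Roe equivalence ``property $A$ $\Leftrightarrow$ $\beta\Gamma\rtimes\Gamma$ amenable'' recalled in Section \ref{coarsegroupoid}, freeness being what makes the boundary action genuinely a $\Gamma$-action). Granting amenability, Tu's Theorem \ref{Tugamma} gives $\gamma=1$ for $\Group(X)_F$, hence the assembly map with arbitrary coefficients is an isomorphism, in particular injective. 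For (b) I would reproduce the spectral-gap projection argument of Proposition \ref{inj/surj}: the warped combinatorial Laplacians $\Delta_n$ built from $S$ assemble into a finite-propagation, locally compact operator $\Delta_X=\oplus_n\Delta_n\in C^*(X)$, and the spectral-gap hypothesis (no almost invariant vectors in $L^2_0(Y,\nu)$) means that $0$ is isolated in the spectrum of $\Delta_X$ uniformly in $n$. Functional calculus then places the kernel projection $p_X$ in $C^*(X)$; on each level $p_X$ is the rank-one projection onto the $\Gamma$-invariant constants, with entries $\sim 1/|X_n|\to 0$, so $p_X$ is a ghost and maps to $0$ in $C^*_r(\Group(X)_F)$, while the homomorphism $C^*(X)\to(\prod_n\mathcal{K})/(\oplus_n\mathcal{K})$ sends $[p_X]$ to $(1,1,1,\dots)$ and thereby shows $[p_X]$ is not in the image of $K_0(C^*_r(\Group(X)_U))$. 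This is precisely the non-exactness required, and Lemma \ref{notonto} then yields that $\mu_{CBC}$ is not surjective.

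The main obstacle is step (a): unlike the box-space case, where $\Group(X)_F$ was literally the transformation groupoid $(\beta X\setminus X)\rtimes\Gamma$, for a warped cone over a continuum $Y$ one must first understand the boundary groupoid $\Group(X)_F$ concretely and then verify that property $A$ of $\Gamma$ (together with freeness) does propagate to amenability of this boundary groupoid. A secondary technical point is to ensure the discretised Laplacians $\Delta_n$ have a genuinely \emph{uniform} gap whose kernel is exactly the constants; this is where the adaptedness of $\nu$ to $d_Y$ and the spectral-gap assumption must be combined carefully, so that the abstract gap for $L^2_0(Y,\nu)$ survives the passage to the finite net graphs $X_n$.
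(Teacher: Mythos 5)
Your architecture is the right one, and it is worth noting at the outset that the survey itself gives no proof of this theorem: the statement is quoted from Sawicki \cite{Saw17b} (hence the square immediately following it), so the only argument in the paper you can be measured against is the box-space counterexample (Lemma \ref{notonto}, Proposition \ref{inj/surj} and the theorem following it), which is exactly the template you adapt. Your skeleton -- discretise the levels into graphs $X_n$, pass to the Skandalis--Tu--Yu groupoid $\Group(X)$ with coefficients $\ell^\infty(X,\mathcal{K})$, split the unit space into $U=X$ and $F=\beta X\setminus X$, produce non-exactness in the middle of the $K$-theory sequence via the spectral-gap ghost projection, and establish injectivity of assembly over $F$ so that Lemma \ref{notonto} yields non-surjectivity -- matches both that template and the architecture of \cite{Saw17b}. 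But the two steps you defer are not routine transplants of the box-space argument; they are precisely the new mathematical content, so what you have is a correct reduction rather than a proof.

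Concretely: (a) in the box-space case the identification $\Group(X)_F\cong(\beta X\setminus X)\rtimes\Gamma$ is essentially free, because the quotient maps $\Gamma\to\Gamma/N_k$ are isometric on balls of radius tending to infinity; for a warped cone there is no such statement to quote. What is needed is a structure theorem at infinity: using quantitative freeness (for every finite $F\subset\Gamma\setminus\{e\}$ there is $\varepsilon>0$ with $d_Y(\gamma y,y)\geq\varepsilon$ for all $y\in Y$ and $\gamma\in F$, by compactness of $Y$), every pair of net points at bounded warped distance, sufficiently deep in the cone, decomposes \emph{uniquely} into a bounded group element followed by a move of size $O(1)$ in the rescaled metric $2^n d_Y$. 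This exhibits $\Group(X)_F$ as built from a $\Gamma$-action at infinity together with the boundary groupoid of the rescaled family $(Y,2^n d_Y)$; amenability of $\Group(X)_F$ then requires \emph{both} property A of $\Gamma$, patched through these local product charts (a genuine patching argument, not a formal consequence of the Higson--Roe equivalence you invoke, which concerns $\beta\Gamma\rtimes\Gamma$ and not this groupoid), \emph{and} uniform property A of the rescaled copies of $Y$, which is where regularity of $(Y,d_Y,\nu)$ (doubling/Ahlfors-type hypotheses, implicit in bounded geometry) enters. Only after this does Theorem \ref{Tugamma} apply. (b) In Proposition \ref{inj/surj} the uniform gap of $\Delta_X$ is part of the hypothesis (the $X_k$ are expanders by assumption); here it must be \emph{deduced}: one has to convert the representation-theoretic gap of $\Gamma\curvearrowright L^2_0(Y,\nu)$ into a uniform combinatorial gap for the discretised warped Laplacians $\Delta_n$ whose kernels are exactly the constants. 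This is Vigolo's expander theorem \cite{Vig16} (in Sawicki's variant), and it is where adaptedness of $\nu$ to $d_Y$ is used; without it, $0$ need not be uniformly isolated in the spectrum and the ghost projection $p_X$ need not exist in $C^*(X)$. Supplying (a) and (b) -- which together constitute most of \cite{Saw17b} -- would turn your outline into a proof.
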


By looking at actions on Cantor sets, Sawicki is even able to produce counter-examples to CBC which are NOT coarsely equivalent to any family of graphs.

\section{Outreach of the Baum-Connes conjecture}

The Baum-Connes conjecture and the coarse Baum-Connes conjecture prompted a surge of activity at the interface between operator algebras and other fields of mathematics, e.g. geometric group theory and metric geometry. Indeed results like the Higson-Kasparov theorem (see Theorem \ref{HigsonKasparov} above) are of the form "{\it groups (resp. spaces) in a given class satisfy the Baum-Connes (resp. coarse Baum-Connes) conjecture}". This leads naturally to trying to extend the class of groups (resp. spaces) in question, as a way of enlarging the the domain of validity of either conjecture. The study of a class of groups (resp. spaces) has two obvious counterparts: providing new examples, and studying permanence properties of the class. We sketch some of those developments below.

\subsection{The Haagerup property}

The 5-authors book \cite{CCJJV} was the first survey on the subject. Although motivated by Theorem \ref{HigsonKasparov}, it barely mentions the Baum-Connes conjecture and focuses on new examples and stability properties. It was updated in the paper \cite{Valette18}, which can serve as a guide to more recent literature. Here we mention some longstanding open questions on the Haagerup property, and partial results.

\begin{itemize}
\item Let $B_n$ denote the braid group on $n$ strands. Does $B_n$ have the Haagerup property? Yes trivially for $B_2\simeq\Z$, and yes easily for $B_3\simeq\F_2\rtimes\Z$. A recent result by T. Haettel \cite{Haettel} shows that, if the general answer is affirmative, it will not be for a very geometric reason: for $n\geq 4$, the group $B_n$ has no proper, cocompact isometric action on a CAT(0) cube complex\footnote{Recall that a group acting properly isometrically on a CAT(0) cube complex, has the Haagerup property, see e.g. Corollary 1 in \cite{Valette18}.}. Note that a fairly subtle proof of the Baum-Connes conjecture with coefficients for $B_n$, has been given by T. Schick \cite{Schick}.
\item Unlike amenability or property (T), the Haagerup property is {\it not} stable under extensions\footnote{Amenability (resp. property (T)) can be defined by a fixed point property: existence of a fixed point for affine actions on compact convex sets (resp. affine isometric actions on Hilbert spaces). This makes clear that it is preserved under extensions.}. The standard examples to see this are $\Z^2\rtimes SL_2(\Z)$ and $\R^2\rtimes SL_2(\R)$, where the relative property (T) with respect to the non-compact normal subgroup, is an obstruction to the Haagerup property. However the Haagerup property is preserved by some types of semi-direct products: e.g, Cornulier-Stalder-Valette \cite{CorStaVal} proved that, if $\Gamma,\Lambda$ are countable groups with the Haagerup property, then the wreath product $\Gamma\wr\Lambda=(\oplus_\Lambda \Gamma)\rtimes\Lambda $ has the Haagerup property. A probably difficult question: is $G,N$ are locally compact groups with the Haagerup property and $G$ acts continuously on $N$ by automorphisms, under which conditions on the action $G\curvearrowright N$ does the semi-direct product $N\rtimes G$ have the Haagerup property? When $G,N$ are $\sigma$-compact and $N$ is {\it abelian}, the answer was provided by Cornulier-Tessera (Theorem 4 in \cite{CorTes}): $N\rtimes G$ has the Haagerup property if and only if there exists a net $(\mu_i)_{i\in I}$ of Borel probability measures on the Pontryagin dual $\hat{N}$, such that there is a weak-* convergence $\mu_i\rightarrow\delta_1$, and $\mu_i\{1\}=0$ for every $i\in I$, and $\|g\cdot\mu_i-\mu_i\|\rightarrow 0$ uniformly on compact subsets of $G$, and finally the Fourier transform $\widehat{\mu_i}$ is a $C_0$ function on $N$ for every $i\in I$.
\item The behavior of the Haagerup property under central extensions is a widely open question. More precisely: if $Z$ is a closed central subgroup in the locally compact group $G$, is it true that $G$ has the Haagerup property if and only if $G/Z$ has it? Both implications are open. See Proposition 4.2.14 and Section 7.3.3 in \cite{CCJJV} for partial results on lifting the Haagerup property from $G/Z$ to $G$, in particular from $SU(n,1)$ to $\widetilde{SU(n,1)}$.
\item The Haagerup property for discrete groups is stable under free products or more generally amalgamated products over finite groups, by Proposition 6.2.3(1) of \cite{CCJJV}. In general, it is {\it not} true that, if $A,B$ have the Haagerup property and $C$ is a common subgroup, then $A\ast_C B$ has the Haagerup property: see section 4.3.3 in \cite{Valette18} for an example with $C=\Z^2$. An open question concerns the permanence of the Haagerup property for amalgamated products $A\ast_C B$ with $C$ virtually cyclic; a first positive result was obtained recently by M. Carette, D. Wise and D. Woodhouse \cite{CWW}: recall that if a group $G$ acts by isometries on a metric space $(X, d)$, the action of $G$ on $X$ is said to be semisimple if, for every $g\in G$, the infimum $\inf_{x\in X}d(gx,x)$ is actually a minimum. They proved that, if $A,B$ are groups acting properly and semisimply on some real hyperbolic space $\mathbb{H}^n(\R)$, and $C$ is a cyclic subgroup common to $A$ and $B$, then the amalgamated product $A\ast_C B$ has the Haagerup property. 
\end{itemize}

\subsection{Coarse embeddings into Hilbert spaces}\label{coarseHilbert}

In 2000, Guoliang Yu  \cite{YuA} opened a new direction in mathematics by uniting the fields of K-theory for $C^*$-algebras and of metric embeddings into Hilbert space. Indeed he proved that if a metric space $X$ with bounded geometry coarsely embeds into Hilbert space, then $X$ satisfies the CBC. Using the descent principle (Theorem \ref{descent}), this implies that if some Cayley graph $|\Gamma|$ of a finitely generated group $\Gamma$ coarsely embeds into Hilbert space, then the Baum-Connes assembly map for $\Gamma$ is injective\footnote{Under the assumption that $|\Gamma|$ coarsely embeds into Hilbert space, the assumption that $B\Gamma$ is a finite complex was removed by G. Skandalis, J.-L. Tu and G. Yu \cite{Skandalis-Tu-Yu}, using their groupoid approach to CBC.}, i.e. the assembly map $\mu$ embeds the K-homology of the classifying space $B\Gamma$ into the K-theory of the reduced $C^*$-algebra of $\Gamma$. This implies the Novikov conjecture on the homotopy invariance of the higher signatures for $\Gamma$. This was a stunning result, as a strong topological conclusion resulted from a weak metric assumption. 

Finitely generated groups with the Haagerup property coarsely embed into Hilbert space. Indeed if $\alpha$ is a proper isometric action of $\Gamma$ on $\HH$, then for every $x\in\HH$ the orbit map $g\mapsto\alpha(g)x$ is a coarse embedding. 

Using their groupoid approach, Skandalis, Tu and Yu (Theorem 6.1 in \cite{Skandalis-Tu-Yu}) proved the following:

\begin{Thm}\label{coarseimpliesNov} Let $\Gamma$ be a finitely generated group that admits a coarse embedding into Hilbert space. Then the assembly map $\mu_{A,r}$ is injective for every $\Gamma-C^*$-algebra $A$.
\end{Thm}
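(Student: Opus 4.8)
The plan is to run the Skandalis--Tu--Yu groupoid strategy: convert the metric hypothesis on $\Gamma$ into the Haagerup property for an associated groupoid, invoke Tu's machinery to obtain a Baum--Connes isomorphism \emph{with coefficients} for that groupoid, and then transfer injectivity back to $\Gamma$ by exploiting that the left-hand side only involves proper (finite-isotropy) actions. First I would form the coarse groupoid: as $\Gamma$ is finitely generated, $|\Gamma|$ is a discrete metric space of bounded geometry, and its associated groupoid is $\mathcal{G}=\beta\Gamma\rtimes\Gamma$. By the Skandalis--Tu--Yu dichotomy recalled above (see \cite{Skandalis-Tu-Yu}), $|\Gamma|$ coarsely embeds into Hilbert space if and only if $\mathcal{G}$ has the Haagerup property, so the hypothesis hands us a Haagerup groupoid; then $\mathcal{G}$ carries a $\gamma$-element equal to $1$ in $KK_{\mathcal{G}}(C_0(\mathcal{G}^0),C_0(\mathcal{G}^0))$ \cite{Tu99Moyennables}, whence by Theorem \ref{Tugamma} the assembly map $\mu^B_r$ for $\mathcal{G}$ is an isomorphism for \emph{every} $\mathcal{G}$-algebra $B$, and $\mathcal{G}$ is K-amenable.

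The heart of the matter is transferring this to $\Gamma$ with coefficients in an arbitrary $\Gamma$-$C^*$-algebra $A$. I would embed $A$ $\Gamma$-equivariantly into the $\mathcal{G}$-algebra $\ell^\infty(\Gamma,A)$ and consider the commuting square
$$
\begin{array}{ccc}
K^{top}_*(\Gamma, A) & \xrightarrow{\ \mu^\Gamma_{A,r}\ } & K_*(C^*_r(\Gamma, A)) \\[2pt]
\downarrow & & \downarrow \\[2pt]
K^{top}_*(\Gamma, \ell^\infty(\Gamma, A)) & \xrightarrow{\ \mu^\Gamma_{\ell^\infty(\Gamma,A),r}\ } & K_*(C^*_r(\Gamma, \ell^\infty(\Gamma, A))).
\end{array}
$$
The transformation-groupoid identity $C^*_r(\mathcal{G},\ell^\infty(\Gamma,A))=C^*_r(\Gamma,\ell^\infty(\Gamma,A))$, together with the corresponding identification of topological sides, shows that the bottom row \emph{is} the groupoid assembly map of the first step, hence an isomorphism. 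It then suffices to prove that the left vertical map, induced by $A\hookrightarrow\ell^\infty(\Gamma,A)$, is injective. For this I would use that $K^{top}_*(\Gamma,-)=\lim_X KK_\Gamma(C_0(X),-)$ over $\Gamma$-compact $X\subset\underline{E\Gamma}$, where every isotropy group is finite: over a finite subgroup $F\leq\Gamma$ the inclusion $A\hookrightarrow\ell^\infty(\Gamma,A)$ admits an $F$-equivariant left inverse obtained by averaging over $F$, so the induced map is split injective on each $KK_\Gamma(C_0(X),-)$ and hence in the limit. A diagram chase then forces $\mu^\Gamma_{A,r}$ to be injective (indeed split injective).

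The main obstacle is exactly this last step. One must make rigorous, inside Le Gall's $\mathcal{G}$-equivariant $KK$-theory, both the identification of the lower row with the groupoid assembly map (i.e.\ that for the transformation groupoid over $\beta\Gamma$ the group and groupoid assembly maps with a $\mathcal{G}$-algebra coefficient genuinely agree, on both sides) and the assertion that the finiteness of isotropy on $\underline{E\Gamma}$ makes $A\hookrightarrow\ell^\infty(\Gamma,A)$ a split monomorphism after applying $K^{top}_*(\Gamma,-)$. Steps one and two are quotable black boxes, so the genuine work is the compatibility square and the careful bookkeeping with the coefficient algebra $\ell^\infty(\Gamma,A)$ over the classifying space for proper actions; note that no equivariant splitting exists over $\Gamma$ itself (that would require amenability), which is precisely why the argument yields only injectivity rather than an isomorphism.
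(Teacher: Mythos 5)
Your overall route --- coarse embeddability gives the Haagerup property for $\beta\Gamma\rtimes\Gamma$, Tu's theorem gives Baum--Connes with coefficients for that groupoid, then transfer back to $\Gamma$ --- is exactly the one the paper points to (the paper gives no proof of Theorem \ref{coarseimpliesNov}, it quotes Theorem 6.1 of \cite{Skandalis-Tu-Yu}), and your first two steps, as well as the identification of the group and groupoid assembly maps for $C(\beta\Gamma)$-algebra coefficients, are legitimately quotable. The proof breaks at the step you yourself single out as the heart: injectivity of $K^{top}_*(\Gamma,A)\to K^{top}_*(\Gamma,\ell^\infty(\Gamma,A))$. First, your mechanism cannot even get started: the averaging map $\psi(f)=\frac{1}{|F|}\sum_{h\in F}h\cdot f(h)$ is a unital completely positive $F$-equivariant left inverse of the inclusion, but it is \emph{not} a $*$-homomorphism (already $\psi(\delta_e)^2\neq\psi(\delta_e^2)$), and $KK_\Gamma(C_0(X),-)$ is functorial in the coefficient variable only under equivariant $*$-homomorphisms; what you have built is the finite-group analogue of an invariant mean, and means never induce $KK$-maps. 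Second, and fatally, the intermediate statement you are trying to prove is \emph{false} whenever $\Gamma$ has torsion. Take $\Gamma=F=\Z/2$, where your argument applies verbatim: by Green--Julg the left vertical arrow of your square is $K_0(\C\rtimes\Gamma)\to K_0(C(\Gamma)\rtimes\Gamma)\cong K_0(M_2(\C))$, i.e.\ the dimension homomorphism $R(\Z/2)\to\Z$, which kills $[\mathrm{triv}]-[\mathrm{sign}]$. The same kernel appears inside any infinite group with torsion (e.g.\ $\Gamma=\Z/2\times\Z$): for a finite subgroup $F\leq\Gamma$, every class in $\ker\bigl(\mathrm{Res}\colon K_*(A\rtimes F)\to K_*(A)\bigr)\subseteq KK_\Gamma(C_0(\Gamma/F),A)$ dies under your map, because $F$ acts freely on $\beta\Gamma$ and $\beta\Gamma$ is totally disconnected, so that $\ell^\infty(\Gamma,A\otimes\mathcal{K})\rtimes F\cong\prod_{F\backslash\Gamma}\bigl(A\otimes\mathcal{K}\otimes\mathcal{K}(\ell^2F)\bigr)$ retains only non-equivariant K-theory --- yet such classes are nonzero in $K^{top}_*(\Gamma,A)$. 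So no diagram chase through $\ell^\infty(\Gamma,A)$-coefficients can prove the theorem for groups with torsion: your left vertical arrow genuinely has a kernel, and split injectivity "isotropy group by isotropy group" is not a meaningful substitute, since $\Gamma$-equivariant $KK$ is not assembled from isotropy-group data in this way.

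This is precisely why the published proofs do not argue through this square. The Haagerup property of $\beta\Gamma\rtimes\Gamma$ is exploited analytically, not formally: from the field of Hilbert spaces over $\beta\Gamma$ on which the groupoid acts properly by affine isometries, Higson \cite{Higson2000}, Tu \cite{Tu99} and Skandalis--Tu--Yu \cite{Skandalis-Tu-Yu} manufacture data that behave correctly on finite subgroups --- in the $\gamma$-element formalism of section \ref{viveTu}, a \emph{proper} $\Gamma$-$C^*$-algebra $B$ with classes $\beta\in KK_\Gamma(\C,B)$, $\alpha\in KK_\Gamma(B,\C)$ whose product restricts to $1_K\in R(K)$ for every compact subgroup $K$, so that Theorem \ref{abstractgamma}(1) yields injectivity of $\mu_{A,r}$ (Higson's variant runs the analogous construction with asymptotic morphisms in E-theory). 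The normalization at finite subgroups holds there because a finite group acting affinely and isometrically on a Hilbert space has a fixed point, so equivariant Bott periodicity applies; and this is exactly the information that passage to $\ell^\infty(\Gamma,A)$-coefficients destroys, as the computation above shows. In short, the missing ingredient in your proposal is not compatibility bookkeeping: it is the entire Dirac/dual-Dirac construction, and averaging over finite isotropy cannot replace it.
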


Lots of finitely generated groups embed coarsely into Hilbert space, as they satisfy the stronger property A (see section \ref{propA} below). Actually it is not even easy to find a {\it bounded geometry space} not embedding coarsely. The most famous example is due to Matousek \cite{Matousek}, and was popularized by Gromov \cite{Gro03}; we will give a proof of a stronger result in Proposition \ref{superexpdonotembed}:

\begin{Prop}\label{expandersdonotembed} Let $X$ be the coarse disjoint union of a family of expanders. Then $X$ does not coarsely embed into Hilbert space.
\end{Prop}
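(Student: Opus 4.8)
The plan is to derive the non-embeddability of a coarse disjoint union of expanders directly from the Poincar\'e inequality characterization of expanders established in Proposition \ref{expandersPoincare}. The key insight is that coarse embeddings are controlled by the control functions $\rho_\pm$, and these controls are fundamentally incompatible with the averaging inequality \eqref{Poincare} precisely because expanders have vertex sets of unbounded size while the spectral gap forces points to be ``far apart on average'' in a way that no Hilbert-space embedding can respect.

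First I would set up the contradiction hypothesis: suppose $f:X=\coprod_{k>0}X_k\rightarrow\HH$ is a coarse embedding with control functions $\rho_-,\rho_+$. Write $f_k$ for the restriction of $f$ to the vertex set $V_k$ of the $k$-th expander $X_k$. The plan is to estimate both sides of the Poincar\'e inequality \eqref{Poincare} using the two controls. On the right-hand side, since adjacent vertices $x\sim y$ satisfy $d(x,y)=1$, I have $\|f(x)-f(y)\|^2\leq\rho_+(1)^2$, so the right-hand side of \eqref{Poincare} is bounded above by $\frac{C}{|V_k|}\cdot(\text{number of edges})\cdot\rho_+(1)^2$. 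By $d$-regularity the number of edges is $\frac{d}{2}|V_k|$, giving an upper bound $\frac{C d}{2}\rho_+(1)^2$, a constant independent of $k$. This is the crucial boundedness: the right side stays bounded as $k\to\infty$.

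Next I would bound the left-hand side from below. Here the plan is to show that the average squared distance $\frac{1}{|V_k|^2}\sum_{x,y\in V_k}\|f(x)-f(y)\|^2$ must grow with $k$. The point is that $\rho_-$ is proper (i.e. $\rho_-(t)\to\infty$), and in an expander the diameters $\mathrm{diam}(X_k)$ tend to infinity (this follows since a $d$-regular graph on $|V_k|$ vertices has diameter at least $\log_d|V_k|$, and $|V_k|\to\infty$). So there exist pairs of vertices at arbitrarily large distance; using $\|f(x)-f(y)\|\geq\rho_-(d(x,y))$ and counting a definite proportion of distant pairs, the average on the left is bounded below by a quantity tending to infinity with $k$. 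Combining the bounded right-hand side with the divergent left-hand side contradicts inequality \eqref{Poincare}, which must hold for the fixed constant $C$ from Proposition \ref{expandersPoincare} for \emph{every} $k$.

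The main obstacle I anticipate is making the lower bound on the left-hand side genuinely uniform and quantitative, rather than merely asserting that some pairs are far apart. A single pair at large distance contributes only $\frac{1}{|V_k|^2}$ to the average, which vanishes; so I must show that a \emph{positive fraction} of all pairs $(x,y)$ are at distance at least some $r_k\to\infty$. The clean way to handle this is to argue at the level of the spectral gap directly: rather than chasing diameters, I would feed the embedding $f$ itself into Proposition \ref{expandersPoincare}, observing that the inequality it provides is \emph{equivalent} to the expander condition, so it is really the failure of \eqref{Poincare} that I am exhibiting. Concretely, I can renormalize: if $f$ were a coarse embedding, then for large enough distance scale the left-hand average, rescaled by the right-hand side, would exceed $C$, contradicting the uniform validity. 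Getting the interplay between the control functions and the averaging precisely right--so that the constant $C$ is genuinely violated--is the delicate part, and is exactly where the bounded geometry hypothesis (through $d$-regularity bounding the edge count) does its essential work.
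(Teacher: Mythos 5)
Your overall framework is the same as the paper's: assume a coarse embedding $f$ with controls $\rho_\pm$, feed it into the Poincar\'e inequality \eqref{Poincare} from Proposition \ref{expandersPoincare}, and note that the right-hand side stays bounded by $dC\rho_+(1)^2$ because adjacent vertices have image-distance at most $\rho_+(1)$ and there are only $\frac{d}{2}|V_k|$ edges. However, there is a genuine gap in your treatment of the left-hand side, and you have correctly located it yourself: you must show that a \emph{positive fraction} of all pairs $(x,y)\in V_k\times V_k$ lie at distance tending to infinity, and neither of your two proposed routes delivers this. The diameter bound $\mathrm{diam}(X_k)\geq\log_d|V_k|$ produces only individual far-apart pairs, whose contribution to the average is negligible, as you note. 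Your fallback --- ``argue at the level of the spectral gap directly'' and ``renormalize'' so that ``the left-hand average, rescaled by the right-hand side, would exceed $C$'' --- is circular: inequality \eqref{Poincare} genuinely \emph{holds} for expanders (that is the content of Proposition \ref{expandersPoincare}), so the contradiction cannot be ``the failure of \eqref{Poincare}''; the contradiction must come from confronting the embedding's lower control $\rho_-$ with the combinatorics of the graphs, and that confrontation is exactly the step you leave unproved.

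The missing argument is elementary and runs in the opposite direction from the one you attempt. From the bounded right-hand side you get that the mean of $\rho_-(d(x,y))^2$ over all pairs is at most $M:=dC\rho_+(1)^2$, for every $k$. By Markov's inequality, at least half of the pairs $(x,y)\in V_k\times V_k$ satisfy $\rho_-(d(x,y))^2\leq 2M$; since $\rho_-(t)\rightarrow\infty$, there is a single constant $N$ (independent of $k$) such that at least half of the pairs satisfy $d(x,y)\leq N$. Now use $d$-regularity where it really matters --- not for the edge count, but for volume growth: a ball of radius $N$ in a $d$-regular graph has at most $(d+1)^N$ vertices, so the number of pairs at distance $\leq N$ in $X_k$ is at most $|V_k|(d+1)^N$, which is strictly smaller than $\frac{|V_k|^2}{2}$ as soon as $|V_k|>2(d+1)^N$. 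Since $|V_k|\rightarrow\infty$, this contradicts the previous sentence for $k$ large. So the correct statement is not that most pairs are far apart at a scale $r_k\rightarrow\infty$ (which you tried to prove), but that a bounded average of $\rho_-(d(x,y))^2$ would force too many \emph{close} pairs, which volume growth forbids; this Markov-plus-ball-counting step is the heart of the proof and is absent from your proposal.
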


In \cite{Gro03}, Gromov sketched the construction of families of groups containing families of expanders coarsely embedded in their Cayley graphs, which therefore do not embed coarsely into Hilbert space. These are called Gromov's {\it random groups}, or {\it Gromov monsters}. Details of their construction were supplied by Arzhantseva-Delzant \cite{ArzDel}. It was shown by Higson-Lafforgue-Skandalis \cite{HLS} that those groups provide counter-examples to the Baum-Connes conjecture with coefficients (Conjecture \ref{ConjBCcoeff}).

\begin{Thm} Let $\Gamma$ be a Gromov monster. Consider the commutative $C^*$-algebra $A=\ell^\infty(\N,c_0(\Gamma))$, with the natural $\Gamma$-action. Then the Baum-Connes conjecture with coefficients fails for $\Gamma$ and $A$, in the sense that $\mu_{A,r}$ is not onto. 
\end{Thm}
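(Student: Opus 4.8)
The plan is to reduce the failure of surjectivity for $\Gamma$ to the failure of surjectivity of the coarse assembly map for an expander, exploiting the defining feature of a Gromov monster. First I would recall that, by construction (Gromov \cite{Gro03}, with details supplied by Arzhantseva--Delzant \cite{ArzDel}), a Gromov monster $\Gamma$ admits a coarse embedding $\phi\colon X\hookrightarrow|\Gamma|$ of a coarse disjoint union $X=\coprod_{k>0}X_k$ of a family of finite $d$-regular expanders into its Cayley graph. I would fix control functions and, in particular, a radius $R>0$ so that any two adjacent vertices of a given $X_k$ are sent to points at $\Gamma$-distance $\le R$; after suitable translations one may regard each $\ell^2(X_k)$ as a finite-dimensional subspace of $\ell^2(\Gamma)$, with every edge of $X_k$ recorded by a group element of length $\le R$.

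The whole point of the coefficient algebra $A=\ell^\infty(\N,c_0(\Gamma))$ is that it is \emph{commutative} yet lets $C^*_r(\Gamma,A)$ absorb the finite-propagation calculus of the expander: the factor $\ell^\infty(\N,\cdot)$ indexes the components $X_k$, the factor $c_0(\Gamma)$ records the base-point of a vertex inside $\Gamma$, and the crossed product by $\Gamma$ supplies the convolution (i.e. finite-propagation) part. In this way the Hilbert-space bookkeeping usually carried by Yu's \emph{non-commutative} coefficient algebra $\ell^\infty(X,\mathcal{K})$ (compare with \S\ref{coarsegroupoid}) is here carried by the regular representation of $\Gamma$ itself. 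Using $\phi$ I would then assemble the family of combinatorial Laplacians $(\Delta_k)_{k>0}$ into a single self-adjoint element $D\in C^*_r(\Gamma,A)$, each $\Delta_k$ being a finite $\Z$-linear combination of translates $e_g$ with $|g|_S\le R$ multiplied by coefficients in $A$.

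Because $(X_k)_{k>0}$ is a family of expanders, $0$ is isolated in the spectrum of $D$, so functional calculus yields a spectral projection $p=\chi_{\{0\}}(D)\in C^*_r(\Gamma,A)$ and a class $[p]\in K_0(C^*_r(\Gamma,A))$; the existence of this spectral gap, hence of $p$, is precisely the geometric reason why expanders do not coarsely embed into Hilbert space (Proposition \ref{expandersdonotembed}). It then remains to prove $[p]\notin\mathrm{im}\,\mu_{A,r}$, and for this I would transport the computation in the proof of Proposition \ref{inj/surj} through $\phi$: the class $[p]$ behaves as a ``ghost'' projection concentrated at infinity. Concretely, the locality of operators of finite propagation produces a homomorphism of $C^*_r(\Gamma,A)$ into the quotient-at-infinity algebra $\big(\prod_{k}\mathcal{K}(\ell^2(X_k))\otimes\mathcal{K}\big)/\big(\bigoplus_{k}\mathcal{K}(\ell^2(X_k))\otimes\mathcal{K}\big)$, under which $[p]$ maps to the constant sequence $(1,1,1,\dots)\in\Z^{\N}/\Z^{(\N)}$, which is non-zero; whereas, as in Lemma \ref{notonto}, a factorization of $\mu_{A,r}$ through the maximal crossed product together with a finite-support argument forces the image of the assembly map into the finitely-supported part $\Z^{(\N)}$. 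This contradiction gives the non-surjectivity of $\mu_{A,r}$.

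The hard part will be the encoding of the middle step together with the verification of the last one. Realizing the expander's finite-propagation operators as genuine, $\Gamma$-equivariantly compatible elements of $C^*_r(\Gamma,A)$ with \emph{commutative} coefficients, and checking that the spectral projection is transported faithfully, is where the coarse embedding $\phi$ and the bounded distortion $R$ must be used delicately. The real obstacle, however, is to certify that $[p]$ genuinely escapes $\mathrm{im}\,\mu_{A,r}$: unlike the injectivity counterexamples coming from $\gamma$-elements, here one must detect a K-theory class missed by the \emph{reduced} assembly map, which ultimately rests on the non-exactness of the reduced crossed-product functor as exhibited through the expander's spectral gap --- the same mechanism underlying the groupoid counterexamples of Higson--Lafforgue--Skandalis \cite{HLS} and the surjectivity failure of the coarse Baum--Connes conjecture for expanders.
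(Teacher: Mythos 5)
Your overall strategy --- push the expander Laplacian into $C^*_r(\Gamma,A)$ via the coarse embedding, extract the ghost spectral projection, and show its class escapes the assembly map --- is the right one for this statement: it is the original Higson--Lafforgue--Skandalis route, which this survey does not actually reproduce (the survey cites \cite{HLS} and instead proves, in the subsection on exactness, the cognate result with \emph{non-commutative} coefficients and \emph{trivial} action, via non-half-K-exactness (Theorem \ref{monsters}) and a mapping-cone trick (Theorem \ref{BCnotonto}); that route deliberately sidesteps the issue raised below, at the price of losing commutativity of the coefficients). However, your sketch has one genuine gap and several repairable inaccuracies.

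The genuine gap is the last step. Your claim that ``a factorization of $\mu_{A,r}$ through the maximal crossed product together with a finite-support argument forces the image of the assembly map into the finitely-supported part $\Z^{(\N)}$'' is false as stated. Writing $A=\prod_{k}c_0(\Gamma)$ and letting $\mathrm{ev}_k:C^*_r(\Gamma,A)\to C^*_r(\Gamma,c_0(\Gamma))\cong\mathcal{K}(\ell^2(\Gamma))$ be the evaluation homomorphisms, the constant sequence $q=(\chi_{\{e\}})_k\in A$ is a projection with $(\mathrm{ev}_k)_*[q]=1$ for \emph{every} $k$, and $[q]$ \emph{does} lie in the image of $\mu_{A,r}$: by naturality of assembly applied to the diagonal equivariant embedding $\varphi:c_0(\Gamma)\to A$, one has $[q]=\varphi_*\big(\mu_{c_0(\Gamma),r}(z)\big)=\mu_{A,r}(\varphi_*z)$, using that $\mu_{c_0(\Gamma),r}$ is an isomorphism since $c_0(\Gamma)$ is a proper $\Gamma$-algebra (Theorem \ref{BCproper}). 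So no unconditional finite-support constraint holds. What Lemma \ref{notonto} actually provides is a \emph{conditional} statement, and you must supply its hypotheses: (i) introduce the $\Gamma$-invariant ideal $J=c_0(\N,c_0(\Gamma))\subseteq A$, so that $C^*_r(\Gamma,J)=\oplus_k\mathcal{K}(\ell^2(\Gamma))$ and $K_0(C^*_r(\Gamma,J))=\oplus_k\Z$; (ii) prove the \emph{ghost property}, namely that $p$ itself maps to $0$ in $C^*_r(\Gamma,A/J)$ --- this follows because the canonical conditional expectation onto $A/J$ is faithful and $E_A(p)=\big(\tfrac{1}{n_k}\chi_{f_k(X_k)}\big)_k$ lies in $J$, exactly the mechanism in the proof of Theorem \ref{monsters}; and, crucially, (iii) establish that $\mu_{A/J,r}$ is \emph{injective}. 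Only then does the chase close: if $[p]=\mu_{A,r}(x)$, then by (ii) and (iii) the image of $x$ in $K^{top}_*(\Gamma,A/J)$ vanishes, exactness of $0\to C^*_{\max}(\Gamma,J)\to C^*_{\max}(\Gamma,A)\to C^*_{\max}(\Gamma,A/J)\to 0$ plus the factorization $\mu_{A,r}=\lambda_*\circ\mu_{A,\max}$ puts $[p]$ in the image of $K_0(C^*_r(\Gamma,J))$, and this contradicts $(\mathrm{ev}_k)_*[p]=1$ for all $k$. Item (iii) is a substantive input you never address: it does not come for free, since Gromov monsters are not coarsely embeddable (so Theorem \ref{coarseimpliesNov} is unavailable); in \cite{HLS} it is supplied by the special structure of Gromov's groups as limits of hyperbolic groups, which admit a $\gamma$-element in the sense of section \ref{viveTu} (Kasparov--Skandalis \cite{Kasparov-Skandalis03}), so that Theorem \ref{abstractgamma}(1) applies --- parallel to the way Proposition \ref{counter} invokes Kasparov's injectivity for subgroups of $SL_n(\R)$ at exactly this point. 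Without (iii), your argument only yields the dichotomy of Proposition \ref{inj/surj} (``injectivity or surjectivity fails''), not the theorem as stated.

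Three smaller repairs. First, the ghost projection is not $\chi_{\{0\}}(D)$: since each $\Delta_k$ is extended by $0$ to $\ell^2(\Gamma)$, the kernel of $D$ contains all of $\ell^2(\Gamma\setminus f_k(X_k))$, and $\chi_{\{0\}}(D)$ exists only in the unitization; the correct element is $p=(\chi_{f_k(X_k)})_k-\chi_{[\varepsilon,2d]}(D)$, both terms of which lie in $C^*_r(\Gamma,A)$ (the first because $(\chi_{f_k(X_k)})_k\in A$, the second by functional calculus with a function vanishing at $0$). Second, your ``quotient-at-infinity'' homomorphism obtained by compressing to $\ell^2(f_k(X_k))$ is not multiplicative, even asymptotically: finite-propagation operators on $\ell^2(\Gamma)$ spill over the boundary of $f_k(X_k)$ inside $\Gamma$, unlike in Proposition \ref{inj/surj} where the ambient space is the coarse disjoint union itself; the detection should instead use the evaluation maps $(\mathrm{ev}_k)_*:K_0(C^*_r(\Gamma,A))\to K_0(\mathcal{K}(\ell^2(\Gamma)))=\Z$, which suffice. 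Third, viewing $\ell^2(X_k)$ inside $\ell^2(\Gamma)$ requires $f_k$ to be injective; this needs a perturbation argument using the uniform bound on the fibers of the coarse embedding (or the non-injective bookkeeping via $E_{xy}\mapsto E_{xy}\otimes f_k(x)^{-1}f_k(y)$ used in Theorem \ref{monsters}).
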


We will come back on those groups in section \ref{propA}, and explain what exactly is needed to get counter-examples to Conjecture \ref{ConjBCcoeff}.

\subsection{Yu's property A: a polymorphous property}

One of the crucial new invariants of metric spaces introduced by G. Yu \cite{YuA} is property A, a non-equivariant form of amenability. Like standard amenability, it has several equivalent definitions. In particular we will see that three concepts from different areas (property A for discrete spaces, boundary amenability from topological dynamics, and exactness from $C^*$-algebra theory) provide one and the same concept when applied to finitely generated groups.

\subsubsection{Property A}\label{propA}

\begin{Def}\label{Yu'sA} Let $(X,d)$ be a discrete metric space. The space $X$ has property A if there exists a sequence $\Phi_n:X\times X\rightarrow \C$ of normalized, positive definite kernels on $X$ such that $\Phi_n$ is supported in some entourage\footnote{Recall from subsection \ref{coarsegroupoid} that an entourage is a subset of $X\times X$ on which $d(.,.)$ is bounded.}, and $(\Phi_n)_{n>0}$ converges to 1 uniformly on entourages for $n\rightarrow\infty$. 
\end{Def}

This is inspired by the following characterization of amenability for a countable group $\Gamma$: the group $\Gamma$ is amenable if and only if there exists a sequence $\varphi_n:\Gamma\rightarrow\C$ of normalized, finitely supported, positive definite functions on $\Gamma$ such that $(\varphi_n)_{n>0}$ converges to 1 for $n\rightarrow\infty$. If this happens and if $\Gamma$ is finitely generated, then $\Phi_n(s,t)=\varphi_n(s^{-1}t)$ witnesses that $|\Gamma|$ has property A. However there are many more examples of finitely generated groups with property A. Other natural examples are provided by linear groups, i.e., subgroups of the group $GL_n(F)$ for some field $F$, this is a result by Guentner, Higson and Weinberger \cite{GuentnerHigsonWeinberger}; this class includes many groups with property (T). The list of classes of groups that satisfy property A also includes one-relator groups, Coxeter groups, groups acting on finite-dimensional CAT(0) cube complexes, and many more.

\begin{Thm}\label{Aimpliescoarse}(see Theorem 2.2 in \cite{YuA}) A discrete metric space with property A admits a coarse embedding into Hilbert space . 
\end{Thm}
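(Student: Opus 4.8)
The plan is to convert the sequence of normalized positive-definite kernels furnished by property A directly into a single coarse embedding by a direct-sum construction applied to a carefully chosen subsequence, exploiting that each kernel is \emph{simultaneously} close to $1$ on small scales and \emph{identically zero} beyond its entourage.

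First I would reduce to real kernels: replacing each $\Phi_n$ by its real part (still normalized, still positive definite, and supported in the same entourage, since $\mathrm{Re}\,\Phi_n$ vanishes wherever $\Phi_n$ does), I may assume $\Phi_n : X\times X \to [-1,1]$. By the Kolmogorov decomposition of a normalized positive-definite kernel I obtain, for each $n$, a Hilbert space $\mathcal{H}_n$ and a map $\xi_n : X \to \mathcal{H}_n$ with $\langle \xi_n(x),\xi_n(y)\rangle = \Phi_n(x,y)$ and $\|\xi_n(x)\| = 1$; in particular $\|\xi_n(x)-\xi_n(y)\|^2 = 2-2\Phi_n(x,y) \in [0,4]$, using $|\Phi_n|\leq 1$. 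Next, since $\Phi_n \to 1$ uniformly on every entourage, I would extract a subsequence $(\Phi_{n_k})_{k\geq 1}$ with $\sup_{d(x,y)\leq k}\bigl(1-\Phi_{n_k}(x,y)\bigr)\leq 2^{-k}$, and let $R_k<\infty$ be a radius with $\Phi_{n_k}(x,y)=0$ whenever $d(x,y)>R_k$. Fixing a basepoint $x_0$, I set $\eta_k(x)=\xi_{n_k}(x)-\xi_{n_k}(x_0)$ and define $f\colon X\to\bigoplus_k \mathcal{H}_{n_k}$ by $f(x)=\bigoplus_k \eta_k(x)$.

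The three things to verify are then: (i) convergence of $\|f(x)\|^2=\sum_k\bigl(2-2\Phi_{n_k}(x,x_0)\bigr)$, which holds because for $k\geq d(x,x_0)$ the $k$-th term is $\leq 2^{1-k}$, while only finitely many remaining terms are bounded by $4$; (ii) the upper estimate $\|f(x)-f(y)\|^2=\sum_k\bigl(2-2\Phi_{n_k}(x,y)\bigr)\leq 4\lceil d(x,y)\rceil+2$, by splitting the sum at $k=\lceil d(x,y)\rceil$ as in (i), giving an upper control function $\rho_+(t)=\sqrt{4\lceil t\rceil+2}\to\infty$; and (iii) the lower estimate $\|f(x)-f(y)\|^2\geq 2\,\#\{k : R_k<d(x,y)\}$, valid because each kernel with $R_k<d(x,y)$ forces its term to equal exactly $2$ while every term is nonnegative.

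The one genuinely structural point — the step I would treat as the crux — is to see that the lower control function $\rho_-(t)=\sqrt{2\,\#\{k:R_k<t\}}$ really does tend to infinity. This requires \emph{no} quantitative control on the growth of the support radii $R_k$ (which property A does not supply, and which a naive attempt to bound them from above would get stuck on): because there are infinitely many kernels and each $R_k$ is finite, for any $M$ one has $\#\{k:R_k<t\}\geq M$ as soon as $t>\max(R_1,\dots,R_M)$, whence $\#\{k:R_k<t\}\to\infty$ as $t\to\infty$. Combining the three estimates gives $\rho_-(d(x,y))\leq\|f(x)-f(y)\|\leq\rho_+(d(x,y))$ with $\rho_\pm\to\infty$, so $f$ is a coarse embedding. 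Everything else (the reduction to real parts, the Kolmogorov realization, and the bookkeeping keeping each term in $[0,4]$) is routine.
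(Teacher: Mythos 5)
The paper gives no proof of this statement at all---it simply cites Theorem 2.2 of Yu's paper \cite{YuA}---and your argument is correct and is essentially that standard proof: Kolmogorov decomposition of the (real parts of the) kernels, extraction of a subsequence with $1-\Phi_{n_k}\leq 2^{-k}$ on the entourage $\{d\leq k\}$, and the basepointed direct sum $f(x)=\bigoplus_k\bigl(\xi_{n_k}(x)-\xi_{n_k}(x_0)\bigr)$, where the finite-support condition forces each term with $R_k<d(x,y)$ to equal exactly $2$, yielding the lower control function. The only cosmetic point is that $\rho_-(t)=\sqrt{2\,\#\{k:R_k<t\}}$ could formally take the value $+\infty$ for $t$ larger than every realized distance (e.g.\ if $X$ is bounded, where the theorem is trivial anyway); replacing it by $\min\bigl(\rho_-(t),t\bigr)$ keeps it finite-valued while preserving both the inequality and the divergence at infinity.
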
 

The converse is false: endow $\{0,1\}^n$ with the Hamming distance; then the coarse disjoint union $\coprod_n\{0,1\}^n$ coarsely embeds into Hilbert space but does not have property A, as proved by P. Nowak \cite{Nowak2007}; however this space does not have bounded geometry. For a while, an unfortunate situation was that the only way of disproving property A for a space $X$, was to prove that $X$ has no coarse embedding into Hilbert space (see section \ref{coarseHilbert}). The situation began to evolve with a paper of R. Willett \cite{Willett11} containing a nice result addressing property A directly: the coarse disjoint union of a sequence of finite regular graphs with girth tending to infinity (i.e. graphs looking more and more like trees), does not have property A. On the other hand some of them can be coarsely embedded into Hilbert space, as was shown by Arzhantseva, Guentner and \v Spakula \cite{AGS} using box spaces of the free group. For every group $G$, denote by $G^{(2)}$ the normal subgroup generated by squares in $G$, and define a decreasing sequence of subgroups in $G$ by $G_0=G$ and $G_n=G_{n-1}^{(2)}$. The main result of \cite{AGS} is:

\begin{Thm}\label{AGS} For the free group $\F_k$ of rank $k\geq 2$, with $(\F_k)_{n}$ defined as above, the box space $\coprod_{n>0}\F_k/(\F_k)_n$ does not have property A but is coarsely embeddable into Hilbert space.
\end{Thm}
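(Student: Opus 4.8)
The statement to prove, Theorem~\ref{AGS}, asserts that for the free group $\F_k$ of rank $k\geq 2$, with the derived-type sequence $(\F_k)_n$ generated iteratively by squares, the box space $\coprod_{n>0}\F_k/(\F_k)_n$ fails property A yet is coarsely embeddable into Hilbert space. The plan is to prove the two assertions by quite different means, since they pull in opposite directions: failure of property A is a rigidity/spectral statement, whereas coarse embeddability is a flexibility statement exploiting a special algebraic feature of the iterated squaring construction.

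First I would address the \emph{failure of property A}. The key observation is that each quotient group $G_n=\F_k/(\F_k)_n$ is a finite $2$-group (an iterated elementary-abelian-by-$\cdots$ tower), and the box space of a residually finite group fails property A precisely when the associated family of Cayley graphs forms a family of expanders, or more generally when it carries enough spectral gap to obstruct the approximating kernels of Definition~\ref{Yu'sA}. The cleanest route is to invoke the link, developed in Section~\ref{coarsegroupoid} via Skandalis--Tu--Yu, that property A for $X=\coprod_{n>0}G_n$ is equivalent to amenability of the groupoid $\Group(X)$; failure of property A can then be detected by producing a nontrivial Kazhdan-type projection or by showing the box space is an expander. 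Since $\F_k$ has property $(\tau)$ with respect to this particular filtration by iterated squares (this is exactly the residual finiteness plus spectral-gap input needed, analogous to Definition~\ref{tau} and Theorem~\ref{Margulisexp}), the family $(Cay(G_n,S))_{n>0}$ is a family of expanders, and Willett's girth criterion together with Proposition~\ref{expandersdonotembed}-type reasoning shows such spaces cannot have property A. I would cite the property $(\tau)$ of $\F_k$ along this filtration as the decisive structural fact.

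Second, and this is where the cleverness of the iterated squaring lies, I would prove \emph{coarse embeddability into Hilbert space}. The idea is that although the quotients $G_n$ are $2$-groups, the kernels $(\F_k)_n$ are highly structured, and one can build a coarse embedding by assembling ``partial'' embeddings at each scale that are controlled by the wall structure coming from the squaring operation. Concretely, the iterated squares produce a sequence of characters/quadratic-form data on each $G_n$ with values in $\mathbb{Z}/2$, and these can be packaged into a proper conditionally-negative-type kernel on the box space with Hilbert-space-valued control functions $\rho_\pm$ as in the definition of coarse embedding. The point is that, unlike the congruence box spaces of $SL_n(\Z)$, the squaring filtration does not produce a single global spectral gap in the \emph{Hilbert-space} (type-$2$) sense strong enough to obstruct embedding; the expander property here lives at the level of property A but not at the level of Hilbert embeddability. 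I would make this precise using a cut-metric / wall-space decomposition of each $Cay(G_n,S)$ and verify that the resulting kernels interpolate compatibly across scales with uniform control.

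The main obstacle I anticipate is exactly reconciling these two facts without contradiction: expanders generally \emph{do not} embed coarsely into Hilbert space (Proposition~\ref{expandersdonotembed}), so the delicate part is showing that this box space, while failing property A, is nonetheless \emph{not} an expander in the obstruction-to-embedding sense. The resolution must hinge on the precise quantitative difference between the $\ell^2$-Poincaré inequality~(\ref{Poincare}) characterizing Hilbert non-embeddability and the weaker spectral condition that suffices to kill property A. I would therefore spend the bulk of the effort verifying that the first nonzero Laplacian eigenvalues $\lambda_1(Cay(G_n,S))$ decay to $0$ along this filtration (so it is \emph{not} a Hilbert-space expander and the embedding can be constructed), while simultaneously the girth grows without bound (so property A still fails by Willett's criterion). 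Balancing the decay rate of $\lambda_1$ against the growth of girth is the technical heart of the argument, and I expect it to require the explicit iterated-squaring structure rather than any soft general principle.
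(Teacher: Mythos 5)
Your proof of the first half rests on a claim that is false and that, if it were true, would refute the second half: you assert that $\F_k$ has property $(\tau)$ with respect to the iterated-square filtration, so that the box space is an expander. It does not. By Lubotzky--Zimmer \cite{LuZi} (quoted after Definition~\ref{box}), property $(\tau)$ with respect to a filtration is \emph{equivalent} to the box space being an expander, and by Proposition~\ref{expandersdonotembed} an expander admits no coarse embedding into Hilbert space; so your claim is incompatible with the embeddability you set out to prove. Nor can you escape by distinguishing an ``expander in the property-A-killing sense'' from an ``expander in the Hilbert-embedding sense'': expansion is a single condition, the $\ell^2$-Poincar\'e inequality of Proposition~\ref{expandersPoincare}, and your proposal contradicts itself by first invoking property $(\tau)$ and later asserting $\lambda_1(Cay(G_n,S))\to 0$. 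In reality the quotients $G_n$ are finite $2$-groups, there is no spectral gap, and the failure of property A has nothing to do with expansion. The intended argument is purely combinatorial: the girth of $X_n=Cay(\F_k/(\F_k)_n,S)$ tends to infinity (a cycle in $X_{n+1}$ projects to a mod-$2$ null-homologous closed walk in $X_n$, whose support contains a cycle each of whose edges is traversed at least twice, so $\mathrm{girth}(X_{n+1})\geq 2\,\mathrm{girth}(X_n)$), and Willett's theorem \cite{Willett11}, stated immediately before Theorem~\ref{AGS} and valid for \emph{any} sequence of finite regular graphs of degree at least $3$ with girth tending to infinity, expanders or not, then shows the box space fails property A. (Alternatively, one may use the general fact that a box space with property A forces the underlying group to be amenable, and $\F_k$ is not.) In particular there is no ``balancing of the decay rate of $\lambda_1$ against the growth of girth''; no eigenvalue estimate enters either half of the proof.

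For coarse embeddability, your wall-space intuition points in the right direction but omits the one idea that makes it work, which is the actual content of \cite{AGS}. Since $H^{(2)}\supseteq[H,H]$ for every group $H$ (a group of exponent $2$ is abelian), the subgroup $(\F_k)_{n+1}$ is exactly the kernel of $(\F_k)_n\to H_1((\F_k)_n;\Z/2)$; in other words, $X_{n+1}$ is precisely the $\Z/2$-homology cover of $X_n$. The key geometric lemma is then that the $\Z/2$-homology cover of a finite graph $X$ carries a wall structure, the walls being the edge fibres of the covering map, whose wall pseudometric agrees with the graph metric up to scales controlled by the girth of $X$. Feeding this into an induction on $n$, with $\mathrm{girth}(X_n)\to\infty$ supplying ever larger scales of agreement, produces embeddings of all the $X_n$ into Hilbert space with control functions $\rho_\pm$ \emph{independent of $n$}, hence a coarse embedding of the box space. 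Without identifying the walls and proving this comparison between wall metric and graph metric, ``packaging quadratic-form data into a proper conditionally negative type kernel with compatible interpolation across scales'' is a restatement of the goal rather than a proof.
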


To summarize the above discussion, we have a square of implications, for finitely generated groups (where CEH stands for {\it coarse embeddability into Hilbert space}):

$$\left. \begin{array}{ccc}
\mbox{amenable} & \Longrightarrow & \mbox{property A} \\ \Downarrow &  & \Downarrow \\  \mbox{Haagerup property}& \Longrightarrow & \mbox{CEH}
\end{array}\right.$$

Let us observe:
\begin{itemize}
\item The top horizontal and the left vertical implications cannot be reversed: indeed a non-abelian free group enjoys both property A and the Haagerup property, but is not amenable.
\item The bottom horizontal implication cannot be reversed: $SL_3(\Z)$ has CEH but, because of property (T), it does not have the Haagerup property. The same example shows that property A does {\it not} imply the Haagerup property.
\end{itemize}

This leaves possibly open the implications ``{\it CEH $\Rightarrow$ property A}'' (which was known to be false for spaces, by Theorem \ref{AGS}), and the weaker implication ``{\it Haagerup property $\Rightarrow$ property A}. The latter was disproved by D. Osajda \cite{Osajda}: he managed, using techniques of graphical small cancellation, to embed sequences of graphs isometrically into Cayley graphs of suitably constructed groups. This way he could prove:

\begin{Thm} There exists a finitely generated group not having property A, but admitting a proper isometric action on a CAT(0) cube complex (and therefore having the Haagerup property).
\end{Thm}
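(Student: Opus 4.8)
The plan is to reproduce D. Osajda's strategy of \emph{graphical small cancellation} \cite{Osajda}: construct a finitely generated group $\Gamma$ whose Cayley graph contains, isometrically embedded, an infinite sequence of finite graphs that fails property A, while simultaneously arranging the defining relators so as to produce a system of walls rich enough to cubulate $\Gamma$. The two requirements pull in opposite directions — geometric complexity of the embedded graphs versus the rigidity needed for small cancellation — and reconciling them is the crux of the argument.

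First I would fix the source of the failure of property A. By the result of Willett \cite{Willett11} quoted above, the coarse disjoint union of a sequence of finite graphs of bounded degree whose girth tends to infinity does not have property A; so I choose such a sequence $(\Theta_n)_{n>0}$. The objective is to realize the coarse disjoint union $\coprod_{n>0}\Theta_n$ as a coarsely embedded subspace of a Cayley graph $|\Gamma|$. Since property A is a coarse invariant that passes to subspaces, such an embedding forces $\Gamma$ to lack property A.

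Second comes the technical heart, which I expect to be the main obstacle: constructing a $Gr'(1/6)$-labelling of $\coprod_{n>0}\Theta_n$ over a finite alphabet $S$. Concretely, one must orient and label each edge by a letter of $S\cup S^{-1}$ so that the labelling is reduced and consistent at vertices, and so that every word read along a path lying in two distinct relators (a \emph{piece}) is short compared with the cycle lengths — the graphical analogue of the classical $C'(1/6)$ condition. Establishing the \emph{existence} of such a labelling on graphs combinatorially rich enough to defeat property A is delicate; I anticipate needing a probabilistic argument or a carefully staged explicit construction that controls the girth/expansion geometry and the small cancellation inequality at once. This is the step where essentially all the difficulty is concentrated.

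Finally, setting $\Gamma=\langle S\mid \mathcal{R}\rangle$ with $\mathcal{R}$ the words read along the cycles of the $\Theta_n$, graphical small cancellation theory (in the tradition of Gromov \cite{Gro03}, \cite{ArzDel}, with foundations due to Ollivier and Gruber) guarantees that each $\Theta_n$ embeds isometrically into $|\Gamma|$ and that these embeddings are uniformly coarse, whence $\coprod_{n>0}\Theta_n$ coarsely embeds into $|\Gamma|$ and $\Gamma$ has no property A. To obtain the cube complex action, I would exploit the wallspace structure that the $Gr'(1/6)$ condition furnishes: each relator contributes a family of hypergraphs/walls in $|\Gamma|$, and one verifies that these walls separate points and are permuted properly by $\Gamma$; the Sageev construction then produces a CAT(0) cube complex on which $\Gamma$ acts properly and isometrically (the graphical extension of Wise's cubulation of $C'(1/6)$ groups). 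Since a proper isometric action on a CAT(0) cube complex implies the Haagerup property (as recalled in the footnote to the Haagerup section, cf. \cite{Valette18}), the group $\Gamma$ has the Haagerup property — and is in particular CEH — yet fails property A, disproving the implication ``Haagerup property $\Rightarrow$ property A''.
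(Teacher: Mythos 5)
Your proposal is correct and follows essentially the same route as the paper, which proves nothing itself but attributes the result to Osajda's graphical small cancellation construction: embed a Willett-type sequence of bounded-degree, large-girth graphs isometrically into a Cayley graph via a $Gr'(1/6)$-labelling (the genuinely hard step, as you rightly note), then obtain the proper action on a CAT(0) cube complex from the wall structure that the small cancellation condition provides. Your sketch, including the identification of where the difficulty is concentrated, matches the strategy of \cite{Osajda} (surveyed in \cite{KhukhroBourbaki}) that the paper invokes.
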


We refer to \cite{KhukhroBourbaki} for a nice survey of that work.

\subsubsection{Boundary amenability}\label{boundary}

Let $\Gamma$ be a countable group; we denote by $Prob(\Gamma)$ the set of probability measures on $\Gamma$, endowed with the topology of pointwise convergence. 

\begin{Def}\begin{enumerate}
\item Let $X$ be a compact space on which $\Gamma$ acts by homeomorphisms. We say that the action $\Gamma\curvearrowright X$ is topologically amenable if there exists a sequence of continuous maps $\mu_n:X\rightarrow Prob(\Gamma)$ which are almost $\Gamma$-equivariant, i.e.
$$\lim_{n\rightarrow\infty}\sup_{x\in X}\|\mu_n(gx)-g\mu_n(x)\|_1=0.$$
\item The group $\Gamma$ is boundary amenable if $\Gamma$ admits a topologically amenable on some compact space.
\end{enumerate}
\end{Def}

For example, the action of $\Gamma$ on a point is topologically amenable if and only if $\Gamma$ is amenable, so boundary amenability is indeed a generalization of amenability. We will see in Theorem \ref{pleindegens} below that, for finitely generated group, boundary amenability is equivalent to property A. Boundary amenability attracted the attention of low-dimensional topologists, so that the following groups were shown to verify it:
\begin{itemize}
\item Mapping class groups, see \cite{Hamenstadt, Kida};
\item $Out(\F_n)$, the outer automorphism group of the free group, see \cite{BestvinaHorbezGuirardel}.
\end{itemize}

\subsubsection{Exactness}\label{exact}

For $C^*$-algebras $A,B$, denote by $A\otimes_{\rm min}B$ (resp. $A\otimes_{\rm max} B$) the minimal (resp. maximal) tensor product. Recall that $A$ is {\it nuclear} if the canonical map $A\otimes_{\rm max} B\rightarrow A\otimes_{\rm min} B$ is an isomorphism for every $C^*$-algebra $B$, and that $A$ is {\it exact} if the minimal tensor product with $A$ preserves short exact sequences of $C^*$-algebras. As the maximal tensor product preserves short exact sequences, nuclear implies exact. 

A classical result of Lance says that, for discrete groups, a group $\Gamma$ is amenable if and only if $C^*_r(\Gamma)$ is nuclear. It turns out that, for exactness we have an analogous result merging this section with subsections \ref{propA} and \ref{boundary}; it is a combination of results by Anantharaman-Delaroche and Renault \cite{AnanDelRen}, Guentner and Kaminker \cite{GK02}, Higson and Roe \cite{Higson-Roe}, and Ozawa \cite{Oz00}.

\begin{Thm}\label{pleindegens} For a finitely generated group $\Gamma$, the following are equivalent:
\begin{enumerate}
\item $\Gamma$ has property A;
\item $\Gamma$ is boundary amenable;
\item $C^*_r(\Gamma)$ is exact.
\end{enumerate}
\end{Thm}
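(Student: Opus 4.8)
The plan is to prove the three equivalences by routing everything through amenability of the canonical action of $\Gamma$ on its Stone-\v{C}ech compactification $\beta\Gamma$, and to isolate the single deep input, due to Ozawa, as the implication $(3)\Rightarrow(2)$.

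First I would establish $(1)\Leftrightarrow(2)$; the content here is a translation between the metric language of Definition \ref{Yu'sA} and the dynamical language of topological amenability. I would begin by observing that boundary amenability is equivalent to amenability of the specific action $\Gamma\curvearrowright\beta\Gamma$: one direction is trivial since $\beta\Gamma$ is compact, and for the other, given a topologically amenable action $\Gamma\curvearrowright X$ on a compact space, an orbit map $g\mapsto gx_0$ extends by the universal property of the Stone-\v{C}ech compactification to a continuous $\Gamma$-equivariant map $\beta\Gamma\to X$, along which the almost-equivariant maps $\mu_n\colon X\to\mathrm{Prob}(\Gamma)$ pull back to witness amenability of $\Gamma\curvearrowright\beta\Gamma$. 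It then remains to match property A with amenability of $\Gamma\curvearrowright\beta\Gamma$: a normalized positive-definite kernel $\Phi_n$ supported in an entourage is the same datum as a finitely supported (hence, after extension, weak-$*$ continuous on $\beta\Gamma$) family of probability measures $x\mapsto\mu_n^x$ on $\Gamma$, the uniform convergence of $\Phi_n$ to $1$ on entourages translating exactly into the almost-invariance $\sup_x\|\mu_n^{gx}-g\mu_n^x\|_1\to 0$. This is the Higson-Roe identification of property A with amenability of the groupoid $\beta\Gamma\rtimes\Gamma$ recalled in Section \ref{groupoids}.

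Next I would prove $(2)\Rightarrow(3)$. Suppose $\Gamma\curvearrowright X$ is topologically amenable with $X$ compact. By a theorem of Anantharaman-Delaroche the reduced crossed product $C(X)\rtimes_r\Gamma$ is then nuclear, in particular exact. Since $C^*_r(\Gamma)$ sits inside $C(X)\rtimes_r\Gamma$ as the closed subalgebra generated by the canonical unitaries $\{u_g\}_{g\in\Gamma}$, and since exactness passes to $C^*$-subalgebras by Kirchberg's theorem, $C^*_r(\Gamma)$ is exact.

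Finally, the implication $(3)\Rightarrow(2)$ is the hard part, and is Ozawa's theorem. The strategy is to use Kirchberg's local characterization of exactness: $C^*_r(\Gamma)$ is exact precisely when its identity map factors approximately, in the point-norm topology, through finite-dimensional algebras by unital completely positive maps. The difficulty, which is exactly where all the work lies, is to promote these \emph{analytic} finite-dimensional approximations to \emph{geometric} data of controlled propagation: Ozawa's insight is that the ucp approximations can be averaged against the group action and localized so as to produce finitely supported positive-definite kernels on $\Gamma\times\Gamma$ satisfying the requirements of Definition \ref{Yu'sA}, equivalently an almost-equivariant continuous family of probability measures on $\beta\Gamma$. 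Combined with $(1)\Leftrightarrow(2)$ this closes the cycle. I expect this last step to be the main obstacle, as it is precisely the point where one must convert a purely $C^*$-algebraic approximation property into a uniformly finite-propagation, dynamical statement, with no shortcut available.
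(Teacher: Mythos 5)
Your decomposition is exactly the one the paper has in mind: the paper offers no proof of Theorem \ref{pleindegens} but assembles it from the same sources you invoke --- Higson--Roe for the identification of property A with amenability of $\Gamma\curvearrowright\beta\Gamma$, Anantharaman-Delaroche--Renault for nuclearity of reduced crossed products by amenable actions, and Ozawa for the hard implication $(3)\Rightarrow(2)$. Your reduction of boundary amenability to amenability of the single action $\Gamma\curvearrowright\beta\Gamma$ via the universal property of the Stone-\v{C}ech compactification is correct, as is the passage $(2)\Rightarrow(3)$ through $C^*_r(\Gamma)\subseteq C(X)\rtimes_r\Gamma$ and Kirchberg's permanence of exactness under $C^*$-subalgebras. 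Two small caveats: the translation between positive-definite kernels supported in entourages and almost-equivariant families of probability measures is a standard lemma rather than literally an identification of data (one direction needs a square-root/GNS-type argument), and you silently bypass Guentner--Kaminker, which is harmless since your route $(1)\Rightarrow(2)\Rightarrow(3)$ replaces it.

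The genuine problem is in your sketch of the Ozawa step. What you state as ``Kirchberg's local characterization of exactness'' --- the \emph{identity} map of $C^*_r(\Gamma)$ factoring approximately, point-norm, through finite-dimensional algebras by u.c.p.\ maps --- is not a characterization of exactness: it is the completely positive approximation property, i.e.\ nuclearity. By Lance's theorem (quoted in this very section of the paper) $C^*_r(\Gamma)$ is nuclear precisely when $\Gamma$ is amenable, so the hypothesis you propose to work from is never available for a non-amenable exact group such as $\F_2$, and the argument as set up would collapse exactly where it is needed. Kirchberg's actual characterization is \emph{nuclear embeddability}: the \emph{inclusion} $C^*_r(\Gamma)\subseteq\mathcal{B}(\ell^2(\Gamma))$ factors approximately through matrix algebras, with the second leg landing in $\mathcal{B}(\ell^2(\Gamma))$ and not back in $C^*_r(\Gamma)$. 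This extra room is precisely the difference between exactness and nuclearity, and Ozawa's theorem consists in compressing and averaging such factorizations to produce positive-definite kernels supported in tubes and converging to $1$, i.e.\ property A. Since you in any case cite Ozawa for $(3)\Rightarrow(2)$, the global logic of your proposal stands, but the one piece of mathematics you sketch for that step starts from the wrong statement.
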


Combining with Theorems \ref{Aimpliescoarse} and \ref{coarseimpliesNov}, we get immediately:

\begin{Cor} If $\Gamma$ is a finitely generated group with property A, then for every $\Gamma-C^*$-algebra $A$ the assembly map $\mu_{A,r}$ is injective.
\end{Cor}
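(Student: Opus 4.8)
This corollary follows by directly chaining together three results already established in the excerpt, so the plan is essentially to assemble them in the correct logical order rather than to prove anything new.

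First I would invoke Theorem \ref{Aimpliescoarse}: since $\Gamma$ is a finitely generated group with property A, the underlying metric space $|\Gamma|$ (equipped with a word metric for some finite generating set) is a discrete metric space with property A, hence admits a coarse embedding into Hilbert space. Here I would note that property A for $\Gamma$ as a group is precisely property A for the metric space $|\Gamma|$, and that $|\Gamma|$ automatically has bounded geometry, so Theorem \ref{Aimpliescoarse} applies without any extra hypothesis.

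Next I would feed this conclusion into Theorem \ref{coarseimpliesNov}: having shown that $\Gamma$ admits a coarse embedding into Hilbert space, that theorem (due to Skandalis, Tu and Yu) yields immediately that the assembly map $\mu_{A,r}$ is injective for every $\Gamma$-$C^*$-algebra $A$, which is exactly the desired statement. I would emphasize that the strength of the corollary comes entirely from these two inputs, and that the equivalences of Theorem \ref{pleindegens} (property A $\Leftrightarrow$ boundary amenability $\Leftrightarrow$ exactness of $C^*_r(\Gamma)$) are not needed for the proof itself — they merely situate property A among several equivalent formulations, any one of which could serve as the hypothesis.

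There is no genuine obstacle here, since every implication is quoted from earlier in the text; the only point requiring a word of care is the compatibility of the two notions of property A (for the group versus for its Cayley graph as a metric space), and the observation that the descent-type injectivity statement of Theorem \ref{coarseimpliesNov} is stated precisely for coefficients in an arbitrary $\Gamma$-$C^*$-algebra $A$, matching what we want. The proof is therefore the two-line chain: property A $\Rightarrow$ coarse embeddability into Hilbert space (Theorem \ref{Aimpliescoarse}) $\Rightarrow$ injectivity of $\mu_{A,r}$ for all $A$ (Theorem \ref{coarseimpliesNov}).
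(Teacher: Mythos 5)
Your proposal is correct and coincides with the paper's own argument: the corollary is stated there as an immediate consequence of combining Theorem \ref{Aimpliescoarse} (property A implies coarse embeddability into Hilbert space) with Theorem \ref{coarseimpliesNov} (coarse embeddability implies injectivity of $\mu_{A,r}$ for all coefficients $A$). Your remarks that Theorem \ref{pleindegens} is not logically needed and that property A for the group means property A for the metric space $|\Gamma|$ are both accurate and match the paper's implicit reading.
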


As a consequence of Theorem \ref{pleindegens}, for a finitely generated group $\Gamma$, nuclearity and exactness of $C^*_r(\Gamma)$ are quasi-isometry invariants (which is by no means obvious on the analytical definitions). An interesting research question is: which other properties of $C^*_r(\Gamma)$ are quasi-isometry invariants of $\Gamma$?

We now explain how the lack of exactness of $C^*_r(\Gamma)$, when detected at the level of K-theory, leads to counterexamples to Conjecture \ref{ConjBCcoeff}.

\begin{Def} A $C^*$-algebra $C$ is {\it half-K-exact} if for any short exact sequence $0\rightarrow J\rightarrow A\rightarrow B\rightarrow 0$ of $C^*$-algebras, the sequence $$K_*(J\otimes_{\rm min}C)\rightarrow K_*(A\otimes_{\rm min}C)\rightarrow K_*(B\otimes_{\rm min} C)$$ is exact in the middle term.
\end{Def}

The following statement is an unpublished result by N. Ozawa (see however Theorem 5.2 in \cite{ThesisOzawa}).

\begin{Thm}\label{monsters} Gromov monsters are not half-K-exact.
\end{Thm}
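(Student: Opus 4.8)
The plan is to show that $C=C^*_r(\Gamma)$ fails the defining property of half-K-exactness for one explicitly chosen short exact sequence, the obstruction being a ghost projection manufactured from the expander that a Gromov monster $\Gamma$ carries in its Cayley graph. First I would record the conceptual reason the statement should hold: by Theorem \ref{pleindegens}, $C^*_r(\Gamma)$ is exact if and only if $\Gamma$ has property A; but $\Gamma$ contains a coarsely embedded family of expanders, which by Theorem \ref{Aimpliescoarse} and Proposition \ref{expandersdonotembed} rules out property A (property A passes to subspaces, yet an embedded expander admits no coarse embedding into Hilbert space). Hence $C^*_r(\Gamma)$ is not exact, so minimal tensoring with it breaks exactness of some short exact sequence at the $C^*$-level. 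The content of the theorem is that this failure survives in $K$-theory, and the goal is to pin it down by a concrete computation in the style of the proof of Proposition \ref{inj/surj}.

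Concretely I would fix a coarse embedding of the coarse disjoint union $\coprod_k X_k$ of a $d$-regular expander family into $|\Gamma|$, write $n_k=|V_k|$, set $J=\bigoplus_k M_{n_k}(\C)$, $A=\prod_k M_{n_k}(\C)$, $B=A/J$, and apply $(-)\otimes_{\min}C^*_r(\Gamma)$ to $0\to J\to A\to B\to 0$ (note the trivial $\Gamma$-action on the matrix algebras):
$$0\longrightarrow \bigoplus_k M_{n_k}(\C)\longrightarrow \prod_k M_{n_k}(\C)\longrightarrow B\longrightarrow 0.$$
The embedding sends each edge $x\sim y$ of $X_k$ to a group element of uniformly bounded length, so the combinatorial Laplacians assemble into a \emph{bounded-propagation} element $\Delta=\sum_{s\in S'}M_s\otimes\lambda(s)$, a finite sum of elementary tensors over a fixed finite set $S'\subset\Gamma$, hence a genuine element of $\big(\prod_k M_{n_k}(\C)\big)\otimes_{\min}C^*_r(\Gamma)$. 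The crucial point — where non-exactness enters — is that although each fibre $\Delta_k$ is invertible in $M_{n_k}(\C)\otimes C^*_r(\Gamma)$ (the regular representation of the non-amenable group $\Gamma$ has no almost invariant vectors under $S'$, while the expander gap controls the complement of the constants), the inverse need not lie in the minimal tensor subalgebra. I would argue that the image $\overline{\Delta}\in B\otimes_{\min}C^*_r(\Gamma)$ nevertheless has $0$ isolated in its spectrum, so that holomorphic functional calculus yields a nonzero spectral projection, a ghost in the quotient.

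Finally I would extract the $K$-theoretic obstruction exactly as in Proposition \ref{inj/surj}: comparing $K_0(J\otimes_{\min}C^*_r(\Gamma))$, which is the algebraic direct sum $K_0(C^*_r(\Gamma))^{(\N)}$, with the product $K_0(C^*_r(\Gamma))^{\N}$ receiving the fibrewise projections, the ghost class evaluates to the constant family $(1,1,1,\dots)$, which cannot come from the ideal $J\otimes_{\min}C$ yet is killed in the quotient term; this precisely contradicts exactness in the middle of the $K_0$-sequence associated with our extension tensored by $C^*_r(\Gamma)$. It is worth noting that this is the same ghost-projection mechanism that makes $\mu_{A,r}$ non-surjective for $A=\ell^\infty(\N,c_0(\Gamma))$ in the preceding Gromov-monster counterexample to Conjecture \ref{ConjBCcoeff}, so the two failures are two faces of the non-exactness of $C^*_r(\Gamma)$. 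The main obstacle is the analytic step of the second paragraph — establishing a genuine spectral gap for $\overline{\Delta}$ in the minimal tensor product (equivalently, producing the ghost projection with the correct propagation and $K$-theory) — since minimal tensoring with a non-exact $C^*$-algebra can destroy invertibility in uncontrolled ways, and one must verify that the resulting projection is supported so that its class is detected by the product-versus-sum comparison rather than being absorbed into the ideal.
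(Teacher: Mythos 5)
Your skeleton is the same as the paper's: the same extension $0\to J=\oplus_k M_{n_k}(\C)\to \prod_k M_{n_k}(\C)\to B\to 0$ tensored with $C^*_r(\Gamma)$, the same finite-propagation element built from the coarsely embedded expander, and the same sum-versus-product detection in $K_0$ at the end. But the analytic claims in your second paragraph are wrong, and they sit exactly where the theorem's content lies. Your element is, fibrewise, the image of the graph Laplacian $\Delta_k$ under the unital injective $*$-homomorphism $\iota_k:M_{n_k}(\C)\to M_{n_k}(\C)\otimes C^*_r(\Gamma)$, $E_{xy}\mapsto E_{xy}\otimes f_k(x)^{-1}f_k(y)$. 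Since a unital injective $*$-homomorphism preserves spectra, the fibre $\iota_k(\Delta_k)$ is \emph{not} invertible: $0$ lies in its spectrum with nonzero spectral projection $\iota_k(p_k)$ (concretely, the vectors $\sum_x\delta_x\otimes\delta_{f_k(x)^{-1}h}$ are in its kernel; non-amenability of $\Gamma$ and almost invariant vectors play no role here). For the same reason, the spectral gap of $\Delta$ in $\big(\prod_k M_{n_k}(\C)\big)\otimes_{\rm min}C^*_r(\Gamma)$ is automatic, by the expander gap plus spectral permanence, so the projection $q=(\iota_k(p_k))_k$ exists with no analytic difficulty whatsoever: the ``main obstacle'' you flag is not where the work is. What actually requires proof is the opposite of what you assert: the image $\overline{\Delta}$ in $B\otimes_{\rm min}C^*_r(\Gamma)$ is \emph{invertible}, i.e.\ $(\pi\otimes_{\rm min}\mathrm{Id})(q)=0$ --- the ghost lives in the middle algebra and dies in the quotient. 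If, as you claim, the quotient contained a nonzero spectral projection, your own endgame would collapse: you would then need its class to vanish in $K_0(B\otimes_{\rm min}C^*_r(\Gamma))$, and it does not (a trace on $B$ tensored with $\tau$ detects it). This is also exactly what distinguishes the twisted element from the untwisted $(\Delta_k\otimes 1)_k$, for which the spectral projection survives in the quotient and the whole argument fails.

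The missing step --- the one place the coarse embedding is used beyond providing finite propagation --- is the following. Coarse embeddings have uniformly finite fibres, $|f_k^{-1}(g)|\le K$ for all $k$ and $g$. Applying the conditional expectation $\mathbb{E}=\mathrm{Id}\otimes\tau$, where $\tau$ is the canonical trace on $C^*_r(\Gamma)$, the matrix $\mathbb{E}(\iota_k(p_k))$ has $(x,y)$-entry $\frac{1}{n_k}$ when $f_k(x)=f_k(y)$ and $0$ otherwise, hence operator norm at most $K/n_k\to 0$. Therefore $\mathbb{E}(q)$ lies in the ideal $J$, and compatibility of $\mathbb{E}$ with the quotient map together with faithfulness of the induced expectation $B\otimes_{\rm min}C^*_r(\Gamma)\to B$ forces $(\pi\otimes_{\rm min}\mathrm{Id})(q)=0$. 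Only then does your final paragraph make sense: the class $[q]$ is killed in the quotient term for the trivial reason that $q$ itself maps to zero, while the fibrewise evaluations $(\sigma_k\otimes\tau)_*[q]=1$ for all $k$ show $[q]$ is not in the image of $K_0(J\otimes_{\rm min}C^*_r(\Gamma))$, whose classes have only finitely many nonzero evaluations. That last comparison in your proposal is correct and matches Proposition \ref{inj/surj} and the paper's proof; the gap is entirely in establishing that $q$ is a genuine ghost, which your proposal not only omits but contradicts.
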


\begin{proof}[Proof] Let $\Gamma$ be a Gromov monster. So there is a family $(X_k)_{k>0}$ of $d$-regular expanders which coarsely embeds in $\Gamma$, i.e. there exists a family of maps $f_k:X_k\rightarrow \Gamma$ such that, for $x_k,y_k\in X_k$, we have $d_{X_k}(x_k,y_k)\rightarrow+\infty \Longleftrightarrow d_\Gamma(f_k(x_k),f_k(y_k))\rightarrow+\infty$. We will need below a consequence of this fact: there exists a constant $K>0$ such that the fiber $f_k^{-1}(g)$ has cardinality at most $K$, for every $k>0$ and every $g\in\Gamma$. (Indeed, first observe that, as a consequence of the coarse embedding, there exists $R>0$ such that, for every $k$ and $g$, we have $d_{X_k}(x,y)\leq R$ for every $x,y\in f_k^{-1}(g)$; then use the bounded geometry of the family $(X_k)_{k>0}$: we may for example take for $K$ the cardinality of a ball of radius $R$ in the $d$-regular tree.)

We now start the proof really. Denote by $n_k$ the number of vertices of $X_k$, and form the product of matrix algebras $M=\prod_{k>0} M_{n_k}(\C)$ together with its ideal $J=\oplus_{k>0} M_{n_k}(\C)$. We are going to show that the sequence
$$K_0(J\otimes_{\rm min}C^*_r(\Gamma))\rightarrow K_0(M\otimes_{\rm min} C^*_r(\Gamma))\rightarrow K_0((M/J)\otimes_{\rm min} C^*_r(\Gamma))$$
is not exact at its middle term. Let us identify $M_{n_k}(\C)$ with $End(\ell^2(X_k)$ via the canonical basis. 

We first define an injective homomorphism $\iota_k:M_{n_k}(\C)\rightarrow M_{n_k}(\C)\otimes C^*_r(\Gamma)$ by $\iota_k(E_{xy})=E_{xy}\otimes f_k(x)^{-1}f_k(y)$, where $E_{xy}$ is the standard set of matrix units in $End(\ell^2(X_k))$. We then use an idea similar to the one in the proof of Proposition \ref{inj/surj}. Let $\Delta_k$ be the combinatorial Laplace operator on $X_k$, let $p_k$ be the projection on its 1-dimensional kernel: recall that $(p_k)_{xy}=\frac{1}{n_k}$ for every $x,y\in X_k$. Then $\Delta:=(\iota_k(\Delta_k))_{k>0}\in M\otimes_{\rm min}C^*_r(\Gamma)$ has 0 as an isolated point in its spectrum, as the $X_k$'s are a family of expanders. The spectral projection associated with 0 is $q=(\iota_k(p_k))_{k>0}$. The class $[q]\in K_0(M\otimes_{\rm min}C^*_r(\Gamma))$ will witness the desired non-exactness. 

Let $\pi:M\rightarrow M/J$ denote the quotient map. To show that $q$ is in the kernel of $\pi\otimes_{\rm min}Id$, consider the conditional expectation $\mathbb{E}_M=Id_M\otimes\tau: M\otimes_{\rm min}C^*_r(\Gamma)\rightarrow M$, where $\tau$ denotes the canonical trace on $C^*_r(\Gamma)$. We have 
$$(Id_{n_k}\otimes \tau)(\iota_k(p_k))_{xy}=\left\{\begin{array}{ccc}\frac{1}{n_k} & if  & f_k(x)=f_k(y) \\  0& if & f_k(x)\neq f_k(y)\end{array}\right.$$
So the operator norm of $(Id_{n_k}\otimes \tau)(\iota_k(p_k))$ satisfies:
$$\|(Id_{n_k}\otimes \tau)(\iota_k(p_k))\|\leq \frac{1}{n_k}\cdot\max_{x\in X_k}|f_k^{-1}(f_k(x))|\leq \frac{K}{n_k},$$
where $K$ is the constant introduced at the beginning of the proof. As a consequence $\mathbb{E}_M(q)$ belongs to $J$ and 
$${\scriptstyle 0=\pi(\mathbb{E}_M(q))=\pi((Id_M\otimes\tau)(q))=(Id_{M/J}\otimes \tau)((\pi\otimes_{\rm min}Id)(q))=\mathbb{E}_{M/J}((\pi\otimes_{\rm min}Id)(q));}$$
by faithfulness of $\mathbb{E}_{M/J}$ we get $(\pi\otimes_{\rm min}Id)(q)=0$.

It remains to show that $[q]$ is not in the image of $K_0(J\otimes_{\rm min}C^*_r(\Gamma))$ in $K_0(M\otimes_{\rm min}C^*_r(\Gamma)$. For this, denote by $\sigma_k:M\otimes_{\rm min}C^*_r(\Gamma)\rightarrow M_{n_k}(\C)\otimes C^*_r(\Gamma)$ the projection on the $k$-th factor. Because $K_0(J\otimes_{\rm min}C^*_r(\Gamma))=\oplus_{k>0} K_0(M_{n_k}(\C)\otimes C^*_r(\Gamma))$, for every $x\in K_0(J\otimes_{\rm min}C^*_r(\Gamma))$ we have
$(\sigma_k\otimes\tau)(x)=0$ for $k$ large enough. On the other hand $(\sigma_k\otimes\tau)(q)>0$ for every $k>0$.
\end{proof}

The following result may be extracted from \cite{HLS}, where it is not stated explicitly.

\begin{Thm}\label{BCnotonto} Let $\Gamma$ be a countable group. If $\Gamma$ is not half-K-exact, then there is a $C^*$-algebra $C$ with {\it trivial} $\Gamma$-action such that the assembly map $$\mu_{C,r}: K^{top}_*(\Gamma,C)\rightarrow K_*(C^*_r(\Gamma,C))$$ is NOT onto.
\end{Thm}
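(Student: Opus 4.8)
The plan is to exploit the naturality of the assembly map together with the good exactness properties of its left-hand side, converting the failure of exactness into a failure of surjectivity by means of a mapping cone. First I would reduce to trivial coefficients: for a trivial $\Gamma$-action one has $C^*_r(\Gamma,C)=C^*_r(\Gamma)\otimes_{\min}C$, so the hypothesis that $C^*_r(\Gamma)$ is not half-$K$-exact furnishes a short exact sequence $0\to J\xrightarrow{i}A\xrightarrow{q}B\to 0$ of $C^*$-algebras, which I equip with the trivial $\Gamma$-action, such that $K_*(C^*_r(\Gamma,J))\to K_*(C^*_r(\Gamma,A))\to K_*(C^*_r(\Gamma,B))$ fails to be exact in the middle.

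The coefficient algebra I would use is the mapping cone $C_q=\{(a,f)\in A\oplus C_0((0,1],B):f(1)=q(a)\}$ (trivial action). It sits in two extensions: the always-semisplit cone sequence $0\to SB\to C_q\xrightarrow{\mathrm{ev}}A\to 0$, and $0\to J\xrightarrow{j}C_q\to CB\to 0$, where $SB,CB$ are the suspension and cone of $B$ and $j(x)=(i(x),0)$. Since the cone sequence is semisplit, applying $C^*_r(\Gamma,-)=C^*_r(\Gamma)\otimes_{\min}(-)$ keeps it exact and identifies $C^*_r(\Gamma,C_q)$ with the mapping cone of $C^*_r(\Gamma,q)$; the associated six-term sequence then has connecting map $q_*$, so $\ker q_*=\mathrm{im}(\mathrm{ev}_*)$ in $K_*(C^*_r(\Gamma,A))$. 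The non-exactness of the original sequence at $A$ reads $\mathrm{im}(\mathrm{ev}_*)=\ker q_*\supsetneq\mathrm{im}(i_*)=\mathrm{im}(\mathrm{ev}_*\circ j_*)$, so I may pick $\eta\in K_*(C^*_r(\Gamma,C_q))$ with $\mathrm{ev}_*(\eta)\notin\mathrm{ev}_*(\mathrm{im}\,j_*)$; in particular $\eta\notin\mathrm{im}(j_*)$.

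The key structural input I would invoke is that the left-hand side $K^{top}_*(\Gamma,-)$ is half-exact on trivially-acted coefficient algebras. Indeed, for a $\Gamma$-compact proper $X$ the Green--Julg isomorphism identifies $KK^\Gamma_*(C_0(X),-)$ with $K_*\big(C^*_r(\Gamma,C_0(X))\otimes_{\min}(-)\big)$, and $C^*_r(\Gamma,C_0(X))$ is nuclear (a proper cocompact action of a discrete group has finite stabilisers, so the crossed product is type I); hence $K_*\big(C^*_r(\Gamma,C_0(X))\otimes_{\min}(-)\big)$ is half-exact for \emph{all} extensions, and a filtered colimit of half-exact functors is half-exact. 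Applying this to $0\to J\to C_q\to CB\to 0$ together with $K^{top}_*(\Gamma,CB)=0$ (the cone is contractible, and $K^{top}$ is homotopy invariant) shows that $j^{top}_*:K^{top}_*(\Gamma,J)\to K^{top}_*(\Gamma,C_q)$ is surjective.

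I would finish by contradiction. If $\mu_{C_q,r}$ were onto, write $\eta=\mu_{C_q,r}(\tilde\eta)$ and, using surjectivity of $j^{top}_*$, choose $\tilde w$ with $\tilde\eta=j^{top}_*(\tilde w)$; naturality of assembly then gives $\eta=\mu_{C_q,r}(j^{top}_*\tilde w)=j_*\big(\mu_{J,r}(\tilde w)\big)\in\mathrm{im}(j_*)$, contradicting the choice of $\eta$. Hence $\mu_{C_q,r}$ is not onto and $C=C_q$ witnesses the statement. The step I expect to be the main obstacle, and which I would treat most carefully, is exactly the half-exactness of $K^{top}_*(\Gamma,-)$ for the \emph{non-semisplit} extension $0\to J\to C_q\to CB\to 0$: the rest is formal once this is in place, and it is precisely here that one must use the nuclearity of proper cocompact crossed products rather than the mere semisplit half-exactness enjoyed by $KK$ in general.
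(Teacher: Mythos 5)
Your proof is correct, but it follows a genuinely different route from the paper's, so let me compare the two. The paper takes the coefficient algebra one cone deeper than you do: it sets $C=C(\gamma)$, the mapping cone of the map $\gamma\colon J\to C(\beta)$ (your $j\colon J\to C_q$), and then argues entirely on the analytic side. Since $\otimes_{\rm max}$ preserves short exact sequences, parts (1) and (3) of the paper's mapping-cone lemma give $K_*(C\otimes_{\rm max}C^*_{\rm max}(\Gamma))=0$; since $\mu_{C,r}$ factors through $\mu_{C,{\rm max}}$, the assembly map is therefore the \emph{zero} map, while parts (2) and (3) of the same lemma convert the failure of half-K-exactness into $K_*(C\otimes_{\rm min}C^*_r(\Gamma))\neq 0$. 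Non-surjectivity thus follows with no input whatsoever about the left-hand side beyond the factorization through the maximal completion. Your argument, by contrast, uses the single cone $C_q=C(\beta)$, never mentions the maximal crossed product, and instead exploits a structural property of the left-hand side: half-exactness of $K^{top}_*(\Gamma,-)$ on trivially-acted coefficients, obtained from the generalized Green--Julg isomorphism $KK^\Gamma_*(C_0(X),D)\cong K_*\bigl((C_0(X)\rtimes_r\Gamma)\otimes_{\rm min}D\bigr)$ together with nuclearity of $C_0(X)\rtimes_r\Gamma$ for $X$ proper and cocompact, plus naturality of assembly; you correctly identify the half-exactness for the non-semisplit extension $0\to J\to C_q\to CB\to 0$ as the crux. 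This all works (one quibble: your justification of nuclearity via ``finite stabilisers, hence type I'' is loose --- the clean argument is that proper actions are amenable, hence the crossed product is nuclear --- but the fact itself is standard). What each approach buys: yours uses a simpler coefficient algebra and isolates the conceptual reason for the failure, namely that the topological side is half-exact for arbitrary extensions of trivially-acted algebras while the reduced analytic side is not, but at the price of invoking Green--Julg machinery that is much heavier than anything in the paper's argument and requires some care about the regularity of the $\Gamma$-compact pieces of $\underline{E\Gamma}$; the paper's proof is shorter, essentially elementary and self-contained, and yields the stronger conclusion that $\mu_{C,r}$ is identically zero onto a nonzero target.
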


The proof will be given below. Combining with Theorem \ref{monsters} and its proof, we immediately get the following:

\begin{Cor} If $\Gamma$ is a Gromov monster, there exists a non-commutative $C^*$-algebra $C$ with {\it trivial} $\Gamma$-action such that the assembly map $$\mu_{C,r}: K^{top}_*(\Gamma,C)\rightarrow K_*(C^*_r(\Gamma,C))$$ is NOT onto.
\hfill$\square$
\end{Cor}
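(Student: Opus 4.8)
The plan is to deduce the statement directly from Theorem \ref{monsters} and Theorem \ref{BCnotonto}, the only genuine task beyond citing these two results being to pin down the coefficient algebra $C$ concretely enough to guarantee that it is non-commutative. First I would record that a Gromov monster $\Gamma$ is in particular a countable (indeed finitely generated) group, so both cited results apply. By Theorem \ref{monsters}, $\Gamma$ is not half-K-exact; feeding this into Theorem \ref{BCnotonto} produces a $C^*$-algebra $C$ with \emph{trivial} $\Gamma$-action for which $\mu_{C,r}$ is not onto. This already exhibits a failure of surjectivity in Conjecture \ref{ConjBCcoeff}; what is left is to arrange that $C$ can be taken non-commutative.

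The key point is that Theorem \ref{BCnotonto} should not be used as a bare existence statement: I would open up its mechanism and observe that it selects $C$ among the three terms of whatever short exact sequence $0\rightarrow J\rightarrow A\rightarrow B\rightarrow 0$ witnesses the failure of half-K-exactness of $C^*_r(\Gamma)$. Indeed, for a trivial action one has $C^*_r(\Gamma,D)\cong C^*_r(\Gamma)\otimes_{\rm min}D$, so that non-exactness in the middle of $K_*(J\otimes_{\rm min}C^*_r(\Gamma))\rightarrow K_*(A\otimes_{\rm min}C^*_r(\Gamma))\rightarrow K_*(B\otimes_{\rm min}C^*_r(\Gamma))$ is exactly non-exactness in the middle of the reduced row $K_*(C^*_r(\Gamma,J))\rightarrow K_*(C^*_r(\Gamma,A))\rightarrow K_*(C^*_r(\Gamma,B))$. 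Since the topological side $K^{top}_*(\Gamma,-)$ is half-exact in the coefficient variable, and the maximal side $K_*(C^*_{\rm max}(\Gamma,-))=K_*(C^*_{\rm max}(\Gamma)\otimes_{\rm max}-)$ is exact in the middle (the maximal tensor product preserving short exact sequences), a diagram chase in the spirit of Lemma \ref{notonto}, using the factorization $\mu_{-,r}=\lambda_*\circ\mu_{-,\rm max}$, forces at least one of $\mu_{J,r},\mu_{A,r},\mu_{B,r}$ to fail surjectivity. I would then take $C$ to be that failing term, so that $C\in\{J,A,B\}$.

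Finally I would invoke the explicit witness built in the proof of Theorem \ref{monsters}: there $J=\oplus_{k>0}M_{n_k}(\C)$, $A=M=\prod_{k>0}M_{n_k}(\C)$ and $B=M/J$, where $n_k=|V_k|$ is the number of vertices of the $k$-th expander $X_k$ coarsely embedded in $\Gamma$, whence $n_k\rightarrow\infty$. Each of these three algebras is non-commutative as soon as $n_k\geq 2$: $J$ and $M$ are sums and products of the matrix algebras $M_{n_k}(\C)$, and for the corona $M/J$ one checks that the images of $(E^{(k)}_{12})_k$ and $(E^{(k)}_{21})_k$ do not commute, their commutator being nonzero in every coordinate and hence not lying in $J=\oplus_{k>0}M_{n_k}(\C)$. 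Since $C\in\{J,M,M/J\}$, the coefficient algebra $C$ is non-commutative, which completes the deduction.

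The main obstacle is precisely this bookkeeping: sharpening the abstract conclusion of Theorem \ref{BCnotonto} so that $C$ is identified with one of the three terms of the witnessing sequence, and verifying that for a Gromov monster this witness (supplied by the proof of Theorem \ref{monsters}) is built entirely from non-commutative matrix-algebra constructions. Two points in the chase require care and I would check them explicitly: the half-exactness of equivariant K-homology $K^{top}_*(\Gamma,-)$ in the coefficients, and the fact that the composite $J\rightarrow A\rightarrow B$ induces the zero map on K-theory, so that in each row the image is automatically contained in the kernel and ``non-exactness in the middle'' is the correct failure being exploited.
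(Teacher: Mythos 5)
Your first paragraph is fine, but the second paragraph misreads the mechanism of Theorem \ref{BCnotonto}, and the diagram chase you substitute for it has a genuine gap. In the paper's proof of Theorem \ref{BCnotonto}, the coefficient algebra is \emph{not} one of the three terms $J,A,B$ of the witnessing sequence: it is the double mapping cone $C=C(\gamma)$, where $\gamma:J\rightarrow C(\beta)$ is the inclusion into the mapping cone of $\beta:A\rightarrow B$. The point of this construction is that it needs no injectivity input whatsoever: by exactness of the maximal tensor product together with Lemma \ref{mappingcone}(1) and (3), one gets $K_*(C\otimes_{\rm max}C^*_{\rm max}(\Gamma))=0$, while non-half-K-exactness together with Lemma \ref{mappingcone}(2) and (3) gives $K_*(C\otimes_{\rm min}C^*_r(\Gamma))\neq 0$; since $\mu_{C,r}$ factors through $\mu_{C,{\rm max}}$, it is the zero map into a non-zero group, hence not onto. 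Your chase, by contrast, cannot deliver the trichotomy you assert. Running the argument of Lemma \ref{notonto} on the rows for $J,A,B$ yields only the dichotomy: \emph{either} $\mu_{A,r}$ fails surjectivity \emph{or} $\mu_{B,r}$ fails injectivity. To convert this into a surjectivity failure at one of $J,A,B$ you would need injectivity of $\mu_{B,r}$ for $\Gamma$ with coefficients in $B=M/J$ — and for a Gromov monster no such injectivity is available (these groups do not coarsely embed into Hilbert space, so Theorem \ref{coarseimpliesNov} does not apply, and the paper establishes no substitute). Your appeal to half-exactness of $K^{\mathrm{top}}_*(\Gamma,-)$ in the coefficient variable is also unjustified: half-exactness of $KK^G(C_0(X),-)$ requires a semisplit sequence, and no completely positive splitting of $\prod_k M_{n_k}(\C)\rightarrow \big(\prod_k M_{n_k}(\C)\big)/\big(\oplus_k M_{n_k}(\C)\big)$ is exhibited (the quotient is non-separable, so the usual lifting theorems do not apply directly). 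Moreover, even granting your chase, it would not force the failure to occur at a \emph{surjectivity} statement, which is what the Corollary asserts.

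The part of your proposal that survives is the non-commutativity bookkeeping, which transfers directly to the correct choice of $C$: with the witness from the proof of Theorem \ref{monsters}, namely $J=\oplus_{k>0}M_{n_k}(\C)$, $A=M=\prod_{k>0}M_{n_k}(\C)$, $B=M/J$, the cone $C(\gamma)$ contains a copy of the suspension $C_0(]0,1[)\otimes C(\beta)$, which in turn contains the suspension of $B$; your matrix-unit computation showing that the images of $(E^{(k)}_{12})_k$ and $(E^{(k)}_{21})_k$ do not commute in $M/J$ then shows $C$ is non-commutative. So the repair is: drop the chase, quote the \emph{proof} (not just the statement) of Theorem \ref{BCnotonto} to identify $C=C(\gamma)$ explicitly, and apply your commutator argument inside the cone. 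This is exactly what the paper means by ``combining with Theorem \ref{monsters} \emph{and its proof}''.
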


It seems this is as close as one can get to a counter-example to the Baum-Connes conjecture without coefficients (conjecture \ref{ConjBC}).

To prove Theorem \ref{BCnotonto}, we start by some recalls about mapping cones.

\begin{Def} Let $\beta:A\rightarrow B$ be a homomorphism of $C*$-algebras. The {\it mapping cone} of $\beta$ is the $C^*$-algebra $C(\beta) =\{(a,f)\in A\oplus C([0,1],B): f(0)=\beta(a), f(1)=0\}$.
\end{Def}

Consider now the following situation, with three $C^*$-algebras $J,A,B$ and homomorphisms:
\begin{itemize}
\item $\alpha:J\rightarrow A$, injective;
\item $\beta:A\rightarrow B$, surjective, such that $\beta\circ\alpha=0$.
\end{itemize}
We then have an inclusion $\gamma:J\rightarrow C(\beta):j\mapsto(\alpha(j),0)$.

\begin{Lem}\label{mappingcone}\begin{enumerate}
\item If $Im(\alpha)=\ker(\beta)$, i.e. the sequence $0\rightarrow J\rightarrow A\rightarrow B\rightarrow 0$ is exact, then $\gamma_*:K_*(J)\rightarrow K_*(C(\beta))$ is an isomorphism.
\item If $\gamma_*$ is an isomorphism, then the sequence $K_*(J)\stackrel{\alpha_*}{\rightarrow}K_*(A)\stackrel{\beta_*}{\rightarrow}K_*(B)$ is exact.
\item $\gamma_*$ is an isomorphism if and only if $K_*(C(\gamma))=0$.
\end{enumerate}
\end{Lem}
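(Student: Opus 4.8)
The plan is to prove the three statements about the mapping cone $C(\beta)$ using the basic tools of K-theory for $C^*$-algebras: the six-term exact sequence associated to a short exact sequence, the long exact sequence of a mapping cone, and the homotopy invariance of K-theory. I will treat the three parts in order, since the later parts build on the machinery set up for the first.

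For part (1), I would first recall the standard \emph{mapping cone exact sequence}. For any homomorphism $\beta:A\rightarrow B$, there is a short exact sequence $0\rightarrow SB\rightarrow C(\beta)\rightarrow A\rightarrow 0$, where $SB=C_0(]0,1[,B)$ is the suspension of $B$; this is just the evaluation-at-$0$ map $C(\beta)\rightarrow A$ composed (in the usual way) with the inclusion of paths vanishing at both endpoints. The associated six-term sequence, after using $K_*(SB)\cong K_{*+1}(B)$ (Bott periodicity), reads as a long exact sequence $\cdots\rightarrow K_*(C(\beta))\rightarrow K_*(A)\stackrel{\beta_*}{\rightarrow}K_*(B)\rightarrow\cdots$, in which the connecting map $K_*(A)\rightarrow K_{*-1}(SB)=K_*(B)$ is exactly $\beta_*$ (this identification of the connecting map is the one slightly technical point, but it is classical). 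Now assume $0\rightarrow J\rightarrow A\rightarrow B\rightarrow 0$ is exact. The map $\gamma:J\rightarrow C(\beta)$ sits in a commutative diagram between the six-term sequence for $0\rightarrow J\rightarrow A\rightarrow B\rightarrow 0$ and the long exact mapping-cone sequence for $\beta$; I would write this comparison diagram out and apply the five lemma to conclude that $\gamma_*:K_*(J)\rightarrow K_*(C(\beta))$ is an isomorphism. I expect this comparison-of-sequences step, together with the correct identification of the connecting homomorphism, to be the main obstacle, as it is where all the bookkeeping lives.

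For part (2), I would use the same mapping-cone long exact sequence $\cdots\rightarrow K_*(C(\beta))\stackrel{\iota}{\rightarrow}K_*(A)\stackrel{\beta_*}{\rightarrow}K_*(B)\rightarrow\cdots$, where $\iota$ is induced by evaluation at $0$ followed by inclusion. One checks directly that $\iota\circ\gamma_*=\alpha_*$, since $\gamma(j)=(\alpha(j),0)$ evaluates at $0$ to $\alpha(j)$. Hence if $\gamma_*$ is an isomorphism, the image of $\alpha_*$ equals the image of $\iota$, which by exactness of the mapping-cone sequence is exactly the kernel of $\beta_*$. Combined with $\beta_*\circ\alpha_*=(\beta\circ\alpha)_*=0$, this gives exactness of $K_*(J)\stackrel{\alpha_*}{\rightarrow}K_*(A)\stackrel{\beta_*}{\rightarrow}K_*(B)$ in the middle term, as desired.

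For part (3), I would invoke the general principle that a homomorphism $\gamma$ induces an isomorphism on K-theory if and only if its own mapping cone $C(\gamma)$ is K-theoretically trivial. Concretely, one more application of the mapping-cone long exact sequence, this time for $\gamma$ itself, yields $\cdots\rightarrow K_{*+1}(C(\gamma))\rightarrow K_*(J)\stackrel{\gamma_*}{\rightarrow}K_*(C(\beta))\rightarrow K_*(C(\gamma))\rightarrow\cdots$; exactness shows $\gamma_*$ is an isomorphism in all degrees precisely when $K_*(C(\gamma))=0$. This is a direct reading of the long exact sequence and should require no further work beyond citing the mapping-cone sequence already used above.
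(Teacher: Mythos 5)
Your proposal is correct, and it is uniform in a way the paper's proof is not: all three parts are deduced from a single tool, the Puppe (mapping-cone) exact sequence $\cdots\rightarrow K_{*+1}(B)\rightarrow K_*(C(\beta))\rightarrow K_*(A)\stackrel{\beta_*}{\rightarrow}K_*(B)\rightarrow\cdots$. The paper proceeds differently in two places. For part (1) it simply cites Exercise 6.N of Wegge-Olsen, whereas you sketch the actual argument (comparison of the cone sequence with the six-term sequence of the extension via $\gamma_*$ and the identity maps, then the five lemma); as you note, the real content there is the commutativity of the squares involving the connecting maps, i.e.\ identifying the boundary map of the cone sequence with $\beta_*$ and checking its compatibility with the index map of the extension. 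For part (2) the paper does not use the Puppe sequence at all: it sets $I=\ker(\beta)$, factors $\gamma=\tilde{\gamma}\circ\alpha$ through $\tilde{\gamma}:I\rightarrow C(\beta)$, applies part (1) to the exact sequence $0\rightarrow I\rightarrow A\rightarrow B\rightarrow 0$ to get that $\tilde{\gamma}_*$ is an isomorphism, and concludes that $\alpha_*:K_*(J)\rightarrow K_*(I)$ is an isomorphism, whence exactness. Your route via $\iota\circ\gamma_*=\alpha_*$ and $\mathrm{Im}(\iota)=\ker(\beta_*)$ is logically independent of part (1) and does not even use surjectivity of $\beta$; the paper's route is shorter (given the citation) and yields the slightly stronger conclusion that $\alpha_*$ identifies $K_*(J)$ with $K_*(\ker\beta)$. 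For part (3) the two proofs coincide: the paper's short exact sequence $0\rightarrow C_0(]0,1[)\otimes C(\beta)\rightarrow C(\gamma)\rightarrow J\rightarrow 0$, with Bott periodicity used to identify the connecting maps with $\gamma_*$, is exactly your Puppe sequence for $\gamma$.
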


\begin{proof}[Proof of lemma \ref{mappingcone}] \begin{enumerate}
\item See Exercise 6.N in \cite{WeggeOlsen}.
\item Set $I=\ker(\beta)$ and $\tilde{\gamma}:I\rightarrow C(\beta):x\mapsto (x,0)$, so that $\gamma=\tilde{\gamma}\circ\alpha$. Since $\tilde{\gamma}_*$ is an isomorphism by the previous point, and $\gamma_*$ is an isomorphism by assumption, we get that $\alpha_*: K_*(J)\rightarrow K_*(I)$ is an isomorphism. Since the sequence $\xymatrix{K_*(I)\ar[r]& K_*(A)\ar[r]^{\beta_*}&K_*(B)}$ is exact, so is the sequence $$\xymatrix{K_*(J)\ar[r]^{\alpha_*} &K_*(A)\ar[r]^{\beta_*}&K_*(B).}$$
\item Since $\gamma$ is injective, we may identify the mapping cone $C(\gamma)$ with $\{f\in C([0,1],C(\beta)):f(0)\in\gamma(J),f(1)=0\}$. By evaluation at 0, we get a short exact sequence 
$$\xymatrix{0\ar[r]& C_0]0,1[\otimes C(\beta)\ar[r]& C(\gamma)\ar[r]&J\ar[r] &0.}$$
In the associated 6-terms exact sequence in K-theory, the use of Bott periodicity to identify $K_*(C_0]0,1[\otimes C(\beta))$ with $K_*(C(\beta))$ allows to identify the connecting maps with $\gamma_*$, so the result follows.
\end{enumerate}
\end{proof}

\begin{proof}[Proof of Theorem \ref{BCnotonto}] Since $\Gamma$ is not half-K-exact, we find a short exact sequence $\xymatrix{0\ar[r] &J\ar[r]^{\alpha}&A\ar[r]^{\beta}& B\ar[r] &0}$ such that 
\begin{equation}\label{nonKexact}
\xymatrix{
K_*(J\otimes_{\rm min}C^*_r(\Gamma))\ar[r]^{\scriptscriptstyle{(\alpha\otimes_{\rm min}Id)_*}} &K_*(A\otimes_{\rm min} C^*_r(\Gamma)) \ar[r]^{\scriptscriptstyle(\beta\otimes_{\rm min}Id)_*}&K_*(B\otimes_{\rm min}C^*_r(\Gamma))}
\end{equation}
is not exact in the middle term. As above, define the mapping cone $C(\beta)$ and the inclusion $\gamma:J\rightarrow C(\beta)$. Set $C=C(\gamma)$, with trivial $\Gamma$-action. We prove in three steps that the assembly map $\mu_{C,r}$ with coefficients in $C$, is not onto.
\begin{itemize}
\item $K_*(C\otimes_{\rm min}C^*_r(\Gamma))=K_*(C(\gamma\otimes_{\rm min}Id))$ is non-zero: this follows from non-exactness of the sequence (\ref{nonKexact}) together with the two last statements of lemma \ref{mappingcone}.
\item $K_*(C\otimes_{\rm max}C^*_{\rm max}(\Gamma))=K_*(C(\gamma\otimes_{\rm max}Id))$ is zero: this follows from exactness of
$$0\rightarrow J\otimes_{\rm max}C^*_{\rm max}(\Gamma)\rightarrow A\otimes_{\rm max}C^*_{\rm max}(\Gamma)\rightarrow B\otimes_{\rm max}C^*_{\rm max}(\Gamma)\rightarrow 0$$
together with the first and last statements of lemma \ref{mappingcone}.
\item The assembly map $\mu_{C,r}: K^{top}_* (\Gamma,C)\rightarrow K_*(C^*_r(\Gamma,C))=K_*(C\otimes_{\rm min}C^*_r(\Gamma))$ is zero, and therefore is not onto: this is because, as explained in the beginning of section \ref{KvsH}, $\mu_{C,r}$ factors through $$\mu_{C,{\rm max}}: K^{top}_*(\Gamma,C)\rightarrow K_*(C^*_{\rm max}(\Gamma,C))=K_*(C\otimes_{\rm max}C^*_{\rm max}(\Gamma)),$$ and this is the zero map.
\end{itemize}
\end{proof}

\subsection{Applications of strong property (T)}

\subsubsection{Super-expanders}\label{superexp}

A Banach space is {\it super-reflexive} if it admits an equivalent norm making it uniformly convex. As mentioned in subsection \ref{positiveCBC} Kasparov and Yu proved in \cite{KaYu} that if a discrete metric space with bounded geometry coarsely embeds into some super-reflexive space, then the coarse assembly map $\mu_X$ is injective. Since families of expanders do not embed coarsely into Hilbert space, by Proposition \ref{expandersdonotembed}, it is natural to ask: {\it is there a family of expanders that admits a coarse embedding into some super-reflexive Banach space?} This is a very interesting open question. However, certain families of expanders are known {\it not} to embed coarsely into any super-reflexive Banach space, and we wish to explain the link with strong property (T) from section \ref{strong(T)}.

Let $(X_k=(V_k,E_k))_{k>0}$ be a family of finite, connected, $d$-regular graphs with $\lim_{k\rightarrow\infty}|V_k|=+\infty$, and let $B$ be a Banach space. We say that $(X_k)_{k>0}$ {\it satisfies a Poincar\'e inequality with respect to $B$} if there exists $C=C(B)>0$ such that for every map $f:\coprod_{k>0}X_k\rightarrow B$ we have:
\begin{equation}\label{PoincareB}
\frac{1}{|V_k|^2}\sum_{x,y\in V_k}\|f(x)-f(y)\|_B^2\leq \frac{C}{|V_k|}\sum_{x\sim y}\|f(x)-f(y)\|_B^2.
\end{equation}
Compare with inequality (\ref{Poincare}), which is the Poincar\'e inequality with respect to Hilbert spaces. In view of Proposition \ref{expandersPoincare}, the following result implies Proposition \ref{expandersdonotembed}.

\begin{Prop}\label{superexpdonotembed} Assume that the family $(X_k)_{k>0}$ satisfies a Poincar\'e inequality with respect to the Banach space $B$. Then the coarse disjoint union $X$ of the $X_k$'s, admits no coarse embedding into $B$.
\end{Prop}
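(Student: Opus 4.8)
The plan is to argue by contradiction, exactly in the spirit of the standard proof that expanders do not embed coarsely into Hilbert space. Suppose $f\colon X\to B$ is a coarse embedding, with control functions $\rho_-,\rho_+$ as in the definition of coarse embedding. I want to combine the Poincar\'e inequality \eqref{PoincareB}, valid on each component $X_k$, with the bounds coming from the control functions, and derive a contradiction from the fact that $|V_k|\to\infty$.

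First I would restrict the embedding to a single component $X_k$, apply the hypothesis inequality \eqref{PoincareB}, and bound each side. On the right-hand side, every edge $x\sim y$ of the $d$-regular graph $X_k$ has $d_{X_k}(x,y)=1$, so $\|f(x)-f(y)\|_B\leq\rho_+(1)$; since $X_k$ has $\frac{d|V_k|}{2}$ edges, the right-hand side is at most $\frac{C}{|V_k|}\cdot d|V_k|\cdot\rho_+(1)^2=Cd\,\rho_+(1)^2$, a constant independent of $k$. On the left-hand side, I would bound from below by isolating the pairs $(x,y)$ that are far apart in $X_k$. Because $(X_k)$ is a family of bounded-degree expanders, the graphs have diameter tending to infinity and, more usefully, a positive proportion of pairs of vertices lie at distance exceeding any fixed $r$; concretely, for fixed $r$ the number of pairs $(x,y)$ with $d_{X_k}(x,y)\leq r$ is at most $|V_k|\cdot(\text{size of a ball of radius }r)$, which is $o(|V_k|^2)$. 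Hence for $k$ large at least half of the ordered pairs satisfy $d_{X_k}(x,y)>r$, and for those $\|f(x)-f(y)\|_B\geq\rho_-(d_{X_k}(x,y))\geq\rho_-(r)$. This gives a lower bound for the left-hand side of the form $\frac{1}{|V_k|^2}\cdot\frac{|V_k|^2}{2}\cdot\rho_-(r)^2=\frac12\rho_-(r)^2$.

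Putting the two bounds together yields $\frac12\rho_-(r)^2\leq Cd\,\rho_+(1)^2$ for every $r$ and all sufficiently large $k$. Since $\rho_-(r)\to\infty$ as $r\to\infty$ (this is precisely the defining property of a coarse embedding, together with $f$ mapping into the bounded-geometry space $X$ with components growing in diameter), choosing $r$ large enough makes the left side exceed the fixed right side, a contradiction. Therefore no coarse embedding $X\to B$ exists.

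The main obstacle, and the step that needs care, is the lower bound on the left-hand side: I must be sure that in a family of $d$-regular expanders a definite proportion of vertex pairs are at large distance. This follows from bounded geometry (balls of fixed radius have bounded cardinality, so the $r$-neighborhood of any vertex contains at most a constant number of points) combined with $|V_k|\to\infty$, which forces the diameter to grow; I would make this quantitative by noting $\#\{(x,y):d_{X_k}(x,y)\leq r\}\leq |V_k|\,b_r$ where $b_r$ is the uniform bound on balls of radius $r$, so this count is negligible compared to $|V_k|^2$. One subtlety is that the Poincar\'e inequality is stated with the \emph{square} of the $B$-norm on both sides, so I must keep everything in terms of squared distances throughout, which the argument above already does. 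It is worth remarking that expansion itself is not directly invoked in this contradiction beyond guaranteeing, via \eqref{PoincareB}, the uniform constant $C$; the genuinely geometric input is the bounded-geometry growth of the components, and I would make sure to cite bounded geometry explicitly where the counting estimate is used.
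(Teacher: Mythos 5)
Your proof is correct and is essentially the paper's own argument: a contradiction obtained by combining the Poincar\'e inequality with the edge bound $\|f(x)-f(y)\|_B\leq\rho_+(1)$ (giving the constant $dC\rho_+(1)^2$ on the right) and a count of pairs at bounded distance using $d$-regularity; the only difference is organizational, in that you count far-apart pairs first and let $r\to\infty$, whereas the paper applies a Markov-type averaging to the quantities $\rho_-(d(x,y))^2$ (mean $\leq M$ forces half the pairs to satisfy $\rho_-(d(x,y))^2\leq 2M$, hence $d(x,y)\leq N$) and then counts. One small point to patch: your inequality $\rho_-(d_{X_k}(x,y))\geq\rho_-(r)$ tacitly assumes $\rho_-$ is non-decreasing, which the paper's definition of control functions does not require (they are merely functions $\R^+\to\R^+$ tending to infinity); either replace $\rho_-$ at the outset by the non-decreasing function $t\mapsto\inf_{s\geq t}\rho_-(s)$, which is still a valid lower control function tending to infinity, or reorganize as in the paper, whose ordering of the argument needs no monotonicity at all.
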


\begin{proof}[Proof] Suppose by contradiction that there exists a coarse embedding $f:X\rightarrow B$, with control functions $\rho_\pm$. Then, using $\|f(x)-f(y)\|_B\leq \rho_+(1)$ for $x\sim y$ in any $X_k$, we get for every $k>0$:
$$\frac{1}{|V_k|^2}\sum_{x,y\in V_k}\rho_-(d(x,y))^2\leq\frac{1}{|V_k|^2}\sum_{x,y\in V_k}\|f(x)-f(y)\|^2_B\leq\frac{C}{|V_k|}\sum_{x\sim y}\|f(x)-f(y)\|^2_B$$
$$\leq \frac{2C|E_k|\rho_+(1)^2}{|V_k|}=dC\rho_+(1)^2,$$
where the second inequality is the Poincar\'e inequality and the final equality is $|E_k|=\frac{d|V_k|}{2}$. Set $M=dC\rho_+(1)^2$; since the mean of the quantities $\rho_-(d(x,y))^2$ is at most $M$, this means that for at least half of the pairs $(x,y)\in V_k\times V_k$, we have $\rho_-(d(x,y))^2\leq 2M$, for every $k>0$. Since $\lim_{t\rightarrow\infty}\rho_-(t)=+\infty$, we find a constant $N>0$ such that, for every $k>0$ and at least half of the pairs $(x,y)\in V_k\times V_k$, we have $d(x,y)\leq N$. But as $X_k$ is $d$-regular, the cardinality of a ball of radius $N$ is at most $(d+1)^N$, so the cardinality of the set of pairs $(x,y)\in V_k\times V_k$ with $d(x,y)\leq N$, is at most $|V_k|(d+1)^N$. For $k\gg 0$, this is smaller than $\frac{|V_k|^2}{2}$, and we have reached a contradiction.
\end{proof}

\begin{Def} A sequence $(X_k)_{k>0}$ of finite, connected, $d$-regular graphs with $\lim_{k\rightarrow\infty}|X_k|=+\infty$, is a family of super-expanders if, for any super-reflexive Banach space $B$, the sequence $(X_k)_{k>0}$ satisfies the Poincar\'e inequality (\ref{PoincareB}) with respect to $B$.
\end{Def}

It follows from Proposition \ref{expandersPoincare} that, assuming they do exist, super-expanders are expanders, and from Proposition \ref{superexpdonotembed} that super-expanders do not admit a coarse embedding into {\it any} super-reflexive Banach space. Lafforgue's construction of super-expanders in \cite{LafforgueT,LafforgueFourier}, following a suggestion by A. Naor, answered a question from \cite{KaYu}:

\begin{Thm} Let $F$ be a non-Archimedean local field, let $G$ be a simple algebraic group of higher rank defined over $F$, and let $G(F)$ be the group of $F$-rational points of $G$. Let $\Gamma$ be a lattice in $G(F)$, fix any filtration $(N_k)_{k>0}$ of $\Gamma$. Then the box space $\coprod_{k>0} Cay(\Gamma/N_k,S)$ (see Definition \ref{box}) is a family of super-expanders.
\end{Thm}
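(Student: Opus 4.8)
The plan is to verify the two defining requirements of a family of super-expanders: that $(Cay(\Gamma/N_k,S))_{k>0}$ is a sequence of finite, connected, $d$-regular graphs with $|V_k|\to\infty$, and that it satisfies the Poincaré inequality (\ref{PoincareB}) with respect to \emph{every} super-reflexive Banach space $B$. The first point is immediate: with $d=|S\cup S^{-1}|$ each Cayley graph is connected and $d$-regular, and since $\Gamma$ is infinite (a lattice in a non-compact higher-rank group) with $\bigcap_k N_k=\{1\}$, the indices $|V_k|=[\Gamma:N_k]$ tend to infinity. So the whole content is the Banach-valued Poincaré inequality, uniformly in $k$, for an arbitrary super-reflexive target.

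The first key reduction is that super-reflexivity forces nontrivial type. Indeed, by Enflo's theorem super-reflexivity is equivalent to the property that every Banach space finitely representable in $B$ is reflexive; since $\ell^1$ is not reflexive, $\ell^1$ is not finitely representable in a super-reflexive space, and by the Maurey–Pisier theorem this is exactly the statement that $B$ has nontrivial type. Hence the class $\mathcal{E}$ of super-reflexive Banach spaces has nontrivial type in the sense of the remark following Theorem \ref{StrongThigher}. That remark, together with Theorem \ref{StrongThigher} (note that $G(F)$ is a simple algebraic group of split rank $\geq 2$ over a non-Archimedean local field, i.e. of higher rank), shows that $G(F)$ --- and therefore its lattice $\Gamma$ --- has strong property (T) with respect to $\mathcal{E}$.

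I would then convert strong property (T) into a uniform Poincaré inequality. Fix a super-reflexive space $B$ and, for each $k$, let $\pi_k$ be the isometric representation of $\Gamma$ by left translation on $\ell^2(\Gamma/N_k,B)$, whose invariant vectors are exactly the constant functions $\Gamma/N_k\to B$. The crucial uniformity is that the finite $\ell^2$-sums $\ell^2(\Gamma/N_k,B)$ of the \emph{fixed} space $B$ are super-reflexive with modulus of uniform convexity bounded independently of $k$ (Day's theorem on $\ell^2$-sums of uniformly convex spaces); they thus all lie in a single subclass of $\mathcal{E}$ of nontrivial type, so one instance of strong property (T) governs all of them at once. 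It provides, for suitable $\varepsilon>0$, a Kazhdan projection $e\in\mathcal{C}_{\varepsilon,K}(\Gamma)$ whose image $\pi_k(e)$ is the projection onto constants, realized as a norm-limit of finitely supported averaging operators with bounds independent of $k$ and of $B$. Writing $f=\mathrm{avg}(f)+(f-\mathrm{avg}(f))$ and using that the edge energy $\frac{1}{|V_k|}\sum_{x\sim y}\|f(x)-f(y)\|_B^2$ controls the quantities $\|(1-\pi_k(s))f\|^2$ for $s\in S$, the uniform bound on $\pi_k(e)$ yields a constant $C=C(B)$, independent of $k$, with
$$\frac{1}{|V_k|^2}\sum_{x,y\in V_k}\|f(x)-f(y)\|_B^2\leq \frac{C}{|V_k|}\sum_{x\sim y}\|f(x)-f(y)\|_B^2$$
for every $f:V_k\to B$; in the Hilbert case this recovers the spectral-gap estimate of Proposition \ref{expandersPoincare}. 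As $B\in\mathcal{E}$ was arbitrary, the box space satisfies (\ref{PoincareB}) with respect to every super-reflexive space, which is exactly the definition of a family of super-expanders.

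The analytic heart of the argument is Theorem \ref{StrongThigher}, Lafforgue's strong property (T) in the non-Archimedean higher-rank case, which I am allowed to invoke. Given that, the main obstacle is the uniformity: one must check that the relevant Banach-space invariants of the coefficient spaces $\ell^2(\Gamma/N_k,B)$ do not degenerate as $k\to\infty$, so that a single Kazhdan projection controls the entire filtration and produces one constant $C(B)$. This is precisely the step where the \emph{strong} form of property (T), as opposed to an instance-by-instance Banach property (T), is indispensable, and where the passage from the abstract Kazhdan projection to the explicit Poincaré constant requires care.
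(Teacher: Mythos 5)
Your proposal is correct and follows essentially the same route as the paper's proof: invoke Lafforgue--Liao strong property (T) for isometric representations on the uniformly super-reflexive spaces $L^2(X_k,B)$ (the paper cites this directly, you reach it via the nontrivial-type remark after Theorem \ref{StrongThigher} -- same result), obtain a Kazhdan projection onto constants uniformly in $k$, approximate it by a finitely supported averaging element of $\C\Gamma$, and bound the resulting quantities by the edge energy to get the Poincar\'e inequality (\ref{PoincareB}). Your explicit attention to the uniformity of the convexity modulus of the spaces $\ell^2(\Gamma/N_k,B)$ is exactly the point the paper leaves implicit, so there is no gap.
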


\begin{proof}[Proof] Write $X_k=:Cay(\Gamma/N_k,S)$. Let $B$ be a super-reflexive Banach space. The goal is to show that the Poincar\'e inequality \ref{PoincareB} is satisfied.
 \begin{enumerate}
\item Let $B_k$ be the space of functions $X_k\rightarrow B$, with norm $\|f\|_{B_k}^2=\frac{1}{|X_k|}\sum_{x\in X_k}\|f(x)\|_B^2$. For $f\in B_k$, set $m_f=\frac{1}{|X_k}\sum_{x\in X_k}f(x)\in B$. Then\footnote{Note typos regarding inequality \ref{average} in Proposition 5.2 of \cite{LafforgueT} and in Proposition 5.5 of \cite{LafforgueFourier}: $\leq \frac{4}{|X_k|}$ is erroneously written as $=\frac{2}{|X_k|}$.}
\begin{equation}\label{average}
\frac{1}{|X_k|^2}\sum_{x,y\in X_k}\|f(x)-f(y)\|^2_B\leq \frac{4}{|X_k|}\sum_{x\in X_k}\|f(x)-m_f\|^2_B.
\end{equation}
To see this: by translation we may assume $m_f=0$. Then by the triangle inequality:
$$\|f(x)-f(y)\|_B^2\leq(\|f(x)\|_B+\|f(y)\|_B)^2\leq 2(\|f(x)\|_B^2+\|f(y)\|_B^2),$$
and inequality \ref{average} follows by averaging over $X_k\times X_k$.

\item Let $\pi_k$ be the natural isometric representation of $\Gamma$ on $B_k$. As $\Gamma$ acts transitively on $X-K$, the fixed point space of $\Gamma$ in $B_k$ is the space of constant functions. Now strong property (T) for representations in a Banach space is defined by analogy with Definition \ref{StrongT}, by replacing Hilbert space by a suitable class of Banach spaces: it posits the existence of a Kazhdan projection projecting onto the fixed point space, for any representation in a suitable class. It turns out that the lattice $\Gamma$ has strong property (T) for isometric representations in super-reflexive Banach spaces: this is due to Lafforgue \cite{LafforgueT,LafforgueFourier} when $G(F)$ contains $SL_3(F)$, and to Liao \cite{Benben14} in general. So, denoting by $\mathcal{C}_{0,1}(\Gamma)$ the Banach algebra completion of $\C\Gamma$ with respect to isometric $\Gamma$-representations in the spaces $(B_k)_{k>0}$, there exists an idempotent $p\in  \mathcal{C}_{0,1}(\Gamma)$ such that in particular $\pi_k(p)f=m_f$ for every $f\in B_k$. Inequality \ref{average} is then reformulated
\begin{equation}\label{refaverage}
\frac{1}{|X_k|^2}\sum_{x,y\in X_k}\|f(x)-f(y)\|^2_B\leq 4\|f-\pi_k(p)f\|_{B_k}^2.
\end{equation}

\item Let $q\in \C\Gamma$ be an element such that $\|p-q\|_{\mathcal{C}_{0,1}(\Gamma)}<\frac{1}{2}$ and $\sum_\gamma q(\gamma)=1$. Then
\begin{align*}
\|\pi_k(p)f-\pi_k(q)f\|_{B_k}&=\|(\pi_k(p)-\pi_k(q))(f-m_f)\|_{B_k}\\
&\leq\frac{1}{2}\|f-m_f\|_{B_k}=\frac{1}{2}\|f-\pi_k(p)f\|_{B_k};
\end{align*}
but $\|f-\pi_k(p)f\|_{B_k}\leq\|f-\pi_k(q)f\|_{B_k}+\|\pi_k(q)f-\pi_k(p)f\|_{B_k}$ by the triangle inequality, so
$$\|f-\pi_k(p)f\|_{B_k}\leq 2\|f-\pi_k(q)f\|_{B_k}, $$
that we plug in (\ref{refaverage}).

\item Finally it is easy to see that there exists a constant $C_1>0$, only depending on $q$, such that for every $k>0$:
$$\|f-\pi_k(q)f\|_{B_k}^2\leq \frac{C_1}{|X_k|}\sum_{x\sim y}\|f(x)-f(y)\|_B^2.$$
\end{enumerate}
\end{proof}

Later on, other constructions of super-expanders were provided:
\begin{itemize}
\item by M. Mendel and A. Naor \cite{MendelNaor}, using zig-zag products;
\item independently by D. Sawicki \cite{SawickiSuper} and by T. de Laat and F. Vigolo \cite{deLaatVigolo}, using warped cones, as defined in section \ref{warped}: the constructions appeal to actions on manifolds of groups with strong property (T).
\end{itemize}

\subsubsection{Zimmer's conjecture}

A striking, unexpected application of Lafforgue's strong property (T) from section \ref{strong(T)} is the recent solution of Zimmer's conjecture on actions of higher rank lattices on manifolds. Roughly speaking, Zimmer's conjecture claims that a lattice $\Gamma$ in a higher rank simple Lie group $G$, has only finite actions on manifolds of dimension small enough (relative to data only associated with $G$). Somewhat more precisely, in this section:

\begin{itemize}
\item {\it higher rank} means that the real rank of $G$ is at least 2 (think of $G=SL_n(\R)$, for $n\geq 3$; or $G=Sp_{2n}(\R)$, for $n\geq 2$);
\item {\it manifold} means a smooth closed manifold $M$;
\item {\it action of $\Gamma$ on $M$} means an action by diffeomorphisms of class at least $C^2$;
\item {\it a finite action of $\Gamma$} is one that factors through a finite quotient of $\Gamma$.
\end{itemize}
It remains to explain ``{\it dimension small enough}'' and for this we will restrict to $G=SL_n(\R), n\geq 3$. For the general case, we refer to Conjecture 1.2 in \cite{BFH}. For the original statement by R.J. Zimmer, see \cite{Zimmer1987}.

If $\Gamma$ is a lattice in $SL_n(\R)$, we may let it act linearly on $\R^n$. So we get an infinite action of $\Gamma$ on the $(n-1)$-dimensional projective space $P^{n-1}(\R)$; we observe that this action has no invariant volume form. On the other hand, $\Gamma=SL_n(\Z)$ has an infinite action on the $n$-dimensional torus $\mathbf{T}^n=\R^n/\Z^n$, this one clearly preserving a volume form. Zimmer's conjecture basically claims that those examples are of minimal dimension among non-finite actions. Precisely, Zimmer's conjecture for cocompact lattices in $SL_n(\R)$, is now the following result by A. Brown, D. Fisher and S. Hurtado (Theorem 1.1 in \cite{BFH}):

\begin{Thm}\label{Zimmer} Let $\Gamma$ be a cocompact lattice in $SL_n(\R), n\geq 3$. 
\begin{enumerate}
\item If $\dim M<n-1$, any action of $\Gamma$ on $M$ is finite.
\item If $\dim M<n$, any volume-preserving action of $\Gamma$ on $M$ is finite.
\end{enumerate}
\end{Thm}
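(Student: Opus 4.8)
*Proof proposal for Theorem 8.1 (Zimmer's conjecture for cocompact lattices in $SL_n(\R)$).*

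The plan is to follow the Brown--Fisher--Hurtado strategy \cite{BFH}, whose decisive analytic input is precisely Lafforgue's strong property (T) (Theorem \ref{StrongThigher}), together with the theory of invariant measures and Lyapunov exponents for lattice actions. The overall logic is a dichotomy: either the action has ``uniformly subexponential growth of derivatives'' along the group, in which case strong property (T) forces an invariant Riemannian metric and hence finiteness of the action; or there is genuine exponential growth, in which case a careful analysis of Lyapunov exponents, combined with the dimension hypotheses $\dim M < n-1$ (resp. $\dim M < n$), yields a contradiction via Zimmer's cocycle superrigidity and suspension techniques. First I would set up the suspension space $M^\alpha = (G\times M)/\Gamma$, a fiber bundle over $G/\Gamma$ with fiber $M$, carrying an induced $G$-action; this transfers the problem from the lattice $\Gamma$ to the ambient group $G=SL_n(\R)$, where representation-theoretic tools (the restricted root system, the choice of a one-parameter subgroup realizing the top restricted root) are available.

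The heart of the argument is a uniform estimate on the norm of the derivative cocycle. One fixes the $G$-action on the suspension and studies, for a generating set $S$ of $\Gamma$, the quantity measuring how fast $\|D_x\gamma\|$ can grow as $\gamma$ ranges over a ball of radius $r$ in $\Gamma$. Strong property (T) enters as follows: the derivative cocycle defines, after an averaging/convexity argument, an isometric or slow-growth representation of $G$ on a suitable Banach space of sections (an $L^p$-space of tensor fields), and strong property (T) provides a Kazhdan-type projection producing an almost-invariant, then genuinely invariant, object. Concretely, one argues that if the average top Lyapunov exponent is smaller than a threshold determined by the root data of $G$ and by $\dim M$, then the representation has no exponential growth transverse to the fixed-point space, so the projection onto invariants yields a $\Gamma$-invariant Riemannian metric on $M$; a group of diffeomorphisms preserving a Riemannian metric on a compact manifold is compact, so its image in $\mathrm{Diff}(M)$ is compact, and since $\Gamma$ has property (T) and is finitely generated with finite abelianization, the action factors through a finite quotient.

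The remaining case is where some Lyapunov exponent is large. Here one uses Zimmer's measurable cocycle superrigidity for the derivative cocycle of the $G$-action on the suspension, constraining the Lyapunov spectrum to be governed by weights of a finite-dimensional representation of $G$; since the smallest nontrivial such weight ``costs'' dimension $n-1$ (the standard representation, realizing the projective action), an action on a manifold of dimension $<n-1$ cannot support a nontrivial exponent, and in the volume-preserving case the sum-zero constraint on exponents pushes the bound to $\dim M < n$. I expect the main obstacle to be the passage from strong property (T) --- which is a statement about uniform boundedness of a Kazhdan projection in a Banach completion $\mathcal{C}_{\varepsilon,K}(G)$ (see section \ref{strong(T)}) --- to an \emph{a priori} bound on the growth of the derivative cocycle that is uniform over all of $\Gamma$ and sharp enough to beat the dimension threshold. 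This requires interpolating the derivative cocycle into a family of slow exponential growth representations, verifying that the relevant Banach spaces of tensor fields have nontrivial type (so that Lafforgue's strong property (T) for such Banach spaces, the general form of Theorem \ref{StrongThigher}, applies), and controlling the constants through the non-linear, non-isometric nature of the cocycle --- this last nonlinearity being exactly where the delicate work lies, and where the cocompactness hypothesis on $\Gamma$ is used to guarantee the necessary uniformity.
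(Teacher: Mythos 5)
Your overall strategy is the same as the paper's (which follows Brown--Fisher--Hurtado \cite{BFH}): establish uniform subexponential growth of derivatives --- your dichotomy is just the contrapositive packaging of this, with the ``bad'' horn ruled out by the Lyapunov/suspension analysis, which is indeed where the paper locates the hard ergodic-theoretic input --- and then apply strong property (T) to the representation on Riemannian structures to produce an invariant metric. Two small remarks on that middle step: the paper works with the Hilbert--Sobolev space $\HH^k$ of sections of $S^2(TM)$, so the Hilbert-space form of strong property (T) (in the strengthened quantitative form: genuine probability measures $\mu_i$ supported on balls, with $\|\pi(\mu_i)-P_\infty\|<K\lambda^i$) suffices; your detour through $L^p$-spaces of tensor fields and Banach spaces of nontrivial type is unnecessary. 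Also, the representation being averaged is a representation of $\Gamma$ itself, not of $G$.

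The genuine gap is at the end of your invariant-metric horn. From ``$\alpha(\Gamma)$ preserves a Riemannian metric, hence lies in a compact group of isometries'' you conclude finiteness ``since $\Gamma$ has property (T) and is finitely generated with finite abelianization.'' This inference is false: property (T) and finite abelianization do not prevent a lattice from having infinite --- even dense --- image in a compact Lie group. Indeed, a cocompact arithmetic lattice $\Gamma\subset SL_n(\R)$ obtained by restriction of scalars from a totally real number field comes with a Galois-conjugate embedding into the compact form $SU(n)$ with \emph{dense} image, and hence admits infinite isometric actions on closed manifolds (homogeneous spaces of $SU(n)$, say). What kills such actions in the theorem is not property (T) but the dimension hypothesis, which must be used a \emph{second} time at this step. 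The paper's argument is: if $\alpha(\Gamma)$ were infinite, Margulis superrigidity forces the compact real form $\mathfrak{su}_n$ of $\mathfrak{sl}_n(\R)$ to embed into the Lie algebra of $K=\mathrm{Isom}(M,g_\infty)$, whence
$$n^2-1=\dim\mathfrak{su}_n\leq\dim K\leq \frac{m(m+1)}{2},\qquad m=\dim M,$$
contradicting $m<n-1$ (the analogous count also handles $m<n$ in the volume-preserving case). Your proposal spends the dimension hypothesis only in the ``large Lyapunov exponent'' horn; as written, your invariant-metric horn would ``prove'' that every isometric action of $\Gamma$ on every closed manifold is finite, which the dense embedding $\Gamma\hookrightarrow SU(n)$ refutes. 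You need to reinstate the superrigidity-plus-dimension-count argument to close the proof.
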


Let us give a rough sketch, in 3 steps, of the proof of the first statement in Theorem \ref{Zimmer}. So we consider $\alpha:\Gamma\rightarrow Diff(M)$, with $\dim M<n-1$, we must show that $\alpha$ is finite.
\begin{itemize}
\item Let $\alpha:\Gamma\rightarrow Diff^\infty(M)$ be a homomorphism (for simplicity we assume that $\Gamma$ acts by $C^\infty$ diffeomorphisms). Fix any Riemannian structure on $M$. For $x\in M,\gamma\in\Gamma$, denote by $D_x\alpha(\gamma)$ the differential of $\alpha(\gamma)$ at $x$. Then $\alpha$ has {\it uniform subexponential growth of derivatives}, i.e. for every $\varepsilon>0$, there exists $C\geq 1$ such that for every $\gamma\in\Gamma$:
\begin{equation}\label{subexponential}
\sup_{x\in M} \|D_x\alpha(\gamma)\|\leq Ce^{\varepsilon\ell(\gamma)},
\end{equation}
where $\ell$ denotes the word length with respect to a fixed finite generating set of $\Gamma$. Morally, this means that generators of $\Gamma$ are close to being isometries of $M$.
\item A Riemannian structure of class $C^k$ on $M$ is a $C^k$ section of the symmetric square $S^2(TM)$ of the tangent bundle $TM$ of $M$. Via $\alpha$, the group $\Gamma$ acts on $C^k$ Riemannian structures on $M$ and this defines a homomorphism $\alpha_\sharp$ from $\Gamma$ to the group of invertibles in the algebra $\mathcal{B}(C^k(S^2(TM)))$ of bounded operators on $C^k(S^2(TM))$. At this point we introduce the Hilbert space $\HH^k$ which is the Sobolev space of sections of $S^2(TM)$ with weak $k$-th derivative being $L^2$. By the Sobolev embedding theorem, we have $\HH^k\subset C^\ell(S^2(TM))$ for $k\gg\ell$. If $\alpha$ satisfies \ref{subexponential}, then $\alpha_\sharp$ has slow exponential growth: for all $\varepsilon>0$, there exists  $C\geq 1$ such that for all $g\in G$:
$$\|\alpha_\sharp(g)\|_{\HH^k\rightarrow\HH^k}\leq Ce^{\varepsilon\ell(g)}.$$

It is here that strong property (T) enters the game; it is however needed in a form both stronger and more precise than in Definition \ref{StrongT}, namely: there exists a constant $\delta>0$ and a sequence $\mu_n$ of probability measures supported in the balls $B_\ell(n)$ of radius $n$ in $\Gamma$, such that for all $C>0$ and any representation $\pi$ on a Hilbert space with $\|\pi(g)\|\leq Ce^{\delta\ell(g)}$, the operators $(\pi(\mu_n))_{n>0}$ converge exponentially quickly to a projection $P_\infty$ onto the space of invariant vectors. That is, there exists $K>0$ and $0<\lambda<1$, independent of $\pi$, such that $\|\pi(\mu_i)-P_\infty\|<K\cdot\lambda^i$. Theorem 6.3 in \cite{BFH} explains how to deduce the extra desired features (exponentially fast convergence and approximation by positive measures rather than signed measures) from the {\it proofs} of Theorem \ref{StrongThigher} by Lafforgue, de Laat and de la Salle \cite{LafforgueT, delaSalleb, dLdlS}\footnote{The subtlety here is that, as lucidly explained in \cite{dlS16}, Definition \ref{StrongT} for an arbitrary finitely generated group is equivalent to the existence of a sequence of {\it signed} probability measures as above.}.

Coming back to our sketch of proof of Theorem \ref{Zimmer}:

\begin{Prop}\label{invmetric} $\alpha(\Gamma)$ preserves some $C^\ell$ Riemannian structure on $M$.
\end{Prop}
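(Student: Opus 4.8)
The plan is to manufacture the invariant Riemannian structure by averaging a fixed metric against the probability measures $\mu_n$ furnished by the strong property (T) of $\Gamma$, and then to upgrade the semidefinite limit to a genuine, non-degenerate metric. First I would fix an arbitrary smooth Riemannian metric $g_0$ on $M$; regarded as a section of $S^2(TM)$ it lies in the Sobolev space $\HH^k$. For each $n$ set
$$h_n = \alpha_\sharp(\mu_n)\,g_0 = \int_\Gamma \alpha_\sharp(\gamma)\,g_0 \; d\mu_n(\gamma).$$
Because each $\alpha_\sharp(\gamma)\,g_0$ is again a smooth, pointwise positive-definite metric and $\mu_n$ is a genuine \emph{probability} measure, $h_n$ is a convex combination of positive-definite symmetric forms, hence itself pointwise positive-definite; by the Sobolev embedding $\HH^k \subset C^\ell(S^2(TM))$ it is of class $C^\ell$.

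Next I would invoke strong property (T) in the precise (exponentially convergent) form recalled above. Since $\alpha_\sharp$ has slow exponential growth on $\HH^k$, choosing $\varepsilon \le \delta$ places $\alpha_\sharp$ in the class of Hilbert-space representations to which the measures $\mu_n$ apply, so $\alpha_\sharp(\mu_n) \to P_\infty$ exponentially fast in operator norm, $P_\infty$ being the projection onto the $\alpha_\sharp$-invariant vectors of $\HH^k$. Hence $h_n \to h_\infty := P_\infty\, g_0$ in $\HH^k$, and therefore in $C^\ell$ by Sobolev; moreover $h_\infty$ lies in the range of $P_\infty$, so it is $\Gamma$-invariant, which is to say $\alpha(\Gamma)$ preserves $h_\infty$. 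At this stage $h_\infty$ is a $C^\ell$, $\Gamma$-invariant, positive-\emph{semi}definite section.

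The hard part, and the only real obstacle, is to rule out degeneracy of $h_\infty$: a $C^0$-limit of positive-definite metrics is a priori only positive semidefinite. To control this I would run the identical averaging on the dual bundle, i.e. on inverse (co-)metrics. The action on co-metrics has the same slow exponential growth, since the derivative bounds (\ref{subexponential}) hold equally for $\alpha(\gamma)^{-1}$ (as $\ell(\gamma^{-1})=\ell(\gamma)$); thus setting $h_n' = \int_\Gamma (\alpha_\sharp(\gamma)\,g_0)^{-1}\, d\mu_n(\gamma)$ one again gets $C^\ell$-convergence $h_n' \to h_\infty'$. The key input is the operator convexity of matrix inversion: the operator Jensen inequality gives, pointwise, $h_n(x)^{-1} \le h_n'(x)$, that is $h_n(x) \ge h_n'(x)^{-1}$. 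Since the convergent sequence $(h_n')_n$ is uniformly bounded in $C^0$, its eigenvalues relative to $g_0$ are bounded above by some $B$ uniformly in $n$ and $x$; inverting, the eigenvalues of $h_n(x)$ relative to $g_0(x)$ are bounded below by $B^{-1}$, and this lower bound is preserved under the $C^0$-limit. Therefore $h_\infty \ge B^{-1} g_0$ is uniformly positive-definite, hence a genuine $C^\ell$ Riemannian structure preserved by $\alpha(\Gamma)$, which proves the proposition. Everything outside this last step is a routine combination of Sobolev embedding with the exponentially fast convergence provided by strong property (T); the delicate point is exactly the passage from semidefiniteness to definiteness, handled by the dual-averaging/operator-convexity argument.
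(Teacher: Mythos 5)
Your proof is correct, but it handles the crucial non-degeneracy step by a genuinely different route than the paper. Up to the construction of the invariant semidefinite limit $h_\infty=P_\infty g_0$ (averaging against the probability measures $\mu_n$, Sobolev embedding, convergence of $\alpha_\sharp(\mu_n)$ to $P_\infty$), the two arguments run in parallel. For positive-definiteness the paper runs a quantitative race of exponents: it fixes $e^\varepsilon=\lambda^{-1/3}$ in the subexponential derivative bound (\ref{subexponential}), uses positivity of $\mu_i$ and its support in the ball of radius $i$ to get the pointwise lower bound $g_i(v,v)\geq C^{-2}\lambda^{2i/3}$ for unit vectors $v$, and plays this against the convergence estimate $|g_\infty(v,v)-g_i(v,v)|\leq K\lambda^i$; since $\lambda^{2i/3}\gg\lambda^i$, the limit is positive for $i\gg 0$. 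That argument needs the \emph{exponentially fast} convergence in the strengthened form of strong property (T), together with the freedom to calibrate $\varepsilon$ against $\lambda$. You replace the race by a structural inequality: averaging the inverse metrics — a second representation of slow exponential growth, legitimate because (\ref{subexponential}) applies to $\gamma^{-1}$ as well and pullback commutes with pointwise inversion, so $h_n'=\check{\alpha}_\sharp(\mu_n)(g_0^{-1})$ — and then operator convexity of $A\mapsto A^{-1}$ plus positivity of $\mu_n$ give $h_n\geq (h_n')^{-1}\geq B^{-1}g_0$ uniformly in $n$, a bound that survives the limit. What your approach buys: the convergence \emph{rate} is never used; only convergence (indeed only uniform boundedness) of the averaging operators on the two representations is needed, so the "exponentially fast" refinement of strong (T) becomes superfluous, while the "positive measures" refinement remains essential in both proofs. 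What it costs: a second invocation of strong property (T), applied to the co-metric representation, rather than a single one.
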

\begin{proof}[Proof] We will apply the above form of strong property (T) to the representation $\alpha_\sharp$. Let $(\mu_n)_{n>0}$ be the sequence of probability measures as above, set $P_n=\alpha_\sharp(\mu_n)$, so that $\|P_i-P_\infty\|_{\HH^k\rightarrow\HH^k}<K\cdot\lambda^i$.

We start with any smooth Riemannian metric $g$ on $M$, view it as an element in $\HH^k$, and apply the averaging operators $P_i$: then $g_i=:P_i(g)$. We set $g_\infty=\lim_{i\rightarrow\infty} g_i$, so that $g_\infty$ is $\alpha_\sharp(\Gamma)$-invariant in $\HH^k$, hence also in $C^\ell(S^2(TM))$. We have $g_\infty(v,v)\geq 0$ for every $v\in TM$, as $g_\infty$ is a limit of positive-definite forms, but we must show that $g_\infty$ is positive-definite, i.e. $g_\infty(v,v)>0$ for every unit vector $v\in TM$. By the previous point (subexponential growth of derivatives), taking $e^\varepsilon=\lambda^{-\frac{1}{3}}$, we have for every $\gamma\in\Gamma$:
\begin{align*}
C^2\lambda^{-\frac{2\ell(\gamma)}{3}}\geq \|D\alpha(\gamma^{-1})\|^2&=\sup_{u\in TM}\frac{g(u,u)}{g(D\alpha(\gamma)(u),D\alpha(\gamma)(u))}\\
&\geq\frac{1}{g(D\alpha(\gamma)(v),D\alpha(\gamma)(v))}
\end{align*}
hence, if $\ell(\gamma)\leq i$:
$$g(D\alpha(\gamma)(v),D\alpha(\gamma)(v))\geq\frac{1}{C^2} \cdot\lambda^{\frac{2\ell(\gamma)}{3}}\geq\frac{1}{C^2}\cdot\lambda^{\frac{2i}{3}}$$
Since $\mu_i$ is supported in the ball of radius $i$ of $\Gamma$, we have 
$$g_i(v,v)\geq\frac{1}{C^2}\cdot\lambda^{\frac{2i}{3}}.$$
On the other hand $|g_\infty(v,v)-g_i(v,v)|\leq K\cdot\lambda^i$, hence
$$g_\infty(v,v)\geq g_i(v,v)-K\cdot\lambda^i\geq \frac{1}{C^2}\cdot\lambda^{\frac{2i}{3}}- K\cdot\lambda^i,$$
which is positive for $i\gg0$.
\end{proof}

\item Set $m=\dim M$. Let $g$ be an $\alpha(\Gamma)$-invariant $C^\ell$ metric on $M$, so that $\alpha(\Gamma)$ is a subgroup of the isometry group $K=:Isom(M,g)$. Now $K$ is a compact Lie group, of dimension at most $\frac{m(m+1)}{2}$. Assuming by contradiction that $\alpha(\Gamma)$ is infinite, a suitable version of Margulis' super-rigidity says that the Lie algebra $\mathfrak{su}_n$, which is the compact real form of $\mathfrak{sl}_n(\R)$, must embed into the Lie algebra of $K$. Counting dimensions we get
$$n^2-1=\dim\mathfrak{su}_n\leq\dim K\leq \frac{m(m+1)}{2},$$
contradicting the assumption $m<n-1$. So $\alpha$ is finite.
\end{itemize}

More recently in \cite{BFH2}, Brown, Fisher and Hurtado verified Zimmer's conjecture for $SL_3(\Z)$. For this they had to appeal to de la Salle's result \cite{delaSalleb} that strong property (T) holds for arbitrary lattices in higher rank simple Lie groups.

It is expected that in 2019, A. Brown, D. Fisher and S. Hurtado, with the help of D. Witte-Morris, will complete a proof of Zimmer's conjecture for any lattice in any higher rank simple Lie group.

\addcontentsline{toc}{section}{References}

\bibliographystyle{alpha}
\bibliography{bibliographie-fusion}

\noindent
Authors addresses:

\medskip
\noindent
Maria Paula Gomez Aparicio\\
Institut de Math\'ematiques B\^atiment 307\\
Facult\'e des Sciences d'Orsay\\
Universit\'e Paris-Sud\\
F-91405 Orsay Cedex - France\\
maria.gomez@math.u-psud.fr

\bigskip
Pierre Julg\\
Institut Denis Poisson\\
Universit\'e d'Orl\'eans\\
Collegium Sciences et Techniques\\
B\^atiment de math\'ematiques\\
Rue de Chartres B.P. 6759\\
F-45067 Orl\'eans Cedex 2 - France\\
pierre.julg@univ-orleans.fr

\bigskip
\noindent
Alain Valette\\
Institut de Math\'ematiques\\
Universit\'e de Neuch\^atel\\
11 Rue Emile Argand\\
CH-2000 Neuch\^atel - Switzerland\\
alain.valette@unine.ch

\end{document}